\newtheorem{assumption}{Assumption}
\crefname{assumption}{assumption}{assumptions}
\DeclareMathOperator*{\argmin}{arg\,min}
\newcommand{\R}{\mathbb{R}}
\newcommand{\N}{\mathbb{N}}
\newcommand{\Z}{\mathbb{Z}}
\newcommand{\E}{\mathbb{E}}
\renewcommand{\P}{\mathbb{P}}
\renewcommand{\l}{\left}
\renewcommand{\r}{\right}
\newcommand{\inv}{^{-1}}
\newcommand{\reff}[2]{\hyperref[#1]{#2 \ref*{#1}}}
\newcommand{\sumi}[1]{\sum_{i=1}^{#1}}
\newcommand{\newpar}{\vspace{8pt}}
\newcommand{\lip}{\langle}
\newcommand{\rip}{\rangle}
\newcommand{\ind}{\mathbb{I}}
\DeclareMathSymbol{\indep}{\mathrel}{symbolsC}{121}
\newcommand{\sil}{\stackrel{(i)}{\leq}}
\newcommand{\siil}{\stackrel{(ii)}{\leq}}
\newcommand{\siiil}{\stackrel{(iii)}{\leq}}
\newcommand{\sie}{\stackrel{(i)}{=}}
\def\bG{\mathbf{G}}
\def\bH{\mathbf{H}}
\def\bP{\mathbf{P}}
\def\bU{\mathbf{U}}
\def\bV{\mathbf{V}}
\def\bX{\mathbf{X}}
\def\bY{\mathbf{Y}}
\def\bZ{\mathbf{Z}}
\def\bq{\mathbf{q}}
\def\be{\bm{e}}
\def\bv{\bm{v}}
\def\bu{\bm{u}}
\def\bw{\mathbf{w}}
\def\by{\mathbf{y}}
\def\bone{\mathbf{1}}
\def\cA{\mathcal{A}}
\def\cB{\mathcal{B}}
\def\cC{\mathcal{C}}
\def\cI{\mathcal{I}}
\def\cF{\mathcal{F}}
\def\cS{\mathcal{S}}
\def\cH{\mathcal{H}}
\def\cL{\mathcal{L}}
\def\cM{\mathcal{M}}
\def\cN{\mathcal{N}}
\def\cP{\mathcal{P}}
\def\cR{\mathcal{R}}
\newcommand{\mean}{\mathbb{E}}
\newcommand{\Var}{\text{\rm Var}}
\newcommand{\Cov}{\text{\rm Cov}}
\newcommand{\tagaligneq}{\refstepcounter{equation}\tag{\theequation}}
\DeclareMathOperator{\msum}{\medmath\sum}
\newcommand{\mmax}{\max\nolimits}
\newcommand{\msup}{\sup\nolimits}
\newcommand{\mmin}{\min\nolimits}
\newcommand{\minf}{\inf\nolimits}
\newcommand{\tvec}{\text{\rm vec}}
\def\bg{\mathbf{g}}
\def\bh{\mathbf{h}}
\newcommand{\argdot}{{\,\vcenter{\hbox{\tiny$\bullet$}}\,}}%{\bullet}
\DeclareMathOperator{\mint}{\scaleobj{.8}{\int}}
\title[Universality of High-Dimensional Logistic Regression \& CGMT under Dependence
]{Universality of High-Dimensional Logistic Regression and a Novel CGMT under {Dependence} with Applications to Data Augmentation}
\thanks{denotes equal contribution} \Email{matthewmallory@fas.harvard.edu}\\
\renewenvironment{proof}[1][Proof]%
{%
\par\noindent{\bfseries\upshape #1\ }%
}%
{\jmlrQED}
\begin{document}

\maketitle

\begin{abstract}
Over the last decade, a wave of research has characterized the exact asymptotic risk of many high-dimensional models in the proportional regime. Two foundational results have driven this progress: Gaussian universality, which shows that the asymptotic risk of estimators trained on non-Gaussian and Gaussian data is equivalent, and the convex Gaussian min-max theorem (CGMT), which characterizes the risk under Gaussian settings. However, these results rely on the assumption that the data consists of independent random vectors—an assumption that significantly limit its applicability to many practical setups. In this paper, we address this limitation by generalizing both results to the dependent setting. More precisely, we prove that Gaussian universality still holds for high-dimensional logistic regression under block dependence, {$m$-dependence and special cases of $\beta$-mixing}, and establish a novel CGMT framework that accommodates for correlation across both the covariates and observations. Using these results, we establish the impact of data augmentation, a widespread practice in deep learning, on the asymptotic risk.
\end{abstract}

\begin{keywords}
universality, logistic regression, high dimensions, CGMT, binary classification, block dependence, $m$-dependence, mixing, proportional asymptotics
\end{keywords}

\section{Introduction} \label{sec:intro}

Over the past decade, landmark results such as Gaussian universality and the convex Gaussian min-max theorem (CGMT) have been extended and applied to analyze the asymptotic risk of various high-dimensional feature models. They have led to a deeper understanding of matters such as the impact of regularization and hyperparameters on the risk \citep{salehi2019impact, deng2022model} and the double descent phenomenon \citep{mei2022generalization,hastie2022surprises, belkin2019double}.

Broadly speaking, Gaussian universality is the observation that the risk of many high dimensional estimators depends on the data distribution only through its first two moments \citep{montanari2022universality,montanari2023universality,dandi2024universality,gerace2022gaussian,korada2011applications, han2023universality, hu2022universality}. Consequently, for these estimators, their risks can be studied by analyzing the risk for Gaussian data with matching mean and variance. This unlocks the many useful tools developed for the Gaussian case, including Approximate Message Passing \citep{donoho2009message}, the Cavity Method \citep{opper2001naive} and the CGMT \citep{gordon1985some,thrampoulidis2014gaussian}. Among them, the CGMT is a framework that converts a complex optimization problem on Gaussian data to a much more analytically tractable auxiliary problem. The auxiliary optimization is often further simplified into a deterministic equation involving only a few scalars, and under the CGMT, its solution completely characterizes that of the original problem. 

A general pipeline of analysis built on universality and the CGMT entails the following:
\begin{enumerate}[topsep=0.2em, parsep=0em, partopsep=0em, itemsep=0.1em, leftmargin=2em]
    \item[(i)] Consider a high-dimensional model, such as generalized linear regression or random feature models, with data following some pre-specified distribution;
    \item[(ii)] Equate our estimation problem to that of the same model on Gaussian data via universality;
    \item[(iii)] Simplify the Gaussian optimization problem via the CGMT into a format that can be more readily solved, either analytically or computationally.
\end{enumerate}
One substantial limitation of existing Gaussian universality and CGMT analyses is that the data must consist of independent—and often also identically distributed—vectors, which is not realistic for many applications. Several forms of dependence are commonly observed in practice:
%\textcolor{red}{can we write m dependence instead of local?}
\begin{itemize}[topsep=0.2em, parsep=0em, partopsep=0em, itemsep=0.1em, leftmargin=2em]
    \item \emph{Block dependence. } An important example of dependence in machine learning is found in data augmentation\footnote{The definition of data augmentation in machine learning differs from its use in statistics. In the latter, data augmentation often refers to the introduction of latent variables to the model, e.g.~in the EM algorithm.}, a technique that synthetically expands a training dataset by applying random transformations to existing data and incorporating the transformed data back into the dataset \citep{taqi2018impact,shorten2021text,volkova2024DA}. In machine learning practice, data augmentation has become one of the most widely adopted methods, especially in the presence of invariance (e.g. symmetries) or an underlying structure (e.g sparsity) \citep{lyle2020benefits}. Theoretically, however, the dependence arising from multiple transformed copies of the same observation makes the effect of data augmentation challenging to analyze. %\khhedit{This presents a form of block dependence, as the dependence is contained within blocks of the transformed data;}\textcolor{red}{what do you mean by it presents}
    \item \emph{$m$-dependence.} Another common form of dependence manifests through a finite dependency neighborhood: Under spatial moving average models \citep{cressie1993statistics}, the observation at a given point is dependent on a local neighborhood of observations but no further. Similar examples are ubiquitous in time series, graph and spatial analysis \citep{cryer1986time,brock1992simple,schweinberger2015local,wackernagel2003multivariate};
    \item {\emph{$\beta$-mixing. } Data can also depend on infinitely many variables, with strong short-range dependence and decaying long-range dependence. This is typically described by mixing conditions \citep{billingsley2017probability,bradley2005basic}, and is also found in many common time series and spatial models \citep{deo1973note,tuan1985some,tsay2005analysis,gelfand2010handbook}.}
     
\end{itemize}
% In this paper, we, for the first time, extend
This paper, for the first time, extends
the Gaussian universality principle beyond the independence assumption to encompass dependent vectors $(X_i)$ in the context of high-dimensional logistic regression. {Universality results are provided for block dependence, $m$-dependence, as well as specific $\beta$-mixing processes.}
% Furthermore, 
Moreover, 
we develop a novel CGMT framework, 
% that accommodates 
accommodating
dependence both between covariates and observations under a certain ``low-rank" 
assumption. Leveraging these two new {tools}, %results, 
we precisely characterize the impact of data augmentation on the risk. We notably investigate the effectiveness of data augmentation when the invariance or structure of the problem is only partially known, as is often the case in practice \citep{benton2020learning,yang2023generative}.

\begin{figure}[t]
    \centering 
    \begin{tikzpicture}

        \node[inner sep=0pt] at (-3.8,0){\includegraphics[trim={.5em .5em 0em .5em},clip,width=.5\linewidth]{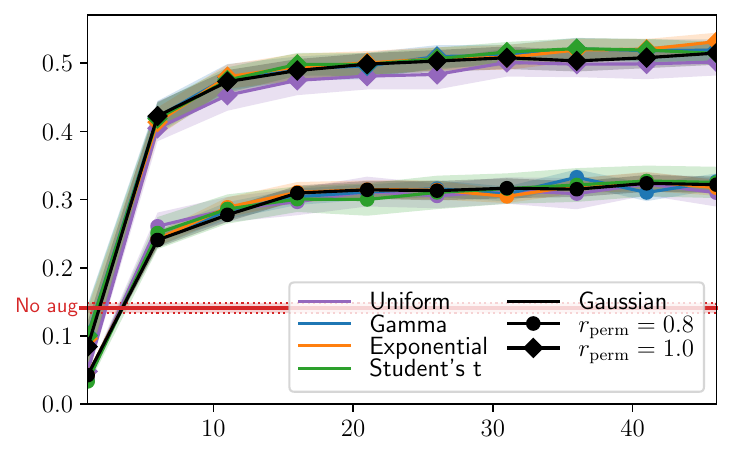}};
        \node[inner sep=0pt] at (3.8,0){\includegraphics[trim={.5em .5em .5em .5em},clip,width=.5\linewidth]{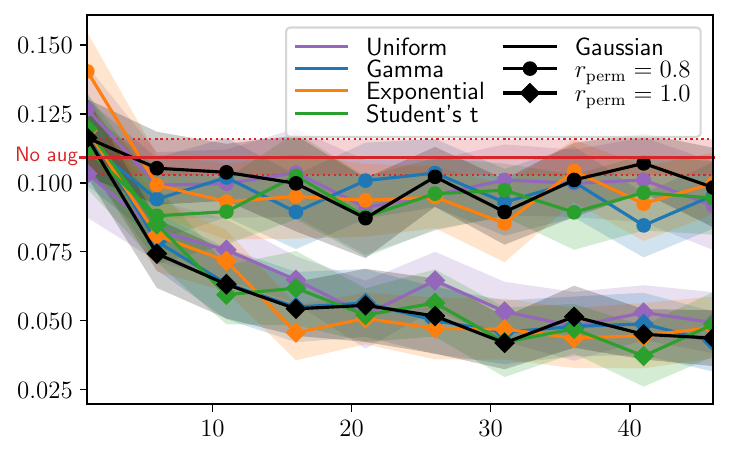}};

        \node[inner sep=0pt] at (-3.2, 2.5) {\scriptsize Training risk (cross-entropy)};
        \node[inner sep=0pt] at (4.4, 2.5) {\scriptsize Test risk (excess 0-1 loss relative to $\beta^*$)};

        \node[inner sep=0pt] at (-3.4, -2.45) {\scriptsize number of augmentations $k$};
        \node[inner sep=0pt] at (4.2, -2.45) {\scriptsize number of augmentations $k$};
    \end{tikzpicture}
    \vspace{-2em}
    \centering
    \caption{Universality of risks of a logistic regressor, trained with different number and amount of random permutations. See \Cref{sec:DA} and \Cref{appendix:simulation:details} for the detailed setup.}
    \label{fig:perm}
    \vspace{-1.5em}
\end{figure}

\subsection{Model Overview}\label{modeloverview}

We observe high-dimensional data $(X_i, y_i)_{i=1}^n$ with covariates $X_i \in \R^p$ and labels $y_i \equiv y_i(X_i) \in \{0, 1\}$. We consider the \textit{proportional regime}, where the signal dimension $p$ grows linearly with the sample size $n$. {In our main universality result (\Cref{sec:universality}), which is used for analyzing data augmentation, } the data $\bX \coloneqq (X_i)_{i \leq n}$
--- 
% , which are 
not assumed to be identically distributed
--- are block dependent: \vspace{-.5em}
\begin{align}
    (X_i, y_i) \indep (X_j, y_j)~ \text{ if }~ j \notin\mathcal B_i := \left\{k\lfloor \tfrac{i-1}{k}\rfloor + 1, \ldots, k\lfloor \tfrac{i-1}{k}\rfloor + k\right\}.
    \label{eq:blockdepdef}
\end{align}
We also consider data that satisfy $m$-dependence and $\beta$-mixing; see \Cref{sec:universality:extend} for the precise definitions. To relate the labels to their covariates, we assume there is a true signal $\beta^* \in \R^p$ such~that 
\begin{align}
    \P\l(y_i = 1\mid \bX\r) = \sigma\big(X_i^\intercal\beta^*\big),\quad\quad \sigma(t) := (1 + e^{-t})\inv.\label{eq:datageneration}
\end{align}
The signal is estimated via  a penalized and weighted logistic regression:  
\begin{align}
    \hat\beta(\bX) &\coloneqq \textstyle\argmin_{\beta\in \R^p} \mfrac1n\msum_{i=1}^n\omega_i\l(\log(1+e^{X_i^\intercal\beta}) - y_iX_i^\intercal\beta\r) + \mfrac{\lambda}{2n}\|\beta\|^2,\label{eq:betahat}
\end{align}
where $(\omega_i) \in [0,1]^{\mathbb{N}}$ are deterministic weights.
% , and $\mathcal S_p$ is a convex subset of $\mathbb{R}^p$ to be defined in \Cref{defs_assumps}.
If 
% When 
the weights are all set as $1$, \eqref{eq:betahat} recovers the traditional penalized logistic regression. More generally, $\omega_i$'s can be chosen to be different to accommodate potential heterogeneity such as heteroskedasticity (e.g., \cite{shalizi2013advanced}), or unique cases like data augmentation (\Cref{sec:DA}). Examples of setups that can be handled by this model include:
\begin{itemize}[topsep=0.2em, parsep=0em, partopsep=0em, itemsep=0.4em, leftmargin=2em]
    \item \emph{
        % Block-dependent 
   {Dependent } $(X_i)$ and conditionally independent $(y_i)$:} 
    % Block dependent 
    {Dependent } covariates commonly arise in various applications. For instance, in biological experiments on mice, the littermate effect introduces dependence between the behaviors
    % responses  % not to be confused with response variables
    of mice from the same litter \citep{haseman1979analysis}. Similarly, local dependence is prevalent in genomic data \citep{yu2017bio}. In those settings, while the covariates are dependent, each response variable $y_i$ has direct dependence only on $X_i$ and not on the other covariates.  % emphasized direct to distinguish from the next
    \item \emph{
        % Block dependent 
       {Block Dependent } $(X_i)$ and $(y_i)$}\label{remark:dependence}. In many other practical settings, the response variable $y_i$ can depend on 
        % the entire 
        a set of multiple covariates $\{X_j : j \in \mathcal B_i\}$. 
        % A common example
    % occurrence of this is the presence of 
    % is when repeated measurements are present 
    % in the dataset. 
    In ICU settings, for example, predicting 24-hour mortality is improved 
    by incorporating past data on the same patient
    % when past data regarding the same patient is taken into account 
    \citep{plate2019incorporating}. To model these setups,
    %  more formally,
      we can assume that there exists a matrix $A \in \R^{n\times n}$ that models the dependencies between observations, so that $y_i$ depends on the linear combination $Z_i \coloneqq \sum_{j\in\mathcal B_i}a_{i,j}X_j$  of predictors in the same block \citep{wu1979use}. 
    %   We can also define $Z_i \coloneqq \sum_{j\in\mathcal B_i}a_{i,j}X_j$ to observe that 
    In this case,
       $(Z_i,y_i)$ is 
    %   now 
      still a block dependent process that 
    %   will 
      satisfies all the assumptions of our setup.
    \item \emph{Data Augmentation}. If the original dataset $(Z_i)$ are independent, then the augmented data will exhibit block dependence within each set of augmented copies of $X_i$, {whereas the response $y_i$ typically depends only on the original variable $Z_i$}; more details in \Cref{sec:DA}.
\end{itemize}

\begin{remark} \label{remark:model:DA}
In \eqref{eq:betahat}, the logistic regression is performed on the same variables that $y_i$ depends on. \\
\noindent
We have chosen this presentation for simplicity. \Cref{appendix:general:model} includes a more general model, which allows for the label to depend on the entire block $\{X_j : j\in\mathcal B_i\}$ while $X_i$ is regressed only on a subset of those observations. This hence allows for the regression to be misspecified. Data augmentation, for example, implicitly assumes that the label of the transformed data depends only on the untransformed data, which makes this generalization necessary.
\end{remark}

\noindent
Detailed assumptions on our data-generating process and model are presented in \Cref{defs_assumps,sec:universality}. 

\vspace{-.5em}

\subsection{Summary of Results}

The main contributions of our paper are as follows:

\begin{enumerate}[topsep=0.2em, parsep=0em, partopsep=0em, itemsep=0.4em, leftmargin=2em]
    \item[(i)] \textit{Universality}.  {Under mild conditions, we prove a set of dependent Gaussian universality results for the training and test risks, which address block dependence (\Cref{mainthm} in \Cref{sec:universality}), $m$-dependence, and specific $\beta$-mixing processes (\Cref{mainthm:extend} in \Cref{sec:universality:extend}). } To the best of our knowledge, this constitutes the first results demonstrating that universality holds in the proportional regime for estimators trained with dependent observations. A key consequence is that if the data is uncorrelated, even if dependent, the asymptotic risk is the same as in the independent setting. Hence previously derived results for logistic regression still hold (see \Cref{related_work}). To tackle the case where the data is correlated, we propose a novel CGMT result. 
    \item[(ii)] \textit{CGMT}. We introduce a novel extension of the CGMT for Gaussian matrices with a ``low-rank" dependence structure (\Cref{thm:CGMT_2}) in \Cref{sec:cgmt}. In particular, this result accommodates dependence across both columns and rows. This significantly broadens the applicability of the CGMT approach, which, until now, required either the rows or the columns to be independent.  
    \item[(iii)] \textit{Data Augmentation}. Using our universality result and the dependent CGMT, we exactly characterize the asymptotic risks of logistic regression under different forms of data augmentation, such as random permutations when the covariates are partially exchangeable and sign flipping when $\beta^*$ is sparse. We observe that when the structure of the problem is fully known, data augmentation significantly decreases the test risk. However, when it is only partially known, the effect of data augmentation can be negligible. See \Cref{sec:DA}.
\end{enumerate}
The remainder of the paper consists of a literature overview in \Cref{related_work}, an overview of proof techniques in \Cref{proof_overview}, and a discussion of future directions in \Cref{conclusion}.

\section{Definitions }\label{defs_assumps}

In this section, we define various quantities that will be used throughout the paper.
We first define the empirical risk of an estimator as \\[-1.5em]
\begin{align*}
    \hat R_n(\beta; \bX) \;\coloneqq\; \mfrac1n\msum_{i=1}^n\omega_i\l(\log(1+e^{X_i^\intercal\beta}) - y_iX_i^\intercal\beta\r) + \frac{\lambda}{2n}\|\beta\|^2.
\end{align*} \\[-1.5em]
The performance of our estimator is then evaluated on a new observation $X_{\rm new}$, which we do not assume to have the same observation as any of the training points. The test risk is hence defined as  \\[-1.5em]
\begin{align*}
    R_{\text{test}}(\hat \beta(\bX)) \;\coloneqq\; 
    \mean\big[  
   \ell_{\rm test}\big( X_{\text{new}}^\intercal \hat \beta(\bX), X_{\text{new}}^\intercal \beta^* \big)
        \,\big|\, 
    \hat \beta(\bX) \big]
    \;,
\end{align*} \\[-1.5em]
where the expectation is taken over the mean-zero random vector $X_{\text{new}}$ that is independent of the trained estimator $\hat \beta = \hat\beta(\bX)$, 
and where $\ell_{\rm test}$ is a generic locally Lipschitz function. 
For our simulations, $\ell_{\rm test}$ will be the 0-1 loss, for which we also verify our results (see \Cref{proof_second_eq}). To compare the distribution of training risk on Gaussian and non-Gaussian data, we use the metric given by  \\[-1.5em]
\begin{align*}
    d_{\cH}(X, Y) \;\coloneqq\; \msup_{h\in\mathcal H}\E\l[h(X) - h(Y)\r],
\end{align*} \\[-1.5em]
where $\mathcal H$ is the set of differentiable functions $h$ with Lipschitz derivative satisfying $\|h\|_\infty, \|h'\|_\infty \leq 1$; see \cite{montanari2022universality} for why this distance metrizes convergence in distribution.

We shall establish universality with respect to the Gaussian surrogates $G_i \sim \mathcal N\l(0, \Var(X_i)\r)$, where each block $(G_j)_{j\in\mathcal B_i}$ is jointly normal. The corresponding dataset $\l(G_i, y_i(G_i)\r)_{i=1}^n$ satisfies the same assumptions as $\l(X_i, y_i(X_i)\r)_{i=1}^n$ in \Cref{sec:universality}. We also write the Gaussian counterpart of the test risk $R_{\rm test}$ as $R^G_{\rm test}(\hat\beta(\bG)) \coloneqq \mean\big[  
   \ell_{\rm test}\big( G_{\text{new}}^\intercal \hat \beta(\bG), G_{\text{new}}^\intercal \beta^* \big)
        \,|\, 
    \hat \beta(\bG) \big]$,
where $G_{\text{new}} \sim \cN\l(0, \Sigma_{\text{new}}\r)$ for $\Sigma_{\text{new}} \coloneqq \Var(X_{\text{new}})$ is a substitute for $X_{\rm new}$.

In our proofs, we will restrict our minimization problem to a particular set of the form
\vspace{-.3em}
\begin{align}
    \mathcal S_p := \Big\{\beta \in \R^p : \|\beta\|_2 \leq \textsf L\hspace{-2pt}\sqrt p,\ \|\beta\|_\infty \leq \textsf Lp^{\frac{1-r}{2}}\Big\}\label{eq:sp}
\end{align} \\[-1.2em]
for fixed constants $\textsf L > 0$ and $r \in (0, \tfrac18)$. This can be viewed as the set of parameter vectors $\beta$ which cannot align too strongly with a particular direction to ensure pointwise normality, and is widely used in proving universality results (e.g., \cite{lahiry2023universality, han2023universality, montanari2022universality}). This restriction becomes equivalent to the unconstrained minimization when one proves that $\hat\beta \in \mathcal S_p$ with high probability, which can be done on a case-by-case basis. See Appendix \ref{the_discussion} for a more detailed discussion on the set $\mathcal S_p$.

\section{Universality of the Risks {Under Block Dependence}} \label{sec:universality}
We first state the various assumptions we place on our data generating process and the model. We postpone the discussion of those assumptions to \Cref{sec:discuss:assumptions} after the result is stated.

\begin{assumption}[Block-dependence]\label{blockdepass}  
There exists $k \geq 1$ such that $(X_i, y_i)$ is independent of $(X_j, y_j)$ whenever $j \notin\mathcal B_i = \left\{k\lfloor \tfrac{i-1}{k}\rfloor + 1, \ldots, k\lfloor \tfrac{i-1}{k}\rfloor + k\right\}$.
\end{assumption}

\begin{assumption}[Logistic Model]\label{logmodel}
The labels are generated as $y_i(X_i) = \mathbb I\big(X_i^\intercal\beta^* -\varepsilon_i > 0\big)$, where each $\varepsilon_i \sim \text{Logistic}(0, 1)$.%, or equivalently as in \eqref{eq:datageneration}. %block-dep epsilon_i's?
\end{assumption}

\begin{assumption}[Scaling \& Sub-Gaussianity]\label{scalinggg}
$\E[X_{i}] =0$ and $\E[ X_i X_i^\intercal ] = \Sigma_i$. Moreover, each $X_i$ is sub-Gaussian, and there exists $\textnormal{\textsf K}_X > 0$ such that $\sup_{1\leq i\leq n}\|X_i\|_{\psi_2} \leq \textnormal{\textsf{K}}_X/\sqrt n$, where $\| \argdot \|_{\psi_2}$ denotes the sub-Gaussian norm.
\end{assumption}

\begin{assumption}[Signal Size]\label{signalsizinghehe}
$\beta^* \in \mathcal S_p$ as in \eqref{eq:sp}, and there exists $\kappa\in(0, \infty)$ such that \label{ts}$\frac{p}{n} = \frac{p(n)}{n}\to\kappa$.
\end{assumption}

\begin{assumption}[Gaussian Approximation]\label{gauss_approx} For the hypersphere $\mathcal S^{k-1} := \{\bm x \in \R^k : \|\bm x\|_2 = 1\}$,
\begin{align*}
   \sup_{f\in \mathcal{F}}
   \;\, \sup_{\substack{\beta_1, \hdots, \beta_k  \in\mathcal S_p}}
   \;\, \sup_{\theta\in\mathcal{S}^{k-1}\,,\,i \leq n-k}
   \;\,
   \Big| 
   \E\Big[
    f \Big(\msum_{r=1}^k\theta_r X_{i+r}^\intercal\beta_r\Big) - f\Big(\msum_{r=1}^{k}\theta_r G_{i+r}^\intercal\beta_r
    \Big)\Big] \Big| \;\rightarrow\; 0 \;,
\end{align*}  
where $\mathcal{F} \coloneqq\{f : \R \to \R\mid f \in \mathcal C_1, \|f\|_{\infty} < \infty, \|\partial f\|_{\infty} \leq 1 \}$.
\end{assumption}

% \begin{remark}
% Note that, while the first few assumptions are clear in most scenarios, \Cref{gauss_approx} is to be proven on a case-by-case basis, and in \Cref{check:augmentation} we prove that this assumption holds for a range of common data augmentation setups. 
% %We may also note that while \reff{blockdepass}{Assumption} is for block dependence, we extend our results to $m$-dependence in \reff{appendix:additional}{Appendix}.
% \end{remark}

Assumptions \ref{blockdepass}-\ref{gauss_approx} are used to establish universality of the training risk. For universality of the test risk, we require two additional assumptions: one on the distribution of $X_{\text{new}}$, and one on the geometry of the training risk. Below, we denote $\tilde{\mathcal{F}} \coloneqq \{f : \R^2 \to \R \mid f \in \cC_1  , \|f\|_{\infty} < \infty, \|\partial f\|_{\infty}\le 1\}.$

\begin{assumption}[Gaussian Approximation of $X_{\text{new}}$]\label{courage} We have
\begin{align*}
    \msup_{f\in \mathcal{\tilde F}} \; \msup_{\substack{\beta \in\mathcal S_p}} \; \big|& \E\big[f\big( X_{\rm{new}}^T\beta, X_{\rm{new}}^T\beta^* \big)- f\big( G_{\rm{new}}^T\beta, G_{\rm{new}}^T\beta^*\big) \big]\big| \;\rightarrow\; 0 \;.
\end{align*}
\end{assumption}%Note that this assumption holds under moderate conditions on $X_{\text{new}}$, such as local dependence.

\begin{assumption}  \label{assumption:test:risk} There exist constants $\bar \chi, \chi_* > 0$ such that for every fixed $\epsilon > 0$,
\begin{align*} 
    \P \, \bigg(    
    \min_{\beta \in \cS_p \,,\, | (\beta^\intercal \Sigma_{\text{new}} \beta)^{1/2} \,-\, \bar \chi | > \epsilon}\hspace{-7pt} \hat R_n(\beta; \bG)
    \;>\;  
    \min_{\beta \in \cS_p} \hat R_n(\beta; \bG)
    \bigg) 
    \;\rightarrow\; 
    1
    \qquad \text{ and } \qquad
    {\beta^{*}}^\intercal \Sigma_{\text{new}} \beta^{*}\;\rightarrow\; \chi_*^2
    \;.
\end{align*}\end{assumption}

Under these assumptions, the following theorem holds:

\begin{theorem}[Block Dependent Universality]\label{mainthm} Let $\l(X_i, y_i(X_i)\r)_{i=1}^n$ and $\l(G_i, y_i(G_i)\r)_{i=1}^n$ be generated under Assumptions \ref{blockdepass}-\ref{gauss_approx}, where each $G_i \sim \mathcal N\l(0, \Var(X_i)\r)$. Then 
\begin{align}
    d_{\mathcal H} \big(\textstyle\min_\beta \hat R_n(\beta; \bX), \textstyle\min_\beta \hat R_n(\beta; \bG)\big) \to 0.\label{eq:main_first}
\end{align}
Moreover, if Assumptions \ref{courage} and \ref{assumption:test:risk} also hold, then
\begin{align}
      | \, R_{\rm test}(\hat \beta(\bX)) - R^G_{\rm test}(\hat \beta(\bG)) \, |
    \;\xrightarrow{\P}\; 
    0
    \;.\label{eq:main_second}
\end{align}
\end{theorem}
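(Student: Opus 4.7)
Both \eqref{eq:main_first} and \eqref{eq:main_second} should follow from a common block-Lindeberg interpolation. Introduce the hybrid process $\bW^{(j)}$ whose first $j$ blocks are the Gaussian surrogates and whose remaining blocks coincide with $\bX$, so that $\bW^{(0)}=\bX$ and $\bW^{(n/k)}=\bG$. Writing $\phi_n(\bW):=\min_{\beta\in\cS_p}\hat R_n(\beta;\bW)$, the training-risk statement reduces by a telescoping argument to controlling each single-block swap
\begin{align*}
    \Delta_j\;:=\;\bigl|\E h(\phi_n(\bW^{(j-1)}))-\E h(\phi_n(\bW^{(j)}))\bigr|
\end{align*}
by $o(k/n)$, and then summing the $n/k$ contributions. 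For the test-risk statement, the same machinery is applied to a perturbed objective that probes the only two functionals of $\hat\beta$ on which the Gaussian conditional test risk depends.

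\textbf{Training-risk universality.} For each $j$ I would introduce the leave-block-out estimator $\tilde\beta_j$, which minimizes the objective with the summands over $i\in B_j$ removed. Strong convexity from the $\ell^2$-penalty gives $\|\hat\beta(\bW^{(\cdot)})-\tilde\beta_j\|_2 = O_{\P}(k/n)$, and an envelope-theorem expansion around $\tilde\beta_j$ yields
\begin{align*}
    \phi_n(\bW^{(j-1)})-\phi_n(\bW^{(j)})
    \;=\;\mfrac1n\msum_{i\in B_j}\omega_i\bigl[L_i(\tilde\beta_j;X_i)-L_i(\tilde\beta_j;G_i)\bigr]+O_{\P}\bigl((k/n)^2\bigr).
\end{align*}
Taylor-expanding $h$ to first order (using $\|h'\|_\infty\le 1$ and Lipschitz $h'$), $\Delta_j$ reduces to the expectation of the displayed first-order term multiplied by $h'(\tilde\phi_j)$, where $\tilde\phi_j$ is measurable with respect to everything outside block $B_j$. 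This is the critical use of Assumption~\ref{blockdepass}: $(X_i)_{i\in B_j}$ is independent of $(\tilde\beta_j,\tilde\phi_j)$, and so is $(G_i)_{i\in B_j}$ by construction. Conditioning on $(\tilde\beta_j,\tilde\phi_j)$ and integrating out $\varepsilon_i$ first via Assumption~\ref{logmodel} reduces the per-$i$ contribution to a bounded Lipschitz function of the pair $(X_i^\intercal\tilde\beta_j,X_i^\intercal\beta^*)$. Assumption~\ref{gauss_approx} applied with $\beta_r\in\{\tilde\beta_j,\beta^*\}$, together with Cramér-Wold, delivers the required $o(1)$-Gaussian-approximation of all $2k$ joint linear functionals uniformly over $\tilde\beta_j\in\cS_p$; hence $\Delta_j=o(k/n)$ and summing gives \eqref{eq:main_first}.

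\textbf{Test-risk universality.} To transfer universality from the minimum value to functionals of the minimizer, I would perturb the objective to $\hat R_n^{\tau}(\beta;\bW):=\hat R_n(\beta;\bW)+\tau q(\beta)$ with $q$ chosen successively as $\beta\mapsto\beta^\intercal\Sigma_{\rm new}\beta$ and $\beta\mapsto\beta^\intercal\Sigma_{\rm new}\beta^*$. The block-Lindeberg argument above applies verbatim to $\hat R_n^{\tau}$ because the added term is deterministic in $\beta$, and convex duality identifies $\partial_\tau|_{\tau=0}\min_\beta\hat R_n^{\tau}=q(\hat\beta)$ via the envelope theorem. A standard small-perturbation lemma converts universality of $\min_\beta\hat R_n^{\tau}$ on a neighborhood of $\tau=0$ into in-probability universality of each $q(\hat\beta)$. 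Combined with Assumption~\ref{assumption:test:risk}, this yields $\hat\beta(\bX)^\intercal\Sigma_{\rm new}\hat\beta(\bX)\xrightarrow{\P}\bar\chi^2$ and matches $\hat\beta(\bX)^\intercal\Sigma_{\rm new}\beta^*$ with its Gaussian counterpart. Localizing on the slab $|\hat\beta^\intercal\Sigma_{\rm new}\hat\beta-\bar\chi^2|<\epsilon$, on which $\ell_{\rm test}$ can be truncated to a bounded Lipschitz function in the class $\tilde{\mathcal F}$, Assumption~\ref{courage} gives $\E[\ell_{\rm test}(X_{\rm new}^\intercal\hat\beta,X_{\rm new}^\intercal\beta^*)\mid\hat\beta]=\E[\ell_{\rm test}(G_{\rm new}^\intercal\hat\beta,G_{\rm new}^\intercal\beta^*)\mid\hat\beta]+o_{\P}(1)$, and the Gaussian conditional expectation is a continuous function of precisely the pair $(\hat\beta^\intercal\Sigma_{\rm new}\hat\beta,\hat\beta^\intercal\Sigma_{\rm new}\beta^*)$. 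Combining yields \eqref{eq:main_second}.

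\textbf{Main obstacle.} The delicate step is making the envelope-theorem expansion rigorous under the constraint $\beta\in\cS_p$ and uniformly over the block-dependence structure: one needs a high-probability guarantee that $\hat\beta$ and every $\tilde\beta_j$ lie in the interior of $\cS_p$ (derived from Assumption~\ref{scalinggg} and a block-dependent sub-Gaussian deviation bound), together with a uniform $O((k/n)^2)$ bound on the quadratic remainder, which does not follow from naive strong convexity because the block's Hessian contribution is of the same order $k/n$ as the penalty. For the test-risk half, the further subtlety is that the perturbation-in-$\tau$ argument demands the universality error to be uniform in $\tau$ on a neighborhood of $0$ so one may interchange the $o(1)$ bound with the derivative — typically handled by exploiting convexity and monotonicity of $\tau\mapsto\min_\beta\hat R_n^\tau$, together with an additional localization of $\hat\beta$ away from the boundary of $\cS_p$ on this neighborhood.
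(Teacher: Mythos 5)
Your training-risk argument has a genuine gap at the envelope-theorem step, and you have in fact put your finger on it yourself in the ``Main obstacle'' paragraph without resolving it. The expansion $\phi_n(\bW^{(j-1)})-\phi_n(\bW^{(j)})=\frac1n\sum_{i\in B_j}\omega_i[L_i(\tilde\beta_j;X_i)-L_i(\tilde\beta_j;G_i)]+O_\P((k/n)^2)$ requires the minimizer to be stable under removal of a block, i.e.\ $\|\hat\beta-\tilde\beta_j\|=O_\P(k/n)$. But the objective's strong-convexity modulus is only $\lambda/n$ (the penalty is $\frac{\lambda}{2n}\|\beta\|^2$ and the logistic term contributes no uniform curvature), while dropping a block perturbs the gradient by a vector of norm $\Theta(k/n)$ (each $X_i^\intercal\beta=O_\P(1)$ on $\cS_p$ under \Cref{scalinggg}); the minimizer can therefore move by $\Theta(k/\lambda)=\Theta(1)$ in the proportional regime, and the quadratic remainder is not $O((k/n)^2)$. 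This is precisely why the paper never linearizes around a leave-block-out minimizer: it first smooths the labels, replaces $\min_\beta$ by a soft-min $f_\delta(\alpha,\cdot)$ (a log-sum-exp over a $\delta\sqrt p$-net), and interpolates via $\bU^t=\sin(t)\bX+\cos(t)\bG$, so that $\partial_t h(f_\delta(\bU^t))$ can be computed \emph{exactly} as a Gibbs average $\langle\cdot\rangle$ over the net. The leave-block-out device is then applied only to the \emph{value} of the smoothed functional (whose stability to removing a block is a genuine $O(k/n)$ bound, \Cref{lindy}), and the residual dependence of the Gibbs weights on the block is handled by a polynomial approximation of $x\mapsto 1/x$ (\Cref{poly_approx}) before invoking \Cref{gauss_approx}. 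Your conditioning-on-the-block idea and the reduction to joint normality of $(X_i^\intercal\beta_r)_{i\in B_j}$ are the right ingredients, but they must be deployed on the smoothed functional, not on the raw minimizer.

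For the test risk your perturbation-in-$\tau$ scheme is a different and heavier route than needed. \Cref{assumption:test:risk} is an \emph{assumption}, not something to be derived: it asserts that the Gaussian optimizer concentrates on $\{|(\beta^\intercal\Sigma_{\rm new}\beta)^{1/2}-\bar\chi|\le\epsilon\}$, and the paper obtains the same concentration for $\hat\beta(\bX)$ simply by applying training-risk universality to the restricted minimization over $\cS_p\setminus\cA_p$ (the generalized \Cref{mainthm_2} over arbitrary $\tilde{\cS}\subseteq\cS_p$) and comparing the two minima. No envelope derivative in $\tau$, and none of the uniformity-in-$\tau$ issues you flag, are required. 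Your observation that the conditional Gaussian test risk also depends on the cross term $\hat\beta^\intercal\Sigma_{\rm new}\beta^*$ is fair, but that quantity is pinned down by the second half of \Cref{assumption:test:risk} together with \Cref{courage}, again without any perturbation argument.
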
 

The proof of \Cref{mainthm} is deferred to \Cref{app_b} and \ref{proof_second_eq}, with a proof sketch given in \Cref{proof_overview}. This result allows us to better understand the properties of the risk—notably, we observe that it only depends on the distribution of the data through its first two moments. Consequently, the dependence among the observations influences the risk only via the covariance, rather than through a more intricate relationship. In other words, even if the data exhibits dependence, as long as it is uncorrelated, the asymptotic behavior of the risk is the same as the independent case,
% remains unchanged
and allows it to be analyzed using the existing extensive literature. In scenarios where the data is not uncorrelated, the risk of $\hat{\beta}(\bX)$ still simplifies to the risk of $\hat{\beta}(\bG)$, and such cases can be studied using our novel dependent CGMT approach (see \Cref{sec:cgmt}), as long as a ``low rank" dependence assumption holds. %This result allows % the covariance strucThe universality of the training \textcolor{red}{add some commentary: Universality means that despite dependce the asymptotic risk of this high dimensional quantity only depends on the covariance structure (this is how we will see the dependence) + add commentary about risk being able to be reduced to }

\subsection{Discussion of Assumptions}\label{sec:discuss:assumptions}

Assumptions \ref{logmodel}–\ref{signalsizinghehe} are
% constitute 
standard in high-dimensional settings. Assumptions \ref{gauss_approx}–\ref{assumption:test:risk} are mirror conditions required to establish universality of the risk in the independent case 
% setting 
(e.g.~\citet{montanari2022universality,han2023universality}). In particular, our \Cref{gauss_approx} is closely related to Assumption 5 of \cite{montanari2022universality}. However, ours is slightly stronger, 
% than theirs, 
as it requires the joint convergence of $(X_{i_1}^T\beta_1, \dots, X_{i_k}^T\beta_k)$ to a Gaussian limit for all $\beta_1, \dots, \beta_k \in \mathcal{S}_p$.
% , which notably 
This is a direct consequence of \Cref{blockdepass}, which relaxes the independence assumption to block dependence. It can hence be seen as a multivariate version of the pointwise normality assumption in \citet{montanari2022universality}.

To establish the Gaussian universality of the testing risk, in addition to the training risk, it is necessary to introduce further assumptions, specifically Assumptions \ref{courage} and \ref{assumption:test:risk}. \Cref{courage} closely resembles \Cref{gauss_approx}, but applies to $X_{\rm new}$ rather than our original data. Note that we did not require $X_{\rm new}$ to share the same distribution as any of the $(X_i)$. 
% On the other hand, 
\Cref{assumption:test:risk} is a stronger condition:
informally, it states that in the Gaussian case, the optimizer should be concentrated on a small subset of $\mathcal{S}_p$. However, since this assumption pertains to the Gaussian data rather than $\bX$, it can be proven via our 
% novel 
dependent CGMT framework, provided that \Cref{assumption:CGMT:low:rank} is satisfied. In the independent setting this is notably established in \citet[Eq.~92]{salehi2019impact}, \citet[Eq.~74]{dhifallah2021inherent} and \citet[Eq.~B.11]{thrampoulidis2016recovering}. \cite{montanari2022universality} does not impose such a condition, but instead studied a modified notion of the test risk. More formally \cite{montanari2022universality} proved the universality of $\min_{\beta\in \mathcal{\tilde S}(\bX)} R_{\rm test}(\beta)$ where $\mathcal{\tilde S}(\bX)\subset\mathcal{S}_p$ is a subset defined using the empirical risk (see Theorem 2 of \cite{montanari2022universality}). 

\section{Extension to $m$-dependent and Specific $\beta$-Mixing Processes} \label{sec:universality:extend}

In this section, we show that the universality result of \Cref{mainthm} also extends to data with $m$-dependence and specific $\beta$-mixing processes. In the definition of mixing, we shall temporarily make the dimension dependence explicit in the data $(X_i, y_i) \equiv (X^{(p)}_i, y^{(p)}_i)$ and $\bX \equiv \bX^{(p)}$. Following \cite{bradley2005basic}, for every $p \in \N$, we define the $\beta$-mixing coefficient of $(X^{(p)}_i, y^{(p)}_i)_{i \in \N}$ as 
%\textcolor{red}{I can kinda of just wrote everything to be $\beta(m)$ and not wrote the dependence over $X^{(p)}$. can we just take the supremum? I am tired and i think it s fine for the moment }
\begin{align*}
    \beta_{\rm mix}(N) \;\coloneqq\; \sup_{p \in \N} \sup_{t \in \N} \,  \sup_{\cA \in \cP^{(p)}_{\leq t}, \, \cB \in \cP^{(p)}_{\geq t+N}  } \, \mfrac{1}{2} \msum_{A \in \cA} \msum_{B \in \cB} \big| \P( A \cap B ) - \P( A) \P(B) \big|  \,,
\end{align*}
where $\cP^{(p)}_{ \leq t}$ is the set of all finite partitions of $\sigma( (X^{(p)}_i, y^{(p)}_i) \,|\, i \leq t)$ and $\cP^{(p)}_{ \geq t + N}$ is the set of all finite partitions of $\sigma( (X^{(p)}_i, y^{(p)}_i) \,|\, i \geq t+N)$. We use this definition to state our $\beta$-mixing requirement for the triangular array $(X^{(p)}_i, y^{(p)}_i)_{i,p \in \N}$ below in \Cref{general} $(ii)$.

% say that the triangular array of data $(X^{(p)}_i, y^{(p)}_i)_{i,p \in \N}$ is $\beta$-mixing if there exists some $r \in (0,1)$ such that $\sup_{p \in \N} \sum_{l=1}^N \beta_N(\bX^{(p)})^r \rightarrow 0$ as $N \rightarrow \infty$. 

 \begin{assumption}[$m$-dependence or specific $\beta$-mixing processes]\label{general}  
One of the following holds:
\begin{enumerate}
    \item[(i)] $(X_i, y_i)$ is independent of $(X_j, y_j)$ whenever $|i-j| > m$; 
    \item[(ii)] There exists some $r \in (0,1)$ such that $\sum_{l=1}^\infty \beta_{\rm mix}(l)^r < \infty$.
    % $(X^{(p)}_i, y^{(p)}_i)_{i,p \in \N}$ is $\beta$-mixing. 
    Moreover, there exists a process $(Z^{(p)}_i)_{i, p \in \N}$ of centered independent random vectors with bounded sub-Gaussian norms such that, for every $i, p \in \N$, we can express 
    \begin{align*}
        X^{(p)}_i \; =\; \msum_{j\in \N} c^{(p)}_{i,j} \, Z^{(p)}_j
    \end{align*}
    for some constants $(c^{(p)}_{i,j})$. We also assume that there exists an universal constant $\underline{c}>0$ such that $\sqrt{p}\lambda_{\min}(\Var(Z^{(p)}_j))\ge \underline{c}$ for all $j,p\in \mathbb{N}$.
\end{enumerate}

\end{assumption}

\Cref{general} substitutes the block-dependence \Cref{blockdepass}.
\Cref{general}(ii) is restrictive, but already covers important $\beta$-mixing processes of practical interests: For example, any autoregressive model \citep{tsay2005analysis,gelfand2010handbook}
can be expressed in the form of \Cref{general}(ii).

\begin{assumption}[Strong Gaussian Approximation]\label{gauss_approx2} For all $d\in\mathbb{N}$, we have
\begin{align*}
   \sup_{f\in \mathcal{F}}
   \;\, \sup_{\substack{\beta_1, \hdots, \beta_d  \in\mathcal S_p}}
   \;\, \sup_{\theta\in\mathcal{S}^{d-1}\,,\,i \leq n-d}
   \;\,
   \Big| 
   \E\Big[
    f \Big(\msum_{r=1}^d\theta_r X_{i+r}^\intercal\beta_r\Big) - f\Big(\msum_{r=1}^{d}\theta_r G_{i+r}^\intercal\beta_r
    \Big)\Big] \Big| \;\rightarrow\; 0 \;,
\end{align*}  
where $\mathcal{F} \coloneqq\{f : \R \to \R\mid f \in \mathcal C_1, \|f\|_{\infty} < \infty, \|\partial f\|_{\infty} \leq 1 \}$.
\end{assumption}

\Cref{gauss_approx2} is a stronger form of \Cref{gauss_approx}: Instead of requiring the Gaussian approximation to hold for $k$-many vectors, where $k$ is the dependency block size, we now require this to hold for every fixed $d \in \N$. Note however that we do not require $d$ to grow with $n$; it suffices that for every fixed $d$, the convergence holds as $n \rightarrow \infty$. By replacing Assumptions \ref{blockdepass} \& \ref{gauss_approx} by Assumptions \ref{general} \& \ref{gauss_approx2}, we may state our extended universality result for $m$-dependence and specific mixing processes.

% \textcolor{red}{note that we don't require $d$ to grow with n we just require this to hold for every d \\ talk about the extended theorem}

\begin{theorem}[Universality under $m$-dependence or mixing] \label{mainthm:extend} 
    Let $\l(X_i, y_i(X_i)\r)_{i=1}^n$ and $\l(G_i, y_i(G_i)\r)_{i=1}^n$ be generated under Assumptions \ref{logmodel}-\ref{signalsizinghehe}, where each $G_i \sim \mathcal N\l(0, \Var(X_i)\r)$. Assume in addition that Assumptions \ref{general} and \ref{gauss_approx2} hold. Then 
\begin{align}
    d_{\mathcal H}\big( \textstyle\min_\beta \hat R_n(\beta; \bX), \textstyle\min_\beta \hat R_n(\beta; \bG)\big) \to 0.\label{eq:main_first_mdep}
\end{align}
Moreover, if Assumptions \ref{courage} and \ref{assumption:test:risk} also hold, then
\begin{align}
      | R_{\rm test}(\hat \beta(\bX)) - R^G_{\rm test}(\hat \beta(\bG)) |
    \;\xrightarrow{\P}\; 
    0
    \;.\label{eq:main_second_mdep}
\end{align}
\end{theorem}

The proof of \Cref{mainthm:extend} is an adaptation of the block-dependent case of \Cref{mainthm}, and is { included in \Cref{lorelai,lorelai2}}. To give an overview, the first step of the proof is to apply the classical technique of representing the data as an alternating sequence of big blocks and small blocks of random vectors, where the small blocks are then ignored 
\citep{bernstein1927extension,ibragimov1975independent,davidson1992central}. In the $m$-dependent case, the big blocks become independent provided that each small block is of size at least $m$, and the block-dependent result of \Cref{mainthm} applies directly. In the mixing case, the big blocks are only approximately independent and, hence, \Cref{mainthm} does not directly apply. To be able to use the results we derived in the independent case, we use the embedding result of \cite{yu1994rates}. This result allows one to compare the expectation of functions of $\beta$-mixing block with that of exactly independent random variables. However, a difficulty is that in our setting, the sizes of the blocks are not allowed to depend on $n$, and the number of blocks that can be approximated by independent ones cannot depend on $n$. Hence, we cannot apply the embedding result of \cite{yu1994rates} directly to the risk. We instead use \cite{yu1994rates} to bound how much the risk changes along the path of interpolation between $\bG$ and $\bX$. Finally, note that a key ingredient of the proof is being able to control the largest eigenvalue of $\bX^T\bX$. The proof relies on being able to prove a Bernstein inequality for $\sum_{i=1}^n(X_i^T\beta)^2$ for all $\beta\in \mathcal{S}_p$. Under the mixing setting, \Cref{general}(ii)  allows us to do so by rewriting $\sum_{i=1}^n(X_i^T\beta)^2$ as a quadratic form over the independent process $(Z_j).$ In general, this assumption --- that $(X_i)$ can be rewritten as an infinite sum of independent processes --- can be weakened to processes for which one can control the moments of $\lambda_{\rm max}(\bX^T\bX).$

% , we carefully take the size of the small block to grow slowly while exploiting the $\beta$-mixing condition. This again allows us to apply the block-dependent result of \Cref{mainthm}. \textcolor{red}{the block size don't grow with n but after}

% \khh{novel concentration inequality?}

\section{Dependent CGMT}\label{sec:cgmt}

Under general conditions, {\theoremref{mainthm,mainthm:extend}} allows us to study the risk of $\hat\beta(\bX)$ via that of $\hat\beta(\bG)$. When
% When $\bG$ has independent entries--meaning when the vectors
$X_i$'s are isotropic and uncorrelated, $\bG = (G_i)_{i \leq n}$ can be viewed as an $\R^{p \times n}$ matrix with i.i.d.~standard Gaussian entries. In this case, the risk of $\hat\beta(\bG)$ can be studied via the CGMT method.
%,  which shows the equivalence between two risks one involving a Gaussian matrix and a simpler one involving only two Gaussian vectors.
Broadly speaking, the classical CGMT method first relates $\min_\beta \hat R_n(\beta; \bG)$ to the optimization \\[-1.4em]%\textcolor{red}{any reason we don't combine it into one?}
\begin{align*}
    \Psi_{\mathcal{S}_w, \mathcal{S}_u}\;\coloneqq\;
    \mmin_{w \in \mathcal{S}_w} \, \mmax_{u \in \mathcal{S}_u} \, L_\Psi(w,u)
    \qquad 
    \text{ with } 
    \qquad
    L_\Psi(w,u) 
    \;\coloneqq\;
    w^\intercal \bH u 
    + f(w,u)\;,
    \tagaligneq \label{eq:CGMT:PO}
\end{align*} \\[-1.4em]
where $\cS_w \subset \R^p$ and $\cS_u \subset \R^n$ are compact and convex, $f:\mathcal{S}_w\times \mathcal{S}_u\rightarrow\mathbb{R}$ is a convex-concave function, and $\bH$ is typically a suitably projected version of $\bG$. The main result of CGMT is that $\Psi_{\mathcal{S}_w, \mathcal{S}_u}$ is equivalent to a simpler optimization involving only two Gaussian vectors (see \Cref{appendix:statement:CGMT}).
% ~More precisely, let $\bH\in \R^{p\times n}$ be a random matrix with Gaussian centered entries--this will in general chosen to be a projection of $\bG$. Set $\mathcal{S}_w$ and $\mathcal{S}_u$ to be compact and convex sets. Let $f:\mathcal{S}_w\times \mathcal{S}_u\rightarrow\mathbb{R}$ be a convex-concave function, CGMT states that the following risk%we consider the risks 
%  \begin{align*}
%     \Psi_{\mathcal{S}_w, \mathcal{S}_u}\;\coloneqq&\;
%     \min_{w \in \mathcal{S}_w} \, \max_{u \in \mathcal{S}_u} \, L_\Psi(w,u)%\\
% %   \tilde \psi_{\mathcal{S}_w,\mathcal{S}_u}
%    % \;\coloneqq&\;
%    % \min_{w \in \mathcal{S}_w} \, \max_{u \in \mathcal{S}_u} \, \tilde L_\psi(w,u),
% \end{align*}
% where we defined $ L_\Psi(w,u) 
%  :=
%     w^\intercal \bH u 
%     + f(w,u)
%     \;$
%  is equivalent to a simpler one defined in \Cref{appendix:statement:CGMT}.  

In the dependent setting, however, $\bH$ can exhibit both correlated columns and rows, and the standard CGMT framework is generally not applicable.
%  Instead, $\bH$ is expected to exhibit both correlated columns and rows.  
 To address this, 
%  more intricate scenario, 
 we develop a more 
%  generalized version of the CGMT
general CGMT framework that accommodates 
%  , under 
 a ``low-rank assumption" on the 
%  covariance  
 dependence structure of $\bH$. 
 %More formally we are assuming that 
 
\begin{assumption}
    [Low-rank Dependence]  \label{assumption:CGMT:low:rank}
    Let $\bH$ be an $\R^{p \times n}$ Gaussian matrix. There exist $M\in \mathbb{N}$ and symmetric positive  semi-definite matrices  $(\Sigma^{(l)},\tilde\Sigma^{(l)})_{l\le M}$, with $\Sigma^{(l)} \in \R^{p \times p}$ and $\tilde \Sigma^{(l)} \in \R^{n \times n}$, s.t.
    \begin{align*}
        \Cov[ \bH_{ji}, \bH_{j'i'}] \;=\; \msum_{l=1}^M  \Sigma^{(l)}_{jj'} \tilde \Sigma^{(l)}_{ii'} 
        \;\qquad\;
         \text{ for all } 
         i,i' \leq n 
         \text{ and }
         j,j' \leq p
     \;.
\end{align*}
\end{assumption} 

\begin{remark} For $M=1$, \Cref{assumption:CGMT:low:rank} is equivalent to the assumption that $\bH = \big(\Sigma^{(1)}\big)^{1/2} \bH' \big(\tilde \Sigma^{(1)}\big)^{1/2}$ for some Gaussian matrix $\bH'$ with i.i.d.~standard normal entries. For $M > 1$, this says that the dependence is captured by some sum of covariance matrices that ``factorise". In \Cref{appendix:low:rank:discussion}, we discuss how this is a natural assumption for the dependence structure that arises in data augmentation.
\end{remark}

Denote $\| v \|_{\Sigma'} = \sqrt{v^\intercal \Sigma' v}$. Under \Cref{assumption:CGMT:low:rank}, we shall compare $\Psi_{\mathcal{S}_w,\mathcal{S}_u}$ to the risk
\vspace{-.5em}
\begin{align*}
    &\;\psi_{\mathcal{S}_w,\mathcal{S}_u}
    \;\coloneqq\;
    \min_{w \in \mathcal{S}_w} \, \max_{u \in \mathcal{S}_u} \, L_\psi(w,u)
    \;,
    \\
    &\qquad\textrm{ where }
    \quad 
    L_\psi(w,u) 
    \;\coloneqq\;
    \msum_{l=1}^M \Big\{ 
        \| w \|_{\Sigma^{(l)}} \bh_l^\intercal \big( \tilde \Sigma^{(l)} \big)^{1/2} u + w^\intercal \big( \Sigma^{(l)} \big)^{1/2} \bg_l \| u \|_{\tilde \Sigma^{(l)}}
    \Big\} 
   +
    f(w,u)
    \;.
    \tagaligneq \label{eq:CGMT:AO}
   % \tagaligneq \label{eq:CGMTgeneralloss2}
\end{align*} 
$(\bh_l,\bg_l)_{l \leq M}$ are independent standard Gaussians respectively in $\R^n$ and $\R^p$. Our next result formalizes the equivalence of $\Psi_{\mathcal{S}_w,\mathcal{S}_u}$ and $\psi_{\mathcal{S}_w,\mathcal{S}_u}$, and additionally controls $\hat w_\Psi \in \cS_p$, the minimizer of $\Psi_{\mathcal{S}_w, \mathcal{S}_u}$.
% by $\hat w_\Psi \in \cS_p$. 
% This allows us to get the following dependent CGMT% \textcolor{red}{should we also define the corresponding u for completion}

\begin{theorem}[Dependent CGMT] \label{thm:CGMT_2} Suppose $\mathcal{S}_w $ and $\mathcal{S}_u$ are compact and convex, and $f$ is continuous and convex-concave on $\mathcal{S}_w \times \mathcal{S}_u$. Under \Cref{assumption:CGMT:low:rank}, the following statements hold:
\begin{enumerate}[topsep=0.2em, parsep=0em, partopsep=0em, itemsep=0em, leftmargin=2em]
    \item[(i)] For all $c \in \R$, 
    \begin{align*}
         \P(  \Psi_{\mathcal{S}_w, \mathcal{S}_u} \leq c ) 
         \;\leq&\;  2^M  \, \P(  \psi_{\mathcal{S}_w, \mathcal{S}_u} \leq c )
         &\text{ and }&&
         \P(  \Psi_{\mathcal{S}_w, \mathcal{S}_u} \geq c ) \,\leq\,  2^M  \, \P(  \psi_{\mathcal{S}_w, \mathcal{S}_u} \geq c )\;;
    \end{align*} 
    % and in particular, for all $\mu \in \R$ and $t > 0$, 
    % \begin{align*}
    %     \P(  |  \Psi_{\cS_p, \cS_n} - \mu | \geq t ) \,\leq\,  2^M  \, \P(  | \psi_{\cS_p, \cS_n} - \mu | \geq t )\;.
    % \end{align*}
    \item[(ii)]  Let $\cA_p$ be an arbitrary open subset of $\mathcal{S}_w$ and $\cA_p^c \coloneqq \mathcal{S}_w \setminus \cA_p$. If there exist constants $\bar \phi_{\mathcal{S}_w}$, $\bar \phi_{\cA^c_p}$ and $\eta, \epsilon > 0$ such that $\bar \psi_{\cA^c_p} \geq \bar \psi_{\mathcal{S}_w} + 3 \eta$, \;\;$\P( \psi_{\mathcal{S}_w, \mathcal{S}_u} \leq \bar \psi_{\mathcal{S}_w} + \eta  ) \,\geq\, 1 - \epsilon$ \; and $\P( \psi_{\cA^c_p, \mathcal{S}_u} \geq \bar \psi_{\cA^c_p} - \eta  ) \,\geq\, 1 - \epsilon$, then 
    \begin{align*}
        \P( \hat w_\Psi  \,\in\, \cA_p ) \,\geq\, 1 - 4 \epsilon \;.
    \end{align*}
\end{enumerate}
\end{theorem}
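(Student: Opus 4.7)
The overall strategy is to reduce \Cref{thm:CGMT_2} to the classical (independent) CGMT by exploiting the Kronecker decomposition implied by \Cref{assumption:CGMT:low:rank}, and then to handle the sum-of-layers structure by a single joint application of Gordon's Gaussian min-max inequality with variance balancing. A direct covariance calculation confirms that $\bH \stackrel{d}{=} \sum_{l=1}^M (\Sigma^{(l)})^{1/2} \bH^{(l)} (\tilde\Sigma^{(l)})^{1/2}$ for independent matrices $\bH^{(l)} \in \R^{p \times n}$ of iid $\mathcal{N}(0,1)$ entries, so $w^\intercal \bH u = \sum_l (A_l^\intercal w)^\intercal \bH^{(l)} (B_l^\intercal u)$ with $A_l = (\Sigma^{(l)})^{1/2}$, $B_l = (\tilde\Sigma^{(l)})^{1/2}$, $\|A_l^\intercal w\| = \|w\|_{\Sigma^{(l)}}$, and $\|B_l^\intercal u\| = \|u\|_{\tilde\Sigma^{(l)}}$.

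For part (i), let $X_{wu}$ and $Y_{wu}$ denote the PO and AO Gaussian components respectively, so $L_\Psi = X_{wu} + f(w,u)$ and $L_\psi = Y_{wu} + f(w,u)$. The $l$-th summand of $Y$ has twice the variance of the $l$-th summand of $X$, so I would introduce iid $\gamma_1, \ldots, \gamma_M \sim \mathcal{N}(0,1)$, independent of all else, and define $X'_{wu} := X_{wu} + \sum_l \|w\|_{\Sigma^{(l)}} \|u\|_{\tilde\Sigma^{(l)}} \gamma_l$ to match variances. Independence across $l$ and Cauchy--Schwarz give
\begin{equation*}
E[Y_{wu} Y_{w'u'}] - E[X'_{wu} X'_{w'u'}] \;=\; \sum_{l=1}^M \bigl(\|w\|_{\Sigma^{(l)}} \|w'\|_{\Sigma^{(l)}} - w^\intercal \Sigma^{(l)} w'\bigr) \bigl(u^\intercal \tilde\Sigma^{(l)} u' - \|u\|_{\tilde\Sigma^{(l)}} \|u'\|_{\tilde\Sigma^{(l)}}\bigr) \;\leq\; 0
\end{equation*}
for $w \neq w'$, with equality when $w = w'$, which are exactly Gordon's covariance conditions for the min-max inequality. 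This yields $\P(\widetilde\Psi \leq c) \leq \P(\psi \leq c)$ with $\widetilde\Psi := \min_w\max_u[X'_{wu} + f(w,u)]$. On the event $\{\gamma_l \leq 0 \;\forall l\}$ of probability $2^{-M}$, the perturbation is non-positive and $\widetilde\Psi \leq \Psi$, so $\P(\Psi \leq c) \leq 2^M \P(\psi \leq c)$. For the reverse direction, I would invoke Sion's minimax theorem on the convex-concave $L_\Psi$ (bilinear Gaussian term plus convex-concave $f$) to write $\Psi = \max_u\min_w L_\Psi$, run the same Gordon-plus-$\gamma$ argument on the dual formulation $-\Psi = \min_u\max_w(-L_\Psi)$ using the distributional symmetries $-\bH^{(l)} \stackrel{d}{=} \bH^{(l)}$ and $-Y_{wu} \stackrel{d}{=} Y_{wu}$, and then apply the (unconditional) weak-duality bound $\max_u\min_w L_\psi \leq \psi$ to conclude $\P(\Psi \geq c) \leq 2^M \P(\psi \geq c)$.

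For part (ii), I apply part (i) to both $\cS_w$ and to $\cA_p^c \subset \cS_w$: the hypotheses translate to $\P(\Psi_{\cS_w, \cS_u} \geq \bar\psi_{\cS_w} + \eta) \leq 2^M \epsilon$ and $\P(\Psi_{\cA_p^c, \cS_u} \leq \bar\psi_{\cA_p^c} - \eta) \leq 2^M \epsilon$. Since $\cA_p^c \subset \cS_w$ implies $\Psi_{\cA_p^c, \cS_u} \geq \Psi_{\cS_w, \cS_u}$ always, with equality iff $\hat w_\Psi \in \cA_p^c$, the gap $\bar\psi_{\cA_p^c} - \eta > \bar\psi_{\cS_w} + 2\eta$ forces strict inequality on an event of probability at least $1 - 2^{M+1}\epsilon$, so $\hat w_\Psi \in \cA_p$ there (the stated $4\epsilon$ matches the $M = 1$ case).

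The subtlest step is the reverse direction of the Gordon comparison. A naive iterative application of the classical CGMT layer-by-layer breaks down, because the AO terms $\|w\|_{\Sigma^{(l)}} \bh_l^\intercal (\tilde\Sigma^{(l)})^{1/2} u$ introduced after the first replacement are not jointly convex in $w$ and concave in $u$, obstructing the usual convex-concave machinery needed at subsequent iterations. Handling all $M$ layers in a single Gordon step via joint variance balancing, combined with Sion's theorem applied to the original convex-concave $L_\Psi$ and only weak duality on the AO (for which no convex-concavity is required), sidesteps this obstruction and produces the sharp $2^M$ factor in both directions.
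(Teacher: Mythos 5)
Your proposal follows essentially the same route as the paper: a single Gordon comparison across all $M$ layers at once, with $M$ independent auxiliary Gaussians $\gamma_l$ (the paper's $\xi_l$) added to the primary loss to balance variances, the $2^M$ factor extracted by conditioning on the sign pattern of the $\gamma_l$, the reverse inequality in (i) obtained via the min-max theorem plus sign symmetry of the Gaussian terms and weak duality on the auxiliary problem (exactly the argument of Thrampoulidis et al.\ that the paper defers to), and part (ii) by a union bound. Your covariance computation is correct, and your diagnosis of why a layer-by-layer iteration of the classical CGMT would fail is precisely the motivation for the paper's one-shot comparison.

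Two caveats. First, Gordon's comparison inequality applies to discrete index sets, so a complete proof needs the extension from $\delta$-nets of $\cS_w \times \cS_u$ to the compact sets themselves; the paper devotes a separate lemma to this (uniform continuity of $f$ together with concentration of $\| \bH \|$, $\| \bh_l \|$, $\| \bg_l \|$ and $|\gamma_l|$), and your writeup skips this step entirely. Second, your part (ii) argument yields $1 - 2^{M+1}\epsilon$, not the stated $1 - 4\epsilon$, since each of the two events you control costs $2^M \epsilon$ after applying part (i). You are right to flag this: the paper's own proof, which transplants the $M=1$ argument with the factor $2$ replaced by $2^M$, gives the same $2^{M+1}\epsilon$, so the $4\epsilon$ in the statement appears to be an oversight in the paper rather than a defect of your argument. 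Note also that $\cA_p^c$ is closed but generally non-convex, so on $\cA_p^c$ one may only invoke the first bound of (i), which (as the paper remarks) does not require convexity; your argument does use only that direction there, but this should be said explicitly.
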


\begin{remark} Convexity is not required for the first bound of (i); see \theoremref{thm:CGMT:full} for the full theorem.
    % Note that we present a more complete and longer version of \theoremref{thm:CGMT_2} in the appendix \textcolor{red}{add ref}
\end{remark}

Notably, \theoremref{thm:CGMT_2} implies an asymptotic concentration result for the minimizer $\hat w_\Psi$ in $\cA_p$: 
% This theorem notably allows us to establish the following corollary

\begin{corollary}[Asymptotic CGMT] \label{cor:CGMT}   Let $\cA_p$ be an arbitrary open subset of $\cS_w$ and $\cA_p^c \coloneqq \cS_w \setminus \cA_p$. If there exists constants $\bar \psi < \bar \psi^c$ such that $\psi_{\cS_w, \cS_u} \xrightarrow{\P} \bar \psi$ and $\psi_{\cA^c_p, \cS_u}  \xrightarrow{\P} \bar \psi^c$, then \\[-1.2em]
\begin{align*}
    \P(  \hat w_\Psi \in \cA_p ) \;\rightarrow\; 1 \;.
\end{align*}
\end{corollary}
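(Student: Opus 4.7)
The plan is to apply part (ii) of \Cref{thm:CGMT_2} directly, treating the asymptotic hypotheses of the corollary as providing, for each $\epsilon > 0$, the finite-sample conditions required by that statement. Throughout, I identify the set $\cS_n$ in the corollary with $\mathcal{S}_u \subset \R^n$ in the theorem.

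Set $\bar \psi_{\mathcal S_w} := \bar \psi$ and $\bar \psi_{\cA_p^c} := \bar \psi^c$ (both are deterministic by hypothesis), and choose the separation parameter $\eta := (\bar \psi^c - \bar \psi)/4$, which is strictly positive since $\bar \psi < \bar \psi^c$. The separation condition $\bar \psi_{\cA_p^c} \geq \bar \psi_{\mathcal S_w} + 3\eta$ then reduces to $\bar \psi^c - \bar \psi = 4\eta \geq 3\eta$, which holds.

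Now fix an arbitrary $\epsilon > 0$. Because $\psi_{\cS_p, \cS_n} \xrightarrow{\P} \bar \psi$, there exists $N_1$ such that for all $n \geq N_1$,
\begin{align*}
    \P\bigl( \psi_{\cS_p, \cS_n} \leq \bar \psi + \eta \bigr) \;\geq\; \P\bigl( |\psi_{\cS_p, \cS_n} - \bar \psi | \leq \eta \bigr) \;\geq\; 1 - \epsilon\;.
\end{align*}
Similarly, $\psi_{\cA^c_p, \cS_n} \xrightarrow{\P} \bar \psi^c$ yields $N_2$ such that $\P\bigl( \psi_{\cA^c_p, \cS_n} \geq \bar \psi^c - \eta \bigr) \geq 1 - \epsilon$ for all $n \geq N_2$. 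For $n \geq \max(N_1, N_2)$, all hypotheses of part (ii) of \Cref{thm:CGMT_2} are thus met, and the conclusion of that part gives $\P(\hat w_\Psi \in \cA_p) \geq 1 - 4\epsilon$.

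Since $\epsilon > 0$ was arbitrary, this yields $\liminf_{n \to \infty} \P(\hat w_\Psi \in \cA_p) \geq 1$, which is the desired convergence to one. I anticipate essentially no technical obstacle here: the argument is a routine translation of asymptotic statements into the non-asymptotic form of part (ii), together with bookkeeping of the two constants $\bar \psi, \bar \psi^c$ and the choice $\eta = (\bar \psi^c - \bar \psi)/4$. All of the genuine content---the comparison between $\Psi_{\cS_w, \cS_u}$ and $\psi_{\cS_w, \cS_u}$ and the localization of the minimizer---has already been absorbed into part (ii), so the corollary is essentially a user-friendly repackaging of that result for settings in which the auxiliary optimizations concentrate at deterministic limits.
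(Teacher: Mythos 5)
Your proposal is correct and follows exactly the route the paper takes: \Cref{cor:CGMT} is proved in the paper by a direct appeal to \Cref{thm:CGMT:full}(ii) (equivalently \Cref{thm:CGMT_2}(ii)), mirroring Corollary 3.3.2 of Thrampoulidis et al., and your argument simply spells out the standard translation of the convergence-in-probability hypotheses into the finite-sample conditions of that part, with a valid choice of $\eta$. No gaps.
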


\noindent
\Cref{thm:CGMT_2} and \corollaryref{cor:CGMT} have several important implications:

\vspace{.5em}

\noindent
\emph{Simplifying the analysis of $\hat R_n(\beta; \bG)$.} \Cref{thm:CGMT_2} reduces the analysis of $\hat R_n(\beta; \bG)$, which involves a high-dimensional and correlated Gaussian matrix, to a loss involving only Gaussian vectors. This substantially simplifies the analysis of the asymptotic risk, as one can avoid invoking random matrix theory. Indeed in the isotropic case, the conversion of $\psi_{\cS_w, \cS_u}$ into a deterministic, low-dimensional problem has been performed in many models through algebraic calculations and the min-max theorem \citep{thrampoulidis2016recovering,salehi2019impact,dhifallah2021inherent}. In our case, the terms in $\psi_{\cS_w, \cS_u}$ depend on the $2M$ covariance matrices, and the complexity of these calculations grows with $M$. We present the calculations for special cases of data augmentation in \Cref{sec:DA} and \Cref{appendix:additional}.

\vspace{.5em}

\noindent 
\emph{Pipeline of analysis for dependent data. } Together with our dependent universality result (\theoremref{mainthm,mainthm:extend}), \Cref{thm:CGMT_2} extends the pipeline of analysis discussed in \Cref{sec:intro} to dependent
% general block-dependent 
data in logistic regression. Since \Cref{thm:CGMT_2} is model-independent, we also expect it to be valuable to other setups, provided that an analogous dependent universality result is established.

\vspace{.5em}

\noindent 
\emph{Universality of test risk. } Our CGMT also helps with verifying \Cref{assumption:test:risk}, required for the universality of test risk in \theoremref{mainthm,mainthm:extend}. To see this, let us identify $\cA_p$ in \Cref{thm:CGMT_2}$(ii)$ as the set $\{ \, \beta \in \cS_p \mid |(\beta^\intercal \Sigma_{\text{new}} \beta)^{1/2} - \bar \chi | \leq \epsilon_n \}$. Under this notation, \Cref{assumption:test:risk} is a comparison between the training risks of two optimizations on $\cS_w \setminus \cA_p$ and $\cS_w$ respectively. \Cref{thm:CGMT_2} allows us to perform this comparison on the simpler auxiliary optimizations instead, which additionally allows for computing the value of $ \bar \chi$; see \Cref{appendix:DA:EQs}. 

% Since \corollaryref{cor:CGMT} works for any $\cA_p$, we can in particular choose this to be the cutout set $\{ \, \beta \in \cS_p \mid |(\beta^\intercal \Sigma_{\text{new}} \beta)^{1/2} - \bar \chi | \leq \epsilon_n \}$ in \Cref{assumption:test:risk}. This allows us to verify \Cref{assumption:test:risk} directly on the deterministic version 
% \textcolor{red}{not sure that this part is super clear to people. yes you said above that we can turn this into a deterministic thing but might not be obvious. can we delete the deterministic part}of $\psi_{\cS_p, \cS_n}$; see \Cref{appendix:DA:EQs} for an example.

% \khh{agree - a lot of these stuff i was hesitating whether to merge with related literature. The related literature thing is still so long though}
% \textcolor{red}{what about moving this to the related literature part to save space. i already wrote a small something about those papers in the related literature. or at least shorten it? just for space sake}

\begin{remark}[Comparison to existing CGMT results] 
For comparison, \Cref{thm:CGMT_2} recovers the standard CGMT  with $\Sigma^{(1)} = I_p$, $\tilde \Sigma^{(1)} = I_n$ and $M=1$. It also recovers the multivariate CGMT of \citet{dhifallah2021inherent} by setting $\Sigma^{(l)}$ and $\tilde \Sigma^{(l)}$ as block diagonal matrices with $M$ equal-sized subblocks, such that the $l$-th subblock is identity and the other blocks are zero.
 \citet{akhtiamov2024novel} generalizes the block diagonal setup to allow non-identity subblocks, which is a special case of our \Cref{assumption:CGMT:low:rank}, but they also allow for transforming $w$ and $u$, which we do not address here.
%  We do expect that their result can be combined with ours to obtain a more general extension.
\end{remark}

% This corollary allows us \textcolor{red}{finish}
% \color{red}
% \begin{itemize}
%     \item \Cref{cor:CGMT}  is crucial to establish the unviersality of the test risk as it allows one to establish the veracity of \Cref{assumption:test:risk}
%     \item Allows us to study a simpler risk (AO) by converting the high-dimensinoal Gaussian matrix to Gaussian vectors
%     \item Compared to classical CGMT: Number of terms depends on $M$, the covariance matrices also enter the loss through the norms... 
% \end{itemize}

% \color{black}

\section{Applications to Data Augmentation (DA)}\label{sec:DA}

As an example application of \Cref{mainthm,thm:CGMT_2}, we analyze the effect of data augmentation, which introduces a simple yet ubiquitous form of block dependence in machine learning. To see how the dependence arises, let $(Z_i)_{i \leq m}$ be i.i.d.~mean-zero random vectors, which correspond to our original data, and let ${\phi_{1},\ldots,\phi_{mk}}$ be $n=mk$ i.i.d.~$\R^p \rightarrow \R^p$ transformations, which are the augmentations. Note that the coordinates of $Z_i$ may be locally dependent. DA synthesizes an artificial dataset $(X_i, y_i)_{i \leq n}$ by setting $X_i \coloneqq \phi_i(Z_{\lceil i/k\rceil})$, i.e.~each observation is augmented $k$ times, and setting $y_i = y_i(Z_{\lceil i/k\rceil})$ to retain the label of the original observation. The estimator $\hat \beta$ is then fitted on the augmented dataset through the minimization \\[-1.2em]
\begin{align*}
    \textstyle\min_{\beta \in \cS_p} \,
    \mfrac{1}{n} \msum_{i=1}^n \, 
    \big( 
        \log \big( 1 + e^{X_i^\intercal \beta} \big) 
        -
        y_i(Z_{\lceil i/k\rceil}) \times X_i^\intercal \beta
    \big)
    +    
    \mfrac{\lambda}{2n} \| \beta \|^2_2 
    \;.
    \tagaligneq \label{eq:DA:logistic}
\end{align*}\\[-1.2em]
% \textcolor{purple}{This is closely related to the model \eqref{logmodel} by considering an $m(k+1)$ dataset $\{Z_{i'}, X_{i'(k-1)+1}, \ldots, X_{i'k} \}_{i' \leq m}$ and setting $\omega_i$ to be zero at the unaugmented data $Z_{i'}$. The only difference is that $y_i$ depends on $Z_{i'}$ instead of the $X_{i'k} $'s  }
See \remarkref{remark:model:DA} for how this relates to the model \eqref{eq:betahat}.
For simplicity, we assume that the test data $X_{\rm new}$ is identically distributed as the unaugmented $Z_1$. The practical heuristic behind DA is that, if $\phi_i$'s are chosen to reflect certain structures of the problem well, DA may improve the test risk of $\hat \beta$ despite the dependence introduced. While the benefits of DA are empirically observed across a large body of ML literature, limited theoretical attempts have provided an exact theoretical quantification, especially in the case of a classification task; see \Cref{related_work}. Here, we analyze several DA schemes:
% (see \khh{ref to literature section}).\textcolor{red}{if we need to gain space we can also move this to the related literature. i feel in general that we debate things in multiple places} Theoretically, \cite{huang2022data} show that whether DA is beneficial can be subtle and problem-dependent, but their analysis is primarily on regression models with Euclidean-valued outputs. In contrast, \Cref{mainthm,thm:CGMT_2} provide tools that can, for the first time, exactly quantify the effect of DA on the test risk of a classification task with $0/1$ labels. We consider several DA schemes:

\vspace{.2em}

\noindent
% \textcolor{red}{ note that i proved the result to allow for local dependent entries to $Z_i$ foreign flipping etc..not sure why you want to restric to when $var=Id$ and independent entry. we should at least i think introduce some remark about tolerating local dependence. picture etc haves local deendence}
\textbf{Random permutations under a group structure. } Suppose $Z_1$ can be broken down into $N$ groups of coordinates, each of size $p_t$, with $p_1 + \ldots + p_N = p$. Namely, $Z_1 = ( (Z^{(1)}_1)^\intercal \,,\, \ldots \,,\, (Z^{(N)}_1)^\intercal )^\intercal$
such that $\{Z^{(t)}_1\}_{t \leq N}$ are independent vectors and each $Z^{(t)}_i$ is $\R^{p_t}$-valued with i.i.d.~coordinates. The i.i.d.~structure within each group motivates one to augment the data by permuting coordinates within each group. As the full permutation group is exponentially large in $p$, a practical question is how much permutation should one perform. This concerns both the number of random permutations $k$ as well as the proportion of coordinates to permute. For simplicity, we fix $r_{\rm perm} \in [0,1]$, a proportionality parameter that may be chosen by practitioners, so that within each $t$-th group, we only consider permuting the top $\lceil r_{\rm perm} p_t \rceil$ coordinates. Each augmentation $\phi_i$ is a uniformly random permutation that permutes the top $\lceil r_{\rm perm} p_t \rceil$ coordinates within each $t$-th group.

\vspace{.2em}

\noindent
\textbf{Random sign flipping under a sparsity structure. } 
% Let $\Var[Z_1] = \frac{1}{p} I_p$ for simplicity. 
Consider a sparsity structure in $\beta^*$: A proportion $\rho^* \in [0,1]$ of the $p$ entries of  $\beta^*$ are non-zero, whereas the remaining $(1-\rho^*) p$ entries are zero. The positions of the non-zero coordinates are unknown in general, and $\rho^*$ may be known or unknown. This motivates the use of random sign flipping to shrink the estimate $\hat \beta$ at locations where the entries of $\beta^*$ may be zero. We fix some $\lceil r_{\rm flip}  p \rceil$ entries of $p$, where $r_{\rm flip} \in [0,1]$ is a parameter chosen by users. Each  $\phi_i$ is a random diagonal matrix, generated by drawing the fixed $\lceil r_{\rm flip}  p \rceil$ entries of its diagonal as i.i.d.~Rademacher variables, and setting the remaining entries of the diagonal to $1$. 

\vspace{.2em}

\noindent
\textbf{Random cropping under a sparsity structure.} Suppose $\beta^*$ has the same sparsity structure as above. Another way to encode our guess of the zero entries  is by randomly removing coordinates in the data. Here, each $\phi_i$ is  a random diagonal matrix generated by randomly setting $\lceil r_{\rm crop}  p \rceil$ entries of its diagonal to zero and leaving the remaining entries as $1$. 

\vspace{.2em}

As $y_i$'s depend on the unaugmented data instead of the augmented,
% data, 
we apply a generalization of \theoremref{mainthm} 
(\Cref{appendix:universality:general}) to show that universality holds under the same assumptions. The key condition to verify is \Cref{gauss_approx}. In \Cref{check:augmentation}, we 
verify this
% show that this holds 
for sign flipping, cropping and
% as well as the 
noise injection. 
% in \cite{dhifallah2021inherent}. 
For permutations, we 
verify this
% show that this holds 
under general conditions on the group sizes $p_t$'s for both a fixed and small number of groups $N$ and a growing number of groups $N=N(n) \rightarrow \infty$. Meanwhile, our CGMT result (\Cref{thm:CGMT_2}) applies to sign flipping and cropping above with $\Var[Z_1] = \frac{1}{p} I_p$, as well as permutations (\Cref{appendix:DA:special:cases}). Under simplifying conditions that hold for permutations and sign flipping, we also derive a set of $10$ deterministic and scalar 
% and non-linear 
equations \eqref{EQs} in \Cref{appendix:DA:EQs}, which explicitly characterize the test risk of logistic regressors. More general augmentations can be accommodated but at the expense of a more complicated system of equations. While \eqref{EQs} is complicated to state, we verify that in the isotropic case with no augmentation, it recovers exactly the characterizing equation 
% derived 
by \cite{salehi2019impact}.

\begin{figure}[t]
    \centering 
    \begin{tikzpicture}
        \node[inner sep=0pt] at (-3.8,0){\includegraphics[trim={.5em .5em 0em .5em},clip,width=.5\linewidth]{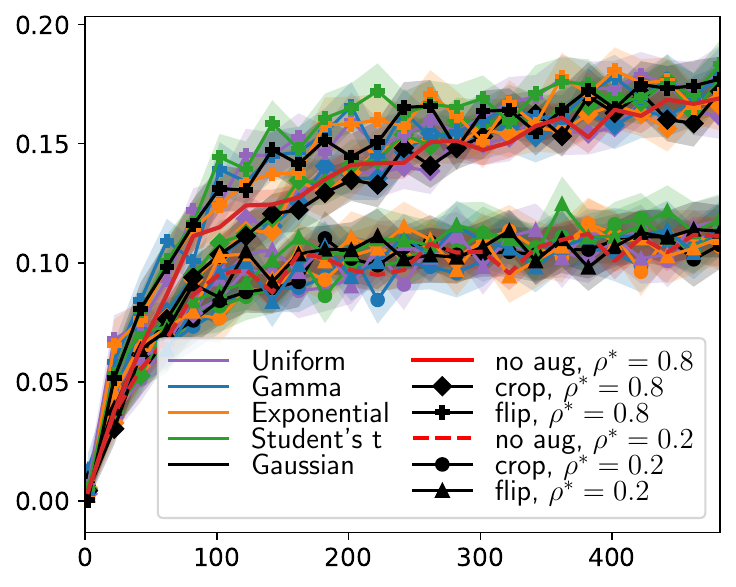}};
        \node[inner sep=0pt] at (3.8,0){\includegraphics[trim={.5em .5em .5em .5em},clip,width=.49\linewidth]{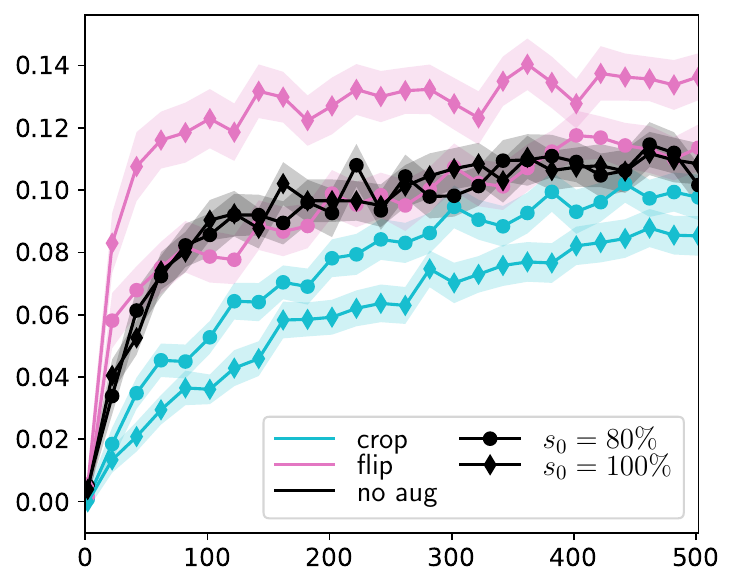}};

        \node[inner sep=0pt] at (-3.3, 3.1) {\scriptsize Test risk (excess 0-1 loss relative to $\beta^*$)};
        \node[inner sep=0pt] at (4.1, 3.1) {\scriptsize Test risk (excess 0-1 loss relative to $\beta^*$)};

        \node[inner sep=0pt] at (-3.4, -3.2) {\scriptsize dimension $p$};
        \node[inner sep=0pt] at (4.1, -3.2) {\scriptsize dimension $p$};
    \end{tikzpicture}
    \vspace{-1.0em}
    \centering
    \caption{Test risks under random cropping and sign flipping. \emph{Left}. Same setup as \Cref{sec:DA}. \emph{Right}. Signal ratio $\rho^*=0.2$ and the bottom $\lceil s_0(1 - \rho^*)p \rceil$ coordinates are known to be zero. }
    \vspace{-2em}
    \label{fig:crop:flip}
\end{figure}

\renewcommand{\thefootnote}{\fnsymbol{footnote}} 
These results enable us to understand the effects of DA through two possible approaches: To simulate a logistic regression on Gaussians, i.e.~a high-dimensional convex optimization with simple distributions, or to solve the nonlinear equations \eqref{EQs}, i.e.~a low-dimensional but highly non-convex optimization. We use the former approach with $m=200$ synthetic data and present results in Fig.~\ref{fig:perm} ($p=500$) and \ref{fig:crop:flip} ($k=30$); see \Cref{appendix:simulation:details} for full simulation details\footnote{Code is available at \href{https://github.com/KevHH/Dependent_Universality.git}{\texttt{github.com/KevHH/Dependent\_Universality}}.}.~A few observations:

\emph{Full permutations alleviate overfitting under a group structure}. Fig.~\ref{fig:perm} considers a high dimensional regime ($p/m = 2.5$), where logistic regression with no augmentations is expected to overfit. This typically manifests through a low training risk but a high test risk.  Fig.~\ref{fig:perm} shows that a full permutation  ($r_{\rm perm}=1.0$) of the i.i.d.~coordinates guards against this overfitting: The test risk improves substantially and as more and more augmentations are used. 

\emph{Full knowledge of the problem structure can be crucial}. A surprising observation from Fig.~\ref{fig:perm} is that using only a slightly smaller subgroup of permutations ($r_{\rm perm}=0.8$) results in test risks that are within error margins from that of no augmentations. This suggests that, at least within our model, exploiting the full set of permutation invariance is  critical for obtaining noticeable improvements. On the other hand, the sparsity setup for cropping and sign flipping does not allow the knowledge of the full structure by design, as it would otherwise imply that we know exactly which coordinates of $\beta$ to exclude from the regression. In the left plot of Fig.~\ref{fig:crop:flip}, perhaps surprisingly, we see that sign flipping and cropping both yield indistinguishable test risks from that under no augmentation.
% \footnote{As a sanity check, the observations in Fig.~\ref{fig:crop:flip} is confirmed across two sparsity regimes ($\rho^*=0.2$, $\rho^*=0.8$).}
For comparison, we perform Gaussian simulation in an artificial setup in the right plot of Fig.~\ref{fig:crop:flip}, where some
% $1-\rho^*$
portion of the null entries of $\beta^*$ are known and on which %the bottom $\lceil s_0(1 - \rho^*)p \rceil$ coordinates are fixed and known to be zero,
 cropping and sign flipping are always performed. The remaining amount of cropping and sign flipping are applied to the rest of the coordinates. There, cropping ensures that the known
 % $(1-\rho^*)\%$ of the 
 null coordinates never enter the regression and outperforms no augmentation and sign flipping. In summary, these observations send a cautionary message: the benefits of data augmentation may be concretely visible only when the full problem structure is known, 
 % $(\rho^*=0)$, 
 which is too stringent for many practical setups.

\section{Related Literature}\label{related_work}
\emph{Universality}. Universality has been extensively studied in the probability, statistics, and ML literature. In statistics, the risk of a wide range of penalized linear models and the behavior of the approximate message passing (AMP) algorithm have been demonstrated to be universal  \citep{korada2011applications, montanari2017universality,abbasi2019universality,oymak2018universality,han2023universality,dudeja2023universality,wang2024universality,chen2021universality}. Beyond linear regression, it has been proven that generalized linear models, perceptron models, max-margin classifiers, random feature models, and others obtained via empirical risk minimization exhibit universal behavior \citep{montanari2022universality,montanari2023universality,dandi2024universality,gerace2022gaussian,korada2011applications, han2023universality, hu2022universality}.% Early works assumed that the observations had independent entries, this has been since then relaxed. For example, 
~Those works either assume that the covariates are independent or that the observations projected on a wide range of directions are asymptotically normal (e.g., \cite{montanari2022universality}).
\cite{lahiry2023universality} further proved it for regularized linear regression if the covariates within each vector are block-dependent. %hese proofs employ a (continuous) Lindeberg method, which was first extended beyond the central limit theorem by \cite{rotar1976limit,rotar1979limit}, \cite{mossel2005noise}, and \cite{chatterjee2006generalization}. 
However, they still assumed the rows of the design matrix were independent. \cite{huang2022data} moved beyond this condition and showed that under certain stability conditions, machine learning estimators trained with data augmentation satisfy Gaussian universality. Those conditions are, however, hard to verify and this paper does not cover overparameterized logistic regression. %In this paper, the authors notably establish the universality of ridgeless regression in the overparameterized regime under a local dependence assumption.
%Moreover, \cite{dudeja2023universality,wang2024universality} study the universality of AMP for pseudo-random designs.

\emph{CGMT \& Exact Asymptotics}. 
% Over the last decade, the 
The exact asymptotic risk of many high-dimensional models has been extensively studied using a variety of techniques. These include AMP \citep{donoho2009message}, the cavity method \citep{opper2001naive}, the Gaussian Min-Max Theorem \citep{gordon1985some}, and the CGMT \citep{thrampoulidis2014gaussian}. Since its introduction, the CGMT has been successfully applied to analyze the risk of numerous high-dimensional models (e.g.~\citet{stojnic2013framework, stojnic2013upper, thrampoulidis2015regularized, thrampoulidis2018precise, akhtiamov2024regularized, aolaritei2022performance, javanmard2022precise, mignacco2020role}). Further developments have extended the method to settings with independent but non-identically distributed rows \citep{akhtiamov2024novel,dhifallah2021inherent}. We, for the first time, extend CGMT to dependent rows and columns.%, expanding the usefulness of CGMT. %However, contrary to our version they does not allow for dependent rows to be analyzed, a limitation we overcome in this paper.

\emph{Data Augmentation}. Data augmentation is a widely utilized practice in machine learning, particularly in deep learning (e.g.~ \citet{taqi2018impact,shorten2019survey,shorten2021text,volkova2024DA}). Given its critical role, a number of studies have investigated its theoretical properties (e.g.~ \citet{hanin2020data,huang2022data,chen2020group,lin2024good}). The first work that applies CGMT to the study of data augmentation is \citet{dhifallah2021inherent}, which examines the impact of noise injection on logistic regression, demonstrating that it serves as an implicit regularization. However, their results and analysis are limited to noise injection, a data augmentation strategy that preserves the independence of the covariates, which simplifies their study. In this paper, we develop a novel universality and CGMT result that allows us to significantly broaden the scope of data augmentations we can study.% We notably can study how much knowledge of the symmetries of the problem is needed to make data augmentation efficient.%Data augmentation is a very common practice in machine learning and especially in deep learning (e.g. \citep{taqi2018impact,shorten2019survey,shorten2021text,volkova2024DA}). Due to its importance a certain number of works studied its theoretical properties (e.g \citep{hanin2020data,huang2022data,chen2020group,lin2024good}). The most relevant to us is \citep{dhifallah2021inherent}, which studies the effect of a specific type of data augmentation, namely noise injection, on logistic regression and shows that it acts as implicit regularization. However, note that a key feature of the data augmentation scheme they study is that it preserves the independence of covariates. This significantly ease their study and mean that they did not need a more complicated CGMT result. Moreover note that their result partain only to noise injection and that our theorem allows us to study a much more general class of DA algorithm.
%As data augmentation (DA) has become an increasingly more viable method to increase effective sample size, a desire to understand its theoretical guarantees has also emerged. In \cite{dhifallah2021inherent}, the authors study noise injection for random feature models, in which artificial Gaussian noise is injected into the data. They exactly characterize the training and test error through a Gaussian equivalence conjecture, and show that such augmentation behaves like weighted ridge regularization. In \cite{huang2022data}, the authors quantify the effects of augmentation on the variance and limiting distribution of various estimators, illustrating that DA can have unexpected effects such as increasing uncertainty in the risk, and either shifting or not shifting the double-descent curve. Although not exactly DA, \cite{jain2024scaling} studies the theoretical properties of introducing surrogate data, and prove that it can also reduce test error even when the surrogate is highly unrelated to the original data.

\emph{Logistic Regression}. %Many works have studied logistic regression in the setting where $p \gg n$ (e.g.~\citet{van2008high,bach2010self,bunea2008honest}). 
Recently, substantial progress has been made in understanding the exact asymptotics of high-dimensional logistic regression in the proportional regime \citep{sur2019modern, deng2022model, kini2021label}. %Notably, the seminal work of \citet{sur2019modern} challenged conventional wisdom by demonstrating that, in the proportional regime, the maximum likelihood estimator is biased. 
%Furthermore, 
\citet{salehi2019impact} successfully adapted the CGMT framework to the logistic regression setting, enabling the analysis of regularized logistic regression. The issue of dependence in logistic regression, motivated by applications to biology and sociology, has also been well-studied \citep{bonney1987logistic, prentice1988correlated, reboussin2008locally, zorn2001generalized}. The dependence was notably modeled through mixed effects or latent variable models. In high dimensions, recent work has taken inspiration from the Ising model to propose a model in which  $y_i$ depends not only on $X_i$ but also other labels $(y_j)_{j\ne i}$, exhibiting network dependence \citep{mukherjee2021high}. Beyond the locally dependent setting, a recent wave of papers has studied properties of estimators trained on dependent data \citep{nagaraj2020least,zou2009generalization}. Beyond logistic regression, a number of papers have studied the asymptotics of Lasso estimators trained on time series data and in VAR models (e.g \cite{wong2020lasso,basu2015regularized,wong2016regularized, nicholson2020jmlr, guo2016HD}).

%\emph{High-dimensional statistics for mixing data} 

\section{Proof Overview} \label{proof_overview}
\emph{Universality.} The proof for the training risk builds upon a variant of the Lindeberg method (see e.g.~\citet{chatterjee2005concentration}) introduced by \citet{montanari2022universality}. One crucial difference, however, is that we need to account for the dependence between the observations. In \citet{montanari2022universality}, the independence of the data allows one to reduce the proof to showing that the mean of a particular function of $X_i^\intercal \beta$
% ,$G_i^\intercal \beta$ 
approaches zero for all $\beta \in \cS_p$. This is done by exploiting the asymptotic normality of $X_i^\intercal \beta$. However, in the presence of dependence, this reduction is no longer valid. Instead, we must control the mean of a function of $(X_{i_1}^\intercal \beta_1, \dots, X_{i_k}^\intercal \beta_k)$ for all $\beta_1, \dots, \beta_k \in \mathcal{S}_p$. This requires establishing its joint asymptotic normality and a more careful analysis.

The proof for the test risk builds on the observation that under \Cref{courage},
%  we can prove that 
the test risk  depends asymptotically only on $\hat \beta(\bX)^T\Sigma_{\rm new}\hat\beta(\bX)$. We further exploit universality of the training risk and \Cref{assumption:test:risk} to obtain that $\hat \beta(\bX)^T\Sigma_{\rm new}\hat\beta(\bX)$
converges in probability to a deterministic constant $\bar \chi^2$, which is the same limit as in the Gaussian case. The desired result then directly follows.% Note that \Cref{assumption:test:risk} is verified for our data augmentation applications using our dependent CGMT result (see  \Cref{thm:DA:risk} in \Cref{appendix:DA:EQs}).
% we verify \cref{assumption:test:risk} using our novel CGMT result for our data augmentation applications (seeTEXT)

%\emph{CGMT.} As with the classical CGMT, our dependent CGMT is proved by relating two carefully constructed Gaussian processes under the Gaussian comparison theorem \citet{gordon1985some}. The main difference is that our Gaussian process for the auxiliary optimization $\psi_{\cS_w, \cS_u}$ in \eqref{eq:CGMT:AO} is designed to account carefully for the dependence structure of the Gaussian matrix $\bH$ in the primary problem \eqref{eq:CGMT:PO}.

\emph{ CGMT for Data Augmentation (DA).} In DA, the covariance of a set of augmented data $\{ \phi_1(Z_1),$ $\ldots, \phi_k(Z_1)\}$ is completely described by the variance of the individual data points $\Var[\phi_1(Z_1)]$ and the covariance between two differently transformed data $\Cov[\phi_1(Z_1), \phi_2(Z_1)]$. As a result, this satisfies the low-rank dependence assumption of our CGMT (\Cref{assumption:CGMT:low:rank}) with $M=2$. However, the actual application of the CGMT is more subtle since the logistic regression \eqref{eq:betahat} is not a priori in the form of the primary optimization \eqref{eq:CGMT:PO}. Similar to \citet{thrampoulidis2016recovering, salehi2019impact, dhifallah2021inherent}, we first move the data $X_i$ outside the logarithm in \eqref{eq:betahat} via Lagrange multipliers. This yields a formulation similar to \eqref{eq:CGMT:PO} involving a high-dimensional $\R^{p \times n}$ matrix $\bX=(X_1, \ldots, X_n)$. Due to the presence of the labels $y_i$ and their nonlinear dependence on the data, CGMT can only be applied to a suitably projected version of $\bX$, say $P \bX$, that is uncorrelated with both the labels $y_i$ and the remainder $(I_p-P)\bX$. In the isotropic and independent case, $PX_i$ and $(I_p-P)X_j$ are uncorrelated for any projection matrix $P$, whereas $y_i$ depends on $X_i$ only through $X_i^\intercal \beta^*$, so one may choose $P$ to project onto the subspace orthogonal to $\beta^*$. In the dependent case, $PX_i$ and $(I_p-P)X_j$ may still be correlated, as $\Cov[X_i, X_j]$ does not necessarily commute with $P$. In DA, this is further complicated by the fact that $y_i$ depends on the unaugmented data $Z_i$ instead of $X_i$. 
Choosing a suitable projection $P$ is thus highly non-trivial, and we develop such a $P$ for DA in \Cref{appendix:DA:cgmt:results}.
% this is achieved by simply choosing $P$ to be a projection onto 
% this is achieved by projecting the Gaussian data matrix onto a subspace that is independent of $(y_i)$; the choice of the projection is simple, as the Gaussian matrix is invariant under orthogonal transformations. In the dependent case, 

\section{Conclusion and Future Work}\label{conclusion}

We have shown that, for high-dimensional logistic regression, the pipeline of analysis of Gaussian universality and CGMT extends readily to dependent 
% block dependent 
data, and the asymptotic risks are again completely characterized by the mean and the variance of the data. This has many useful implications, from allowing us to perform Gaussian simulations in lieu of the actual data, to obtaining low-dimensional scalar equations that capture the behavior of the estimator, as we have demonstrated in simple examples of data augmentation. In fact, the majority of our analysis is not exclusive to logistic regression and can be directly extended to any classification algorithm such as SVM that relies on $(X_i)$ solely through its one-dimensional projections $(X_i^T\beta)$ (see \Cref{proof_overview}). Moreover, our dependent CGMT is not tied to the logistic model and relies only on a low-rank dependence assumption. An interesting future line of work would be to extend our analysis to high-dimensional models such as random feature models and to extend our 
dependency assumptions 
% block dependence assumption 
to a more general mixing condition \citep{bradley2005basic}.
% or other forms of dependence. 
As demonstrated in our plots, there is also no reason that universality should be a uniquely sub-Gaussian phenomenon, as opposed to a proof artifact. Extending this to other distributions would constitute a valuable extension of our work. 

In our simulations, we also observed \emph{non-universality of the training trajectories}. In Fig.~\ref{fig:perm} and \ref{fig:crop:flip}, most estimates $\hat \beta$ are obtained via gradient descent with a learning rate $0.1$ until either convergence or $10^6$ steps are completed. Two exceptions are the t-distribution and the uniform distribution in Fig.~\ref{fig:perm}: Numerically, we find that different learning rates are required to converge to the global minimum within $10^6$ steps. Indeed, our results establish the universality of the \emph{global minima}, but do not answer whether the \emph{training trajectories to reach these minima} are universal, since the latter question is specific to the optimization methods employed. In Fig.~\ref{fig:train:trajectory} in the appendix, we observe that with the same learning rate, the training loss curves differ for uniform and t distributions, but agree for the remaining distributions. An interesting follow-up question to investigate is whether universality holds for training trajectories under different optimization methods.

\clearpage
\acks{
MEM is supported by the National Science Foundation Graduate Research Fellowship under Grant No. DGE 2140743. KHH is supported by the Gatsby Charitable Foundation. 
}

\bibliography{refs}

\clearpage
\appendix

\noindent
The appendix is organized as follows:
\begin{itemize}[topsep=0.2em, parsep=0em, partopsep=0em, itemsep=0em, leftmargin=1em]
    \item \Cref{appendix:definition:notation} presents additional definitions and notation used throughout the appendix.
    \item \Cref{appendix:additional} state additional results. This includes the characterizing \eqref{EQs} for selected data augmentation in \Cref{appendix:DA:EQs}, a comparison of our full dependent CGMT to the classical CGMT in \Cref{appendix:statement:CGMT}, a generalized logistic model in \Cref{appendix:general:model}, {and a brief discussion on \Cref{assumption:CGMT:low:rank} in \Cref{appendix:low:rank:discussion}}.
    \item \Cref{appendix:simulation:details} includes simulation details.
    \item \Cref{app_b} proves training risk universality in \Cref{mainthm} for the more general model in \Cref{appendix:general:model} under block dependence. The generalized result is stated in \Cref{appendix:universality:general}. Note that in \Cref{app_b,important_lemmas}, we temporarily convert the $0/1$ labels to $\pm1$ as it simplifies the proofs; the equivalence between the two label schemes is performed in \Cref{appendix:label:equivalence}.
    \item \Cref{lorelai} proves training risk universality in \Cref{mainthm:extend} under \Cref{general}(i) for $m$-dependent processes.
    \item \Cref{lorelai2} proves training risk universality in \Cref{mainthm:extend} under \Cref{general}(ii) for mixing processes.
    \item \Cref{proof_second_eq} proves test risk universality in \Cref{mainthm,mainthm:extend}.
    \item \Cref{important_lemmas} collects important lemmas used for the proofs in \Cref{app_b,proof_second_eq}.
    \item \Cref{equivalence_sec} contains a result that establishes the equivalence of training losses under deletion of small blocks, which is utilised in the big-block-small-block technique applied in \Cref{lorelai,lorelai2}. 
    \item \Cref{check:augmentation} verifies \Cref{gauss_approx} for different augmentation schemes.
    \item \Cref{appendix:auxiliary:lemma} collects auxiliary lemmas  used for the proofs in \Cref{proof_second_eq,check:augmentation}.
    \item \Cref{appendix:proof:CGMT} proves the dependent CGMT.
    \item \Cref{appendix:DA:cgmt:results} present all intermediate optimizations used for applying CGMT to analyze data augmentation. We also include results that verify the CGMT conditions for different augmentations.
    \item \Cref{appendix:proof:DA} proves all results in \Cref{appendix:DA:cgmt:results}.
\end{itemize}

\section{Additional Definitions and Notations} \label{appendix:definition:notation}

% \subsection{Definitions} 
Our results hold under the assumption that the random vectors $(X_i)$ are sub-Gaussian. We present here a formal definition of sub-Gaussianity.
\begin{definition}\label{ui}
    We say that a random vector $Y \in \R^p$ is sub-Gaussian with constant $\textnormal{\textsf K}$ if, for all vectors $\textbf{v} \in \R^p$, we have $$\mathbb{E}\l[\exp\l(\lambda\lip\textbf{v},Y-\mathbb{E}(Y)\rip\r)\r]\le \exp\l(\textsf C\lambda^2\|\textbf{v}\|_2^2\textnormal{\textsf K}^2\r)\quad \text{ for all } \lambda > 0,$$ for some absolute constant $\textsf C > 0$. 
\end{definition} If $Y$ is sub-Gaussian, then the norm of its covariance matrix is well controlled (see \lemmaref{cabris} for more details). Furthermore, a number of results assume that the data is locally dependent, as defined in \cite{ross2011fundamentals}.

\begin{definition}
    Let $(X_{i})_{i\le p} \in \R^p$ be a random vector. We say that it is locally dependent if for all $i\le p$ there exists a subset $\mathcal{N}_{i}\subset[p]$ such that $X_{i}$ is independent from $(X_{k})_{k\not\in \mathcal{N}_i}$. We call $\mathcal{N}_i$ the dependency neighborhood of $X_i$.
\end{definition}
% Note that if $Y$
A similar definition can be made for random arrays:
\begin{definition}
    Let $(X_{i,j})_{i\le p_1,j\le p_2}$ be a random array. We say that it is locally dependent if for all $i\le p_1$ and $j\le p_2$ there exists a subset $\mathcal{N}_{i,j}\subset[p_1]\times [p_2]$ such that $X_{i,j}$ is independent from $(X_{k,l})_{(k,l)\not\in\mathcal{N}_{i,j}}$. 
\end{definition}
% Note that if $Y$ is assumed to be sub-Gaussian vector then the following property holds
% \begin{proposition}
%     If $Y$ is a sub-Gaussian vector with constant $\sigma^2$ then for all $p\ge 1$ we have that $$\mathbb{E}(\|Y$$
% \end{proposition}
% \subsection{Notation}
Throughout the appendix we use the following notation:
\begin{itemize}[topsep=0.2em, parsep=0em, partopsep=0em, itemsep=0em, leftmargin=1em]
    \item For a sequence $(W_i)$ and a set $B\subset\mathbb{N}$, we let $W_B$ designate $(W_i)_{i\in B}$.
    \item Recall the definition of the blocks $\mathcal B_i$ in \eqref{eq:blockdepdef}. In our block dependent proofs, we may assume $\mathcal  B_i = \{i, i+1, \hdots, i+k-1\}$ for notational simplicity, without any loss of generality. 
    \item For a matrix $A \in \R^{n\times n}$, $a_{\mathcal B_i}$ denotes $(a_{ij})_{j\in\mathcal B_i} \in \R^k$. 
    \item For a set $\mathcal S$ and $\delta > 0$, we let $\mathcal S_\delta$ designate a minimal $\delta\sqrt p$-net of $\mathcal S$. 
    \item We use $x \vee y := \max\{x, y\}$ and $x \wedge y := \min\{x, y\}$. 
    \item The function $\bm 1^\pm(x)$ will be used to denote the sign function $\text{sgn}(x) = \mathbb I(x \geq 0) - \mathbb I(x < 0)$.
    \item Any constant in sans serif font such as $\textsf{C}_1$, $\textsf{C}_2$, and so on, depends on at most the constants $\textsf{L}$, $\textsf{K}_X$, and $\kappa$ given in our assumptions. If it further depends on $\delta$ for example, then it will be written as $\textsf{C}_\delta$ or $\textsf C(\delta)$. 
    \item We write $\hat\beta(\bX)$ as simply $\hat\beta$ when it is clear from context.
    \item For a matrix $\mathbf Y \in \R^{n\times p}$ and a given set $\mathcal S \subseteq \R^p$, the ``scaled" operator norm on $\mathcal S$ is defined via
    \begin{align}
        \|\bm Y\|_{\mathcal S} := \sup_{\beta\in\mathcal S}\|\mathbf Y\beta\|.\label{eq:goodcatch}
    \end{align}
\end{itemize}

\section{Additional Results}\label{appendix:additional}

\subsection{Characterizing equations for selected data augmentations}\label{appendix:DA:EQs}

As discussed in \Cref{sec:cgmt}, the dependent CGMT allows us to derive explicitly a set of deterministic, low-dimensional equations that capture the asymptotic behavior of a logistic regression under data augmentations. As an example, we compute this explicitly under a further simplifying assumption on the covariance structure of the augmented data. To state the assumption, let $\Sigma_o \coloneqq \Var[Z_1] \in \R^{p \times p}$, $\Sigma \coloneqq \Var[X_1] = \Var[\phi_1(Z_1)] \in \R^{p \times p}$, and $\Sigma_o^\dagger, \Sigma^\dagger$ be their respective pseudo-inverses.

\begin{assumption} \label{assumption:CGMT:var} Write $ \Sigma_* \coloneqq ( \Sigma^\dagger )^{1/2} \, \Cov[ \phi_1(Z_1)\,,\, \phi_2(Z_1) ] \,  ( \Sigma^\dagger )^{1/2}$. Assume that
\begin{align*}
    (i) \; \Sigma_* \;=&\; 
    ( \Sigma^\dagger )^{1/2}\, \Cov[ \phi_1(Z_1)\,,\, Z_1 ]  ( \Sigma_o^\dagger )^{1/2}
    &\text{ and }&&
    (ii) \; \Sigma_*^2 \;=&\; \Sigma_*\;.
\end{align*} 
\end{assumption}
Since $\phi_1$ and $\phi_2$ are i.i.d.~transformations, \Cref{assumption:CGMT:var}(i) holds for example under an invariance assumption, $Z_1 \overset{d}{=} \phi_1(Z_1)$. \Cref{assumption:CGMT:var}(ii) requires that the eigenvalues of $\Sigma_*$ consist of only zeros and ones. Note that $\Sigma_*$ is symmetric and idempotent, and therefore a projection matrix; this property is exploited throughout the CGMT formula computation in \Cref{appendix:proof:CGMT}. 

We verify \Cref{assumption:CGMT:var} for the cases of no augmentation, random permutation and random sign flipping in \Cref{appendix:DA:special:cases}. Note that \Cref{assumption:CGMT:var} is more restrictive than necessary: While it does not cover random cropping, the CGMT theorem does apply to random cropping and the only difference is that one cannot use \Cref{assumption:CGMT:var} to simplify certain algebraic calculations, resulting in a more complicated set of equations than \eqref{EQs}. We clarify this in \Cref{appendix:DA:special:cases}.

To apply the CGMT to obtain a deterministic set of equations, one needs to establish the equivalence of multiple optimization problems. We state them in \Cref{appendix:DA:cgmt:results}, which includes the original optimization \eqref{OO} (i.e.~\eqref{eq:DA:logistic} in \Cref{sec:DA}), the primary optimization \eqref{PO} (i.e.~$\Psi_{\cS_w, \cS_u}$ in \Cref{thm:CGMT_2}), the auxiliary optimization \eqref{AO} (i.e.~$\psi_{\cS_w, \cS_u}$ in \Cref{thm:CGMT_2}), a low-dimensional scalar optimization \eqref{SO} and a low-dimensional deterministic optimization \eqref{DO}, whose solutions are characterized by \eqref{EQs}. We state the main result here.

\begin{theorem}[Effect of data augmentation on the test risk] \label{thm:DA:risk} Let $\hat \beta(\bX, \bX^\Phi)$ be the estimator fitted via \eqref{OO} with $S=\cS_p$. Assume that Assumptions \ref{blockdepass} -- \ref{courage} hold, that the minimizer-maximizers of \eqref{DO} are within the interior of the domain of optimization and \Cref{assumption:CGMT:var} holds. Then
\begin{align*}
    | R_{\rm test}( \hat \beta(\bX, \bX^\Phi)) -  R^G_{\rm test}( \hat \beta(\bG, \bG^\Phi))  | \;\overset{\P}{\rightarrow}&\; 0 
    &\text{ and }&&
     | 
     R_{\rm test}( \hat \beta(\bX, \bX^\Phi)) -
     \bar R_{\rm test}(\bar \chi^{r, \theta, \sigma, \tau}_2)
     |
     \;\overset{\P}{\rightarrow}&\; 0 
     \;,
\end{align*}
where $(r, \theta, \sigma, \tau)$ solves the system of equations \eqref{EQs}, $ \bar \chi^{r, \theta, \sigma, \tau}_2$ is defined in \eqref{DO}, and 
\begin{align*}
    \bar R_{\rm test}(\bar \chi) \;\coloneqq&\; 
    \mean_{\eta \sim \cN(0,1)}\big[ 
        \ell_{\rm test} 
        \big( \sqrt{\bar \chi}\, \eta \,,\, \| \Sigma_{\rm new}^{1/2} \beta^* \| \eta \big)
    \big]
    \;.
\end{align*}
\end{theorem}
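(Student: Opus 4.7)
The plan is to establish the two claims in the theorem through a sequential reduction: first transfer from the data $\bX$ to Gaussian surrogates $\bG$ via the universality theorem, and then reduce the Gaussian problem, via the dependent CGMT and a chain of intermediate optimizations (\eqref{PO} $\to$ \eqref{AO} $\to$ \eqref{SO} $\to$ \eqref{DO}), to the deterministic scalar system \eqref{EQs}. The augmented data $(X_i)$ are block-dependent with block size $k$ per \Cref{blockdepass}, so invoking the generalized universality result from \Cref{appendix:universality:general} yields $| R_{\rm test}( \hat \beta(\bX, \bX^\Phi)) -  R^G_{\rm test}( \hat \beta(\bG, \bG^\Phi))  | \overset{\P}{\rightarrow} 0$ directly, reducing the remainder of the argument to the Gaussian model. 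This uses \Cref{courage} together with the test-risk universality machinery (Assumption~\ref{assumption:test:risk}), which in turn will be verified as a byproduct of the CGMT analysis below.

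To obtain the deterministic characterization, I would first rewrite the logistic regression \eqref{OO} in the primary form \eqref{PO} of \Cref{thm:CGMT_2} by Lagrange-dualizing the term $\log(1 + e^{X_i^\intercal \beta})$ (as in \citet{thrampoulidis2016recovering,salehi2019impact,dhifallah2021inherent}), which pulls $\bG$ outside the nonlinearity. The key subtlety, flagged in \Cref{proof_overview}, is choosing a projection $P$ such that $P\bG$ is uncorrelated with both the labels $y_i$ (which depend on $Z_i$, not on $X_i$) and with $(I-P)\bG$; this is the essential use of \Cref{assumption:CGMT:var}, whose idempotency condition $\Sigma_*^2 = \Sigma_*$ ensures the existence of such a projection, and condition (i) guarantees the correct cross-covariance with the unaugmented $Z_i$. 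With this projection in hand, the low-rank covariance structure $\Cov[\bH_{ji}, \bH_{j'i'}] = \Sigma^{(1)}_{jj'} \tilde\Sigma^{(1)}_{ii'} + \Sigma^{(2)}_{jj'} \tilde\Sigma^{(2)}_{ii'}$ (reflecting $\Var[\phi_1(Z_1)]$ and $\Cov[\phi_1(Z_1),\phi_2(Z_1)]$) places us in the regime $M=2$ of \Cref{assumption:CGMT:low:rank}, so \Cref{thm:CGMT_2}(i) gives the two-sided comparison between \eqref{PO} and the auxiliary optimization \eqref{AO}.

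From \eqref{AO}, the standard CGMT reduction simplifies the Gaussian vectors $\bg_l, \bh_l$ to their norms by rotational invariance, collapsing the optimization over $w \in \R^p$ and $u \in \R^n$ into a handful of scalar quantities $(r, \theta, \sigma, \tau)$; this yields \eqref{SO}, and applying Sion's min-max theorem (legitimate since $f$ is convex-concave and $\cS_w, \cS_u$ are compact convex) along with concentration of Gaussian norms yields the deterministic \eqref{DO}. The first-order optimality conditions for the saddle point of \eqref{DO}, which by assumption lies in the interior of the domain, are exactly \eqref{EQs}. Invoking \Cref{cor:CGMT} with $\cA_p$ taken as the set $\{\beta \in \cS_p : |(\beta^\intercal \Sigma_{\rm new}\beta)^{1/2} - \bar\chi^{r,\theta,\sigma,\tau}_2| < \epsilon\}$ gives both the verification of \Cref{assumption:test:risk} needed above, and the concentration $\hat\beta(\bG,\bG^\Phi)^\intercal \Sigma_{\rm new} \hat\beta(\bG,\bG^\Phi) \overset{\P}{\rightarrow} (\bar\chi^{r,\theta,\sigma,\tau}_2)^2$. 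Combining this with \Cref{courage}, which ensures the test risk asymptotically depends on $\hat\beta$ only through $(\hat\beta^\intercal \Sigma_{\rm new}\hat\beta, \beta^{*\intercal}\Sigma_{\rm new}\beta^*)$, and replacing $(G_{\rm new}^\intercal \hat\beta, G_{\rm new}^\intercal \beta^*)$ by its Gaussian limit with the appropriate correlation structure, yields $R^G_{\rm test}(\hat\beta(\bG,\bG^\Phi)) \overset{\P}{\rightarrow} \bar R_{\rm test}(\bar\chi^{r,\theta,\sigma,\tau}_2)$. The main obstacle I anticipate is engineering the projection $P$ so that the CGMT can be applied cleanly despite the label $y_i$ depending on $Z_i$ rather than the augmented $X_i$; this is precisely where \Cref{assumption:CGMT:var} does the heavy lifting, and without it the resulting system would be substantially more intricate than \eqref{EQs}.
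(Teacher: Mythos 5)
Your proposal follows essentially the same route as the paper: reduce to the Gaussian problem via the generalized universality theorem, verify \Cref{assumption:test:risk} by chaining \eqref{GO} $\to$ \eqref{PO} $\to$ \eqref{AO} $\to$ \eqref{SO} $\to$ \eqref{DO} (with \Cref{assumption:CGMT:var} supplying the projection $P_*$ and the $M=2$ low-rank structure), and read off \eqref{EQs} as the interior first-order conditions of \eqref{DO}. The only slip is notational: $\bar\chi^{r,\theta,\sigma,\tau}_2$ is the limiting \emph{variance} $\hat\beta^\intercal \Sigma_{\rm new}\hat\beta$, so the concentration should read $\hat\beta^\intercal \Sigma_{\rm new}\hat\beta \xrightarrow{\P} \bar\chi^{r,\theta,\sigma,\tau}_2$ (and the set $\cA_p$ is defined via $|(\beta^\intercal\Sigma_{\rm new}\beta)^{1/2} - (\bar\chi^{r,\theta,\sigma,\tau}_2)^{1/2}|$), not the square you wrote.
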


\Cref{thm:DA:risk} shows that the test risk is completely characterized by a 1d quantity $\bar \chi^{r, \theta, \sigma, \tau}_2$. This quantity is completely determined by the parameters $(\alpha, \sigma_1, \sigma_2, \tau_1, \tau_2, \nu_1, \nu_2, r_1, r_2, \theta)$, defined as solutions to the system of 10 non-linear equations
\begin{align}
    \begin{cases}
        0
        \;=\;
        \theta \bar \kappa_*^2 
        - 
        \frac{ \alpha \bar \kappa_*^2}{\sigma_2 \tau_2}
        -
        \frac{r_2 \nu_2 \bar \kappa_* }{k}
        \mean \,
        \Big[
            \bar Z_1  
            \bone_k^\intercal
            u_{\bar Z, \varepsilon_1, \eta}
        \Big] 
        +
        r_2 \nu_2 \alpha \bar \kappa_*^2 
        \;,
        \\
        0
        \;=\;
        -
        \frac{1}{2 \tau_1} 
        -
        \partial_{\sigma_1} \bar \chi_1^{r, \theta, \sigma, \tau} 
        +
        \frac{r_1 \nu_1}{ k} \, 
        \mean\Big[
            \eta^\intercal \, \big(I_k - \frac{1}{k} \bone_{k \times k}\big)
             \, u_{\bar Z, \varepsilon_1, \eta} 
        \Big] 
        +
        \frac{r_1 \nu_1 \sigma_1 (k-1)}{ k} 
        +
        \frac{r_2 \nu_2 }{ k}
        \mean\Big[
            \eta^\intercal
            \frac{1}{k} \bone_{k \times k}
            u_{\bar Z, \varepsilon_1, \eta} 
        \bigg]
        \\
        \qquad\;
        +
        \frac{r_2 \nu_2\sigma_1 }{ k}
        \;,
        \\
        0
        \;=\; 
        - 
        \frac{1}{2 \tau_2}
        +
        \frac{\alpha^2 \bar \kappa_*^2}{2 \sigma_2^2 \tau_2}
        -
        \partial_{\sigma_2} \bar \chi_1^{r, \theta, \sigma, \tau} 
        +
        \frac{r_2 \nu_2 }{ k}
        \mean\Big[
            \bar Z_2
            \bone_k^\intercal
                u_{\bar Z, \varepsilon_1, \eta} 
        \Big]
        +
        r_2 \nu_2 \sigma_2 
        \;,
        \\
        0 
        \;=\;
        \frac{\sigma_1}{2 \tau_1^2}
        -
        \partial_{\tau_1} \bar \chi_1^{r, \theta, \sigma, \tau} 
        \;,
        \\
        0 
        \;=\;
        \frac{\sigma_2}{2 \tau_2^2}
        +
        \frac{\alpha^2 \bar \kappa_*^2}{2 \sigma_2 \tau_2^2}
        -
        \partial_{\tau_2} \bar \chi_1^{r, \theta, \sigma, \tau} 
        \;,
        \\
        0
        \;=\;
        - 
        \frac{r_1}{2 \nu_1^2}
        +
        \frac{r_1}{2 k} \,  \mean\Big[ \big\| 
            \big(I_k - \frac{1}{k} \bone_{k \times k}\big)
            ( u_{\bar Z, \varepsilon_1, \eta} + \sigma_1 \eta )
            \big\|^2 \Big]
        \;,
        \\
        0
        \;=\;
        -
        \frac{r_2}{2 \nu_2^2}
        +
        \frac{1}{4 r_2 \nu_2^2}
        +
        \mfrac{r_2}{2 k}
        \mean\bigg[
        \Big\| 
            \mfrac{1}{k} \bone_{k \times k}
            \Big(
                u_{\bar Z, \varepsilon_1, \eta} 
                -
                \mfrac{1}{r_2 \nu_2} \bar Y
                \bone_k 
                -
                \alpha \bar \kappa_* \bar Z_1 
                \bone_k
                +
                \sigma_1 \eta 
                +
                \sigma_2 \bar Z_2 \bone_k 
            \Big)
        \Big\|^2
        \bigg]\\
        \;\qquad+
        \mfrac{1}{\nu_2 k}
        \mean\bigg[
            \bar Y
            \Big(
                \bone_k^\intercal
                u_{\bar Z, \varepsilon_1, \eta} 
                -
                \mfrac{k}{r_2 \nu_2} \bar Y
                -
                k \alpha \bar \kappa_* \bar Z_1 
            \Big)
        \bigg]
        \;,
        \\
        0
        \;=\;
        \frac{1}{2\nu_1} 
        -
        \partial_{r_1} \bar \chi_1^{r, \theta, \sigma, \tau} 
        +
        \frac{\nu_1}{2 k} \,  \mean\Big[ \big\| 
        \big(I_k - \frac{1}{k} \bone_{k \times k}\big)
        ( u_{\bar Z, \varepsilon_1, \eta} + \sigma_1 \eta )
        \big\|^2 \Big]
        \;,
        \\
        0
        \;=\;
        \frac{1}{2\nu_2} 
        +
        \frac{1}{4 r_2^2 \nu_2}
        -
        \partial_{r_2} \bar \chi_1^{r, \theta, \sigma, \tau} 
        +
        \mfrac{\nu_2}{2 k}
        \mean\bigg[
        \Big\| 
            \mfrac{1}{k} \bone_{k \times k}
            \Big(
                u_{\bar Z, \varepsilon_1, \eta} 
                -
                \mfrac{1}{r_2 \nu_2} \bar Y
                \bone_k 
                -
                \alpha \bar \kappa_* \bar Z_1 
                \bone_k
                +
                \sigma_1 \eta 
                +
                \sigma_2 \bar Z_2 \bone_k 
            \Big)
        \Big\|^2
        \bigg]
        \\
        \;\qquad
        +
        \mfrac{1}{r_2 k}
        \mean\bigg[
            \bar Y
            \Big(
                \bone_k^\intercal
                u_{\bar Z, \varepsilon_1, \eta} 
                -
                \mfrac{k}{r_2 \nu_2} \bar Y
                -
                k \alpha \bar \kappa_* \bar Z_1 
            \Big)
        \bigg]
        \;,
        \\
        0 
        \;=\;
        \alpha \bar \kappa_*^2 
        -
        \partial_\theta \bar \chi_1^{r, \theta, \sigma, \tau} 
        \;.
    \end{cases}
     \tag{EQs} \label{EQs}
\end{align}
Here, $\bar Z = (\bar Z_0, \bar Z_1, \bar Z_2)$ and $\eta = (\eta_1, \ldots, \eta_k)$ are both independent low-dimensional standard Gaussians, $\varepsilon_1$ is an independent {\rm Logistic}-$(0,1)$ variable, $\bar Y \coloneqq  \ind_{\geq 0}\{ \bar \kappa_o \bar Z_0 + \bar \kappa_* \bar Z_1 - \varepsilon_1  \} 
=\ind\{ \bar \kappa_o \bar Z_0 + \bar \kappa_* \bar Z_1 - \varepsilon_1 \geq 0 \} 
$, and $\bar \kappa_*$, $\bar \kappa_0$ and $\bar \chi_1^{r,\theta,\sigma,\tau}$ are limits  defined in \eqref{DO} that are related to $\beta^*$ and the covariances of the original data as well as the augmented data. $u_{\bar Z,\varepsilon_1,\eta}$ can be viewed as a generalization of the proximal operator used in \cite{salehi2019impact}, in the sense that its is defined as an minimizer of the low-dimensional, random optimization problem 
\begin{align*}
    \min_{\tilde u \in \R^k}
    &\;
    \mfrac{1}{k} \bone_k^\intercal \rho(\tilde u)
    + 
    \mfrac{r_1 \nu_1}{2 k} \, 
    \big\| 
        \big(I_k - \mfrac{1}{k} \bone_{k \times k}\big)
        ( \tilde u  + \sigma_1 \eta )
    \big\|^2
    \;
    \\
    &\;
    +
    \mfrac{r_2 \nu_2}{2 k}
    \Big\| 
        \mfrac{1}{k} \bone_{k \times k}
        \Big(
            \tilde u 
            -
            \mfrac{1}{r_2 \nu_2} \bar Y
            \bone_k 
            -
            \alpha \bar \kappa_* \bar Z_1 
            \bone_k
            +
            \sigma_1 \eta 
            +
            \sigma_2 \bar Z_2 \bone_k 
        \Big)
    \Big\|^2
    \;.
    \tagaligneq \label{eq:EQs:proximal:define}
\end{align*}
While the system of equations is rather complicated, we show in \lemmaref{lem:EQs:iso} that it recovers exactly the system of $6$ non-linear equations in the case of isotropic data with no augmentation, derived in \citet{salehi2019impact}. As part of our proof, we also observe that $\sigma_1$, $\tau_1$, $\nu_1$ and $r_1$ are the additional parameters that arise due to augmentation.

\vspace{.5em}

\begin{proof}[Proof of \theoremref{thm:DA:risk}] By \theoremref{mainthm_2}, the conclusion of \theoremref{mainthm} \eqref{eq:main_first} holds for the logistic model \eqref{logmodel_two}. This in particular includes the data augmentation model \eqref{eq:DA:logistic} by considering an $m(k+1)$ dataset $(Z_{i'}, \phi_{i'(k-1)+1}(Z_{i'}), \ldots, \phi_{i'k}(Z_{i'}) )_{i' \leq m}$, setting the weights $\omega_i$ of the loss to be $1$ for all augmented data and $0$ for the unaugmented data, and setting the weights $a_{ij}$ in the labels such that the labels of $\phi_{i'(k-1)+1}(Z_{i'}), \ldots, \phi_{i'k}(Z_{i'})$ all depend only on $Z_{i'}$. Meanwhile, notice that the proof of \theoremref{mainthm} \eqref{eq:main_second} in \Cref{proof_second_eq} does not depend on the choice of the logistic model as long as training risk universality is estbalished. Therefore the test risk universality in \theoremref{mainthm} \eqref{eq:main_second} would hold for data augmentation and the stated assumptions, if \Cref{assumption:test:risk} is verified. 

To verify \Cref{assumption:test:risk}, we set $\bar \chi =  \bar \chi^{r, \theta, \sigma, \tau}_2$ in  \Cref{assumption:test:risk}, and combine  \lemmaref{lem:GO:PO}, \lemmaref{lem:PO:SO} and \lemmaref{lem:SO:DO} to relate \eqref{GO} to \eqref{DO}. By assumption, the minimizer-maximizers of \eqref{DO} are within the interior of the domain of optimization, so the converged risk of \eqref{DO} changes by $\Theta(\epsilon^2)$ depending on whether the optimization domain of $\beta$ requires $| (\beta^\intercal \Sigma_{\rm new} \beta)^{1/2} - (\bar \chi^{r, \theta, \sigma, \tau}_2)^{1/2} | > \epsilon$. This verifies \Cref{assumption:test:risk} and proves the universality of the test risk. The deterministic approximation then follows by substituting $\bar \chi =  (\bar \chi^{r, \theta, \sigma, \tau}_2)^{1/2}$  and applying \lemmaref{lem:EQs} to obtain \eqref{EQs}.
\end{proof}

\subsection{Dependent and classical CGMT results}\label{appendix:statement:CGMT}

The next result states the full version of our dependent CGMT, for which \theoremref{thm:CGMT_2} is a direct corollary. $\Psi_{\cS_w, \cS_u}$ and  $\psi_{\cS_w, \cS_u}$ are the risks under the primary optimization \eqref{eq:CGMT:PO} and the auxiliary optimization \eqref{eq:CGMT:AO} respectively, both defined in \Cref{sec:cgmt}; $\hat w_\Psi \in \cS_w$ is the minimizer of  $\Psi_{\cS_w, \cS_u}$.

\begin{theorem}[Dependent CGMT] \label{thm:CGMT:full} Suppose $\cS_w $ and $\cS_u$ are compact and $f$ is continuous on $\cS_w \times \cS_u$. Then the following statements hold:
\begin{enumerate}
    \item[(i)] For all $c \in \R$, 
    \begin{align*}
            \P(  \Psi_{\cS_w, \cS_u} \leq c ) \,\leq\,  2^M \P(  \psi_{\cS_w, \cS_u} \leq c )\;.
    \end{align*}
    \item[(ii)] If additionally $\cS_w$ and $\cS_u$ are convex and $f$ is convex-concave on $\cS_w \times \cS_u$, then for all $c \in \R$, 
    \begin{align*}
        \P(  \Psi_{\cS_w, \cS_u} \geq c ) \,\leq\,  2^M \P(  \psi_{\cS_w, \cS_u} \geq c )\;,
    \end{align*}
    and in particular, for all $\mu \in \R$ and $t > 0$, 
    \begin{align*}
        \P(  |  \Psi_{\cS_w, \cS_u} - \mu | \geq t ) \,\leq\,  2^M  \, \P(  | \psi_{\cS_w, \cS_u} - \mu | \geq t )\;.
    \end{align*}
    \item[(iii)] Assume the conditions of (ii). Let $\cA_p$ be an arbitrary open subset of $\cS_w$ and $A_p^c \coloneqq \cS_w \setminus \cA_p$. If there exists constants $\bar \psi_{\cS_w}$, $\bar \psi_{\cA^c_p}$ and $\eta, \epsilon > 0$ such that
    \begin{align*}
        \bar \psi_{\cA^c_p} \geq \bar \psi_{\cS_w} + 3 \eta \;,
        \quad 
        \P( \psi_{\cS_w, \cS_u} \leq \bar \psi_{\cS_w} + \eta  ) \,\geq\, 1 - \epsilon\;,
        \quad 
        \P( \psi_{\cA^c_p, \cS_u} \geq \bar \psi_{\cA^c_p} - \eta  ) \,\geq\, 1 - \epsilon\;,
    \end{align*}
    then $
        \P( \hat w_\Psi  \,\in\, \cA_p ) \,\geq\, 1 - 4 \epsilon
    $.
\end{enumerate}
\end{theorem}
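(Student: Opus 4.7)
The plan is to reduce the dependent CGMT to $M$ iterated applications of the classical (single-matrix) CGMT along an interpolating family of min-max problems. Since $\bH$ is Gaussian (being a linear image of the ambient Gaussian $\bG$), \Cref{assumption:CGMT:low:rank} allows the decomposition
\begin{align*}
    \bH \;\overset{d}{=}\; \msum_{l=1}^M (\Sigma^{(l)})^{1/2} G^{(l)} (\tilde\Sigma^{(l)})^{1/2}\;,
\end{align*}
where $(G^{(l)})_{l \leq M}$ are independent $\R^{p \times n}$ matrices with i.i.d.~standard Gaussian entries, since both sides are centered Gaussian with matching product-sum covariance. Enlarging the probability space with additional independent standard Gaussian vectors $(\bh_l, \bg_l)_{l \leq M}$, I would introduce the interpolating family $\Phi^{(0)}, \ldots, \Phi^{(M)}$ defined by the same min-max as $L_\Psi$ except that the first $m$ bilinear terms $w^\intercal (\Sigma^{(l)})^{1/2} G^{(l)} (\tilde\Sigma^{(l)})^{1/2} u$ are replaced with their AO counterparts $\|w\|_{\Sigma^{(l)}} \bh_l^\intercal (\tilde \Sigma^{(l)})^{1/2} u + w^\intercal (\Sigma^{(l)})^{1/2} \bg_l \|u\|_{\tilde \Sigma^{(l)}}$, so that $\Phi^{(0)} = \Psi_{\cS_w, \cS_u}$ and $\Phi^{(M)} = \psi_{\cS_w, \cS_u}$. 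The aim is to prove $\P(\Phi^{(m-1)} \leq c) \leq 2\, \P(\Phi^{(m)} \leq c)$ at each step for part (i), and the analogous $\geq c$ bound for part (ii); telescoping then yields the $2^M$ factor.

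For part (i), I would condition on every Gaussian variable except $G^{(m)}$ and reparametrize via $\tilde w = (\Sigma^{(m)})^{1/2} w$, $\tilde u = (\tilde\Sigma^{(m)})^{1/2} u$. This places the $G^{(m)}$-term into the canonical bilinear form $\tilde w^\intercal G^{(m)} \tilde u$ of the classical CGMT, with all other terms absorbed into a continuous penalty that is random under the conditioning. The general (non-convex) $\leq c$ bound of the classical CGMT then applies conditionally, yielding the desired one-step inequality. When $\Sigma^{(m)}$ or $\tilde\Sigma^{(m)}$ is singular, one restricts the reparametrisation to its range; the null directions contribute zero to the bilinear form and require only routine book-keeping. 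Taking expectations and iterating over $m$ complete part (i).

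Part (ii) is the main obstacle. The $\geq c$ direction of the classical CGMT requires the non-bilinear portion of the step-$m$ optimization to be convex-concave on a convex domain, so that Sion's minimax theorem can be used to flip min and max before Gordon's inequality is applied in the reverse direction. This structure is preserved by $f$ and by the yet-untransformed bilinear $G^{(l)}$-terms, but the previously introduced AO terms $\|w\|_{\Sigma^{(l)}} \bh_l^\intercal (\tilde \Sigma^{(l)})^{1/2} u$ are not globally convex-concave, because the scalars $\bh_l^\intercal (\tilde \Sigma^{(l)})^{1/2} u$ and $w^\intercal (\Sigma^{(l)})^{1/2} \bg_l$ can change sign. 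To circumvent this, I plan to introduce auxiliary scalar radii $\rho_l = \|w\|_{\Sigma^{(l)}}$ and $\tilde\rho_l = \|u\|_{\tilde\Sigma^{(l)}}$ for each previously transformed index and rewrite the corresponding AO terms with these radii held fixed; once fixed, the step-$m$ optimization becomes bilinear in $(w,u)$ plus convex-concave on a convex domain, so the classical $\geq c$ CGMT applies conditionally on both the non-$G^{(m)}$ Gaussians and the radii. Integrating out both layers of conditioning and iterating over $m$ yields part (ii).

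Finally, part (iii) follows by applying (i) to $\Psi_{\cA^c_p, \cS_n}$ at $c = \bar\psi_{\cA^c_p} - \eta$ and (ii) to $\Psi_{\cS_p, \cS_n}$ at $c = \bar\psi_{\cS_p} + \eta$. Combined with the separation $\bar\psi_{\cA^c_p} \geq \bar\psi_{\cS_p} + 3\eta$, these force $\Psi_{\cA^c_p} > \Psi_{\cS_p}$ on the intersection of two high-probability events, which implies $\hat w_\Psi \in \cA_p$; a union bound on the two failure events delivers the stated probability.
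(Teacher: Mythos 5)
Your decomposition $\bH \overset{d}{=} \msum_{l=1}^M (\Sigma^{(l)})^{1/2} G^{(l)} (\tilde\Sigma^{(l)})^{1/2}$ is valid, and your treatment of part (i) --- peeling off one $G^{(l)}$ at a time and invoking the one-sided ($\leq c$) classical CGMT conditionally on everything else --- is correct, since that direction needs only compactness and continuity of the conditionally frozen remainder. This is a genuinely different route from the paper's: the paper never interpolates, but instead compares $\Psi$ in one shot to the surrogate $w^\intercal \bH u + \sum_{l} \xi_l \|w\|_{\Sigma^{(l)}}\|u\|_{\tilde\Sigma^{(l)}}$ with $M$ fresh scalar Gaussians, verifies Gordon's second-moment conditions for all $M$ terms in a single computation (the cross-moment gap factorizes per $l$ into a product of two Cauchy--Schwarz deficits), and extracts the $2^M$ by conditioning on the $2^M$ sign patterns of $(\xi_1,\dots,\xi_M)$. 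Your version trades that covariance computation for $M$ conditional appeals to a black-box theorem; both are legitimate for (i), and your part (iii) is the same union-bound argument the paper defers to (though note that chasing the constants with the $2^M$ factors from (i) and (ii) gives $1-2^{M+1}\epsilon$ rather than $1-4\epsilon$; the paper's statement has the same issue).

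Part (ii), however, has a genuine gap. The $\geq c$ direction of the classical CGMT is proved by first exchanging $\min_w$ and $\max_u$ in the \emph{primary} problem via the minimax theorem --- which requires the entire objective of that problem, bilinear term included, to be convex-concave on convex sets --- and only then running Gordon's comparison on the flipped problem (the auxiliary problem is never required to be convex-concave; one only uses $\max\min \leq \min\max$ at the very end). In your iteration, at any step $m \geq 2$ the role of the primary problem is played by $\Phi^{(m-1)}$, whose objective already contains $\|w\|_{\Sigma^{(l)}}\bh_l^\intercal(\tilde\Sigma^{(l)})^{1/2}u + w^\intercal(\Sigma^{(l)})^{1/2}\bg_l\|u\|_{\tilde\Sigma^{(l)}}$ for $l<m$. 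Since $\bh_l^\intercal(\tilde\Sigma^{(l)})^{1/2}u$ changes sign over $\cS_n$, the first of these is a negative multiple of the convex function $\|w\|_{\Sigma^{(l)}}$ on part of the domain, so $\Phi^{(m-1)}$ is not convex-concave and the min-max swap is unavailable. Your proposed repair does not close this: if the radii $\rho_l=\|w\|_{\Sigma^{(l)}}$, $\tilde\rho_l=\|u\|_{\tilde\Sigma^{(l)}}$ are kept equal to the norms then nothing has changed; if they are frozen as external parameters then the admissible sets $\{w:\|w\|_{\Sigma^{(l)}}=\rho_l\}$ are level sets of seminorms and hence non-convex, so the hypotheses of the classical theorem still fail; and ``integrating out'' the radii is not meaningful, as they are functions of the optimization variables rather than random variables.

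A correct repair within your architecture is available: perform the min-max swap exactly once, on the original convex-concave $\Psi_{\cS_p,\cS_n}$, rewrite $\P(\Psi_{\cS_p,\cS_n}\geq c)$ as $\P(\min_u\max_w(-L_\Psi)\leq -c)$, and then iterate only the $\leq$-direction comparison (which needs no convexity) over the $M$ matrices of this flipped problem, finishing with the inequality $\max_u\min_w L_\psi \leq \min_w\max_u L_\psi = \psi_{\cS_p,\cS_n}$. This is, in effect, what the paper's one-shot argument does.
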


As a comparison, we remark that the standard CGMT in the isotropic, independent case is exactly the same as above with $M=1$, and stated for the loss 
\begin{align*}
    \Psi_{\mathcal{S}_w, \mathcal{S}_u}\;\coloneqq&\;
    \min_{w \in \mathcal{S}_w} \, \max_{u \in \mathcal{S}_u} \, w^\intercal \bH u 
    + f(w,u) \\
   \tilde \psi_{\mathcal{S}_w,\mathcal{S}_u}
    \;\coloneqq&\;
    \min_{w \in \mathcal{S}_w} \, \max_{u \in \mathcal{S}_u} \, \|w\|_2 \bh^\intercal u+w^\intercal \bg \|u\| + f(w,u)
    \;.
\end{align*}
In this case, $\bH$ is an $\R^{p \times n}$ matrix with i.i.d.~standard Gaussian entries, and $\bh$ and $\bg$ are again independent standard Gaussian vectors in $\R^n$ and $\R^p$ respectively. We refer interested readers to \cite{thrampoulidis2016recovering} for a detailed overview of CGMT and their Theorem 3.3.1 for the standard CGMT result.

\subsection{Generalizing the Model}\label{appendix:general:model}

Recall that the model stated in \Cref{modeloverview} assumes that $y_i$ is only a function of its own covariates:
\begin{align*}
    \P(y_i = 1\mid X_i) = \sigma\l(X_i^\intercal\beta^*\r).
\end{align*}
However, this formulation is quite limiting with regards to the types of dependence it can handle. Recall that a key property of data augmentation, for example, is that any transformation we apply to the covariates should not alter the associated label (meaning $y_1 = y_2 = \cdots = y_k$). This suggests that our model must be able to account for both the classical setup of logistic regression, and that of data augmentation, repeated measurements, and more. Thus, for the rest of our block-dependent results and proofs given in the appendix, we alter \Cref{logmodel} in the following way:

\begin{assumption}[Generalized Model]\label{logmodel_two}
There exists a block diagonal matrix $A = (a_{ij}) \in [0,1]^{n\times n}$ satisfying $a_{ij} = 0$ if $j\not\in\mathcal B_i$ and $\sum_{j\in\mathcal B_i}a_{ij} = 1$ for all  $1\leq i \leq n$, such that
\begin{align*}
    \P\l(y_i = 1\r) = \sigma\Big(\textstyle\sum_{j\in\mathcal B_i}a_{ij}X_j^\intercal\beta^*\Big).
\end{align*}
\end{assumption}
Recalling that our training risk is given by 
\begin{align*}
    \hat R_n(\beta, \bX) = \frac1n\sum_{i=1}^n\omega_i\l(\log(1 + e^{X_i^\intercal\beta}) - y_iX_i^\intercal\beta\r) + \frac{\lambda}{2n}\|\beta\|^2,
\end{align*}
we can specify certain values of our matrix $A$ and the weights $\omega:= (\omega_1, \hdots, \omega_n)$ to obtain relevant setups:
\begin{enumerate}
    \item[(i)] When $A = I_n$ and $\omega = (1, \ldots, 1)$, we recover the classic logistic regression framework.
    \item[(ii)] When $a_{ij} = \mathbb I(j = \min \mathcal B_i)$ and $\omega_i = \mathbb I(i \neq \min\mathcal B_i)$, we obtain the data augmentation framework as utilized in \cite{hu2022universality}. Note that this implies that the first element of each block is the original data point which defines the labels, but is not considered in the regression.
    \item[(iii)] If $(a_{ij})_{j\in\mathcal B_i} = (\tfrac1k, \hdots, \tfrac1k)$, the label is defined by an equally weighted sum of the block, which can be utilized for situations such as repeated measurements and peer effects.
\end{enumerate}

\subsection{Discussion on \Cref{assumption:CGMT:low:rank}}
\label{appendix:low:rank:discussion}

The low-rank covariance structure in  \Cref{assumption:CGMT:low:rank} is natural for setups with data augmentation, as illustrated by the next lemma.

\begin{lemma} \label{lem:low:rank:verify} Fix $m$ such that $m$ divides $n$ and write $k \coloneqq m/n$. Let $X_1, \ldots, X_m$ be i.i.d.~random vectors in $\R^p$, let $\phi_1, \ldots, \phi_n$ be i.i.d.~random $\R^d \rightarrow \R^d$ transformations, and let $\bG$ be an $\R^{p \times n}$-valued Gaussian matrix that matches the mean and covariance of the augmented data matrix 
\begin{align*}
    (\phi_1(X_1), \ldots, \phi_k(X_1), \ldots, \phi_{(m-1)k+1}(X_m), \ldots, \phi_{mk}(X_m))
    \;.
\end{align*}
Also denote $\Sigma'_1 \coloneqq \Var[\phi_1(X_1)]$ and $\Sigma'_2 \coloneqq \Cov[\phi_1(X_1), \phi_2(X_1)] $. Then $\bG$ satisfies \Cref{assumption:CGMT:low:rank} with 
\begin{align*}
    \Cov[\bG_{ji}, \bG_{j'i'}]
    \;=\;
    (\Sigma_1 - \Sigma_2) \ind\{i = i' \}
    +
    \Sigma_2 \ind\{ i \in N(i') \}
    \;\qquad\;
     \text{ for all } 
     i,i' \leq n 
     \text{ and }
     j,j' \leq p
     \;,
\end{align*}
where $N(i') \coloneqq \{ \lfloor (i'-1)/k \rfloor + 1, \ldots, \lfloor (i'-1)/k \rfloor + k  \} $ is the set of indices that correspond to differently augmented versions of the same data vector. Moreover, $\Sigma_1 - \Sigma_2$ is positive semi-definite.
\end{lemma}

\begin{proof}[Proof of \lemmaref{lem:low:rank:verify}] The proof is identical to that of \lemmaref{lem:cov:GPstar} in \Cref{appendix:proof:DA} by replacing $P_*^\perp$ with identity.  
\end{proof}

We do note that, however, in the actual application of CGMT, the Gaussian matrix considered in \Cref{assumption:CGMT:low:rank} is typically a slightly modified version of $\bH$, although most of the dependence structure is typically inherited. In our examples, this modification comes from a projection matrix analogous to those used in the i.i.d.~version of CGMT. A precise formulation is included in \Cref{appendix:DA:cgmt:results}, and the corresponding verification of \Cref{assumption:CGMT:low:rank}
 is presented in \lemmaref{lem:cov:GPstar} of \Cref{appendix:proof:DA}.

\section{Simulation Details} \label{appendix:simulation:details}

We present some additional simulation details on top of the setups described in \Cref{sec:DA} here. The regularization parameter is held at $\lambda = 0.01$, and the test loss is computed as the difference between the 0-1 loss achieved by $\hat \beta$ and that achieved by the oracle $\beta^*$. Below, we denote $\cN$, ${\rm Unif}$, $\Gamma_2$, ${\rm Exp}$ and $t_3$ respectively as a standard normal, a uniform distribution, a gamma distribution with shape $2$, an exponential distribution and a Student's t distribution with $3$ degrees of freedom, all shifted and rescaled to have zero mean and $1/p$ variance. We also write $\tilde t_3$ as $t_3$ rescaled to have unit variance.

\begin{figure}[t]
    \centering 
    \begin{tikzpicture}
        \node[inner sep=0pt] at (-3.8,0){\includegraphics[trim={.5em .5em 0em .5em},clip,width=.5\linewidth]{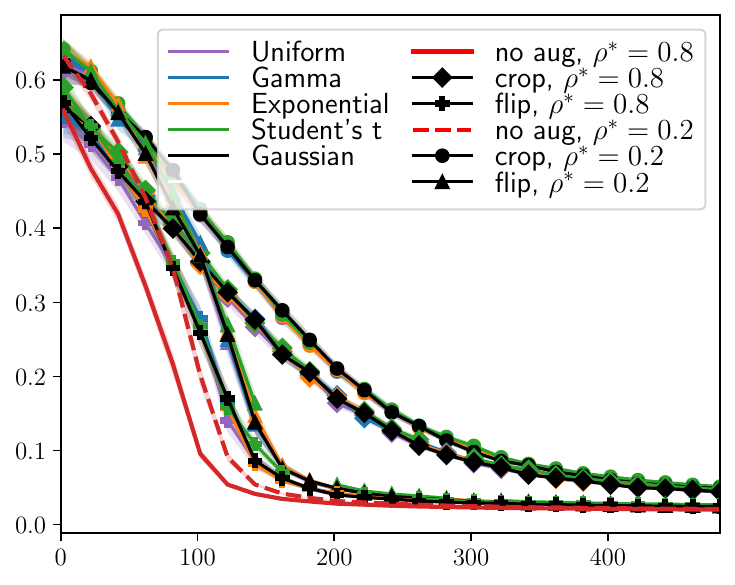}};
        \node[inner sep=0pt] at (3.8,0){\includegraphics[trim={.5em .5em .5em .5em},clip,width=.49\linewidth]{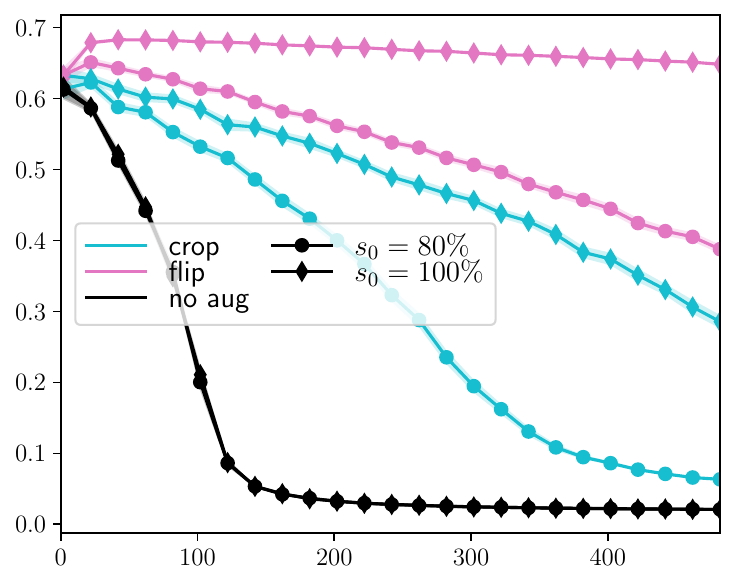}};

        \node[inner sep=0pt] at (-3.3, 3.1) {\scriptsize Train risk (excess 0-1 loss relative to $\beta^*$)};
        \node[inner sep=0pt] at (4.1, 3.1) {\scriptsize Train risk (excess 0-1 loss relative to $\beta^*$)};

        \node[inner sep=0pt] at (-3.4, -3.2) {\scriptsize dimension $p$};
        \node[inner sep=0pt] at (4.1, -3.2) {\scriptsize dimension $p$};
    \end{tikzpicture}
    \vspace{-1.0em}
    \centering
    \caption{Universality of training risks under cropping and sign flipping. The left and the right plots are the training risk analogues of the left and the right plots of Fig.~\ref{fig:crop:flip} respectively. }
    \vspace{-2em}
    \label{fig:crop:flip:train}
\end{figure}

\noindent
\textbf{Details for random permutations.} Fig.~\ref{fig:perm} concerns the performance of random permutations across different proportion $r_{\rm perm}$ of coordinates to permute and different number of augmentations $k$. Results are collected over $50$ random trials for augmented data and over $200$ random trials for unaugmented data. The dimension is fixed as $p=500$, $50$ groups are considered, and all group sizes are kept the same with $p_1 = \ldots = p_{50} = 10$.  In each trial,  $\beta^* \in \R^p$ is generated by concatenating $50$ groups of $10$ identical entries each, where the $50$ different entries are generated i.i.d.~from $\tilde t_3$. Every group of coordinates of the data are generated i.i.d.~according to $\cN$, ${\rm Unif}$, $\Gamma_2$, ${\rm Exp}$ and $t_3$, but additionally rescaled by a random group-dependent parameter drawn from $\Gamma(0.5,1)$. The choices of $r_{\rm perm}=0.8$ and $r_{\rm perm}=1.0$ correspond to random permutations performed respectively on the top $8$ and $10$ coordinates of each group. Plots with no augmentation are generated under Gaussian data.

\vspace{.5em}

\noindent
\textbf{Details for random cropping and sign flipping.} Fig.~\ref{fig:crop:flip} concerns the performance of random cropping and random sign flipping across different signal ratio $\rho^*$ and different data dimension $p$. Results are collected over $100$ random trials for augmented data and over $200$ random trials for unaugmented data. The number of augmentations is fixed as $k=30$. Plots with no augmentation are generated under Gaussian data.
\begin{itemize}[topsep=0.2em, parsep=0em, partopsep=0em, itemsep=0.2em, leftmargin=1em]
    \item For the setup without knowledge of zero coordinates (left plot of Fig.~\ref{fig:crop:flip} and left plot of Fig.~\ref{fig:crop:flip:train}), $\beta^*$ is generated such that a uniformly random subset of $\lceil (1-\rho^*)p \rceil$ coordinates are zero and the remaining entries are drawn i.i.d.~from $\tilde t_3$, and random cropping and sign flipping are performed on $r_{\rm flip} = r_{\rm crop} = 20 \%$ of the coordinates. Data are generated coordinate-wise i.i.d.~according to $\cN$, ${\rm Unif}$, $\Gamma_2$, ${\rm Exp}$ and $t_3$.
    \item For the setup where the bottom $\lceil s_0(1 - \rho^*)p \rceil$ coordinates are known to be zero (right plot of Fig.~\ref{fig:crop:flip} and right plot of Fig.~\ref{fig:crop:flip:train}),  the remaining coordinates of $\beta^*$ are generated such that a random subset of $\lceil (1-\rho^*)p \rceil - \lceil (1-\rho^*)p \rceil$ coordinates are zero and the rest are again drawn i.i.d.~from $\tilde t_3$. Cropping and sign flipping are always performed on the bottom  $\lceil s_0(1 - \rho^*)p \rceil$ coordinates, as well as also on $\lceil r\rceil - \lceil s_0(1 - \rho^*)p \rceil$ of the remaining coordinates, where $r=r_{\rm flip}=r_{\rm crop}=0.2$. Data are generated coordinate-wise i.i.d.~according to $\cN$.
\end{itemize}
 We also remark that even with knowledge of the coordinates, sign flipping does not outperform no augmentation: Unlike cropping, sign flipping does not explicitly leave out the zero coordinates.

\vspace{.5em}

\noindent
\textbf{Universality of risks}. Notice that the simulations are performed over different distributions on the coordinates of $Z_i$'s, shifted and scaled to have zero mean and the same variance. Notably, the uniform distribution obeys the sub-Gaussianity in \Cref{scalinggg}, the exponential and gamma distributions only satisfy sub-exponential tails, and the t-distribution is chosen with $3$ degrees of freedom, i.e.~with unbounded third moments. Universality behavior is observed across all distributions. Indeed in our proof, sub-Gaussianity is only assumed for convenience, and we conjecture that this is not a necessary assumption for \theoremref{thm:CGMT_2}. 

\vspace{.5em}

\noindent
\textbf{Non-universality of training trajectories as observed by the requirement of different learning rates.} In both Fig.~\ref{fig:perm} and \ref{fig:crop:flip}, gradient descent is employed to optimize the logistic regressor either until convergence or until $10^6$ steps are exhausted. Learning rate is chosen as ${\rm LR}=0.1$ across all simulations with three exceptions: ${\rm LR}=1$ for $t_3$ in Fig.~\ref{fig:perm} under $r_{\rm perm}=0.8$, ${\rm LR}=0.5$ for uniform distribution in Fig.~\ref{fig:perm} under $r_{\rm perm}=1.0$ and ${\rm LR}=0.8$ for uniform distribution in Fig.~\ref{fig:perm} under $r_{\rm perm}=0.8$. We find that for these three setups, ${\rm LR}=0.1$ does not lead to convergence within $10^5$ steps. We conjecture that this arises due to the lack of universality of the training trajectories, as illustrated in \Cref{fig:train:trajectory} and as discussed towards the end of \Cref{sec:DA}.

\begin{figure}[t]
    \centering 
    \begin{tikzpicture}

        \node[inner sep=0pt] at (0,0){\includegraphics[trim={.5em .5em 0em .5em},clip,width=.6\linewidth]{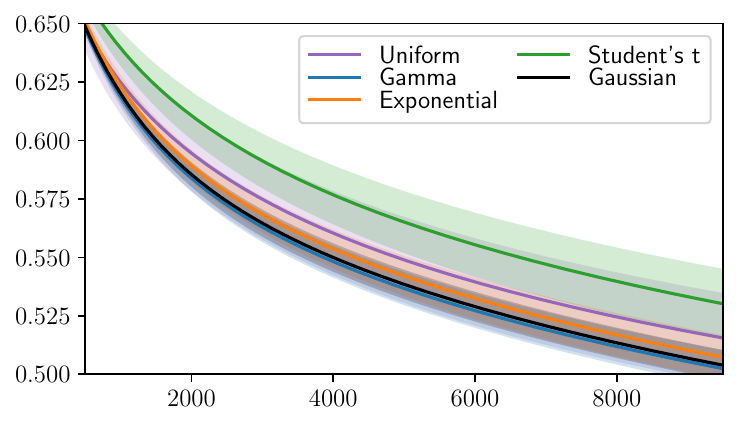}};

        \node[inner sep=0pt,rotate=90] at (-4.8, 0.1) {\scriptsize Training risk (cross-entropy)};

        \node[inner sep=0pt] at (0.3, -2.8) {\scriptsize number of training iterations};
    \end{tikzpicture}
    \vspace{-.5em}
    \centering
    \caption{Initial training loss curves for the random permutation setup in Fig.~\ref{fig:perm} with $\rho_{\rm perm} = 0.8$, $k=11$ and learning rate ${\rm LR}=0.1$.}
    \vspace{-2em}
    \label{fig:train:trajectory}
\end{figure}

% \vspace{.5em}

% \noindent
% \textbf{s}

\section{Proof of \theoremref{mainthm} \eqref{eq:main_first}: Training risk universality}\label{app_b}

\subsection{Restricting to $\mathcal S_p$}\label{the_discussion}

Recall that in Equation \eqref{eq:sp} we defined the set we are taking our minimum over as 
\begin{align*}
    \mathcal S_p := \l\{\beta \in \R^p : \|\beta\|_2 \leq \textsf L\hspace{-2pt}\sqrt p,\ \|\beta\|_\infty \leq \textsf Lp^{\frac{1-r}{2}}\r\}.
\end{align*}
This restriction is very commonly applied in many results on universality, such as \cite{lahiry2023universality, han2023universality, montanari2022universality}. In our case, the Euclidean norm is trivial, since by virtue of being a minimizer we know that 
\begin{align*}
    \hat R_n(\hat\beta) \leq \hat R_n(0) &\implies  \mfrac1n\msum_{i=1}^n\omega_i\l(\log(1+e^{X_i^\intercal\hat\beta}) - y_iX_i^\intercal\hat\beta\r) + \frac{\lambda}{2n}\|\hat\beta\|^2 \leq \log(2)\\
    &\implies \|\hat\beta\| \leq \sqrt{\frac{\log(4)}{\lambda}}\sqrt n.
\end{align*}
However, establishing the infinity norm bound with high probability is much more challenging. Such a bound is often proven through a leave-one-out approach, as in \cite{karoui2013, han2023universality}, in which one defines a new minimizer
\begin{align*}
    \hat\beta^{(s)} &:= \argmin_{\substack{\beta\in\mathbb R^p \\ \beta_s = 0}}\hspace{2pt}\frac1n\sum_{i=1}^n\log(1 + e^{-y_iX_i^\intercal\beta}) + \frac{\lambda}{2n}\|\beta\|_2^2
\end{align*}
for each $1 \leq s \leq p$. For a well-behaved design, it should be the case that $\|\hat\beta - \hat\beta^{(s)}\|$ is small, which leads to a bound on $\|\hat\beta\|_\infty$. This idea was even generalized in \cite{lahiry2023universality} for block dependence within observations, where the new minimizer assumes that $\beta_{\mathcal B_i} = 0$ for an entire block $\mathcal B_i$.

Since our dependence is also across observations, these approaches are not able to leverage the independence between coordinates which plays a crucial role in such bounds. A more viable approach is taken in Theorem 5 \& Lemma 13 of \cite{montanari2022universality}: they prove that under certain conditions, if a minimizer with bounded $\ell_2$ norm exists (with high probability), then there also exists a minimizer with both bounded $\ell_2$ and $\ell_\infty$ norms (with high probability). The only drawback to such an approach is that it requires a lower bound on the smallest singular value of a matrix, of the form 
\begin{align*}
    \sigma_{\min}(\bX\bX^\intercal) \geq \frac{p}{\textsf C}.
\end{align*}
This can be nearly impossible in some dependent set-ups: in the worst-case scenario, where all the rows in a neighborhood are identical, we know $\sigma_{\min}(\bX)$ is identically zero and such a bound is unattainable. However, certain DA schemes possess a nice enough structure to make this result hold and thus also the $\ell_\infty$ bound. In noise injection, for example, we can decompose the augmented matrix into a matrix of identical rows and the independent Gaussian noise. This latter matrix is asymptotically free from the first one, and thus we can obtain control on its smallest singular value. To extend this into a proof covering all DA schemes will be left to future work, and for now we will work under the mild constraint that $\hat\beta \in \mathcal S_p$. 

\subsection{Generalized Theorem} \label{appendix:universality:general}

In this section, we will prove \eqref{eq:main_first} from \theoremref{mainthm}. However, we actually prove a slightly more general result:
\begin{theorem}\label{mainthm_2} Let $\l(X_i, y_i(X_i)\r)_{i=1}^n$ and $\l(G_i, y_i(G_i)\r)_{i=1}^n$ be generated under Assumptions \ref{blockdepass}, \ref{scalinggg}-\ref{gauss_approx}, and \ref{logmodel_two}, where each $G_i \sim \mathcal N\l(0, \Var(X_i)\r)$. Then for any $\tilde{\mathcal S}\subseteq \mathcal S_p$,
\begin{align*}
    d_{\mathcal H}\l(\min_{\beta\in \tilde{\mathcal S}} \hat R_n(\beta; \bX), \min_{\beta\in \tilde{\mathcal S}} \hat R_n(\beta; \bG)\r) \to 0.
\end{align*}
\end{theorem}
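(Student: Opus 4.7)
The strategy is to adapt the Lindeberg interpolation approach of \citet{montanari2022universality} to swap entire dependency blocks at a time, with their pointwise normality input replaced by the joint $k$-fold version in \Cref{gauss_approx}. Since the $\min$ functional is non-smooth, I would first replace it by a soft-min surrogate
\begin{align*}
    F_\tau(\mathbf{W}) \;\coloneqq\; -\tau \log \msum_{\beta \in \tilde{\mathcal{S}}_\delta} \exp\bigl(- \hat R_n(\beta; \mathbf{W}) / \tau\bigr)\;,
\end{align*}
where $\tilde{\mathcal{S}}_\delta$ is a minimal $\delta\sqrt{p}$-net of $\tilde{\mathcal{S}}$. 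Because $\hat R_n(\cdot; \mathbf{W})$ is Lipschitz on $\mathcal{S}_p$ with a coefficient that is controlled via the sub-Gaussian norms of \Cref{scalinggg}, the discretization introduces error $O(\delta)$, while $|F_\tau - \min_{\beta \in \tilde{\mathcal{S}}_\delta} \hat R_n(\beta; \mathbf{W})| = O(\tau \log |\tilde{\mathcal{S}}_\delta|) = O(\tau p \log(1/\delta))$. The proof then reduces to bounding $d_{\mathcal H}(F_\tau(\mathbf{X}), F_\tau(\mathbf{G}))$ and choosing $\tau=\tau_n, \delta=\delta_n \to 0$ at appropriate rates.

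Next, I would telescope over the $n/k$ dependency blocks: let $\mathbf{W}^{(t)}$ be the hybrid dataset that agrees with $\mathbf{G}$ on the first $t$ blocks and with $\mathbf{X}$ on the remainder. For $h \in \mathcal{H}$,
\begin{align*}
    \mathbb{E}[h(F_\tau(\mathbf{G})) - h(F_\tau(\mathbf{X}))]
    \;=\;
    \msum_{t=1}^{n/k} \bigl( \mathbb{E}[h(F_\tau(\mathbf{W}^{(t)}))] - \mathbb{E}[h(F_\tau(\mathbf{W}^{(t-1)}))] \bigr)\;.
\end{align*}
By \Cref{blockdepass}, observations in $\mathcal{B}_t$ are independent of all others under both $\mathbf{X}$ and $\mathbf{G}$, so conditioning on everything outside $\mathcal{B}_t$ reduces each summand to comparing $\mathbb{E}[\Phi_t(X_{\mathcal{B}_t})]$ with $\mathbb{E}[\Phi_t(G_{\mathcal{B}_t})]$ for a smooth conditional functional $\Phi_t$. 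Under the generalized \Cref{logmodel_two}, both the logistic loss and the labels enter only through linear projections, so $\Phi_t$ depends on $X_{\mathcal{B}_t}$ only through the finite collection $\mathcal{P}_t \coloneqq \{X_i^\intercal \beta : i \in \mathcal{B}_t,\, \beta \in \tilde{\mathcal{S}}_\delta \cup \{\beta^*\}\}$.

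With this reduction in hand, I would Taylor-expand $\Phi_t$ and invoke \Cref{gauss_approx} applied to $k$-fold joint test functions across $i \in \mathcal{B}_t$. The zeroth-, first-, and second-order terms match because $X_i$ and $G_i$ share mean zero and identical covariance, so the per-block error collapses to a third-order remainder of order $(k/n)^{3/2}$ under the sub-Gaussian moment bounds of \Cref{scalinggg}. Summing across the $n/k$ blocks yields a total contribution of order $O((k/n)^{1/2})$, which, combined with the discretization and smoothing errors above, vanishes for suitable $\tau_n, \delta_n$.

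The main obstacle is controlling all these approximations simultaneously. The second derivatives of the soft-min $F_\tau$ acquire factors of $1/\tau$, so each per-block Taylor remainder is amplified and the overall error schematically behaves like $\tau^{-2} n^{-1/2} + \tau p \log(1/\delta) + \delta$; the parameters $\tau$ and $\delta$ must be tuned against $n$ carefully enough for all three terms to vanish. A second delicate point is that \Cref{gauss_approx} only compares bounded Lipschitz test functions, whereas a Taylor remainder naturally produces unbounded polynomial terms. Handling this will require splitting into a bulk event on which every projection $X_i^\intercal \beta$ is $O(1)$ (via sub-Gaussian concentration, using the constraint $\|\beta\|_\infty \leq \textsf{L} p^{(1-r)/2}$ in the definition of $\mathcal{S}_p$) and a tail event whose small probability absorbs any polynomial blow-up. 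Managing the interplay between smoothing, discretization, and the joint Gaussian-approximation assumption is where the real work lies, and is precisely the point at which block dependence complicates the argument relative to \citet{montanari2022universality}.
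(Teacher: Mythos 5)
Your high-level scaffolding (soft-min over a $\delta\sqrt p$-net, then a Lindeberg comparison block by block) matches the paper's, but the core of your argument has a gap that I do not think can be repaired in the form you propose. You telescope over the $n/k$ blocks discretely, so you need each per-block error $\E[h(F_\tau(\mathbf W^{(t)}))]-\E[h(F_\tau(\mathbf W^{(t-1)}))]$ to be $o(k/n)$. Your two proposed sources of smallness both fail. First, \Cref{gauss_approx} is purely qualitative: it gives $o(1)$ with no rate, so invoking it once per block and summing over $n/k$ blocks yields $(n/k)\cdot o(1)$, which need not vanish. Second, the moment-matching Taylor expansion does not produce a $(k/n)^{3/2}$ remainder here: $\Phi_t$ depends on $X_{\mathcal B_t}$ through the projections $X_i^\intercal\beta$, which are $O(1)$ random variables (not $O(n^{-1/2})$), so expanding in them gives an $O(1)$ remainder per block; expanding instead in the $O(n^{-1/2})$ coordinates $X_{ij}$ gives $O(n^{-3/2})$ per coordinate but $p\asymp n$ coordinates per observation and $n$ observations, accumulating to a diverging total. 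This is precisely the known failure mode of naive Lindeberg in the proportional regime, and it is why the paper instead uses the continuous interpolation $\bU^t=\sin(t)\bX+\cos(t)\bG$: there the $t$-derivative of the risk carries an explicit $\frac1n$ per observation, and the decisive point is that the Gaussian limit of each per-observation term is \emph{exactly zero} — because $\E[\tilde V_i V_j^\intercal]=\sin t\cos t(\E[X_iX_j^\intercal]-\E[G_iG_j^\intercal])=0$ decouples $\tilde V_i$ from the rest — so only a \emph{uniform} $o(1)$ control per term is needed, which is exactly what \Cref{gauss_approx} supplies. Your own error bookkeeping confirms the problem: $\tau p\log(1/\delta)\to0$ forces $\tau\ll 1/n$, which makes $\tau^{-2}n^{-1/2}$ explode.

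A second, independent gap is the reduction to \Cref{gauss_approx}. The soft-min depends on $X_{\mathcal B_t}$ through the Gibbs weights, i.e.\ through a \emph{ratio} whose denominator involves all exponentially many $\beta\in\tilde{\mathcal S}_\delta$ simultaneously, whereas \Cref{gauss_approx} only controls bounded Lipschitz functions of $k$ projections $\sum_r\theta_r X_{i+r}^\intercal\beta_r$ at a time. You assert that $\Phi_t$ "depends only on the finite collection $\mathcal P_t$," but that collection has cardinality $k\,|\tilde{\mathcal S}_\delta|$, far beyond the reach of the assumption. The paper bridges this with two devices you would need analogues of: a polynomial approximation of $x\mapsto 1/x$ applied to the normalizing constant (so each resulting term involves only $\ell+1$ fixed values of $\beta$), and a separate lemma upgrading the $k$-projection assumption to $\ell k$ joint projections via a characteristic-function smoothing argument. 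Without these, the step "invoke \Cref{gauss_approx}" is not available, even setting aside the rate issue above.
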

We remark that if we successfully establish \theoremref{mainthm_2}, then the claim in \eqref{eq:main_first} of \theoremref{mainthm} directly follows by setting $\tilde{\mathcal S} = \mathcal S_p$.

\subsection{Converting the Loss}\label{appendix:label:equivalence} 

Before continuing, we will convert our labels from $\{0, 1\}$ to $\{-1, 1\}$, as this combines two of the terms in the training risk to significantly simplify calculations. To be specific, noting that $y_i \in \{0, 1\}$, we can define $\tilde y_i := 2 y_i - 1 \in \{-1, 1\}$, which still satisfies 
\begin{align*}
    \P\l(\tilde y_i = 1 \mid X_i\r) = \P\l(y_i = 1\mid X_i\r) = \sigma(a_{\mathcal B_i}^\intercal X_{\mathcal B_i}\beta^*).
\end{align*}
Then the loss evaluated at each data point $(X_i, y_i)$ can be re-expressed as 
\begin{align*}
    \log\big( 1 + e^{ X_i^\intercal \beta} \big) - y_i X_i^\intercal \beta
    \;=&\;
    \Big( \log\big( 1 + e^{ X_i^\intercal \beta} \big) - X_i^\intercal \beta \Big)  \, \ind\{ y_i = 1 \}
    +
    \log\big( 1 + e^{ X_i^\intercal \beta} \big)  \, \ind\{ y_i = 0 \}
    \\
    \;=&\;
    \log\Big( \mfrac{1 + e^{X_i^\intercal \beta}}{ e^{X_i^\intercal \beta}} \Big)  \, \ind\{ y_i = 1 \}
    +
    \log\big( 1 + e^{ X_i^\intercal \beta} \big)  \, \ind\{  y_i = 0 \}
    \\
    \;=&\;
    \log\big( 1 + e^{- X_i^\intercal \beta} \big)  \, \ind\{ \tilde y_i = 1 \}
    +
    \log\big( 1 + e^{ X_i^\intercal \beta} \big)  \, \ind\{\tilde y_i = - 1 \}
    \\
    \;=&\;
    \log\big( 1 + e^{- \tilde y_i X_i^\intercal \beta} \big) 
    \;.
\end{align*}
Thus, renaming our labels as $y_i \in \{-1, 1\}$, for the rest of this section and also \Cref{important_lemmas}, we use the training risk
\begin{align*}
    \hat R_n(\beta, \bX) = \frac1n\sum_{i=1}^n\omega_i\log(1 + e^{-y_iX_i^\intercal\beta}) + \frac{\lambda}{2n}\|\beta\|^2.
\end{align*}

\subsection{Definitions}

To complete the proof, we must first introduce the various terminology and techniques that are used throughout, from smoothing the labels and minimum function to the continuous Lindeberg interpolation.

\subsubsection{Smoothing the Labels}

First we will define the way in which we smooth our labels and subsequently the risk function. To do so, let us define the mollifier $\zeta_\gamma : \R \to \R$ for $\gamma \in (0, 1)$ as
\begin{align*}
    \zeta_\gamma(x) := \textsf{C}\cdot\exp\l(\frac{\gamma^2}{x^2 - \gamma^2}\r) \cdot \mathbb I(|x| < \gamma), 
\end{align*}
where $\textsf{C}$ is chosen such that $\int_{\R}\zeta_\gamma( x)\,\text{d} x = 1$. Then for a given function $f:\R\to\R$ we define 
\begin{align*}
    f_\gamma := f \ast \zeta_\gamma
\end{align*}
as the convolution of $f$ with $\zeta_\gamma$, noting that this makes $f$ smooth. We can then define a smoothed version of the labels as
\begin{align}
    \eta_i := \bm 1^\pm_\gamma\l(a_{\mathcal B_i}^\intercal X_{\mathcal B_i}\beta^* - \varepsilon_i\r).\label{eq:smooth_like_butter}
\end{align}
From here we can define the new smoothed risk as 
\begin{align*}
    \hat R_n^\gamma(\beta; \bX) := \frac{1}{n}\sumi{n}\omega_i\log\l(1 + e^{-\eta_iX_i^\intercal\beta}\r) + \frac{\lambda}{2n}\|\beta\|_2^2,
\end{align*}
where we have replaced each $y_i$ with its smoothed counterpart $\eta_i$. 

\subsubsection{Smoothing the Minimum}

Next we will define the function that will be used to approximate the minimum over our parameter space. For the set $\mathcal{\tilde S}$, define the smoothed minimum 
\begin{align*}
    f_\delta : \R^{>0}\times \R^{n\times p} \to \R,\ \ \ f_\delta(\alpha, \bX) := \frac{-1}{n\alpha}\log\l(\sum_{\beta\in \mathcal{\tilde S}_{\delta}}\exp\l[-n\alpha\hat R_n^\gamma(\beta; \bX)\r]\r),
\end{align*}
where the sum is over the minimal $\delta\sqrt p$-net $\mathcal {\tilde S}_{\delta}$.
When $\alpha$ is fixed or understood from context, we will refer to $f_\delta(\alpha, \bX)$ as simply $f_\delta(\bX)$. 

\subsubsection{Interpolation Technique}

Finally, we define the interpolation that we will use for the proof of our main theorem. For $t \in [0, \tfrac\pi2]$, let 
\begin{align*}
    \bU^t := \sin(t)\bX + \cos(t)\bG.
\end{align*}
When $t$ is fixed or understood from context, we will refer to $\bU^t$ as simply $\bU$. Now, for each $i = 1, \hdots, n$, define the weight functions
\begin{align*}
    w_\gamma(\beta) := \frac{e^{-n\alpha \hat R_n^{\gamma}(\beta, \bU)}}{\sum_{\beta'\in\mathcal {\tilde S}_{\delta}}e^{-n\alpha \hat R_n^{\gamma}(\beta', \bU)}},\quad\quad\quad w_\gamma^{i,k}(\beta) := \frac{e^{-n\alpha \hat R_n^{\gamma, i, k}(\beta, \bU)}}{\sum_{\beta'\in\mathcal {\tilde S}_{\delta}}e^{-n\alpha \hat R_n^{\gamma, i, k}(\beta', \bU)}}.
\end{align*}
Also define expectation with respect to the density induced by these weights as 
\begin{align*}
    \lip\textsf{g}(\beta)\rip := \sum_{\beta\in\mathcal {\tilde S}_{\delta}}\hspace{-5pt}w_\gamma(\beta)\hspace{1pt}\textsf{g}(\beta),\quad\quad\quad \lip\textsf{g}(\beta)\rip_{i,k} := \sum_{\beta\in\mathcal {\tilde S}_{\delta}}\hspace{-5pt}w_\gamma^{i,k}(\beta)\hspace{1pt}\textsf{g}(\beta),
    \tagaligneq \label{eq:angular:bracket}
\end{align*}
where
\begin{align*}
    \hat R_n^{\gamma, i, k}(\beta; \bm\eta, \bU) := \frac{1}{n}\sum_{j\notin\mathcal B_i}\omega_j\log\l(1 + e^{-\eta_jU_j^\intercal\beta}\r) + \frac{\lambda}{2n}\|\beta\|_2^2
\end{align*}
represents the risk taken only over the points outside the block $\mathcal B_i$ containing $X_i$. Also for each $i = 1, \hdots, n$ define the conditional expectation
\begin{align*}
    \E_{(i, k)}[\ \cdot\ ] := \E[\ \cdot\ \mid \bU^{ik}],
\end{align*}
where $\bU^{ik}$ is used to denote the interpolation matrix without $\mathcal B_i$:
\begin{align*}
    \bU^{ik} := \l(U_1, \hdots, U_{i-1},  0, \hdots,  0, U_{i+k}, \hdots, U_n\r),
\end{align*}
noting that this forces $\bU^{ik} \indep U_{\mathcal B_i}$. Lastly, we also define a ``gradient" term 
\begin{align*}
    \mathcal D_{i}(U_{\mathcal B_i}, \beta) := \left(\eta_i\omega_i\sigma_{i\beta}\right)\beta + \left(\sum_{j\in\mathcal B_i}\omega_j\sigma_{j\beta}\eta_j'a_{ji}U_j^\intercal\beta \right)\beta^* \in \R^p,
\end{align*}
where $\sigma_{i\beta} := \sigma(-\eta_i{U_i}^\intercal\beta)$ and $\eta_i' := {\bm 1^{\pm}_\gamma}'(a_{\mathcal B_i}^\intercal{U_{\mathcal B_i}}\beta^* - \varepsilon_i)$. When the data matrix is clear from context, we will write $\mathcal D_{i}(U_{\mathcal B_i}, \beta)$ as simply $\mathcal D_{i}(\beta)$. Lastly, we use the shorthand
\begin{align*}
    \ell(a, b) := \log(1+e^{-ab}),\quad\quad\ell_i(\beta) := \ell(\eta_i, U_i^\intercal\beta).
\end{align*}
With these, we are ready to begin the proof.

\subsection{Proof of the Theorem \ref{mainthm_2}}\label{proof_main_theorem}

In this subsection we finally prove \theoremref{mainthm_2} which, from our previous remarks, immediately proves \eqref{eq:main_first} of \theoremref{mainthm}.\newpar

\begin{proof}[Proof of \theoremref{mainthm_2}] For ease, let us refer to the quantity of interest as 
\begin{align*}
    d_{\mathcal H}\l(\min_\beta \hat R_n(\beta; \bX), \min_\beta \hat R_n(\beta; \bG)\r) &= (\star).
\end{align*}
Let $\alpha, \delta, \gamma, \tau > 0$. We may  first bound 
\begin{align}
    (\star) &\leq 
    d_{\mathcal H}\l(\min_\beta \hat R_n(\beta; \bX), \min_\beta \hat R_n^\gamma(\beta; \bX)\r) + d_{\mathcal H}\l(\min_\beta \hat R_n^\gamma(\beta; \bX), \min_\beta \hat R_n^\gamma(\beta; \bG)\r)\\&+ d_{\mathcal H}\l(\min_\beta \hat R_n^\gamma(\beta; \bG), \min_\beta \hat R_n(\beta; \bG)\r)\nonumber\\
    &\stackrel{(i)}{\leq} 2\textsf{C}_1\sqrt{k\gamma} + d_{\mathcal H}\l(\min_\beta \hat R_n^\gamma(\beta; \bX), \min_\beta \hat R_n^\gamma(\beta; \bG)\r),\label{eq:wefirstbound}
\end{align}
where $(i)$ follows from applying \lemmaref{smooth_risk} to the first and third summands. Then, we use \lemmaref{smoothmin} to bound
\begin{align}
    d_{\mathcal H}\l(\min_\beta \hat R_n^\gamma(\beta; \bX), \min_\beta \hat R_n^\gamma(\beta; \bG)\r) \leq d_{\mathcal H}\l(f_\delta(\alpha, \bX), f_\delta(\alpha, \bG)\r) + \textsf{C}_2\l(\sqrt k \delta
   + \frac1\alpha\log\l(\frac{1}{\delta}\r)\r).\label{eq:wesecondbound}
\end{align}
From here, we will prove universality for $f_\delta(\bX)$, and then show why this is sufficient. Recall from above the interpolator
\begin{align*}
    \bU^t := \sin(t)\bX + \cos(t)\bG,\ \ \ t \in [0,\tfrac\pi2].
\end{align*}
By the fundamental theorem of calculus, since $\bU^0 =\bG$ and $\bU^{\pi/2} = \bX$, we may bound
\begin{align*}
    \abs{\E\l[h\l(f_\delta(\bX)\r) - h\l(f_\delta(\bG)\r)\r]} &\leq \int_0^{\pi/2}\abs{\E\l[\partial _t h\l(f_\delta(\bU^t)\r)\r]}\,\text{d}t.
\end{align*}
Using the chain rule we may expand
\begin{align*}
    \partial_th\l(f_\delta(\bU)\r) = \frac{-h'\l(f_\delta(\bU)\r)}{n}\sumi{n}\lip \tilde U_i^\intercal\mathcal D_{i}(\beta)\rip
\end{align*}
where we set $$\tilde{\bU}^t := \partial_t \bU^t = \cos(t)\bX - \sin(t)\bG.$$ Using \lemmaref{cabristop} we obtain that 
\begin{align}
    \lim_{n\to\infty}\abs{\E\l[h\l(f_\delta(\bX)\r) - h\l(f_\delta(\bG)\r)\r]} &\stackrel{(i)}{\leq} \int_0^{\pi/2}\limsup_{n\to\infty}\abs{\E\l[\partial _t h\l(f_\delta(\bU^t)\r)\r]}\,\text{d}t \;\stackrel{(ii)}{\leq} \frac{\pi}{2}\tau,\label{eq:wethirdbound}
\end{align}
where $(i)$ is from the Dominated Convergence Theorem with the dominating function given by the bound on this derivative in \Cref{second_mom}, and $(ii)$ is from \lemmaref{cabristop}. Combining \eqref{eq:wethirdbound} with \eqref{eq:wefirstbound} and \eqref{eq:wesecondbound}, we conclude that 
\begin{align*}
    \lim_{n\to\infty}(\star) \leq \textsf C\l[\sqrt{k\gamma} + \sqrt{k}\delta + \frac{1}{\alpha}\log\l(\frac{1}{\delta}\r) + \tau\r].
\end{align*}
Noting that the left-hand side is independent of our four parameters $(\alpha, \delta, \gamma, \tau)$, we can take our limits in the proper order to conclude that 
\begin{align*}
    \lim_{n\to\infty}(\star) \leq \lim_{\delta\to0}\lim_{\substack{\tau, \gamma\to0 \\ \alpha\to\infty}}\textsf C\l[\sqrt{k\gamma} + \sqrt{k}\delta + \frac{1}{\alpha}\log\l(\frac{1}{\delta}\r) + \tau\r] = 0.
\end{align*}
\end{proof}

\section{Proof of Theorem \ref{mainthm:extend} \eqref{eq:main_first_mdep}  under Assumption \ref{general}$(i)$: Training risk universality under $m$-dependence}\label{lorelai}
%\section{Extending to $m$-dependence}\label{extendingtheorem}
%\textcolor{red}{i want to adapt this section so that it works for both m dependence and missing
%}
The next result restates Theorem \ref{mainthm:extend} \eqref{eq:main_first_mdep} under Assumption \ref{general}$(i)$, i.e. the universality universality of the training risk in the $m$-dependent setting:

\begin{theorem}[Training risk universality under $m$-dependence]\label{second_thm_m} Let $\l(X_i, y_i(X_i)\r)_{i=1}^n$ and $\l(G_i, y_i(G_i)\r)_{i=1}^n$ be generated under Assumptions \ref{logmodel}-\ref{signalsizinghehe}, \ref{general}(i), and \ref{gauss_approx2}, where each $G_i \sim \mathcal N\l(\bm0, \Var(X_i)\r)$. Then 
\begin{align}
    d_{\mathcal H}\l(\min_\beta \hat R_n(\beta; \bX), \min_\beta \hat R_n(\beta; \bG)\r) \to 0.\label{mdepfirst}
\end{align}
% Moreover, if Assumptions \ref{courage} and \ref{assumption:test:risk} also hold, then 
% \begin{align}
%       | R_{\text{test}}(\hat \beta(\bX)) - R_{\text{test}}(\hat \beta(\bG)) |
%     \;\xrightarrow{\P}\; 
%     0
%     \;.\label{mdepsecond}
% \end{align}
\end{theorem}

\begin{proof}[Proof of \Cref{second_thm_m} ]
Let $M \in \Z^+$ be fixed. Define new matrices $\bX^M, \bG^M \in \R^{n' \times p}$ as
\begin{align*}
    \bX^M &:= (X_1, \hdots, X_M, X_{M+m+1}, \hdots, X_{2M+m}, X_{2M+2m+1}, \hdots)^\intercal\\
    \bG^M &:= (G_1, \hdots, G_M, G_{M+m+1}, \hdots, G_{2M+m}, G_{2M+2m+1}, \hdots)^\intercal,
\end{align*}
noting that 
\begin{align}
n' \in [n-(r+1)m+1,\ n - rm] \subset \l[n\frac{M}{M+m} - m,\ n\frac{M}{M+m} + m\r] = [nq -m, nq + m], \label{eq:nprime}
\end{align}
where $r := \lfloor\frac{n}{M+m} \rfloor$ and $q := \frac{M}{M+m}$. By construction, $\bX^M$ and $\bG^M$ are block dependent with block size $M$. We may also define
\begin{align*}
    \bX^{m} &:=(X_{M+1},\dots, X_{M+m},X_{2M+m+1},\dots,X_{2M+2m},\dots)\\
    \bG^{m} &:=(G_{M+1},\dots, G_{M+m},G_{2M+m+1},\dots,G_{2M+2m},\dots)
\end{align*}
so that every vector $X_i$ is either in $\bX^M$ or $\bX^m$. For simplicity we can also write these indexing sets as 
\begin{align*}
    B_M &:= \{1, \hdots, M, M+m+1, \hdots, 2M + m, \hdots, \}\\
    B_m &:= [n] \setminus B_M.
\end{align*}
If we once again refer to our quantity of interest as 
\begin{align*}
    d_{\mathcal H}\l(\min_\beta \hat R_n(\beta; \bX), \min_\beta \hat R_n(\beta; \bG)\r) = (\star),
\end{align*}
then we may bound
\begin{align*}
    (\star) &\leq \underbrace{d_{\mathcal H}\l(\min_\beta \hat R_n(\beta; \bX), \min_\beta \hat R_{n'}(\beta; \bX^M)\r)}_{(a)} + \underbrace{d_{\mathcal H}\l(\min_\beta \hat R_{n'}(\beta; \bX^M), \min_\beta \hat R_{n'}(\beta; \bG^M)\r)}_{(b)} \\&+ \underbrace{d_{\mathcal H}\l(\min_\beta \hat R_{n'}(\beta; \bG^M), \min_\beta \hat R_n(\beta; \bG)\r)}_{(c)}.
\end{align*}
By \theoremref{mainthm}, we know that $(b) \to 0$ as $n\to\infty$. Hence we have
\begin{align*}
    \limsup_{n\to\infty}\ (\star) &\leq \limsup_{n\to\infty}\ (a)  +\limsup_{n\to\infty}\ (c).
\end{align*}
We may now apply \theoremref{equivalence} with $\tilde m=m$ to say that for some $\textsf{C}_d > 0$, 
\begin{align}
    \limsup_{n\to\infty}\ (\star) \leq 2\textsf{C}_d\frac{m}{M}\sqrt{M+m} = \mathcal O\l(M^{-1/2}\r). 
\end{align}
As this holds for every $M \in \Z^+$, we take $M\to\infty$ to obtain the result.
\end{proof}

% \begin{proof}[Proof of \reff{second_thm}{Theorem} \eqref{mdepsecond}] 
% {\color{red} Proof for test risk goes here!}\textcolor{red}{is that not the same one?}
% \end{proof}

\section{Proof of Theorem \ref{mainthm:extend} \eqref{eq:main_first_mdep}  under Assumption \ref{general}$(ii)$: Training risk universality under mixing}\label{lorelai2}
We first prove the universality of the train risk if the data is made of blocks of size $k$ that are almost independent. We will then combine this result with \cref{equivalence} to get the desired result. \begin{assumption}[$\beta_{\rm{mix}}$ almost independent blocks of size $k$]\label{blockdepass2}  
For all $i,j$ such that $j\ge i+k$ we have that if we define $\mathcal{A}:=\sigma\big((X_\ell, y_\ell)_{\ell \le i}\big)$ and $\mathcal{B}:=\sigma\big((X_\ell, y_\ell)_{\ell > j}\big)$ then $\beta\Big(\mathcal{A},\mathcal{B}\Big)\le \beta_{\rm{mix}}$, where $\beta\Big(\mathcal{A},\mathcal{B}\Big)$ is the $\beta-$mixing coefficient between the sigma-algebras $\mathcal{A}$ and $\mathcal{B}$ (see \cite{bradley2005basic} for a definition).
\end{assumption} 

Under this assumption we can establish the universality of the train risk.

\begin{lemma}\label{neuchatel2}
Assume that $(\bX_i,y_i)$ and $(\bG_i,y_i(\bG_i))$ satisfy Assumption \ref{logmodel}-\ref{gauss_approx}.
Assume that $\rm{Var}((\bG_i))=\rm{Var}((\bX_i)).$ Moreover assume that  \Cref{blockdepass2} also holds. Then we have that there is a constant $\textsf C$ such that 
\begin{align*}
    \limsup_{n\rightarrow\infty} d_{\mathcal H}\l(\min_\beta \hat R_n(\beta; \bX), \min_\beta \hat R_n(\beta; \bG)\r) &\le C\beta_{\rm mix}^{1/15}
\end{align*}
\end{lemma}

\begin{proof} For ease, let us refer to the quantity of interest as 
\begin{align*}
    d_{\mathcal H}\l(\min_\beta \hat R_n(\beta; \bX), \min_\beta \hat R_n(\beta; \bG)\r) &= (\star).
\end{align*}
Let $\alpha, \delta, \gamma, \tau > 0$. We may  first bound 
% 5\textcolor{red}{Let $E_n$ be an event such that $PX\in(E_n)\rightarrow 1$. Then we have\begin{align} (\star) &\leq 
%     d_{\mathcal H}\l(\min_\beta \hat R_n(\beta; \bX)\mathbb{I}(X\in E_n),\min_\beta \hat R_n(\beta; \bX)\r) + d_{\mathcal H}\l(\min_\beta \hat R_n(\beta; \bX)\mathbb{I}(X\in E_n), \min_\beta \hat R_n(\beta; \bG)\mathbb{I}(X\in E_n)\r)\\&+ d_{\mathcal H}\l(\min_\beta \hat R_n(\beta; \bG)\mathbb{I}(X\in E_n), \min_\beta \hat R_n(\beta; \bG)\r)\nonumber\\
%     &\stackrel{(i)}{\leq} 2\log(2) P(\bX\in E_n)+d_{\mathcal H}\l(\min_\beta \hat R_n(\beta; \bX)\mathbb{I}(X\in E_n), \min_\beta \hat R_n(\beta; \bG)\mathbb{I}(X\in E_n)\r),\label{eq:wefirstbound}
% \end{align}  where to get (i) we used the fact the minimum risk is bounded by $\log(2)$. Now }
\begin{align}
    (\star) &\leq 
    d_{\mathcal H}\l(\min_\beta \hat R_n(\beta; \bX), \min_\beta \hat R_n^\gamma(\beta; \bX)\r) + d_{\mathcal H}\l(\min_\beta \hat R_n^\gamma(\beta; \bX), \min_\beta \hat R_n^\gamma(\beta; \bG)\r)\nonumber\\&+ d_{\mathcal H}\l(\min_\beta \hat R_n^\gamma(\beta; \bG), \min_\beta \hat R_n(\beta; \bG)\r)\nonumber\\
    &\stackrel{(i)}{\leq} 2\textsf{C}_1\sqrt{\gamma}\frac{\sqrt{\max(\mathbb{E}\|\bX\|_{\mathcal{S}_p}^2,\mathbb{E}\|\bG\|_{\mathcal{S}_p}^2)}}{\sqrt{n}}+ d_{\mathcal H}\l(\min_\beta \hat R_n^\gamma(\beta; \bX), \min_\beta \hat R_n^\gamma(\beta; \bG)\r),\label{eq:wefirstbound2}
\end{align}
where $(i)$ follows from applying \lemmaref{smooth_risk} to the first and third summands. Then, we use \lemmaref{smoothmin} to bound 
\begin{align}&
    d_{\mathcal H}\l(\min_\beta \hat R_n^\gamma(\beta; \bX), \min_\beta \hat R_n^\gamma(\beta; \bG)\r)\\& \leq d_{\mathcal H}\l(f_\delta(\alpha, \bX), f_\delta(\alpha, \bG)\r) + \textsf{C}_2\l(\frac{\sqrt{\max(\mathbb{E}\|\bX\|_{\mathcal{B}(0,\sqrt{p})}^2,\mathbb{E}\|\bG\|_{\mathcal{B}(0,\sqrt{p})}^2)}}{\sqrt{n}} \delta
   + \frac1\alpha\log\l(\frac{1}{\delta}\r)\r).%\label{eq:wesecondbound}
\end{align}  {Using  the bound in \corollaryref{dep_norm_bound} under \Cref{general}(ii) we know that there exists a constant ${\textsf C}_3>0$ such that $$\frac{\sqrt{\max(\mathbb{E}\|\bX\|_{\mathcal{S}_p}^2,\mathbb{E}\|\bG\|_{\mathcal{S}_p}^2)}}{\sqrt{n}},\frac{\sqrt{\max(\mathbb{E}\|\bX\|_{\mathcal{B}(0,\sqrt{p})}^2,\mathbb{E}\|\bG\|_{\mathcal{B}(0,\sqrt{p})}^2)}}{\sqrt{n}}\le{\textsf C}_3\mathcal{S}. $$Hence we have 
\begin{align}
   (\star)\leq d_{\mathcal H}\l(f_\delta(\alpha, \bX), f_\delta(\alpha, \bG)\r) + \textsf{C}_2\l({\textsf C}_3\mathcal{S}\delta
   + \frac1\alpha\log\l(\frac{1}{\delta}\r)\r)+2\textsf C_1\textsf C_3\sqrt{\gamma}\mathcal{S}.\label{eq:wesecondbound2}
\end{align}}
From here, we will prove universality for $f_\delta(\bX)$, and then show why this is sufficient. Recall from above the interpolator
\begin{align*}
    \bU^t := \sin(t)\bX + \cos(t)\bG,\ \ \ t \in [0,\tfrac\pi2].
\end{align*}
By the fundamental theorem of calculus, since $\bU^0 =\bG$ and $\bU^{\pi/2} = \bX$, we may bound
\begin{align*}
    \abs{\E\l[h\l(f_\delta(\bX)\r) - h\l(f_\delta(\bG)\r)\r]} &\leq \int_0^{\pi/2}\abs{\E\l[\partial _t h\l(f_\delta(\bU^t)\r)\r]}\,\text{d}t.
\end{align*}
Using the chain rule we may expand
\begin{align*}
    \partial_th\l(f_\delta(\bU)\r) = \frac{-h'\l(f_\delta(\bU)\r)}{n}\sumi{n}\lip \tilde U_i^\intercal\mathcal D_{i}(\beta)\rip
\end{align*} 
\textcolor{black}{To further bound this, we will replace specific blocks $(X_{\mathcal B_i})$ with independent blocks, allowing us to use the results established in \cref{lemm:bliock}. More precisely, we will show that for an average $i\le n$ the expectation $\mathbb{E}\Big[h'\l(f_\delta(\bU)\r)\lip \tilde U_i^\intercal\mathcal D_{i}(\beta)\rip\Big]$ is approximately the same if the block containing the i-th observation $\Big(X_j,y_j(X_j)\Big)_{j\in \mathcal{B}_i}$ is independent from the others. We also use the notation $j > \cB_i$ to denote the set of indices after the last index of the block $\cB_i$, and analogously use $j < \cB_i$ for those before $\cB_i$. } 

{ In this goal,  for $i\le n$ we write $(W_j^i)$ the process such that $\Big(W_j^i,y_j(W_j^i)\Big)_{j\in \mathcal{B}_i}$, $\Big(W_j^i,y_j(W_j^i)\Big)_{j>\mathcal{B}_i}$ and $\Big(W_j^i,y_j(W_j^i)\Big)_{j<\mathcal{B}_i}$ are independent and have the same marginals as $\Big(X_j,y_j(X_j)\Big)_{j\in \mathcal{B}_i}$, $\Big(X_j,y_j(X_j)\Big)_{j>\mathcal{B}_i}$ and $\Big(X_j,y_j(X_j)\Big)_{j<\mathcal{B}_i}$. We define similarly $\Big(G^i_j,y_j(G^i_j)\Big)_{j\in \mathcal{B}_i}$, $\Big(G^i_j,y_j(G^i_j)\Big)_{j>\mathcal{B}_i}$ and $\Big(G^i_j,y_j(G^i_j)\Big)_{j<\mathcal{B}_i}$. The interpolated process between $W^i$ and $\bG^i$ is written as $${U}^{(i)}_j:=\rm{sin}(t)W_j^i+\rm{cos}(t)G^i_j,\qquad \tilde{U}^{(i)}_j:=\rm{cos}(t)W_j^i-\rm{sin}(t)G^i_j.$$ Denote $\mathcal D^{(i)}_{j}(\beta)$  the version of $\mathcal{D}_i(\beta)$ for $\bU^{(i)}$ and $\tilde \bU^{(i)}$. Similarly we write $\lip\cdot\rip_{(i)}$ the version of $\lip\cdot\rip$ for $U^{(i)}$ (see \eqref{eq:angular:bracket} for the definition of $\langle \cdot \rangle$).}

% We first show that for an averagely chosen $i\le n$ we have that $$\mathbb{E}\Big(\frac{-h'\l(f_\delta(\bU)\r)}{n}\sumi{n}\lip \tilde U_i^\intercal\mathcal D_{i}(\beta)\rip\Big)\approx \mathbb{E}\Big(\frac{-h'\l(f_\delta(\bU^{(i)})\r)}{n}\sumi{n}\lip ({\tilde \bU}^{(i)})^\intercal\mathcal D^{(i)}_{i}(\beta)\rip_{(i)}\Big).$$

Choose $L>0$ to be a real. Define $$\mathcal{D}_{i,L}(\beta):=\mathcal{D}_{i}(\beta)~\mathbb{I}(|\tilde U_i^T\beta|,|U_i^T\beta|\le L)$$ and $$\mathcal{D}^{(i)}_{i,L}(\beta):=\mathcal{D}^{(i)}_{i}(\beta)\mathbb{I}(|\tilde (U^{(i)}_i)^T\beta|,|(U^{(i)}_i)^T\beta|\le L).$$ We first remark that we can switch $\mathcal D_{i}(\beta)$ for the truncated $\mathcal D_{i,L}(\beta)$  without changing too much the value of the expectation $\mathbb{E}\Big({-h'\l(f_\delta(\bU)\r)}\lip \tilde U_i^\intercal\mathcal D_{i}(\beta)\rip\Big)$. Indeed we have  \begin{align*}&
   \frac{1}{n} \sum_{i=1}^n\Big|\mathbb{E}\Big({-h'\l(f_\delta(\bU)\r)}\lip \tilde U_i^\intercal\mathcal D_{i,L}(\beta)\rip\Big)-\mathbb{E}\Big({-h'\l(f_\delta(\bU)\r)}\lip \tilde U_i^\intercal\mathcal D_{i}(\beta)\rip\Big)\Big|
   \\&\le   \frac{1}{n} \sum_{i=1}^n\mathbb{E}\Big(\lip |\tilde U_i^\intercal\mathcal D_{i}(\beta)|\mathbb{I}(|\tilde U_i^T\beta|>L)\rip\Big)
   +\frac{1}{n} \sum_{i=1}^n\mathbb{E}\Big(\lip |\tilde U_i^\intercal\mathcal D_{i}(\beta)|\mathbb{I}(|U_i^T\beta|>L)\rip\Big)\
   \\&\le  \frac{1}{n} \sum_{i=1}^n\mathbb{E}\Big(\lip \Big\{|\tilde U_i^\intercal \beta|+| U_i^\intercal \beta||\tilde U_i^\intercal \beta^*|\Big\}~\mathbb{I}(|\tilde U_i^T\beta|>L)\rip\Big)
   + \frac{1}{n} \sum_{i=1}^n\mathbb{E}\Big(\lip \Big\{|\tilde U_i^\intercal \beta|+| U_i^\intercal \beta||\tilde U_i^\intercal \beta^*|\Big\}~\mathbb{I}(|U_i^T\beta|>L)\rip\Big)
   % \\&\le  \frac{1}{nL^{1/4}} \sum_{i=1}^n\Big|\mathbb{E}\Big(\lip \Big\{|\tilde U_i^\intercal \beta|+| U_i^\intercal \beta||\tilde U_i^\intercal \beta^*|\Big\}|\tilde U_i^T\beta|^{1/2}\rip\Big)\Big|
   % \\&+ \frac{1}{nL^{1/4}} \sum_{i=1}^n\Big|\mathbb{E}\Big(\lip \Big\{|\tilde U_i^\intercal \beta|+| U_i^\intercal \beta||\tilde U_i^\intercal \beta^*|\Big\}|U_i^T\beta|^{1/2}\rip\Big)\Big|
   \\&\overset{(i)}{\le}  \frac{1}{nL^{1/4}} \sum_{i=1}^n\mathbb{E}\Big(\lip \Big\{|\tilde U_i^\intercal \beta|^{5/4}+| U_i^\intercal \beta||\tilde U_i^\intercal \beta|^{1/4}|\tilde U_i^\intercal \beta^*|\Big\}\rip\Big)
   \\&\qquad+  \frac{1}{nL^{1/4}} \sum_{i=1}^n\mathbb{E}\Big(\lip \Big\{|\tilde U_i^\intercal \beta|{|U_i^T\beta|^{1/4}}+| U_i^\intercal \beta|^{5/4}|\tilde U_i^\intercal \beta^*|\Big\}\rip\Big)
   % \\&\le \frac{1}{nL^{1/4}} \sum_{i=1}^n\mathbb{E}\Big(\lip |\tilde U_i^\intercal \beta|^{5/4}\rip\Big)+\| \tilde U_i^\intercal \beta\|_6\sqrt{\mathbb{E}(\lip (U_i^T\beta)^{3/2}\rip^{4/3})}\Big(\mathbb{E}(\lip (\tilde U_i^T\beta)^{3/4}\rip)\Big)^{1/3}
   % \\&\qquad+  \frac{1}{L^{1/4}} \sum_{i=1}^n\Big|\mathbb{E}\Big(\lip \Big\{|\tilde U_i^\intercal \beta|\sqrt{|U_i^T\beta|}+| U_i^\intercal \beta|^{3/2}|\tilde U_i^\intercal \beta^*|\Big\}\rip\Big)\Big|
  % \\&\le\frac{1}{nL^{1/4}} \sum_{i=1}^n\mathbb{E}\Big(\lip |\tilde U_i^\intercal \beta|^{5/4}\rip\Big)+\max_{i\le n}\| \tilde U_i^\intercal \beta\|_6\sqrt{\frac{1}{n}\sumi{n}\mathbb{E}(\lip (U_i^T\beta)^{3/2}\rip^{4/3})}\Big({\frac{1}{n}\sumi{n}\mathbb{E}(\lip (U_i^T\beta)^{3/4}\rip)}\Big)^{3/4}
    \\&\overset{(ii)}{\le}\frac{1}{nL^{1/4}} \sum_{i=1}^n\mathbb{E}\Big(\lip |\tilde U_i^\intercal \beta|^{5/4}\rip\Big)+L^{-1/4}\max_{i\le n}\| \tilde U_i^\intercal \beta\|_6\sqrt{\frac{1}{n}\sumi{n}\mathbb{E}(\lip (U_i^T\beta)^{2}\rip)}\Big\{{\frac{1}{n}\sumi{n}\mathbb{E}(\lip (U_i^T\beta)^{3/4}\rip)}  \Big\}^{3/4} 
    \\&\qquad +\frac{1}{L^{1/4}} \sqrt{\frac{1}{n}\sumi{n}\mathbb{E}(\lip (\tilde U_i^T\beta)^{2}\rip)}\sqrt{\frac{1}{n}\sumi{n}\mathbb{E}(\lip |\tilde U_i^T\beta|^{3/4}\rip^{2/3})}+\max_i\|\tilde U_i^T\beta^*\|_{3}\sqrt{\frac{1}{n}\sumi{n}\mathbb{E}(\lip ( U_i^T\beta)^{2}\rip)}
    \\&\le \frac{1}{L^{1/4}} \Big\{\mathbb{E}\Big(n^{-1}\|\tilde U\|_{\mathcal{S}_p}^2\Big)^{5/8}+\max_{i\le n}\| \tilde U_i^\intercal \beta\|_6\sqrt{\mathbb{E}\Big(n^{-1}\| \bU\|_{\mathcal{S}_p}^2\Big)}\mathbb{E}\Big(n^{-1}\|\tilde U\|_{\mathcal{S}_p}^2\Big)^{9/32}
    \\&\qquad +\sqrt{\mathbb{E}\Big(n^{-1}\|\tilde U\|_{\mathcal{S}_p}^2\Big)}\Big({\mathbb{E}\Big(n^{-1}\|U\|_{\mathcal{S}_p}^2\Big)}\Big)^{1/8}+\max_i\|\tilde U_i^T\beta^*\|_{3}\sqrt{\mathbb{E}\Big(n^{-1}\| U\|_{\mathcal{S}_p}^2\Big)}\Big\}.
\end{align*} where (i) is a consequence of the fact that $\mathbb{I}(|\tilde U_i^T\beta|\ge L)\le \frac{|\tilde U_i^T\beta|^{1/4}}{L^{1/4}}$, (ii) comes from Jensen inequality combined with H\"{o}lder inequality. Hence using the bound in \corollaryref{dep_norm_bound} under \Cref{general}(ii) and using \Cref{scalinggg} we have that there exists a constant $\textsf C_4$ such that\begin{align}&\label{rory2}
   \frac{1}{n} \sum_{i=1}^n\Big|\mathbb{E}\Big({-h'\l(f_\delta(\bU)\r)}\lip \tilde U_i^\intercal\mathcal D_{i,L}(\beta)\rip\Big)-\mathbb{E}\Big({-h'\l(f_\delta(\bU)\r)}\lip \tilde U_i^\intercal\mathcal D_{i}(\beta)\rip\Big)\Big|\le \textsf C_4 L^{-1/4}.\end{align} %Let's define $$A_{i,L}:=\Big\{\frac{1}{n}\sum_{i\le n}\lip \tilde U_i^T\mathcal{D}_i(\beta) \rip^{3/2}\le L\Big\}.$$ Now for all $\epsilon>0$ there exists a constant $C_{\epsilon}$ and a function $C_\epsilon^{-1}h'_{\epsilon}(\cdot)\in \mathcal H$ such that $h'\l(f_\delta(\bU)\r)=0$ if $\tilde U\not\in A_L$ and such that $$\Big|\mathbb{E}\Big(h'\l(f_\delta(\bU)\r) \frac{1}{n}\sum_{i=1}^n\lip \tilde U_i^\intercal\mathcal D_{i}(\beta)\rip\Big)-\mathbb{E}\Big(\mathbb{I}((\bU,\tilde U)\in A_L)h'\l(f_\delta(\bU)\r)\frac{1}{n}\sum_{i=1}^n\lip \tilde U_i^\intercal\mathcal D_{i}(\beta)\rip\Big)\Big|\le \epsilon$$According 
Moreover according to \lemmaref{tur}  we have that there is a constant $\textsf C_5$ such that 
\begin{align}&\label{rory}
    \frac{1}{n}\sum_{i=1}^n\Big|\mathbb{E}\Big({-h'\l(f_\delta(\bU)\r)}\lip \tilde U_i^\intercal\mathcal D_{i,L}(\beta)\rip\Big)-\mathbb{E}\Big({-h'\l(f_\delta(\bU^{(i)})\r)}\lip ({\tilde \bU}^{(i)})^\intercal\mathcal D^{(i)}_{i,L}(\beta)\rip_{(i)}\Big)\Big|
    \\&\overset{(i)}{\le} 4({3\beta_{\rm{mix}}})^{1/3}\frac{1}{n}\sum_{i=1}^{n}\Big\{L(1+\|\tilde U_i^T\beta^*\|_{3/2}\Big\}\nonumber
    \\&\overset{(ii)}{\le}L\textsf C_5({\beta_{\rm{mix}}})^{1/3}\nonumber%  \\&\overset{(i)}{\le} 2({3\beta_{\rm{mix}}})^{1/3}\frac{1}{n}\sum_{i=1}^{n}\Big\{\|\lip \tilde \bU_i^T\beta\rip\|_{3/2}+\|\lip \tilde \bU_i^T\beta\rip\|_{2}\|\bU_i^T\beta^*\|_{6}+\|\lip ({\tilde{U}^{(i)}}_i)^T\mathcal{D}_i^{(i)}(\beta)\rip_{(i)}\|_{3/2}\Big\}
   %  \\&\overset{(ii)}{\le }2({3\beta_{\rm{mix}}})^{\frac{1}{3}}\Big\{\sqrt{\frac{1}{n}\sum_{i=1}^{n}\|\lip \tilde \bU_i^T\beta\rip\|^2_2}\Big(1+\max_i\|\bU_i^T\beta^*\|_6\Big)+{\frac{1}{n}\sum_{i=1}^{n}\|\lip ({\tilde{U}^{(i)}}_i)^T\mathcal{D}^{(i)}_i(\beta)\rip_{(i)}\|_2}\Big\} \\&\overset{(iii)}{\le }2({3\beta_{\rm{mix}}})^{\frac{1}{3}}\Big\{\sqrt{\frac{1}{n}\mathbb{E}\Big[\lip \sum_{i=1}^{n}(\tilde \bU_i^T\beta)^2\rip\Big]}\Big(1+\max_i\|\bU_i^T\beta^*\|_6\Big)+\max_i\|\lip ({\tilde{U}^{(i)}}_i)^T\mathcal{D}^{(i)}_i(\beta)\rip_{(i)}\|_2\Big\} \\&\overset{(iv)}{\le }2({3\beta_{\rm{mix}}})^{\frac{1}{3}}\Big\{\sqrt{\frac{\textsf C_4}{n}\mathbb{E}\Big[\lip \sum_{i=1}^{n}(\tilde \bU_i^T\beta)^2\rip\Big]}+\max_i\|\lip ({\tilde{U}^{(i)}}_i)^T\mathcal{D}^{(i)}_i(\beta)\rip_{(i)}\|_2\Big\} 
   % %\\&\le4\sqrt{3\beta_{\rm{mix}}}\Big\{\sqrt{\mathbb{E}\Big(\lip \frac{1}{n}\sum_{i=1}^{n}\tilde (U_i^TD_i(\beta))^2\rip\Big)}+\sqrt{\mathbb{E}\Big(\lip \frac{1}{n}\sum_{i=1}^{n}({\tilde{U}^W_i}^TD^W_i(\beta))^2\rip\Big)}\Big\}
\end{align} 
where (i) is due to the fact that $|\tilde U_i\mathcal{D}_{i,L}(\beta)|\le |\tilde U_i\beta|~\mathbb{I}(|\tilde U_i\beta|\le L)+ \mathbb{I}(| U_i\beta|\le L)|~\tilde U_i\beta^*||U_i^T\beta|$  combined with the triangle inequality (ii)  is due to 
\cref{scalinggg} which imply that $\limsup_n\sup_i\|U_i^T\beta^*\|_{3/2}<\infty$. Hence combining \cref{cabristop} with \cref{rory,rory2} we obtain that \begin{align}\label{fin}\frac{1}{n}\sum_{i=1}^n\Big|\mathbb{E}\Big(\frac{-h'\l(f_\delta(\bU)\r)}{n}\sumi{n}\lip \tilde U_i^\intercal\mathcal D_{i}(\beta)\rip\Big)\Big|\le L\textsf C_5\beta_{\rm mix}^{1/3}+\tau+\textsf C_4 L^{-1/4}.\end{align}%due to Cauchy-Swartz combined with the fact that $\|\cdot\|_{3/2}\le \|\cdot \|_2$, (iii) due to Jensen inequality. Finally $(iv)$ is due to the fact that Assumption \ref{scalinggg} implies that $\limsup_n\max_{i\le n}\|\bU_i^T\beta^*\|_6<\infty.$

This directly implies that 
\begin{align}
    \lim_{n\to\infty}\abs{\E\l[h\l(f_\delta(\bX)\r) - h\l(f_\delta(\bG)\r)\r]} &\stackrel{(i)}{\leq} \int_0^{\pi/2}\limsup_{n\to\infty}\abs{\E\l[\partial _t h\l(f_\delta(\bU^t)\r)\r]}\,\text{d}t \\&\;\stackrel{(ii)}{\leq} \frac{\pi}{2}\Big[L\textsf C_5\beta_{\rm mix}^{1/3}+\tau+\textsf C_4 L^{-1/4}\Big]\label{eq:wethirdbound2}
\end{align}
where $(i)$ is from the Dominated Convergence Theorem with the dominating function given by the bound on this derivative in \Cref{second_mom}, and $(ii)$ is from \cref{fin}. Combining \eqref{eq:wethirdbound2} with \eqref{eq:wefirstbound2} and \eqref{eq:wesecondbound2}, we conclude that 
\begin{align*}
    \lim_{n\to\infty}(\star) \leq \textsf C\l[\sqrt{k\gamma} + \sqrt{k}\delta + \frac{1}{\alpha}\log\l(\frac{1}{\delta}\r) + \tau +L\beta_{\rm mix}^{1/3}+L^{-1/4}\r].
\end{align*}
Noting that the left-hand side is independent of our four parameters $(\alpha, \delta, \gamma, \tau)$, we can take our limits in the proper order to conclude that 
\begin{align*}
    \lim_{n\to\infty}(\star) \leq \lim_{\delta\to0}\lim_{\substack{\tau, \gamma\to0 \\ \alpha\to\infty}}\textsf C\l[\sqrt{k\gamma} + \sqrt{k}\delta + \frac{1}{\alpha}\log\l(\frac{1}{\delta}\r) + \tau\r] = C\l[L\beta_{\rm mix}^{1/3}+L^{-1/4}\r].
\end{align*}
Optimizing over $L\ge 0$ gives us the desired result.

\end{proof} 

The next result restates Theorem \ref{mainthm:extend} \eqref{eq:main_first_mdep} under Assumption \ref{general}$(ii)$, i.e.~training risk universality under mixing:

\begin{theorem}[Universality of the training risk under Assumption \ref{general}(ii)]\label{second_thm} Let $\l(X_i, y_i(X_i)\r)_{i=1}^n$ and \\ $\l(G_i, y_i(G_i)\r)_{i=1}^n$ be generated under Assumptions \ref{logmodel}-\ref{signalsizinghehe}, \ref{general}(ii), and \ref{gauss_approx2}, where each $G_i \sim \mathcal N\l(\bm0, \Var(X_i)\r)$. Then 
\begin{align}
    d_{\mathcal H}\l(\min_\beta \hat R_n(\beta; \bX), \min_\beta \hat R_n(\beta; \bG)\r) \to 0.\label{mixingversion}
\end{align}
% Moreover, if Assumptions \ref{courage} and \ref{assumption:test:risk} also hold, then 
% \begin{align}
%       | R_{\text{test}}(\hat \beta(\bX)) - R_{\text{test}}(\hat \beta(\bG)) |
%     \;\xrightarrow{\P}\; 
%     0
%     \;.\label{mdepsecond}
% \end{align}
\end{theorem}

\begin{proof}[Proof of \Cref{second_thm}]
Let $M ,\tilde m\in \Z^+$ be fixed. Define new matrices $\bX^M, \bG^M \in \R^{n' \times p}$ as
\begin{align*}
    \bX^M &:= (X_1, \hdots, X_M, X_{M+\tilde m+1}, \hdots, X_{2M+\tilde m}, X_{2M+2\tilde m+1}, \hdots)^\intercal\\
    \bG^M &:= (G_1, \hdots, G_M, G_{M+\tilde m+1}, \hdots, G_{2M+\tilde m}, G_{2M+2\tilde m+1}, \hdots)^\intercal,
\end{align*}
noting that 
\begin{align}
n' \in [n-(r+1)\tilde m+1,\ n - r\tilde m] \subset \l[n\frac{M}{M+\tilde m} -\tilde m,\ n\frac{M}{M+\tilde m} + \tilde m\r] = [nq -\tilde m, nq + \tilde m], \label{eq:nprime2}
\end{align}
where $r := \lfloor\frac{n}{M+\tilde m} \rfloor$ and $q := \frac{M}{M+\tilde m}$. By construction, $\bX^M$ and $\bG^M$ satisfy Assumption \ref{blockdepass2} with $\beta_{\rm mix}=\beta(\tilde m)$. We may also define
\begin{align*}
    \bX^{\tilde m} &:=(X_{M+1},\dots, X_{M+\tilde m},X_{2M+\tilde m+1},\dots,X_{2M+2\tilde m},\dots)\\
    \bG^{\tilde m} &:=(G_{M+1},\dots, G_{M+\tilde m},G_{2M+\tilde m+1},\dots,G_{2M+2\tilde m},\dots)
\end{align*}
so that every vector $X_i$ is either in $\bX^M$ or $\bX^{\tilde m}$. For simplicity we can also write these indexing sets as 
\begin{align*}
    B_M &:= \{1, \hdots, M, M+{\tilde m}+1, \hdots, 2M + {\tilde m}, \hdots, \}\\
    B_{\tilde m} &:= [n] \setminus B_M.
\end{align*}
If we once again refer to our quantity of interest as 
\begin{align*}
    d_{\mathcal H}\l(\min_\beta \hat R_n(\beta; \bX), \min_\beta \hat R_n(\beta; \bG)\r) = (\star),
\end{align*}
then we may bound
\begin{align*}
    (\star) &\leq \underbrace{d_{\mathcal H}\l(\min_\beta \hat R_n(\beta; \bX), \min_\beta \hat R_{n'}(\beta; \bX^M)\r)}_{(a)} + \underbrace{d_{\mathcal H}\l(\min_\beta \hat R_{n'}(\beta; \bX^M), \min_\beta \hat R_{n'}(\beta; \bG^M)\r)}_{(b)} \\&+ \underbrace{d_{\mathcal H}\l(\min_\beta \hat R_{n'}(\beta; \bG^M), \min_\beta \hat R_n(\beta; \bG)\r)}_{(c)}.
\end{align*}
By \theoremref{neuchatel2}, we know that there is $\textsf C>0$ such that $\limsup~(b)\le \textsf C \beta(\tilde m)^{1/15} $ as $n\to\infty$. Hence we have
\begin{align*}
    \limsup_{n\to\infty}\ (\star) &\leq \limsup_{n\to\infty}\ (a)  +\limsup_{n\to\infty}\ (c)+\textsf C \beta(\tilde m)^{1/15}.
\end{align*}
We may now apply \theoremref{equivalence} with $\tilde m$ to say that for some $\textsf{C}_d > 0$, 
\begin{align}
    \limsup_{n\to\infty}\ (\star) \leq 2\textsf{C}_d\frac{m}{M}\sqrt{M+m} +\textsf C \beta(\tilde m)^{1/15}= \mathcal O\l(M^{-1/2}+\textsf C \beta(\tilde m)^{1/15}\r). 
\end{align}
As this holds for every $M \in \Z^++$, we take $M\to\infty$ to obtain that \begin{align}
    \limsup_{n\to\infty}\ (\star) \leq \textsf C \beta(\tilde m)^{1/15}. 
\end{align} Finally noting that this holds for an arbitrary $\tilde m$ gives us the desired result.
\end{proof}

\section{Proof of \Cref{mainthm} \eqref{eq:main_second} and \Cref{mainthm:extend} \eqref{eq:main_second_mdep}: Test risk universality}\label{proof_second_eq}

In this section, we prove the second equation of both \theoremref{mainthm,mainthm:extend} concerning test risk universality, as both share the same proof once training risk universality is proved. We focus on presenting the proof for the 0-1 loss, i.e.~the test risk $R_{\rm test}$ is defined with
\begin{align*}
    \ell_{\rm test}( X_{\rm new}^\intercal \hat \beta \,,\, X_{\rm new}^\intercal \beta^*  )
    \;\coloneqq\; 
    \ind\Big\{  \;  \ind\Big\{ \sigma(X_{\text{new}}^\intercal \hat \beta) \geq \mfrac{1}{2} \Big\} = \ind \big\{ X_{\text{new}}^\intercal \beta^* - \varepsilon_{\text{new}} \geq 0 \big\}  \; \Big\} 
\end{align*}
for both $\hat \beta = \hat \beta(\bX)$ and $\hat \beta = \hat \beta(\bG)$. As our proof strategy relies on approximating $\ell_{\rm test}$ by the $1$-Lipschitz functions in $\tilde \cF$, the same proof also works if  $\ell_{\rm test}$ is already Lipschitz. Therefore the result also applies to any locally Lipschitz $\ell_{\rm test}$, which is Lipschitz over the compact set $\cS_p$.

\vspace{8pt}

\begin{proof}[Proof of \theoremref{mainthm} \eqref{eq:main_second} and \Cref{mainthm:extend} \eqref{eq:main_second_mdep}] 
%     We first recall the definition of the 0/1 test risk:
% \begin{align*}
%     R_{\text{test}}(\hat \beta) 
%     \;\coloneqq\; 
%     \mean\Big[ 
%         \,
%         \ind\Big\{  \ind\big\{ \sigma(X_{\text{new}}^\intercal \hat \beta) \geq \mfrac{1}{2} \big\} = \ind\{ X_{\text{new}}^\intercal \beta^* - \varepsilon_{\text{new}} \geq 0 \} \Big\} 
%     \,\Big|\, \hat \beta \Big]
%     \;.
% \end{align*} 
% Here, we use a generic random $\hat \beta$ that will be later specified as $\hat \beta(\bX)$ and $\hat \beta(\bG)$, which respectively denote the minimizer of the loss on the original and the minimizer of the loss on Gaussian surrogates. 
Recall that $\sigma(x) = \frac{1}{1+e^{-x}}$. Our test loss can then be re-expressed as 
\begin{align*}
    R_{\text{test}}( \hat \beta) 
    \;=&\; 
    \mean\Big[ 
        \,
        \ind\Big\{  \ind\big\{ \sigma(X_{\text{new}}^\intercal   \hat \beta) \geq \mfrac{1}{2} \big\} = \ind\{ X_{\text{new}}^\intercal \beta^* - \varepsilon_{\text{new}} \geq 0 \} \Big\} 
    \,\Big|\,   \hat \beta \Big]
    \\
    \;=&\; 
    \mean\Big[ 
        \,
        \ind\Big\{  \ind\big\{ X_{\text{new}}^\intercal   \hat  \beta \geq 0\big\} = \ind\{ X_{\text{new}}^\intercal \beta^* - \varepsilon_{\text{new}} \geq 0 \} \Big\} 
    \,\Big|\,  \hat  \beta \Big]
    \\
    \;=&\;
    \mean\Big[ 
        \,
        \ind\Big\{ 
            X_{\text{new}}^\intercal  \hat  \beta \geq 0
            \,,\,
            X_{\text{new}}^\intercal \beta^* - \varepsilon_{\text{new}} \geq 0 
        \Big\}
        \,
        +
        \,
        \ind\Big\{ 
            X_{\text{new}}^\intercal  \hat \beta < 0
            \,,\,
            X_{\text{new}}^\intercal \beta^* - \varepsilon_{\text{new}} < 0 
        \Big\}
        \,
        \Big|\,  \hat  \beta \Big]
    \;,
    \tagaligneq \label{eq:0:1:risk:initial}
\end{align*}
and by a similar argument, 
\begin{align*}
    R_{{\text{test}}}^G(  \hat  \beta) 
    \;=\;
    \mean\Big[ 
        \,
        \ind\Big\{ 
            G_{\text{new}}^\intercal  \hat \beta \geq 0
            \,,\,
            G_{\text{new}}^\intercal \beta^* - \varepsilon_{\text{new}} \geq 0 
        \Big\}
        \,
    +
        \,
        \ind\Big\{ 
            G_{\text{new}}^\intercal \hat \beta < 0
            \,,\,
            G_{\text{new}}^\intercal \beta^* - \varepsilon_{\text{new}} < 0 
        \Big\}
        \,
        \Big|\,  \hat  \beta \Big]
    \;.
\end{align*}
% By the same argument while noting that $G_{\text{new}}$ is Gaussian and $\beta \neq 0$, we have 
% \begin{align*}
%     R^{0/1}_{{\text{test}};G}( \beta) 
%     \;=\;
%     \mean\big[ 
%         \,
%         \ind\big\{ 
%             G_{\text{new}}^\intercal \beta
%             \, ( G_{\text{new}}^\intercal \beta^* - \varepsilon_{\text{new}} ) 
%             \,
%             \geq 0 \big\}  \big] 
% \end{align*}
% For convenience we write 
% \begin{align*}
%     Q_X 
%     \;\coloneqq&\; 
%     X_{\text{new}}^\intercal \beta
%             \, ( X_{\text{new}}^\intercal \beta^* - \varepsilon_{\text{new}} ) 
%     &\text{ and }&&
%     Q_G
%     \;\coloneqq&\; 
%     G_{\text{new}}^\intercal \beta
%             \, ( G_{\text{new}}^\intercal \beta^* - \varepsilon_{\text{new}} ) \;.
% \end{align*}
For convenience, we denote the random $\R^2$ vectors 
\begin{align*}
    V_X \;\coloneqq&\;
    ( X_{\text{new}}^\intercal \hat \beta 
    \,,\,
    X_{\text{new}}^\intercal \beta^* - \varepsilon_{\text{new}} )^\intercal
    &\text{ and }&&
    V_G \;\coloneqq&\;
    ( G_{\text{new}}^\intercal \hat \beta 
    \,,\,
    G_{\text{new}}^\intercal \beta^* - \varepsilon_{\text{new}} )^\intercal
    \;.
\end{align*}
We first perform a standard smoothing of the indicator function. By Lemma 34 of \citet{huang2023high}, for any $\tau \in \R$ and $\delta >0$, there exists a continuously differentiable function $h_{\tau;\delta}$ such that $h_{\tau+ \delta;\delta}(x) \leq \ind\{ x \geq \tau \} \leq h_{\tau;\delta}(x)$ for all $x \in \R$ and that $\partial h_{\tau;\delta}$ is bounded in norm by $\delta^{-1}$. Moreover $h_{\tau;\delta}$ takes value in $[0,1]$. We use this to construct the $\R^2 \rightarrow \R$ function $\tilde h_{\tau;\delta}(x,y) \coloneqq h_{\tau;\delta}(x) \, h_{\tau;\delta}(y)$, which satisfies 
\begin{align*}
    \tilde h_{\delta;\delta}(x,y) 
    \;\leq\; 
    \ind\{ x \geq 0, y \geq 0\} 
    \;=\;  
    \ind\{ x \geq 0 \} \, \ind\{ y \geq 0\} 
    \;\leq\; 
    \tilde h_{0;\delta}(x,y) \;.
\end{align*}
This implies that for every $\delta > 0$, almost surely
\begin{align*}
    &\;
    \mean\Big[ 
        \,
        \ind\Big\{ 
            X_{\text{new}}^\intercal \hat \beta \geq 0
            \,,\,
            X_{\text{new}}^\intercal \beta^* - \varepsilon_{\text{new}} \geq 0 
        \Big\}
        \,
    - 
        \,
        \ind\Big\{ 
            G_{\text{new}}^\intercal \hat \beta \geq 0
            \,,\,
            G_{\text{new}}^\intercal \beta^* - \varepsilon_{\text{new}} \geq 0 
        \Big\}
        \,
    \Big|\, \hat \beta
    \Big] 
    \\
    &\;\leq\; 
    \mean\big[\tilde h_{0;\delta}(V_X)  -  \tilde h_{\delta;\delta}(V_G) \,\big|\, \hat \beta \big]
    \\
    &\;\leq\;
    \mean\big[\tilde h_{0;\delta}(V_X) - \tilde h_{0;\delta}(V_G) + \tilde h_{0;\delta}(V_G) - \tilde h_{\delta;\delta}(V_G) \,\big|\, \hat \beta \big]
    \\
    &\;\leq\;
    \mean\big[ \tilde h_{0;\delta}(V_X) -  \tilde h_{0;\delta}(V_G) 
    +
    \ind\{ 
        (V_G)_1 \geq - \delta
        \,,\,
        (V_G)_2 \geq - \delta
    \}  
    -  
    \ind\{ 
        (V_G)_1 \geq \delta 
        \,,\,
        (V_G)_2 \geq \delta
    \} \,\big|\, \hat \beta \big]
    \\
    &\;\leq\;
    \mean\big[\tilde h_{0;\delta}(V_X)-  \tilde h_{0;\delta}(V_G) \,\big|\, \hat \beta \big]
    + 
    \P\big(  (V_G)_1\in [-\delta,\delta)  \,\big|\, \hat \beta \big)
    +
    \P\big(  (V_G)_2 \in [-\delta,\delta)  \,\big|\, \hat \beta \big)
    \;.
\end{align*}
In the last inequality, we have noted that if $(V_G)_1, (V_G)_2 \geq - \delta$ is true and yet $(V_G)_1, (V_G)_2 \geq \delta$ is false, we must have either $(V_G)_1\in [-\delta,\delta)$ or $ (V_G)_2 \in [-\delta,\delta)$. Now let $\delta \in (0,1]$. Notice that $\delta \, \tilde h_{0;\delta} \in \tilde \cF$, where $\tilde \cF$ is defined in \Cref{courage}. Also note that $\hat \beta$ is independent of $X_{\rm new}$ and $G_{\rm new}$ in $V_X$ and $V_G$. This implies according to \Cref{courage} that 
\begin{align*}&
    \,\big|\,  \mean\big[\tilde h_{0;\delta}(V_X) \,\big|\, \hat \beta \big] - \mean\big[ \tilde h_{0;\delta}(V_G)  \,\big|\, \hat \beta \big]  \,\big|\,
    \;\\
    \leq&\;
    \mfrac{1}{\delta}
    \,
    \msup_{f\in \mathcal{\tilde F}} 
    \Big| \mean\Big[ 
        f (
            X_{\text{new}}^\intercal \hat \beta 
        , X_{\text{new}}^\intercal \beta^* - \varepsilon_{\text{new}})
        \,\Big|\, \hat \beta
     \Big] 
        -  
        \mean\Big[ 
            f (G_{\text{new}}^\intercal \hat \beta 
,G_{\text{new}}^\intercal \beta^* - \varepsilon_{\text{new}}) 
    \,\Big|\, \hat \beta
     \Big] 
    \Big|
    \\
    \;\leq&\;
    \mfrac{1}{\delta}
    \,
    \msup_{f\in \mathcal{\tilde F}} 
    \, 
    \msup_{\beta \in \cS_p}
    \Big| \mean\Big[ 
        f (
            X_{\text{new}}^\intercal \hat\beta 
        , X_{\text{new}}^\intercal \beta^* - \varepsilon_{\text{new}})
     \Big] 
        -  
        \mean\Big[ 
            f (G_{\text{new}}^\intercal \hat\beta 
,G_{\text{new}}^\intercal \beta^* - \varepsilon_{\text{new}}) 
     \Big] 
    \Big|
    \\
    \;\overset{(a)}\le  &  \mfrac{1}{\delta}
    \,
    \msup_{f\in \mathcal{\tilde F}} 
    \, 
    \msup_{\beta \in \cS_p}
    \Big| \mean\Big[ 
        f (
            X_{\text{new}}^\intercal \hat\beta 
        , X_{\text{new}}^\intercal \beta^* 
        )
     \Big] 
        -  
        \mean\Big[ 
            f (G_{\text{new}}^\intercal \hat\beta 
,G_{\text{new}}^\intercal \beta^* )  \Big] 
    \Big|
    \;\eqqcolon\; 
    \mfrac{1}{\delta}\Delta_n % (\beta)
   % \rightarrow 0.
    % \\
    % \;\overset{(b)}{\leq}&\;
    % \mfrac{C'}{\delta} \, \Big( p^{3/2 -r } \, \max_{\ell} |B_{{\text{new}}; \ell}|^{3/2} \,  \max_{\ell} \| X_{{\text{new}}; \ell} \|^3_{L_3} \Big)^{1/5}
\end{align*}
%for some absolute constant $C' > 0$.
In $(a)$, we have used a conditioning on $\varepsilon_{\text{new}}$, moved the suprema and the norm inside the expectation over  $\varepsilon_{\text{new}}$ and observed that the function $f( \argdot, \argdot - \varepsilon_{\text{new}} ) \in \mathcal{\tilde F}$ almost surely.% In $(b)$, we have applied \lemmaref{dalia} with $q=2$. 
~Substituting this into the above yields that, almost surely  
\begin{align*}
    &\;
    \mean\Big[ 
        \,
        \ind\Big\{ 
            X_{\text{new}}^\intercal \hat \beta \geq 0
            \,,\,
            X_{\text{new}}^\intercal \beta^* - \varepsilon_{\text{new}} \geq 0 
        \Big\}
        \,
    - 
        \,
        \ind\Big\{ 
            G_{\text{new}}^\intercal \hat \beta \geq 0
            \,,\,
            G_{\text{new}}^\intercal \beta^* - \varepsilon_{\text{new}} \geq 0 
        \Big\}
        \,
    \Big|\, \hat \beta
    \Big] 
    \\
    &\;\leq\;
    \mfrac{1}{\delta}\Delta_n % (\beta)
    + 
    \P\big(  (V_G)_1\in [-\delta,\delta)  \,\big|\, \hat \beta \big)
    +
    \P\big(  (V_G)_2 \in [-\delta,\delta)  \,\big|\, \hat \beta \big)
    \;.
\end{align*}
By a similar argument, we can obtain that almost surely
\begin{align*}
    &\;
    \mean\Big[ 
        \,
        \ind\Big\{ 
            G_{\text{new}}^\intercal \hat \beta \geq 0
            \,,\,
            G_{\text{new}}^\intercal \beta^* - \varepsilon_{\text{new}} \geq 0 
        \Big\}
        \,
    -
        \,
        \ind\Big\{ 
            X_{\text{new}}^\intercal \hat \beta \geq 0
            \,,\,
            X_{\text{new}}^\intercal \beta^* - \varepsilon_{\text{new}} \geq 0 
        \Big\}
        \,
    \Big|\, 
        \hat \beta
    \Big] 
    \\
    &\;\leq\; 
    \mean\big[\tilde h_{0;\delta}(V_G)  - \tilde h_{\delta;\delta}(V_X) \,\big|\, \hat \beta \big]
    \\
    &\;\leq\;
    \mean\big[\tilde h_{\delta;\delta}(V_G) 
    -
    \tilde h_{\delta;\delta}(V_X) 
    +
    \tilde h_{0;\delta}(V_G) 
    -
    \tilde h_{\delta;\delta}(V_G) \,\big|\, \hat \beta \big]
    \\
    &\;\leq\;
    \mean\big[ \tilde h_{\delta;\delta}(V_X)
    -
    \tilde h_{\delta;\delta}(V_G) \,\big|\, \hat \beta \big]
    + 
    \P\big(  (V_G)_1\in [-\delta,\delta)  \,\big|\, \hat \beta \big)
    +
    \P\big(  (V_G)_2 \in [-\delta,\delta)  \,\big|\, \hat \beta \big)
    \\
    &\;\leq\; 
  \frac{1}{\delta}\Delta_n % (\beta)
    + 
    \P\big(  (V_G)_1\in [-\delta,\delta)  \,\big|\, \hat \beta \big)
    +
    \P\big(  (V_G)_2 \in [-\delta,\delta)  \,\big|\, \hat \beta \big)
    \;.
\end{align*}
Combining the two bounds implies that, almost surely,
\begin{align*}
    (\star)
    \;\coloneqq\;
    &\;\Big| \, \mean\Big[ 
        \,
        \ind\Big\{ 
            X_{\text{new}}^\intercal \hat \beta \geq 0
            \,,\,
            X_{\text{new}}^\intercal \beta^* - \varepsilon_{\text{new}} \geq 0 
        \Big\}
        \,
        -
        \,
        \ind\Big\{ 
            G_{\text{new}}^\intercal \hat \beta \geq 0
            \,,\,
            G_{\text{new}}^\intercal \beta^* - \varepsilon_{\text{new}} \geq 0 
        \Big\}
        \,
    \Big|\, 
        \hat \beta
    \Big] 
    \,\Big|
    \\
    &\;\leq\;
    \mfrac{2}{\delta}\Delta_n % (\beta)
    + 
    \P\big(  (V_G)_1\in [-\delta,\delta)  \,\big|\, \hat \beta \big)
    +
    \P\big(  (V_G)_2 \in [-\delta,\delta)  \,\big|\, \hat \beta \big)
    \;.
\end{align*}
To control the probability terms, notice that conditioning on $\hat \beta$, $(V_G)_1 = G_{\text{new}}^\intercal \hat \beta \,|\, \hat \beta \sim \cN(0, \hat \beta^\intercal \Sigma_{\rm new} \hat \beta )$ and $(V_G)_2 \,|\, \varepsilon_{\text{new}}, \hat\beta\, \sim \cN( -\varepsilon_{\text{new}},  {\beta^{*}}^\intercal \Sigma_{\rm new} \beta^{*} )$. Therefore by a standard anti-concentration result for Gaussians (see e.g.~\citet{carbery2001distributional}), there is an absolute constant $C'' >0$ such that, almost surely,
\begin{align*}
    \P \big(  (V_G)_1\in [-\delta,\delta) \,\big|\, \hat \beta \big)
    +
    \P \big(  (V_G)_2 \in [-\delta,\delta) \,\big|\, \hat \beta \,,\, \varepsilon_{\rm new} \big)
    \;\leq\;
    C'' \delta \, \bigg( 
        \mfrac{1}{\hat \beta^\intercal \Sigma_{\rm new }\hat \beta } 
        +
        \mfrac{1}{{\beta^*}^\intercal \Sigma_{\rm new} \beta^* } 
    \bigg)
    \;.
\end{align*}
Meanwhile by \Cref{assumption:test:risk}, for every $\epsilon > 0$,
\begin{align*}
    \P( D_\epsilon(\bG) > 0 ) \;\rightarrow\; 1\;,
    \qquad 
    \text{ where }\quad
    D_\epsilon(\bG) \;\coloneqq\;
    \min_{\beta \in \cS_p \,,\, | (\beta^\intercal \Sigma_{\text{new}} \beta)^{1/2} \,-\, \bar \chi | > \epsilon}\hspace{-7pt} \hat R_n(\beta; \bG)
    -
    \min_{\beta \in \cS_p} \hat R_n(\beta; \bG)
    \;,
\end{align*}
and by the universality of the training risk (\Cref{mainthm} \eqref{eq:main_first} or \Cref{mainthm:extend} \eqref{eq:main_first_mdep}), we also have $\P( D_\epsilon(\bX) > 0) \rightarrow 1$. This implies that  
\begin{align*}
    \P\big( \,  \big|( \hat \beta(\bX)^\intercal \Sigma_{\text{new}} \hat \beta(\bX))^{1/2}  - \bar \chi \big| \leq \epsilon \, \big)
    \;=&\;
    \P( D_\epsilon(\bX) > 0 )
    \;\rightarrow\; 
    1\;,
    \\
    \P\big( \,  \big|( \hat \beta(\bG)^\intercal \Sigma_{\text{new}} \hat \beta(\bG))^{1/2}  - \bar \chi \big| \leq \epsilon \, \big)
    \;=&\;
    \P( D_\epsilon(\bG) > 0 )
    \;\rightarrow\; 
    1\;.
    \tagaligneq \label{eq:0:1:conv:chi}
\end{align*}
In other words, for both $\hat \beta = \hat \beta(\bX)$ and $\hat \beta = \hat \beta(\bG)$, $\hat \beta^\intercal \Sigma_{\rm new }\hat \beta \xrightarrow{\P} \bar \chi^2$ in probability. Moreover, $\bar \chi > 0$ and ${\beta^*}^\intercal \Sigma_{\rm new} \beta^* \xrightarrow{\P} \chi_*^2 > 0$ by \Cref{assumption:test:risk}. This allows us to consider a rare event 
\begin{align*}
    E_\chi 
    \;\coloneqq&\; 
    \Big\{ \hat \beta^\intercal \Sigma_{\rm new }\hat \beta < \mfrac{\bar \chi^2}{2} \,,\,  {\beta^*}^\intercal \Sigma_{\rm new} \beta^* < \mfrac{\chi_*^2}{2} \Big\}
    \;\qquad \;
    \text{ such that }
    \quad 
    \P(E_\chi ) \;\rightarrow\; 0\;.
\end{align*}
Denoting $E_\chi^c$ as the complement of $E_\chi$, we obtain that for any $\epsilon' > 0$,
\begin{align*}
    \P( |(\star)| > \epsilon' )
    \;\leq&\;
    \P\bigg( 
        \,
        \mfrac{2}{\delta} \Delta_n + C'' \delta \, \Big( 
            \mfrac{1}{\hat \beta^\intercal \Sigma_{\rm new }\hat \beta } 
            +
            \mfrac{1}{{\beta^*}^\intercal \Sigma_{\rm new} \beta^* } 
        \Big)
        \,>\,
        \epsilon'
        \,
    \bigg)
    \\
    \;\leq&\;
    \P\bigg( 
        \,
        \mfrac{2}{\delta} \Delta_n + C'' \delta \, \Big( 
            \mfrac{1}{\hat \beta^\intercal \Sigma_{\rm new }\hat \beta } 
            +
            \mfrac{1}{{\beta^*}^\intercal \Sigma_{\rm new} \beta^* } 
        \Big)
        \,>\,
        \epsilon'
        \,
        \Big| \, E_\chi^c
    \bigg)
    +
    \P( E_\chi )
    \\
    \;\leq&\;
    \ind\Big\{ 
        \,
        \mfrac{2}{\delta} \Delta_n + C'' \delta \, \Big( 
            \mfrac{2}{\bar \chi^2} 
            +
            \mfrac{2}{\chi_*^2} 
        \Big)
        \,>\,
        \epsilon'
    \Big\}
    +
    \P( E_\chi )
    \;.
\end{align*}
By \Cref{courage}, $\Delta_n \rightarrow 0$. Since the above is valid for any $\delta$, whose choice is independent of $\epsilon'$, we can choose $\delta = \sqrt{\Delta_n}$, which implies that the above converge to zero. In other words, we have shown that 
\begin{align*}
    \Big| \, \mean\Big[ 
        \,
        \ind\Big\{ 
            X_{\text{new}}^\intercal \hat \beta \geq 0
            \,,\,
            X_{\text{new}}^\intercal \beta^* - \varepsilon_{\text{new}} \geq 0 
        \Big\}
        \,
        -
        \,
        \ind\Big\{ 
            G_{\text{new}}^\intercal \hat \beta \geq 0
            \,,\,
            G_{\text{new}}^\intercal \beta^* - \varepsilon_{\text{new}} \geq 0 
        \Big\}
        \,
    \Big|\, 
        \hat \beta
    \Big] 
    \,\Big|
    \;\xrightarrow{\P}\; 
    0
\end{align*}
for both $\hat \beta=\hat \beta(\bX)$ and $\hat \beta = \hat \beta(\bG)$. By an exactly analogous argument, we have 
\begin{align*}
    \Big| \, \mean\Big[ 
        \,
        \ind\Big\{ 
            X_{\text{new}}^\intercal \hat \beta < 0
            \,,\,
            X_{\text{new}}^\intercal \beta^* - \varepsilon_{\text{new}} < 0 
        \Big\}
        \,
    \Big] 
    - 
    \mean\Big[ 
        \,
        \ind\Big\{ 
            G_{\text{new}}^\intercal \hat \beta < 0
            \,,\,
            G_{\text{new}}^\intercal \beta^* - \varepsilon_{\text{new}} < 0 
        \Big\}
        \,\Big|\, \hat \beta
    \Big] 
    \,\Big|
    \;\rightarrow\; 0
    \;.
\end{align*}
In view of \eqref{eq:0:1:risk:initial}, we can use a triangle inequality to obtain that
\begin{align*}
    | R_{\text{test}}( \hat \beta(\bX) ) - R_{{\text{test}}}^G(\beta(\bX)) | 
    \;\xrightarrow{\P}&\;
     0
    &\text{ and }&&
    | R_{\text{test}}( \hat \beta(\bG) ) - R_{{\text{test}}}^G(\beta(\bG)) | 
    \;\xrightarrow{\P}&\;
     0
     \;.
\end{align*}
Meanwhile, note that $ R_{{\text{test}}}^G(\hat \beta)$ depends on $\hat \beta$ only through the mean-zero conditionally Gaussian variable $G_{\rm new}^\intercal \hat \beta$, which is completely characterized by $\Var[G_{\rm new}^\intercal \hat \beta \,|\, \hat \beta ] = \hat \beta^\intercal \Sigma_{\rm new} \hat \beta$. In view of \eqref{eq:0:1:conv:chi}, both $\hat \beta(\bX)^\intercal \Sigma_{\rm new} \hat \beta(\bX)$ and $\hat \beta(\bG)^\intercal \Sigma_{\rm new} \hat \beta(\bG)$ converge in probaility to the same constant $\bar \chi^2$. This implies
\begin{align*}
    | R_{\text{test}}^G( \hat \beta(\bX) ) - R_{{\text{test}}}^G(\beta(\bG)) | 
    \;\xrightarrow{\P}&\;
     0
     \;,
\end{align*}
which in particular implies the desired statement that  $| R_{\text{test}}( \hat \beta(\bX) ) - R_{{\text{test}}}^G(\beta(\bG)) | 
\,\xrightarrow{\P}\,
 0$
\end{proof}

\section{Important Lemmas}\label{important_lemmas}

In this section, we present the statements and proofs of the various lemmas used to prove our main theorems.

\subsection{Auxiliary Lemmas}

The lemma and its corollary aim to bound the expectation of the maximum possible norm of our signal $\bX\beta$, conditional on the fact that a Bernstein-like Inequality holds. 

\begin{lemma}[Operator Norm Bound]\label{ess_pee_two} Let $(\mathbf{Y}_i)$ be a sequence of $\R^p$-valued random vectors, and let $\textnormal{\textsf R} > 0$. Suppose that $\frac{n}{p}\to\kappa$ and that there exist constants $\textnormal{\textsf K}, \textnormal{\textsf C}_1, \textnormal{\textsf c}_2, \textnormal{\textsf C}_3$ such that 
\begin{enumerate}
    \item $\sup_{i\le n}\|\textnormal{Var}(\mathbf Y_i)\|_{\textnormal{op}}\le \tfrac1p\textnormal{\textsf K}\ $.
    \item For all $\beta \in \mathcal S := \mathcal B_p(0,\textnormal{\textsf R}\sqrt{p})$ and $t > 0$, we have 
    \begin{align}
    \P\l(\frac1n\Big|\sum_{i= 1}^n(\mathbf{Y}_i^\intercal\beta)^2-\mathbb{E}((\mathbf{Y}_i^\intercal\beta)^2)\Big|\ge t\r)\le \textnormal{\textsf C}_1\exp\l(-\textnormal{\textsf c}_2n\l(\frac{t}{\textnormal{\textsf C}_3\textnormal{\textsf R}^2} \wedge \frac{t^2}{\textnormal{\textsf C}_3^2\textnormal{\textsf R}^4}\r)\r).\label{eq:bernstein_again}
\end{align}
\end{enumerate}
Then there exists $\textnormal{\textsf{C}}_{\textnormal{\textsf{R}}} > 0$ depending on $\textnormal{\textsf R}$ and the constants above such that for $n$ sufficiently large, $$\E\l[\|\mathbf Y\|_{\mathcal S}^2\r] \leq \textnormal{\textsf{C}}_{\textnormal{\textsf{R}}}p,$$
where $\|\mathbf Y\|_{\mathcal S}$ is as defined in \eqref{eq:goodcatch}.
\end{lemma}

\begin{proof}
Let $\beta \in \mathcal S$, meaning by definition $\|\beta\|_2 \leq \textsf R\sqrt p$. Note that by the first statement of the lemma, we have that
\begin{align*}
  \E\l[\|\mathbf Y\beta\|^2\r] = n\beta^\intercal\overline{\Sigma}_n\beta \leq n \textsf R^2\textsf K,
\end{align*}
where $\overline{\Sigma}_n:=\frac{1}{n}\sum_{i=1}^n\text{Var}(\mathbf Y_i)$. Now, we can note that for $s > 0$, 
\begin{align*}
    \P\l(\|\mathbf Y\beta\|^2 \geq s\r) = \P\l(\frac1n\sum_{i=1}^nW_i \geq \frac{s - n\beta^\intercal\overline{\Sigma}_n\beta}{n}\r),
\end{align*}
where $W_i:= (\mathbf Y_i^\intercal\beta)^2 -\beta^\intercal\text{Var}(\mathbf Y_i)\beta$.~% is the centered version of our sub-exponential random variables, which by Lemma 2.6.8 of \cite{vershyninhighdimprob} and \eqref{eq:normsatisfy} satisfies
% \begin{align*}
%     \|W_i\|_{\psi_1} \leq \textsf{C}_3\|(A_i^\intercal\xi)^2\|_{\psi_1} \leq \textsf{C}_3\delta^2{\textsf{K}_X^2}
% \end{align*}
% for some $\textsf{C}_3 > 0$. \
If we let $\textsf{M}_\textsf{R} := \textsf R^2(\textsf K + \textsf C_3)$, %(\textsf C_2(\kappa +1) \vee \textsf{C}_3)$% $\textsf{M}_\delta := \delta^2\textsf K_X^2(\textsf C_2(\kappa +1) \vee \textsf{C}_3)$, t
then by \eqref{eq:bernstein_again}, we obtain that for $s > 0$,
\begin{align}
    \P\l(\|\mathbf Y\beta\|^2 \geq n\textsf{M}_\textsf{R}(s+1)\r) \leq \textsf C_1\cdot\exp\l[-{\textsf c_2n}\l(s\wedge {s^2}\r)\r]\label{eq:johnlennon},
\end{align}
which follows from noting that 
\begin{align*}
    \frac{n\textsf{M}_\textsf{R}(s+1) - n\beta^\intercal \overline{\Sigma}_n\beta}{n\textsf C_3\textsf R^2} \geq \frac{\textsf{M}_\textsf{R}(s+1) - \textsf{M}_\textsf{R}}{\textsf{M}_\textsf{R}} = s.
\end{align*}
Now let $\varepsilon >0$, and define $\mathcal S_{\varepsilon}$ to be a minimal $\varepsilon\sqrt p$-net of $\mathcal S$. Also, for $t > 0$, define the quantity $$\eta_t := \frac{\textsf{C}_3\sqrt{p} + t}{\sqrt n}\quad\text{for}\quad \textsf{C}_3 := \sqrt{\frac{1}{\textsf c_2}\log\l(1 + \frac{3\textsf R}{\varepsilon}\r)}.$$ Then if we set $s=\eta_t \vee \eta^2_t$ in \eqref{eq:johnlennon}, by a union bound we obtain 
\begin{align*}
    \P\l(\sup_{\beta\in\mathcal S_{\varepsilon}}\|\mathbf Y\beta\|^2 \geq n\textsf{M}_\textsf{R}\l((\eta_t\vee\eta_t^2) + 1\r)\r) &\stackrel{(i)}{\leq} \textsf C_1\abs{\mathcal S_{\varepsilon}}\exp\l[-\textsf c_2n\cdot\eta_t^2\r]\\
    &\stackrel{(ii)}{\leq} \textsf C_1\l(\frac{3\textsf R}{\varepsilon}\r)^p\exp\l[-\textsf c_2(\textsf{C}_3^2p +t^2)\r]\\
    &\stackrel{(iii)}{\leq} \textsf C_1e^{-\textsf c_2t^2},
\end{align*}
where $(i)$ is via the fact that for any $x \geq0$, we have 
\begin{align*}
    (x \vee x^2) \wedge (x \vee x^2)^2 = x^2,
\end{align*}
$(ii)$ is via Corollary 4.2.13 of \cite{vershyninhighdimprob} bounding the cardinality of a minimal $\varepsilon$-net, and $(iii)$ is from the definition of $\eta_t$ and $\textsf C_3$. Now we may bound the error between the supremum on the whole space and the supremum on the $\varepsilon$-net by applying a similar technique to Lemma 4.4.1 of \cite{vershyninhighdimprob}, which gives
\begin{align*}
    \sup_{\beta\in\mathcal S}\|\mathbf Y\beta\|^2  \leq \frac{1}{1-2\varepsilon}\sup_{\beta\in\mathcal S_{\varepsilon}}\|\mathbf Y\beta\|^2 .
\end{align*}
We conclude that, for $\textsf A := \sqrt{\textsf{M}_\textsf{R}/(1-2\varepsilon)}$, 
\begin{align}
    \P\l(\|\mathbf Y\|_{\mathcal S}^2 \geq n\textsf{A}^2\l((\eta_t\vee\eta_t^2) + 1\r)\r) \leq \textsf C_1e^{-\textsf c_2t^2}.\label{eq:paulmacca}
\end{align}
Now, let us define the two events 
\begin{align*}
    \mathcal E_1 := \l\{\frac{\|\mathbf Y\|_{\mathcal S}^2}{n\textsf{A}^2} - 1 \leq \eta_t \vee \eta_t^2\r\},\quad\quad \mathcal E_2 := \l\{\frac{\|\mathbf Y\|_{\mathcal S}}{\sqrt{n}\textsf{A}} \leq 1\r\},
\end{align*}
where we note that $\mathcal E_1$ is exactly the high-probability event of \eqref{eq:paulmacca}. Then we can first see that, on the event $\mathcal E_1 \cap \mathcal E_2$, we have
\begin{align}
    \mathcal E_1 \cap \mathcal E_2 \implies \|\bY\|_{\mathcal S} \leq \textsf{A}\sqrt{n},\label{duhbro}
\end{align}
which is simply from the definition of the event $\mathcal E_2$. On the other hand, for the event $\mathcal E_1 \cap \mathcal E_2^c$, we have 
\begin{align*}
    \l(\frac{\|\bY\|_{\mathcal S}}{\sqrt{n}\textsf{A}} - 1\r)^2 \vee\  \abs{\frac{\|\bY\|_{\mathcal S}}{\sqrt{n}\textsf{A}} - 1} &\stackrel{(i)}{\leq} \abs{\frac{\|\bY\|_{\mathcal S}^2}{n\textsf{A}^2} - 1}\\
    &\stackrel{(ii)}{=} \frac{\|\bY\|_{\mathcal S}^2}{n\textsf{A}^2} - 1\\
    &\stackrel{(iii)}{\leq} \eta_t \vee \eta_t^2,
\end{align*}
where $(i)$ is from the fact that $(x - y)^2 \vee |x-y| \leq |x^2-y^2|$ for $x , y > 0$ and $x + y \geq 1$, $(ii)$ follows from $\mathcal E_2^c$, and $(iii)$ from $\mathcal E_1$. Since we also know that 
\begin{align*}
    (x \vee x^2) \leq (y \vee y^2) \implies x \leq y\quad \text{ for }\quad x , y \geq 0,
\end{align*}
we can say that
\begin{align}
    \mathcal E_1 \cap \mathcal E_2^c \implies \abs{\frac{\|\bY\|_{\mathcal S}}{\sqrt{n}\textsf{A}} - 1} \leq \eta_t &\implies \|\bY\|_{\mathcal S} \leq \textsf{A}\l(\sqrt n + \textsf C_3 \sqrt p + t\r).\label{duhbroagain}
\end{align}
Combining \eqref{duhbro} and \eqref{duhbroagain}, we conclude that
\begin{align*}
    \mathcal E_1 = (\mathcal E_1 \cap \mathcal E_2) \cup (\mathcal E_1 \cap \mathcal E_2^c) &\implies \|\bY\|_{\mathcal S_p} \leq \tilde{\textsf{A}}\l(\sqrt n + \sqrt p + t\r),
\end{align*}
where $\tilde{\textsf{A}} = \textsf{A}(\textsf{C}_3 + 1)$. Thus we have
\begin{align*}
    \P\l(\|\bY\|_{\mathcal S} \geq \tilde{\textsf{A}}\l(\sqrt n + \sqrt p + t\r)\r) &\leq \P\l(\mathcal E_1^c\r)\\
    &\leq \textsf C_1e^{-\textsf c_2t^2}.
\end{align*}
If we set the variable $y = \tilde{\textsf{A}}^2\l(\sqrt n + \sqrt p + t\r)^2$, then we have that
\begin{align*}
    \P\l(\|\bY\|_{\mathcal S} \geq \sqrt{y}\r) \leq \textsf C_1\cdot\exp\l[-\textsf c_2\l(\frac{\sqrt y}{\tilde{\textsf A}} - \sqrt n - \sqrt p \r)^2\r]
\end{align*}
whenever $\sqrt y \geq \tilde{\textsf A}\l(\sqrt n + \sqrt p\r)$. 
This lets us conclude via the tail-integral formula that, after setting $\varepsilon = 1/4$ for example, 
\begin{align*}
    \E\|\bY\|_{\mathcal S}^2 &= \int_0^\infty\P\l(\|\bY\|_{\mathcal S}^2 \geq y\r)\,\text{d}y\\
    &= \int_0^\infty\P\l(\|\bY\|_{\mathcal S} \geq \sqrt{y}\r)\,\text{d}y\\
    &= \int_0^{\tilde{\textsf{A}}^2(\sqrt{n} + \sqrt{p})^2}\P\l(\|\bY\|_{\mathcal S} \geq \sqrt{y}\r)\,\text{d}y + \int_{\tilde{\textsf{A}}^2(\sqrt{n} + \sqrt{p})^2}^\infty \P\l(\|\bY\|_{\mathcal S} \geq \sqrt{y}\r)\,\text{d}y\\
    &\leq \int_0^{\tilde{\textsf{A}}^2(\sqrt{n} + \sqrt{p})^2}1\,\text{d}y + \textsf C_1\int_{\tilde{\textsf{A}}^2(\sqrt{n} + \sqrt{p})^2}^\infty \exp\l[-\textsf c_2\l(\frac{\sqrt y}{\tilde{\textsf A}} - \sqrt n - \sqrt p \r)^2\r]\,\text{d}y\\
    &\leq \tilde{\textsf{A}}^2(\sqrt{n} + \sqrt{p})^2 + \tilde{\textsf A}^2\l(\frac{1}{\textsf c_2} + \frac{\sqrt{\pi}(\sqrt n + \sqrt p)}{\sqrt{\textsf c_2}}\r)\\
    &\leq \textsf C_{\textsf R} \cdot p,
\end{align*}
for 
\begin{align*}
    \textsf C_{\textsf R} \propto \textsf R^2\log(1 + 12\textsf R). 
\end{align*}

\end{proof}

Now that we have given a bound on this scaled operator norm, we can use this to obtain a bound in the specific cases of block dependence, $m$-dependence, and mixing.
% \begin{lemma}[General operator norm bound]\textcolor{red}{ Let $(\mathbf{Y}_i)$ be a sequence of random vectors in $\mathbb{R}^p$ such that there exists constants $c_1,c_2,c_3>0$ such that for every $\beta\in \mathcal{S}_p$ we have $$P\big(\Big|\sum_{i\le n}(\mathbf{Y}_i^T\beta)^2-\mathbb{E}((\mathbf{Y}_i^T\beta)^2)\Big|\ge t\big)\le 2e^{-tc_1\min(\frac{t}{c_2},c_3)} .$$ Moreover suppose that there exists a constant $C_{\mathbf{Y}}$ such that $\sup_{i\le n}\|\rm{Var}(Y_i)\|_{op}\le \mathbf{K}_Y/{p}$. 
% Define a difference ``norm" via $$\|\mathbf{Y}\|_{\mathcal S_p} := \hspace{-8pt}\sup_{\substack{\beta\in\mathcal S_p }}\hspace{-8pt}\|\mathbf{Y}\beta\|.$$ Then there exists $\textnormal{\textsf{C}}$ depending only on $c_1,c_2,c_3>0$ such that for $n$ sufficiently large, $$\E\l[\|\mathbf{Y}\|_{\mathcal S_p}^2\r] \leq \textnormal{\textsf{C}}kp.$$}
% \end{lemma}
\begin{corollary}[Dependent Norm Bound]\label{dep_norm_bound} Let $(\mathbf{X}_i)$ be a sequence of $\R^p$-valued random vectors satisfying Assumptions \ref{logmodel}-\ref{signalsizinghehe}. Then setting $\mathcal S$ in \lemmaref{ess_pee_two} as either $\mathcal{B}_p(0,\sqrt{p})$ or $\mathcal S_p$ as in \eqref{eq:sp}, the following holds for $n$ sufficiently large: 
\begin{enumerate}
    \item[$(1)$] If $(\mathbf{X}_i)$ satisfies Assumption \ref{blockdepass} (block-dependence), then $\E[\|\mathbf X\|_{\mathcal S}^2] \leq \textnormal{\textsf C}kp$. 
    \item[$(2)$] If $(\mathbf{X}_i)$ satisfies Assumption \ref{general}(i) ($m$-dependence), then $\E[\|\mathbf X\|_{\mathcal S}^2] \leq \textnormal{\textsf C}mp$. 
    \item[$(3)$] If $(\mathbf{X}_i)$ satisfies Assumption \ref{general}(ii) ($\beta$-mixing) and there exists $\varepsilon > 0$ such that 
    \begin{align*}
        \textnormal{\textsf S} := \sum_{\ell=1}^\infty \beta(\ell)^{\frac{\varepsilon}{2+\varepsilon}} < \infty,
    \end{align*}
    then $\E[\|\mathbf X\|_{\mathcal S}^2] \leq \textnormal{\textsf C}\textnormal{\textsf S}p$. Furthermore, for $(\mathbf G_i)$ also satisfying Assumptions \ref{logmodel}-\ref{signalsizinghehe} with $\mathbf G_i \sim \mathcal N\l(0, \textnormal{Var}(\mathbf X_i)\r)$ and $\textnormal{cov}(\mathbf G) = \textnormal{cov}(\mathbf X)$, we have $\E[\|\mathbf G\|_{\mathcal S}^2] \leq \textnormal{\textsf C}\textnormal{\textsf S}p.$
\end{enumerate}
\end{corollary}

\begin{proof}
We display the proofs for $\mathcal S_p$, as those for the ball of radius $\sqrt p$ are identical. 

\vspace{8pt}\noindent$(1)$: Note that $(1)$ follows from $(2)$, as block-dependence with block parameter $k$ is a special case of $m$-dependence with $m = k$. 

\vspace{8pt}\noindent $(2)$: We check that the two conditions of \lemmaref{ess_pee_two} hold. For the first one, note that by the sub-Gaussian Assumption \ref{scalinggg} and \lemmaref{cabris}, we have 
\begin{align*}
    \|\text{Var}(\mathbf X_i)\|_{\text{op}} \leq \frac{\textsf C_1\textsf K_X^2}{n} \leq \frac{\textsf B}{p}
\end{align*}
for some $\textsf C_1, \textsf B> 0$ and $n$ sufficiently large. For the second condition, by sub-Gaussianity and the definition of $\mathcal S_p$, we have that 
\begin{align}
    \|(\mathbf X_i^\intercal\beta)^2\|_{\psi_1} \sie \|\mathbf X_i^\intercal\beta\|_{\psi_2}^2 \leq \frac{\textsf K_X^2\|\beta\|^2}{n} \leq \textsf C_2\textsf L^2 
\end{align}
for $n$ sufficiently large, where $(i)$ is by Lemma 2.7.6 of \cite{vershyninhighdimprob}. Thus we may apply \lemmaref{bern_dep} with $Z_i := (X_i^\intercal\beta^2) - \E\l[(X_i^\intercal\beta^2)\r]$ and $\textsf K = \textsf C_2\textsf L^2$. This gives us our second condition, namely \eqref{eq:bernstein_again}, with the constants $(\textsf R, \textsf K, \textsf C_1, \textsf c_2, \textsf C_3)$ set as $(\textsf L, \textsf B, 4, cm\inv, \textsf C_2)$. For the Gaussin case, the first condition holds for the exact same reason. For the second condition, we again use \lemmaref{lem:lemmethink}, which allows us to satisfy \eqref{eq:bernstein_again} with the constants $(\textsf R, \textsf K, \textsf C_1, \textsf c_2, \textsf C_3)$ set as $(\textsf L, \textsf B, 2, c\textsf S\inv, \textsf K_X\underline{c}\inv)$, respectively. 

\vspace{8pt}\noindent $(4)$: As above, the first condition of \lemmaref{ess_pee_two} holds for the same reason as $(2)$ and $(3)$. The second condition holds by the second statement of \lemmaref{lem:lemmethink}, which allows us to satisfy \eqref{eq:bernstein_again} with the constants $(\textsf R, \textsf K, \textsf C_1, \textsf c_2, \textsf C_3)$ set as $(\textsf L, \textsf B, 2, c\textsf S\inv, c_2\l(\tilde C_2 \wedge \tilde C_2'\r))$.
\end{proof}

Next, recall that we have smoothed our labels $y_1, \hdots, y_n$ by taking a convolution of the sign function with a mollifier $\zeta_\gamma$ as in \eqref{eq:smooth_like_butter}. This next lemma shows that the derivative of this convolution grows at a rate inversely proportional to the smoothing factor, $\gamma$. 

\begin{lemma}[Smoothed Label Derivative Bound]\label{desmos_lol}
    Define $\eta_i' := {\bm 1_\gamma^\pm}'(a_{\mathcal B_i}^\intercal X_{\mathcal B_i}\beta^*- \varepsilon_i)$. Then $$\abs{\eta_i'} \leq 3\gamma\inv.$$
\end{lemma}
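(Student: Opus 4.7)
The plan is direct: compute the derivative of $\mathbf{1}^\pm_\gamma$ explicitly, sup-norm bound it, and then track the scaling in $\gamma$. Since $\mathbf{1}^\pm_\gamma = \mathbf{1}^\pm * \zeta_\gamma$ with $\mathbf{1}^\pm$ of bounded variation and $\zeta_\gamma$ smooth and compactly supported, integration by parts (equivalently, distributional differentiation of $\mathbf{1}^\pm$, whose derivative is $2\delta_0$) gives
\begin{align*}
    (\mathbf{1}^\pm_\gamma)'(x) \;=\; \int (\mathbf{1}^\pm)'(y)\, \zeta_\gamma(x-y)\, dy \;=\; 2\zeta_\gamma(x)\;.
\end{align*}
Hence $|\eta_i'| = 2\zeta_\gamma(a_{\mathcal B_i}^\intercal X_{\mathcal B_i}\beta^* - \varepsilon_i) \leq 2\|\zeta_\gamma\|_\infty$, and the problem reduces to bounding $\|\zeta_\gamma\|_\infty$.

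Next I would determine the normalization constant $\mathsf{C}$. A substitution $x = \gamma t$ in the integral defining $\mathsf{C}$ gives $\mathsf{C} = 1/(\gamma c_0)$ where $c_0 \coloneqq \int_{-1}^{1} \exp\!\big(\tfrac{1}{t^2-1}\big)\,dt$ is an absolute constant independent of $\gamma$. The map $t \mapsto \exp(1/(t^2-1))$ is maximized at $t=0$, so $\zeta_\gamma$ attains its maximum at $0$ and
\begin{align*}
    \|\zeta_\gamma\|_\infty \;=\; \zeta_\gamma(0) \;=\; \mathsf{C}\, e^{-1} \;=\; \frac{1}{\gamma c_0 e}\;.
\end{align*}

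It remains to verify that the resulting constant $\frac{2}{c_0 e}$ is at most $3$. For $|t|\leq \tfrac12$ we have $t^2-1 \in [-1,-\tfrac34]$, so $1/(t^2-1) \geq -\tfrac43$, which yields
\begin{align*}
    c_0 \;\geq\; \int_{-1/2}^{1/2} e^{-4/3}\, dt \;=\; e^{-4/3}\;.
\end{align*}
Therefore $|\eta_i'| \leq \frac{2}{\gamma c_0 e} \leq \frac{2 e^{1/3}}{\gamma} \leq \frac{3}{\gamma}$, since $2 e^{1/3} \approx 2.79 < 3$. No step here is a serious obstacle; the only care needed is the explicit lower bound on $c_0$, which any crude estimate (such as the one above) suffices to close.
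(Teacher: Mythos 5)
Your proof is correct, and it reaches the paper's bound by a slightly different (and arguably cleaner) route. The paper absorbs the derivative into the mollifier, writing $(\bm 1^\pm * \zeta_\gamma)' = \bm 1^\pm * \zeta_\gamma'$ and bounding $|\eta_i'| \leq \|\bm 1^\pm\|_\infty \int_{-\gamma}^{\gamma} |\zeta_\gamma'(t)|\,dt$; evaluating that integral (the integrand is, up to sign, the derivative of $\exp(\gamma^2/(t^2-\gamma^2))$) gives exactly $2\mathsf{C}e^{-1}$. You instead put the derivative on $\bm 1^\pm$, obtaining the exact identity $(\bm 1^\pm_\gamma)' = 2\zeta_\gamma$ and hence the bound $2\|\zeta_\gamma\|_\infty = 2\mathsf{C}e^{-1}$ — the same intermediate quantity, but as an identity rather than an $L^1$ estimate, and with no integral to evaluate. (Your distributional step is easily made elementary: split the convolution integral at the jump of $\bm 1^\pm$ and differentiate.) From there both arguments reduce to lower-bounding the normalizing integral; your rescaling $\mathsf{C}^{-1} = \gamma c_0$ with $c_0 \geq e^{-4/3}$ is the same estimate the paper uses (restricting to $|t| \leq 1/2$ and using monotonicity), stated slightly more transparently, and your final constant $2e^{1/3} < 3$ matches. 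No gaps.
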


\begin{proof}
Recall by the properties of convolution that a derivative can be ``absorbed" into the convolution like so:
\begin{align*}
\eta_i' := {\bm 1^{\pm}_\gamma}'(a_{\mathcal B_i}^\intercal X_{\mathcal B_i}\beta^* - \varepsilon_i) &= (\bm 1^\pm * \zeta_\gamma)'(a_{\mathcal B_i}^\intercal X_{\mathcal B_i}\beta^*- \varepsilon_i)\\
&= (\bm 1^\pm * \zeta_\gamma')(a_{\mathcal B_i}^\intercal X_{\mathcal B_i}\beta^* - \varepsilon_i)\\
&= \int_{-\gamma}^\gamma \bm 1^\pm(a_{\mathcal B_i}^\intercal X_{\mathcal B_i}\beta^*- \varepsilon_i-t)\zeta_\gamma'(t)\,\text{d}t.
\end{align*}
We may thus bound 
\begin{align*}
    \abs{\eta_i'} &= \abs{\int_{-\gamma}^\gamma \bm 1^\pm(a_{\mathcal B_i}^\intercal X_{\mathcal B_i}\beta^* - \varepsilon_i-t)\zeta_\gamma'(t)\,\text{d}t}\\
    &\leq \int_{-\gamma}^\gamma  \abs{\zeta_\gamma'(t)}\,\text{d}t\\
    &= 2\textsf{C}\gamma^2\int_{-\gamma}^\gamma\frac{|t|}{(t^2-\gamma^2)^2}\exp\l(\frac{\gamma^2}{t^2-\gamma^2}\r)\,\text{d}t\\
    &\stackrel{(i)}{\leq} 4\textsf{C}\gamma^2\int_{0}^\gamma\frac{t}{(t^2-\gamma^2)^2}\exp\l(\frac{\gamma^2}{t^2-\gamma^2}\r)\,\text{d}t\\
    &= \frac{2}{e}\textsf C,
\end{align*}
where $(i)$ is from the fact that the integrand is even. Thus it suffices to bound $\textsf C$, which is the integrating constant of our mollifier $\zeta_\gamma$. We may lower bound the integral like so:
\begin{align*}
    \textsf C\inv &= \int_{-\gamma}^\gamma\exp\l(\frac{\gamma^2}{t^2-\gamma^2}\r)\,\text{d}t\\
    &= 2\int_{0}^\gamma\exp\l(\frac{\gamma^2}{t^2-\gamma^2}\r)\,\text{d}t\\
    &\stackrel{(i)}{\geq} 2\int_{0}^{\gamma/2}\exp\l(\frac{\gamma^2}{t^2-\gamma^2}\r)\,\text{d}t\\
    &\stackrel{(ii)}{\geq} 2\int_{0}^{\gamma/2}\exp\l(\frac{\gamma^2}{(\gamma/2)^2-\gamma^2}\r)\,\text{d}t\\
    &= \gamma\cdot e^{-4/3}\\
    &\geq \frac{\gamma}{4},
\end{align*}
where $(i)$ follows from the fact that the integrand is non-negative, and $(ii)$ from the fact that it is a decreasing function, as it has derivative
\begin{align*}
    \frac{-2t\gamma^2}{(t^2-\gamma^2)^2}\exp\l(\frac{\gamma^2}{t^2-\gamma^2}\r) < 0\quad \text{ for } \quad0 < t < \gamma.
\end{align*}
We conclude that 
\begin{align*}
    \abs{\eta_i'} \leq \frac{2}{e}\cdot\frac{4}{\gamma} \leq \frac{3}{\gamma}. 
\end{align*}
\end{proof}
\subsection{General lemmas}\subsubsection{Replacing the True Minimum with a Smoothed \& Discretized Minimum}

To prove that the two minimum risks are close in distribution to one another, we must first smooth the labels, and then also discretize the parameter space that we are taking the minimum over. The following two lemmas show that the error incurred by these two approximations is negligible in the limit.  

\begin{lemma}[Smoothing the Risk]\label{smooth_risk} Suppose that $\bX$ and $\bG$ satisfy Assumptions \ref{logmodel}--\ref{signalsizinghehe}. Let $\gamma \in (0, 1)$. Then there exists $\textnormal{\textsf C} > 0$ such that for $n$ sufficiently large, 
\begin{align*}
    d_{\mathcal H}\l(\min_{\beta\in \mathcal{\tilde S}} \hat R_n(\beta; \bX), \min_{\beta\in \mathcal{\tilde S}}\hat R_n^\gamma(\beta; \bX)\r) &\leq \textnormal{\textsf{C}}\frac{\sqrt{\mathbb{E}\|\bX\|_{\mathcal{S}_p}^2}}{\sqrt{n}}\sqrt{\gamma}\\
    d_{\mathcal H}\l(\min_{\beta\in \mathcal{\tilde S}} \hat R_n(\beta; \bG), \min_{\beta\in \mathcal{\tilde S}} \hat R_n^\gamma(\beta; \bG)\r) &\leq \textnormal{\textsf{C}}\frac{\sqrt{\mathbb{E}\|\bG\|_{\mathcal{S}_p}^2}}{\sqrt{n}}\sqrt{\gamma}.
\end{align*}
\end{lemma}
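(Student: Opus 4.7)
The plan is to reduce the distributional distance $d_{\mathcal H}$ to a uniform-in-$\beta$ bound on the pointwise discrepancy $\hat R_n - \hat R_n^\gamma$. Since each $h\in\mathcal H$ has $\|h'\|_\infty\le 1$ and hence is $1$-Lipschitz, and since $|\min_\beta f-\min_\beta g|\le \sup_\beta|f-g|$, it suffices to show
\begin{align*}
\E\sup_{\beta\in\tilde{\mathcal S}}\bigl|\hat R_n(\beta;\bX)-\hat R_n^\gamma(\beta;\bX)\bigr|\;\le\;\textsf{C}\sqrt{k\gamma}.
\end{align*}
The ridge term cancels, and because $a\mapsto\log(1+e^{-a t})$ is $|t|$-Lipschitz in $a$ while $\omega_i\in[0,1]$, the integrand in the summation is bounded by $|y_i-\eta_i|\,|X_i^\intercal\beta|$.

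Next I would control the label discrepancy. Since $\zeta_\gamma$ is supported in $[-\gamma,\gamma]$ and $\bm 1^\pm$ is locally constant away from $0$, the convolution $\bm 1_\gamma^\pm(s)$ equals $\bm 1^\pm(s)$ whenever $|s|\ge\gamma$, and is always in $[-1,1]$. Thus $|y_i-\eta_i|\le 2\,\mathbb I\bigl(|a_{\mathcal B_i}^\intercal X_{\mathcal B_i}\beta^*-\varepsilon_i|\le\gamma\bigr)$. Conditioning on $X_{\mathcal B_i}$ and using that the $\text{Logistic}(0,1)$ density is bounded by $1/4$, this indicator has conditional probability at most $\gamma/2$, so
\begin{align*}
\E\Bigl[\tfrac1n\msum_{i=1}^n|y_i-\eta_i|^2\Bigr]\;\le\;2\gamma.
\end{align*}

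Now I would apply Cauchy–Schwarz in the index $i$ to peel the supremum over $\beta$: uniformly in $\beta\in\tilde{\mathcal S}\subseteq\mathcal S_p$,
\begin{align*}
\tfrac1n\msum_{i=1}^n|y_i-\eta_i|\,|X_i^\intercal\beta|
\;\le\;
\Bigl(\tfrac1n\msum_i|y_i-\eta_i|^2\Bigr)^{1/2}\cdot\tfrac{1}{\sqrt n}\|\bX\beta\|
\;\le\;
\Bigl(\tfrac1n\msum_i|y_i-\eta_i|^2\Bigr)^{1/2}\cdot\tfrac{1}{\sqrt n}\|\bX\|_{\mathcal S_p},
\end{align*}
where the first factor no longer depends on $\beta$. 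Taking expectation and Cauchy–Schwarz once more, then invoking Corollary \ref{ess_pee} to bound $\E\|\bX\|_{\mathcal S_p}^2\le\textsf{C}kp$ and the signal-size assumption $p/n\to\kappa$, gives the claimed bound $\textsf{C}'\sqrt{k\gamma}$. The proof for $\bG$ is verbatim, as Corollary \ref{ess_pee} supplies the same operator-norm bound for the Gaussian surrogate, and the density bound on $\varepsilon_i$ is independent of the covariate law.

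I do not anticipate a real obstacle: the only nonroutine piece is recognizing that the support of the mollifier pins the label discrepancy to a rare event of probability $O(\gamma)$, and that the $\sqrt{k}$ factor arises solely from the block-dependent operator-norm bound in Corollary \ref{ess_pee}. The composition of two Cauchy–Schwarz steps (one in $i$ pointwise in $\beta$, one in expectation) must be done in this order so that the $\beta$-supremum falls on the deterministic factor $\tfrac{1}{\sqrt n}\|\bX\|_{\mathcal S_p}$ before any expectation is taken.
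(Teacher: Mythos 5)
Your proposal is correct and follows essentially the same route as the paper: reduce $d_{\mathcal H}$ via the $1$-Lipschitzness of $h$ to the pointwise discrepancy of the risks, use the $|X_i^\intercal\beta|$-Lipschitzness of the loss in the label, bound $\E\|\bm y-\bm\eta\|^2$ by $O(n\gamma)$ through the mollifier's support and the bounded logistic density, and finish with Cauchy--Schwarz plus the restricted operator-norm bound of Corollary \ref{ess_pee}. The only cosmetic difference is that you bound $\sup_{\beta}|\hat R_n-\hat R_n^\gamma|$ whereas the paper bounds the maximum of the discrepancy at the two minimizers; both are then controlled by the identical uniform estimate.
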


\begin{proof}
We show the proof only for $\bX$, and note that the exact same technique holds for $\bG$. Since $h \in \mathcal H$ is Lipschitz, we know that if $\tilde\beta$ and $\tilde\beta_\gamma$ are the minimizers of $\hat R_n$ and $\hat R_n^\gamma$ on $\tilde{\mathcal S}$ respectively, then
\begin{align*}
    d_{\mathcal H}\l(\min_\beta \hat R_n(\beta; \bX), \min_\beta \hat R_n^\gamma(\beta; \bX)\r) &= \sup_{h\in\mathcal H}\E\l[h\l(\hat R_n(\tilde\beta; \bX)\r) - h\l(\hat R_n^\gamma(\tilde\beta_\gamma; \bX)\r)\r]\\
    &\leq \sup_{h\in\mathcal H}\|h'\|_{\infty}\E\abs{\hat R_n(\tilde\beta; \bX) - \hat R_n^\gamma(\tilde\beta_\gamma; \bX)}\\
    &\leq \E\abs{\hat R_n(\tilde\beta; \bX) - \hat R_n^\gamma(\tilde\beta_\gamma; \bX)},
\end{align*}
where the last line is by definition of $\mathcal H$. To control this term, we can note that 
\begin{align*}
    \abs{\hat R_n(\tilde\beta; \bX) - \hat R_n^\gamma(\tilde\beta_\gamma; \bX)} \leq
    \max\Big\{&\underbrace{\abs{\hat R_n(\tilde\beta; \bX) - \hat R_n^\gamma(\tilde\beta; \bX)}}_{(a)},\ \  \underbrace{\abs{\hat R_n(\tilde\beta_\gamma;  \bX) - \hat R_n^\gamma(\tilde\beta_\gamma; \bX)}}_{(b)} \Big\}.
\end{align*}
We bound the first term $(a)$ like so:
\begin{align*}
    \abs{\hat R_n(\tilde\beta; \bX) - \hat R_n^\gamma(\tilde\beta; \bX)} &\overset{(i)}{\leq }\frac{1}{n}\sumi{n}\abs{\log\l(1 + e^{-y_iX_i^\intercal\tilde\beta}\r) - \log\l(1 + e^{-\eta_iX_i^\intercal\tilde\beta}\r)}\\
    &\stackrel{(ii)}{\leq} \frac{1}{n}\sumi{n}\abs{X_i^\intercal\tilde\beta}|y_i - \eta_i|\\
    &\stackrel{(iii)}{\leq} \frac{1}{n}\|\bX\tilde\beta\|\ \|\bm y -\bm\eta\|\\
    &\stackrel{(iv)}{\leq} \frac{1}{n}\|\bX\|_{\mathcal S_p}\|\bm y - \bm\eta\|,
\end{align*}
where $(i)$ uses that $0\le \omega_i\le 1$ for all $i \leq n$, $(ii)$ comes from treating the loss as a function of the label and Taylor expanding, $(iii)$ is from Cauchy-Schwarz, and $(iv)$ is from the definition of $\|\bX\|_{\mathcal S_p}$. Applying Cauchy-Schwarz once more we see that
\begin{align*} 
    \E[(a)] &\leq \frac{1}{n}\sqrt{\E\|\bX\|_{\mathcal S_p}^2\E\|\bm y -\bm\eta\|^2}.
   % &\stackrel{(i)}{\leq} \frac{\textsf{C}_1\sqrt{kp}}{n}\sqrt{\E\|\bm y -\bm\eta\|^2},
\end{align*}
%where $(i)$ is via \corollaryref{ess_pee}.
From here we note that
\begin{align*}
    \E\|\bm y -\bm\eta\|^2 &= \sum_{i=1}^n\E[(y_i-\eta_i)^2]\\
    &\stackrel{(i)}{=} \sum_{i=1}^n\E[(y_i-\eta_i)^2\bm 1_{|a_{\mathcal B_i}^\intercal X_{\mathcal B_i}\beta^* - \varepsilon_i| \leq \gamma}]\\
    &\stackrel{(ii)}{\leq} \sum_{i=1}^n\E\l[\P\l(|a_{\mathcal B_i}^\intercal X_{\mathcal B_i}\beta^* - \varepsilon_i| \leq \gamma \mid X_{\mathcal B_i}\r)\r]\\
    &\stackrel{(iii)}{\leq} n\gamma,
\end{align*}
where the indicator in $(i)$ is introduced as 
\begin{align*}
    y_i - \eta_i \neq 0 \iff |a_{\mathcal B_i}^\intercal X_{\mathcal B_i}\beta^* - \varepsilon_i| \leq \gamma,
\end{align*}
$(ii)$ is because $(y_i - \eta_i)^2 \leq 1$, and $(iii)$ from
\begin{align*}
    \P\l(|a_{\mathcal B_i}^\intercal X_{\mathcal B_i}\beta^*- \varepsilon_i| \leq \gamma \mid X_{\mathcal{B}_i}\r) &= \sigma(a_{\mathcal B_i}^\intercal X_{\mathcal B_i}\beta^*+ \gamma) - \sigma(a_{\mathcal B_i}^\intercal X_{\mathcal B_i}\beta^*- \gamma)\\
    &\leq 2\gamma\|\sigma'\|_\infty\\
    &\leq \gamma.
\end{align*}
We conclude that 
\begin{align*}
    \E[(a)] \leq \frac{\sqrt{\mathbb{E}\|\bX\|_{\mathcal{S}_p}^2}}{n}\sqrt{n\gamma} \leq \frac{\sqrt{\mathbb{E}\|\bX\|_{\mathcal{S}_p}^2}}{\sqrt{n}}\sqrt{\gamma} 
\end{align*}
for $n$ sufficiently large. To finish, note that this exact string of inequalities also holds for $\E[(b)]$. 
\end{proof}

Now that we have bounded the difference between the original risk and smoothed risk in terms of the smoothing parameter $\gamma$, we can show that universality for the smoothed risk reduces to universality for the smooth minimum $f_\delta$. 

\begin{lemma}[Discretization \& Smooth-Min]\label{smoothmin} Let $\alpha, \delta > 0$. Suppose that $\bX$ and $\bG$ satisfy Assumptions \ref{logmodel}-\ref{signalsizinghehe}. Write $\mathcal{S}:=\mathcal{B}(0,\sqrt{p})$. Then there exists $\textnormal{\textsf C} > 0$ such that for $n$ sufficiently large, 
% \begin{align*}
%     d_{\mathcal H}\l(\min_\beta \hat R_n^\gamma(\beta; \bX), \min_\beta \hat R_n^\gamma(\beta; \bG)\r) \leq d_{\mathcal H}\l(f_\delta(\alpha, \bX), f_\delta(\alpha, \bG)\r) + \textnormal{\textsf{C}}\l(\sqrt k\delta 
%    + \frac1\alpha\log\l(\frac{1}{\delta}\r)\r).
% \end{align*}
\begin{align*}&
     d_{\mathcal H}\l(\min_\beta \hat R_n^\gamma(\beta; \bX), \min_\beta \hat R_n^\gamma(\beta; \bG)\r) \\&\le d_{\mathcal H}\l(f_\delta(\alpha, \bX), f_\delta(\alpha, \bG)\r) + \textsf C \Big(\delta +\frac{\delta}{\sqrt{n}}\sqrt{\E\l[\|\textbf{X}\|^2_{\mathcal{S}}\r]\vee\E\l[\|\textbf{G}\|^2_{\mathcal{S}}\r]} + \frac1\alpha\log\l(\frac{1}{\delta}\r)\Big)
\end{align*} %and then we will have a nice bound for the norm for each cases

\end{lemma}

\begin{proof}
    By the Triangle Inequality, we know that 
\begin{align*}
    d_{\mathcal H}\l(\min_\beta \hat R_n^\gamma(\beta; \bX), \min_\beta \hat R_n^\gamma(\beta; \bG)\r) &\leq d_{\mathcal H}\l(\min_{\beta\in\tilde{\mathcal S}} \hat R_n^\gamma(\beta; \bX), \min_{\beta \in \tilde{\mathcal{S}}_{\delta}} \hat R_n^\gamma(\beta; \bX)\r)\\
    &+ d_{\mathcal H}\l(\min_{\beta \in \tilde{\mathcal{S}}_{\delta}} \hat R_n^\gamma(\beta; \bX), f_\delta(\alpha, \bX)\r)\\ 
    &+ d_{\mathcal H}\l(f_\delta(\alpha, \bX), f_\delta(\alpha, \bG)\r)\\
    &+ d_{\mathcal H}\l(f_\delta(\alpha, \bG), \min_{\beta \in \tilde{\mathcal{S}}_{\delta}} \hat R_n^\gamma(\beta; \bG)\r)\\ 
    &+ d_{\mathcal H}\l(\min_{\beta \in \tilde{\mathcal{S}}_{\delta}} \hat R_n^\gamma(\beta; \bG), \min_{\beta\in\tilde{\mathcal S}} \hat R_n^\gamma(\beta; \bG)\r)\\
    &=: \mathbb D_1 + \mathbb D_2 + \mathbb D_3 + \mathbb D_4 + \mathbb D_5.
\end{align*}
For $\mathbb D_1$, let $\tilde\beta$ be the minimizer of the risk on $\mathcal{\tilde S}$, $\tilde\beta_\delta$ the closest point to it on the $\delta\sqrt p$-net $\mathcal{\tilde S}_{\delta}$, and $\tilde\beta'$ the minimizer of the risk on $\mathcal{\tilde S}_\delta$. Then we have that   
\begin{align*}
    \mathbb D_1 &:= \sup_{h\in\mathcal H}\abs{\E\l[h\l(\hat R_n^\gamma(\tilde\beta; \bX)\r) - h\l(\hat R_n^\gamma(\tilde\beta'; \bX)\r)\r]}\\
    &\leq \sup_{h\in\mathcal H}\E\abs{h\l(\hat R_n^\gamma(\tilde\beta; \bX)\r) - h\l(\hat R_n^\gamma(\tilde\beta'; \bX)\r)}\\
    &\stackrel{(i)}{\leq} \E\abs{\hat R_n^\gamma(\tilde\beta; \bX) - \hat R_n^\gamma(\tilde\beta'; \bX)}\\
    &\stackrel{(ii)}{\leq} \E\abs{\hat R_n^\gamma(\tilde\beta; \bX) - \hat R_n^\gamma(\tilde\beta_\delta; \bX)}\\
    &\leq \E\l[\sup_{\nu}\abs{\lip \nabla \hat R_n^\gamma(\nu; \bX), \tilde\beta - \tilde\beta_\delta\rip}\r],
\end{align*}
where $(i)$ is from the definition of $\mathcal H$ and $(ii)$ is because 
\begin{align*}
    \hat R_n^\gamma(\tilde\beta_\delta) \geq \hat R_n^\gamma(\tilde\beta')
\end{align*}
by definition. The gradient of our risk has coordinates
\begin{align*}
    \frac{\partial}{\partial \nu_j}\hat R_n^\gamma(\nu; \bX) = \frac{\lambda}{n}\nu_j - \frac{1}{n}\sumi{n}\omega_i\eta_iX_{ij}\sigma_{i\nu},
\end{align*}
and thus
% \begin{align*}
%     \mathbb D_1 &\leq \frac{\lambda}{n}\E\l[\sup_\nu\abs{\lip \nu, \tilde\beta - \tilde\beta_\delta \rip}\r]
%     + \frac{1}{n}\E\l[\sup_{\nu}\abs{\l\lip \sumi{n}\eta_i\omega_iX_{i}\sigma_{i\nu}, \tilde\beta - \tilde\beta_\delta\r\rip}\r]\\
%     &\stackrel{(i)}{\leq} \frac{\lambda \textsf{L}\delta p}{n}+ \frac{1}{n}\E\l[\sup_{\nu}\abs{\sumi{n}\eta_i\omega_i\sigma_{i\nu}\l\lip X_i, \tilde\beta - \tilde\beta_\delta\r\rip}\r]\\
%     &\overset{(ii)}{\le} \frac{\lambda \textsf{L}\delta p}{n}+ \frac1n\E\l[\sup_{\nu}\sqrt{\sumi{n} \eta_i^2\sigma_{i\nu}^2\omega_i^2}\sqrt{\sumi{n} \l\lip X_{i}, \tilde\beta - \tilde\beta_\delta\r\rip^2}\r]\\& \stackrel{(iii)}{\le} \frac{\lambda \textsf{L}\delta p}{n}+ \frac{1}{\sqrt n}\E\l[\sqrt{\sumi{n} \l\lip X_{i}, \tilde\beta - \tilde\beta_\delta\r\rip^2}\r] 
%     \\&\overset{(iv)}{\le} \frac{\lambda \textsf{L}\delta p}{n}+ \frac{1}{\sqrt{n}}\E\l[\|\textbf{X}\|_{\mathcal{S}_p,\delta}\r]\\&\overset{(v)}{\leq } \textsf C_1\delta + \frac{\tilde{\textsf C}_{\delta}\sqrt{kp}}{\sqrt{n}}\\
%     &\leq \textsf C_2\sqrt{k}\delta,
% \end{align*}

{\begin{align*}
    \mathbb D_1 &\leq \frac{\lambda}{n}\E\l[\sup_\nu\abs{\lip \nu, \tilde\beta - \tilde\beta_\delta \rip}\r]
    + \frac{1}{n}\E\l[\sup_{\nu}\abs{\l\lip \sumi{n}\eta_i\omega_iX_{i}\sigma_{i\nu}, \tilde\beta - \tilde\beta_\delta\r\rip}\mathbb{I}(\bX\in E_n)\r]\\
    &\stackrel{(i)}{\leq} \frac{\lambda \textsf{L}\delta p}{n}+ \frac{1}{n}\E\l[\sup_{\nu}\abs{\sumi{n}\eta_i\omega_i\sigma_{i\nu}\l\lip X_i, \tilde\beta - \tilde\beta_\delta\r\rip}\mathbb{I}(\bX\in E_n)\r]\\
    &\overset{(ii)}{\le} \frac{\lambda \textsf{L}\delta p}{n}+ \frac1n\E\l[\sup_{\nu}\sqrt{\sumi{n} \eta_i^2\sigma_{i\nu}^2\omega_i^2}\sqrt{\sumi{n} \l\lip X_{i}, \tilde\beta - \tilde\beta_\delta\r\rip^2}\r]\\& \stackrel{(iii)}{\le} \frac{\lambda \textsf{L}\delta p}{n}+ \frac{1}{\sqrt n}\E\l[\sqrt{\sumi{n} \l\lip X_{i}, \tilde\beta - \tilde\beta_\delta\r\rip^2}\r] 
   \\&\overset{(iv)}{\le} \frac{\lambda \textsf{L}\delta p}{n}+ \frac{\delta}{\sqrt{n}}\E\l[\|\textbf{X}\|_{\mathcal{S}}\r]
   %\\&\overset{(v)}{\leq } \textsf C_1\delta + \E\l[\|\textbf{X}\|_{\mathcal{S}_p,\delta}\r]\
%\\
    %&\leq \textsf C_2\sqrt{k}\delta,
\end{align*}
}
Above, $(i)$ is from the fact that
\begin{align*}
    \abs{\lip \nu, \tilde\beta - \tilde\beta_\delta \rip} \leq \|\nu\|\ \|\tilde\beta - \tilde\beta_\delta\| \leq \textsf L\sqrt p\delta \sqrt p = \textsf L\delta p,
\end{align*}
$(ii)$ is a result of Cauchy-Schwarz, $(iii)$ uses that $\eta_i, \omega_i,\sigma_{i,v}\le 1$ for all $i\le n$, $(iv)$ is via the definition of $\|\bX\|_{\mathcal S}$ and the fact that $\delta^{-1}(\tilde \beta-\tilde \beta_{\delta})\in\mathcal{S}$.%, and $(v)$ comes from \lemmaref{ess_pee_two} with Jensen's Inequality. 
% \begin{align*}
%     \|\nabla \hat R_n^\gamma(\nu; \bX)\|_2^2 &= \frac{1}{n^2}\l(\lambda^2\|\nu\|_2^2 + \sum_{i, j=1}^{n,p}\eta_i^2X_{ij}^2\sigma_{i\nu}^2 + \sum_{j=1}^p\sum_{i\neq \ell}\eta_i\eta_\ell X_{ij}X_{\ell j}\sigma_{i\nu}\sigma_{k\nu} - 2\lambda\sum_{j=1}^p\nu_j\sum_{i=1}^n\eta_iX_{ij}\sigma_{i\nu}\r)\\
%     &\leq \frac{1}{n^2}\l(\lambda^2\|\nu\|_2^2 + \sum_{i, j=1}^{n,p}X_{ij}^2 + \sum_{j=1}^p\sum_{i\neq \ell} \abs{X_{ij}X_{\ell j}} +  2\lambda\sum_{j=1}^p\abs{\nu_j}\sum_{i=1}^n\abs{X_{ij}}\r),
% \end{align*}
% meaning that
% \begin{align*}&
%     \E\l[\sup_{\nu}\|\nabla \hat R_n^\gamma(\nu;  \bX)\|_2^2\r] \\&\leq \frac{1}{n^2}\l(\lambda^2\sup_\nu\|\nu\|_2^2 + \sum_{i, j=1}^{n,p}\E[X_{ij}^2] + \sum_{j=1}^p\sum_{i\neq \ell}\sqrt{\E[X_{ij}^2]  \E[X_{kj}^2]} +  2\lambda\sup_\nu\sum_{j=1}^p\abs{\nu_j}\sum_{i=1}^n\sqrt{\E[X_{ij}^2]}\r)\\
%     &\leq \frac{1}{n^2}\l(\lambda^2\textsf{L}^2p + p+pn +  2\lambda\sqrt{n}p\textsf{L}\r),
% \end{align*}
% and so by Cauchy-Schwarz, we conclude
% \begin{align*}
%     \E\l[\sup_{\nu}\|\nabla \hat R_n^\gamma(\nu;  \bX)\|_2\r] \leq \sqrt{\E\l[\sup_{\nu}\|\nabla \hat R_n^\gamma(\nu;  \bX)\|_2^2\r]} \leq \frac1n\sqrt{p(\lambda^2\textsf{L}^2 + 1+n +  2\lambda\textsf{L}\sqrt{n})},
% \end{align*}
% which means that 
% \begin{align*}
%     \mathbb D_1 \leq \frac{\delta\sqrt{p}}{n}\sqrt{p(\lambda^2\textsf{L}^2 + 1+k +  2\lambda\textsf{L}\sqrt{n})} \leq \textsf{C}_\lambda\cdot\delta
% \end{align*}
% for sufficiently large $n$, which goes to zero as $\delta\to0$. 
 \ To bound $\mathbb D_2$, we observe that 
\begin{align}
    \abs{f_{\delta}(\alpha, \bX) -\min_{\beta\in \mathcal{\tilde S}_{ \delta}}\hat R_n^\gamma(\beta; \bX)} &= \abs{\frac{-1}{n\alpha}\log\l(\sum_{\beta\in \mathcal{\tilde S}_{\delta}}\exp\l[-n\alpha\hat R_n^\gamma(\beta; \bX)\r]\r) - \hat R_n^\gamma(\tilde\beta; \bX)} \nonumber\\
    &= \frac{1}{n\alpha}\abs{\log\l(\sum_{\beta\in \mathcal{\tilde S}_{\delta}}\exp\l[-n\alpha\hat R_n^\gamma(\beta; \bX)\r]\r) - \log\l(\exp\l[-n\alpha\hat R_n^\gamma(\tilde\beta; \bX)\r]\r)}\nonumber\\
    &= \frac{1}{n\alpha}\abs{\log\l(\sum_{\beta\in \mathcal{\tilde S}_{\delta}}\exp\l[-n\alpha\l(\hat R_n^\gamma(\beta; \bX)-\hat R_n^\gamma(\tilde\beta; \bX)\r)\r]\r)}\nonumber\\
    &<
    \frac{1}{n\alpha}\log\abs{\mathcal{\tilde S}_{\delta}},\label{eq:dee_two_bound}
\end{align}
where the last line follows from the fact that $\hat R_n^\gamma(\beta; \bX)-\hat R_n^\gamma(\tilde\beta; \bX)$ is always non-negative for $\beta \in \mathcal {\tilde S}_{\delta}$ by definition, and is zero at least once, meaning when we multiply by $-n\alpha$ they will always be either zero or strictly negative. This means the sum inside of the logarithm lies in $\l(1, \abs{\mathcal {\tilde S}_{\delta}}\r)$. By Proposition 4.2.12 of \cite{vershyninhighdimprob}, we can say that since $\mathcal S_p \subseteq B_{\R^p}(\bm 0, \textsf{L}\sqrt{p})$, then
\begin{align*}
    \abs{\mathcal {\tilde S}_{\delta}} \leq \l(\frac{3\textsf{L}\sqrt{p}}{\delta\sqrt p}\r)^p = \l(\frac{3\textsf{L}}{\delta}\r)^p,
\end{align*}
and so combining this with \eqref{eq:dee_two_bound} we have
\begin{align*}
    \mathbb D_2 \leq \frac{p}{n\alpha}\log\l(\frac{3\textsf{L}}{\delta}\r) \leq \textsf{C}_3\frac{1}{\alpha}\log\l(\frac{1}{\delta}\r)
\end{align*}
for $n$ sufficiently large. We finish by noting that $\mathbb D_4$ and $\mathbb D_5$ have the exact same bounds as $\mathbb D_2$ and $\mathbb D_1$, respectively. 
\end{proof}

\subsection{Main lemmas under independence under partial block independence.}\label{lemm:bliock}
Let $i\le n$ be an index. Throughout this subsection we will assume that the block $(\bX_j, y_j(\bX_j))_{j\in \mathcal{B}_i}$ is independent from  $(\bX_j, y_j(\bX_j))_{j\not\in \mathcal{B}_i}$ where $\mathcal{B}_i$ is the block of size $k$ that contains $i$.

\subsubsection{Main Lemma}

In the next result, we recall the notation that $\tilde{\bU}_i \equiv \tilde{\bU}_i(t) = \cos(t) \bX_i - \sin(t) \bG_i$ for $t \in [0,1]$, where we abbreviate the dependence on $t$ for the simplicity of presentation in the proof.

\begin{lemma}\label{cabristop}Let $i\le n$. Let $(\bX_j, y_j(\bX_j))$ and $(\bG_j, y_j(\bG_j))$ be generated under Assumptions \ref{logmodel}-\ref{gauss_approx} and that $\rm{Var}(\bG)=\rm{Var}(\bX)$. In addition, suppose that the block $(\bX_j, y_j(\bX_j))_{j\in \mathcal{B}_i}$ is independent from  $(\bX_j, y_j(\bX_j))_{j\not\in \mathcal{B}_i}$. For every $t\in (0,\frac{\pi}{2})$, we have

    \begin{align}\label{eq:taubound}
     \limsup_{n\rightarrow \infty}   \abs{\E\l[ -h'\l(f_\delta(\bU)\r)\lip \tilde U_i^\intercal\mathcal D_{i}(\beta)\rip\r]}\le \tau 
    \end{align}
and
 \begin{align}
     \limsup_{n\rightarrow \infty}   \abs{\E\l[ -h'\l(f_\delta(\bU)\r)\lip \tilde U_i^\intercal\mathcal D_{i,L}(\beta)\rip\r]}\le \tau 
    \end{align}
    where we defined $\mathcal{D}_{i,L}(\beta):=\mathcal{D}_i(\beta)\mathbb{I}(|\tilde U_i^T\beta|,|\tilde U_i^T\beta|\le L)$
\end{lemma}
\begin{proof} We note that it is enough to show the desired result for $\mathcal{D}_{i,L}$ as \cref{eq:taubound} directly follows by taking $L=\infty$. 

Firstly by adding and subtracting the quantity $h'(f_\delta(\bU^{ik}))$ we obtain 
\begin{align*}&
          \abs{\E\l[-h'\l(f_\delta(\bU)\r)\lip \tilde U_i^\intercal\mathcal D_{i,L}(\beta)\rip\r]} \\ &\leq \underbrace{\E\abs{\l(h'(f_\delta(\bU)) - h'(f_\delta(\bU^{ik}))\r)\lip \tilde U_i^\intercal\mathcal D_{i,L}(\beta)\rip}}_{(a)} + \underbrace{\abs{\E \l[h'(f_\delta(\bU^{ik}))\lip \tilde U_i^\intercal\mathcal D_{i,L}(\beta)\rip\r]}}_{(b)}.
\end{align*}
For the term $(a)$, we use Cauchy-Schwarz to say that 
\begin{align*}
    \E\abs{\l(h'(f_\delta(\bU)) - h'(f_\delta(\bU^{ik}))\r)\lip \tilde U_i^\intercal\mathcal D_{i,L}(\beta)\rip} &\leq \underbrace{\sqrt{\E\l[h'(f_\delta(\bU)) - h'(f_\delta(\bU^{ik}))\r]^2}}_{(a_1)}\cdot\underbrace{\sqrt{\E\lip \tilde U_i^\intercal\mathcal D_{i,L}(\beta)\rip^2}}_{(a_2)}.
\end{align*}
To control these two terms, we first apply \lemmaref{lindy} to $(a_1)$ to obtain
\begin{align}
    (a_1) \leq \sqrt{\frac{\textsf{C}_1k^2}{n}} = \frac{\textsf{C}_2k}{\sqrt n},\label{eq:cee}
\end{align}
and then apply \lemmaref{second_mom} to $(a_2)$ to say that
\begin{align}
    \E\lip \tilde U_i^\intercal\mathcal D_{i,L}(\beta)\rip^2 &\stackrel{(i)}{=} \E \l\lip\frac{\tilde U_i^\intercal \mathcal D_{i,L}(\beta)e^{-\alpha\sum_{j\in\mathcal B_i}\ell_j(\beta)}}{\lip e^{-\alpha\sum_{j\in\mathcal B_i}\ell_j(\beta)}\rip_{i,k}}\r\rip_{i,k}^2\nonumber\\
    &\stackrel{(ii)}{\leq} \E \l\lip\l(\frac{\tilde U_i^\intercal \mathcal D_i(\beta)e^{-\alpha\sum_{j\in\mathcal B_i}\ell_j(\beta)}}{\lip e^{-\alpha\sum_{j\in\mathcal B_i}\ell_j(\beta)}\rip_{i,k}}\r)^2\r\rip_{i,k}\nonumber\\
    &= \E \l\lip\E_{(i,k)}\l[\l(\frac{\tilde U_i^\intercal \mathcal D_i(\beta)e^{-\alpha\sum_{j\in\mathcal B_i}\ell_j(\beta)}}{\lip e^{-\alpha\sum_{j\in\mathcal B_i}\ell_j(\beta)}\rip_{i,k}}\r)^2\r]\r\rip_{i,k}\nonumber\\
    &\stackrel{(iii)}{\leq} \E\l[\sup_{\beta\in\mathcal {\tilde S}_{\delta}}\E_{(i, k)}\l[\l(\frac{\tilde U_i^\intercal \mathcal D_i(\beta)e^{-\alpha\sum_{j\in\mathcal B_i}\ell_j(\beta)}}{\lip e^{-\alpha\sum_{j\in\mathcal B_i}\ell_j(\beta)}\rip_{i,k}}\r)^2\r]\r]\nonumber\\
    &\stackrel{(iv)}{\leq} \textnormal{\textsf C}_1(k, \alpha, \gamma),\label{eq:dee}
\end{align}
where $(i)$ comes from noting that, for a general function $\textsf g$, 
\begin{align}
    \lip\textsf{g}(\beta)\rip = \sum_{\beta\in\mathcal {\tilde S}_{\delta}}\hspace{-5pt}w_\gamma(\beta)\hspace{1pt}\textsf{g}(\beta)\nonumber
    = \frac{\sum_{\beta\in\mathcal {\tilde S}_{\delta}}e^{-n\alpha\hat R_n^\gamma(\beta)}\textsf{g}(\beta)}{\sum_{\beta'\in\mathcal{\tilde S}_{\delta}}e^{-n\alpha\hat R_n^\gamma(\beta')}}&= \frac{\sum_{\beta\in\mathcal S_{\delta}}e^{-n\alpha\hat R_n^{\gamma, i, k}(\beta)}\textsf{g}(\beta)e^{-\alpha\sum_{j\in\mathcal B_i}\omega_j\ell_j(\beta)}}{\sum_{\beta'\in\mathcal {\tilde S}_{\delta}}e^{-n\alpha\hat R_n^\gamma(\beta')}e^{-\alpha\sum_{j\in\mathcal B_i}\omega_j\ell_j(\beta')}}\nonumber\\
    &= \l\lip\frac{\textsf{g}(\beta)e^{-\alpha\sum_{j\in\mathcal B_i}\omega_j\ell_j(\beta)}}{\lip e^{-\alpha\sum_{j\in\mathcal B_i}\omega_j\ell_j(\beta)}\rip_{i,k}}\r\rip_{i,k},\label{eq:rewriteliprip}
\end{align}
$(ii)$ follows from Jensen's Inequality combined with the fact that $|\tilde U_i^T\mathcal{D}_{i,L}(\beta)|\overset{a.s}{\le}\tilde U_i^T\mathcal{D}_{i}(\beta)$, $(iii)$ from the fact that 
\begin{align}
    \lip \textsf g(\beta)\rip \leq \msup_{\beta\in\mathcal {\tilde S}_{\delta}}g(\beta),\label{eq:replacewithsup}
\end{align}
and $(iv)$ is exactly the statement of \lemmaref{second_mom} for some $\textsf C_1(k,\alpha,\gamma)$. Combining \eqref{eq:cee} and \eqref{eq:dee}, we conclude that
\begin{align}
    (a) \leq \frac{\textsf C_2k\textsf C_1(k,\alpha,\gamma)}{\sqrt n}    =: \frac{\textsf{C}_2(k,\alpha,\gamma)}{\sqrt n}.\label{eq:aisbounded}
\end{align}
For the term $(b)$, we first apply \lemmaref{poly_approx}, which says that there exist $\textsf D = \textsf D(k, \alpha, \gamma, \tau)$ and real coefficients $b_0, \hdots, b_{\textsf D}$ such that
\begin{align}
    (b) \leq \tau + \sum_{\ell=0}^{\textnormal{\textsf D}}|b_\ell|\sup_{\substack{\beta_0, \hdots, \beta_\ell \\ \in \mathcal {\tilde S}}}\abs{\E\l[\tilde U_i^\intercal \mathcal D_{i,L}(\beta_0)\exp\Big(-\alpha\sum_{r=0}^\ell\sum_{j\in\mathcal B_i} \omega_j\ell(\eta_j, U_j^\intercal\beta_r)\Big)\r]},\label{eq:taupluspoly}
\end{align}
noting by definition that $\abs{b_\ell} = {{\textsf D} \choose \ell} \leq \textsf D!$ is bounded. Now, we define the Gaussian interpolator 
\begin{align*}
    \bV := \sin(t)\tilde{\bG} + \cos(t)\bG
\end{align*}
with $\tilde{\bG}$ as defined in \lemmaref{many_betas} being an identical copy of $\bG$. Moreover note that according to \cref{lem:lipschitz:approx} for all $\epsilon>0$ there exists $g_\epsilon(\cdot)$ such that $\|g_\epsilon\|_\infty\le 1$ and $\|g_\epsilon\|_{\rm Lipchitz}\le2 \epsilon^{-1}$ and such that  $$\big|g_\epsilon(|\tilde U_i^T\beta|,|U_i^T\beta|)-\mathbb{I}(|\tilde U_i^T\beta|,|U_i^T\beta|\le L)\big|\le \mathbb{I}\Big[|\tilde U_i^T\beta|\in [L-\epsilon,L]\Big]+\mathbb{I}\Big[| U_i^T\beta|\in [L-\epsilon,L]\Big].$$ We note that\begin{align*}
    \limsup_{n\to\infty}\sup_{\substack{\beta_0, \hdots, \beta_\ell \\ \in \mathcal {\tilde S}}}&\Big|\E\l[\tilde U_i^\intercal\mathcal D_{i}(U_{\mathcal B_i}, \beta_0)g_\epsilon(|\tilde U_i^T\beta_0|,|U_i^T\beta_0|)\exp\Big(-\alpha\sum_{r=0}^\ell\sum_{j\in\mathcal B_i} \omega_j\ell(\eta_j,  U_j^\intercal\beta_r)\Big)\r]\\&-\E\l[\tilde U_i^\intercal\mathcal D_{i,L}(U_{\mathcal B_i}, \beta_0)\exp\Big(-\alpha\sum_{r=0}^\ell\sum_{j\in\mathcal B_i} \omega_j\ell(\eta_j,  U_j^\intercal\beta_r)\Big)\r]\Big|\\\le \limsup_{n\to\infty}\sup_{\substack{\beta_0 \in \mathcal {\tilde S}}}&\E\l[\Big|\tilde U_i^\intercal\mathcal D_{i}(U_{\mathcal B_i}, \beta_0)\Big|\mathbb{I}\big(|U_{i}^T\beta_0|\in [L-\epsilon,L])\r]\\&+\E\l[\Big|\tilde U_i^\intercal\mathcal D_{i}(U_{\mathcal B_i}, \beta_0)\Big|\mathbb{I}\big(|\tilde U_{i}^T\beta_0|\in [L-\epsilon,L])\r]
    % \\\le \limsup_{n\to\infty}\sup_{\substack{\beta_0 \in \mathcal {\tilde S}}}&\E\l[\Big|\tilde U_i^\intercal\mathcal D_{i}(U_{\mathcal B_i}, \beta_0)\Big|\mathbb{I}\big(|U_{i}^T\beta_0|\in [L-\epsilon,L])\r]\\&+\E\l[\Big|\tilde U_i^\intercal\mathcal D_{i}(U_{\mathcal B_i}, \beta_0)\Big|\mathbb{I}\big(|\tilde U_{i}^T\beta_0|\in [L-\epsilon,L])\r]
    \\\le \limsup_{n\to\infty}\sup_{\substack{\beta_0 \in \mathcal {\tilde S}}}&\E\l[\Big|\tilde U_i^\intercal\mathcal D_{i}(U_{\mathcal B_i}, \beta_0)\Big|^2\r]^{1/2}\sqrt{P\big(|U_{i}^T\beta_0|\in [L-\epsilon,L]\big)+P\big(|\tilde U_{i}^T\beta_0|\in [L-\epsilon,L]\big)}
    % &\quad\quad\quad\quad\quad\quad\stackrel{(i)}{\leq} \limsup_{n\to\infty}\sup_{\substack{\beta_0, \hdots, \beta_\ell \\ \in \mathcal 
    % {\tilde S}}}\abs{\E\l[\tilde V_i^\intercal\mathcal D_i(V_{\mathcal B_i}, \beta_0)\exp\Big(-\alpha\sum_{r=0}^\ell\sum_{j\in\mathcal B_i} \omega_j\ell(\eta_j,  V_j^\intercal\beta_r)\Big)\r]}\nonumber\\
    % &\quad\quad\quad\quad\quad\quad\stackrel{(ii)}{=} \limsup_{n\to\infty}\sup_{\substack{\beta_0, \hdots, \beta_\ell \\ \in \mathcal{\tilde S}}}\abs{\E\l[\tilde V_i\r]^\intercal\E\l[\mathcal D_i(V_{\mathcal B_i}, \beta_0)\exp\Big(-\alpha\sum_{r=0}^\ell\sum_{j\in\mathcal B_i}\omega_j \ell(\eta_j,  V_j^\intercal\beta_r)\Big)\r]}\nonumber\\
    % &\quad\quad\quad\quad\quad\quad= 0.\label{eq:limsupiszero}
\end{align*}
Using \cref{scalinggg} we know that $$\limsup_{n\to\infty}\sup_{\substack{\beta_0 \in \mathcal {\tilde S}}}\E\l[\Big|\tilde U_i^\intercal\mathcal D_{i}(U_{\mathcal B_i}, \beta_0)\Big]\r]<\infty.$$ Moreover we note that \begin{align*}
    P\big(|U_{i}^T\beta_0|\in [L-\epsilon,L]\big)&=\mathbb{E}\Big(\Phi^c\Big(\frac{L-\epsilon-\rm{sin}(t)X_i^T\beta_0}{\rm{cos}(t)}\Big)\Big)-\mathbb{E}\Big(\Phi^c\Big(\frac{L-\rm{sin}(t)X_i^T\beta_0}{\rm{cos}(t)}\Big)\Big)
    \\&\le \frac{\epsilon}{\rm{cos}(t)}\mathbb{E}\Big(\varphi\Big(\frac{L-\epsilon-\rm{sin}(t)X_i^T\beta_0}{\rm{cos}(t)}\Big)\Big)\\&\le \frac{\epsilon}{\rm{cos}(t)}%\mathbb{E}\Big(\varphi\Big(\frac{L-\epsilon-\rm{sin}(t)X_i^T\beta_0}{\rm{cos}(t)}\Big)\Big)
\end{align*} Similarly we can obtain that \begin{align*}
    P\big(|\tilde U_{i}^T\beta_0|\in [L-\epsilon,L]\big)\le \frac{\epsilon}{\rm{sin}(t)}.%\mathbb{E}\Big(\varphi\Big(\frac{L-\epsilon-\rm{sin}(t)X_i^T\beta_0}{\rm{cos}(t)}\Big)\Big)
\end{align*} Hence we obtain that there is a constant $\textsf C_3>0$ such that \begin{align}\label{krio}
    \limsup_{n\to\infty}\sup_{\substack{\beta_0, \hdots, \beta_\ell \\ \in \mathcal {\tilde S}}}&\Big|\E\l[\tilde U_i^\intercal\mathcal D_{i}(U_{\mathcal B_i}, \beta_0)g_\epsilon(|\tilde U_i^T\beta_0|,|U_i^T\beta_0|)\exp\Big(-\alpha\sum_{r=0}^\ell\sum_{j\in\mathcal B_i} \omega_j\ell(\eta_j,  U_j^\intercal\beta_r)\Big)\r]\\&-\E\l[\tilde U_i^\intercal\mathcal D_{i,L}(U_{\mathcal B_i}, \beta_0)\exp\Big(-\alpha\sum_{r=0}^\ell\sum_{j\in\mathcal B_i} \omega_j\ell(\eta_j,  U_j^\intercal\beta_r)\Big)\r]\Big|\\\le 
\textsf C_3 \sqrt{ \epsilon }.  % &\quad\quad\quad\quad\quad\quad\stackrel{(i)}{\leq} \limsup_{n\to\infty}\sup_{\substack{\beta_0, \hdots, \beta_\ell \\ \in \mathcal 
    % {\tilde S}}}\abs{\E\l[\tilde V_i^\intercal\mathcal D_i(V_{\mathcal B_i}, \beta_0)\exp\Big(-\alpha\sum_{r=0}^\ell\sum_{j\in\mathcal B_i} \omega_j\ell(\eta_j,  V_j^\intercal\beta_r)\Big)\r]}\nonumber\\
    % &\quad\quad\quad\quad\quad\quad\stackrel{(ii)}{=} \limsup_{n\to\infty}\sup_{\substack{\beta_0, \hdots, \beta_\ell \\ \in \mathcal{\tilde S}}}\abs{\E\l[\tilde V_i\r]^\intercal\E\l[\mathcal D_i(V_{\mathcal B_i}, \beta_0)\exp\Big(-\alpha\sum_{r=0}^\ell\sum_{j\in\mathcal B_i}\omega_j \ell(\eta_j,  V_j^\intercal\beta_r)\Big)\r]}\nonumber\\
    % &\quad\quad\quad\quad\quad\quad= 0.\label{eq:limsupiszero}
\end{align}Moreover we note that 
\begin{align}
    &\limsup_{n\to\infty}\sup_{\substack{\beta_0, \hdots, \beta_\ell \\ \in \mathcal {\tilde S}}}\abs{\E\l[\tilde U_i^\intercal\mathcal D_i(U_{\mathcal B_i}, \beta_0)g_\epsilon(|\tilde U_i^T\beta_0|,|U_i^T\beta_0|)\exp\Big(-\alpha\sum_{r=0}^\ell\sum_{j\in\mathcal B_i} \omega_j\ell(\eta_j,  U_j^\intercal\beta_r)\Big)\r]}\nonumber\\
    &\stackrel{(i)}{\leq} \limsup_{n\to\infty}\sup_{\substack{\beta_0, \hdots, \beta_\ell \\ \in \mathcal 
    {\tilde S}}}\abs{\E\l[\tilde V_i^\intercal\mathcal D_i(V_{\mathcal B_i}, \beta_0)g_\epsilon(|\tilde V_i^T\beta_0|,|V_i^T\beta_0|)\exp\Big(-\alpha\sum_{r=0}^\ell\sum_{j\in\mathcal B_i} \omega_j\ell(\eta_j,  V_j^\intercal\beta_r)\Big)\r]}\nonumber\\&=\limsup_{n\to\infty}\sup_{\substack{\beta_0, \hdots, \beta_\ell \\ \in \mathcal 
    {\tilde S}}}\abs{\E\l[\mathbb{E}\Big(g_\epsilon(|\tilde V_i^T\beta_0|,|V_i^T\beta_0|)\tilde V_i\Big|(V_j)\Big)^\intercal\mathcal D_i(V_{\mathcal B_i}, \beta_0)\exp\Big(-\alpha\sum_{r=0}^\ell\sum_{j\in\mathcal B_i} \omega_j\ell(\eta_j,  V_j^\intercal\beta_r)\Big)\r]}\nonumber\\
    % &\quad\quad\quad\quad\quad\quad\stackrel{(ii)}{=} \limsup_{n\to\infty}\sup_{\substack{\beta_0, \hdots, \beta_\ell \\ \in \mathcal{\tilde S}}}\abs{\E\l[\tilde V_i\r]^\intercal\E\l[\mathcal D_i(V_{\mathcal B_i}, \beta_0)\exp\Big(-\alpha\sum_{r=0}^\ell\sum_{j\in\mathcal B_i}\omega_j \ell(\eta_j,  V_j^\intercal\beta_r)\Big)\r]}\nonumber\\
    &\overset{(ii)}{=} 0.\label{eq:limsupiszero}
\end{align}
Above, $(i)$ follows from the second statement of \lemmaref{many_betas} coupled with \Cref{gauss_approx}, since we know that the function 
\begin{align*}
    g(X_{\mathcal B_i}B, G_{\mathcal B_i}B) := \tilde U_i^\intercal\mathcal D_i(U_{\mathcal B_i}, \beta_0)g_\epsilon(|\tilde V_i^T\beta_0|,|V_i^T\beta_0|)\exp\Big(-\alpha\sum_{r=0}^\ell\sum_{j\in\mathcal B_i} \omega_j\ell(\eta_j,  U_j^\intercal\beta_r)\Big),
\end{align*}
where $B := (\beta_0, \hdots, \beta_\ell) \in \R^{p\times(\ell+1)}$ is locally Lipschitz as all of its components are, and is also square integrable as 
\begin{align}
    \sup_B\E\l[g(X_{\mathcal B_i}B, G_{\mathcal B_i}B)^2\r] \leq \sup_B\E\l[\l(\tilde U_i^\intercal\mathcal D_i(\beta_0)\r)^2\r] \leq \textsf C(k, \gamma)
\end{align}
by the argument in \eqref{eq:uitdb} and \eqref{eq:see_five} of \lemmaref{second_mom}. Further, $(ii)$ follows from independence of $\tilde V_i$ from $(V_j)$, as
\begin{align*}
    E[\tilde V_iV_j^\intercal] = \sin(t)\cos(t)\E[X_iX_j^\intercal] - \sin(t)\cos(t)\E[G_iG_j^\intercal] = 0,
\end{align*}
and we know that two zero-covariance Gaussians are necessarily independent of each other, combined with the fact that $x\rightarrow g_\epsilon(x^T\beta_0,V_i^T\beta_0)x$ is a symmetric function.  We may thus combine \eqref{eq:aisbounded}, \eqref{eq:taupluspoly}, \eqref{krio} and \eqref{eq:limsupiszero} to conclude 
\begin{align}
    \limsup_{n\to\infty}\abs{\E\l[ -h'\l(f_\delta(\bU)\r)\lip \tilde U_i^\intercal\mathcal D_{i,L}(\beta)\rip\r]} &\leq \limsup_{n\to\infty}\frac{\textsf C_2(k,\alpha,\gamma)}{\sqrt n} + \tau +\textsf C_3\sqrt{\epsilon}= \tau+\textsf C_3\sqrt{\epsilon},
\end{align}
As the dependence on $\epsilon$ is arbitrary we get the desired result.
\end{proof}
\subsubsection{Upper Bounds on Expectations \& Approximations}
%\textcolor{red}{we actually want to say we prove this under the setting where $(X_i)_{i\in \mathcal{B}_j}$ is independent from the rest because this is the setting for the mixing case}
The next set of lemmas are used to bound various expectations that appear in the proofs of our main results. Throughout this subsection $i\le n$ designates an index. We assume that $(\bX_j, y_j(\bX_j))$ and $(\bG_j, y_j(\bG_j))$ are generated under Assumptions \ref{logmodel}-\ref{gauss_approx} and that $\rm{Var}(\bG)=\rm{Var}(\bX)$. In addition, suppose that the block $(\bX_j, y_j(\bX_j))_{j\in \mathcal{B}_i}$ is independent from  $(\bX_j, y_j(\bX_j))_{j\not\in \mathcal{B}_i}$. We begin with a Lindeberg bound. 

\begin{lemma}[Lindeberg Difference Bound]\label{lindy}
Let $\alpha, \delta> 0$. Then there exists $\textnormal{\textsf C} > 0$ such that for each $t \in [0, \tfrac\pi2]$ and $n$ sufficiently large, 
\begin{align*}
    \E\l(\l[h'(f_\delta(\bU)) - h'(f_\delta(\bU^{ik}))\r]^2\r) \leq \frac{\textnormal{\textsf{C}}k^2}{n}.
\end{align*}
\end{lemma}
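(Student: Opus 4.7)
The plan is to first reduce the squared difference of $h'$-values to one for $f_\delta$ itself, and then bound $\E[(f_\delta(\bU) - f_\delta(\bU^{ik}))^2]$ via a soft-min stability property together with a direct estimate on the logistic loss. Since $h\in\mathcal H$ has a Lipschitz derivative with some universal constant $L_h$, we immediately get $[h'(a)-h'(b)]^2 \leq L_h^2(a-b)^2$, so it suffices to prove $\E[(f_\delta(\bU) - f_\delta(\bU^{ik}))^2] \leq \textsf{C}' k^2/n$.

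The key observation is the following soft-min stability: if $\sup_{\beta \in \mathcal{\tilde S}_\delta} |\hat R_n^\gamma(\beta; \bU) - \hat R_n^\gamma(\beta; \bU^{ik})| \leq \Delta$ for some $\Delta \geq 0$, then $|f_\delta(\bU) - f_\delta(\bU^{ik})| \leq \Delta$. This follows by sandwiching $\sum_\beta e^{-n\alpha \hat R_n^\gamma(\beta; \bU^{ik})} \in [e^{-n\alpha \Delta}, e^{n\alpha \Delta}] \cdot \sum_\beta e^{-n\alpha \hat R_n^\gamma(\beta; \bU)}$, taking logs and dividing by $-n\alpha$. After this step, the problem reduces to a uniform bound on the risk difference over the discrete set $\mathcal{\tilde S}_\delta$.

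Next, since $\bU$ and $\bU^{ik}$ coincide outside $\mathcal{B}_i$, the only contributions to $\hat R_n^\gamma(\beta; \bU) - \hat R_n^\gamma(\beta; \bU^{ik})$ come from the terms indexed by $j \in \mathcal{B}_i$, each of the form $\tfrac{\omega_j}{n}[\log(1+e^{-\eta_j U_j^\intercal\beta}) - \log 2]$ (the $\log 2$ because $U_j = 0$ in $\bU^{ik}$). Using that $t\mapsto \log(1+e^t)$ is $1$-Lipschitz, $|\eta_j|, \omega_j \leq 1$, and $\|\beta\|_2 \leq \textsf L \sqrt p$ from $\beta \in \mathcal S_p$, each term is bounded by $\tfrac{\textsf L \sqrt p}{n}\|U_j\|$. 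Summing over $j \in \mathcal{B}_i$, squaring, and applying Cauchy-Schwarz yields $(f_\delta(\bU) - f_\delta(\bU^{ik}))^2 \leq \frac{\textsf L^2 p k}{n^2} \sum_{j \in \mathcal{B}_i} \|U_j\|^2$.

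Finally, I would take expectations: by \Cref{scalinggg}, the sub-Gaussian norm bound gives $\E\|X_j\|^2 = \mathrm{tr}(\Sigma_j) = O(\textsf{K}_X^2 \, p/n)$, and similarly for $G_j$ and hence for the convex combination $U_j$. Combining with $p/n \to \kappa$ from \Cref{signalsizinghehe}, we obtain $\E[(f_\delta(\bU) - f_\delta(\bU^{ik}))^2] = O(p^2 k^2/n^3) = O(k^2/n)$, which yields the claim after multiplying by $L_h^2$. The one subtle point, which I expect to be the main obstacle, is that the smoothed labels $\eta_j$ for $j \in \mathcal{B}_i$ depend on $U_{\mathcal{B}_i}$ and therefore also differ between $\hat R_n^\gamma(\cdot; \bU)$ and $\hat R_n^\gamma(\cdot; \bU^{ik})$; however, the argument invokes only the uniform bound $|\eta_j| \leq 1$ rather than any matching between the two sets of labels, so this discrepancy does not propagate into the final estimate.
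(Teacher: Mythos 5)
Your proof is correct and reaches the same $O(k^2/n)$ bound as the paper, with the same overall structure (Lipschitzness of $h'$, then reduce to the block-$\mathcal B_i$ contribution to the risk, then moments of $\|U_j\|$), but the middle step is handled by a genuinely different mechanism. The paper controls $f_\delta(\bU)-f_\delta(\bU^{ik})$ by applying Jensen's inequality to the logarithm of the ratio of partition functions, which yields a bound of the form $\frac{1}{n}\big(k\log 2 + \big\langle\sum_{j\in\mathcal B_i}\omega_j\ell_j(\beta)\big\rangle_{i,k}\big)$ and then requires estimating the first and second moments of this Gibbs average. You instead invoke the sup-norm stability of log-sum-exp, $|f_\delta(\bU)-f_\delta(\bU^{ik})|\leq \sup_{\beta\in\mathcal{\tilde S}_\delta}|\hat R_n^\gamma(\beta;\bU)-\hat R_n^\gamma(\beta;\bU^{ik})|$, combined with the $1$-Lipschitzness of $t\mapsto\log(1+e^{t})$, which gives the uniform bound $\frac{\textsf{L}\sqrt p}{n}\sum_{j\in\mathcal B_i}\|U_j\|$ directly and bypasses the Gibbs-average machinery entirely. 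Since the paper's moment bounds on the Gibbs average are themselves obtained from estimates uniform in $\beta$, the two routes have identical strength here; yours is slightly cleaner in that the Lipschitz estimate of the loss difference avoids the additive constant from $\log(1+e^{-x})\leq|x|+1$ and the $\alpha$-dependence cancels exactly rather than being divided out. Your closing observation about the labels is also correct: for $j\in\mathcal B_i$ the loss term in $\hat R_n^\gamma(\cdot;\bU^{ik})$ equals $\log 2$ regardless of the value of $\eta_j$, so the mismatch between the two sets of smoothed labels never enters the estimate.
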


\begin{proof}
By the Lipschitzness of $h'$, we first have
\begin{align*}
    \abs{h'(f_\delta(\bU)) - h'(f_\delta(\bU^{ik}))} &\leq \|h'\|_{\text{Lip}}\abs{f_\delta(\bU) - f_\delta(\bU^{ik})}\\
    &\stackrel{(i)}{\leq} \frac{\textsf C_1}{n\alpha}\abs{\log\l(\frac{\sum_{\beta}\exp\l[-n\alpha\hat R_n^\gamma(\beta; \bm \eta, \bU)\r]}{\sum_{\beta}\exp\l[-n\alpha\hat R_n^\gamma(\beta; \bm \eta, \bU^{ik})\r]}\r)}\\
    &\stackrel{(ii)}{\leq} \frac{\textsf C_1}{n\alpha}\l(k\alpha\log(2)+\Big\lip\alpha \sum_{j\in\mathcal B_i}\omega_j\ell(\eta_j, U_j^\intercal\beta)\Big\rip_{i,k}\r)\\
    &\leq \frac{\textsf C_1}{n}\l(k + \Big\lip \sum_{j\in\mathcal B_i}\omega_j\ell(\eta_j, U_j^\intercal\beta)\Big\rip_{i,k}\r),
\end{align*}
where $(i)$ is because $h \in \mathcal H$, and $(ii)$ is via Jensen's inequality on the natural logarithm and the fact that $\ell(a, 0) = \log(2)$ for any $a \in \R$. Thus, we have
\begin{align*}
    \E\l[h'(f_\delta(\bU)) - h'(f_\delta(\bU^{ik}))\r]^2 \leq \frac{\textsf C_1}{n^2}\l(\E\Big\lip \sum_{j\in\mathcal B_i}\omega_j\ell(\eta_j, U_j^\intercal\beta)\Big\rip_{i,k}^2 + k\E\Big\lip \sum_{j\in\mathcal B_i}\omega_j\ell(\eta_j, U_j^\intercal\beta)\Big\rip_{i,k} + k^2\r).
\end{align*}
From here, we can first notice that
\begin{align*}
    \log(1+ e^{-x}) \leq |x| + 1 \quad\ \ \  \text{ for all } x \in \R,
\end{align*}
and thus, using $\abs{\eta_j} = 1, \omega_j \leq 1$, and Cauchy-Schwarz, we can say that
\begin{align*}
    \sum_{j\in\mathcal B_i}\omega_j\ell(\eta_j, U_j^\intercal\beta)&\leq \sum_{j\in\mathcal B_i}\l(\abs{\eta_jU_j^\intercal\beta} + 1\r)\\
    &\leq k + \|\beta\|\sum_{j\in\mathcal B_i}\|U_j\|,
\end{align*}
and so
\begin{align*}
    \E\Big\lip \sum_{j\in\mathcal B_i}\omega_j\ell(\eta_j, U_j^\intercal\beta)\Big\rip_{i,k} &\leq k + \sum_{j\in\mathcal B_i}\E\Big\lip \|\beta\|\ \|U_j\|\Big\rip_{i,k}\\
    &\leq k + \sum_{j\in\mathcal B_i}\E\l[\|U_j\|\sup_{\beta\in\mathcal {\tilde S}_{\delta}}\|\beta\|\r]\\
    &\leq k + \textsf{L}\sqrt{p}\sum_{j\in\mathcal B_i}\E\l[\|U_j\|\r] \\
    &\stackrel{(i)}{\leq} k + \textsf{L}\sqrt{p}\sum_{j\in\mathcal B_i}\sqrt{\sum_{k=1}^p\E[U_{jk}^2]}\\
    &\stackrel{(ii)}{\leq} k + \textsf{L}k\sqrt{p}\sup_j\sqrt{\text{Tr}(\Sigma_j)}\\
    &\stackrel{(iii)}{\leq} \textsf{C}_2k\sqrt{p},
\end{align*}
where $(i)$ is via Jensen's Inequality, $(ii)$ is because  
\begin{align*}
    \E\l[U_{jk}^2\r] = \E\l[\sin^2(t)X_{jk}^2 + \cos^2(t)G_{jk}^2 + \sin(t)\cos(t)X_{jk}G_{jk}\r] = (\Sigma_j)_{k,k}\Big(\sin^2(t) + \cos^2(t)\Big) = (\Sigma_j)_{k,k},
\end{align*}
and $(iii)$ is via \lemmaref{cabris} and the fact that
\begin{align*}
   \text{Tr}(\Sigma_j) = \sum_{i=1}^p\lip \Sigma_j e_i, e_i\rip \leq \sum_{i=1}^p\|\Sigma_j\|_{\text{op}} \leq p\|\Sigma_j\|_{\text{op}}= O(1),
\end{align*} where the last inequality comes from the scaling of $\bU$.
Using an identical argument and the fact that
\begin{align*}
    \E\Big\lip \sum_{j\in\mathcal B_i}\omega_j\ell(\eta_j, U_j^\intercal\beta)\Big\rip_{i,k}^2 \leq \E\Big\lip \Big(\sum_{j\in\mathcal B_i}\omega_j\ell(\eta_j, U_j^\intercal\beta)\Big)^2\Big\rip_{i,k},
\end{align*}
we similarly obtain 
\begin{align*}
    \E\Big\lip \sum_{j\in\mathcal B_i}\omega_j\ell(\eta_j, U_j^\intercal\beta)\Big\rip_{i,k}^2 \leq \textsf{C}_3k^2p,
\end{align*}
and thus 
\begin{align*}
    \E\l(\l[h'(f_\delta(\bU)) - h'(f_\delta(\bU^{ik}))\r]^2\r)\leq \frac{\textsf C_1}{n^2}\l( \textsf{C}_3k^2p + \textsf{C}_2k^2\sqrt{p} + k^2\r) \leq \frac{\textsf{C}k^2}{n}
\end{align*}
for $n$ sufficiently large. 
\end{proof}

\begin{lemma}[Second Moment Bound]\label{second_mom} There exists $\textnormal{\textsf C}(k, \alpha, \gamma) > 0$ such that for every $t \in \l[0, \frac{\pi}{2}\r]$ and $i = 1, \hdots, n$, we have 
\begin{align*}
    \sup_{\beta\in\mathcal {\tilde S}}\E_{(i, k)}\l[\l(\frac{\tilde U_i^\intercal\mathcal D_{i}(\beta)e^{-\alpha\sum_{j\in\mathcal B_i}\omega_j\ell_j(\beta)}}{\lip e^{-\alpha\sum_{j\in\mathcal B_i}\omega_j\ell_j(\beta)}\rip_{i,k}}\r)^2\r] \leq \textnormal{\textsf C}(k, \alpha, \gamma).
\end{align*}

\end{lemma}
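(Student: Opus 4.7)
The central difficulty is that both the exponential factor in the numerator and the Gibbs-type average $\langle e^{-\alpha\sum_j\omega_j\ell_j(\beta')}\rangle_{i,k}$ in the denominator depend on $U_{\mathcal B_i}$ in a coupled way, so one cannot hope to control them separately. My plan is to first decouple the ratio via Jensen applied to $x\mapsto 1/x$, and then exploit the fact that the weights $w_\gamma^{i,k}$ are $\bU^{ik}$-measurable (and that $U_{\mathcal B_i}\indep \bU^{ik}$) to reduce the claim to two routine sub-Gaussian moment bounds.

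For the decoupling step, the non-negativity $\ell_j(\beta)\geq 0$ gives $e^{-\alpha\sum_j\omega_j\ell_j(\beta)}\leq 1$, and Jensen's inequality applied to $x\mapsto 1/x$ under the probability measure $\{w_\gamma^{i,k}(\beta')\}_{\beta'\in\tilde{\mathcal S}_\delta}$ yields $1/\langle h(\beta')\rangle_{i,k}\leq \langle 1/h(\beta')\rangle_{i,k}$ for any $h>0$. Taking $h=e^{-\alpha\sum_j\omega_j\ell_j(\beta')}$ then produces the clean upper bound
\begin{align*}
\frac{e^{-\alpha\sum_{j\in\mathcal B_i}\omega_j\ell_j(\beta)}}{\langle e^{-\alpha\sum_{j\in\mathcal B_i}\omega_j\ell_j(\beta')}\rangle_{i,k}} \;\leq\; \langle e^{\alpha\sum_{j\in\mathcal B_i}\omega_j\ell_j(\beta')}\rangle_{i,k}.
\end{align*}
Squaring this bound, pulling the square inside $\langle\cdot\rangle_{i,k}$ via Jensen, applying Cauchy--Schwarz under $\E_{(i,k)}$, and then swapping $\E_{(i,k)}$ with $\langle\cdot\rangle_{i,k}$ reduces the target to bounding the two quantities $\E_{(i,k)}[(\tilde U_i^\intercal\mathcal D_i(\beta))^4]$ and $\sup_{\beta'\in\tilde{\mathcal S}}\E_{(i,k)}[e^{4\alpha\sum_{j\in\mathcal B_i}\omega_j\ell_j(\beta')}]$, each by a constant depending only on $k,\alpha,\gamma$.

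Both remaining bounds follow from sub-Gaussianity. For the exponential moment I bound $\ell_j(\beta')\leq \log 2+|U_j^\intercal\beta'|$ and apply H\"older's inequality with exponents $k$ to factorize $\E[\prod_{j\in\mathcal B_i}e^{4\alpha|U_j^\intercal\beta'|}]$ into a product $\prod_j\E[e^{4\alpha k|U_j^\intercal\beta'|}]^{1/k}$; each factor is finite and uniformly bounded in $\beta'\in\tilde{\mathcal S}$ and $n$ because $U_j^\intercal\beta'$ is sub-Gaussian with parameter $\beta'^\intercal\Sigma_j\beta'\leq \textsf{L}^2p\|\Sigma_j\|_{\rm op}=O(1)$ under \Cref{cabris} and the scaling $p/n\to\kappa$. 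For the fourth moment I expand $\mathcal D_i(\beta)$, apply $|\eta_j'|\leq 3/\gamma$ from \Cref{desmos_lol} together with $|\eta_j|,\omega_j,\sigma_{j\beta},|a_{ji}|\leq 1$, and control the scalar factors $\tilde U_i^\intercal\beta$, $\tilde U_i^\intercal\beta^*$ and $U_j^\intercal\beta$ via Cauchy--Schwarz against their sub-Gaussian eighth moments, which are uniformly bounded on $\tilde{\mathcal S}$ by the same scaling argument. The Jensen-based decoupling in the first step is the only nontrivial idea; the remaining steps are standard sub-Gaussian bookkeeping.
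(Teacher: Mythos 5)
Your proposal is correct and follows essentially the same route as the paper: both reduce the claim, via Jensen on the convex reciprocal power (exploiting $e^{-\alpha\sum_j\omega_j\ell_j}\leq 1$), Cauchy--Schwarz, and the commutation of $\E_{(i,k)}$ with $\langle\cdot\rangle_{i,k}$, to the same two quantities $\E[(\tilde U_i^\intercal\mathcal D_i(\beta))^4]$ and $\sup_{\beta'}\E[e^{4\alpha\sum_{j\in\mathcal B_i}\omega_j\ell_j(\beta')}]$, which are then handled exactly as you describe (the $3/\gamma$ bound on $\eta_j'$ plus sub-Gaussian moments for the first, and $\ell_j\leq |U_j^\intercal\beta'|+O(1)$ plus H\"older with exponent $k$ and the sub-Gaussian MGF for the second). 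The only cosmetic difference is that you apply Jensen to $1/x$ before squaring whereas the paper applies Cauchy--Schwarz first and then Jensen to $x^{-4}$; the resulting constants are of the same order.
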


\begin{proof}
Fix some $\beta\in\mathcal{\tilde S}$. We may bound
\begin{align*}
   \E_{(i, k)}\l[\l(\frac{\tilde U_i^\intercal \mathcal D_i(\beta)e^{-\alpha\sum_{j\in\mathcal B_i}\omega_j\ell_j(\beta)}}{\lip e^{-\alpha\sum_{j\in\mathcal B_i}\omega_j\ell_j(\beta)}\rip_{i,k}}\r)^2\r] &\stackrel{(i)}{\leq} \E_{(i, k)}\l[\l(\tilde U_i^\intercal \mathcal D_i(\beta)\r)^4\r]^{1/2}\E_{(i, k)}\l[\l(\frac{e^{-\alpha\sum_{j}\omega_j\ell_j(\beta)}}{\lip e^{-\alpha\sum_{j}\omega_j\ell_j(\beta)}\rip_{i,k}}\r)^4\r]^{1/2}\\
   &\stackrel{(ii)}{\leq} \E\l[\l(\tilde U_i^\intercal \mathcal D_i(\beta)\r)^4\r]^{1/2}\Big\lip\E\l[e^{4\alpha\sum_{j}\omega_j\ell_j(\beta)}\r]\Big\rip_{i,k}^{1/2}\\
   &\leq \underbrace{\E\l[\l(\tilde U_i^\intercal \mathcal D_i(\beta)\r)^4\r]^{1/2}}_{(a)}\underbrace{\l(\sup_\beta\E\l[e^{4\alpha\sum_{j}\omega_j\ell_j(\beta)}\r]\r)^{1/2}}_{(b)},
\end{align*}
where $(i)$ is via Cauchy-Schwarz, and $(ii)$ is from the fact that $\E_{(i, k)}[\ \cdot\ ]$ and $\lip\ \cdot\ \rip_{i, k}$ commute due to $w_{\gamma}^{i, k}(\beta)$ only being a function of $(U_j)_{j\notin\mathcal B_i}$, and from applying Jensen's Inequality to the convex function $x^{-4}$, using the fact that 
\begin{align*}
    \alpha\sum_{j\in\mathcal N_i}\omega_j\ell_j(\beta) \geq 0 \implies \exp\l(-\alpha\sum_{j\in\mathcal N_i}\omega_j\ell_j(\beta)\r) \leq 1.
\end{align*}
Also in $(ii)$, note that the conditional expectation has disappeared due to block dependence. For the term $(a)$, we can notice that 
\begin{align}
    \abs{\tilde U_i^\intercal\mathcal D_{i}(\beta)} &= \abs{\eta_i\omega_i\sigma_{i\beta}\tilde U_i^\intercal\beta + \sum_{j\in\mathcal B_i}\omega_j\sigma_{j\beta}\eta_j'a_{ji}U_j^\intercal\beta \tilde U_i^\intercal\beta^*}\nonumber\\
    &\stackrel{(i)}{\leq} \abs{\tilde U_i^\intercal\beta} + \frac{3}{\gamma}\abs{\tilde U_i^\intercal\beta^*}\sum_{j\in\mathcal B_i}\abs{U_j^\intercal\beta},\label{eq:uitdb}
\end{align}
where $(i)$ is from $\eta_i, \omega_i, \sigma_{i\beta}, a_{ji} \leq 1$ and \lemmaref{desmos_lol} bounding $\abs{\eta_j'}$. From here, we know that since the rows of $\bX$ and $\bG$ are sub-Gaussian by \Cref{scalinggg}, so must those of $\bU$ and $\tilde{\bU}$ be as well, since 
\begin{align*}
    \|U_i\|_{\psi_2} = \|\sin(t)X_i + \cos(t)G_i\|_{\psi_2} \leq \abs{\sin(t)}\|X_i\|_{\psi_2} + \abs{\cos(t)}\|G_i\|_{\psi_2} \leq \frac{\sqrt2\textsf K_X}{\sqrt n},
\end{align*}
and similarly for $\tilde{\bU}$. This further implies that for any $\beta \in \mathcal S_p$ we have 
\begin{align}
    \|U_i^\intercal\beta\|_{\psi_2} &\leq \frac{\sqrt2\textsf K_X\|\beta\|}{\sqrt n}\nonumber\\
    &\leq \frac{\sqrt2\textsf K_X\textsf L \sqrt p}{\sqrt n}\nonumber\\
    &\leq \textsf C_1\label{eq:lol_see_one}
\end{align}
for $n$ sufficiently large, and again this holds for $\tilde {U}_i^\intercal\beta$ as well. We conclude by a multinomial expansion that 
\begin{align}
    \E\l[\l(\tilde U_i^\intercal\mathcal D_i(\beta)\r)^4\r] &\stackrel{(i)}{\leq} \sum_{\ell=0}^4{4 \choose \ell}\E\l[\abs{\tilde U_i^\intercal\beta}^{4-\ell}\frac{3^\ell}{\gamma^\ell}\abs{\tilde U_i^\intercal\beta^*}^\ell\l(\sum_{j\in\mathcal B_i}\abs{U_j^\intercal\beta}\r)^\ell\r]\nonumber\\
    &\stackrel{(ii)}{\leq} \frac{486}{\gamma^4}\sum_{\ell=0}^4\E\l[\abs{\tilde U_i^\intercal\beta}^{4-\ell}\abs{\tilde U_i^\intercal\beta^*}^\ell\l(\sum_{j\in\mathcal B_i}\abs{U_j^\intercal\beta}\r)^\ell\r]\nonumber\\
    &\stackrel{(iii)}{\leq} \frac{162}{\gamma^4}\sum_{\ell=0}^4\E\l[\abs{\tilde U_i^\intercal\beta}^{12-3\ell} + \abs{\tilde U_i^\intercal\beta^*}^{3\ell} + \l(\sum_{j\in\mathcal B_i}\abs{U_j^\intercal\beta}\r)^{3\ell}\r]\nonumber\\
    &\stackrel{(iv)}{\leq} \frac{162}{\gamma^4}\sum_{\ell=0}^4l\l(\textsf C_2\sqrt{12-3\ell}\r)^{12-3\ell} + \l(\textsf C_3\sqrt{3\ell}\r)^{3\ell} + \l(\textsf C_4k\sqrt{3\ell}\r)^{3\ell}\nonumber\\
    &\stackrel{(v)}{\leq}\textsf C_5\gamma^{-4}k^{12},\label{eq:see_five}
\end{align}
where $(i)$ is from \eqref{eq:uitdb} and binomial expansion, $(ii)$ is because $\ell\leq4$ and $\max_\ell{4\choose\ell} = 6$, $(iii)$ is the AM-GM Inequality for $n = 3$, $(iv)$ is from Proposition 2.5.2 in \cite{vershyninhighdimprob} on equivalent properties of sub-Gaussian random variables, and $(v)$ is from the fact that $$3\ell \vee (12-3\ell) \leq 12\quad \text{ for all } \quad0 \leq \ell \leq 4.$$ For the term $(b)$, we once again use that $|\omega_i| \leq 1$ and $\log(1+e^{-x}) \leq |x| + 1$ for $x\in\R$ to say that 
\begin{align}
    \E\l[e^{4\alpha\sum_{j}\omega_j\ell_j(\beta)}\r] &\leq \E\l[e^{4\alpha\sum_{j}|U_j^\intercal\beta|+1}\r] \nonumber\\
    &\stackrel{(i)}{\leq} e^{4k\alpha}\E\l[e^{4\alpha\sum_{j}|U_j^\intercal\beta|}\r] \nonumber\\
    &\leq  e^{4k\alpha}\E\l[\prod_{j\in\mathcal B_i}e^{4\alpha|U_j^\intercal\beta|}\r]\nonumber\\
    &\stackrel{(ii)}{\leq} e^{4k\alpha}\l(\prod_{j\in\mathcal B_i}\E\l[e^{4k\alpha|U_j^\intercal\beta|}\r]\r)^{1/k}\nonumber\\
    &\stackrel{(iii)}{\leq} e^{4k\alpha(\mu+1)}\l(\prod_{j\in\mathcal B_i}\E\l[e^{4k\alpha(|U_j^\intercal\beta|-\mu)}\r]\r)^{1/k} \nonumber\\
    &\stackrel{(iv)}{\leq} e^{4k\alpha(\mu+1)}e^{\textsf C_1k^2\textsf{K}_X^2\alpha^2}\nonumber\\
    &\leq e^{\textsf{C}_2k^2\alpha^2},\label{eq:see_six}
    \end{align}
where $(i)$ is via $\abs{\mathcal B_i} = k$, $(ii)$ is via H\"older's Inequality, $(iii)$ is from adding and subtracting $\mu := \E[|U_j^\intercal\beta|]$ in the exponent, which satisfies
\begin{align*}
    \mu &\leq \sqrt{\E\l[(U_j^\intercal\beta)^2\r]} = \sqrt{\beta^\intercal\Sigma_j\beta} \leq \|\beta\|\ \|\Sigma_j\|_{\text{op}}^{1/2} \leq \frac{\textsf C\textsf K_X\textsf L\sqrt p}{\sqrt n} \leq \textsf C_3
\end{align*}
for $n$ sufficiently large, by Jensen's Inequality and \lemmaref{cabris}, and $(iv)$ is via sub-Gaussianity of the centered version of $U_j^\intercal\beta$, which is sub-Gaussian by Lemma 2.6.8 of \cite{vershyninhighdimprob}, and thus satisfies Condition $(v)$ of Proposition 2.5.2 of the same text. Since this holds for all $\beta\in\tilde{\mathcal S}$, we conclude by combining \eqref{eq:see_five} and \eqref{eq:see_six} that 
\begin{align*}
    \sup_{\beta\in\mathcal {\tilde S}}\E_{(i, k)}\l[\l(\frac{\tilde U_i^\intercal\mathcal D_{i}(\beta)e^{-\alpha\sum_{j\in\mathcal B_i}\omega_j\ell_j(\beta)}}{\lip e^{-\alpha\sum_{j\in\mathcal B_i}\omega_j\ell_j(\beta)}\rip_{i,k}}\r)^2\r] \leq \textsf C_5\gamma^{-2}k^{6}\exp\l(\textsf{C}_2k^2\alpha^2\r)
 =: \textnormal{\textsf C}(k, \alpha, \gamma).
\end{align*}
\end{proof}

The following lemma employs a technique developed in \cite{montanari2022universality}, which will allow us to convert a complicated term involving the inverse function $1/x$ into one involving a polynomial that is much more straightforward to control. 

\begin{lemma}[Polynomial Approximation]\label{poly_approx} Let $\alpha, \delta, \gamma, \tau>0$. Then there exists $\textnormal{\textsf D} = \textnormal{\textsf D}(k, \alpha, \gamma, \tau)$ and coefficients $b_0, \hdots, b_{\textnormal{\textsf D}} \in \R$ such that
\begin{align*}&
    \abs{\E \l[h'(f_\delta(\bU^{ik}))\lip \tilde U_i^\intercal\mathcal D_{i}(\beta)\rip\r]} \\&\hspace{50pt}\leq \tau + \sum_{\ell=0}^{\textnormal{\textsf D}}|b_\ell|\sup_{\substack{\beta_0, \hdots, \beta_\ell \\ \in \mathcal {\tilde S}}}\abs{\E\l[\tilde U_i^\intercal \mathcal D_i(\beta_0)\exp\Big(-\alpha\sum_{r=0}^\ell\sum_{j\in\mathcal B_i}\omega_j \ell(\eta_j, U_j^\intercal\beta_r)\Big)\r]}.
\end{align*}
\end{lemma}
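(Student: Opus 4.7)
The plan is to replace the $1/Z$ factor appearing in $\l\lip \tilde U_i^\intercal \mathcal D_i(\beta)\r\rip$ by a degree-$\textsf D$ polynomial in $Z \coloneqq \lip S(\beta)\rip_{i,k}$, where $S(\beta) \coloneqq e^{-\alpha \sum_{j\in\mathcal B_i}\omega_j \ell_j(\beta)}$; by \eqref{eq:rewriteliprip}, $\l\lip \tilde U_i^\intercal \mathcal D_i(\beta)\r\rip = \lip \tilde U_i^\intercal \mathcal D_i \cdot S\rip_{i,k}/Z$. A convenient choice is the geometric-series truncation $P_{\textsf D}(z) \coloneqq \sum_{\ell=0}^{\textsf D}(1-z)^\ell$, whose explicit $z^\ell$-coefficients will play the role of $b_\ell$ and which enjoys the identity $zP_{\textsf D}(z) = 1 - (1-z)^{\textsf D+1}$. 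Writing $1/Z = P_{\textsf D}(Z) + (1/Z - P_{\textsf D}(Z))$ and inserting this into $\E[h'(f_\delta(\bU^{ik}))\lip\tilde U_i^\intercal \mathcal D_i\rip]$ splits the target quantity into a polynomial part and an error.

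For the polynomial part, $Z^\ell$ expands into a sum indexed by $(\beta_1,\dots,\beta_\ell) \in \mathcal{\tilde S}_\delta$ with weights $\prod_{r=1}^\ell w^{i,k}_\gamma(\beta_r)$. The key observation is that these weights, together with $h'(f_\delta(\bU^{ik}))$, depend only on $\bU^{ik}$, whereas $\tilde U_i^\intercal \mathcal D_i(\beta_0)\prod_r S(\beta_r)$ depends only on $(U_{\mathcal B_i},\eta_{\mathcal B_i})$; by block dependence (\Cref{blockdepass}) the two groups are independent. Every summand therefore factorises, and using $|h'|\leq 1$, $w^{i,k}_\gamma\geq 0$, and $\sum_\beta w^{i,k}_\gamma(\beta)=1$ gives
\begin{align*}
    \bigl|\E\bigl[h'(f_\delta(\bU^{ik})) \lip \tilde U_i^\intercal \mathcal D_i \cdot S\rip_{i,k}\, Z^\ell\bigr]\bigr| \;\leq\; \sup_{\beta_0,\dots,\beta_\ell\in\mathcal{\tilde S}}\Bigl|\E\Bigl[\tilde U_i^\intercal \mathcal D_i(\beta_0) \prod_{r=0}^\ell S(\beta_r)\Bigr]\Bigr|,
\end{align*}
which is exactly the supremum term appearing on the right-hand side of the lemma.

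For the error term, the identity $\lip \tilde U_i^\intercal \mathcal D_i \cdot S\rip_{i,k}(1/Z - P_{\textsf D}(Z)) = \l\lip \tilde U_i^\intercal \mathcal D_i\r\rip(1-Z)^{\textsf D+1}$ reduces matters to $\E[h'\l\lip \tilde U_i^\intercal \mathcal D_i\r\rip(1-Z)^{\textsf D+1}]$, and Cauchy--Schwarz bounds this by $\sqrt{\E[\l\lip \tilde U_i^\intercal \mathcal D_i\r\rip^2]}\cdot\sqrt{\E[(1-Z)^{2(\textsf D+1)}]}$. The first factor is bounded uniformly in $n$: Jensen on $x \mapsto x^2$ gives $\l\lip g\r\rip^2 \leq \l\lip g^2\r\rip = \lip g^2 S\rip_{i,k}/Z$ with $g \coloneqq \tilde U_i^\intercal \mathcal D_i$, a further Cauchy--Schwarz reduces this to $\sqrt{\sup_\beta \E[g(\beta)^4]}\cdot\sqrt{\E[1/Z^2]}$, and both pieces are $n$-free constants depending on $(k,\alpha,\gamma)$---the fourth moment via the bound \eqref{eq:see_five} inside the proof of \Cref{second_mom}, and $\E[1/Z^2]$ via Jensen on the convex $1/x^2$ together with the sub-Gaussian MGF $\sup_{\beta\in\mathcal S_p}\E[e^{2\alpha\sum_{j\in\mathcal B_i}|U_j^\intercal\beta|}] < \infty$, which holds because $\|U_j^\intercal\beta\|_{\psi_2}\leq \sqrt 2\,\textsf{K}_X\textsf{L}\sqrt{p/n} = O(1)$. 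The second factor is handled by splitting on $\{Z\geq\epsilon\}$ (contributing $(1-\epsilon)^{2(\textsf D+1)}$) and $\{Z<\epsilon\}$ (bounded via $\P(Z<\epsilon) \leq \epsilon\,\E[1/Z]$ by Markov and the same MGF argument). Choosing $\epsilon$ small, then $\textsf D$ large---both depending only on $(k,\alpha,\gamma,\tau)$---makes the total error at most $\tau$.

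The main obstacle is keeping $\textsf D$ independent of $n$, which is essential because the downstream use of the lemma in \Cref{proof_main_theorem} sends $n\to\infty$ before $\tau\to 0$ and requires $\sum_{\ell=0}^{\textsf D}|b_\ell|$ to be a finite sum of terms with vanishing inner $\sup$-expectations. The naïve pointwise bound $|\lip gS\rip_{i,k}| \leq Z\sup_\beta|g(\beta)|$ would force one to control $\E[\sup_\beta|g|^2]$, which scales with $p$ because $g$ is at best sub-exponential over the exponentially large net $\mathcal{\tilde S}_\delta$. Switching to the Cauchy--Schwarz route with $\l\lip g\r\rip^2$---whose expectation is $n$-uniformly bounded via Jensen and the sub-Gaussian MGF estimate on $1/S$---is precisely what salvages $\textsf D = \textsf D(k,\alpha,\gamma,\tau)$.
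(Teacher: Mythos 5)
Your proposal is correct and follows essentially the same route as the paper: the same truncated geometric series $\sum_{\ell=0}^{\textsf D}(1-z)^\ell$ for $1/z$, the same Cauchy--Schwarz split of the remainder against the second moment of $\lip \tilde U_i^\intercal\mathcal D_i\rip$ (controlled via \Cref{second_mom}), and the same factorization of the polynomial part through block independence of $\bU^{ik}$ from $(U_{\mathcal B_i},\tilde U_{\mathcal B_i})$. The only (cosmetic) difference is in the remainder's second factor: you exploit the closed form $(1-Z)^{\textsf D+1}$ and truncate on $\{Z<\epsilon\}$ via Markov, whereas the paper's \Cref{lem:remaindersquared} pushes the remainder inside the Gibbs average by Jensen and truncates on the event $\{\max_{j\in\mathcal B_i}|U_j^\intercal\beta|\le t\}$; both yield an $n$-free choice of $\textsf D(k,\alpha,\gamma,\tau)$.
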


\begin{proof}
Since $\bU^{ik}$ was constructed to be independent of $U_{\mathcal B_i}$ and $\tilde U_{\mathcal B_i}$, we first expand this quantity as 
\begin{align}
    &\abs{\E \l[h'(f_\delta(\bU^{ik})\lip \tilde U_i^\intercal\mathcal D_{i}(\beta)\rip\r]}\nonumber\\ &\hspace{30pt}\stackrel{(i)}{=} \abs{\E \l[\E_{(i, k)}\l[h'(f_\delta(\bU^{ik})\lip \tilde U_i^\intercal\mathcal D_{i}(\beta)\rip\r]\r]}\nonumber\\
    &\hspace{30pt}\stackrel{(ii)}{=} \abs{\E \l[h'(f_\delta(\bU^{ik})\E_{(i, k)}\lip \tilde U_i^\intercal\mathcal D_{i}(\beta)\rip\r]}\nonumber\\
    &\hspace{30pt}\leq \|h'\|_\infty \cdot\E \abs{\E_{(i, k)}\lip \tilde U_i^\intercal\mathcal D_{i}(\beta)\rip}\nonumber\\
    &\hspace{30pt}\stackrel{(iii)}{\leq} \E \abs{\E_{(i, k)}\l\lip \frac{\tilde U_i^\intercal\mathcal D_{i}(\beta)e^{-\alpha\sum_{j\in\mathcal B_i}\omega_j\ell(\eta_j, U_j^\intercal\beta)}}{\lip e^{-\alpha\sum_{j\in\mathcal B_i}\omega_j\ell(\eta_j, U_j^\intercal\beta)}\rip_{i, k}}\r\rip_{i, k}}\nonumber\\
    &\hspace{30pt}\stackrel{(iv)}{=} \E\abs{\l\lip \E_{(i, k)}\l(\frac{\tilde U_i^\intercal\mathcal D_{i}(\beta)e^{-\alpha\sum_{j\in\mathcal B_i}\omega_j\ell(\eta_j, U_j^\intercal\beta)}}{\lip e^{-\alpha\sum_{j\in\mathcal B_i}\omega_j\ell(\eta_j, U_j^\intercal\beta)}\rip_{i, k}}\r)\r\rip_{i, k}}\nonumber\\
    &\hspace{30pt}\stackrel{(v)}{\leq} \E\Bigg[\sup_{\beta_0 \in \mathcal{\tilde S}} \underbrace{\Bigg|\E_{(i, k)}\l(\frac{\tilde U_i^\intercal\mathcal D_{i}(\beta_0)e^{-\alpha\sum_{j\in\mathcal B_i}\omega_j\ell(\eta_j, U_j^\intercal\beta_0)}}{\lip e^{-\alpha\sum_{j\in\mathcal B_i}\omega_j\ell(\eta_j, U_j^\intercal\beta)}\rip_{i, k}}\r)\Bigg|}_{(a)}\Bigg],\label{eq:supremumoverbetazero}
\end{align}
where $(i)$ is via the law of total expectation, $(ii)$ is by the independence mentioned above, $(iii)$ is by definition of $h \in \mathcal H$ and the ability to rewrite $\lip \tilde U_i^\intercal\mathcal D_{i}(\beta)\rip$ as done previously in \eqref{eq:rewriteliprip}, $(iv)$ is because $\E_{(i, k)}[\ \cdot\ ]$ and $\lip\ \cdot\ \rip_{i, k}$ commute with each other, and $(v)$ is the same as in \eqref{eq:replacewithsup}. 

Now, we will approximate the inverse function $x\inv$ by a polynomial by defining the functions
\begin{align*}
    Q_{\textnormal{\textsf D}}(x) := \sum_{\ell=0}^\textnormal{\textsf D}(1-x)^\ell = \sum_{\ell=0}^\textnormal{\textsf D}b_\ell x^\ell,\quad\quad R_\textnormal{\textsf D}(x) := \frac{1}{x} - Q_{\textnormal{\textsf D}}(x)
\end{align*}
for some degree $\textsf D$ and $x \in (0, 1]$. Recall from \lemmaref{second_mom} that there exists $\textnormal{\textsf C}(k, \alpha, \gamma)$ such that
\begin{align}
    \sup_{\beta\in\mathcal {\tilde S}}\E_{(i, k)}\l[\l(\frac{\tilde U_i^\intercal\mathcal D_{i}(\beta_0)e^{-\alpha\sum_{j\in\mathcal B_i}\omega_j\ell_j(\beta_0)}}{\lip e^{-\alpha\sum_{j\in\mathcal B_i}\omega_j\ell_j(\beta)}\rip_{i,k}}\r)^2\r] \leq \textnormal{\textsf C}(k, \alpha, \gamma).\label{eq:secondmomforpoly}
\end{align}
Thus we may choose the degree of our polynomial to be the exact $\textsf D = \textsf D\l(k, \alpha, \tau^2 / \textnormal{\textsf C}(k, \alpha, \gamma)\r)$ such that, by \lemmaref{lem:remaindersquared}, we have 
\begin{align}
    \E_{(i, k)}\l[R_\textnormal{\textsf D}\l(\lip e^{-\alpha\sum_{j\in\mathcal B_i}\omega_j\ell(\eta_j, U_j^\intercal\beta)}\rip_{i, k}\r)^2\r] < \frac{\tau^2}{\textnormal{\textsf C}(k, \alpha, \gamma)}.\label{eq:tauover_seekay}
\end{align}
We conclude that 
\begin{align}
    (a) &\overset{(i)}{\leq} \abs{\E_{(i, k)}\l[\tilde U_i^\intercal\mathcal D_{i}(\beta_0)e^{-\alpha\sum_{j\in\mathcal B_i}\omega_j\ell_j(\beta_0)}Q_{\textsf D}\l(\lip e^{-\alpha\sum_{j\in\mathcal B_i}\omega_j\ell_j(\beta)}\rip_{i, k}\r)\r]}\nonumber\\
    &\quad\quad + \abs{\E_{(i, k)}\l[\tilde U_i^\intercal\mathcal D_{i}(\beta_0)e^{-\alpha\sum_{j\in\mathcal B_i}\omega_j\ell_j(\beta_0)}R_{\textsf D}\l(\lip e^{-\alpha\sum_{j\in\mathcal B_i}\omega_j\ell_j(\beta)}\rip_{i, k}\r)\r]}\nonumber\\
    &\overset{(ii)}{\leq} \abs{\E_{(i, k)}\l[\tilde U_i^\intercal\mathcal D_{i}(\beta_0)e^{-\alpha\sum_{j\in\mathcal B_i}\omega_j\ell_j(\beta_0)}Q_{\textsf D}\l(\lip e^{-\alpha\sum_{j\in\mathcal B_i}\omega_j\ell_j(\beta)}\rip_{i, k}\r)\r]}\nonumber\\
    &\quad\quad + \E_{(i, k)}\l[\l(\tilde U_i^\intercal\mathcal D_{i}(\beta_0)e^{-\alpha\sum_{j\in\mathcal B_i}\omega_j\ell_j(\beta_0)}\r)^2\r]^{1/2}\E_{(i, k)}\l[R_{\textsf D}\l(\lip e^{-\alpha\sum_{j\in\mathcal B_i}\omega_j\ell_j(\beta)}\rip_{i, k}\r)^2\r]^{1/2}\nonumber\\
    &\stackrel{(iii)}{\leq}  \abs{\E_{(i, k)}\l[\tilde U_i^\intercal\mathcal D_{i}(\beta_0)e^{-\alpha\sum_{j\in\mathcal B_i}\omega_j\ell_j(\beta_0)}Q_{\textsf D}\l(\lip e^{-\alpha\sum_{j\in\mathcal B_i}\omega_j\ell_j(\beta)}\rip_{i, k}\r)\r]} + \tau,\label{eq:firsttermlikeso}
\end{align}
where (i) is via the Triangle Inequality, (ii) via Cauchy-Schwarz, and $(iii)$ and \eqref{eq:secondmomforpoly} with \eqref{eq:tauover_seekay}. To finish, we rewrite the first term in \eqref{eq:firsttermlikeso} like so: recall that if $X_1, \hdots, X_\ell$ are $\ell$ i.i.d. random variables with the same distribution as some random variable $X$, then 
\begin{align*}
    \E[e^X]^\ell = \E[e^{X_1}]\cdots\E[e^{X_\ell}] = \prod_{r=1}^\ell\E[e^{X_r}] = \E\l[\prod_{r=1}^\ell e^{X_r}\r] = \E\l[e^{\sum_{r=1}^\ell X_r}\r],
\end{align*}
which means that we can say 
\begin{align}
    \abs{\E_{(i, k)}\l[\tilde U_i^\intercal\mathcal D_{i}(\beta_0)e^{-\alpha\sum_{j\in\mathcal B_i}\omega_j\ell_j(\beta_0)}Q_{\textsf D}\l(\lip e^{-\alpha\sum_{j\in\mathcal B_i}\omega_j\ell_j(\beta)}\rip_{i, k}\r)\r]}\nonumber\\
    &\hspace{-160pt}=\abs{\E_{(i, k)}\l[\tilde U_i^\intercal\mathcal D_i(\beta_0)e^{-\alpha\sum_{j\in\mathcal B_i}\omega_j\ell_j(\beta_0)}\sum_{\ell=0}^{\textsf D}b_\ell\lip e^{-\alpha\sum_{j\in\mathcal B_i}\omega_j\ell_j(\beta)}\rip_{i, k}^\ell\r]}\nonumber\\
    &\hspace{-160pt}\leq \sum_{\ell=0}^\textsf{D}\abs{b_\ell}\abs{\E_{(i, k)}\l[\tilde U_i^\intercal\mathcal D_i(\beta_0)e^{-\alpha\sum_{j\in\mathcal B_i}\omega_j\ell_j(\beta_0)}\lip e^{-\alpha\sum_{j\in\mathcal B_i}\omega_j\ell(\eta_j, U_j^\intercal\beta)}\rip_{i, k}^\ell\r]}\nonumber\\
    &\hspace{-160pt}\leq \sum_{\ell=0}^\textsf{D}\abs{b_\ell}\abs{\l\lip\E_{(i, k)}\l[ \tilde U_i^\intercal\mathcal D_i(\beta_0)e^{-\alpha\sum_{r=0}^\ell\sum_{j\in\mathcal B_i}\omega_j\ell_j(\beta_r)}\r]\r\rip_{i, k, \ell}}\nonumber\\
    &\hspace{-160pt}\leq  \sum_{\ell=0}^\textsf{D}\abs{b_\ell}\sup_{\substack{\beta_1, \hdots, \beta_\ell \\ \in \mathcal {\tilde S}}}\abs{\E\l[ \tilde U_i^\intercal\mathcal D_i(\beta_0)e^{-\alpha\sum_{r=0}^\ell\sum_{j\in\mathcal B_i}\omega_j\ell_j(\beta_r)}\r]},\label{eq:ihatethis}
\end{align}
where the final expectation is unconditional due to independence, and $\lip\ \cdot\ \rip_{i, k, \ell}$ represents the $\ell$-dimensional joint expectation with marginals following $\lip\ \cdot\ \rip_{i, k}$. Combining \eqref{eq:firsttermlikeso} and \eqref{eq:ihatethis}  with the supremum over $\beta_0$ in \eqref{eq:supremumoverbetazero}, we conclude the result. 
\end{proof}

The following lemma allows us to convert the statement about Gaussian approximation from \Cref{gauss_approx}, which involves $k$ terms, to one that involves arbitrarily many, which will be important when combined with the polynomial derived from the previous lemma. 

\begin{lemma}[$\bm k$-to-many Betas]\label{many_betas} 
Let $\ell \geq 1$ and $g: \R^{2\ell k} \to \R$. Then
\begin{enumerate}
    \item If $g$ is bounded Lipschitz and 
    \begin{align*}
        K \;\coloneqq\; 
        \sup_{\substack{f\in \mathcal{F}\\\theta\in \mathcal{S}^{k-1}}}\sup_{\substack{(\beta_1, \hdots, \beta_k) \\ \in\mathcal S_p^k}}\abs{\E f\l(\sum_{i=1}^k\theta_iX_i^\intercal\beta_i\r) - \E f\l(\sum_{i=1}^k\theta_iG_i^\intercal\beta_i\r) }
        \;\leq\; 1\;,
    \end{align*}
    then there exists some $\textnormal{\textsf C}_{k,l} > 0$ such that
    % \begin{align*}&
%   \sup_{\substack{B = (\beta_1, \hdots, \beta_\ell) \\ \in\mathcal {\tilde S}^\ell}}\abs{\E\l[g(X_{\mathcal B_i}B, \tilde G_{\mathcal B_i}B) - g( G_{\mathcal B_i}B, \tilde G_{\mathcal B_i}B)\r]}\\&\hspace{15pt} \leq \textnormal{\textsf C}_k\l(\ell\,\|g\|_{\textnormal{Lip}}\|g\|_\infty^{2}\r)^{1/3}\sup_{\substack{f\in \mathcal{F}\\\theta\in \mathcal{S}^{k-1}}}\sup_{\substack{(\beta_1, \hdots, \beta_k) \\ \in\mathcal S_p^k}}\abs{\E f\l(\sum_{i=1}^k\theta_iX_i^\intercal\beta_i\r) - \E f\l(\sum_{i=1}^k\theta_iG_i^\intercal\beta_i\r) }^{2/3},
% \end{align*}
\begin{align*}&
     \sup_{\substack{B = (\beta_1, \hdots, \beta_\ell) \\ \in\mathcal {\tilde S}^\ell}}\abs{\E\l[g(X_{\mathcal B_i}B, \tilde G_{\mathcal B_i}B) - g( G_{\mathcal B_i}B, \tilde G_{\mathcal B_i}B)\r]}
     \\
     &\hspace{15pt} \leq \textnormal{\textsf C}_{k,l} \,
    \Big(
    \| g \|_{\rm Lip} 
    + 
    \| g \|_\infty
    \,
    \big( 
    \mean[ \| X_{\mathcal B_i}B \|_\infty + \| G_{\mathcal B_i}B \|_\infty + 2 \| \tilde G_{\mathcal B_i}B \|_\infty  ]
    + 
    1 \big) 
    \Big)\, K^{\frac{1}{8kl-4}} 
\end{align*}
where $\tilde{\bG}$ is an independent copy of $\bG$ and $\mathcal{F}$ is as in \Cref{gauss_approx}.

\item If $g$ is locally Lipschitz \& square-integrable and \Cref{gauss_approx} holds, then 
\begin{align*}
    \sup_{\substack{B = (\beta_1, \hdots, \beta_\ell) \\ \in\mathcal {\tilde S}^\ell}}\abs{\E\l[g(X_{\mathcal B_i}B, \tilde G_{\mathcal B_i}B) - g( G_{\mathcal B_i}B, \tilde G_{\mathcal B_i}B)\r]} \xrightarrow{n\to\infty} 0.
\end{align*}
\end{enumerate}
\end{lemma}

\begin{proof}
To prove the first statement, let $g: \R^{2\ell k} \to \R$ be a bounded Lipschitz function. For notational simplicity, we may assume WLOG that the block $\mathcal B_i$ begins at index $i$, meaning 
\begin{align*}
    X_{\mathcal B_i} := \begin{pmatrix}
    \rule[.5ex]{2.5ex}{0.5pt} & X_i & \rule[.5ex]{2.5ex}{0.5pt}\\
    \  & \vdots & \ \\
    \rule[.5ex]{2.5ex}{0.5pt} & X_{i+k-1} & \rule[.5ex]{2.5ex}{0.5pt}
    \end{pmatrix} \in \R^{k\times p},\quad\quad B := \begin{pmatrix}
    \vert & \  & \vert\\
    \beta_1 & \cdots & \beta_\ell\\
    \vert & \  & \vert
    \end{pmatrix} \in \mathcal {\tilde S}^\ell \subseteq \R^{p \times \ell}.
\end{align*}
Let us define the following quantities:
\begin{align*}
    \mathcal M := B^\intercal \otimes I_{2k} \in \R^{2k\ell \times 2kp}\ \ \ \ \ \ \bm{u} := \textsf{vec}{X_{\mathcal B_i}\choose \tilde G_{\mathcal B_i}},\ \bm{v} := \textsf{vec}{G_{\mathcal B_i}\choose \tilde G_{\mathcal B_i}} \in \R^{2kp\times 1},
\end{align*}
where $\textsf{vec}(A)$ is the vectorized version of a matrix $A$. This allows us to say that 
\begin{align*}
    g(X_{\mathcal B_i}B, \tilde G_{\mathcal B_i}B) - g( G_{\mathcal B_i}B, \tilde G_{\mathcal B_i}B) = g(\mathcal M\bm{u}) - g(\mathcal M\bm{v}).
\end{align*}
Now, let $\sigma>0$ and define $\bm Z \sim \mathcal N(0, \sigma^2I_{2k\ell})$ independent of all other quantities. Then by the Triangle Inequality,
\begin{align}
\abs{\E\l[g(\mathcal M\bm{u}) - g(\mathcal M\bm{v})\r]} &\leq \abs{\E\l[g(\mathcal M\bm{u}) - g(\mathcal M\bm{u} + \bm Z)\r]} + \abs{\E\l[g(\mathcal M\bm{v}) - g(\mathcal M\bm{v} + \bm Z)\r]}\label{eq:firsttwoguys}\\
&\ \ \ \ \ \ \ \ \ + \abs{\E\l[g(\mathcal M\bm{u} + \bm Z) - g(\mathcal M\bm{v} + \bm Z)\r]}.\label{eq:annoyingdude}
\end{align}
The two quantities in \eqref{eq:firsttwoguys} are easily bounded, as 
\begin{align}
    \abs{\E\l[g(\mathcal M\bm{u}) - g(\mathcal M\bm{u} + \bm Z)\r]} &\leq \E\abs{g(\mathcal M\bm{u}) - g(\mathcal M\bm{u} + \bm Z)}\nonumber\\
    &\leq \|g\|_{\text{Lip}}\cdot\E\|\bm Z\|_2\nonumber\\
    &\leq \|g\|_{\text{Lip}}\cdot\sigma\sqrt{2k\ell},\label{eq:sigma2kl} 
\end{align}
and similarly for $\abs{\E\l[g(\mathcal M\bm{v}) - g(\mathcal M\bm{v} + \bm Z)\r]}$. 
To control the quantity in \eqref{eq:annoyingdude}, for $R > 0$ we consider the functions 
\begin{align*}
    g_R(y) 
    \;\coloneqq&\; 
    (
        g(y) 
        -
        \bar g(y)
    )
    \, 
    \ind_{\{ \| y \|_\infty \leq R \}}\;,
    \\
    \bar g(y)
    \;\coloneqq&\;
    \msum_{r \leq 2k, r' \leq l}
        \mfrac{g(y^{(r,r')}(R)) -g(y^{(r,r')}(-R))}{2R} 
        y_{r,r'} \be_{r,r'}
    \;,
\end{align*}
where $y^{(r,r')}(R) \in \R^{2kl}$ is a copy of $y$ with the $(r,r')$-th coordinate replaced by $R$, $y_{r,r'}$ is the $(r,r')$-th coordinate of $y$ and $\be_{r,r'} \in \R^{2kl}$ is the standard Euclidean basis vector with $1$ at the position $(r,r')$. The support of the function $g_R$ is contained within the hyperrectangle $\{ y \in \R^{2kl} \,|\, \| y \|_\infty \leq R\}$. Moreover,  $g_R$ is continuous inside the hyperrectangle and by the choice of $\bar g$, $g_R$ agrees on the boundary of the hyperrectangle. This allows us to extend $g_R$ to a continuous and $\| g \|_{\text{Lip}}$-Lipschitz function $\tilde g_R$ that is $2R$-periodic in each coordinate. By construction 
\begin{align*}
    &\;
    \big| 
        \mean[ g(\cM\bu + \bZ) ] 
        - 
        \mean[\tilde g_R(\cM\bu + \bZ) ]
    \big|
    \\
    &\;\leq\;
    \big| 
        \mean[g(\cM \bu + \bZ) - g(\cM \bu + \bZ)  \ind_{\{ \| \cM \bu + \bZ \|_\infty \leq R \}} ] 
    \big|
    +
     \big| 
        \mean[ 
        \bar g(\cM \bu + \bZ) \, \ind_{\{ \| \cM \bu + \bZ \|_\infty \leq R \}} 
        ]
    \big| 
    \\
    &\qquad 
    +
    \big| 
        \mean[ \tilde g_R(\cM\bu + \bZ)  \, \ind_{\{ \| \cM \bu + \bZ \|_\infty > R \}} ]
    \big| 
    \\
    &\;\leq\;
    \| g \|_\infty \, \P( \| \cM \bu + \bZ \|_\infty > R )
    + 
    \mfrac{2kl \, \| g \|_\infty}{R} \mean [ \|\cM \bu + \bZ \|_\infty ]
    +
    (1 + 2kl)  \, \| g \|_\infty  \, \P( \| \cM \bu + \bZ \|_\infty > R )
    \\
    &\;\leq\;
    \mfrac{( 2 + 4kl) \| g \|_\infty \, \mean[ \| \cM \bu + \bZ \|_\infty]  }{R}
    \;,
    \tagaligneq \label{eq:hyperrectangle:bound}
\end{align*}
and the same bound holds with $\bu$ replaced by $\bv$. We can now approximate $\tilde g_R$ coordinate-wise by a truncated Fourier series.  Since $\tilde g_R$ is continuous and $\| g \|_{\text{Lip}}$-Lipschitz, applying a well-established result on uniform convergence of Fourier series to each coordinate (see e.g.~\cite{jackson1930theory,alimov1992multiple}) says that there exists some absolute constant $C' > 0$ such that, for every $N \in \N$,
\begin{align*}
    \| \tilde g_R -  \tilde g_R^{(N)}
    \|_\infty \;\leq\;  C' \| g \|_{\text{Lip}} \, \mfrac{\log N}{N}\;,
\end{align*}
where the truncated Fourier series is given by 
\begin{align*}
    \tilde g_R^{(N)}( y )
    \;\coloneqq\;
    \,
    \msum_{w \in \{-N, \ldots, N\}^{2kl} } 
    \, e^{i \frac{\pi w^\intercal y}{R}  } \, 
    \Big(\mfrac{1}{(2R)^{2kl}} \mint_{[0,2R]^{2kl}} \tilde g_R (t) e^{- i \frac{\pi w^\intercal t}{R}} dt  \Big)
    \;.
\end{align*}
This implies that
\begin{align*}
    &\;\big| 
        \mean\big[ 
            g( \cM \bu + \bZ )
            -
            g(  \cM \bv + \bZ  )
        \big] 
    \big|
    \\
    &\;\leq\;
    \big| 
        \mean\big[ 
            g( \cM \bu + \bZ )
            -
            \tilde g_R(  \cM \bu + \bZ  )
        \big] 
    \big|
    +
    \big| 
        \mean\big[ 
            g( \cM \bv + \bZ )
            -
            \tilde g_R(  \cM \bv + \bZ  )
        \big] 
    \big|
    \\
    &\qquad 
    +
    \big| 
        \mean\big[ 
            \tilde g( \cM \bu + \bZ )
            -
            \tilde g_R^{(N)}(  \cM \bu + \bZ  )
        \big] 
    \big|
    +
    \big| 
        \mean\big[ 
            \tilde g( \cM \bv + \bZ )
            -
            \tilde g_R^{(N)}(  \cM \bv + \bZ  )
        \big] 
    \big|
    \\
    &\qquad 
    +(\star)
    \\
    &\;\overset{\eqref{eq:hyperrectangle:bound}}{\leq}\;
    \mfrac{( 2 + 4kl) \, \| g \|_\infty \, \mean[ \| \cM \bu + \bZ \|_\infty + \| \cM \bv + \bZ \|_\infty]  }{R}
    +
    \mfrac{2 C' \| g \|_{\text{Lip}} \, \log N}{N}
    +
    (\star)
    \;,  \tagaligneq\label{eq:smoothed:Fourier:halfway}
\end{align*}
where 
\begin{align*}
    (\star) 
    \;\coloneqq&\;
    \big| 
        \mean\big[ 
            \tilde g_R^{(N)}( \cM \bu + \bZ  )
            -
            \tilde g_R^{(N)}( \cM \bv + \bZ  )
        \big] 
    \big|
    \\
    \;=&\;
    \bigg| 
        \sum_{w \in \{-N, \ldots, N\}^{2kl} } 
        \Big( 
                \mean[
                     e^{i \frac{\pi w^\intercal ( \cM \bu + \bZ )}{R}} 
                     -
                     e^{i\frac{\pi w^\intercal ( \cM \bv + \bZ )}{R}} 
                ]  
        \Big)
        \Big(\mfrac{1}{(2R)^{2kl}} \mint_{[0,2R]^{2kl}} \tilde g_R (t) e^{- i \frac{\pi w^\intercal t}{R}} dt \Big)
    \bigg|
    \\
    \;=&\;
    \bigg| 
        \sum_{w \in \{-N, \ldots, N\}^{2kl} } 
        \Big( 
                \mean[
                     e^{i \frac{\pi w^\intercal( \cM \bu)}{R} } 
                     -
                     e^{i \frac{\pi w^\intercal (\cM \bv)}{R} } 
                ]  
        \Big)
        e^{ - \frac{\pi^2 \sigma^2 \| w \|^2}{2R^2}}
        \Big(\mfrac{1}{(2R)^{2kl}} \mint_{[0,2R]^{2kl}} \tilde g_R (t) e^{- i \frac{\pi w^\intercal t}{R}} dt \Big)
    \bigg|
    \\
    \;\leq&\;
    \| g \|_\infty 
    \,
    \msum_{w \in \{-N, \ldots, N\}^{2kl} } 
        \big|
                \psi_{\cM u}\big( \mfrac{\pi w}{R} \big)
                -
                \psi_{\cM v}\big( \mfrac{\pi w}{R} \big)
        \big|
    \,
    e^{ - \frac{\pi^2 \sigma^2 \| w \|^2}{2 R^2}}
    \\
    \;=&\;
    \| g \|_\infty 
    \,
    \msum_{w \in \{- \frac{N}{R}, \ldots, \frac{N}{R}\}^{2kl} } 
        \big|
                \psi_{\cM u}(\pi w)
                -
                \psi_{\cM v}(\pi w)
        \big|
    \,
    e^{ - \frac{\pi^2 \sigma^2 \| w \|^2}{2}}
    \;.
    \tagaligneq\label{eq:trucated:Fourier:diff}
\end{align*}
To control the difference of characteristic functions, let $t \in \R^{2k\ell}$, and decompose it as $t = (s, \tilde s)$ for $s, \tilde s \in \R^{k\ell}$. Then we have that 
\begin{align}
    \abs{\psi_{\mathcal M\bm{u}}(t) - \psi_{\mathcal M\bm{v}}(t)}^2 &\stackrel{(i)}{=} \l(\E\l(e^{i\textsf{vec}(X_{\mathcal B_i}B)^\intercal s}\r)\E\l(e^{i\textsf{vec}({\tilde G}_{\mathcal B_i}B)^\intercal \tilde s}\r) - \E\l(e^{i\textsf{vec}(G_{\mathcal B_i}B)^\intercal s}\r)\E\l(e^{i\textsf{vec}({\tilde G}_{\mathcal B_i}B)^\intercal \tilde s}\r)\r)^2\nonumber\\
    &\siil 2\abs{\E\l(e^{i\textsf{vec}(X_{\mathcal B_i}B)^\intercal s}\r) - \E\l(e^{i\textsf{vec}(G_{\mathcal B_i}B)^\intercal s}\r)},\label{eq:spiritfarer}
\end{align}
where $(i)$ is because the characteristic function factors due to $\tilde{\bG} \indep (\bX, \bG)$, and $(ii)$ is because the characteristic function always has modulus in $[0,1]$, and $(x-y)^2 \leq 2|x-y|$ for $x, y \in [0,1]$. From here, let us now decompose our vector $s\in \R^{k\ell}$ into $k$ subvectors by defining 
\begin{align*}
     s_r := s_{r(\ell-1)+1:r\ell} = \l(s_{r(\ell-1)+1}, \hdots, s_{r\ell}\r) \in \R^\ell
\end{align*}
for each $r = 1, \hdots, k$. Then, we expand 
\begin{align}
    \textsf{vec}(X_{\mathcal B_i}B)^\intercal s &= \sum_{r=1}^{k}(X_{r+i-1}^\intercal\beta_1, \hdots, X_{r+i-1}^\intercal\beta_\ell)^\intercal s_r\nonumber\\
    &= \sum_{r=1}^{k}X_{r+i-1}^\intercal(B s_r)\nonumber\\
    &= \sum_{r=1}^{k}X_{r+i-1}^\intercal\sum_{t=1}^\ell s_{rt}\beta_t\nonumber\\
    &= \sum_{r=1}^{k}\|s_r\|_1X_{r+i-1}^\intercal\sum_{t=1}^\ell \frac{|s_{rt}|}{\|s_r\|_1}\text{sgn}(s_{rt})\beta_t\nonumber\\
    &=: \sum_{r=1}^{k}\|s_r\|_1X_{r+i-1}^\intercal \nu_r,\label{eq:pingyou}
\end{align}
where each $\nu_r :=  \sum_{t=1}^\ell \frac{|s_{rt}|}{\|s_r\|_1}\text{sgn}(s_{rt})\beta_t$ is in $\mathcal S_p$, since if we define 
\begin{align*}
    (\beta_{\rm{mix}})_{rt} := \text{sgn}(s_{rt})\beta_t \in \mathcal S_p,\ \ \ \ \lambda_{rt} := \frac{|s_{rt}|}{\|s_r\|_1} \in [0,1],
\end{align*}
then we know that since $\mathcal S_p$ is symmetric and convex and $\sum_t\lambda_{rt} = 1$, and since $\mathcal{S}_p$ contains the convex closure of $\tilde S$,  it must be that
\begin{align*}
    \nu_r = \sum_{t=1}^\ell \frac{|s_{rt}|}{\|s_r\|_1}\text{sgn}(s_{rt})\beta_t = \lambda_{r1}\tilde\beta_{r1} + \hdots + \lambda_{r\ell}\tilde\beta_{r\ell} \in \mathcal S_p.
\end{align*}
Thus we conclude that 
\begin{align*}
\sup_{B}\abs{\psi_{\mathcal M\bm{u}}(t) - \psi_{\mathcal M\bm{v}}(t)}^2 &\sil 2\sup_{B}\abs{\E\l(e^{i\sum_r\|s_r\|_1X_{r+i-1}^\intercal\beta_r}\r) - \E\l(e^{i\sum_r\|s_r\|_1G_{r+i-1}^\intercal\beta_r}\r)}\\
&\leq 2\sup_{B}\abs{\E\l(e^{{i}{\sqrt{\sum_{r}\|s_r\|_1^2}}\sum_r\frac{\|s_r\|_1X_{r+i-1}^\intercal\beta_r}{{\sqrt{\sum_{r}\|s_r\|_1^2}}}}\r) - \E\l(e^{{i}{\sqrt{\sum_{r}\|s_r\|_1^2}}\sum_r\frac{\|s_r\|_1G_{r+i-1}^\intercal\beta_r}{{\sqrt{\sum_{r}\|s_r\|_1^2}}}}\r)}\\
&\leq 2\sup_{\theta\in \mathcal{S}^{k-1}}\sup_{B}\abs{\E\l(e^{{i}{\sqrt{\sum_{r}\|s_r\|_1^2}}\sum_r \theta_rX_{r+i-1}^\intercal\beta_r}\r) - \E\l(e^{{i}{\sqrt{\sum_{r}\|s_r\|_1^2}}\sum_r \theta_rG_{r+i-1}^\intercal\beta_r}\r)}\\
&\siil 2\sqrt{\sum_{r}\|s_r\|_1^2}\underbrace{\sup_{\substack{f\in \mathcal{F}\\\theta\in \mathcal{S}^{k-1}}}\sup_{\substack{(\beta_1, \hdots, \beta_k) \\ \in\mathcal S_p^k}}\abs{\E f\l(\sum_{j=1}^k\theta_jX_{j+k-1}^\intercal\beta_j\r) - \E f\l(\sum_{j=1}^k\theta_jG_{j+k-1}^\intercal\beta_j\r) }}_{\textsf = K}.%\\ &\to 0
\end{align*}
% where we call the supremum $\textsf K$ for ease of notation later on.
Here, $(i)$ is via \eqref{eq:spiritfarer} and \eqref{eq:pingyou}, and $(ii)$ is because the map $x\mapsto c\inv e^{icx} $ is 1-Lipschitz and bounded by $c\inv$.
Now let $w_{r,r'}$ be the $r(l-1)+r'$-th coordinate of $w$, where $1 \leq r \leq 2k$ and $1 \leq r' \leq l$. 
Plugging the above bound into \eqref{eq:trucated:Fourier:diff}, we obtain 
\begin{align*}
    (\star) 
    \;\leq&\;
    \| g \|_\infty  
    \, 
    \sqrt{2 K}
    \,
    \msum_{w \in \{-\frac{N}{R}, \ldots, \frac{N}{R}\}^{2kl} }  
    \,
    \Big( \msum_{r=1}^k \Big( \msum_{r'=1}^l  |\pi w_{r,r'} |  \Big)^2 \Big)^{1/4}
    \,e^{ - \frac{\pi^2 \sigma^2 \| w \|^2}{2}}
    \\
    \;\leq&\;
    \| g \|_\infty  
    \, 
    \sqrt{2 K}
    \,
    (k l^2)^{1/4} 
    \,
    \mfrac{\pi N}{R}
    \,
    \msum_{w \in \{-\frac{N}{R}, \ldots, \frac{N}{R}\}^{2kl} }  
    \,
    e^{ - \frac{\pi^2 \sigma^2 \| w \|^2}{2}}
    \\
    \;=&\;
    \| g \|_\infty  
    \, 
    \sqrt{2 K}
    \,
    (k l^2)^{1/4} 
    \,
    \mfrac{\pi N}{R}
    \,
    \Big(
        1
        +
        2 R
        \msum_{w' \in \{1, \frac{1}{R}, \ldots, \frac{N}{R}\} }  
        \,
        \mfrac{1}{R} 
        \,
        e^{ - \frac{\pi^2 \sigma^2 (w')^2}{2}}
    \Big)^{2kl}
    \\
    \;\leq&\;
    \| g \|_\infty  
    \, 
    \sqrt{2 K}
    \,
    (k l^2)^{1/4} 
    \,
    \mfrac{\pi N}{R}
    \,
    \Big(
        1
        +
        2 R
        \mint_0^\infty
        \,
        e^{ - \frac{\pi^2 \sigma^2 (w')^2}{2}}
        \,
        dw'
    \Big)^{2kl}
    \\
    \;=&\;
    \| g \|_\infty  
    \, 
    \sqrt{2 K}
    \,
    (k l^2)^{1/4} 
    \,
    \mfrac{\pi N}{R}
    \,
    \Big(
        1
        +
        \mfrac{R \sqrt{2}}{{\sigma}}
    \Big)^{2kl}
    \;.
\end{align*}
Combining this bound with \eqref{eq:sigma2kl} and \eqref{eq:smoothed:Fourier:halfway} by the triangle inequality, we get 
\begin{align*}
   &\;\big| 
        \mean\big[ 
            g( \cM \bu  )
            -
            g(  \cM \bv  )
        \big] 
    \big|
    \\
    &\;\leq\;
    \| g \|_{\text{Lip}} \sigma \sqrt{2 kl}
    +
    \mfrac{( 2 + 4kl) \, \| g \|_\infty \, \mean[ \| \cM \bu + \bZ \|_\infty + \| \cM \bv + \bZ \|_\infty]  }{R}
    +
    \mfrac{2 \| g \|_{\text{Lip}} C' \log N}{N}
    \\
    &\qquad 
    \;+
    \| g \|_\infty  
    \, 
    \sqrt{2 K}
    \,
    (k l^2)^{1/4} 
    \,
    \mfrac{\pi N}{R}
    \,
    \Big(
        1
        +
        \mfrac{R \sqrt{2}}{{\sigma}}
    \Big)^{2kl}
    \\
    &\;\leq\;
    \| g \|_{\text{Lip}} \sigma \sqrt{2 kl}
    +
    \mfrac{( 2 + 4kl) \, \| g \|_\infty \, \big( \mean[ \| \cM \bu  \|_\infty + \| \cM \bv \|_\infty]  + 2kl \sigma \big) }{R}
    +
    \mfrac{2 \| g \|_{\text{Lip}} C' \log N}{N}
    \\
    &\qquad 
    \;+
    \| g \|_\infty  
    \, 
    \sqrt{2 K}
    \,
    (k l^2)^{1/4} 
    \,
    \mfrac{\pi N}{R}
    \,
    \Big(
        1
        +
        \mfrac{R \sqrt{2}}{{\sigma}}
    \Big)^{2kl}
    \;,
\end{align*}
where we have used the Markov inequality and a union bound. Now choose
\begin{align*}
    \sigma \;=\; K^{\frac{1}{8 kl - 4} }
    \;,
    \qquad 
    N \;=\; R \;=\; \lfloor K^{-\frac{1}{8 kl } } \rfloor
    \;.
\end{align*}
We get that for some absolute constant $C > 0$,
\begin{align*}
    &\;\big| 
    \mean\big[ 
        g( \cM \bu  )
        -
        g(  \cM \bv  )
    \big] 
    \big|
    \\
    &\;\leq\;
    C
    \Big(
    \| g \|_{\rm Lip} \, \big(\sqrt{2 kl} \,  + 1\big)
    + 
    \| g \|_\infty
    (1+2kl)
    \big(  \mean[ \| \cM \bu \|_\infty + \| \cM \bv \|_\infty ]
    +
    kl \big)
    +
    \| g \|_\infty
    \, 
    3^{2kl}
    \,
    (k l^2)^{1/4}   \Big)\, K^{\frac{1}{8kl-4}} 
    \\ 
    &\;\leq\; 
    \textnormal{\textsf C}_{k,l} \,
    \Big(
    \| g \|_{\rm Lip} 
    + 
    \| g \|_\infty
    \,
    \big( 
    \mean[ \| X_{\mathcal B_i}B \|_\infty + \| G_{\mathcal B_i}B \|_\infty + 2 \| \tilde G_{\mathcal B_i}B \|_\infty  ]
    + 
    1 \big) 
    \Big)\, K^{\frac{1}{8kl-4}} 
\end{align*}
for some $\textnormal{\textsf C}_{k,l} > 0$. We also remark that by sub-Gaussianity, the term $\mean[ \| X_{\mathcal B_i}B \|_\infty + \| G_{\mathcal B_i}B \|_\infty + 2 \| \tilde G_{\mathcal B_i}B \|_\infty  ]$ is $O(kl)$.

% Hence we obtain that 
% \begin{align}
%     \sup_B\abs{\E\l[g(\bm M\bm u) - g(\bm M \bm v)\r]} &\sil 2\sqrt{2k\ell}\|g\|_{\text{Lip}}\sigma + \frac{\|g\|_\infty}{2^{k\ell}}\sup_B\E\l[\l(\psi_{\mathcal M\bm{u}}(\bm Y) - \psi_{\mathcal M\bm{v}}(\bm Y)\r)^2\r]^{1/2}\nonumber\\
%     &\siil 2\sqrt{2k\ell}\|g\|_{\text{Lip}}\sigma + \frac{\|g\|_\infty\sqrt{\textsf K}}{2^{k\ell-1/2}}\E_{\bm Y_r \sim \mathcal N(0, \sigma^{-2}I_{\ell})}\l[\sum_{r=1}^{k}\|\bm Y_r\|_1^2\r]^{1/4}\nonumber\\
%     &\siiil 2\sqrt{2k\ell}\|g\|_{\text{Lip}}\sigma + \frac{\|g\|_\infty\sqrt{\textsf K}(k\ell)^{1/4}}{2^{k\ell-1/2}\sqrt\sigma}\label{eq:thetwintowers}
% \end{align}
% where $(i)$ is by \eqref{eq:sigma2kl} and \eqref{eq:absolutely_evil}, $(ii)$ is by Jensen's Inequality, and $(iii)$ by a simple calculation of Multivariate Normal expectation. If we set 
% \begin{align}
%     \sigma = \l(\frac{\|g\|_\infty\sqrt{\textsf K}}{2^{k\ell+1}(k\ell)^{1/4}\|g\|_{\text{Lip}}}\r)^{2/3},
% \end{align}
% then we can plug this into \eqref{eq:thetwintowers} to conclude that
% \begin{align*}
%     \sup_B\abs{\E\l[g(\bm M\bm u) - g(\bm M \bm v)\r]} \leq \sqrt[3]{2^{(11-4k)/2}k\ell\|g\|_{\text{Lip}}\|g\|_\infty^2\textsf K^2}.
% \end{align*}
To prove the second statement of the lemma, we will sketch the outline and refer to Lemma 30 of \cite{montanari2022universality} for the specific details of a similar approach. We fix $B > 0$, and consider $g_B$, which forces $g$ to be bounded Lipschitz like so:
\begin{align*}
    g_B(\bm x) := g\l(\bm x\r)\mathbb I\l(\|\bm x\| \leq B\r) + g\l(B\bm x / \|\bm x\|\r)\mathbb I\l(\|\bm x\| > B\r).
\end{align*}
We may then apply the first statement of the lemma to $g_B$, which under \Cref{gauss_approx} converges to zero. To bound the leftover differences of the form
\begin{align*}
    \sup_B\abs{\E\l[g(\bm M\bm u) - g_B(\bm M\bm u)\r]},
\end{align*}
we use square-integrability of $g$ and the fact that
\begin{align*}
    g(\bm M\bm u) - g_B(\bm M\bm u) \neq 0 \implies \|\bm M\bm u\| > B,
\end{align*}
which by sub-Gaussianity of all $2k\ell$ components of $\bm M\bm u$ occurs with probability bounded by \\$\textsf C_1k\ell\exp\l(-\textsf c_2B^2/k\ell\r)$. Sending $B\to\infty$ thus concludes the result.
\end{proof}

 \section{Comparing training loss with deleted blocks to the original training loss}\label{equivalence_sec}
% %\textcolor{red}{i want to adapt this section so that it works for both m dependence and missing
% %}
% We extend \theoremref{mainthm} beyond the assumption of block dependence to the $m$-dependent setting. More formally, we assume that the data satisfies the following local dependency assumption:

% \begin{assumption}[$m$-dependence]\label{mdep}
% There exists $m \geq 1$ such that $$X_i \indep \{X_j \mid j \notin \mathcal N_i\}\ \ \  \text{ for } \ \ \ \mathcal N_i := \{i-m, \ldots, i+m\} \cap \Z^+.$$
% \end{assumption} 
% Essentially, $m$-dependence means $X_i \indep X_j$ whenever $|i-j| > m$. Replacing the block dependence Assumption \ref{blockdepass} with the above Assumption \ref{mdep}, we prove that the training and test risk once again satisfy Gaussian universality, as long as Assumption \ref{gauss_approx} holds uniformly over $\mathbb N$, as opposed to only holding for the block parameter:
% \begin{assumption}[Gaussian Approximation, $m$-dependence]\label{gauss_approx2}
% For all $k\in \mathbb{N}$ we have
% \begin{align*}
%    \sup_{\phi\in \mathcal{F}} \sup_{\substack{\beta_1, \hdots, \beta_k \\ \in\mathcal S_p}}\sup_{\theta\in\mathcal{S}^{k-1}}\abs{\E\l[\phi\l(\sum_{r=1}^k\theta_r X_r^\intercal\beta_r\r) - \phi\l(\sum_{r=1}^k\theta_r G_r^\intercal\beta_r\r)\r]}\rightarrow 0,
% \end{align*} where $\mathcal{F}:=\{\phi : \R \to \R \mid   \|\phi\|_{\infty}\le 1, \|\phi\|_{\text{Lip}}\le 1\}.$
% \end{assumption}

%Then our $m$-dependent universality result is as follows:\section{Equivalence}
Let $M ,\tilde m\in \Z^+$ be fixed. Define new matrices $\bX^M, \bG^M \in \R^{n' \times p}$ as
\begin{align*}
    \bX^M &:= (X_1, \hdots, X_M, X_{M+\tilde m+1}, \hdots, X_{2M+\tilde m}, X_{2M+2\tilde m+1}, \hdots)^\intercal\\
    \bG^M &:= (G_1, \hdots, G_M, G_{M+\tilde m+1}, \hdots, G_{2M+\tilde m}, G_{2M+2\tilde m+1}, \hdots)^\intercal.
\end{align*} We show that the training loss
$\min_\beta \hat R_n(\beta; \bX)$ and $\min_\beta \hat R_{n'}(\beta; \bX^M)$ are close if $M$ is large.% By construction, $\bX^M$ and $\bG^M$ are block dependent with block size $M$. We may also define
% \begin{align*}
%     \bX^{m} &:=(X_{M+1},\dots, X_{M+m},X_{2M+m+1},\dots,X_{2M+2m},\dots)\\
%     \bG^{m} &:=(G_{M+1},\dots, G_{M+m},G_{2M+m+1},\dots,G_{2M+2m},\dots)
% \end{align*}
%so that every vector $X_i$ is either in $\bX^M$ or $\bX^m$.   
\begin{theorem}[Equivalence of losses]\label{equivalence} Let $\l(X_i, y_i(X_i)\r)_{i=1}^n$ and $\l(G_i, y_i(G_i)\r)_{i=1}^n$ be generated under Assumptions \ref{logmodel}-\ref{signalsizinghehe}, where each $G_i \sim \mathcal N\l(\bm0, \Var(X_i)\r)$. Then if \cref{general}(i) is respected then there exists a constant $\textsf C_d$ such that 
\begin{align}
    d_{\mathcal H}\l(\min_\beta \hat R_n(\beta; \bX), \min_\beta \hat R_{n'}(\beta; \bX^M)\r)\le 
    &
\textsf{C}_d\frac{\max(m,\tilde m)}{M}\sqrt{M+\tilde m }\\
    d_{\mathcal H}\l(\min_\beta \hat R_n(\beta; \bG), \min_\beta \hat R_{n'}(\beta; \bG^M)\r)\le 
    &\textsf{C}_d\frac{\max(m,\tilde m)}{M}\sqrt{M+\tilde m }
\end{align} If instead \cref{general}(ii) is respected then there exists a constant $\textsf C'_d$ such that \begin{align}
    d_{\mathcal H}\l(\min_\beta \hat R_n(\beta; \bX), \min_\beta \hat R_{n'}(\beta; \bX^M)\r)\le 
    &
\textsf{C}'_d\frac{\max(\mathcal{S},\tilde m)}{M}\sqrt{M+\tilde m }\\
    d_{\mathcal H}\l(\min_\beta \hat R_n(\beta; \bG), \min_\beta \hat R_{n'}(\beta; \bG^M)\r)\le 
    &\textsf{C}'_d\frac{\max(\mathcal{S},\tilde m)}{M}\sqrt{M+\tilde m }
\end{align}
% Moreover, if Assumptions \ref{courage} and \ref{assumption:test:risk} also hold, then 
% \begin{align}
%       | R_{\text{test}}(\hat \beta(\bX)) - R_{\text{test}}(\hat \beta(\bG)) |
%     \;\xrightarrow{\P}\; 
%     0
%     \;.\label{mdepsecond}
% \end{align}
\end{theorem}

\begin{proof}[Proof of \Cref{equivalence} ]We present the proof in the $m$-dependent case but the proof is identical in the other case. Similarly we show the proof only for $\bX$ as the proof is equivalent for $\bG$. 
Let $M \in \Z^+$ be fixed. Define new matrices $\bX^M \in \R^{n' \times p}$ as
\begin{align*}
    \bX^M &:= (X_1, \hdots, X_M, X_{M+\tilde m+1}, \hdots, X_{2M+\tilde m}, X_{2M+2m+1}, \hdots)^\intercal\\
 %   \bG^M &:= (G_1, \hdots, G_M, G_{M+\tilde m+1}, \hdots, G_{2M+\tilde m}, G_{2M+2m+1}, \hdots)^\intercal,
\end{align*}
noting that 
\begin{align}
n' \in [n-(r+1)m+1,\ n - rm] \subset \l[n\frac{M}{M+\tilde m} - m,\ n\frac{M}{M+\tilde m} + m\r] = [nq -\tilde m, nq +\tilde m], \label{eq:nprimesecond}
\end{align}
where $r := \lfloor\frac{n}{M+\tilde m} \rfloor$ and $q := \frac{M}{M+\tilde m}$.  We may also define
\begin{align*}
    \bX^{\tilde m} &:=(X_{M+1},\dots, X_{M+\tilde m},X_{2M+\tilde m+1},\dots,X_{2M+2\tilde m},\dots)\\
   % \bG^{m} &:=(G_{M+1},\dots, G_{M+\tilde m},G_{2M+\tilde m+1},\dots,G_{2M+2m},\dots)
\end{align*}
so that every vector $X_i$ is either in $\bX^M$ or $\bX^m$. For simplicity we can also write these indexing sets as 
\begin{align*}
    B_M &:= \{1, \hdots, M, M+\tilde m+1, \hdots, 2M + m, \hdots, \}\\
    B_m &:= [n] \setminus B_M.
\end{align*}
% If we once again refer to our quantity of interest as 
% \begin{align*}
%     d_{\mathcal H}\l(\min_\beta \hat R_n(\beta; \bX), \min_\beta \hat R_n(\beta; \bG)\r) = (\star),
% \end{align*}
% then we may bound
% \begin{align*}
%     (\star) &\leq \underbrace{d_{\mathcal H}\l(\min_\beta \hat R_n(\beta; \bX), \min_\beta \hat R_{n'}(\beta; \bX^M)\r)}_{(a)} + \underbrace{d_{\mathcal H}\l(\min_\beta \hat R_{n'}(\beta; \bX^M), \min_\beta \hat R_{n'}(\beta; \bG^M)\r)}_{(b)} \\&+ \underbrace{d_{\mathcal H}\l(\min_\beta \hat R_{n'}(\beta; \bG^M), \min_\beta \hat R_n(\beta; \bG)\r)}_{(c)}.
% \end{align*}
%By \theoremref{mainthm}, we know that $(b) \to 0$ as $n\to\infty$. Hence we have
% \begin{align*}
%     \limsup_{n\to\infty}\ (\star) &\leq \limsup_{n\to\infty}\ (a)  +\limsup_{n\to\infty}\ (c).
% \end{align*}
%We will prove that both of those limits are bounded and depend on $m$ and $M$ only. For $(a)$, 
By a triangle inequality argument we note that
\begin{align}
    d_{\mathcal H}\l(\min_\beta \hat R_n(\beta; \bX), \min_\beta \hat R_{n'}(\beta; \bX^M)\r) &= \sup_{h\in\mathcal H}\abs{\E\l[h\l(\min_\beta \hat R_n(\beta; \bX)\r) - h\l(\min_\beta \hat R_{n'}(\beta; \bX^M)\r)\r]} \nonumber\\
    &\leq \sup_{h\in\mathcal H}\E\abs{h\l(\min_\beta \hat R_n(\beta; \bX)\r) - h\l(\min_\beta \hat R_{n'}(\beta; \bX^M)\r)}\nonumber\\
    &\stackrel{(i)}{\leq} \E\abs{\min_\beta \hat R_n(\beta; \bX) - \min_\beta \hat R_{n'}(\beta; \bX^M)}\nonumber\\&\stackrel{(ii)}{=:} \E\abs{\hat R_n(\hat\beta; \bX) - \hat R_{n'}(\tilde\beta; \bX^M)}
   \label{eq:imdying},
\end{align}
where $(i)$ is via the Lipschitzness of $h$, and in $(ii)$ we defined the minimizers $\hat\beta:=\text{argmin}_\beta\hat R_{n}(\beta; \bX)$ and  $\tilde\beta:=\text{argmin}_\beta\hat R_{n'}(\beta; \bX^M)$. We first control the difference inside the absolute value of \eqref{eq:imdying} by noting that, by definition of being minimizers, 
\begin{align*}
    \hat R_n(\hat\beta; \bX) - \hat R_{n'}(\tilde\beta; \bX^M) &\leq \l(1 - \frac{n}{n'}\r)\hat R_n(\hat\beta) + \frac{1}{n'}\sum_{i\in B_m}\ell\l(\eta_i, X_i^\intercal\tilde\beta\r) \leq \frac{1}{n'}\sum_{i\in B_m}\ell\l(\eta_i, X_i^\intercal\tilde\beta\r)\\
    \hat R_{n'}(\tilde\beta; \bX^M) - \hat R_n(\hat\beta; \bX) &\leq \l(1 - \frac{n'}{n}\r)\hat R_{n'}(\tilde\beta) - \frac{1}{n}\sum_{i\in B_m}\ell\l(\eta_i, X_i^\intercal\hat\beta\r) \leq \l(1 - \frac{n'}{n}\r)\hat R_{n'}(\tilde\beta),
\end{align*}
where the terms removed are because of the fact that the risk/loss is always positive and $n > n'$. Now, we will use the fact that for any $x \in \R$ and $a, b > 0$, we have  
\begin{align*}
    x \leq a,\ -x \leq b \implies |x| \leq a \vee b \leq a + b
\end{align*}
to say that 
\begin{align}
    \abs{\hat R_n(\hat\beta; \bX) - \hat R_{n'}(\tilde\beta; \bX^M)} \leq \l(1 - \frac{n'}{n}\r)\hat R_{n'}(\tilde\beta) + \frac{1}{n'}\sum_{i\in B_m}\ell\l(\eta_i, X_i^\intercal\tilde\beta\r).\label{eq:friday}
\end{align}
For the first term on the right in \eqref{eq:friday}, we note by definition of being a minimizer that 
\begin{align}
    \l(1 - \frac{n'}{n}\r)\hat R_{n'}(\tilde\beta) \leq \l(1 - \frac{n'}{n}\r)\hat R_{n'}(0) = \l(1 - \frac{n'}{n}\r)\log(2).\label{eq:yomomma}
\end{align}
For the second term in \eqref{eq:friday}, we have that 
\begin{align}
    \frac{1}{n'}\sum_{i\in B_m}\ell\l(\eta_i, X_i^\intercal\tilde\beta\r) &\sil \frac{1}{n'}\sum_{i\in B_m}1 + |X_i^\intercal\tilde\beta|\nonumber\\
    &\leq \frac{n-n'}{n'} + \frac{1}{n'}\sup_{\beta}\sum_{i\in B_m} |X_i^\intercal\beta|\nonumber\\
    &= \frac{n-n'}{n'} + \frac{1}{n'}\sup_{\beta}\sum_{i=1}^n |X_i^\intercal\beta|\mathbb I(i \in B_m)\nonumber\\
    &\siil \frac{n-n'}{n'} + \frac{1}{n'}\sup_{\beta}\sqrt{\sum_{i=1}^n |X_i^\intercal\beta|^2}\sqrt{\sum_{i=1}^n\mathbb I(i \in B_m)}\nonumber\\
    &= \frac{n-n'}{n'} + \frac{\sqrt{n-n'}}{n'}\|\bX\|_{\mathcal S_p},\label{eq:novernprime}
\end{align}
where $(i)$ is because
\begin{align*}
    \log(1 + e^{-x}) \leq |x| + 1 \text{ for all } x \in \R,
\end{align*}
and $(ii)$ is via Cauchy-Schwarz. Combining \eqref{eq:yomomma} and \eqref{eq:novernprime}, we obtain 
\begin{align}
    \E\abs{\hat R_n(\hat\beta; \bX) - \hat R_{n'}(\tilde\beta; \bX^M)} &\leq \l(1 - \frac{n'}{n}\r)\log(2) + \frac{n-n'}{n'} + \frac{\sqrt{n-n'}}{n'}\E\l[\|\bX\|_{\mathcal S_p}\r]\\
    &\sil  \l(1 - \frac{n'}{n}\r)\log(2) + \frac{n-n'}{n'} + \frac{\sqrt{n-n'}}{n'}\sqrt{\textsf C_Lmp},\label{eq:almostdone}
\end{align}
where $(i)$ is via Jensen's Inequality with \corollaryref{dep_norm_bound}. From here we can use \eqref{eq:nprime} to note that, for these quantities involving both $n$ and $n'$, we have 
\begin{align}
    n' \in nq \pm \tilde m \implies \frac{n'}{n} \in q \pm \frac{\tilde m }{n} \implies \lim_{n\to\infty}\frac{n'}{n} = q.
\end{align}
Substituting this into \eqref{eq:almostdone} and using that $q = \frac{M}{M+\tilde m }$, after some simplification we can conclude that
\begin{align}
    \limsup_{n\to\infty}\E\abs{\hat R_n(\hat\beta; \bX) - \hat R_{n'}(\tilde\beta; \bX^M)} &\leq (1-q)\log(2) + \l(\frac{1}{q} - 1\r) + \sqrt{\frac{\textsf{C}_Lm\kappa}{q}}\sqrt{\frac{1}{q} - 1}\\
    &\leq \textsf{C}_2\frac{\max(m,\tilde m)}{M}\sqrt{M+\tilde m }.
\end{align}
% The same thing also holds for $(c)$, and so we have that
% \begin{align}
%     \limsup_{n\to\infty}\ (\star) \leq 2\textsf{C}_2\frac{m}{M}\sqrt{M+m} = \mathcal O\l(M^{-1/2}\r). 
% \end{align}
% As this held for every single $M \in \Z^+$, we take $M\to\infty$ to obtain the result.
\end{proof}

%\begin{proof}[Proof of \reff{second_thm}{Theorem} \eqref{mdepsecond}] 
%{\color{red} Proof for test risk goes here!}\textcolor{red}{is that not the same one?}
%\end{proof}

\section{Verifying Assumption~\ref{gauss_approx} for Different Data Augmentation Schemes}\label{check:augmentation}
\subsection{Random Cropping}

The data augmentation procedure that we are considering in this subsection is the random cropping method where a portion of the data is randomly set to $0$. For a vector $e:=(e_i)\in \{0,1\}^p$ and $x\in \mathbb{R}^p$
we write $e\cdot x:=(e_ix_i)$. Let $(E_i)$ be an i.i.d sequence of random vectors in $\{0,1\}^p$ and define the random transformations $\phi_i(x)=E_i\cdot x$. We will prove that \Cref{gauss_approx} holds for this type of data augmentation procedure under general conditions. Let $(Z_i)$ a sequence of i.i.d vectors satisfying the following condition $$(H_{\text{cropping}}(\kappa))\qquad\mathbb{E}(Z_1)=0,~\sup_i\|Z_{1,i}\|_4<\kappa/\sqrt{n.}$$  Define $$X_i:=\phi_i(Z_{\lceil i/k\rceil}).$$%  Let $(y_i)$ be a sequence of i.i.d vector with locally dependent entries with $\sup_i\|Z_{1,i}\|_3<\kappa/\sqrt{n}$ where $\kappa$ is an absolute constant.

 % Let $(y_i)$ be a sequence of i.i.d vectors satisfying the following condition $$(H_{\text{cropping}}(\kappa)~\mathbb{E}(Y_1)=0~\sup_i\|Z_{1,i}\|_3<\kappa/\sqrt{n.}$$
\begin{lemma}
    Suppose that the assumption $H_{\text{cropping}}(\kappa)$ holds and that the entries $(Z_{1,i})$ are locally dependent and write $N_i$ the dependency neighborhood of $Z_{1,i}$.  Suppose that $(E_{1,i})$ is locally dependent and write $\tilde N_i$ the local dependency neighborhood of $E_{1,i}$. Then if $|N_i|\times|\tilde N_i|=o(n^{r/2})$ \Cref{gauss_approx} holds for a sequence of random Gaussian vector $(G_i)$ with covariance: $$\text{cov}(G_{i,j},G_{m,l})=\begin{cases}   p_{l}(\mathbb{I}(j=l)+\mathbb{I}(j\ne l)p_j) \Var(Z_{1,j},Z_{1,l})~\text{if}~  i= m\\
     p_{j} p_l \text{cov}(Z_{1,j},Z_{1,l})~\text{if}~  |i-m|\le k,~i\ne m\\0~\text{otherwise}\end{cases}$$ where $p_{j}=P(E_{1,j}=1).$
\end{lemma}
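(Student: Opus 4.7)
My plan is to verify \Cref{gauss_approx} for random cropping via a quantitative CLT for locally dependent sums (Stein's method). Fix $f \in \mathcal{F}$, $\theta \in \mathcal{S}^{k-1}$, $i \le n-k$, and $\beta_1, \ldots, \beta_k \in \mathcal{S}_p$, and set $j_r := \lceil (i+r)/k \rceil$, so that $X_{i+r} = E_{i+r}\cdot Z_{j_r}$ and
\[
    S_n \;:=\; \msum_{r=1}^k \theta_r X_{i+r}^\intercal \beta_r
    \;=\; \msum_{r=1}^{k}\msum_{s=1}^{p} U_{r,s},
    \qquad U_{r,s} \;:=\; \theta_r\, \beta_{r,s}\, E_{i+r,s}\, Z_{j_r,s}.
\]
Each $U_{r,s}$ is mean zero, and the first step is to identify the dependency graph of $\{U_{r,s}\}$: two indices $(r,s),(r',s')$ are connected only if (a) $r = r'$ and $s' \in N_s \cup \tilde N_s$ (same $Z$-block and same $E$-vector), or (b) $r \ne r'$, $j_r = j_{r'}$, and $s' \in N_s$ (same $Z$-block; distinct $E_{i+r}$'s are independent by the i.i.d.\ hypothesis on $(E_i)$). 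Hence the maximum dependency neighborhood has size at most $C_k(|N_i| + |\tilde N_i|) \le C_k |N_i|\,|\tilde N_i|$, with $C_k$ depending only on $k$.

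The second step is to invoke a Stein-method bound for locally dependent summands in bounded Lipschitz Wasserstein distance (e.g.\ Ross 2011, Theorem~3.6, or Chen--Shao 2004): for $f \in \mathcal{F}$,
\[
    \big| \E f(S_n) - \E f(\tilde G_n) \big|
    \;\le\; C\, D^{\alpha}\, \msum_{r,s} \|U_{r,s}\|_3^{\,3}
    \;+\; C\, D^{\beta}\sqrt{\msum_{r,s}\E[U_{r,s}^4]},
\]
where $\tilde G_n \sim \cN(0,\Var(S_n))$, $D$ is the max dependency neighborhood size, and $\alpha,\beta$ are absolute constants. Under $H_{\mathrm{cropping}}(\kappa)$ and $|E_{i+r,s}|\le 1$, $\|U_{r,s}\|_3 \le |\theta_r \beta_{r,s}|\kappa/\sqrt n$. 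Using $\|\beta_r\|_\infty \le \textsf{L}\, p^{(1-r)/2}$ and $\|\beta_r\|_2 \le \textsf{L}\sqrt p$ (with $r$ the exponent from \eqref{eq:sp}), H\"older yields $\msum_s |\beta_{r,s}|^3 \le \textsf{L}^3 p^{(3-r)/2}$ and $\msum_s |\beta_{r,s}|^4 \le \textsf{L}^4 p^{2-r}$, so that $\msum_{r,s}\|U_{r,s}\|_3^3 \lesssim n^{-r/2}$ and $\msum_{r,s}\E[U_{r,s}^4] \lesssim n^{-r}$ under $p/n \to \kappa$. The hypothesis $|N_i||\tilde N_i| = o(n^{r/2})$ then drives both terms in the Stein bound to zero.

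The third step is to compare the Gaussian surrogate $\tilde G_n$ with the target $S_n^G := \msum_r \theta_r G_{i+r}^\intercal \beta_r$ built from the covariance in the lemma. Since both are centered Gaussians, a first-order Taylor expansion gives $|\E f(\tilde G_n) - \E f(S_n^G)| \lesssim |\Var(S_n) - \Var(S_n^G)|^{1/2}$, and the two variances agree at all pairs $(s,s')$ with $s' \notin \tilde N_s$; the remaining banded quadratic-form difference is controlled by $\|\Cov(X) - \Cov(G^*)\|_{\mathrm{op}}\,\|\beta\|_2^2$. Taking a supremum over $(f,\theta,\beta,i)$ closes the argument.

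The main technical obstacle is reconciling the exponent on $D$ in the Stein bound --- for merely Lipschitz $f$ one typically obtains $D^{3/2}$ or $D^2$ rather than $D$ --- with the stated rate $o(n^{r/2})$. A clean remedy is to mollify $f$ by convolution with a Gaussian of scale $\epsilon$, paying an $\epsilon$ error from the Lipschitz constant of $f$, apply a sharper Stein bound to the resulting $f_\epsilon \in C^3$, and optimize $\epsilon$. A second subtle point is matching the nominal covariance of $G_i$ with the true covariance of $X_i$ in the presence of within-block $E$-dependence: when $|\tilde N_i|>1$ one has $\E[E_{i,j}E_{i,l}] \ne p_j p_l$ for $l \in \tilde N_j$, and the stated formula appears to implicitly assume coordinate-wise independence of each $E_i$; this residual gap must either be absorbed by a sharper operator-norm argument using $|\tilde N_i|/p$ or handled by a minor refinement of the lemma's covariance expression.
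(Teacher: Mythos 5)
Your proposal is correct in substance and, at its core, follows the same strategy as the paper: reduce \Cref{gauss_approx} to a quantitative normal approximation for a sum of locally dependent, mean-zero scalar summands $U_{r,s}=\theta_r\beta_{r,s}E_{i+r,s}Z_{j_r,s}$, and control the error using the $\ell_2/\ell_\infty$ constraints on $\beta\in\cS_p$ from \eqref{eq:sp}. The paper's own proof is a two-line reduction: it identifies the dependency neighborhoods of the array $(X_{i,j})$ and invokes its general \Cref{dalia} with $q=1$, which is a mollified Lindeberg expansion for locally dependent arrays. What you flag as the ``main technical obstacle'' --- the exponent on the neighborhood size $D$ for merely Lipschitz test functions --- is exactly what \Cref{dalia} resolves by the remedy you sketch: smooth $f$ at scale $\epsilon$, run a third-order expansion with $\|\partial^3 f_\epsilon\|_\infty\lesssim\epsilon^{-2}$, and optimize $\epsilon$, yielding a bound of order $\big(k^2p^{3/2-r}\max|B_{i,\ell}|^{3/2}\max\|X_{i,\ell}\|_{L_3}^3\big)^{1/3}\asymp(n^{-r}\max|B|^{3/2})^{1/3}$, which vanishes whenever $\max|B|=o(n^{2r/3})$; the hypothesis $|N_i|\,|\tilde N_i|=o(n^{r/2})$ is therefore more than sufficient. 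So your plan, if carried out, closes --- but as written the quantitative step is only a sketch, and you should either import a mollified local-dependence bound wholesale (as the paper does) or write out the optimization in $\epsilon$ explicitly rather than citing a Wasserstein bound whose $D$-exponent you then need to repair.

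Two smaller points. First, your dependency-graph bookkeeping actually gives a neighborhood of size $O(k(|N_s|+|\tilde N_s|))$, which is sharper than the product $|N_s|\,|\tilde N_s|$ used in both the hypothesis and the paper's $B_{i,j}$; this is fine since the product dominates the sum. Second, your observation about the covariance formula is a genuine catch: for $l\in\tilde N_j$ with $l\ne j$ one has $\Cov(X_{1,j},X_{1,l})=\P(E_{1,j}=1,E_{1,l}=1)\Cov(Z_{1,j},Z_{1,l})$, not $p_jp_l\Cov(Z_{1,j},Z_{1,l})$, so the displayed covariance of $(G_i)$ does not exactly equal $\Var(X_i)$ unless the coordinates of $E_1$ are independent. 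The paper's proof silently sidesteps this because \Cref{dalia} produces a Gaussian with the exact covariance of $X$; the discrepancy affects only pairs in $N_j\cap\tilde N_j$ and contributes $O(n^{-r}\max_j|N_j\cap\tilde N_j|)=o(n^{-r/2})$ to the quadratic form $\beta^\intercal(\Sigma_X-\Sigma_G)\beta$, so it is asymptotically harmless, but your instinct that the stated formula needs either a correction or the absorption argument you describe is right.
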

\begin{proof}
Note that as the blocks $(X_{mk+1},\dots,X_{(m+1)k})$ are identically distributed it is enough to prove \Cref{gauss_approx} for $m=0$. Denote $B_{i,j}=\cup_{j\in N_i}\tilde N_j\times[|1,k|] $. Then we remark that the sequence $(X_i)$ is locally dependent and that the dependency neighborhood of $X_{i,j}$ is $B_{i,j}.$ The desired result follows from \lemmaref{dalia} with $q=1$.
\end{proof}
\subsection{Noise Injection}
The data augmentation procedure that we are considering in this subsection is the noise injection method where random Gaussian noise is injected to the entries. For vectors $g:=(g_i),x:=(x_i)\in \mathbb{R}^p$
we write $g\cdot x:=(g_i+x_i)$. Let $(n_i)$ be an i.i.d sequence of random vectors in $\mathbb{R}^p$ such that $n_1\sim \mathcal{N}(0,\sigma^2/n Id)$ and define the random transformations $\phi_i(x)=n_i\cdot x$. We will prove that \Cref{gauss_approx} holds for this type of data augmentation procedure under general conditions.  Let $(Z_i)$ be a sequence of i.i.d vectors satisfying the following condition $$(H_{\text{noise}}(\kappa))~\mathbb{E}(Z_1)=0,\qquad\sup_i\|Z_{1,i}\|_4<\kappa/\sqrt{n.}$$ Define $X_i:=\phi_i( Z_{\lceil i/k\rceil}).$%  Let $(y_i)$ be a sequence of i.i.d vector with locally dependent entries with $\sup_i\|Z_{1,i}\|_3<\kappa/\sqrt{n}$ where $\kappa$ is an absolute constant.

\begin{lemma}
    Suppose that the assumption $H_{\text{noise}}(\kappa)$ holds for an absolute constant $\kappa<\infty$ and that the entries $(Z_{1,i})$ are locally dependent. Write $N_i$ the dependency neighborhood of $Z_{1,i}.$ Suppose that $(n_{1,i})\sim N(0,\frac{\sigma^2}{n}Id)$. Then if $|N_i|=o(n^{r/2})$, \Cref{gauss_approx} holds for a sequence of random Gaussian vector $(G_i)$ with covariance: $$\text{cov}(G_{i,j},G_{m,l})=\begin{cases}\text{cov}(Z_{1,j},Z_{1,l})+\delta_{j,l}\delta_{i,m}\sigma^2/n~\text{if}~  i=m\\
    \text{cov}(Z_{1,j},Z_{1,l})~\text{if}~  |i-m|\le k,i\ne m\\0~\text{otherwise}\end{cases}.$$ 
\end{lemma}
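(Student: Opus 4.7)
The plan is to exploit the additive structure of noise injection, which makes this case strictly simpler than the random cropping lemma that precedes it. Since $X_i = Z_{\lceil i/k\rceil} + n_i$ with $n_i \perp Z$, we decompose, for any $i$ and test parameters,
\begin{align*}
    \msum_{r=1}^k \theta_r X_{i+r}^\intercal \beta_r
    \;=\;
    \underbrace{\msum_{r=1}^k \theta_r Z_{\lceil (i+r)/k\rceil}^\intercal \beta_r}_{=: S_Z}
    \;+\;
    \underbrace{\msum_{r=1}^k \theta_r n_{i+r}^\intercal \beta_r}_{=: S_n},
\end{align*}
with $S_n \indep S_Z$. The term $S_n$ is \emph{exactly} Gaussian with mean zero and variance $(\sigma^2/n) \sum_{r=1}^k \theta_r^2 \|\beta_r\|^2$, by independence of the injected noises across $r$ and within coordinates. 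Hence the approximation error in \Cref{gauss_approx} reduces entirely to a Gaussian approximation statement for $S_Z$, and the $\delta_{j,l}\sigma^2/n$ contribution in the stated covariance appears as the exact variance contribution from $S_n$.

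Next, as in the cropping proof, it suffices by stationarity across blocks to treat a single block, i.e.~take $i$ to be a block boundary so that $Z_{\lceil(i+r)/k\rceil} = Z_1$ for all $r=1,\ldots,k$. Then $S_Z = Z_1^\intercal \bar\beta$ where $\bar\beta := \sum_{r=1}^k \theta_r \beta_r \in \R^p$. Since $\|\beta_r\| \leq \textsf{L}\sqrt p$ and $|\theta_r|\le 1$, we have $\|\bar\beta\|_2 \leq k\textsf L\sqrt p$, and because $\bar\beta$ is a linear combination of elements of $\mathcal S_p$, one also inherits the $\|\cdot\|_\infty$ control up to a factor of $k$. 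The fourth-moment bound $\|Z_{1,j}\|_4 \leq \kappa/\sqrt n$ together with the locally dependent structure (neighborhood size $|N_j|$) is exactly the setting handled by the lemma invoked for cropping (the one referred to as \Cref{dalia}, with $q=1$). I would apply that lemma directly to the one-dimensional statistic $Z_1^\intercal \bar\beta$ to obtain uniform-in-$\bar\beta$ convergence to a Gaussian with variance $\bar\beta^\intercal \Var(Z_1)\bar\beta$, under the hypothesis $|N_j| = o(n^{r/2})$.

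Finally, I would check that the resulting Gaussian limit matches the stated covariance of $(G_i)$. Unpacking $\bar\beta^\intercal \Var(Z_1)\bar\beta = \sum_{r,r'} \theta_r \theta_{r'} \beta_r^\intercal \Var(Z_1) \beta_{r'}$ and combining with the independent contribution from $S_n$ recovers the piecewise formula: the $\text{cov}(Z_{1,j},Z_{1,l})$ block inside a common $k$-window, an extra $\delta_{j,l}\sigma^2/n$ only on the diagonal when $i=m$, and zero outside a block. Thus for any $f \in \mathcal F$, $\E f(S_Z + S_n) - \E f(\tilde S_{Z} + S_n) \to 0$ uniformly, where $\tilde S_Z$ is the Gaussian surrogate of $S_Z$; then using that $f$ is $1$-Lipschitz and $S_n \indep S_Z$, this transfers to a bound on the full Gaussian approximation required by \Cref{gauss_approx}.

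The only real obstacle is verifying that the uniform-in-$(\theta,\beta_1,\ldots,\beta_k)$ CLT of \Cref{dalia} is applicable here, which amounts to checking that the effective one-dimensional test vector $\bar\beta$ still lies in a scaled copy of $\mathcal S_p$ (up to $k$-dependent constants absorbed into $\textsf{L}$) so that the lemma's hypotheses are not violated. Everything else---the decoupling of the Gaussian noise and the identification of the limiting covariance---is algebraic.
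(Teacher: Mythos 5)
Your proposal is correct, but it takes a genuinely different route from the paper. The paper's proof is a one-liner: it observes that the augmented array $X_{i,j}=Z_{1,j}+n_{i,j}$ is itself locally dependent with dependency neighborhoods $N_j\times[|1,k|]$ (the injected noise being independent of everything only shrinks the neighborhoods relative to the cropping case), and then invokes \Cref{dalia} with $q=1$ applied to the full $k\times p$ array, exactly as in the cropping and sign-flipping lemmas. You instead exploit the additive and exactly-Gaussian nature of the noise: you convolve it out (replacing $f$ by $g(x)=\E f(x+S_n)$, which stays in $\mathcal F$), so the Berry--Esseen-type error is paid only for the scalar statistic $Z_1^\intercal\bar\beta$ with $\bar\beta=\sum_r\theta_r\beta_r$, to which \Cref{dalia} applies with $k=1$. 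Your route is cleaner for this specific augmentation — the noise contributes nothing to the third-moment error term, and the covariance bookkeeping ($\bar\beta^\intercal\Var(Z_1)\bar\beta$ plus the $\delta_{j,l}\delta_{i,m}\sigma^2/n$ diagonal from $S_n$) matches the stated Gaussian surrogate exactly — at the cost of the admissibility check you flag: $\bar\beta$ lies only in $\sqrt{k}\,\mathcal S_p$ rather than $\mathcal S_p$, which is harmless since $k$ is fixed and the bound in \Cref{dalia} is homogeneous in the constant $\textsf L$. The paper's route sacrifices this small gain in exchange for a proof template that is identical across all the augmentation schemes. Both arguments rely on the same reduction to a single block ($m=0$), so neither is more careful than the other about indices straddling two blocks.
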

\begin{proof}
Note that as the blocks $(X_{mk+1},\dots,X_{(m+1)k})$ are identically distributed it is enough to prove \Cref{gauss_approx} for $m=0$. Denote $B_{i,j}=N_i\times[|1,k|] $. Then we remark that the sequence $(X_i)$ is locally dependent and that the dependency neighborhood of $X_{i,j}$ is $B_{i,j}.$ The desired result follows from  \lemmaref{dalia} with $q=1$.
\end{proof}
\subsection{Random Sign Flipping}
The data augmentation procedure that we are considering in this subsection is the random sign-flip method, where a portion of the data is has it sign randomly flipped. For a vector $e:=(e_i)\in \{-1,1\}^p$ and $x\in \mathbb{R}^p$
we write $e\cdot x:=(e_ix_i)$. Let $(E_i)$ be an i.i.d sequence of random vectors in $\{-1,1\}^p$ and define the random transformations $\phi_i(x)=E_i\cdot x$. We will prove that \Cref{gauss_approx} holds for this type of data augmentation procedure under general conditions. Let $(Z_i)$ a sequence of i.i.d vectors satisfying the following condition $$(H_{\text{flip}}(\kappa))\qquad\mathbb{E}(Z_1)=0,~\sup_i\|Z_{1,i}\|_4<\kappa/\sqrt{n.}$$  Define $$X_i:=\phi_i(Z_{\lceil i/k\rceil}).$$%  Let $(y_i)$ be a sequence of i.i.d vector with locally dependent entries with $\sup_i\|Z_{1,i}\|_3<\kappa/\sqrt{n}$ where $\kappa$ is an absolute constant.

 % Let $(y_i)$ be a sequence of i.i.d vectors satisfying the following condition $$(H_{\text{cropping}}(\kappa)~\mathbb{E}(Y_1)=0~\sup_i\|Z_{1,i}\|_3<\kappa/\sqrt{n.}$$
\begin{lemma}
    Suppose that the assumption $H_{\text{flip}}(\kappa)$ holds and that the entries $(Z_{1,i})$ are locally dependent and write $N_i$ the dependency neighborhood of $Z_{1,i}$.  Suppose that $(E_{1,i})$ is locally dependent and write $\tilde N_i$ the local dependency neighborhood of $E_{1,i}$. Then if $|N_i|\times|\tilde N_i|=o(n^{r/2})$ \Cref{gauss_approx} holds for a sequence of random Gaussian vector $(G_i)$ with covariance: $$\text{cov}(G_{i,j},G_{m,l})=\begin{cases} p^*_{j,l} \E(Z_{1,j},Z_{1,l})-(1-p^*_{j,l}) \E(Z_{1,j},Z_{1,l})\quad \rm{if}~i=m\\
     p_{j,l} \E(Z_{1,j},Z_{1,l})-(1-p_{j,l}) \E(Z_{1,j},Z_{1,l})~\text{if}~  |i-m|\le k~i\ne m\\0~~\text{~otherwise}\end{cases}$$ where $p_{j,l}=P(E_{1,j}=1)^2+P(E_{1,j}=-1)^2$ and $p^*_{j,l}=P(E_{1,j}=1,E_{1,l}=1)+P(E_{1,j}=-1,E_{1,l}=-1).$
\end{lemma}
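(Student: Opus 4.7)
The plan is to mirror the strategy used in the preceding proofs for random cropping and noise injection, since random sign flipping has an identical multiplicative structure $X_i = E_i \cdot Z_{\lceil i/k \rceil}$ but with $E_i$ valued in $\{-1,1\}^p$ rather than $\{0,1\}^p$ or an additive Gaussian. My first step is to observe that each block $(X_{mk+1}, \ldots, X_{(m+1)k})$ is identically distributed, so it suffices to verify \Cref{gauss_approx} at $m=0$. Next, I would identify the dependency structure of the array $(X_{i,j})$. Since $X_{i,j} = E_{i,j} Z_{\lceil i/k\rceil, j}$ and the two factors are independent, with $(Z_{1,\cdot})$ locally dependent with neighborhoods $N_\cdot$ and $(E_{1,\cdot})$ locally dependent with neighborhoods $\tilde N_\cdot$, I would set $B_{i,j} = \bigcup_{j' \in N_j} \tilde N_{j'} \times \{1,\ldots,k\}$ and check that $X_{i,j}$ is independent of $\{X_{k',l}\}_{(k',l) \notin B_{i,j}}$. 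The hypothesis $|N_j|\,|\tilde N_j| = o(n^{r/2})$ then provides the size control on $B_{i,j}$ that is needed downstream.

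Given this dependency structure, I would invoke \Cref{dalia} with $q=1$, exactly as in the two preceding lemmas. That yields the joint Gaussian approximation for linear combinations of the form $\sum_r \theta_r X_{i+r}^\intercal \beta_r$ required by \Cref{gauss_approx}, with a limiting Gaussian sequence whose covariance structure is determined by the pairwise covariances of the $X_{i,j}$. The remaining task is a short covariance computation to verify the three cases in the statement. For $i=m$, independence of $E_1$ from $Z_1$ yields
\[
\Cov(X_{1,j},X_{1,l}) \;=\; \E[E_{1,j}E_{1,l}]\,\E[Z_{1,j}Z_{1,l}],
\]
and from the definition $p^*_{j,l} = P(E_{1,j}{=}1,E_{1,l}{=}1) + P(E_{1,j}{=}{-}1,E_{1,l}{=}{-}1)$ one computes $\E[E_{1,j}E_{1,l}] = p^*_{j,l} - (1-p^*_{j,l})$, recovering the first case. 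For $i\neq m$ within the same original-data block, the augmentation variables $E_i$ and $E_m$ are independent, so $\E[E_{i,j}E_{m,l}] = \E[E_{1,j}]\E[E_{1,l}]$; an elementary computation then reduces this to the $p_{j,l}$ expression stated. For $|i-m|\geq k$, the blocks are independent by construction and the covariance vanishes.

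The main obstacle will essentially be bookkeeping in the covariance computation together with confirming that the moment control needed for \Cref{dalia} still holds. The latter is inherited at no cost: since $|E_{1,j}| = 1$ almost surely, $\|X_{1,j}\|_4 \leq \|Z_{1,j}\|_4 \leq \kappa/\sqrt{n}$ directly from $H_{\text{flip}}(\kappa)$. Beyond that, the argument is formally identical to the cropping case, because both augmentations are coordinate-wise multiplications by an i.i.d.~factor and neither enlarges the local dependency skeleton of $(Z_{1,\cdot})$ in any structural way. I therefore do not anticipate any genuinely new technical difficulty beyond verifying the two covariance formulas above and plugging the resulting bounds into the invocation of \Cref{dalia}.
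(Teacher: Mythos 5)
Your proposal is correct and follows essentially the same route as the paper's proof: reduce to the first block by identical distribution, identify the dependency neighborhood $B_{i,j}=\bigcup_{j'\in N_j}\tilde N_{j'}\times\{1,\dots,k\}$, and apply \Cref{dalia} with $q=1$. The covariance computation and the observation that $\|X_{1,j}\|_4\le\|Z_{1,j}\|_4$ since $|E_{1,j}|=1$ are details the paper leaves implicit, and you fill them in correctly.
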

\begin{proof}
Note that as the blocks $(X_{mk+1},\dots,X_{(m+1)k})$ are identically distributed it is enough to prove \Cref{gauss_approx} for $m=0$. Denote $B_{i,j}=\cup_{j\in N_i}\tilde N_j\times[|1,k|] $. Then we remark that the sequence $(X_i)$ is locally dependent and that the dependency neighborhood of $X_{i,j}$ is $B_{i,j}.$ The desired result follows from \lemmaref{dalia} with $q=1$.
\end{proof}\begin{remark}
    Note that we do not assume that the probability of having a sign flipped at one position is the same than at any other positions
\end{remark}
\subsection{Random Small Permutations}In this section we will show that \Cref{gauss_approx} holds for a random permutation scheme. In this goal, let $(Z_i)$ be a sequence of centered i.i.d random vectors with independent (not necessarily identically distributed) entries. We assume that the vectors $(Z_i)$ have blocks of identically distributed entries. More precisely we suppose that there is a partition $(B_i)_{i\le M_n}$ of $[|p|]$ in $M_n$ subsets such that the entries $(Z_{1,i})_{i\in B_u}$ are i.i.d for all $u\le M_n$. We choose $(\pi_i)$ to be an i.i.d sequence of random permutations of $[|n|]$ that preserve the partition, meaning that for all $j,k\le n$ that do not belong to the same permutation element then $P(\pi_1(j)=k)=0$. Define $$X_i:= (Z_{\lceil i/k\rceil,\pi^{-1}_i(\ell)}).$$  We will show that this data augmentation scheme satisfies \Cref{gauss_approx}. In this goal we define the following condition $$(H_{\text{small~permutation}}(\kappa))\qquad\mathbb{E}(Z_1)=0,~\sup_i\|Z_{1,i}\|_4<\kappa/\sqrt{n.}$$ 
% Suppose $Z_1$ can be broken down into $N$ groups of coordinates $(P_i)_{i\le n}$, each of size $p_t$, with $p_1 + \ldots + p_N = p$, namely, $Z_1 = ( (Z^{(1)}_1)^\intercal \,,\, \ldots \,,\, (Z^{(N)}_1)^\intercal )^\intercal$,
% such that $\{Z^{(t)}_1\}_{t \leq N}$ are independent vectors and each $Z^{(t)}_i$ is $\R^{p_t}$-valued with i.i.d.~coordinates. The intra-group i.i.d.~structure motivates one to augment the data by permuting coordinates within each group.  For a permutation $\pi$ and $x\in \mathbb{R}^p$
% we write $\pi\cdot x:=(x_{\pi(i)})$. Let $(\pi_i)$ be an i.i.d sequence of random permutations that preserve the partition and define the random transformations $\phi_i(x)=\pi_i\cdot x$. We will prove that \Cref{gauss_approx} holds for this type of data augmentation procedure under the following condition $$(H_{\text{small~permutation}}(\kappa))\qquad\mathbb{E}(Z_1)=0,~\sup_i\|Z_{1,i}\|_4<\kappa/\sqrt{n.}$$  Define $$X_i:=\phi_i(Z_{\lceil i/k\rceil}).$$%  Let $(y_i)$ be a sequence of i.i.d vector with locally dependent entries with $\sup_i\|Z_{1,i}\|_3<\kappa/\sqrt{n}$ where $\kappa$ is an absolute constant.

 % Let $(y_i)$ be a sequence of i.i.d vectors satisfying the following condition $$(H_{\text{cropping}}(\kappa)~\mathbb{E}(Y_1)=0~\sup_i\|Z_{1,i}\|_3<\kappa/\sqrt{n.}$$
\begin{lemma}
    Suppose that the assumption $H_{\text{small~permutation}}(\kappa)$ holds. Suppose  that  $\max_i|B_i|=o(n^{r/2})$ \Cref{gauss_approx} holds for a sequence of random Gaussian vector $(G_i)$ with covariance: $$\text{cov}(G_{i,j},G_{m,l})=\begin{cases}
    \sum_{k\in B_{B^{-1}(j)}}p_k^2\rm{var}(Z_{1,k})~\text{if}~  |i-m|\le k\\0~\text{otherwise}\end{cases}.$$ where $B^{-1}(j)$ denotes the unique index such that $j\in B_{B^{-1}(j)}$ and where $p_k:=P(\pi(j)=k)$
\end{lemma}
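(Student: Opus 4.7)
My plan is to mirror exactly the three-step template already used in this section for random cropping, noise injection, and random sign flipping, since the small-permutation augmentation is of the same type (an i.i.d.\ transformation applied coordinate-wise under a block structure). First, I would reduce the claim to verifying Assumption \ref{gauss_approx} on the first block $m=0$: the joint distribution of the block $(X_{mk+1},\ldots,X_{(m+1)k})$ does not depend on $m$ because $(Z_i)$ is i.i.d.\ and $(\pi_i)$ is i.i.d., so the supremum in \Cref{gauss_approx} is attained uniformly in the block index.

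Second, I would establish that the array $(X_{i,j})_{i\le k,\ j\le p}$ on the first block is locally dependent with an explicit dependency neighborhood. Fixing $i\le k$ and $j\le p$, write $u(j) := B^{-1}(j)$ so that $j\in B_{u(j)}$. Since $\pi_i$ preserves the partition, $\pi_i^{-1}(j)\in B_{u(j)}$ almost surely, and therefore $X_{i,j} = Z_{1,\pi_i^{-1}(j)}$ is a measurable function of $(Z_{1,a})_{a\in B_{u(j)}}$ together with the restriction of $\pi_i$ to $B_{u(j)}$. Hence, for another index $(m,\ell)$, the pair $X_{i,j}$ and $X_{m,\ell}$ can fail to be independent only when (a) $\ell\in B_{u(j)}$, since otherwise $\pi_m^{-1}(\ell)$ lies in a disjoint block with independent $Z$-coordinates and an independent restricted permutation, and (b) $|i-m|\le k$, since otherwise $(Z_{\lceil i/k\rceil,\argdot},\pi_i)$ and $(Z_{\lceil m/k\rceil,\argdot},\pi_m)$ are independent. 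Hence the dependency neighborhood of $X_{i,j}$ has cardinality at most $k\cdot|B_{u(j)}|$, which is $o(n^{r/2})$ by the hypothesis $\max_u|B_u| = o(n^{r/2})$.

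Third, I would invoke Lemma \ref{dalia} with $q=1$, exactly as in the earlier three lemmas of this section. The moment hypothesis $\sup_i\|Z_{1,i}\|_4 \le \kappa/\sqrt{n}$ transfers directly to the entries of $X_i$, because each $X_{i,j}$ is distributionally a $Z_{1,a}$ for some (random) $a\in B_{u(j)}$, so $\|X_{i,j}\|_4 \le \sup_a\|Z_{1,a}\|_4$. Combined with the dependency-neighborhood bound $k\cdot|B_{u(j)}| = o(n^{r/2})$, this places us inside the hypotheses of Lemma \ref{dalia}, which yields the claimed Gaussian approximation. The limiting covariance is then obtained by a direct computation: conditioning on $\pi_i,\pi_m$ and using independence of the $Z_{1,a}$'s together with the fact that $\pi_i^{-1}(j) = \pi_m^{-1}(\ell) = a$ with probability $p_a^2 = P(\pi_1(a) = j)P(\pi_1(a) = \ell)$, one collapses the double sum to $\sum_{a\in B_{u(j)}} p_a^2 \Var(Z_{1,a})$, matching the stated formula.

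\textbf{Main obstacle.} The routine part is the reduction to the first block and the verification of local dependence; the only nontrivial point is confirming that the partition-preserving structure of $\pi_i$ is what keeps the dependency neighborhood of each entry small, equal to $k|B_{u(j)}|$ rather than $kp$, which would be the case for a fully unconstrained permutation and would make the result fail in the proportional regime. Once this is cleanly identified, the rest is a direct invocation of the same Lemma \ref{dalia} used throughout this section, and the covariance computation is straightforward conditioning on the i.i.d.\ permutations.
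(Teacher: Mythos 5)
Your proposal follows essentially the same route as the paper's proof: reduce to the block $m=0$ by identical distribution, identify the dependency neighborhood of $X_{i,j}$ as $B_{B^{-1}(j)}\times[k]$ (so of size $k\,|B_{B^{-1}(j)}| = o(n^{r/2})$ by hypothesis), and apply \Cref{dalia} with $q=1$. Your write-up actually supplies more detail than the paper's terse argument, in particular the explicit covariance computation via conditioning on the permutations, which the paper omits.
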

\begin{proof}
Note that as the blocks $(X_{mk+1},\dots,X_{(m+1)k})$ are identically distributed it is enough to prove \Cref{gauss_approx} for $m=0$. Denote $B_{i,j}=B_\ell\times[|1,k|] $ if $i\in P_{\ell}$. Then we remark that the sequence $(X_i)$ is locally dependent and that the dependency neighborhood of $X_{i,j}$ is $B_{i,j}.$ The desired result follows from \lemmaref{dalia} with $q=1$.
\end{proof}
\subsection{Random Large Permutations}
In this section we will show that \Cref{gauss_approx} holds for a random permutation scheme. In this goal, let $(Z_i)$ be a sequence of centered i.i.d random vectors with independent (not necessarily identically distributed) entries. We assume that the vectors $(Z_i)$ have blocks of identically distributed entries. More precisely we suppose that there is a partition $(B_i)_{i\le M}$ of $[|p|]$ in $M$ subsets such that the entries $(Z_{1,i})_{i\in B_u}$ are i.i.d for all $u\le M$. Contrary to the previous section we will have $M<<n$. We choose $(\pi_i)$ to be an i.i.d sequence of random permutations of $[|n|]$ that preserve the partition, meaning that for all $j,k\le n$ that do not belong to the same permutation element then $P(\pi_1(j)=k)=0$. Moreover we assume that for all $j,k\in B_u$ in the same partition we have $P(\pi_1(j)=k)=1/|B_u|.$ Define $$X_i:= (Z_{\lceil i/k\rceil,\pi^{-1}_i(\ell)}).$$ 
\begin{lemma}
     Suppose that there exists an absolute constant $\kappa<\infty$ such that $\max_i\|Z_{1,i}\|_3\le \kappa/\sqrt{n}$ and $\max|B_u|/\min|B_u|<\infty$ \Cref{gauss_approx} holds for a sequence of random Gaussian vectors that are such that $$\text{cov}(G_{i,j},G_{m,l})=\sum_u \frac{\sigma_u^2\mathbb{I}(|i-m|\le k)}{|B_u|^2}$$ where $\sigma_u^2=\Var(Y_{1,l})$ where $l\le p$ is chosen to be in $l\in B_u.$
\end{lemma}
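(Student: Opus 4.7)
The plan is to exploit the special structure of block-preserving uniform permutations: although each $X_i$ depends on \emph{all} entries of $Z_{\lceil i/k \rceil}$, conditional on $\pi_i$ the projection $X_i^\intercal \beta$ is a linear combination of \emph{independent} entries of $Z_{\lceil i/k\rceil}$. I will therefore establish the Gaussian approximation in two stages: first a CLT conditional on the permutations, and then a concentration argument for the resulting conditional variance. Unlike the small-permutation case, the dependency neighborhoods of $X_{i,j}$ are of order $|B_u|\asymp p/M$ and so \Cref{dalia} is not available. By stationarity it suffices to verify \Cref{gauss_approx} at $i=0$, and if $(X_s)_{s\le k}$ straddles two different indices $\lceil s/k\rceil$ the contributions decouple into independent sub-sums, so without loss of generality I may assume $X_1,\ldots,X_k$ all derive from $Z_1$.

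Using $X_{s,\ell} = Z_{1,\pi_s^{-1}(\ell)}$ and changing the summation index, I rewrite the target statistic as
\begin{equation*}
    T \;:=\; \sum_{s=1}^{k} \theta_s X_s^\intercal \beta_s \;=\; \sum_{j=1}^{p} Z_{1,j}\, W_j(\pi),
    \qquad
    W_j(\pi) \;:=\; \sum_{s=1}^{k} \theta_s\, \beta_{s,\pi_s(j)}\;.
\end{equation*}
Conditional on $\pi = (\pi_1,\ldots,\pi_k)$, $T$ is a weighted sum of independent mean-zero entries $\{Z_{1,j}\}$ with $\|Z_{1,j}\|_3 \le \kappa/\sqrt n$, so a quantitative CLT (e.g.\ Stein's method for the Wasserstein-$1$ distance, which controls $\mathcal F$) yields
\begin{equation*}
    \msup_{f \in \mathcal{F}} \;\big|\E[f(T)\mid \pi] - \E[f(\tilde N(\pi))]\big|
    \;\le\;
    C\,\frac{\sum_{j=1}^p \E|Z_{1,j}|^3 \,|W_j|^3}{\sigma^2(\pi)}
\end{equation*}
for $\tilde N(\pi) \sim \mathcal{N}(0,\sigma^2(\pi))$ with $\sigma^2(\pi) := \sum_u \sigma_u^2 \sum_{j \in B_u} W_j^2$. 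Using the $\ell_\infty$-bound in $\mathcal S_p$ and $\|\theta\|_2 = 1$, Cauchy--Schwarz gives $\max_j |W_j| \le \sqrt k\, \textsf L\, p^{(1-r)/2}$ for the exponent $r\in(0,1/8)$ defining $\mathcal S_p$, while the deterministic diagonal ($s = s'$) part of $\sum_j W_j^2$ equals $\sum_s \theta_s^2 \|\beta_s\|_2^2$; together with $\sigma_u^2 = \Theta(n^{-1})$, the Lyapunov ratio is $O(n^{-r/2}) \to 0$ uniformly over the outer suprema, the degenerate regime where every $\|\beta_s\|_2$ is negligible being trivially handled.

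It remains to show that $\sigma^2(\pi)$ concentrates around a deterministic $\sigma^2_\infty$. Expanding
\begin{equation*}
    \sigma^2(\pi) \;=\; \sum_u \sigma_u^2 \sum_{s,s'=1}^{k} \theta_s \theta_{s'} \sum_{j \in B_u}\beta_{s,\pi_s(j)}\beta_{s',\pi_{s'}(j)}\;,
\end{equation*}
the diagonal $s = s'$ contribution is deterministic because block-preserving permutations conserve $\ell_2$-norms within each $B_u$, while the off-diagonal $s \ne s'$ contributions are quadratic statistics in two \emph{independent} uniform permutations of $B_u$. A direct second-moment computation, conditioning on $\pi_s$ and using the joint distribution of $(\pi_{s'}(j),\pi_{s'}(l))$ over pairs of distinct indices in $B_u$, gives the expectation $\tfrac{1}{|B_u|}\big(\sum_{\ell\in B_u}\beta_{s,\ell}\big)\big(\sum_{\ell\in B_u}\beta_{s',\ell}\big)$ and a variance of order $|B_u|^{-1}\|\beta_s|_{B_u}\|_2^2 \|\beta_{s'}|_{B_u}\|_2^2$. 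The balanced-block assumption $\max|B_u|/\min|B_u| < \infty$ forces $|B_u| \asymp p/M \to \infty$, so these fluctuations vanish and $\sigma^2(\pi) \to \sigma^2_\infty := \E\sigma^2(\pi)$ in probability. Since the Gaussian surrogate $T^G := \sum_s \theta_s G_s^\intercal \beta_s$ is constructed to match the covariance of $T$ exactly, one has $\Var(T^G) = \Var(T) \to \sigma^2_\infty$; combining the conditional CLT with this concentration and tower integration gives $|\E f(T) - \E f(T^G)| \to 0$ uniformly over $f\in\mathcal F$, $\theta \in S^{k-1}$, and $\beta_1,\ldots,\beta_k \in \mathcal S_p$. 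The main obstacle is obtaining the Lyapunov bound and the variance concentration with rates that are \emph{uniform} across these suprema; the balanced-block assumption is precisely what rules out small blocks and thereby enables a single rate to suffice.
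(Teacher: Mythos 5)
Your proposal follows essentially the same route as the paper: reduce to a single block, rewrite $\sum_s\theta_sX_s^\intercal\beta_s=\sum_jZ_{1,j}W_j(\pi)$, apply a quantitative CLT conditionally on the permutations (the paper uses its Lemma~\ref{dalia2}, a Lindeberg-type bound for independent summands), and then show the conditional variance $\hat\sigma^2(\pi)$ concentrates. The one genuine difference is the concentration step: the paper telescopes over $\pi_1,\dots,\pi_k$ and bounds each conditional variance increment via an exchangeable-pairs construction (swapping two values of $\pi_S$ and invoking Chatterjee's method), whereas you compute the mean and variance of the off-diagonal linear permutation statistics $\sum_{j\in B_u}\beta_{s,\pi_s(j)}\beta_{s',\pi_{s'}(j)}$ directly from the sampling-without-replacement formulas; both yield fluctuations of order $|B_u|^{-1/2}$ and both need the balanced-block assumption, so this is a cosmetic rather than substantive divergence. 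One caveat: your Lyapunov ratio has $\sigma^2(\pi)$ in the denominator, so the claimed uniform $O(n^{-r/2})$ rate is not literally uniform over $\beta_1,\dots,\beta_k\in\mathcal S_p$ — when $\sigma^2(\pi)$ is small but not negligible you must interpolate between the Lipschitz bound $O(\sigma)$ and the Stein bound $O(p^{-r/2}/\sigma^2)$, which costs a power (giving $O(p^{-r/6})$) but still vanishes; the paper's Lemma~\ref{dalia2} absorbs this automatically through its smoothing parameter $\epsilon$, so this is a fixable imprecision rather than a gap.
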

\begin{proof}
Note that as the blocks $(X_{mk+1},\dots,X_{(m+1)k})$ are identically distributed it is enough to prove \Cref{gauss_approx} for $m=0$.    For all $(\theta_i)\in \mathcal{S}^{k-1}$ and all $(\beta_i)\in \mathcal{S}_p^k$ we have \begin{align*}
        \sum_{i\le k}\theta_iX_i^T\beta_i=\sum_{i\le k} \theta_i \sum_l X_{i,l}\beta_{i,l}=\sum_l Z_{1,l}\sum_{i\le k} \theta_i \beta_{i,\pi_i(l)}.
    \end{align*} Now we notice that conditionally on $(\pi_i)$ the random variables $\big(Z_{1,l}\sum_{i\le k} \theta_i \beta_{i,\pi_i(l)}\big)$ are independent. Moreover we observe that \begin{align*}
        \mathbb{E}\big( Z_{1,l}\sum_{i\le k} \theta_i \beta_{i,\pi_i(l)}\big|(\pi_i)\big)&=0
    \end{align*} and for all $l,m\le p$ we have  \begin{align*}
        \text{cov}\big( Z_{1,l}\sum_{i\le k} \theta_i \beta_{i,\pi_i(l)}, Z_{1,m}\sum_{i\le k} \theta_i \beta_{i,\pi_i(l)}\big|\pi\big)&=\delta_{l,m}\big(\sum_{i\le k} \theta_i \beta_{i,\pi_i(l)}\big)^2\Var(Z_{1,l})\\ \mathbb{E}\big(\big| Z_{1,l}\sum_{i\le k} \theta_i \beta_{i,\pi_i(l)}\big|^3\big|\pi\big)&\le \sqrt{k}^3\max_i\|Z_{1,i}\|^3_3\sqrt{\sum_{i\le k} \beta_{i\pi_i(l)}^2}^3
        \\&\le k^4Lp^{3/2-3r/2}\max_i\|Z_{1,i}\|^3_3
    \end{align*} where for the last inequality we used \Cref{ts}. Define $$\hat\sigma^2:=\sum_{\ell\le p}\Var(Z_{1,\ell})\big(\sum_{i\le k}\theta_i \beta_{i,\pi_i(\ell)}\big)^2=\sum_u \sigma_u^2\sum_{\ell\in B_u}\big(\sum_{i\le k}\theta_i \beta_{i,\pi_i(\ell)}\big)^2.$$
According to \lemmaref{dalia2} we have that there exists an absolute constant $\kappa>0$ such that \begin{align}\sup_{f~\in \mathcal{F}}\Big|\mathbb{E}\Big(f\big(\sum_{i\le k}\theta_iX_i^T\beta_i\big)\big|(\pi_i)\Big)-\mathbb{E}_{Z\sim \mathcal{N}(0,\hat\sigma^2\big)}\Big(f(Z)|(\pi_i)\Big)\Big|\le \kappa^{1/3}p^{1/2-r/2}\max_i\|Z_{1,i}\|_3.
\end{align}  Now using the definition of $\hat\sigma^2$ we observe that \begin{align}
\mathbb{E}\big(\hat\sigma^2\big)&=\sum_u\sigma_u^2\frac{1}{|B_u|^k}\sum_{\ell_1,\dots,\ell_k\in B_u}\big(\sum_{i\le k}\theta_i \beta_{i,\ell_i}\big)^2
\end{align}
To show that $\hat\sigma^2$ converges to $\mathbb{E}(\hat\sigma^2)$ we will proceed by showing that for all $S\le k$ we have $\mathbb{E}(\hat\sigma^2|\pi_1,\dots,\pi_{S})$ concentrates around $\mathbb{E}(\hat\sigma^2|\pi_1,\dots,\pi_{S-1})$. The desired outcome then results from a telescopic sum argument.
In this goal we first notice that \begin{align*}&
    \mathbb{E}(\hat\sigma^2|\pi_1,\dots,\pi_{S})-\mathbb{E}(\hat\sigma^2|\pi_1,\dots,\pi_{S-1})\\&=\sum_u \sigma_u^2\sum_{\ell\in B_u}\sum_{i\ne S}\theta_i\theta_S \mathbb{E}\Big(\beta_{i,\pi_i(\ell)}\beta_{S,\pi_S(\ell)}|\pi_1,\dots,\pi_{S}\Big)-\mathbb{E}\Big(\beta_{i,\pi_i(\ell)}\beta_{S,\pi_S(\ell)}|\pi_1,\dots,\pi_{S-1}\Big)
    %\\&=\sum_u \sigma_u^2\sum_{\ell\in B_u}\sum_{i\ne L}\theta_i\theta_L \mathbb{E}\Big(\beta_{i,\pi_i(\ell)}\big(\beta_{L,\pi_L(\ell)}-\beta_{L,\pi'_L(\ell)})|\pi_1,\dots,\pi_{S}\Big)
\end{align*}%This directly implies that \begin{align}&
To bound this set $I\sim\text{unif}([|n|])$ and $J\sim \text{unif}(B_{B^{-1}(I)})$ where we denote $B^{-1}(i)$ the index $u$ such that $i\in B_u$. Define $\pi'_S=\pi_S \circ (I,J)$ and $\pi'_i=\pi_i$ for all $i\ne S$. Define $$(\hat\sigma')^2:=\sum_{\ell\le p}\Var(Y_{1,l})\big(\sum_{i\le k}\theta_i \beta_{i,\pi'_i(\ell)})^2$$ then we notice that $(\hat\sigma,\hat\sigma')$ forms an exchangeable pair and that \begin{align*}&\mathbb{E}\big(\hat\sigma^2|\pi_{1:S})-\mathbb{E}\big((\hat\sigma')^2|\pi_{1:S})\\&=\sum_u \sigma_u^4\sum_{\ell\in B_u}\sum_{i\ne S}\theta_i\theta_S \mathbb{E}\Big(\beta_{i,\pi_i(\ell)}\beta_{S,\pi_S(\ell)}|\pi_1,\dots,\pi_{S}\Big)-\mathbb{E}\Big(\beta_{i,\pi_i(\ell)}\beta_{S,\pi_S(\ell)}|\pi_1,\dots,\pi_{S-1}\Big).\end{align*} Moreover we observe that 
\begin{align*}
    \hat\sigma^2-(\hat\sigma')^2&\le \sigma^2_{B^{-1}(I)}\sum_{i\ne S}\theta_i\theta_S(\beta_{i,\pi_i(I)}\beta_{S,\pi_S(I)}-\beta_{i,\pi_i(I)}\beta_{S,\pi_S(J)})\\&\qquad+\sigma^2_{B^{-1}(I)}\sum_{i\ne S}\theta_i\theta_S(\beta_{i,\pi_i(J)}\beta_{S,\pi_S(J)}-\beta_{i,\pi_i(J)}\beta_{S,\pi_S(I)})
\end{align*}Hence using 
\cite{chatterjee2005concentration} we obtain that\begin{align*}    \Var\Big(\mathbb{E}\big(\hat\sigma^2\big|\pi_1,\dots,\pi_{S}\big)\Big|\pi_1,\dots,\pi_{S-1}\Big)&\le \mathbb{E}\Big([\hat\sigma^2-(\hat\sigma')^2]^2\Big)\\&\le4\frac{\max_u\sigma^2_u}{p\min_u|B_u|}\sum_{\ell\le p}\sum_{k\in B_{B^{-1}
    (\ell)}}\big(\sum_{i\le k}\theta_i\theta_S\beta_{i,\pi_i(\ell)}^2(\beta_{S,\pi_S(\ell)}-\beta_{S,\pi_S(k)})\big)^2\\&\le8k\frac{\max_u\sigma^4_u}{p\min_u|B_u|}\sum_\ell\sum_{k\in B_{B^{-1}
    (\ell)}}\theta_S^2\sum_{i\le k}\theta_i^2\beta_{i,\pi_i(\ell)}^2(\beta_{S,\pi_S(\ell)}^2+\beta_{S,\pi_S(k)}^2)
    \\&\overset{(a)}{\le} 8k\frac{\max_u\sigma^4_uL}{p^{r}\min_u|B_u|}\sum_\ell\sum_{k\in B_{B^{-1}
    (\ell)}}(\beta_{S,\pi_S(\ell)}^2+\beta_{S,\pi_S(k)}^2)    \\&\overset{}{\le} 16k\frac{\max_u\sigma^4_uL\max_u|B_u|}{p^{r}\min_u|B_u|}\sum_\ell\beta_{S,\ell}^2\\&\overset{}{\le} 16k\frac{\max_u\sigma^4_uS^2\max_u|B_u|p^{1-r}}{\min_u|B_u|}% \frac{4}{p}\sum_u \frac{\sigma^4_u}{|B_u|}\sum_{\ell\in B_u}\big(\sum_j\theta_j\beta_{j,\ell}\big)^2
    % \\&\le \frac{4k}{p}\sum_j\theta_j^2\sum_u \frac{\sigma^4_u}{|B_u|}\sum_{\ell\in B_u}\beta_{j,\ell}^2
    % \\&\le \frac{4k\max_u\sigma^4_u}{p\min_u|B_u|}\sum_j\theta_j^2\|\beta_j\|_2^2
    % \&\le\frac{4k\max_u\sigma^4_uS}{\min_u|B_u|}\sum_j\theta_j^2
\end{align*} where (a) is a result of \Cref{ts}. Hence as every function $f\in \mathcal{F}$ is Lipschitz we obtain that \begin{align*}
    \sup_{f~\in \mathcal{F}}\Big|\mathbb{E}\Big(f\big(\sum_{i\le k}\theta_iX_i^T\beta_i\big)\Big)-\mathbb{E}_{Z\sim \mathcal{N}(0,\sigma^2\big)}\Big(f(Z)\Big)\Big|\le \kappa p^{1/2-r/2}\max_i\|Z_{1,i}\|_3+4\sqrt{k\frac{\max_u\sigma^4_uL^2\max_u|B_u|p^{1-r}}{\min_u|B_u|}}
\end{align*}

Finally we note that $\sum_{i\le k}\theta_i G_i^T\beta_i\sim N(0,\sigma^2)$ and the required result is hence deduced.\end{proof}
 
\section{Auxiliary Lemmas} \label{appendix:auxiliary:lemma}
\subsection{m-dependent Bernstein}
The first lemma aims to extend the classic Bernstein's Inequality from the usual independent setting to $m$-dependence, which of course also then includes block dependence.

\begin{lemma}[$m$-dependent Bernstein]\label{bern_dep} Let $Z_1, \hdots, Z_n$ be centered, sub-exponential, $m$-dependent random variables, and $\textnormal{\textsf K} := \sup_i\|Z_i\|_{\psi_1}$. Then there exists a universal constant $c>0$ such that for $t \geq 0$, 
\begin{align*}
    \P\l(\abs{\frac{1}{n}\sumi{n}Z_i} \geq t\r) \leq 4\cdot\exp\l[-\frac{cn}{m}\l(\frac{t}{\textnormal{\textsf K}} \wedge \frac{t^2}{\textnormal{\textsf{K}}^2}\r)\r].
\end{align*}
\end{lemma}

\begin{proof}
Define $r := \lfloor n/m\rfloor$, and assume without loss of generality that $r$ is even. For each $j = 1, \hdots, r$, let 
\begin{align*}
    Y_j &:= \hspace{-8pt}\sum_{\ell=(j-1)m+1}^{jm}\hspace{-8pt}Z_\ell.
\end{align*}
Also, define $Y_{r+1}:=\sum_{\ell=rm+1}^{n}\hspace{-1pt}Z_\ell$. By construction, the set $(Y_1, Y_3, \hdots, Y_{r+1})$ is independent, as is the set $(Y_2, Y_4, \hdots, Y_r)$. We can then see that 
\begin{align}
    \P\l(\abs{\frac{1}{n}\sumi{n}Z_i} \geq t\r) &= \P\l(\abs{\frac{1}{n}\sum_{j=1}^{r+1}Y_j} \geq t\r)\nonumber\\
    &\leq \P\l(\abs{\frac1n\sum_{j=0}^{r/2}Y_{2j+1}} \geq \frac t2\r) + \P\l(\abs{\frac1n\sum_{j=1}^{r/2}Y_{2j}} \geq \frac t2\r)\nonumber\\
    &\leq \P\l(\abs{\frac{1}{\frac{r}{2}+1}\sum_{j=0}^{r/2}Y_{2j+1}} \geq \frac{nt}{r+2}\r) + \P\l(\abs{\frac{1}{\frac r2}\sum_{j=1}^{r/2}Y_{2j}} \geq  \frac{nt}{r}\r)\label{eq:ihateequations}
\end{align} 
By the Triangle Inequality, we know that each $Y_j$ is still still sub-exponential with norm 
\begin{align*}
    \|Y_j\|_{\psi_1} \leq \sum_{\ell=(j-1)m+1}^{jm}\hspace{-8pt}\|Z_\ell\|_{\psi_1}  \leq m\textsf{K}.
\end{align*}
To bound the first term in \eqref{eq:ihateequations}, we use Corollary 2.8.3 of \cite{vershyninhighdimprob} to say that
\begin{align*}
    \P\l(\abs{\frac{1}{\frac{r}{2}+1}\sum_{j=0}^{r/2}Y_{2j+1}} \geq \frac{nt}{r+2}\r) &\leq 2\cdot\exp\l[-c\l(\frac r2 + 1\r)\l(\frac{nt}{m(r+2)\textsf K} \wedge \frac{n^2t^2}{m^2(r+2)^2\textsf{K}^2}\r)\r] \\
    &\leq 2\cdot\exp\l[-\frac{cn}{m}\l(\frac{t}{2\textsf K} \wedge \frac{t^2}{4\textsf{K}^2}\r)\r],
\end{align*}
where above we used the fact that $r + 2 \geq \frac{n}{m}$. We conclude by noting that this inequality also holds for the second term in \eqref{eq:ihateequations} by the same reasoning. 
\end{proof}
\subsection{Lemmas for mixing processes} In this subsection, we present some useful lemmas to deal with $\beta$-mixing processes. The first one allows one to relate the expectation of a bounded function of mixing data to the one of independent blocks.\begin{lemma}[\cite{yu1994rates} (Corollary 2.7)]\label{Prim}
   Let $K \geq 1$ be an integer and  let $r_1\le s_1\le r_2\le\dots \le s_K$ be a sequence of integers. Let $\tilde X=(\tilde X_i)$ be a stochastic process taking value in $\mathbb{R}^p$. Suppose that $h:\prod_{\ell\le K}\mathbb{R}^{p|s_\ell-r_\ell|}\rightarrow\mathbb{R}$ is measurable function. Let $Q$ be the distribution of $(\tilde X_j)_{j\in\cup_\ell [r_\ell,s_\ell ]}$ and $Q_i$ be the marginal distribution of $(\tilde X_j)_{j\in [r_i,s_i ]}$. Let $\beta_{\max}=\sup _{1 \leq i \leq K-1} \beta\left(k_i\right)$, where $k_i=r_{i+1}-s_i$, and the $P$ the product measure $P=\prod_{i=1}^K Q_i$. Then if $\|h\|_\infty$ we have,
$$
|\mathbb{E}_{X\sim Q}[h(X)]-\mathbb{E}_{Y\sim P}[h(Y)]| \leq(K-1) B \beta_{\max}
$$

\end{lemma} We adapt this lemma to hold for functions with bounded moments.  \begin{lemma}\label{tur}
      Let $K \geq 1$ be an integer and  let $r_1\le s_1\le r_2\le\dots \le s_K$ be a sequence of integers. Let $\tilde X=(\tilde X_i)$ be a stochastic process taking value in $\mathbb{R}^p$. Suppose that $h:\prod_{\ell\le K}\mathbb{R}^{p|s_\ell-r_\ell|}\rightarrow\mathbb{R}$ is measurable function. Let $Q$ be the distribution of $(\tilde X_j)_{j\in\cup_\ell [r_\ell,s_\ell ]}$ and $Q_i$ be the marginal distribution of $(\tilde X_j)_{j\in [r_i,s_i ]}$. Let $\beta_{\max}=\sup _{1 \leq i \leq K-1} \beta\left(k_i\right)$, where $k_i=r_{i+1}-s_i$, and the $P$ the product measure $P=\prod_{i=1}^K Q_i$. Then, if there exists $L>1$ such that $|\mathbb{E}_{X\sim Q}[h(X)^L],\mathbb{E}_{Y\sim P}[h(Y)^L]<\infty$ we have 
$$
|\mathbb{E}_{X\sim Q}[{h}(X)]-\mathbb{E}_{Y\sim P}[{h}(Y)]| \leq2({(K-1) \beta_{\max}})^{1-1/L}\big(\mathbb{E}_{X\sim Q}(h(X)^L)\big)^{1/L}+\big(\mathbb{E}_{Y\sim P}(h(Y)^2)\big)^{1/L}.
$$
\end{lemma}
\begin{proof}
    Let $C>0$ be a positive constant then define $\bar{h}^C$ as the following function $\bar{h}^C(x)=h(x)\mathbb{I}(|h(x)|\le C).$ Now according to \lemmaref{Prim} we have that $$
|\mathbb{E}_{X\sim Q}[\bar{h}^C[X]]-\mathbb{E}_Y\sim {P}[\bar{h}^C[Y]]| \leq(K-1) C \beta_{\max}
$$
Now, moreover, note that 
\begin{align*}
|\mathbb{E}_{X\sim Q}\Big(h(X)\mathbb{I}(|h(X)|>C)\Big)-\mathbb{E}_{Y\sim P}\Big(h(Y)\mathbb{I}(|h(Y)|>C)\Big)|\le \frac{1}{C^{L-1}}\Big\{\mathbb{E}_{X\sim Q}(h(X)^L)+\mathbb{E}_{Y\sim P}(h(Y)^L)\Big\}
\end{align*}
Hence, by triangle inequality, we obtain that $$
|\mathbb{E}_{X\sim Q}[{h}(X)]-\mathbb{E}_{Y\sim P}[{h}(Y)]| \leq(K-1) C \beta_{\max}+ \frac{1}{C^{L-1}}\Big\{\mathbb{E}_{X\sim Q}(h(X)^L)+\mathbb{E}_{Y\sim P}(h(Y)^L)\Big\}.
$$ By choosing $C:=\Big({\frac{(L-1)\Big\{\mathbb{E}_{X\sim Q}(h(X)^L)+\mathbb{E}_{Y\sim P}(h(Y)^L)\Big\}}{(K-1)\beta_{\max}}}\Big)^{1/L}$ we get the desired result.
\end{proof}
The next lemma establishes concentration for some function of $\beta-$mixing random variables.

% {There is this Lemma.\begin{lemma}[\cite{wong2020lasso}]
%     Let $\left(X_j\right)_{j=1}^\infty$ be a strictly stationary sequence of centered random variables with $\sup_i\|X_i\|_\infty \leq \textnormal{\textsf M}$ for some $\textnormal{\textsf M} > 0$.  Suppose their $\beta$-mixing coefficients satisfy $\beta(n) \leq$ $2 \exp \left(-c n\right)$ for some $c>0$.
% %     Let $\gamma$ be a parameter given by
% % $$
% % \frac{1}{\gamma}=\frac{1}{\gamma_1}+\frac{1}{\gamma_2}
% % $$
% % Further assume $\gamma<1$.
% Then for $n>4$ and $t>1 / n$,
% $$
% \mathbb{P}\left\{\left|\frac{1}{n}\sum_{i=1}^nX_i\right|>t\right\} \leq n\cdot \exp \left\{-\frac{nt }{\textnormal{\textsf{MC}}_1}\right\}+\exp \left\{-\frac{nt^2 }{\textnormal{\textsf{M}}^2\textnormal{\textsf{C}}_2}\right\}
% $$
% where the constants $\textnormal{\textsf{C}}_1, \textnormal{\textsf{C}}_2$ depend only on  $c$.
% \end{lemma}} We establish a similar concentration inequalities for quadratic forms of Gaussians.
\begin{lemma}\label{lem:lemmethink}
    Let $(\mathbf{X}_i)$ be a sequence of random variables with mixing coefficients $(\beta(i))$. Assume that there exists a process $(\mathbf{Z}_i)$ of centered  independent random vectors such for every $i\in \mathbb{N}$ there exists constants $(c_{i,j})$ such that  we have $$\mathbf{X}_i=\sum_{j\in \mathbb{N}}c_{i,j}\mathbf{Z}_j.$$Suppose that $K:=\sup_{j}\sqrt{p}\|\mathbf{Z}_j\|_{\psi_2}<\infty$ and that $\rm{Var}(Z_j)=\Sigma/p$ is such that $\lambda_{\min}(\Sigma)\ge \underline{c}>0$. Moreover assume that there exists $\epsilon>0$ such that
 $\mathcal{S}:=\sum_{\ell<\infty}{\beta(\ell)}^{\frac{\epsilon}{2+\epsilon}}<\infty$. Then there exists constants $C,\tilde C,\tilde C'$ such that for all $\beta\in \mathcal{S}_p$$$P\big(\big|\sum_{i\le n}(\bX_i^T\beta)^2-\mathbb{E}((\bX_i^T\beta)^2)\big|\ge t\big)\le 2 e^{-\frac{Ct\underline{c}}{K\mathcal{S}\|\beta\|^2_2/p}\min\Big(\frac{t\underline{c}}{\tilde C{K}n\|\beta\|^2/p}, \frac{1}{\tilde C'}\Big)}.$$ Let $(\bG_i)$ be a sequence of centered Gaussian random variables such that for all $t > 0$, $\rm{Cov}(\bG)=\rm{Cov}(\mathbf{X})$ then we obtain that there exists universal constants $C_2,\tilde C_2,\tilde C_2'>\infty$ such that  for all $\beta\in\mathcal{S}_p$    $$P\big(\big|\sum_{i\le n}(\bG_i^T\beta)^2-\mathbb{E}((\bG_i^T\beta)^2)\big|\ge t\big)\le2e^{-\frac{C_2t}{S\|\beta\|_2^2/p}\min\Big(\frac{t\tilde C_2}{n\|\beta\|_2^2/p},\tilde C_2'\Big)} . $$% 
\end{lemma}
\begin{proof}
Firstly we remark that we can assume without loss of generality that there exists an  $M\in \mathbb{N}$ such that $\bX_i=\sum_{j\le M}c_{i,j}Z_j$ for all $i\le n$. 
%that the process $(\beta^T\mathbf{X}_i)$ is also $\beta$-mixing and has the same mixing coefficients than $(\mathbf{X}_i)$. %Moreover we remark that $\mathbf{X}_i$ is $LK_{\mathbb{X}}$-subGaussian. 
% %Moreover we remark that for all $M\in \mathbb{N}$ we have$$
% \|\b_i-\sum_{j\le M}c_{i,j}\mathbf{Z}_j\|_2\le 1/\sqrt{p}\sum_{j>M}c_{i,j}^2\Sigma.$$Hence as $\|\rm{Var}(X_i)\|<\infty$ we note that for all $\epsilon>0$ there exists $M<\infty$ such that $$\sup_i\Big\|\
%  \|\bX_i-\sum_{j\le M}c_{i,j}\mathbf{Z}_j\|_2\Big\|\le \epsilon.$$ Hence we can assume without loss of generality that there exists an $M<\infty$ such that $\bX_i=\sum_{j\le M}c_{i,j}\mathbf{Z}_j$ for all $i\le n$. and 
We define $F_m:=\sum_\ell \beta_\ell Z_{m,\ell}$ and remark that the process $(F_m)$ is still a sequence of independent random variables that are $K\|\beta\|^2/p$ subGaussian. Moreover we remark that for all $i\le n$ we have $$\beta^T\bX_i=\sum_{m\le M}c_{i,m}F_m.$$ Define $C_M:=(c_{i,j})_{i\le n,j\le M}$ then we immediately note that $$\sum_{i\le n}(\beta^T\bX_i)^2=F_m^T(C_M^TC_M)F_m.$$
% Moreover we remark that for all $M$ we have$$\rm{Var}(X_i)=\sum_{j}(c_{i,j})^T(C^{i,j})<\infty, \rm{And}~\
% \|X_i-\sum_{j\le M}C^{i,j}\mathbf{Z}_j\|_2\le \sum_{j>M}(C^{i,j})^T(C^{i,j}).$$Hence as $\|\rm{Var}(X_i)\|<\infty$ we note that for all $\epsilon>0$ there exists $M<\infty$ such that $$\sqrt{p}\Big\|\
%  \|X_i-\sum_{j\le M}C^{i,j}\mathbf{Z}_j\|_2\Big\|\le \epsilon.$$ Denote $\bX_i:=\sum_{j\le M}C^{i,j}\mathbf{Z}_j.$ Let $\phi:[|M|]\times [|p|]\rightarrow [|Mp|] $ be a bijection. Then as each $\bX_i$ is a linear combination of $(\mathbf{Z}_j)_{j\le M}$ we obtain that if we write $A\in M_{n,Mp}(\mathbb{R})$ the matrix such that $A_{i,\phi(j,k)}:=\sum_\ell \beta_\ell C_{\ell,k}^{i,j}$ then we have $$\sum_i (\beta^TX_i)^2=\tilde Z^TA^TA \tilde Z$$ where $\tilde Z_{\phi(j,k)}=\mathbf{Z}_{j,k}$. Moreover we note that $(A^TA)_{i,j}=\rm{cov}(\beta^T\bX_i,\beta^T\bX_j).$ 
Hence using Theorem 1.1 of \cite{rudelson2013hanson} we remark that there exists $C>0$ such that $$P\Big(\big|\sum_{i\le n}(\bX_i^T\beta)^2-\mathbb{E}((\bX_i^T\beta)^2)\big|\ge t\Big)\le 2 e^{-\frac{tC}{K\|\beta\|^2/p}\min\Big(\frac{t}{{K\|\beta\|^2}/{p}\|C_M^TC_M\|_{HS}^2}, \frac{1}{\|C_M^TC_M\|}\Big)}.$$We note that $$(C_M^TC_M)_{m_1,m_2}=\sum_{i\le n}c_{i,m_1}c_{j,m_2}.$$Hence we remark that \begin{align*}
    \|C_M^TC_M\|_{HS}^2&=\sum_{m_1,m_2\le M}\big(\sum_i c_{i,m_1}c_{i,m_2}\big)^2\\&=\sum_{i,j\le n}(\sum_{m}c_{i,m}c_{j,m})^2
\end{align*} We observe that if we define $\nu:=(1,\dots,1)^T$ then we have $\rm{cov}(\bX_i^T\nu,\bX_j^T\nu)=\sum_{m}c_{i,m}c_{j,m}\nu^T\Sigma\nu$. As we assumed $\lambda_{min}(\Sigma)>\underline{c}$ we obtain that  $\nu^T\Sigma\nu\ge \underline{c}.$ This implies that  $$\|C_M^TC_M\|_{HS}^2\le (\underline{c})^{-2}\sum_{i,j\le n}\rm{Cov}(\bX_i^T\nu,\bX_j^T\nu)^2.$$ 
% Hence as $\max_i\sqrt{p} \|\rm{Var}(\mathbf{X}_i)\|_{\rm{op}}<\infty$ we have for all $\epsilon>0$ that there exists $M\in \mathbb{N}$ such that $\|\bX_i-\sum_{\ell\le M}c_{i\ell}Z_\ell\|_2<\infty$. Moreover we remark that if we write $\tilde Z_i:=-\sum_{\ell\le M}c_{i\ell}Z_\ell$ we have $$\sum_i (\tilde Z_i^T\beta)^2=\beta^T\mathbf{Z}_M^TC_M^TC_M\mathbf{Z}_M\beta.$$ Let's rewrite $\mathbf{W}:=\mathbf{Z}_M\beta$. Moreover we note that $$\|C_M^TC_M\|_{HS}\le\sqrt{\sum_{i,j}(\sum_{\ell}c_{i,\ell }c_{\ell,j})^2}.$$ To bound this we note that $$\rm{Cov}(X_i,X_j)=\frac{1}{p}\sum_{\ell}c_{i,\ell}c_{\ell,j}Id.$$ 
To further bound this, we note that according to Lemma 26 of \cite{austern2022limit} we know that 
    $$|\rm{cov}(\nu^T\bX_i,\nu^T\bX_j)|\le4{ \beta(i-j)^{\frac{\epsilon}{2+\epsilon}}}\max_{i}\|\nu^T\bX_i\|_{2+\epsilon}^2,$$
which directly implies, coupled with the fact that  $\bX_i^T\nu$ is a $K_{\bX}$ subgaussian random variable, that there exists a constant $\tilde C>0$ such that \begin{align*}
    \|C_M^TC_M\|_{HS}^2&\le 16(\underline{c})^{-2}\sum_{i,j\le n}\rm{Cov}(\bX_i^T\nu,\bX_j^T\nu)^2{ \beta(i-j)^{\frac{2\epsilon}{2+\epsilon}}}\max_{i}\|\nu^T\bX_i\|_{2+\epsilon}^4
    \\&\le \tilde C n(\underline{c})^{-2}\mathcal{S}
\end{align*}Similarly we obtain that there exists a constant $\tilde C'$ such that \begin{align*}
    \|C_m^TC_M\|\le \sup_{m_1}\sum_{m_2\le M}c_{i,m_1},c_{i,m_2}\le \tilde C'(\underline{c})^{-1}\mathcal{S}.
\end{align*}
Combining this together we obtain that 
    $$P\Big(\big|\sum_{i\le n}(\bX_i^T\beta)^2-\mathbb{E}((\bX_i^T\beta)^2)\big|\ge t\Big)\le 2 e^{-\frac{Ct\underline{c}}{K\mathcal{S}\|\beta\|^2_2/p}\min\Big(\frac{t\underline{c}}{\tilde C{K}n\|\beta\|^2/p}, \frac{1}{\tilde C'}\Big)}.$$
To prove the second statement, we observe that  $(\mathbf{G}_i^T\beta)$ is still a Gaussian process. Hence if we let $\mathbf{N}:=(N_i)_{i\le n}$ be a vector of standard normal we obtain that  $\sum_{i\le n}(\bG_i^T\beta)^2\overset{d}{=}\mathbf{N}^T\Sigma_n\mathbf{N}$ where $\Sigma_n:=\rm{Cov}((\bG^T_i\beta)_{i=1}^n).$ Hence for every $t\ge 0$ we have that $$P\big(\big|\sum_{i\le n}(\bG_i^T\beta)^2-\mathbb{E}((\bG_i^T\beta)^2)\big|\ge t\big)\le P\big(\Big|\mathbf{N}^T\Sigma_n\mathbf{N}-\mathbb{E}(\mathbf{N}^T\Sigma_n\mathbf{N})\Big|\ge t\big)$$
    To further bound this the first step is to bound. In this goal we remark that $(\Sigma_n)_{i,j}=\rm{Cov}(\bG_i^T\beta,\bG^T_j\beta)=\rm{Cov}(\bX_i^T\beta,\bX^T_j\beta)$. Now once again using Lemma 26 of \cite{austern2022limit} we know that 
     $$|\rm{cov}(\beta^T\bX_i,\beta^T\bX_j)|\le4{ \beta(i-j)^{\frac{\epsilon}{2+\epsilon}}}\max_{i}\|\beta^T\bX_i\|_{2+\epsilon}^2\le 4\|\beta\|_2^2/p{ \beta(i-j)^{\frac{\epsilon}{2+\epsilon}}}\max_{i,\|\mu\|\le \sqrt{p}}\|\nu\bX_i\|_{2+\epsilon}^2.$$ This directly implies that there exists another constant $\tilde C_2$ such that $$\|\Sigma_n\|^2_F\le (\|\beta\|_2^2/p)^2\tilde C_2\sqrt{n}\mathcal{S}. $$We moreover similarly  notice that there exists another constant $\tilde C_2'$ such that $$\|\Sigma_n\|\le \max_i \sum_{j\le n}\rm{cov}(\bX_i^T\beta,\bX_j^T\beta)\le \|\beta\|_2^2/p \mathcal{S}\tilde C'_2.$$
    Hence using the Hanson-Wright Inequality (Theorem 1.1 of \cite{rudelson2013hanson}), we obtain that there exists a constant $C_2>0$ such that 
    $$P\big(\Big|\mathbf{N}^T\Sigma_n\mathbf{N}-\mathbb{E}(\mathbf{N}^T\Sigma_n\mathbf{N})\Big|\ge t\big)\le2e^{-\frac{C_2t}{S\|\beta\|_2^2/p}\min\Big(\frac{t\tilde C_2}{n\|\beta\|_2^2/p},\tilde C_2'\Big)} . $$% where $c_1,c_2>0$ are absolute constants.
\end{proof}
\subsection{Smoothing lemma}\begin{lemma} \label{lem:lipschitz:approx} For every $\delta > 0$, there exists a family of continuous and $\frac{1}{\delta}$-Lipschitz $\R \rightarrow \R$ functions $(h_{\tau;\delta})_{\tau \in \R}$ such that, for any $x, y \in \R$,
\begin{enumerate}
    \item[(i)] $h_{\tau;\delta}(x) \leq \ind\{x < \tau \}  \leq h_{\tau+\delta;\delta}(x)$;
    \item[(ii)] $
        \big| 
        h_{\tau;\delta}(x)
        -
        \ind\{ x < \tau \}
        \big| 
        \;\leq\;
            \ind\{ x \in [\tau-\delta, \tau) \}
        $\;.
\end{enumerate}
\end{lemma}
    
\begin{proof}[Proof of \lemmaref{lem:lipschitz:approx}.] We construct $h_{\tau;\delta}$ as 
    $$
            h_{\tau;\delta}(x) 
            \;\coloneqq\; 
            \begin{cases}
                1  & \text{ if }  x < \tau - \delta\;, \\
                \frac{\tau - x}{\delta} & \text{ if }  x \in [\tau - \delta, \tau)\;, \\
                0  & \text{ if } x \geq \tau\;,
            \end{cases} 
    $$
    which satisfies (i) automatically. To prove (ii), we first note that 
    \begin{align*}
        \ind\{ x < \tau - \delta \}
        \;\leq\; 
        h_{\tau;\delta}(x) 
        \;\leq\;
        \ind\{ x < \tau \}
        \;,
    \end{align*}
    which implies the desired bound that 
    \begin{align*}
        \big| 
        \,
        h_{\tau;\delta}(x) 
        -
        \ind\{ x < \tau \}
        \, 
        \big| 
        \leq\,
        \ind\{ x \in [\tau-\delta, \tau) \}
        \;.
    \end{align*}
\end{proof}

\subsection{Additional Lindeberg Lemma}
 \begin{lemma}\label{dalia2}Suppose that $(X_i)$ are independent, centered random variables. We obtain that there is an absolute constant $\kappa>0$ such that 
\begin{align}&\sup_{f \in \mathcal F} \abs{\E f\l(\sum_{j\le n}X_j\r) - \E f\l(\sum_{j\le n}Z_j\r) }
 \\  & \le \Big( \frac{\kappa n}{\epsilon^2}\Big)^{1/3}\max_{j\le n}\|X_{j}\|_3,
\end{align} where $(Z_i)$ is an independent sequence of independent and centered Gaussian random variables. %chosen such that $\text{cov}(Z_i,Z_j)=\text{cov}(X_i,X_j)$ and such that $\mathbb{E}(Z_i)=0.$
\end{lemma}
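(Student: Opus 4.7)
The test class $\mathcal F$ consists of $\mathcal C^1$ functions with $\|f\|_\infty<\infty$ and $\|f'\|_\infty\le 1$, so the classical Lindeberg swap cannot be applied directly (it needs a bound on $\|f'''\|_\infty$). The plan is therefore to combine a smoothing regularization with the standard swap-and-Taylor argument, with $\epsilon$ playing the role of the smoothing bandwidth.

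\emph{Step 1 (regularize).} Fix $\epsilon>0$ and let $\varphi_\epsilon$ be the density of $\mathcal N(0,\epsilon^2)$, setting $f_\epsilon := f * \varphi_\epsilon$. Since $f$ is $1$-Lipschitz, $\|f-f_\epsilon\|_\infty \le \epsilon\,\mathbb E|\varphi_\epsilon|^{1/1}\lesssim \epsilon$. Standard estimates on the derivatives of a Gaussian density give $\|f_\epsilon^{(k)}\|_\infty \lesssim \epsilon^{-(k-1)}$ for $k=2,3$; in particular $\|f_\epsilon'''\|_\infty \lesssim 1/\epsilon^2$.

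\emph{Step 2 (telescope and Taylor expand).} Define the interpolating partial sums $S_j := Z_1+\cdots+Z_j + X_{j+1}+\cdots+X_n$, so $S_0=\sum X_i$ and $S_n=\sum Z_i$, and write $W_j := S_j - X_j = S_{j-1}-Z_j$. By construction $W_j$ is independent of both $X_j$ and $Z_j$, and
\begin{align*}
\E\bigl[f_\epsilon(S_n)-f_\epsilon(S_0)\bigr] \;=\; \sum_{j=1}^n \E\bigl[f_\epsilon(W_j+Z_j)-f_\epsilon(W_j+X_j)\bigr].
\end{align*}
Taylor expand $f_\epsilon$ to third order around $W_j$. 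Because $\E X_j=\E Z_j=0$ and $\E X_j^2 = \E Z_j^2$ (the Gaussians were chosen to match first and second moments), the linear and quadratic terms cancel in expectation, leaving a remainder bounded by $\tfrac16 \|f_\epsilon'''\|_\infty(\E|X_j|^3+\E|Z_j|^3)$. Using $\E|Z_j|^3 \lesssim (\E X_j^2)^{3/2} \le \|X_j\|_3^3$ and summing,
\begin{align*}
\bigl|\E[f_\epsilon(S_n)-f_\epsilon(S_0)]\bigr| \;\lesssim\; \tfrac{n}{\epsilon^2}\,\max_{j\le n}\|X_j\|_3^3.
\end{align*}

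\emph{Step 3 (combine and optimize).} Two triangle inequalities using $\|f-f_\epsilon\|_\infty\lesssim \epsilon$ yield
\begin{align*}
\sup_{f\in\mathcal F}\bigl|\E f(S_n)-\E f(S_0)\bigr| \;\le\; C\Bigl(\epsilon + \tfrac{n}{\epsilon^2}\,\max_{j\le n}\|X_j\|_3^3\Bigr).
\end{align*}
This is exactly the free-parameter form of the lemma's estimate: after balancing terms by $\epsilon^3\asymp n\max_j\|X_j\|_3^3$ one obtains the cube-root bound $(\kappa n/\epsilon^2)^{1/3}\max_j\|X_j\|_3$ advertised in the statement (with $\kappa$ absorbing the constants and the $\|X_j\|_3$ factor coming out of the third-power cube root).

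The main technical obstacle is quantifying the derivative blow-up of $f_\epsilon$: one needs $\|f_\epsilon'''\|_\infty\lesssim \epsilon^{-2}$, which follows from writing $f_\epsilon'''(x) = \int f'(x-y)\,\varphi_\epsilon''(y)\,dy$ (moving two derivatives onto the Gaussian) and using $\int|\varphi_\epsilon''|\lesssim \epsilon^{-2}$ together with $\|f'\|_\infty\le 1$. Beyond that, the argument is a direct execution of Lindeberg's method with only the first two moments matched—no independence structure beyond what is stated is required, and the bound is uniform over $f\in\mathcal F$ because only $\|f'\|_\infty$ enters the smoothing estimate.
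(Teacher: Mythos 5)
Your proof is correct and takes essentially the same route as the paper: smooth $f$ so that $\|f_\epsilon'''\|_\infty\lesssim\epsilon^{-2}$ while $\|f-f_\epsilon\|_\infty\lesssim\epsilon$, run Lindeberg with the first two moments matched so that only the third-order Taylor remainder survives, bound it by $n\epsilon^{-2}\max_j\|X_j\|_{3}^3$, and balance $\epsilon$. The only (immaterial) differences are that the paper mollifies by a double moving average rather than a Gaussian convolution, and telescopes via the continuous interpolation $\sqrt{t}\,X_j+\sqrt{1-t}\,Z_j$ rather than the discrete one-variable-at-a-time swap.
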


\begin{proof}
Let $f \in \mathcal F$, and let $\epsilon>0$. Define  $f_\epsilon(u)=\frac{1}{4\epsilon^2}\int_{u-\epsilon}^{u+\epsilon}\int_{t-\epsilon}^{t+\epsilon} f(y)dydt$. We remark that $f_{\epsilon}$ is three times differentiable and as $f$ is Lipschitz we have $\sup_x |f_\epsilon(x)-f(x)|\le 2\epsilon.$  
    
    Write $$X_{j}(t):=\sqrt{t}X_{j}+\sqrt{1-t}G_{j}$$ and $$X^{j,0}_m(t):=\begin{cases}0\text{~if~}m=j
        \\X_{m}(t)~~\text{otherwise}.
    \end{cases}$$
Using the fundamental theorem of calculus we obtain that 
    \begin{align*}&
        \Big|\mathbb{E}(f_\epsilon(\sum_jX_j)-\mathbb{E}(f_\epsilon(\sum_jZ_j))\Big|
        \\&\le \int_0^1 \Big|\partial_t\mathbb{E}(f_\epsilon(\sum_jX_j(t)))\Big|dt
        \\&\overset{(d_1)}{\le}\int_0^1 \Big|\mathbb{E}\Big(f'_\epsilon(\sum_j X_j(t))\sum_j\frac{X_j}{2\sqrt{t}}-\frac{Z_j}{2\sqrt{1-t}}\Big)\Big|dt
                \\&\overset{(d_2)}{\le}\int_0^1 \sum_{j\le n }\Big|\mathbb{E}\Big(f'_\epsilon(\sum X^{j,0}(t)) \big[\frac{X_j}{2\sqrt{t}}-\frac{Z_j}{2\sqrt{1-t}}\big]\Big)\Big|dt
                \\&+\int_0^1 \sum_{j\le n}\Big|\mathbb{E}\Big(f''_\epsilon(\sum X^{j,0}(t))\big[ \frac{X_j}{2\sqrt{t}}-\frac{Z_j}{2\sqrt{1-t}}\big]\big[X_j\sqrt{t}+Z_j\sqrt{1-t}\big]\Big)\Big|dt
               \\&+\int_0^1 \sum_{j\le n}\Big|\mathbb{E}\Big(f^{(3)}_\epsilon(\sum_j\tilde X_j(t))\big[ \frac{X_{j}}{2\sqrt{t}}-\frac{Z_{j}}{2\sqrt{1-t}}\big]\big[X_{j}\sqrt{t}+Z_{j}\sqrt{1-t}\big]^2\Big)\Big|dt
                \\&\le (a)+(b)+(c)
        % \\&\le\sum_{q\le k}\sum_{i\le p}  \Big|\mathbb{E}(f_\epsilon(\sum_j\theta_j{X_j^{i,q}}^T\beta_j))-\mathbb{E}(f_\epsilon(\sum_j\theta_j{X_j^{i+1,q}}^T\beta_j))\Big|
        % \\&\le \sum_{q\le k}\sum_{i\le p}  \Big|\mathbb{E}\Big(\theta_q\beta_{i+1}X_{q,i+1}f'_\epsilon(\sum_j\theta_j{X_j^{i,q,0}}^T\beta_j)\Big)+\mathbb{E}\Big((\theta_q\beta_{i+1}X_{q,i+1})^2f''_\epsilon(\sum_j\theta_j{X_j^{i,q,0}}^T\beta_j)\Big)+\mathbb{E}\Big((\theta_q\beta_{i+1}X_{q,i+1})^3f^{(3)}_\epsilon(\sum_j\theta_j{X_j^{i,q,0}}^T\beta_j)\Big)-\mathbb{E}(f_\epsilon(\sum_j\theta_j{X_j^{i+1,q}}^T\beta_j))+\Big|
    \end{align*}
    where $\sum \tilde X_{j}(t)\in[\sum X_j(t), \sum X^{j,0}(t)]$ and where $d_1$ is a result of the product law and $d_2$ of Taylor's expansion. 
    Using the independence between $X^{j,0}(t)$ and $X_{j}$ and $Z_{j}$ and the fact that those latter are centered, we obtain that $(a)=0$. Similarly, we notice for all $j\le n$ that 
    \begin{align*}&
\mathbb{E}\Big(f''_\epsilon(\sum X^{j,0}(t))\big[ \frac{X_j}{2\sqrt{t}}-\frac{Z_j}{2\sqrt{1-t}}\big]\big[X_j\sqrt{t}+Z_j\sqrt{1-t}\big]\Big)\\&=\mathbb{E}\Big(f''_\epsilon(\sum X^{j,0}(t))\Big)\mathbb{E}\Big(\big[ \frac{X_j}{2\sqrt{t}}-\frac{Z_j}{2\sqrt{1-t}}\big]\big[X_j\sqrt{t}+Z_j\sqrt{1-t}\big]\Big)
\\&\le \mathbb{E}\Big(f''_\epsilon(\sum X^{j,0}(t))\Big)\mathbb{E}\Big(\big[ \frac{X_j^2}{2}-\frac{Z_j^2}{2}\big]\Big)=0.
    \end{align*}  Hence $(b)=0.$
    
    Finally we can note that $\|f_\epsilon^{(3)}\|\le \frac{4}{\epsilon^2}$. Hence, thanks to Jensen inequality we know that there exists absolute constants $C,C_2>0$ such that
    \begin{align*}&
   \Big| \mathbb{E}\Big(f^{(3)}_\epsilon(\sum_j\tilde X_j(t))\big[ \frac{X_{j}}{2\sqrt{t}}-\frac{Z_{j}}{2\sqrt{1-t}}\big]\big[X_{j}\sqrt{t}+Z_{j}\sqrt{1-t}\big]^2\Big)\Big|\\&\le \frac{4}{\epsilon^2}  \mathbb{E}\Big(\Big|  \frac{X_{j}}{2\sqrt{t}}-\frac{Z_{j}}{2\sqrt{1-t}}\Big| \big[X_{j}\sqrt{t}+Z_{j}\sqrt{1-t}\big]^2\Big)\Big|
   \\&\le \frac{C}{\epsilon^2}  \max(\|X_j\|_3^3,\|Z_j\|_3^3)\Big[\frac{1}{\sqrt{t}}+\frac{1}{\sqrt{1-t}}\Big]
   \\&\le \frac{C}{\epsilon^2}  \max(\|X_j\|_3^3,\sqrt{3}^3\|X_j\|_2^3)\Big[\frac{1}{\sqrt{t}}+\frac{1}{\sqrt{1-t}}\Big]
    \\&\le \frac{C_2}{\epsilon^2}  \|X_j\|_3^3\Big[\frac{1}{\sqrt{t}}+\frac{1}{\sqrt{1-t}}\Big]
   % \mathbb{E}\Big(\big| \frac{X_{i,\ell}}{2\sqrt{t}}-\frac{G_{i,\ell}}{2\sqrt{1-t}}\big|\big[2tX_{\tilde i,\tilde \ell}^2+2(1-t)G_{\tilde i,\tilde \ell}^2\big]\Big)
   %  \\&\le \frac{4}{\epsilon^2}   \mathbb{E}\Big(\big| \frac{|X_{i,\ell}|X_{\tilde i,\tilde \ell}^2\sqrt{t}}{2}+\frac{|G_{i,\ell}G_{\tilde i,\tilde \ell}^2\sqrt{1-t}}{2}\Big)\\&+\frac{4}{\epsilon^2}   \mathbb{E}\Big(\big| \frac{|X_{i,\ell}|G_{\tilde i,\tilde \ell}^2(1-t)}{2\sqrt{t}}+\frac{|G_{i,\ell}|X_{\tilde i,\tilde \ell}^2({1-t})}{2\sqrt{t}}\Big)%\big[2tX_{\tilde i,\tilde \ell}^2+2(1-t)G_{\tilde i,\tilde \ell}^2\big]^2\Big) \mathbb{E}\Big(\big| \frac{X_{i,\ell}}{2\sqrt{t}}-\frac{G_{i,\ell}}{2\sqrt{1-t}}\big|\big[2tX_{\tilde i,\tilde \ell}^2+2(1-t)G_{\tilde i,\tilde \ell}^2\big]^2\Big)
   %  \\&\le \frac{4}{\epsilon^2}\Big[\frac{\|X_{i,\ell}\|_3^3}{\sqrt{t}}+\frac{\max(\|X_{i,\ell}\|_3^3,\|G_{i,\ell}\|_3^3)}{\sqrt{1-t}}\Big]
    \end{align*} where we used the fact that if $Z\sim \mathcal{N}(0,1)$ then $\|Z\|_3\le \sqrt{3}$ coupled with the fact that $\|Z_j\|_2=\|X_j\|_2.$  Hence we obtain that there is an absolute constant $\kappa>0$ such that 
    \begin{align}
        (c)&\le \frac{\kappa n}{\epsilon^2}\max_{j\le n}\|X_{j}\|_3^3
    \end{align}This gives us the desired result by choosing $\epsilon:=\Big( \kappa n\Big)^{1/3}\max_{j\le n}\|X_{j}\|_3.$
\end{proof}
\subsection{Asymptotic normality under local dependency assumption}

\begin{lemma}\label{dalia} Suppose that $(X_i)$ are centered random vectors such that the array $(X_{i,\ell})_{i,\ell}$ is locally dependent. Write $B_{i,\ell}$ the dependency neighborhood of the entry $X_{i,\ell}$. For a fixed $q \in \N$, let $\cF_q$ be the class of $\R^q \rightarrow \R$ continuously differentiable functions with $\| f \|_\infty \leq 1$ and $\| \,\| \partial f \|\, \|_\infty \leq 1$. Then there is a constant $C_q>0$ that depends only on $q$ such that 
\begin{align}&
    \sup_{\substack{f\in \mathcal{F}_q \\\theta_1, \ldots, \theta_q \in \mathcal{S}^{k-1}}}\sup_{\substack{(\beta_{11}, \hdots, \beta_{kq}) \\ \in\mathcal S_p^{kq}}}
        \abs{
            \E\textsf{f}
                \begin{pmatrix} 
                    \sum_{i=1}^k \theta_{1i} X_i^\intercal \beta_{1i} 
                    \\ \vdots \\
                    \sum_{i=1}^k \theta_{qi} X_i^\intercal \beta_{qi} 
                \end{pmatrix}
         - 
             \E\textsf{f}
             \begin{pmatrix} 
                \sum_{i=1}^k \theta_{1i} G_i^\intercal \beta_{1i} 
                \\ \vdots \\
                \sum_{i=1}^k \theta_{qi} G_i^\intercal \beta_{qi} 
            \end{pmatrix}
        } \nonumber
    \\
     &\hspace{15em} \le 
     C_q \, \Big( k^2 \, p^{3/2 -r } \, \max_{i, \ell} |B_{i,\ell}|^{3/2} \,  \max_{i,\ell} \| X_{i, \ell} \|^3_{L_3} \Big)^{1/(2q+1)}
     \;,
\end{align} 
where $\cS^{k-1}$ denotes the unit sphere in $\R^k$ and $(G_i)$ is an independent sequence of mean-zero Gaussian vectors chosen such that $\text{cov}(G_{i,l},G_{j,m})=\text{cov}(X_{i,l},X_{j,m})$.
\end{lemma}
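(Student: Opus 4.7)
The plan is to adapt the Stein/Lindeberg scheme used for the univariate result \Cref{dalia2} to $q$ dimensions, combining Gaussian smoothing with a Fourier-inversion reduction to the $q=1$ case in the spirit of \Cref{many_betas}, and handling the array-wise local dependence via a ``leave-neighborhood-out'' Taylor expansion. Concretely, I would first fix $\epsilon > 0$ and replace $f$ by its mollification $f_\epsilon \coloneqq f \ast \zeta_\epsilon$ against a $q$-dimensional Gaussian kernel of width $\epsilon$. Using $\|\partial f\|_\infty \le 1$ gives $\|f - f_\epsilon\|_\infty \le C_q \epsilon$, so the problem reduces to comparing $\E[f_\epsilon(S^X)]$ and $\E[f_\epsilon(S^G)]$, where $S^X$ and $S^G$ denote the $\R^q$-valued vectors appearing on the left-hand side of the claim.

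Next, I would reduce to the univariate case by Fourier inversion exactly as in the proof of \Cref{many_betas}: expressing $f_\epsilon$ via its inverse Fourier transform and applying Cauchy--Schwarz in $t \in \R^q$, the comparison is controlled by $\int_{\R^q} e^{-\epsilon^2 \|t\|^2} |\psi_{S^X}(t) - \psi_{S^G}(t)|^2 \, \mathrm{d}t$, and then by the bound $|\psi_{S^X}(t)-\psi_{S^G}(t)|^2 \le 2|\psi_{S^X}(t)-\psi_{S^G}(t)|$. The convex-combination identity of \Cref{many_betas} (see equation \eqref{eq:pingyou}) then rewrites $t^\intercal S^X$ as $\|t\|_\ast \sum_{i=1}^k \tilde\theta_i X_i^\intercal \tilde\beta_i$ for some $\tilde\theta \in \mathcal{S}^{k-1}$ and $\tilde\beta_i \in \mathcal{S}_p$ (using symmetry and convexity of $\mathcal{S}_p$), so each pointwise characteristic-function comparison reduces to a univariate local-dependence CLT. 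For that univariate bound I would smooth the one-dimensional test function, set $X_i(t) = \sqrt{t} X_i + \sqrt{1-t} G_i$, and, for each entry $(i,\ell) \in [k] \times [p]$, Taylor-expand around the ``leave-neighborhood-out'' sum obtained by zeroing every contribution of $(m,\ell') \in B_{i,\ell}$. Independence of this leave-out sum from $X_{i,\ell}$ and $G_{i,\ell}$ kills the first-order terms (by mean zero), the matching covariance structure of $G$ kills the second-order terms, and the third-order remainder is $\|\tilde f_\epsilon'''\|_\infty$ times the expectation of $|X_{i,\ell}| \cdot |\sum_{(m,\ell') \in B_{i,\ell}} \Delta_{m,\ell'}|^2$ for appropriate linear contributions $\Delta_{m,\ell'}$.

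Summing the third-order remainders over $(i,\ell)$ and using $\|\beta_i\|_\infty \le \textsf{L} p^{(1-r)/2}$ and $\|\beta_i\|_2 \le \textsf{L}\sqrt{p}$ from $\mathcal{S}_p$, together with H\"older, produces a third-moment sum scaling as $k^2 p^{3/2-r} \max_{i,\ell}|B_{i,\ell}|^{3/2} \max_{i,\ell}\|X_{i,\ell}\|_{L_3}^3$; this is where the local-dependence neighborhood cost $|B_{i,\ell}|^{3/2}$ enters. Combining this with the smoothing error $C_q\epsilon$ and the $q$-dimensional Gaussian normalization of the Fourier step (which produces an $\epsilon^{-2q}$ factor), then optimizing $\epsilon$, yields the stated exponent $1/(2q+1)$. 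I expect the main obstacle to be Step 3: the leave-neighborhood Taylor expansion is notably harder than the independent Lindeberg argument of \Cref{dalia2}, because the analogue of $S^{j,0}$ must remove an entire neighborhood rather than a single coordinate, and the matching of second-order terms requires carefully pairing every covariance $\mathrm{Cov}(X_{i,\ell}, X_{m,\ell'})$ with $(m,\ell') \in B_{i,\ell}$ to its Gaussian counterpart. Maintaining uniformity over the $2q$ free parameters $(\theta_j, \beta_{j1}, \ldots, \beta_{jk})_{j \le q}$ while tracking these sums is the other delicate point, but this is precisely what the Fourier reduction to the unit-sphere/$\mathcal{S}_p$ case of \Cref{many_betas} is designed to handle.
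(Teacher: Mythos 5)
Your route is genuinely different from the paper's. The paper proves \Cref{dalia} by a \emph{direct} $q$-dimensional Lindeberg interpolation: it smooths $f$ by a double hypercube average in $\R^q$, interpolates $X_j(t)=\sqrt{t}X_j+\sqrt{1-t}G_j$, and Taylor-expands to third order around the leave-neighborhood-out array $X_{j,i,\ell,0}(t)$; the first- and second-order terms vanish by centering and covariance matching, and the third-order remainder carries the factor $\epsilon^{-2q}$ coming from $\|\partial^3 f_\epsilon\|_\infty$, which after optimizing $\epsilon$ produces the exponent $1/(2q+1)$. You instead propose to Fourier-reduce the $q$-dimensional comparison to the univariate case (as in \Cref{many_betas}, via the convex-combination identity \eqref{eq:pingyou}) and then run the univariate local-dependence Lindeberg. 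That reduction is legitimate: the identity $t^\intercal S^X=\sum_i X_i^\intercal(\sum_j t_j\theta_{ji}\beta_{ji})$ can be renormalized into the form $\sum_i\tilde\theta_i X_i^\intercal\tilde\beta_i$ with $\tilde\theta\in\cS^{k-1}$ and $\tilde\beta_i\in\cS_p$ by symmetry and convexity of $\cS_p$, and your leave-neighborhood-out univariate step, with the $\cS_p$ norm bounds feeding the third-moment sum $k^2p^{3/2-r}\max_{i,\ell}|B_{i,\ell}|^{3/2}\max_{i,\ell}\|X_{i,\ell}\|_{L_3}^3$, is exactly the bookkeeping the paper performs (there via the matrices $M^{(i,\ell)}$).

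The concrete flaw is in your final optimization. The $\epsilon^{-2q}$ factor you invoke is an artifact of the \emph{direct} multivariate Taylor expansion (third derivative of the $q$-fold hypercube mollifier); it does not arise in the Fourier route, where the Gaussian normalization factors cancel (cf.\ steps $(iv)$--$(v)$ in the proof of \Cref{many_betas}). What the Fourier route actually costs is a square root from Cauchy--Schwarz/Plancherel plus the bound $|\psi^X-\psi^G|^2\le 2|\psi^X-\psi^G|$, so the honest two-stage optimization gives a final bound of order $\textsf{K}_1^{2/3}$, where $\textsf{K}_1$ is the $q=1$ error; with $\textsf{K}_1\asymp M^{1/3}$ (here $M$ is the third-moment quantity) this yields $M^{2/9}$, not $M^{1/(2q+1)}$. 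This still proves a valid version of the lemma --- for $q=1$ you skip the Fourier step and get $M^{1/3}$ directly, and for $q\ge 2$ the exponent $2/9$ is in fact at least as strong as $1/(2q+1)$ --- but your stated derivation of the exponent $1/(2q+1)$ does not go through as written, and if you want the lemma in the exact form stated you should either redo the balancing for your route or switch to the paper's direct multivariate expansion.
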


\begin{proof} Fix $f \in \cF_q$. Let $\epsilon>0$ and define a smoothed version of $f$,
\begin{align*}
    f_\epsilon(u) 
    \;\coloneqq\; 
    \mfrac{1}{(2\epsilon)^{2q}} 
    \mint_{u_1-\epsilon}^{u_1+\epsilon} 
    \cdots 
    \mint_{u_q-\epsilon}^{u_q+\epsilon} 
    \;
    \mint_{t_1-\epsilon}^{t_1+\epsilon} 
    \cdots 
    \mint_{t_q-\epsilon}^{t_q+\epsilon} 
    f(y) 
    \; dy_1 \ldots dy_q \; dt_1 \ldots dt_q \; 
    \;.
\end{align*}
Note that $f_{\epsilon}$ is thrice differentiable and, as $f$ is Lipschitz, we have $\sup_x |f_\epsilon(x)-f(x)|\le 6\epsilon \sqrt{q}$. Write 
\begin{align*}
    X_{j}(t)
    \coloneqq \sqrt{t}X_{j}+\sqrt{1-t}G_{j}
    \;,
    \;\;
    a_{jl} \coloneqq 
    \big(
        \theta_{1j} \beta_{1jl}\,,\, \ldots \,,\, \theta_{qj} \beta_{qjl} 
    \big)
    \in \R^{q}\;,
    \;\;
    A_j \coloneqq
    \begin{pmatrix}
        \leftarrow & a_{j1}^\intercal & \rightarrow \\ 
        & \vdots & \\ 
        \leftarrow & a_{jp}^\intercal & \rightarrow \\ 
    \end{pmatrix}
    \in \R^{p \times q}\;,
\end{align*}
and $$\big(X_{j,i,\ell,0}(t)\big)_m \coloneqq \begin{cases}0\text{~if~}(j,m)\in B_{i,\ell}
        \\X_{j,m}(t)~~\text{otherwise}.
    \end{cases}.$$
Using the fundamental theorem of calculus we obtain that 
    \begin{align*}
        &
        \abs{
            \E f_\epsilon
                \begin{pmatrix} 
                    \sum_{i=1}^k \theta_{1i} X_i^\intercal \beta_{1i} 
                    \\ \vdots \\
                    \sum_{i=1}^k \theta_{qi} X_i^\intercal \beta_{qi} 
                \end{pmatrix}
         - 
             \E f_\epsilon
             \begin{pmatrix} 
                \sum_{i=1}^k \theta_{1i} G_i^\intercal \beta_{1i} 
                \\ \vdots \\
                \sum_{i=1}^k \theta_{qi} G_i^\intercal \beta_{qi} 
            \end{pmatrix}
         }
         \;=\; 
         \Big| 
            \mean\Big[ f_\epsilon\Big( \msum_{j \leq n} X_j^\intercal A_j \Big) \Big]
            -
            \mean\Big[ f_\epsilon\Big( \msum_{j \leq n} G_j^\intercal A_j \Big) \Big]
         \Big|
        \\&\le \int_0^1 \Big|\partial_t \, \mathbb{E}\Big[ f_\epsilon\Big(\msum_j\ X_j^\intercal(t) \, A_j \Big) \Big] \Big|
        \, dt
        \\&\overset{(d_1)}{\le}\int_0^1 
        \Big|
        \mathbb{E}\Big[
            \partial f_\epsilon(
                \msum_j X_j^\intercal(t) A_j)
            \msum_{i\le k}
                A_i^\intercal
                \Big(
                \mfrac{ X_i}{2\sqrt{t}}
                -
                \mfrac{G_i}{2\sqrt{1-t}}
               \Big)
            \Big]
        \Big|
        dt
        \\&\overset{(d_2)}{\le}
        \int_0^1 
        \,
        \Big|
            \msum_{i \leq k}
            \msum_{\ell \leq p}
            \mathbb{E}\Big[ 
            \partial f_\epsilon \Big(
            \msum_j X_{j,i,\ell,0}^\intercal(t) A_j \Big)^\intercal
            a_{il}
            \, \Big(\mfrac{X_{i,\ell}}{2\sqrt{t}}-\mfrac{G_{i,\ell}}{2\sqrt{1-t}} \Big) 
        \Big]\Big|
        dt
        \\&+
        \mfrac{1}{2} \int_0^1
        \Big|
            \msum_{i\le k} 
            \msum_{\ell \leq p}
            \msum_{(\tilde i,\tilde \ell)\in B_{i,\ell}}
            \mathbb{E}
            \Big[
                \big(X_{\tilde i,\tilde \ell}\sqrt{t}+G_{\tilde i,\tilde \ell}\sqrt{1-t}\big)
                a_{\tilde i, \tilde l}^\intercal \,
                \partial^2 f_\epsilon\Big(
                    \msum_j X_{j,i,\ell,0}^T(t) A_j
                \Big)
                \, 
                a_{il}
        \\
        &\hspace{20em}
                \,
                \Big(\mfrac{X_{i,l}}{2\sqrt{t}}-\mfrac{G_{i,l}}{2\sqrt{1-t}} \Big)
            \Big]     
        \Big|dt
        \\
        &+
        \mfrac{1}{6 \epsilon^q}
        \int_0^1 
        \,
        \mathbb{E} \Big[ \, 
        \Big\|
        \sum_{i\le k} 
        \sum_{\ell \leq p}
        \sum_{\substack{(\tilde i,\tilde \ell), (\tilde i_2,\tilde \ell_2)\\ \in B_{i,\ell}}}
        \hspace{-6pt}\Big(\mfrac{X_{i,l}}{2\sqrt{t}}-\mfrac{G_{i,l}}{2\sqrt{1-t}} \Big)
        \,
        \big(   X_{\tilde i,\tilde \ell}\sqrt{t}+G_{\tilde i,\tilde \ell}\sqrt{1-t}\big)
        \\
        &\hspace{18em}
        \,
        \big( X_{\tilde i_2,\tilde \ell_2}\sqrt{t}+G_{\tilde i_2,\tilde \ell_2}\hspace{-4pt}\sqrt{1-t} \big)  
        \,( a_{i l} \otimes a_{\tilde i\tilde \ell} \otimes a_{\tilde i_2, \tilde \ell_2}  )
        \Big\| \, \Big]  \, dt
        \\&\le (a)+(b)+(c)\;,
        % \\&\le\sum_{q\le k}\sum_{i\le p}  \Big|\mathbb{E}(f_\epsilon(\sum_j\theta_j{X_j^{i,q}}^T\beta_j))-\mathbb{E}(f_\epsilon(\sum_j\theta_j{X_j^{i+1,q}}^T\beta_j))\Big|
        % \\&\le \sum_{q\le k}\sum_{i\le p}  \Big|\mathbb{E}\Big(\theta_q\beta_{i+1}X_{q,i+1}f'_\epsilon(\sum_j\theta_j{X_j^{i,q,0}}^T\beta_j)\Big)+\mathbb{E}\Big((\theta_q\beta_{i+1}X_{q,i+1})^2f''_\epsilon(\sum_j\theta_j{X_j^{i,q,0}}^T\beta_j)\Big)+\mathbb{E}\Big((\theta_q\beta_{i+1}X_{q,i+1})^3f^{(3)}_\epsilon(\sum_j\theta_j{X_j^{i,q,0}}^T\beta_j)\Big)-\mathbb{E}(f_\epsilon(\sum_j\theta_j{X_j^{i+1,q}}^T\beta_j))+\Big|
    \end{align*}
    where $\tilde X_{j,i,l}(t)\in[X_j(t), X_{j,i,l,0}(t)]$. In $(d_1)$, we have used the product rule; in $(d_2)$, we have used a third-order Taylor expansion together with the bound that $\| \partial^3 f_\epsilon \|_\infty \leq \frac{1}{\epsilon^q}$. Using the independence between $X_{j,i,\ell,0}(t)$ and $X_{i,\ell}$ and $G_{i,\ell}$ and the fact that these variables are centered, we obtain that for all $i\le k$ and $\ell\le p$ we have 
    \begin{align*}
        \mathbb{E}\Big[ \partial f_\epsilon \Big(
            \msum_j X_{j,i,\ell,0}^\intercal(t) \Theta^\beta_j \Big)  (\Theta^\beta_i)^\intercal
            \, \Big(\mfrac{X_{i,\ell}}{2\sqrt{t}}-\mfrac{G_{i,\ell}}{2\sqrt{1-t}} \Big) 
        \Big]
        \;=\; 0
        \;.
    \end{align*}
    Hence we know that $(a)=0$. Similarly we notice that 
    \begin{align*}
        \mean \Big[
            \big(X_{\tilde i,\tilde \ell}\sqrt{t}+G_{\tilde i,\tilde \ell}\sqrt{1-t}\big)
            \big( \Theta^{\beta, (\tilde l)}_{\tilde i}\big)^\intercal \,
            \partial^2 f_\epsilon\Big(
                \msum_j X_{j,i,\ell,0}^T(t) \Theta^\beta_j
            \Big)
            \, 
            (\Theta^\beta_i)^\intercal
            \,
            \Big(\mfrac{X_i}{2\sqrt{t}}-\mfrac{G_i}{2\sqrt{1-t}} \Big)
        \Big]
        \;=\; 
        0\;,
    \end{align*} 
    where we use the independence between $(X_{i,j})$ and $(G_{i,j})$ to ignore cross terms and the fact that $\mathbb{E}(X_{i,\ell}X_{\tilde i,\tilde \ell})=\mathbb{E}(G_{i,\ell}G_{\tilde i,\tilde \ell})$. Hence $(b)=0.$ Finally to handle $(c)$, we see that 
    \begin{align*}&
        \,
        \mathbb{E} \Big[ \, 
        \Big\|
        \sum_{i\le k} 
        \sum_{\ell \leq p}
        \sum_{(\tilde i,\tilde \ell), (\tilde i_2,\tilde \ell_2)\in B_{i,\ell}}
        \Big(\mfrac{X_{i,l}}{2\sqrt{t}}-\mfrac{G_{i,l}}{2\sqrt{1-t}} \Big)
        \\
        &\hspace{10em}
        \,
        \times 
        \big(   X_{\tilde i,\tilde \ell}\sqrt{t}+G_{\tilde i,\tilde \ell}\sqrt{1-t}\big)
        \,
        \big( X_{\tilde i_2,\tilde \ell_2}\sqrt{t}+G_{\tilde i_2,\tilde \ell_2}\sqrt{1-t} \big)  
        \,( a_{i l} \otimes a_{\tilde i\tilde \ell} \otimes a_{\tilde i_2, \tilde \ell_2}  )
        \Big\| \, \Big] 
        \\
        &=
        \mean \Big[ 
            \Big(
                \;
                \msum_{s, \tilde s, \tilde s_2 \le q} 
                \Big(
                    \sum_{i\le k} 
                    \sum_{\ell \leq p}
                    \sum_{(\tilde i,\tilde \ell), (\tilde i_2,\tilde \ell_2)\in B_{i,\ell}}
                    \Big(\mfrac{ \theta_{si} \beta_{sil} X_{i,l}}{2\sqrt{t}}-\mfrac{ \theta_{si} \beta_{sil} G_{i,l}}{2\sqrt{1-t}} \Big)
                    \,
                    \theta_{\tilde s\tilde i} \beta_{\tilde s\tilde i\tilde l}  \big(   X_{\tilde i,\tilde \ell}\sqrt{t}+G_{\tilde i,\tilde \ell}\sqrt{1-t}\big)
        \\
        &\hspace{15em} \,
                    \theta_{\tilde s_2\tilde i_2} \beta_{\tilde s_2\tilde i_2\tilde l_2}  \big( X_{\tilde i_2,\tilde \ell_2}\sqrt{t}+G_{\tilde i_2,\tilde \ell_2}\sqrt{1-t} \big)  
                \Big)^2 
                \;
            \Big)^{1/2}
        \Big]
        \\
        &\overset{(d_3)}{\le }
        \msum_{s, \tilde s, \tilde s_2 \le q} 
        \msum_{i\le k} 
        \, 
        \mean\Big[
                    \sum_{\ell \leq p}
                    \sum_{(\tilde i,\tilde \ell), (\tilde i_2,\tilde \ell_2)\in B_{i,\ell}}
                    \,
                    |\theta_{si} \beta_{sil}|
                    \,
                    \Big| 
                    \mfrac{  X_{i,l}}{2\sqrt{t}}-\mfrac{ G_{i,l}}{2\sqrt{1-t}} \Big|
                    \,
                    | \theta_{\tilde s\tilde i} \beta_{\tilde s\tilde i\tilde l}|  \, \big|   X_{\tilde i,\tilde \ell}\sqrt{t}+G_{\tilde i,\tilde \ell}\sqrt{1-t}\big|
        \\
        &\hspace{23em} \,
                    | \theta_{\tilde s_2\tilde i_2} \beta_{\tilde s_2\tilde i_2\tilde l_2} | \, \big| X_{\tilde i_2,\tilde \ell_2}\sqrt{t}+G_{\tilde i_2,\tilde \ell_2}\sqrt{1-t} \big|
        \Big]
        \\
        &\overset{(d_4)}{\le }
        \msum_{s, \tilde s, \tilde s_2 \le q} 
        \msum_{i\le k} 
                    \msum_{\ell \leq p}
                    \msum_{(\tilde i,\tilde \ell), (\tilde i_2,\tilde \ell_2)\in B_{i,\ell}}
                    \,
                    | \beta_{sil}|
                    \,
                    | \beta_{\tilde s\tilde i\tilde l}|   
                    \,
                    |  \beta_{\tilde s_2\tilde i_2\tilde l_2} |
        \\& \qquad \qquad \times
        \mean\Big[
                    \Big| 
                    \mfrac{  X_{i,l}}{2\sqrt{t}}-\mfrac{ G_{i,l}}{2\sqrt{1-t}} \Big|
                    \, \big|   X_{\tilde i,\tilde \ell}\sqrt{t}+G_{\tilde i,\tilde \ell}\sqrt{1-t}\big|
                     \big| X_{\tilde i_2,\tilde \ell_2}\sqrt{t}+G_{\tilde i_2,\tilde \ell_2}\sqrt{1-t} \big|
        \Big]
        \\
        &\overset{(d_5)}{\le }
        \msum_{s, \tilde s, \tilde s_2 \le q} 
        \msum_{i\le k} 
                    \msum_{\ell \leq p}
                    \msum_{(\tilde i,\tilde \ell), (\tilde i_2,\tilde \ell_2)\in B_{i,\ell}}
                    \,
                    | \beta_{sil}|
                    \,
                    | \beta_{\tilde s\tilde i\tilde l}|   
                    \,
                    |  \beta_{\tilde s_2\tilde i_2\tilde l_2} |
                    \,       
                    \times 
                    \,
                    \Big( \mfrac{1}{2\sqrt{t}} + \mfrac{1}{2\sqrt{1-t}}\Big)
                    (1+\sqrt{3})^2 
                    \max_{i,\ell} \| X_{i, \ell} \|^3_{L_3}
        \;.
    \end{align*} 
    In $(d_3)$, we have moved the summations outside a squareroot and an absolute value; in $(d_4)$, we have noted that $\theta \in \cS^{k-1}$; in $(d_5)$, we have used that for $Z\sim \mathcal{N}(0,1)$, $\|Z\|_{L_3} \le \sqrt{3}$. Now let $(\beta_{\rm{mix}})_s \coloneqq (|(\beta_{\rm{mix}})_{s11}|, \ldots, |(\beta_{\rm{mix}})_{skp}|) \in \R^{kp}$ and 
    $M^{(i, \ell)} \in \R^{kp \times kp}$ be a matrix with entries $M^{(i, \ell)}_{(i',\ell'), (i'',\ell'')} = \ind\{ (i',\ell') \in B_{i,\ell} \} \, \ind\{ (i'',\ell'') \in B_{i,\ell} \} $. Also recall that by the definition of $\cS_p$ in \eqref{eq:sp}, there are fixed constants $L, r >0 $ such that $\| \beta_{si} \|_\infty \leq L p^{1/2 -r}$ and $\| \beta_{si} \|_2 \leq L p^{1/2}$ for all $s \in q, i \leq k$. Then
    \begin{align*}
        \msum_{\ell \leq p}
        \msum_{(\tilde i,\tilde \ell), (\tilde i_2,\tilde \ell_2)\in B_{i,\ell}}
        | \beta_{sil}|
        \,
        | \beta_{\tilde s\tilde i\tilde l}|   
        \,
        |  \beta_{\tilde s_2\tilde i_2\tilde l_2} |
        \;\leq&\;
        L p^{1/2 -r} \,
        \msum_{\ell \leq p}
        \;
        \msum_{(\tilde i,\tilde \ell), (\tilde i_2,\tilde \ell_2)\in B_{i,\ell}}
        \,
        | \beta_{\tilde s\tilde i\tilde l}|   
        \,
        |  \beta_{\tilde s_2\tilde i_2\tilde l_2} |
        \\
        \;=&\;
        L p^{1/2 -r} \,
        \msum_{\ell \leq p} 
        (\beta_{\rm{mix}})_{\tilde s}^\intercal 
        M^{(i,\ell)}
        (\beta_{\rm{mix}})_{\tilde s_2}
        \\
        \;\leq&\;
        L p^{1/2 -r} \, \| (\beta_{\rm{mix}})_{\tilde s} \| \, \| (\beta_{\rm{mix}})_{\tilde s_2} \| \, \big\|  \msum_{\ell \leq p}   M^{(i,\ell)} \big\|_{op}
        \\
        \;\leq&\;
        L^3 k p^{3/2 -r } \, \big\|  \msum_{\ell \leq p}   M^{(i,\ell)} \big\|_{op}
    \end{align*}
    In the last line, we have noted that $ \| (\beta_{\rm{mix}})_{\tilde s} \|^2 = \sum_{\tilde i \leq k} \| (\beta_{\rm{mix}})_{\tilde s \tilde i} \|^2 \leq L^2 k p$. Now observe that the $(i', \ell')$-th column of the matrix $\sum_{\ell \leq p} M^{(i,\ell)}$ is given by
    \begin{align*}
        \Big(
            \msum_{\ell \leq p} 
            \ind\{ (i',\ell') \in B_{i,\ell} \}  \,
            \ind\{ (i'',\ell'') \in B_{i,\ell} \} 
        \Big)_{i'' \leq k, \ell'' \leq p}
        \;.
    \end{align*}
    Since $|B_{i,\ell}| \leq \max_{i, \ell} |B_{i,\ell}|$, the column has at most $\max_{i, \ell} |B_{i,\ell}|$ non-zero entries. For each $(i'', \ell'')$, since the dependency neighborhood induces an equivalence relation and $|B_{i'',\ell''}| \leq \max_{i, \ell} |B_{i,\ell}|$, the $(i'', \ell'')$-th entry cannot exceed $\max_{i, \ell} |B_{i,\ell}|$. In other words, the $l_2$-norm of each column vector of $\sum_{\ell \leq p} M^{(i,\ell)}$ cannot exceed $\sqrt{ \max_{i, \ell} |B_{i,\ell}| \times \max_{i, \ell} |B_{i,\ell}|^2 } = \max_{i, \ell} |B_{i,\ell}|^{3/2}$, which implies 
    \begin{align*}
        \msum_{\ell \leq p}
        \msum_{(\tilde i,\tilde \ell), (\tilde i_2,\tilde \ell_2)\in B_{i,\ell}}
        | \beta_{sil}|
        \,
        | \beta_{\tilde s\tilde i\tilde l}|   
        \,
        |  \beta_{\tilde s_2\tilde i_2\tilde l_2} |
        \;\leq&\;
        L^3 k p^{3/2 -r } \, \max_{i, \ell} |B_{i,\ell}|^{3/2} \ ;.
    \end{align*}
    Combining the bounds, we obtain that for some constant $C_q>0$ depending only on $q$,
    \begin{align*}
        (c)&\le \mfrac{C_q}{\epsilon^{2q}} \, k^2 \, p^{3/2 -r } \, \max_{i, \ell} |B_{i,\ell}|^{3/2} \,  \max_{i,\ell} \| X_{i, \ell} \|^3_{L_3} 
        \;.
    \end{align*}
    This gives us the desired result by choosing $\epsilon:=(C_q \, k^2 \, p^{3/2 -r } \, \max_{i, \ell} |B_{i,\ell}|^{3/2} \,  \max_{i,\ell} \| X_{i, \ell} \|^3_{L_3} )^{1/(2q+1)}.$
\end{proof}

\subsection{Polynomial Approximation Properties}

In this section we discuss some of the properties of our polynomial approximation that are used in the proof of our main theorem and also \Cref{poly_approx}.

\begin{lemma}\label{lem:remaindersquared}
Let $\alpha, \delta, \gamma, \tau > 0$. Then there exists finite $\textnormal{\textsf D} = \textnormal{\textsf D}(k, \alpha, \tau)$ such that, if we define 
\begin{align*}
    Q_{\textnormal{\textsf D}}(x) := \sum_{\ell=0}^\textnormal{\textsf D}(1-x)^\ell,\quad\quad R_\textnormal{\textsf D}(x) := \frac{1}{x} - Q_{\textnormal{\textsf D}}(x),
\end{align*}
then 
\begin{align*}
    \E_{(i, k)}\l[R_\textnormal{\textsf D}\l(\lip e^{-\alpha\sum_{j\in\mathcal B_i}\omega_j\ell(\eta_j, U_j^\intercal\beta)}\rip_{i, k}\r)^2\r] < \tau.
\end{align*}
\end{lemma}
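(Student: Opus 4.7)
The plan is to exploit the geometric-series structure of $Q_D$: for $x\in(0,2)$, $Q_D(x)=\sum_{\ell=0}^D (1-x)^\ell = \tfrac{1-(1-x)^{D+1}}{x}$, so $R_D(x)=(1-x)^{D+1}/x$ and hence $R_D(X)^2 = (1-X)^{2D+2}/X^2$, where I write $X \coloneqq \lip e^{-\alpha\sum_{j\in\mathcal B_i}\omega_j\ell_j(\beta)}\rip_{i,k}$. Since $\omega_j\ell_j(\beta)\ge 0$ always, $X$ is a convex combination of values in $(0,1]$ and therefore lies in $(0,1]$ almost surely. My starting point will be the splitting
\begin{align*}
    R_D(X)^2 \;\le\; \frac{1}{X^2}\,\ind\{X<\epsilon\} \;+\; \frac{(1-\epsilon)^{2D+2}}{X^2}
\end{align*}
for a threshold $\epsilon>0$ to be chosen alongside $D$ at the end.

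The central step will be to control the conditional moments $\E_{(i,k)}[X^{-p}]$ uniformly in $\bU^{ik}$. The weights $w^{i,k}_\gamma(\beta)$ defining $\lip\cdot\rip_{i,k}$ are $\bU^{ik}$-measurable, while $Y(\beta)\coloneqq \alpha\sum_{j\in\mathcal B_i}\omega_j\ell_j(\beta)$ depends only on $(U_j,\varepsilon_j)_{j\in \mathcal B_i}$, which is independent of $\bU^{ik}$ under \Cref{blockdepass}. Two successive applications of Jensen's inequality (to $x\mapsto 1/x$, then to $x\mapsto x^p$) give
\begin{align*}
    \frac{1}{X^p} \;\le\; \lip e^{Y(\beta)}\rip_{i,k}^{\,p} \;\le\; \lip e^{pY(\beta)}\rip_{i,k}
    \qquad \Longrightarrow \qquad
    \E_{(i,k)}[X^{-p}] \;\le\; \msup_{\beta\in\mathcal S_p}\E\big[e^{pY(\beta)}\big]\;,
\end{align*}
where the right-hand side is a deterministic constant $\textnormal{\textsf C}(k,\alpha,p)$ that I extract from the sub-Gaussian calculation already carried out in the proof of \Cref{second_mom} (the estimate culminating in \eqref{eq:see_six}), yielding a bound of the form $\exp(\textnormal{\textsf C}' k^2\alpha^2 p^2)$.

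Combining the split with Cauchy-Schwarz and Markov's inequality,
\begin{align*}
    \E_{(i,k)}\big[R_D(X)^2\big]
    \;\le\; \sqrt{\E_{(i,k)}[X^{-4}]\;\P(X<\epsilon\mid \bU^{ik})} \;+\; (1-\epsilon)^{2D+2}\,\E_{(i,k)}[X^{-2}]
    \;\le\; \textnormal{\textsf C}_1(k,\alpha)\,\epsilon^{1/2} \;+\; \textnormal{\textsf C}_2(k,\alpha)\,(1-\epsilon)^{2D+2}\;,
\end{align*}
where I have used $\P(X<\epsilon\mid \bU^{ik}) \le \epsilon\,\E_{(i,k)}[X^{-1}]$ together with the moment bounds above. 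Given $\tau>0$, I would first pick $\epsilon=\epsilon(k,\alpha,\tau)>0$ small enough to push the first term below $\tau/2$, and then choose $\textnormal{\textsf D} = \textnormal{\textsf D}(k,\alpha,\tau)$ large enough (depending on $\epsilon$) to push the second term below $\tau/2$. Because every bound above is deterministic in $\bU^{ik}$, this gives the desired inequality for the random variable $\E_{(i,k)}[R_D(X)^2]$.

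The main obstacle is the exponential-moment estimate $\sup_{\beta\in\mathcal S_p}\E[e^{pY(\beta)}]<\infty$: $Y(\beta)$ is a sum of $k$ terms each growing linearly in $|U_j^\intercal\beta|$, so a priori $e^{pY(\beta)}$ could have heavy tails. Fortunately, sub-Gaussianity of $U_j^\intercal\beta$ for $\beta\in\mathcal S_p$ (via \Cref{scalinggg}) combined with H\"older's inequality produces exactly the required bound, reproducing the calculation already performed for \Cref{second_mom}. Everything else in the argument is routine calculus in $\epsilon$ and $D$.
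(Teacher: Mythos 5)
Your proof is correct, and while it follows the same high-level strategy as the paper's (isolate a rare ``bad'' event handled by crude moment bounds, and show $R_{\textsf D}$ is uniformly small on the complement), the implementation of both halves is genuinely different. The paper first pushes $R_{\textsf D}(\cdot)^2$ \emph{inside} the Gibbs average $\lip\cdot\rip_{i,k}$ via Jensen (which requires checking that $R_{\textsf D}^2$ is convex), then splits on the data-level event $\mathcal A_t=\{\max_{j\in\mathcal B_i}|U_j^\intercal\beta|\le t\}$: on $\mathcal A_t^c$ it uses the sub-Gaussian tail of $U_j^\intercal\beta$ together with the exponential-moment bound \eqref{eq:see_six}, and on $\mathcal A_t$ it invokes uniform convergence of the Taylor series of $1/x$ on the compact interval $[e^{-\alpha k(t+1)},1]$, with no explicit rate. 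You instead work directly with the averaged quantity $X=\lip e^{-Y(\beta)}\rip_{i,k}\in(0,1]$, use the closed form $R_{\textsf D}(x)=(1-x)^{\textsf D+1}/x$ to get the explicit geometric decay $(1-\epsilon)^{2\textsf D+2}$ on $\{X\ge\epsilon\}$, and control $\{X<\epsilon\}$ by Markov's inequality applied to $1/X$ combined with negative-moment bounds $\E_{(i,k)}[X^{-p}]\le\sup_\beta\E[e^{pY(\beta)}]$ obtained from two applications of Jensen. What your route buys: you avoid the convexity verification for $R_{\textsf D}^2$, you get a quantitative dependence of $\textsf D$ on $(\epsilon,\tau)$ rather than an abstract compactness argument, and you only ever need the single analytic input $\sup_\beta\E[e^{c\,\alpha\sum_j\omega_j\ell_j(\beta)}]<\infty$ for $c\in\{1,2,4\}$, which is exactly the computation culminating in \eqref{eq:see_six}. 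What the paper's route buys is that the bad event is phrased in terms of the raw projections $U_j^\intercal\beta$, which matches the sub-Gaussian machinery already set up and is reused elsewhere. Two cosmetic points: your identity $Q_{\textsf D}(x)=\frac{1-(1-x)^{\textsf D+1}}{x}$ is a polynomial identity valid for all $x\neq 0$, so the restriction to $(0,2)$ is unnecessary; and you reuse the symbol $p$ for the moment order ($p\in\{1,2,4\}$) while the paper reserves $p$ for the dimension---the final constant $\exp(\textsf C'k^2\alpha^2p^2)$ should be read with $p\le 4$, not the ambient dimension, otherwise the bound would be useless. With that relabelling, all constants depend only on $(k,\alpha)$ and the resulting $\textsf D$ only on $(k,\alpha,\tau)$, as required.
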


\begin{proof}
For $t > 0$, define the event 
\begin{align*}
    \mathcal A_t := \l\{\max_{j\in\mathcal B_i}\abs{U_j^\intercal\beta} \leq t\r\} = \bigcap_{j\in\mathcal B_i}\l\{\abs{U_j^\intercal\beta} \leq t\r\}. 
\end{align*}
Then we have that 
\begin{align}
    \P\l(\mathcal A_t^c\r) &\stackrel{(i)}{=} \P\l(\bigcup_{j\in\mathcal B_i}\l\{\abs{U_j^\intercal\beta} > t\r\}\r)\nonumber\\
    &\stackrel{(ii)}{\leq} \sum_{j\in\mathcal B_i}\P\l(\abs{U_j^\intercal\beta} > t\r)\nonumber\\
    &\stackrel{(iii)}{\leq} \sum_{j\in\mathcal B_i}2e^{-ct^2/\textsf C_1^2}\nonumber\\
    &\leq \textsf C_2ke^{-ct^2}.\label{eq:c2kect2}
\end{align}
where $(i)$ is via De Morgan's Law, $(ii)$ is via a union bound, and $(iii)$ is from \eqref{eq:lol_see_one} which bounds the sub-Gaussian norm of each $\abs{U_j^\intercal\beta}$ along with Proposition 2.5.2 of \cite{vershyninhighdimprob}. We can then say that 
\begin{align}
    \E_{(i, k)}\l[R_\textnormal{\textsf D}\l(\lip e^{-\alpha\sum_{j\in\mathcal B_i}\omega_j\ell(\eta_j, U_j^\intercal\beta)}\rip_{i, k}\r)^2\r] &\stackrel{(i)}{\leq} \E_{(i, k)}\l\lip R_\textnormal{\textsf D}\l(e^{-\alpha\sum_{j\in\mathcal B_i}\omega_j\ell(\eta_j, U_j^\intercal\beta)}\r)^2\r\rip_{i, k}\nonumber\\
    &\stackrel{(ii)}{=} \l\lip \E_{(i, k)}\l[R_\textnormal{\textsf D}\l(e^{-\alpha\sum_{j\in\mathcal B_i}\omega_j\ell(\eta_j, U_j^\intercal\beta)}\r)^2\r]\r\rip_{i, k}\nonumber\\
    &\stackrel{(iii)}{=} \l\lip \E_{(i, k)}\l[R_\textnormal{\textsf D}\l(e^{-\alpha\sum_{j\in\mathcal B_i}\omega_j\ell(\eta_j, U_j^\intercal\beta)}\r)^2\mathbb I_{\mathcal A_t}\r]\r\rip_{i, k}\label{eq:r_dee_first} \\
    &\hspace{30pt}+ \l\lip \E_{(i, k)}\l[R_\textnormal{\textsf D}\l(e^{-\alpha\sum_{j\in\mathcal B_i}\omega_j\ell(\eta_j, U_j^\intercal\beta)}\r)^2\mathbb I_{\mathcal A_t^c}\r]\r\rip_{i, k}\label{eq:r_dee_second}
\end{align}
where $(i)$ is because $R_\textsf{D}^2$ is convex, as it is the square of a positive, convex function, $(ii)$ is because $\E_{(i, k)}[\ ]$ and $\lip\ \rip_{i, k}$ commute, and $(iii)$ is the Law of Total Probability. We first bound the term inside the expectation of \eqref{eq:r_dee_second} as 
\begin{align}
    \E_{(i, k)}\l[R_\textnormal{\textsf D}\l(e^{-\alpha\sum_{j\in\mathcal B_i}\omega_j\ell(\eta_j, U_j^\intercal\beta)}\r)^2\mathbb I_{\mathcal A_t^c}\r] &\stackrel{(i)}{\leq} \E\l[R_\textnormal{\textsf D}\l(e^{-\alpha\sum_{j\in\mathcal B_i}\omega_j\ell(\eta_j, U_j^\intercal\beta)}\r)^4\r]^{1/2}\P\l(\mathcal A_t^c\r)^{1/2}\nonumber\\
    &\siil \E\l[e^{4\alpha\sum_{j\in\mathcal B_i}\omega_j\ell(\eta_j, U_j^\intercal\beta)}\r]^{1/2} \textsf C_3\sqrt{k}e^{-ct^2}\nonumber\\
    &\siiil e^{\textsf C_4k^2\alpha^2}\textsf C_3\sqrt{k}e^{-ct^2}\nonumber\\
    &= \textsf C(k, \alpha)e^{-ct^2},\label{eq:addupmylove}
\end{align}
where $(i)$ is via Cauchy-Schwarz and block dependence, $(ii)$ is via \eqref{eq:c2kect2} and the fact that 
\begin{align*}
    Q_{\textsf D}(x) > 0 \implies R_{\textsf D}(x) = \frac{1}{x} - Q_{\textsf D}(x) < \frac{1}{x}
\end{align*}
for $x \in (0, 1)$, and $(iii)$ is via $\eqref{eq:see_six}$. Thus, if we choose $t$ sufficiently large, namely
\begin{align*}
    t > \sqrt{\frac{1}{c}\log\l(\frac{2\textsf C(k, \alpha)}{\tau}\r)},
\end{align*}
then \eqref{eq:addupmylove} yields that
\begin{align}
    \E_{(i, k)}\l[R_\textnormal{\textsf D}\l(e^{-\alpha\sum_{j\in\mathcal B_i}\omega_j\ell(\eta_j, U_j^\intercal\beta)}\r)^2\mathbb I_{\mathcal A_t^c}\r] \leq \frac\tau2.\label{eq:kokomo}
\end{align}
For \eqref{eq:r_dee_first}, we know that since the event $\mathcal A_t$ occurs in this case, we have 
\begin{align*}
    \mathcal A_t \implies \sum_{j\in\mathcal B_i}\omega_j\ell_j(\beta) \leq \sum_{j\in\mathcal B_i}\abs{U_j^\intercal\beta} + 1 \leq k(t+1),
\end{align*}
and so this forces that the argument of $R_{\textsf D}$ satisfies
\begin{align*}
    \exp\l(-\alpha \sum_{j\in\mathcal B_i}\omega_j\ell_j(\beta)\r) \in [e^{-\alpha k(t+1)}, 1]. 
\end{align*}
By definition of $Q_{\textsf D}(x)$ being the power series of $\frac1x$ with radius of convergence equal to 1, there must exist $\textsf D(k, \alpha, \tau)$ such that 
\begin{align*}
    \sup_{x\in [e^{-\alpha k(t+1)}, 1]}\abs{R_{\textsf D}(x)} \leq \sqrt{\frac{\tau}{2}}. 
\end{align*}
This means that the term inside the expectation of \eqref{eq:r_dee_first} may be bounded as 
\begin{align}
    \E_{(i, k)}\l[R_\textnormal{\textsf D}\l(e^{-\alpha\sum_{j\in\mathcal B_i}\omega_j\ell(\eta_j, U_j^\intercal\beta)}\r)^2\mathbb I_{\mathcal A_t}\r] \leq \frac{\tau}{2}\label{eq:onlyoneiknow},
\end{align}
and so the result follows from this choice of $\textsf D$ by combining \eqref{eq:kokomo} and \eqref{eq:onlyoneiknow}.

\end{proof}

\subsection{Properties of sub-Gaussian Vectors}
\begin{lemma}\label{cabris} Let $Y$ be a sub-Gaussian vector in $\mathbb{R}^d$ with sub-Gaussian constant $\textnormal{\textsf K}$. Write $\Sigma_Y:=\Var(Y)$. Then there exists $\textnormal{\textsf C}>0$ such that $$\|\Sigma_Y\|_{\textnormal{op}}\le \textnormal{\textsf C} \textnormal{\textsf K}^2. $$
\end{lemma}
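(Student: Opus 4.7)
The plan is to bound the operator norm of $\Sigma_Y$ variationally and then reduce the problem to a one-dimensional sub-Gaussian tail bound. Concretely, for any symmetric positive semi-definite matrix, $\|\Sigma_Y\|_{\mathrm{op}} = \sup_{v \in \mathcal{S}^{d-1}} v^\intercal \Sigma_Y v = \sup_{v \in \mathcal{S}^{d-1}} \Var(\langle v, Y\rangle)$. So it suffices to uniformly bound $\Var(\langle v, Y\rangle)$ over all unit vectors $v$.

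The key observation is that the sub-Gaussian assumption on $Y$ descends to every one-dimensional projection. Fix a unit vector $v \in \mathcal{S}^{d-1}$ and let $Z_v := \langle v, Y - \mathbb{E}(Y)\rangle$, a centered scalar random variable. Applying the defining inequality in \Cref{ui} to the vector $tv$ for arbitrary $t \in \R$ gives $\mathbb{E}[\exp(tZ_v)] \le \exp(t^2\sigma^2/2)$, so $Z_v$ satisfies the scalar sub-Gaussian moment generating function bound with parameter $\sigma^2$.

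From here I would invoke the standard fact that any centered scalar random variable $Z$ with $\mathbb{E}[e^{tZ}] \le e^{t^2\sigma^2/2}$ for all $t \in \R$ satisfies $\Var(Z) \le \sigma^2$ (for instance, by Taylor expanding both sides around $t=0$ and matching second-order coefficients, or by differentiating the inequality $\log \mathbb{E}[e^{tZ}] \le t^2\sigma^2/2$ twice at $t=0$; alternatively cite e.g.\ Proposition 2.5.2 of Vershynin). Applied to $Z = Z_v$, this yields $\Var(\langle v, Y\rangle) = \mathbb{E}[Z_v^2] \le \sigma^2$.

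Taking the supremum over $v \in \mathcal{S}^{d-1}$ completes the bound with constant $\textsf{C} = 1$. There is essentially no obstacle here: the reduction from vector-valued to scalar sub-Gaussianity is built into the definition via the test vector $v$, and the scalar variance-versus-MGF bound is classical.
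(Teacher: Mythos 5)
Your proof is correct and follows essentially the same route as the paper's: both reduce $\|\Sigma_Y\|_{\mathrm{op}}$ to $\sup_v \Var(\langle v, Y\rangle)$ over unit vectors and then bound the variance of the one-dimensional projection by the sub-Gaussian parameter. Your direct Taylor-expansion of the MGF inequality from \Cref{ui} is a slightly cleaner execution of the same idea (the paper instead routes through the $\psi_2$-norm and Vershynin's Proposition 2.5.2) and even yields the explicit constant $\textsf{C}=1$.
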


\begin{proof}
Define $Z := Y - \E[Y]$, which by Lemma 2.6.8. of \cite{vershyninhighdimprob} is still sub-Gaussian with 
\begin{align*}
    \|Z\|_{\psi_2} \leq \textsf C_1\textsf{K}
\end{align*}
for some fixed $\textsf C_1 > 0$. Now, let $v\in \mathbb{R}^d$. We first know by \definitionref{ui} that since $Z$ is sub-Gaussian with constant $\textsf C_1\textsf{K}$, $Z^\intercal v$ must also be sub-Gaussian with constant at most $\textsf C_1\textsf{K}\|v\|$. We observe that 
\begin{align*}
    v^\intercal \Sigma_Yv \stackrel{(i)}{=} \Var(Z^\intercal v) \stackrel{(ii)}{\leq} \textsf C_2\textsf{K}^2\|v\|^2,
\end{align*}
where $(i)$ is because $Y$ and $Z$ share the same covariance matrix, and the inequality in $(ii)$ is via Proposition 2.5.2 of \cite{vershyninhighdimprob}. This lets us conclude that
\begin{align*}
    \frac{v^\intercal \Sigma_Yv }{\|v\|^2} \leq \textsf C_2\textsf{K}^2,
\end{align*}
and since this holds for all $v \in \R^d$, it holds for the supremum, which exactly defines the operator norm as $\Sigma_Y$ is necessarily positive semi-definite.
\end{proof}
%\section{Proofs to add}

\section{Proofs for the dependent CGMT} \label{appendix:proof:CGMT}

In this section, we prove \theoremref{thm:CGMT:full}, which recovers \theoremref{thm:CGMT_2} directly, and \corollaryref{cor:CGMT}. The proof recipe is similar to that of a standard CGMT: We start by proving a Gaussian min-max theorem (GMT) on discrete sets in \lemmaref{lem:GMT:discrete}, proceed to extend it to compact sets in \lemmaref{lem:GMT:compact}, and then prove the results in \theoremref{thm:CGMT:full}. \corollaryref{cor:CGMT} then follows directly from \theoremref{thm:CGMT:full}(ii).

\vspace{.5em}

As with the standard CGMT, the Gaussian min-max theorem (GMT) on discrete sets is proved for a surrogate optimization problem. Let $(\xi_l)_{l \leq M}$ be a collection of univariate standard Gaussians independent of $\bH$, and define 
\begin{align*}
    \Psi^\xi_{\cS_w, \cS_u}
    \;\coloneqq&\; 
    \min_{w \in \cS_w} \, \max_{u \in \cS_u} \, L^\xi_\Psi(w,u)
    \;,
    \qquad 
    \text{ where }
    \quad 
    L^\xi_\Psi(w,u) 
    \;\coloneqq\; 
    w^\intercal \bH u + \msum_{l=1}^M  \xi_l \, \| w \|_{\Sigma^{(l)}}  \| u \|_{\tilde \Sigma^{(l)}}
    + f(w,u)
    \;.
\end{align*}
We also recall the risk  $\psi_{\cI_p, \cI_n}$ of the auxiliary optimization defined in \theoremref{thm:CGMT:full}.

\begin{lemma}[GMT on discrete sets] \label{lem:GMT:discrete} Let $\cI_p \subseteq \R^p$, $\cI_n \subseteq \R^n$ be discrete sets, and $f$ be finite on $\cI_p \times \cI_n$. Then for all $c \in \R$,
    \begin{align*}
        \P \big( \Psi^\xi_{\cI_p, \cI_n} \geq c  \big) 
        \;\geq\; 
        \P \big( \psi_{\cI_p, \cI_n}  \geq c \big) 
        \;.
    \end{align*}    
\end{lemma}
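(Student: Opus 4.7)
The plan is to reduce the claim to the classical Gordon Gaussian min-max inequality on finite index sets, applied to the two centered Gaussian processes driving $\Psi^\xi$ and $\psi$. Introduce
\begin{align*}
    P_{w,u} \;\coloneqq&\; w^\intercal \bH u + \msum_{l=1}^M \xi_l \, \| w \|_{\Sigma^{(l)}}  \| u \|_{\tilde \Sigma^{(l)}}\;, \\
    A_{w,u} \;\coloneqq&\; \msum_{l=1}^M \Big\{ \| w \|_{\Sigma^{(l)}} \bh_l^\intercal \big( \tilde \Sigma^{(l)} \big)^{1/2} u + w^\intercal \big( \Sigma^{(l)} \big)^{1/2} \bg_l \| u \|_{\tilde \Sigma^{(l)}} \Big\}\;,
\end{align*}
so that $L^\xi_\Psi(w,u) = P_{w,u} + f(w,u)$ and $L_\psi(w,u) = A_{w,u} + f(w,u)$. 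Since $f$ is deterministic and $\cI_p, \cI_n$ are discrete, Gordon's inequality reduces the desired stochastic comparison to three covariance conditions on the pair $(P,A)$: (i) equal variances; (ii) $\mean[P_{w,u} P_{w,u'}] \leq \mean[A_{w,u} A_{w,u'}]$ whenever the index on $\cI_p$ is held fixed; and (iii) $\mean[P_{w,u} P_{w',u'}] \geq \mean[A_{w,u} A_{w',u'}]$ whenever $w \neq w'$.

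Next I would compute the two covariance kernels. Using \Cref{assumption:CGMT:low:rank} and the independence of $(\xi_l), (\bh_l), (\bg_l)$ across $l$ and among families, the cross terms between $\bH$ and the $\xi_l$'s (and likewise between the $\bh$-block and the $\bg$-block) vanish, yielding
\begin{align*}
    \mean[P_{w,u} P_{w',u'}] &\;=\; \msum_l (w^\intercal \Sigma^{(l)} w')(u^\intercal \tilde \Sigma^{(l)} u') + \msum_l \| w \|_{\Sigma^{(l)}} \| w' \|_{\Sigma^{(l)}} \| u \|_{\tilde \Sigma^{(l)}} \| u' \|_{\tilde \Sigma^{(l)}}\;, \\
    \mean[A_{w,u} A_{w',u'}] &\;=\; \msum_l \| w \|_{\Sigma^{(l)}} \| w' \|_{\Sigma^{(l)}} (u^\intercal \tilde \Sigma^{(l)} u') + \msum_l (w^\intercal \Sigma^{(l)} w') \| u \|_{\tilde \Sigma^{(l)}} \| u' \|_{\tilde \Sigma^{(l)}}\;.
\end{align*}
Specialising to $(w,u) = (w',u')$ shows both variances equal $2 \sum_l \| w \|_{\Sigma^{(l)}}^2 \| u \|_{\tilde \Sigma^{(l)}}^2$, proving (i). Fixing only $w = w'$ makes the two right-hand sides identical, so (ii) holds with equality.

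The main step is (iii), which I would verify via the factorisation
\begin{align*}
    \mean[P_{w,u} P_{w',u'}] - \mean[A_{w,u} A_{w',u'}] \;=\; \msum_l \Big( \| w \|_{\Sigma^{(l)}} \| w' \|_{\Sigma^{(l)}} - w^\intercal \Sigma^{(l)} w' \Big) \Big( \| u \|_{\tilde \Sigma^{(l)}} \| u' \|_{\tilde \Sigma^{(l)}} - u^\intercal \tilde \Sigma^{(l)} u' \Big)\;.
\end{align*}
Each factor in every summand is nonnegative by the Cauchy–Schwarz inequality in the semi-inner products induced by the PSD matrices $\Sigma^{(l)}$ and $\tilde \Sigma^{(l)}$, so the whole sum is termwise nonnegative — and this actually holds for \emph{every} pair $(w,u,w',u')$, so (iii) is satisfied without even using $w \neq w'$. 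Gordon's theorem on the discrete set $\cI_p \times \cI_n$ then delivers the stated one-sided stochastic comparison. The conceptual hurdle — though algebraically it boils down to a one-line Cauchy–Schwarz — is recognising why the surrogate $(\xi_l)_{l\leq M}$ must be added to the primary process at all: these extra Gaussians inflate the variance of $w^\intercal \bH u$ by precisely $\sum_l \|w\|^2_{\Sigma^{(l)}}\|u\|^2_{\tilde \Sigma^{(l)}}$, making $\mathrm{Var}(P_{w,u}) = \mathrm{Var}(A_{w,u})$ so that Gordon's equal-variance hypothesis is met; without this doubling, the two processes have mismatched variances and Gordon's comparison is not directly applicable.
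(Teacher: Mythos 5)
Your proof is correct and follows essentially the same route as the paper's: both define the identical pair of centred Gaussian processes, compute the covariance difference, factor it as $\sum_l (\|w\|_{\Sigma^{(l)}}\|w'\|_{\Sigma^{(l)}} - w^\intercal \Sigma^{(l)} w')(\|u\|_{\tilde\Sigma^{(l)}}\|u'\|_{\tilde\Sigma^{(l)}} - u^\intercal \tilde\Sigma^{(l)} u')$, invoke Cauchy--Schwarz in the PSD semi-inner products, and apply Gordon's comparison inequality with the shifts $\lambda_{w,u} = -f(w,u)+c$. No gaps.
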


\begin{proof}[Proof of \lemmaref{lem:GMT:discrete}] Similar to the proof for the standard GMT (see e.g.~proof of Lemma A.1.1~of \citet{thrampoulidis2016recovering}), the proof relies on an application of Gordon's Gaussian comparison inequality (see e.g.~Corollary 3.13~of \citet{ledoux1991probability}) applied to two suitably defined Gaussian processes. Consider the two centred Gaussian processes indexed on the set $\cI_p \times \cI_n$:
\begin{align*}
    Y_{w, u} \;\coloneqq&\; w^\intercal \bH u + \msum_{l=1}^M  \xi_l \, \| w \|_{\Sigma^{(l)}}  \| u \|_{\tilde \Sigma^{(l)}}\;,
    \\
    X_{w, u} \;\coloneqq&\;  \msum_{l=1}^M \big( 
        \| w \|_{\Sigma^{(l)}} \bh_l^\intercal \big( \tilde \Sigma^{(l)} \big)^{1/2} u + w^\intercal \big( \Sigma^{(l)} \big)^{1/2} \bg_l \| u \|_{\tilde \Sigma^{(l)}}
    \big) 
    \;.
\end{align*}
To compare their second moments, we use the independence of $\bH$ and $\{\xi_l\}_{l \leq M}$ as well as the independence of $(\bh_l, \bg_l)_{l \leq M}$: For $w, w' \in \cI_p$ and  $u, u' \in \cI_n$, we have
\begin{align*}
    \mean[ Y_{w, u} Y_{w', u'} ]
    -
    \mean[ X_{w, u} X_{w', u'} ]
    &\;\overset{(a)}{=}\;
    \mean[ w^\intercal \bH u (w')^\intercal \bH  u' ]
    +
    \msum_{l=1}^M  \| w \|_{\Sigma^{(l)}} \, \| w' \|_{\Sigma^{(l)}} \,  \| u \|_{\tilde \Sigma^{(l)}} \,  \| u' \|_{\tilde \Sigma^{(l)}}
    \\
    &\qquad 
    -
    \msum_{l=1}^M \big( 
        \| w \|_{\Sigma^{(l)}} \, \| w' \|_{\Sigma^{(l)}} \, u^\intercal  \tilde \Sigma^{(l)} u'  
        +
        w^\intercal \Sigma^{(l)} w' 
        \, \| u \|_{\tilde \Sigma^{(l)}} \, \| u' \|_{\tilde \Sigma^{(l)}}
    \big)
    \\
    &\;\overset{(b)}{=}\;
    \msum_{l=1}^M 
    \Big( 
        w^\intercal \Sigma^{(l)} w' \, u^\intercal  \tilde \Sigma^{(l)} u' 
        +
        \| w \|_{\Sigma^{(l)}} \, \| w' \|_{\Sigma^{(l)}} \,  \| u \|_{\tilde \Sigma^{(l)}} \,  \| u' \|_{\tilde \Sigma^{(l)}}
    \\
    &\qquad \qquad \qquad
        \;-\,
        \| w \|_{\Sigma^{(l)}} \, \| w' \|_{\Sigma^{(l)}} \, u^\intercal  \tilde \Sigma^{(l)} u'  
        \,-\,
        w^\intercal \Sigma^{(l)} w' 
        \| u \|_{\tilde \Sigma^{(l)}} \| u' \|_{\tilde \Sigma^{(l)}}
    \Big)
    \\
    &\;=\;
    \msum_{l=1}^M 
    \big(   \| w \|_{\Sigma^{(l)}} \, \| w' \|_{\Sigma^{(l)}} -  w^\intercal \Sigma^{(l)} w' \big)
    \big(   \| u \|_{\tilde \Sigma^{(l)}} \, \| u' \|_{\tilde \Sigma^{(l)}} -  u^\intercal \tilde \Sigma^{(l)} u' \big)
    \;.
    \tagaligneq \label{eq:GMT:tmp}
\end{align*} 
In $(a)$, we have used that $\xi_l$'s, $\bh_l$'s and $\bg_l$'s are all standard Gaussians; in $(b)$, we have used 
\begin{align*}
    \mean[ w^\intercal \bH u (w')^\intercal \bH  u' ]
    \;=&\;
    \msum_{i,i'=1}^n \msum_{j,j'=1}^p \, w_i w'_{i'} u_j u'_{j'} \, \mean[ H_{ij} H_{i'j'} ]
    \\
    \;=&\;
    \msum_{l=1}^M
    \msum_{i,i'=1}^n \msum_{j,j'=1}^p \, w_i \Sigma^{(l)}_{ii'} w'_{i'} u_j \tilde \Sigma^{(l)}_{jj'} u'_{j'} 
    \\
    \;=&\;
    \msum_{l=1}^M w^\intercal \Sigma^{(l)} w' \, u^\intercal  \tilde \Sigma^{(l)} u'  \;.
\end{align*}
By the positive semi-definiteness of $\Sigma^{(l)}$ and $\tilde \Sigma^{(l)}$, \eqref{eq:GMT:tmp} is non-negative, and equals to zero when $w = w'$. This shows that the Gaussian processes $(Y_{w, u})_{w \in \cI_p,u \in \cI_n}$ and $(X_{w, u})_{w \in \cI_p,u \in \cI_n}$ verify the conditions of the Gaussian comparison inequality (Corollary 3.13~of \citet{ledoux1991probability}) and therefore for any real sequence $(\lambda_{w,u})_{w \in \cI_p,u \in \cI_n}$,
\begin{align*}
    \P\big( \cap_{w \in \cI_p} \cup_{v \in \cI_n}  \, \{ Y_{w,u} \geq \lambda_{w,u} \} \big)
    \;\geq\; 
    \P\big( \cap_{w \in \cI_p} \cup_{v \in \cI_n}  \, \{ X_{w,u} \geq \lambda_{w,u} \} \big)
    \;.
\end{align*}
Choosing $\lambda_{w,u} = - f(w,u) + c$ yields that 
\begin{align*}
    \P\Big( \min_{w \in \cI_p} \max_{v \in \cI_n}  \, ( Y_{w,u} + f(w,u) ) \geq c  \Big)
    \;\geq\; 
    \P\Big( \min_{w \in \cI_p} \max_{v \in \cI_n}  \, ( X_{w,u} + f(w,u) ) \geq c  \Big)
    \;.
\end{align*}
Noting that the two min-max quantities correspond to $\Psi^\xi_{\cI_p, \cI_n}$ and  $\psi_{\cI_p, \cI_n}$  concludes the proof.
\end{proof}

\vspace{.5em}

The next result extends \lemmaref{lem:GMT:discrete} to compact sets.

\begin{lemma}[GMT for compact sets] \label{lem:GMT:compact} Suppose $\cS_w \subset \R^{p}$ and $\cS_u \subset \R^{n}$ are compact and $f$ is continuous on $\cS_w \times \cS_u$. Then for all $c \in \R$, 
    \begin{align*}
        \P( \Psi^\xi_{\cS_w,\cS_u} \geq c ) \;\geq\; \P( \psi_{S_p,S_n} \geq c ) \;.
    \end{align*}
\end{lemma}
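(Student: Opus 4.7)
My plan is to approximate the compact sets $\cS_w$ and $\cS_u$ by finite $\epsilon$-nets, invoke the discrete Gaussian min-max inequality of \Cref{lem:GMT:discrete}, and pass to the limit $\epsilon \to 0$ using an equicontinuity argument that holds on a high-probability event.

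First, I would fix $R > 0$ and define the event
\begin{align*}
    E_R \;\coloneqq\; \Big\{\, \|\bH\|_{\rm op} \leq R,\ \textstyle\max_{l \leq M}\big( \|\bh_l\|_2 \vee \|\bg_l\|_2 \vee |\xi_l| \big) \leq R \,\Big\}\;.
\end{align*}
Since $\bH$, $(\bh_l,\bg_l,\xi_l)_{l\leq M}$ are Gaussian, for any prescribed $\delta>0$ we can choose $R$ so that $\P(E_R) \geq 1 - \delta$. On $E_R$, both random loss functions $L^\xi_\Psi(w,u)$ and $L_\psi(w,u)$ are continuous on the compact set $\cS_w \times \cS_u$ (using continuity of $f$ together with the bounds on the Gaussians), hence uniformly continuous with some common modulus of continuity $\omega_R(\epsilon)\to 0$ as $\epsilon\to 0$ (the constant $\omega_R$ depends on $R$, the diameters of $\cS_w$, $\cS_u$, and the operator norms of $\Sigma^{(l)},\tilde\Sigma^{(l)}$).

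Second, for each $\epsilon>0$ pick finite $\epsilon$-nets $\cI_p^\epsilon \subset \cS_w$ and $\cI_n^\epsilon \subset \cS_u$. A standard min-max approximation argument gives, on the event $E_R$,
\begin{align*}
    |\Psi^\xi_{\cI_p^\epsilon,\cI_n^\epsilon} - \Psi^\xi_{\cS_w,\cS_u}| \;\leq\; \omega_R(\epsilon)
    \qquad\text{and}\qquad
    |\psi_{\cI_p^\epsilon,\cI_n^\epsilon} - \psi_{\cS_w,\cS_u}| \;\leq\; \omega_R(\epsilon)\;,
\end{align*}
since for any $(w,u)$ one can find a net point within distance $\epsilon$ in each coordinate and perturb the objective by at most $\omega_R(\epsilon)$. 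Invoking \Cref{lem:GMT:discrete} on the finite nets yields
\begin{align*}
    \P\big( \Psi^\xi_{\cI_p^\epsilon,\cI_n^\epsilon} \geq c \big) \;\geq\; \P\big( \psi_{\cI_p^\epsilon,\cI_n^\epsilon} \geq c \big)\;.
\end{align*}

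Third, I would chain the two previous steps: on $E_R$, $\{ \Psi^\xi_{\cI_p^\epsilon,\cI_n^\epsilon} \geq c \} \subseteq \{ \Psi^\xi_{\cS_w,\cS_u} \geq c - \omega_R(\epsilon) \}$ and $\{ \psi_{\cS_w,\cS_u} \geq c + \omega_R(\epsilon)\} \subseteq \{ \psi_{\cI_p^\epsilon,\cI_n^\epsilon} \geq c\}$, so that
\begin{align*}
    \P\big(\Psi^\xi_{\cS_w,\cS_u} \geq c - \omega_R(\epsilon)\big) + \delta
    \;\geq\;
    \P\big(\psi_{\cS_w,\cS_u} \geq c + \omega_R(\epsilon)\big) - \delta\;.
\end{align*}
Letting $\epsilon \to 0$ and then $R \to \infty$ (so $\delta \to 0$), and finally passing to a sequence $c_n \uparrow c$ or $c_n \downarrow c$ through continuity points of the distribution functions of $\Psi^\xi_{\cS_w,\cS_u}$ and $\psi_{\cS_w,\cS_u}$, yields the claimed inequality $\P(\Psi^\xi_{\cS_w,\cS_u} \geq c) \geq \P(\psi_{\cS_w,\cS_u} \geq c)$.

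The main obstacle I expect is verifying the approximation bound $|\Psi^\xi_{\cI_p^\epsilon,\cI_n^\epsilon} - \Psi^\xi_{\cS_w,\cS_u}| \leq \omega_R(\epsilon)$ cleanly: the min-max is not monotone under enlarging/shrinking the domains, so the argument must use the outer min at a near-minimizer $w^* \in \cS_w$ together with an interior max at its approximating $w' \in \cI_p^\epsilon$, and then interchange the role of $u$ and $u'$. This is where the equicontinuity on $E_R$ is doing the real work, and where one must be careful that the modulus $\omega_R$ controls both the inner max and outer min simultaneously.
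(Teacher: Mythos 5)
Your proposal is correct and follows essentially the same route as the paper's proof: discretize $\cS_w\times\cS_u$ by $\delta$-nets, apply the discrete GMT (\Cref{lem:GMT:discrete}), control the net-approximation error of both min-max values via the near-optimizer argument you describe, and remove the error in the limit. The only cosmetic difference is that you bound the Gaussian norms by conditioning on a truncation event $E_R$ of probability $1-\delta$, whereas the paper keeps the error term explicit (of the form $\delta K t + \delta S^2KMt+\epsilon$) and controls it with Chernoff-type tail bounds before sending $t=\delta^{-1/2}$ and $\epsilon\to0$; both resolve the same issue.
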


\begin{proof}[Proof of \lemmaref{lem:GMT:compact}] The proof is almost identical to the proof of standard GMT results for compact sets, now that we have established \lemmaref{lem:GMT:discrete}: We show by a compactness argument that both losses only change a little when replacing $\cS_w$ and $\cS_u$ by their $\delta$-nets $\cS^\delta_p$ and $\cS^\delta_n$, induced by the Euclidean norms on $\R^n$ and $\R^d$ respectively. The only difference from their proof is that we use a slightly different concentration inequality. Therefore we only set up the essential notation, highlight the differences and refer interested readers to the proof of Theorem 3.2.1 of~\citet{thrampoulidis2016recovering}, found in Pg 185-187.

\vspace{.5em}
    
First fix some $\epsilon > 0$. Since $f$ is continuous and thereby uniformly continuous on the compact set $\cS^\delta_p \times \cS^\delta_n$, there exists some $\delta = \delta(\epsilon) > 0$ such that for all $(w,u), (w',u') \in \cS_w \times \cS_u$ with $\| (w,u) - (w', u' )\| \leq \delta$, we have $\| f(w,u)- f(w', u') \| \leq \epsilon$. Use this $\delta$ to form the $\delta$-nets $\cS^\delta_p$ and $\cS^\delta_n$. We also write $\| \argdot \|_{op}$ as the operator norm of a matrix, and write
\begin{align*}
    S \;\coloneqq&\; \mmax_{1 \leq l \leq M} \max \{ \| \Sigma^{(l)} \|_{\rm op}  \,,\, \| \tilde \Sigma^{(l)} \|_{\rm op}  \} 
    &\text{ and }&&
    K \;\coloneqq&\; \max \big\{ \msup_{w \in \cS_w} \| w \|
    \,,\, 
    \msup_{u \in \cS_u} \| u \|
    \big\}
    \;.
\end{align*}
$K$ is bounded since $\cS_w$ and $\cS_u$ are compact, and for $w \in \cS_w$, $u \in \cS_u$ and $l \leq M$, we have 
\begin{align*}
    \| w \| \,\leq\, K\,,
    \qquad 
    \| w  \|_{\Sigma^{(l)}}  \,\leq\, S K\,, 
    \qquad 
    \| u \| \,\leq\, K\,,
    \qquad 
    \| u  \|_{\tilde \Sigma^{(l)}}  \,\leq\, S K\,.
\end{align*}
Then by the same argument as the proof of Theorem 3.2.1 of~\citet{thrampoulidis2016recovering}, there exists $w_1 \in \cS_w$, $w'_1 \in \cS^\delta_p$ with $\| w_1 - w'_1 \| \leq \delta$ and $u_1 \in \cS^\delta_n$ such that 
\begin{align*}
    \Delta^\xi_\Psi 
    \;\coloneqq&\; 
    \mmin_{w \in \cS^\delta_p} \, \mmax_{u \in \cS^\delta_n} \, L^\xi_\Psi(w,u)
    \,-\,
    \mmin_{w \in \cS_w} \, \mmax_{u \in \cS_u} \, L^\xi_\Psi(w,u)
    \\
    \;\leq&\;
    L^\xi_\Psi(w'_1,u_1)
    -
    L^\xi_\Psi(w_1,u_1)
    \;.
\end{align*}
Computing the difference gives 
\begin{align*}
    \Delta^\xi_\Psi 
    \;\leq&\;
    (w'_1 - w_1)^\intercal \bH u_1 
    + 
    \msum_{l=1}^M  \xi_l \, ( \| w'_1 \|_{\Sigma^{(l)}}  - \| w_1 \|_{\Sigma^{(l)}}  )  \| u_1 \|_{\tilde \Sigma^{(l)}}
    + 
    ( f(w'_1,u_1) - f(w_1, u_1) )
    \\
    \;\leq&\;
    \delta \| \bH \| K
    \, + \,
    S K \, \msum_{l=1}^M |\xi_l|  \, \| w'_1 - w_1 \|_{\Sigma^{(l)}}
    \,+\,
    | f(w'_1,u_1) - f(w_1, u_1) | 
    \\
    \;\leq&\;
    \delta K \| \bH\|
    +
    \delta S^2 K \msum_{l=1}^M |\xi_l| 
    +
    \epsilon
    \;.
\end{align*}
We seek to control $\| \bH\|$ and $\msum_{1=1}^M |\xi_l| $ via concentration inequalities. Let $\tvec(\bH)$ denote the $\R^{pn}$-valued vector formed from the entries of $\bH$, and $\Sigma_\bH \coloneqq  \Var[ \tvec(\bH)  ]$. Then we can express, for some $\R^{pn}$-valued standard Gaussian vector $\eta$,
\begin{align*}
    \| \bH \|^2 \;=\; \| \tvec(\bH) \|^2 \;=\; \eta^\intercal \, \Sigma_\bH \, \eta\;.
\end{align*}
Then by a Chernoff bound, we have that for any $t > 0$,
\begin{align*}
    \P( \| \bH \| \geq t ) 
    \;\leq\;
    \minf_{a > 0} e^{-at^2} \mean\big[ e^{a \| \bH\|^2} \big]
    \;=&\;
    \minf_{a > 0} e^{-at^2} \mean\big[ e^{a  \, \eta^\intercal  \Sigma_\bH  \eta } \big]
\end{align*}
Applying the formula of the moment-generating function of a Gaussian quadratic form (see e.g.~\citet{rencher2008linear}) followed by setting $a= \frac{1}{4 \| \Sigma_\bH \|_{op}}$, we obtain
\begin{align*}
    \P( \| \bH \| \geq t ) 
    \;\leq&\;    
    \minf_{a > 0} 
    \, \mfrac{e^{-at^2}  }{\sqrt{\textrm{det}( I_{pn} - 2 a \Sigma_\bH )}}
    \;\leq\;
    \mfrac{e^{-t^2 / (4 \| \Sigma_\bH \|_{op} ) }  }{\sqrt{\textrm{det}( I_{pn} - \frac{1}{2 \|\Sigma_\bH \|_{op} } \Sigma_\bH )}}
    \;\leq\;
    2^{pn/2} \, e^{-t^2 / (4 \| \Sigma_\bH \|_{op} ) }\;.
    \tagaligneq \label{eq:GMT:gaussian:norm:conc}
\end{align*}
On the other hand, a standard concentration result on univariate Gaussians yields 
\begin{align*}
    \P( | \xi_l | > t ) \;\leq\; 2 e^{-t^2/2} \;.
\end{align*}
Taking a union bound, we obtain that for any $t > 0$,
\begin{align*}
    \P\big( \,  \Delta^\xi_\Psi \,\leq\, \delta K t + \delta S^2 K M t + \epsilon \, \big)
    \;\geq\;
    1 - 2^{pn/2} \, e^{-t^2 / (4 \| \Sigma_\bH \|_{op} ) } - 2 M e^{-t^2/2}
    \;,
\end{align*}
and therefore for any $c \in \R$ and $t > 0$,
\begin{align*}
    &\;
    \P\big( \,  \mmin_{w \in \cS_w} \, \mmax_{u \in \cS_u} \, L^\xi_\Psi(w,u) \,\geq\, c - \delta K t - \delta S^2 K M t - \epsilon \, \big)
    \\
    &\qquad 
    \;\geq\;
    \P\big( \,  \mmin_{w \in \cS^\delta_p} \, \mmax_{u \in \cS^\delta_n} \, L^\xi_\Psi(w,u) \,\geq\, c \big)
     - 2^{pn/2} \, e^{-t^2 / (4 \| \Sigma_\bH \|_{op} ) } - 2 e^{-t^2/2}
    \;.
    \tagaligneq \label{eq:GMT:PO:conc}
\end{align*}
A similar argument as in the proof of Theorem 3.2.1 of~\citet{thrampoulidis2016recovering} shows that, there exists $w_2 \in \cS^\delta_p$, $u_2 \in \cS_w$ and $u_2' \in \cS^\delta_n$ with $\| u_2 - u'_2 \| \leq \delta$ such that 
\begin{align*}
    \min_{w \in \cS^\delta_p} \, \max_{u \in \cS^\delta_n} \, L_\psi(w,u)
    &\,-\,
    \min_{w \in \cS_w} \, \max_{u \in \cS_u} \, L_\psi(w,u)
    \;\geq\;
    L_\psi(w_2,u'_2)
    -
    L_\psi(w_2,u_2)
    \\
    &\;=\;
    \msum_{l=1}^M 
    \Big( 
        \| w_2 \|_{\Sigma^{(l)}} 
        \bh_l^\intercal 
        \big( \tilde \Sigma^{(l)} \big)^{1/2} 
        (u'_2 - u_2) 
        +
        w_2^\intercal 
        \big( \Sigma^{(l)} \big)^{1/2} 
        \bg_l 
        ( 
          \| u'_2 \|_{\tilde \Sigma^{(l)}}
          - 
          \| u_2 \|_{\tilde \Sigma^{(l)}}
        )
    \Big)
    \\
    &\qquad
    +
    ( f(w_2, u'_2) - f(w_2, u_2))
    \\
    &\;\geq\;
    - 
    \delta S^2 K \msum_{l=1}^M ( \| \bh_l \| + \| \bg_l \| )
    -
    \epsilon
    \;.
\end{align*}
Applying \eqref{eq:GMT:gaussian:norm:conc} to each $\| \bh_l \|$ and $\| \bg_l \|$ yields that, for any $t > 0$ and $1 \leq l \leq M$,
\begin{align*}
    \P( \| \bh_l \| \geq t ) \;\leq&\; 2^{n/2} e^{-t^2/4}
    &\text{ and }&&
    \P( \| \bg_l \| \geq t ) \;\leq&\; 2^{p/2} e^{-t^2/4}\;.
\end{align*}
Taking another union bound, we get that for any $t > 0$,
\begin{align*}
    &\; 
    \P(  \, \mmin_{w \in \cS_w} \, \mmax_{u \in \cS_u}  \, L_\psi(w,u) \,\geq\, c +     2 \delta S^2 K M t
    +
    \epsilon   \, ) 
    \\
    &\;\leq\;
    \P(  \, \mmin_{w \in \cS_w^\delta} \, \mmax_{u \in \cS_u^\delta}  \, L_\psi(w,u) \,\geq\, c \, )
    \,+\, 2^{n/2} M e^{-t^2/4}
    \,+\, 2^{p/2} M e^{-t^2/4}
        \;.
    \tagaligneq \label{eq:GMT:AO:conc}
\end{align*}
Now by \lemmaref{lem:GMT:discrete}, we have 
\begin{align*}
    \P(  \, \mmin_{w \in \cS_u^\delta} \, \mmax_{u \in \cS_d^\delta}  \, L_\psi(w,u) \,\geq\, c \, )
    \;\leq\; 
    \P\big( \,  \mmin_{w \in \cS^\delta_p} \, \mmax_{u \in \cS^\delta_n} \, L^\xi_\Psi(w,u) \,\geq\, c \big)\;.
\end{align*} 
Combining this with \eqref{eq:GMT:PO:conc} and \eqref{eq:GMT:AO:conc} yields 
\begin{align*}
    &\; 
    \P(  \, \mmin_{w \in \cS_u} \, \mmax_{u \in \cS_d}  \, L_\psi(w,u) \,\geq\, c +     2 \delta S^2 K M t
    +
    \epsilon   \, ) 
    \\
    &\;\leq\;
    \P\big( \,  \mmin_{w \in \cS_u} \, \mmax_{u \in \cS_d} \, L^\xi_\Psi(w,u) \,\geq\, c - \delta K t - \delta S^2 K M t - \epsilon \, \big)
    \\
    &\qquad 
    + 
    2^{n/2} M e^{-t^2/4}
    + 
    2^{p/2} M e^{-t^2/4}
    +
    2^{np/2} \, e^{-t^2 / (4 \| \Sigma_\bH \|_{op} ) } 
    + 
    2 e^{-t^2/2}
    \;.
\end{align*}
The above holds for all $\epsilon > 0$ and $t > 0$. Set $t = \delta^{-1/2}$, take $\epsilon \rightarrow 0$ and choosing a sequence $\delta(\epsilon) \rightarrow 0$, we obtain that 
\begin{align*}
    \P(  \, \mmin_{w \in \cS_w} \, \mmax_{u \in \cS_u}  \, L_\psi(w,u) \,\geq\, c   \, ) 
    \;\leq\;
    \P\big( \,  \mmin_{w \in \cS_w} \, \mmax_{u \in \cS_u} \, L^\xi_\Psi(w,u) \,\geq\, c \, \big)
    \;,
\end{align*}
i.e.~$\P( \Psi^\xi_{\cS_w,\cS_u} \geq c ) \;\geq\; \P( \psi_{S_p,S_n} \geq c )$.
\end{proof}

We are now ready to prove \theoremref{thm:CGMT:full} and \corollaryref{cor:CGMT}.

\vspace{.5em}

\begin{proof}[Proof of \theoremref{thm:CGMT:full}] The proof is almost identical to the proof of Theorem 3.3.1 of \citet{thrampoulidis2016recovering} given the GMT result from \lemmaref{lem:GMT:compact}, and we focus on highlighting the differences. To prove the first bound in (i), we first apply \lemmaref{lem:GMT:compact} to obtain that for all $c \in \R$,
\begin{align*}
    \P\big(  \min_{w \in \cS_n} \, \max_{u \in \cS_d} \, L_\Psi(w,u) + \msum_{l=1}^M  \xi_l \, \| w \|_{\Sigma^{(l)}}  \| u \|_{\tilde \Sigma^{(l)}} \leq c \big) \;\leq\; \P( \psi_{S_n,S_d} \leq c ) \;,
\end{align*}
where $(\xi_l)_{l \leq M}$ is a collection of univariate standard Gaussians independent of $\bH$. First notice that, by conditioning on the event $\cap_{l \leq M} \{\xi_l \geq 0 \}$, we have that
\begin{align*}
    \P(  \Psi_{\cS_p, \cS_n} \leq c )  \;=&\;
    \P\big(  \min_{w \in \cS_n} \, \max_{u \in \cS_d} \, L_\Psi(w,u)  \leq c  \big)
    \\
    \;\leq&\;
    \P\big(  \min_{w \in \cS_n} \, \max_{u \in \cS_d} \, L_\Psi(w,u) + \msum_{l=1}^M  \xi_l \, \| w \|_{\Sigma^{(l)}}  \| u \|_{\tilde \Sigma^{(l)}} \leq c  \,\big|\, 
    \xi_1, \ldots, \xi_M \leq 0 \big) 
\end{align*}
which holds almost surely. Since $\xi_l$'s are all independent and symmetric about zero, and there are $2^M$ possibilities for the signs of $(\xi_1, \ldots, \xi_M)$, we obtain that
\begin{align*}
    \mfrac{1}{2^M}
    \P(  \Psi_{\cS_p, \cS_n} \leq c )  
    \;\leq&\; 
    \mfrac{1}{2^M}
    \P\big(  \min_{w \in \cS_n} \, \max_{u \in \cS_d} \, L_\Psi(w,u) + \msum_{l=1}^M  \xi_l \, \| w \|_{\Sigma^{(l)}}  \| u \|_{\tilde \Sigma^{(l)}} \leq c  \,\big|\, 
    \xi_1, \ldots, \xi_M \leq 0 \big) 
    \\
    \;\leq&\;
    \P\big(  \min_{w \in \cS_n} \, \max_{u \in \cS_d} \, L_\Psi(w,u) + \msum_{l=1}^M  \xi_l \, \| w \|_{\Sigma^{(l)}}  \| u \|_{\tilde \Sigma^{(l)}} \leq c \big)
    \\
    \;\leq&\; \P( \psi_{S_n,S_d} \leq c ) \;,
\end{align*}
which gives the desired statement.

\vspace{.5em}
    
The proof of the bound in (ii) is exactly the same as the proof of Theorem 3.3.1(ii) of \citet{thrampoulidis2016recovering}: It relies on the ability to apply a min-max theorem or a min-max inequality for swapping minimum and maximum under the stated convex-concave assumptions, as well as the invariance of the random term of the loss under a sign change. Both hold for our losses $L_\Psi$ and $L_\psi$, since $\bH$ in our $L_\Psi$ is still zero-mean Gaussian, $L_\psi$ is a linear sum of independent mean-zero Gaussian terms and all additional matrices $\Sigma^{(l)}$ and $\tilde \Sigma^{(l)}$ are positive semi-definite. We refer readers to the proof of Theorem 3.3.1(ii) of \citet{thrampoulidis2016recovering} for a detailed derivation, and note that the only difference in our result is in that the coefficient from the first bound in (i) is now $2^M$ instead of $2$.

\vspace{.5em}

The proof of (iii) is also exactly the same as the proof of Theorem 3.3.1(iii) of \citet{thrampoulidis2016recovering}, which only relies on the three assumptions, the statements (i) and (ii) proved above and a union bound. We again refer readers to the proof of Theorem 3.3.1(iii) of \citet{thrampoulidis2016recovering} for a detailed derivation.
\end{proof}

\begin{proof}[Proof of \corollaryref{cor:CGMT}] The result follows directly from \theoremref{thm:CGMT:full}(ii); see Corollary 3.3.2~of \citet{thrampoulidis2016recovering}. 
    
\end{proof}

\section{Intermediate results for applying CGMT to data augmentation}  \label{appendix:DA:cgmt:results}

For clarity, throughout \cref{appendix:DA:cgmt:results,appendix:proof:DA}, we will index all augmentations as $\phi_{ij}$ where $i \leq m$, the number of original data, and $j \leq k$, the number of augmentations. Recall that $n=mk$. We also write the label of $\phi_{ij}(Z_i)$ as $y_i(Z_i)$ to emphasize the dependence on the original data $Z_i$.

\subsection{Equivalence of different optimization problems} \label{appendix:DA:optimizations}

To prove \theoremref{thm:DA:risk}, we seek to obtain a set of deterministic equations whose solutions characterize the high-dimensional behavior of logistic regression estimate. This involves establishing the equivalence of a series of optimization problems, which are defined in this section. We also formally state all lemmas used to establish the equivalence. 

\vspace{.5em}

\textbf{Original optimization (OO). } The loss on the augmented data computed on $\beta \in \R^p$ is given as
\begin{align}
    L_\beta(\bX, \bX^\Phi ) \;\coloneqq\;
    \mfrac{1}{mk} \msum_{i=1}^m \msum_{j=1}^k\, 
    \Big( 
        \log \big( 1 + e^{( \phi_{ij} (Z_i))^\intercal \beta} \big) 
        -
        y_i(Z_i) \times ( \phi_{ij}(Z_i))^\intercal \beta
    \Big)
    +    
    \mfrac{\lambda}{2n} \| \beta \|^2_2 
    \;.
\end{align}
Here, the loss is computed on the two dependent $\R^{m \times p}$ and $\R^{mk \times p}$-valued data matrices 
\begin{align*}
    \bX \;\coloneqq\; \begin{psmallmatrix}
        \leftarrow Z_1^\intercal  \rightarrow \\
        \vdots \\
        \leftarrow Z_m^\intercal \rightarrow \\
   \end{psmallmatrix}
   \quad\text{ and }\quad
    \bX^\Phi \;\coloneqq\; \begin{psmallmatrix}
         \bX_1^\Phi \\
         \vdots \\
         \bX_m^\Phi \\
    \end{psmallmatrix}
    \qquad 
    \text{ where }
    \qquad 
    \bX^\Phi_i \;\coloneqq\; 
    \begin{psmallmatrix}
        \leftarrow (\phi_{i1}(Z_i) )^\intercal \rightarrow \\
        \vdots \\
        \leftarrow (\phi_{ik}(Z_i) )^\intercal \rightarrow \\
   \end{psmallmatrix}
    \;.
\end{align*}
Let $S$ be any convex and compact subset of $\R^p$. We denote the minimized risk over $S$ and the corresponding minimizer respectively as 
\begin{align}
    \hat R_S(\bX, \bX^\Phi)
    \;\coloneqq&\; 
    \min_{\beta \in S} L_\beta(\bX,\bX^\Phi)
    &\text{ and }&&
    \hat \beta_S(\bX,\bX^\Phi)
    \;\coloneqq&\;
    \argmin_{\beta \in S} L_\beta(\bX,\bX^\Phi)
    \;.
    \tag{OO}\label{OO}
\end{align}
We label \eqref{OO} as the \textbf{original optimization}. By our universality result, we may replace the dependent data matrices in \eqref{OO} by Gaussian matrices.

\vspace{.5em}

\textbf{Gaussian optimization (GO).}  Recall that $\Sigma_o = \Var[Z_1]$ and $\Sigma = \Var[\phi_{11}(Z_1)]$. We denote the corresponding minimized risk under Gaussian data as
\begin{align}
    \hat R_S(\bG \Sigma_o^{1/2}, \bG^\Phi \Sigma^{1/2})
    \;\coloneqq&\; 
    \mmin_{\beta \in S} L_\beta(\bG \Sigma_o^{1/2}, \bG^\Phi \Sigma^{1/2})\;,
    \notag
    \\
    \text{ and }\qquad
    \hat \beta_S(\bG \Sigma^{1/2}, \bG^\Phi \Sigma_o^{1/2})
    \;\coloneqq&\;
    \argmin_{\beta \in S} L_\beta(\bG \Sigma^{1/2}, \bG^\Phi \Sigma_o^{1/2})
    \;.
    \tag{GO} \label{GO}
\end{align}
The risk is computed on the two correlated Gaussian matrices 
\begin{align*}
    \bG \;\coloneqq\; \begin{psmallmatrix}
        \leftarrow G_1^\intercal  \rightarrow \\
        \vdots \\
        \leftarrow G_m^\intercal \rightarrow \\
   \end{psmallmatrix}
   \quad\text{ and }\quad
    \bG^\Phi \;\coloneqq\; \begin{psmallmatrix}
         \bG_1^\Phi \\
         \vdots \\
         \bG_m^\Phi \\
    \end{psmallmatrix}
    \qquad 
    \text{ where }
    \qquad 
    \bG^\Phi_i \;\coloneqq\; 
    \begin{psmallmatrix}
        \leftarrow (G^\Phi_{i1})^\intercal \rightarrow \\
        \vdots \\
        \leftarrow (G^\Phi_{ijk})^\intercal \rightarrow \\
   \end{psmallmatrix}
    \;,
\end{align*}
where $\Sigma_o^{1/2} G_i$ corresponds to $Z_i$, $\bG^\Phi_i \Sigma^{1/2}$ corresponds to $\bX^\Phi_i$ and $\Sigma^{1/2} G^\Phi_{ij}$ corresponds to $\phi_{ij}(Z_i)$, and
\begin{align*}
    \mean[(\bG, \bG^\Phi)] \,=&\, \mean[(\bX, \bX^\Phi) ] \,=\, 0
    &\text{ and }&&
    \Var[(\bG \Sigma_o^{1/2}, \bG^\Phi \Sigma^{1/2})] \,=&\, \mean[(\bX, \bX^\Phi) ]
    \;.
\end{align*}

\vspace{.5em}

\textbf{Primary optimization (PO).} Since \eqref{GO} only depends on Gaussian data, we may adapt the CGMT technique to analyze its limiting behaviour. This requires a reformulation of \eqref{GO} in a similar way to the reformulation of the primary optimization in \citet{salehi2019impact}. To make this reformulation precise, we introduce some more notations. Given an $\R^{mk}$-valued vector $\bv$, we denote
\begin{align*}
    \rho(\bv) \;\coloneqq&\; 
    \big( 
        \log(1 + e^{v_{11}}) 
        \,,\, \ldots \,,\, 
        \log(1 + e^{v_{mk}}) 
    \big)^\intercal 
    \;\in\; \R^{mk}
    \;.
\end{align*}
Also write the $\R^{mk}$-valued vector of labels for (the Gaussian surrogates for) the augmented data as
\begin{align*}
    \by(\bG \Sigma_o^{1/2} \beta^*) \;\coloneqq\; \big( \,  \underbrace{y_1( \Sigma_o^{1/2} G_1), \ldots, y_1( \Sigma_o^{1/2} G_1)}_{\textrm{repeated $k$ times}}, \; \ldots, \; \underbrace{y_m( \Sigma_o^{1/2} G_m), \ldots, y_m(\Sigma_o^{1/2} G_m)}_{\textrm{repeated $k$ times}} \, \big)^\intercal \;,
\end{align*}
where we highlight that $\by$ depends on $\bG$ only through the $\R^n$ vector $\bG \Sigma_o^{1/2} \beta^*$. For $d \in \N$, we also write $\bone_d$ as the all-one vector in $\R^d$.This allows us to rewrite the loss in \eqref{GO} as 
\begin{align*}
    L_\beta(\bG \Sigma^{1/2}_o, \bG^\Phi \Sigma^{1/2})
    \;=&\;
    \mfrac{1}{mk} \bone_{mk}^\intercal \, \rho( \bG^\Phi \Sigma^{1/2} \beta )
    -
    \mfrac{1}{mk} \by(\bG \Sigma_o^{1/2} \beta^*)^\intercal \bG^\Phi \Sigma^{1/2} \beta
    +    
    \mfrac{\lambda}{2n} \| \beta \|^2_2 
    \;.
\end{align*}
Introducing a new variable $u \in \R^{mk}$ and a corresponding Lagrange multiplier $v \in \R^{mk}$, we can consider an alternative loss 
\begin{align*}
    L^{\rm PO}_{\beta, u, v}(\bG \Sigma^{1/2}_o \beta^*, \bG^\Phi \Sigma^{1/2}) 
    \;\coloneqq&\;
    \mfrac{1}{mk} \bone_{mk}^\intercal \, \rho( u )
    -
    \mfrac{1}{mk} \by(\bG \Sigma^{1/2}_o \beta^*)^\intercal u
    +    
    \mfrac{\lambda}{2n} \| \beta \|^2_2 
    +
    \mfrac{1}{mk} v^\intercal ( u - \bG^\Phi \Sigma^{1/2} \beta )
    \;.
\end{align*}
For subsets $S \subseteq \R^p$ and $S_u, S_v \subseteq \R^{mk}$, we denote the minimized loss and the minimizer as 
\begin{align}
    R^{\rm PO}_{S, S_u, S_v}(\bG \Sigma^{1/2}_o \beta^*, \bG^\Phi \Sigma^{1/2}) 
    \;\coloneqq&\; 
    \min_{\beta \in S, u \in S_u} \, \max_{v \in S_v} L^{\rm PO}_{\beta, u, v}(\bG \Sigma^{1/2}_o \beta^*, \bG^\Phi \Sigma^{1/2}) 
    \notag
    \\
    \text{ and }\qquad
    \beta^{\rm PO}_{S, S_u, S_v}(\bG \Sigma^{1/2}_o \beta^*, \bG^\Phi \Sigma^{1/2}) 
    \;\coloneqq&\;
    \argmin_{\beta \in S}\, \min_{u \in S_u} \, \max_{v \in S_v}  L^{\rm PO}_{\beta, u, v}(\bG \Sigma^{1/2}_o \beta^*, \bG^\Phi \Sigma^{1/2}) 
    \;.
    \tag{PO} \label{PO}
\end{align}

\begin{lemma}[Equivalence of \eqref{GO} and \eqref{PO}] \label{lem:GO:PO}  
\begin{align*}
    \hat R_S(\bG \Sigma^{1/2}_o,\bG^\Phi \Sigma^{1/2}) \;=&\; R^{\rm PO}_{S, \R^{mk}, \R^{mk}}(\bG \Sigma^{1/2}_o \beta^*, \bG^\Phi \Sigma^{1/2})\;,
    \\
    \hat \beta_S(\bG \Sigma^{1/2}_o,\bG^\Phi \Sigma^{1/2})  \;=&\; \beta^{\rm PO}_{S,\R^{mk}, \R^{mk}}(\bG \Sigma^{1/2}_o \beta^*, \bG^\Phi \Sigma^{1/2})
    \;.
\end{align*}
\end{lemma}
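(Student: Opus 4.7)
The lemma is a direct Lagrangian reformulation: the auxiliary variable $v$ in \eqref{PO} plays the role of a Lagrange multiplier enforcing the linear equality $u = \bG^\Phi \Sigma^{1/2} \beta$. Once that equality is imposed, $L^{\rm PO}_{\beta,u,v}$ reduces to $L_\beta(\bG\Sigma^{1/2}_o,\bG^\Phi\Sigma^{1/2})$, so the two optimizations coincide termwise.

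The plan is to proceed in three short algebraic steps. First, for each fixed $\beta \in S$ and $u \in \R^{mk}$, I would compute the inner maximum explicitly:
\begin{align*}
    \mmax_{v \in \R^{mk}} \tfrac{1}{mk}\, v^\intercal(u - \bG^\Phi \Sigma^{1/2} \beta)
    \;=\; \begin{cases} 0 & \text{if } u = \bG^\Phi \Sigma^{1/2} \beta, \\ +\infty & \text{otherwise,} \end{cases}
\end{align*}
which is immediate since $v \mapsto v^\intercal w$ is unbounded above on $\R^{mk}$ for any $w \neq 0$. Second, I would substitute this back to obtain
\begin{align*}
    \mmin_{u \in \R^{mk}} \mmax_{v \in \R^{mk}} L^{\rm PO}_{\beta,u,v}
    \;=\; L_\beta(\bG \Sigma^{1/2}_o,\bG^\Phi \Sigma^{1/2})\;,
\end{align*}
since the inner $\min_u$ is forced to pick $u = \bG^\Phi \Sigma^{1/2} \beta$ (any other choice makes the $\max_v$ blow up), and plugging this value in reproduces the three terms defining $L_\beta$. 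Third, taking $\min_{\beta \in S}$ on both sides of the previous identity yields both the equality of minimized risks and, as an immediate consequence, the coincidence of the argmin sets in $\beta$.

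\textbf{Main obstacle.} There is no real obstacle---this is a routine Lagrangian reformulation, standard in the CGMT literature (cf.~the primary optimizations of \citet{thrampoulidis2016recovering, salehi2019impact}). The only subtlety worth flagging is that the $\min_u \max_v$ order (rather than $\max_v \min_u$) is crucial: $\max_v$ acts as a hard infinite penalty, and $\min_u$ then selects the unique feasible point. Because of this, no min-max swap or convex-concave duality theorem is invoked, and no convexity assumption on $L_\beta$ or compactness of $S_u, S_v$ is needed; the argument is purely algebraic.
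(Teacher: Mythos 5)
Your argument is correct and is exactly the Lagrange-multiplier reformulation the paper invokes by citing (37)--(40) of \citet{salehi2019impact}: the inner $\max_v$ acts as an infinite penalty forcing $u = \bG^\Phi\Sigma^{1/2}\beta$, after which $L^{\rm PO}$ reduces termwise to $L_\beta$. You have simply written out the details that the paper delegates to the reference; no gap.
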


\begin{proof}[Proof of \lemmaref{lem:GO:PO}] The proof is exactly the same to the reformulation of the primary optimization in \citet{salehi2019impact} by the Lagrange multiplier method, and we refer readers to their (37) -- (40) in Appendix C for the proof. 
\end{proof}

\vspace{.5em}

\textbf{Auxiliary optimization (AO).}  Before we present the auxiliary optimization, we notice that two key issues make our problem more complicated from the setup in \citet{salehi2019impact}:
\begin{itemize}
    \item In \citet{salehi2019impact}, they have the same data matrices for $\bG$ and $\bG^\Phi $ with i.i.d.~standard normal entries and $\Sigma_o = \Sigma = I_d$. This allows them to project $\bG^\Phi$ onto the subspace orthogonal to $\beta^*$, which is independent of $\bG \beta^*$, and apply CGMT. In our case, $\bG$ and $\bG^\Phi$ are different and have non-trivial dependence. We instead make use of a projection $P^\perp_*$  adapted to the variance-covariance structure in \Cref{assumption:CGMT:var}, defined through 
    \begin{align*}
        P_* \;\coloneqq&\; 
        \begin{cases}
            \mfrac{(\Sigma_* \Sigma_o^{1/2} \beta^* )  (\Sigma_* \Sigma_o^{1/2} \beta^* )^\intercal }{\|  \Sigma_*  \Sigma_o^{1/2} \beta^*  \|^2}
            & \text{ if }    \Sigma_*  \Sigma_o^{1/2} \beta^* \neq 0 \\
            0 & \text{ otherwise }\;,
        \end{cases}
        &\text{ and }&&
        P^\perp_* 
        \;\coloneqq&\; 
        I_p - P_* \;.
    \end{align*}
    In other words, $P^\perp_*$ is a projection onto the subspace orthogonal to $\Sigma_* \Sigma_o^{1/2} \beta^*$. This is explicitly addressed in \Cref{appendix:DA:PO:AO};
    \item As discussed in \Cref{sec:cgmt}, the Gaussian matrix handled by existing work on CGMT is either one with i.i.d.~coordinates, or one formed by multiplying a coordinate-wise i.i.d.~matrix by an $\R^{m \times m}$ matrix and an $\R^{p \times p}$ matrix from both sides. Our augmented matrix, $\bG^\Phi$, cannot be expressed in either form due to the simultaneous presence of two forms of variances: Each row of $\bG^\Phi$ admits a variance of $I_p$, whereas the rows corresponding to different augmentations of the same data admit a variance of $\Sigma_*$. We resolve this issue by applying our dependent CGMT (\theoremref{thm:CGMT_2}) with $M=2$.
\end{itemize}
Having addressed these two issues, we are able to borrow most of the algebraic calculations from \citet{salehi2019impact} for analyzing the auxiliary optimization, except that the limiting terms we obtain are different due to augmentations.
% We will also apply the CGMT result of \theoremref{thm:CGMT_2} with $M=2$. 

\vspace{.5em}

To state the auxiliary optimization, let $\bg_1, \bg_2, \bh_1, \bh_2$ be independent standard Gaussians such that $\bg_l$'s are $\R^p$-valued and $\bh_l$'s are $\R^{mk}$-valued. We also denote the collections $\bg = (\bg_1, \bg_2)$ and $\bh = (\bh_1, \bh_2)$ for short, and define the matrices
\begin{align*}
    &
    \;
    \Sigma_1 
    \;\coloneqq\;
    \Sigma^{1/2} P_*^\perp (I_p - \Sigma_*) P_*^\perp \Sigma^{1/2}
    \;,
    \quad 
    \Sigma_2 
    \;\coloneqq\;
    \Sigma^{1/2} P_*^\perp \Sigma_*  P_*^\perp \Sigma^{1/2}\;,
    \quad
    J_{mk}
    \;\coloneqq\;
    \begin{psmallmatrix}
        \bone_{k \times k} & & \\
        & \ddots & \\
        & & \bone_{k \times k} \\
    \end{psmallmatrix}
    \in \R^{mk \times mk}
    \;.
\end{align*}
The loss of \eqref{AO},  parameterized by $\beta \in \R^p$ and $u, v \in \R^{mk}$, is given as
\begin{align*}
    L^{\rm AO}_{\beta,u,v}(\by, \bG^\Phi  P_*, \bg, \bh)
    \;\coloneqq&\;
    \mfrac{1}{mk} \bone_{mk}^\intercal \, \rho( u )
    -
    \mfrac{1}{mk} \by^\intercal u
    +    
    \mfrac{\lambda}{2n} \| \beta \|^2_2 
    +
    \mfrac{1}{mk} v^\intercal ( u - \bG^\Phi P_* \Sigma^{1/2} \beta )
    -
    \mfrac{1}{mk} 
    v^\intercal \bh_1 \| \beta \|_{\Sigma_1}
    \\
    &\;
    -
    \mfrac{1}{mk} 
    \| v \| \bg_1^\intercal \Sigma_1^{1/2} \beta
    -
    \mfrac{1}{mk^{3/2}} 
    v^\intercal J_{mk} \bh_2 \| \beta \|_{\Sigma_2}
    -
    \mfrac{1}{mk} 
    \| v \|_{J_{mk}} \bg_2^\intercal \Sigma_2^{1/2} \beta
    \;.
\end{align*}
Note that we have abbreviated $\by = \by(\bG \Sigma_o^{1/2} \beta^*)$. We also denote the minimized loss with respect to the subset $(S, S_u, S_v) \subseteq \R^p \times \R^{mk} \times \R^{mk}$ as 
\begin{align}
    R^{\rm AO}_{S, S_u, S_v}(\by, \bG^\Phi  P_*, \bg, \bh)
    \;\coloneqq&\; 
    \min_{\beta \in S, u \in S_u} \, \max_{v \in S_v} L^{\rm AO}_{\beta,u,v}(\by, \bG^\Phi  P_*, \bg, \bh)
    % \notag
    % \\
    % \text{ and }\qquad
    % \beta^{\rm AO}_S(\bG \Sigma_o^{1/2} \beta^*, \bG^\Phi  P_*, \bg, \bh)
    % \;\coloneqq&\;
    % \argmin_{\beta \in S}\, \min_{\beta \in S} \, \max_{v \in \R^{mk}}  L^{\rm AO}_{\beta,u,v}(\bG \Sigma_o^{1/2} \beta^*, \bG^\Phi  P_*, \bg, \bh)
    \;.
    \tag{AO} \label{AO}
\end{align}
The next result applies \theoremref{thm:CGMT_2} to convert \eqref{PO} into \eqref{AO}.

\begin{lemma}[Equivalence of \eqref{PO} and \eqref{AO}] \label{lem:PO:AO} Suppose \Cref{assumption:CGMT:var} holds. Let $S \subset \R^p$ and $S_u, S_v \in \R^{mk}$ be compact, convex and non-empty. Then all conclusions of \theoremref{thm:CGMT_2} hold with $\Psi_{\cS_p, \cS_n}$ replaced by $R^{\rm PO}_{S, S_u, S_v}(\bG \Sigma^{1/2}_o \beta^*, \bG^\Phi \Sigma^{1/2})$ and $\psi_{\cS_p, \cS_m}$ replaced by $ R^{\rm AO}_{S, S_u, S_v}(\by, \bG^\Phi  P_*, \bg, \bh) $.
\end{lemma}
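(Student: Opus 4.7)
The plan is to apply the dependent CGMT (\Cref{thm:CGMT_2}) with $M = 2$ to \eqref{PO} after splitting the Gaussian data via $\bG^\Phi \Sigma^{1/2} = \bG^\Phi P_* \Sigma^{1/2} + \bG^\Phi P_*^\perp \Sigma^{1/2}$. The $P_*$-component captures all the dependence with the labels $\by$ and is kept inside the deterministic-looking part $f(w,u)$ of the CGMT bilinear formulation; the $P_*^\perp$-component plays the role of the Gaussian $\bH$ in \Cref{thm:CGMT_2} and will be shown to be independent of $(\bG^\Phi P_*, \by)$, so that conditioning and towering reduces the problem to a conditional application of the dependent CGMT.

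The first substep is to verify that $\bG^\Phi P_*^\perp$ is independent of $\bG^\Phi P_*$ and of $\bG \Sigma_o^{1/2} \beta^*$ (which fully determines $\by$). Because everything is jointly Gaussian, it is enough to check the cross-covariances. Using $\Var[G^\Phi_{ij}] = I_p$, $\Cov[G^\Phi_{ij}, G^\Phi_{ij'}] = \Sigma_*$ for $j \neq j'$ (from the definition of $\Sigma_*$) and $\Cov[G^\Phi_{ij}, G_i] = \Sigma_*$ (\Cref{assumption:CGMT:var}(i)), the required identities reduce to $P_*^\perp \Sigma_* P_* = 0$ and $P_*^\perp \Sigma_* \Sigma_o^{1/2} \beta^* = 0$. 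Both follow from \Cref{assumption:CGMT:var}(ii): the idempotence $\Sigma_*^2 = \Sigma_*$ yields $\Sigma_*(\Sigma_* \Sigma_o^{1/2} \beta^*) = \Sigma_* \Sigma_o^{1/2} \beta^*$, so $\Sigma_* \Sigma_o^{1/2} \beta^*$ lies in the range of $P_*$ and is killed by $P_*^\perp$, while the same identity gives $\Sigma_* P_* = P_*$ and hence $P_*^\perp \Sigma_* P_* = 0$.

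A row-by-row calculation then shows that the Gaussian matrix $\bH \coloneqq \Sigma^{1/2} P_*^\perp (\bG^\Phi)^\intercal \in \R^{p \times mk}$ driving the remaining bilinear term $-\tfrac{1}{mk}\beta^\intercal \bH v$ in \eqref{PO} has covariance
\begin{align*}
    \Cov\big[\bH_{j_1,(i_1 j_1')}, \bH_{j_2,(i_2 j_2')}\big] \;=\; (\Sigma_1)_{j_1 j_2} (I_{mk})_{(i_1 j_1'),(i_2 j_2')}  +  (\Sigma_2)_{j_1 j_2}(J_{mk})_{(i_1 j_1'),(i_2 j_2')}\;,
\end{align*}
matching \Cref{assumption:CGMT:low:rank} with $(\Sigma^{(1)}, \tilde\Sigma^{(1)}) = (\Sigma_1, I_{mk})$ and $(\Sigma^{(2)}, \tilde\Sigma^{(2)}) = (\Sigma_2, J_{mk})$. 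I cast \eqref{PO} in CGMT form by taking $w = (\beta, u) \in S \times S_u$, $u_{\rm CGMT} = v \in S_v$, and $f(w,u_{\rm CGMT})$ equal to the deterministic remainder of $L^{\rm PO}$, including the conditionally deterministic $-\tfrac{1}{mk} v^\intercal \bG^\Phi P_* \Sigma^{1/2} \beta$; convex-concavity of $f$ is immediate from convexity of $\rho$ and $\|\beta\|^2$ and linearity of the remaining bilinear terms. \Cref{thm:CGMT_2} then produces exactly the four Gaussian terms appearing in $L^{\rm AO}_{\beta,u,v}$, with the factor $k^{-1/2}$ in the $\bh_2$ term coming from $(J_{mk})^{1/2} = J_{mk}/\sqrt{k}$ (since each $k\times k$ block of $J_{mk}$ has a single nonzero eigenvalue $k$); the sign mismatch between $-w^\intercal \bH u$ and the $+w^\intercal \bH u$ of \Cref{thm:CGMT_2} is harmless by Gaussian symmetry. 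All probabilistic statements of \Cref{thm:CGMT_2} hold conditionally on $(\bG^\Phi P_*, \by)$ and lift to unconditional statements by tower. The main obstacle is the independence step, which is where the projection $P_*$ has been chosen compatibly with the covariance structure of $\bG^\Phi$ and where all parts of \Cref{assumption:CGMT:var} enter; the remaining covariance and convex-concavity checks are routine bookkeeping.
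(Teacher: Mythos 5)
Your proposal is correct and follows essentially the same route as the paper: the paper likewise first proves that $\bG^\Phi P_*^\perp$ is independent of $(\bG\Sigma_o^{1/2}\beta^*, \bG^\Phi P_*)$ via the zero cross-covariance checks $P_*^\perp\Sigma_*\Sigma_o^{1/2}\beta^*=0$ and $P_*^\perp\Sigma_*P_*=0$ (\Cref{lem:CGMT:proj:cov}), then computes the covariance of $\Sigma^{1/2}P_*^\perp\bG^\Phi$ as $(I_{mk})\otimes\Sigma_1 + (J_{mk})\otimes\Sigma_2$ (\Cref{lem:cov:GPstar}), and finally applies \Cref{thm:CGMT_2} with $M=2$ conditionally on $(\bG\Sigma_o^{1/2}\beta^*,\bG^\Phi P_*)$ before marginalizing. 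All of your bookkeeping (the $k^{-1/2}$ from $(J_{mk})^{1/2}$, the sign symmetry, and the convex-concavity of the conditionally deterministic remainder) is sound.
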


\vspace{.5em}

\textbf{Scalar optimization (SO).} 
The next step is to convert \eqref{AO} into a scalar formulation. For convenience we write $\| \argdot \| = \| \argdot \|_2$ as the Euclidean norm throughout this section, unless otherwise specified. While the form of the optimization is complicated, we note that the terms are largely similar to the AO in \citet{salehi2019impact}, except for additional parameters $(\sigma_1, \nu_1, r_1, \tau_1)$ introduced to handle the additional covariance across different augmented versions of the same data.  To define the scalar formulation, given the convex compact and non-empty subsets $S \subset \R^p$, $S_u, S_v \in \R^{mk}$, we define the following compact domains of optimization:
\begin{align*}
    &
    S_{r_1}
    \;\coloneqq\; 
    \Big\{ \mfrac{1}{\sqrt{mk}} \| P^\perp_{mk} v \| \,\Big|\, v \in S_v \Big\}
    \;,
    \qquad 
    S_{r_2}
    \;\coloneqq\; 
    \Big\{ \mfrac{1}{\sqrt{mk}} \| P_{mk} v \| \,\Big|\, v \in S_v \Big\}\;,
\end{align*}
where we have defined the projection matrices $P_{mk} \coloneqq \frac{1}{k} J_{mk}$ and write $P^\perp_{mk} = I_{mk} - P_{mk}$. Also define 
\begin{align*}
    S^\alpha \;\coloneqq\; 
    \Big\{ \mfrac{v(\beta^*)^\intercal \Sigma^{1/2} \beta}{\sqrt{p} \, \kappa^2_*} \,\Big| \, \beta \in S \Big\} 
    \;,
\end{align*}
where $v(\beta^*) \coloneqq \sqrt{p} \, \Sigma_* \Sigma_o^{1/2} \beta^*$, $\kappa_* = \| \Sigma_* \Sigma_o^{1/2} \beta^* \|$. Define
\begin{align*}
    S_{\sigma_1} 
    \;\coloneqq\; 
    \Big\{ \| (I_p - \Sigma_*) P_*^\perp \Sigma^{1/2} \beta \| \, \big|  \, \beta \in S  \Big\}
    \;,
    \qquad 
    S_{\sigma_2}
    \;\coloneqq\; 
    \Big\{ \|\Sigma_* P_*^\perp \Sigma^{1/2} \beta \| \, \big|  \, \beta \in S  \Big\}
    \;.
\end{align*}
The optimization will be performed over the two $\R^5$-valued vectors 
\begin{align*}
    \tilde \alpha 
    \;\coloneqq&\; 
    (\alpha, \sigma_1, \sigma_2, \nu_1, \nu_2) \;\in\; S^\alpha \times S_{\sigma_1} \times S_{\sigma_2} \times (\R^+_0)^2
    \;\coloneqq\; 
    S_1
    \;,
    \\
    \tilde \theta 
    \;\coloneqq&\; 
    (r_1, r_2, \tau_1, \tau_2, \theta) \;\in\; S_{r_1} \times S_{r_2} \times (\R^+_0)^2 \times \R
    \;\coloneqq\;
    S_2
    \;.
\end{align*}
We also define $P_\Sigma = (\Sigma^\dagger)^{1/2} \Sigma^{1/2}$, the projection onto the positive eigenspace of $\Sigma$, and the matrix 
\begin{align*}
    \tilde \Sigma_{\sigma, \tau}
    \;\coloneqq&\; 
    \mfrac{1}{2 \sigma_1 \tau_1} (P_\Sigma  - \Sigma_* ) 
    + 
    \mfrac{1}{2 \sigma_2 \tau_2}  \Sigma_*
    \;.
\end{align*}
Also define the Gaussian random vectors 
\begin{align*}
    &\;
    \bq 
    \;\coloneqq\;
    \mfrac{1}{\kappa_* \, \sqrt{p}} \bG^\Phi \, v(\beta^*)
    \;=\;
    \bG^\Phi  \mfrac{\Sigma_* \Sigma_o^{1/2} \beta^*}{\| \Sigma_* \Sigma_o^{1/2} \beta^* \|}
    \;,
    \qquad
    \tilde \bh_{\alpha, \sigma} 
    \;\coloneqq\;
    \kappa_*\alpha \bq - \sigma_1 \bh_1 - \mfrac{\sigma_2}{\sqrt{k}} J_{mk} \bh_2 \;,
    \\
    &\;
    \tilde \bg 
    \;\coloneqq\; 
    -
    \mfrac{r_1 + r_2}{\sqrt{mk}} \, (P_\Sigma  - \Sigma_* ) \bg_1 
    -
    \mfrac{r_2}{\sqrt{m}} \, \Sigma_* \bg_2 
    \;.
\end{align*}
For a function $f: \cS' \rightarrow \R$ and some $\cS' \subseteq \R^{mk}$, we define the Moreau envelope
\begin{align*}
    \cM_S(f;v, t) 
    \;\coloneqq\;
    \min_{x \in \cS} f(x) + \mfrac{1}{2 t} \| x - v \|^2_2 
    \;.
\end{align*}
Now we are ready to define the loss 
\begin{align*}
    L^{\rm SO}_{\tilde \alpha, \tilde \theta}(\by, \bq, \bg, \bh) 
    \;\coloneqq&\;
   -
   \mfrac{\sigma_1}{2 \tau_1} 
   -
   \mfrac{\sigma_2}{2 \tau_2} 
   +
   \mfrac{r_1}{2 \nu_1} 
   +
   \mfrac{r_2}{2 \nu _2}
   +
   \alpha \theta \kappa_*^2 
   -
   \mfrac{\alpha^2 \kappa_*^2}{2 \sigma_2 \tau_2}
   +
   M_{\bg,\sigma, \tau, \theta}
   -
   \mfrac{1}{4} \big\|  (\tilde \Sigma_{\sigma, \tau}^\dagger)^{1/2}
   \big( \tilde \bg
           + 
           \mfrac{\theta}{\sqrt{p}}    v(\beta^*)
   \big) 
   \big\|^2 
   \\
   &\,
   +
   \mfrac{1}{mk}  M_{\by, \tilde \bh_{\alpha, \sigma}, r, \nu}  
   -
   \mfrac{1}{2 r_2 \nu_2 mk }  
   \| \by \|^2
   -
   \mfrac{1}{mk}  \by^\intercal \tilde \bh_{\alpha, \sigma}  
    \;,
\end{align*}
where we have defined the nested Moreau envelope $M_{\by, \tilde \bh_{\alpha, \sigma}, r, \nu}$ via
\begin{align*}
    M^\perp_{\tilde \bh_{\alpha, \sigma},r,\nu}(\tilde u)
    \;\coloneqq&\;
    \cM_{P_{mk}^\perp(S_u) }
    \big( \bone_{mk}^\intercal \, \rho(\tilde u + \argdot ) \,;\, P^\perp_{mk} \tilde \bh_{\alpha, \sigma} , \mfrac{1}{r_1 \nu_1}  \big)
    \;,
    \\
    M_{\by, \tilde \bh_{\alpha, \sigma}, r, \nu} 
    \;\coloneqq&\; 
    \cM_{P_{mk}(S_u)} 
    \big( 
        M^\perp_{\tilde \bh_{\alpha, \sigma},r,\nu} \,;\, \mfrac{1}{r_2 \nu_2}  \by -  P_{mk} \tilde \bh_{\alpha, \sigma} \,,\, r_2 \nu_2
    \big)
    \;,
\end{align*}
as well as another Moreau envelope like term 
\begin{align*}
    M_{\bg,\sigma, \tau, \theta}
    \;\coloneqq\;
    \min_{ \mu \in S} 
    \,
    &\;
    \mfrac{\lambda}{2n} \| P_\Sigma  \mu \|^2_2  
    + 
    \big\| 
        \tilde \Sigma_{\sigma, \tau}^{1/2} (\Sigma^{1/2} \mu)
         -
         \mfrac{1}{2} 
         (\tilde \Sigma_{\sigma, \tau}^\dagger)^{1/2}
        \big( \tilde \bg
                + 
                \mfrac{\theta}{\sqrt{p}}    v(\beta^*)
        \big) 
    \big\|^2 
    +
    \mfrac{r_2}{\sqrt{n}} \bg_2^\intercal P_*  \Sigma^{1/2} \mu
    \;.
\end{align*}
The minimized risk is denoted as 
\begin{align*}
    R^{\rm SO}_{S, S_u, S_v}(\by,\bq,\bg,\bh)
    \;\coloneqq\; 
    \min_{\tilde \alpha \in S_1}  \, \max_{\tilde \theta \in S_2} \, \min_{\tilde \chi \in S_3}
     L^{\rm SO}_{\tilde \alpha \tilde \theta}(\by,\bq,\bg, \bh)
    \;.
    \tag{SO} \label{SO}
\end{align*}

The next lemma shows that \eqref{AO} can be replaced by  \eqref{SO} in that it satisfies similar inequalities as \eqref{AO} in terms of their relationships to \eqref{PO}. The inequalities in the result are to be compared with those in \theoremref{thm:CGMT_2}.

\vspace{.5em}

\begin{lemma}[Equivalence of \eqref{PO} and \eqref{SO}] \label{lem:PO:SO} Let $S \in \R^p$ and $S_u, S_v \in \R^{mk}$ be compact, convex and non-empty. Also assume that the linear span $\textrm{span}(S) = \R^p$. Then for any $c \in \R$,
\begin{align*}
    \P( R^{\rm PO}_{S, S_u, S_v} \leq c ) 
    \;\leq&\; 
    4 \, \P( R^{\rm SO}_{S, S_u, S_v} \,\leq\, c )
    &\text{ and }&&
    \P( R^{\rm PO}_{S, S_u, S_v} \geq c ) 
    \;\leq&\; 
    4 \, \P( R^{\rm SO}_{S, S_u, S_v} \,\geq\, c )
    \;.
\end{align*}
If instead of $S$, the set of values of $\beta$ we consider is the non-convex set
\begin{align*}
    S_{c,\epsilon} \;\coloneqq\; S \setminus \big\{ \beta \in S \;\big|\; | (P_\Sigma \beta)^\intercal \Sigma_o (P_\Sigma \beta) - c |  \leq \epsilon \; \big\} 
\end{align*}
for some $c \in \R$ and some sufficiently small $\epsilon > 0$ such that $S_{c,\epsilon}$ is non-empty. Then we have 
\begin{align*}
    \P( R^{\rm PO}_{S_{c,\epsilon}, S_u, S_v} \leq c ) 
    \;\leq&\; 
    4 \, \P( R^{\rm SO}_{S_{c,\epsilon}, S_u, S_v} \,\leq\, c )
    \;.
\end{align*}
\end{lemma}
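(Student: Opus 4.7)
The plan is to prove the pair of inequalities in three logical steps. \textbf{Step 1:} Invoke \Cref{lem:PO:AO} to transfer probability bounds from the primary optimization \eqref{PO} to the auxiliary optimization \eqref{AO}. By \Cref{thm:CGMT:full}(i), we obtain $\P(R^{\rm PO}_{S,S_u,S_v} \leq c) \leq 4\,\P(R^{\rm AO}_{S,S_u,S_v} \leq c)$ with no convexity needed on $S$; and \Cref{thm:CGMT:full}(ii), whose convex-concavity hypothesis is straightforward here since $\rho$ is convex and the Lagrangian $\tfrac{1}{mk}v^\intercal(u-\bG^\Phi\Sigma^{1/2}\beta)$ is bilinear in $(u,v)$, yields the matching upper bound on $\P(R^{\rm PO} \geq c)$ for convex $S$. \textbf{Step 2:} Establish the almost-sure pointwise identity $R^{\rm AO}_{S,S_u,S_v} = R^{\rm SO}_{S,S_u,S_v}$, from which both bounds in the convex case immediately follow.

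The pointwise reduction of \eqref{AO} to \eqref{SO} proceeds by the standard CGMT dimension-reduction trick, adapted to the block structure induced by data augmentation, following the skeleton of \citet{salehi2019impact} but incorporating the new $P_*$-split. The key observations are that $J_{mk} = k P_{mk}$, so $\|v\|_{J_{mk}} = \sqrt{k}\|P_{mk}v\|$, and that all terms involving $v$ are linear in $v$ or depend on $v$ only through $\|P_{mk}v\|$ and $\|P_{mk}^\perp v\|$. I would parametrize $v$ by $r_1 = \|P_{mk}^\perp v\|/\sqrt{mk}$ and $r_2 = \|P_{mk}v\|/\sqrt{mk}$, take the max over unit vectors in each subspace using the dual identity $\max_{\|w\|=r}w^\intercal a = r\|a\|$, and dualize the resulting norms by introducing Lagrange multipliers $\nu_1,\nu_2$. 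For $\beta$, introduce the three geometric scalars $\alpha=v(\beta^*)^\intercal\Sigma^{1/2}\beta/(\sqrt p\,\kappa_*^2)$, $\sigma_1=\|(I_p-\Sigma_*)P_*^\perp\Sigma^{1/2}\beta\|$ and $\sigma_2=\|\Sigma_*P_*^\perp\Sigma^{1/2}\beta\|$, each enforced by a Lagrange multiplier $\theta,\tau_1,\tau_2$. With these substitutions, the bilinear term $-\tfrac{1}{mk}v^\intercal\bG^\Phi P_*\Sigma^{1/2}\beta$ collapses to $\alpha\kappa_*^2\bq^\intercal v$ by the definition of $\bq$, and the two CGMT cross-terms involving $(\bh_1,\bg_1)$ and $(\bh_2,\bg_2)$ decouple cleanly across $P_{mk}^\perp$ and $P_{mk}$. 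The $u$-minimization then factorizes across these two subspaces and produces the nested Moreau envelope $M_{\by,\tilde\bh_{\alpha,\sigma},r,\nu}$; the remaining minimization over the ``orthogonal'' part of $\beta$ at fixed scalar constraints, after completion of squares against $\bg_1,\bg_2$, becomes the quadratic term $\|(\tilde\Sigma_{\sigma,\tau}^\dagger)^{1/2}(\cdot)\|^2$ together with the envelope $M_{\bg,\sigma,\tau,\theta}$. Collecting the outer $\min_{\tilde\alpha}\max_{\tilde\theta}$ over the scalar and Lagrange variables reproduces exactly $L^{\rm SO}_{\tilde\alpha,\tilde\theta}$; the hypothesis $\mathrm{span}(S)=\R^p$ ensures that none of the scalar variables $(\alpha,\sigma_1,\sigma_2)$ is trivially pinned and that every Lagrange multiplier is actually attained.

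\textbf{Step 3:} For the non-convex set $S_{c,\epsilon}$, only the first inequality of \Cref{thm:CGMT:full}(i) is needed, and it holds without any convexity on the $\beta$-domain. The crucial point is that every step of the AO-to-SO reduction is a pointwise algebraic manipulation that never swaps $\min_\beta$ with a $\max$: the $\beta$-minimization is always the innermost operation, entering the final expression only through the inner Moreau envelope $M_{\bg,\sigma,\tau,\theta}$, whose defining minimization is taken over $\mu\in S$ and can be replaced verbatim by $\mu\in S_{c,\epsilon}$. The excluded set $\{\,|(P_\Sigma\beta)^\intercal\Sigma_o(P_\Sigma\beta)-c|\leq\epsilon\,\}$ is a purely geometric restriction on that innermost minimization and therefore preserves the identity $R^{\rm AO}_{S_{c,\epsilon},S_u,S_v} = R^{\rm SO}_{S_{c,\epsilon},S_u,S_v}$ pointwise, which combined with the convexity-free CGMT bound gives the third inequality.

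The main obstacle I anticipate is verifying, step-by-step, that every min-max swap in the AO-to-SO reduction occurs either between the $(u,v)$ pair (convex-concave by construction) or between a norm and its dual variable, and never covertly between $\min_\beta$ and a maximization that would require convexity of the $\beta$-domain. A secondary technical issue is the correct handling of the pseudo-inverse $\tilde\Sigma_{\sigma,\tau}^\dagger$ when $\Sigma$ is rank-deficient: since $\Sigma^{1/2}\beta$ automatically lies in the range of $\Sigma$, the projection $P_\Sigma$ arises naturally both in the penalty $\tfrac{\lambda}{2n}\|P_\Sigma\mu\|^2$ inside $M_{\bg,\sigma,\tau,\theta}$ and in the defining inequality of $S_{c,\epsilon}$, which are exactly the tweaks needed to make the scalar substitutions well-posed.
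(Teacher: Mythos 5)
Your overall architecture (CGMT to pass from \eqref{PO} to \eqref{AO}, then reduce \eqref{AO} to \eqref{SO}) matches the paper, but Step~2 contains a genuine gap: the claim that $R^{\rm AO}_{S,S_u,S_v} = R^{\rm SO}_{S,S_u,S_v}$ holds as an almost-sure pointwise identity is false, and the obstacle you flag at the end is not one you can verify away --- it is real. The reduction unavoidably swaps $\min$ and $\max$ across the ``wrong'' pairs of variables. Concretely: after you maximize over $v$ and parametrize by $(r_1,r_2)$, these become \emph{maximization} variables, yet every subsequent step (minimizing over $\beta$ via $\alpha,\sigma_1,\sigma_2$ and $\tilde\beta^\perp$, minimizing over $u$ and $\mu$, introducing the min-variables $\nu_1,\nu_2$) must be carried out with $(r_1,r_2)$ and the Lagrange multipliers $w,\tau_1,\tau_2,\theta$ held fixed, i.e.\ inside the outer maximum. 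In \eqref{AO} the order is $\min_{\beta,u}\max_v$, so a flip of $\min_\beta$ past $\max_{(r_1,r_2),w}$ is forced. The auxiliary loss is \emph{not} convex-concave (terms such as $-\tfrac{1}{mk}v^\intercal\bh_1\|\beta\|_{\Sigma_1}$ are neither convex in $\beta$ for all $v$ nor concave in $v$ jointly), so the min-max theorem cannot be applied to \eqref{AO} directly. The correct argument --- and the one the paper uses --- is to define the flipped risk $\cR_1'$, note that the min-max \emph{inequality} gives $\cR_1'\le\cR_1$ unconditionally (hence $\P(\cR_1\le c)\le\P(\cR_1'\le c)$, one direction for free), and obtain the other direction by applying the min-max \emph{theorem} to \eqref{PO}, whose loss \emph{is} convex-concave, and then invoking the Gordon comparison again on the flipped formulation. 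The equivalence of \eqref{AO} and \eqref{SO} is therefore only distributional, routed through \eqref{PO}; this is precisely why the lemma is stated as a pair of probability inequalities with the factor $4=2^M$ rather than as an identity.

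The same gap undermines your Step~3. If the reduction really were pointwise and never swapped $\min_\beta$ with a maximum, then \emph{both} inequalities would survive for $S_{c,\epsilon}$, contradicting the one-sided statement of the lemma. The reason only $\P(R^{\rm PO}_{S_{c,\epsilon}}\le c)\le 4\,\P(R^{\rm SO}_{S_{c,\epsilon}}\le c)$ survives is exactly that the free direction of the flip ($\cR_1'\le\cR_1$) needs no convexity, whereas the reverse direction requires the min-max theorem on the $\beta$-domain, which fails for the non-convex $S_{c,\epsilon}$. You also need (and omit) the observation that the remaining swaps involving $\alpha,\sigma_1,\sigma_2$ remain valid for $S_{c,\epsilon}$ because the images of $S_{c,\epsilon}$ under the maps $\beta\mapsto v(\beta^*)^\intercal\Sigma^{1/2}\beta/(\sqrt p\,\kappa_*^2)$, $\beta\mapsto\|(I_p-\Sigma_*)P_*^\perp\Sigma^{1/2}\beta\|$, $\beta\mapsto\|\Sigma_*P_*^\perp\Sigma^{1/2}\beta\|$ are connected one-dimensional, hence convex, sets.
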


\vspace{.5em}

\textbf{Deterministic optimization (DO). } The next step is to compute the asymptotics of \eqref{SO} as $m, p \rightarrow \infty$  and $p/m \rightarrow \kappa / k$ for special cases of $S \subset \R^p$. The limit is given by a deterministic optimization 
\begin{align}
    R^{DO}_S 
    \;\coloneqq\; 
    \min_{\substack{\alpha \in S^\alpha \\ (\sigma_1, \sigma_2) \in S_{\sigma_1}  \times S_{\sigma_2} \\ \nu_1, \nu_2 \geq 0 }}
        \,
        \max_{\substack{ (r_1, r_2) \in S_{r_1 } \times S_{r_2} \\ \tau_1, \tau_2 \geq 0 \\ \theta \in \R }}
        \,
    & \,
    -
    \mfrac{\sigma_1}{2 \tau_1} 
    -
    \mfrac{\sigma_2}{2 \tau_2} 
    +
    \mfrac{r_1}{2 \nu_1} 
    +
    \mfrac{r_2}{2 \nu _2}
    +
    \alpha \theta \bar \kappa_*^2
    -
    \mfrac{\alpha^2 \bar \kappa_*^2}{2 \sigma_2 \tau_2} \notag
    - \bar \chi^{r, \theta, \sigma, \tau}_1 
    + 
    \epsilon_S^2 \,
     \mfrac{\bar \chi^{r, \theta,  \sigma, \tau}_3}{\bar \chi^{r, \theta, \sigma, \tau}_2}
    \notag 
    \\ 
    &\, 
     - 
     \mfrac{1}{4 r_2 \nu_2}
     -
     \alpha \,  \mean\big[ \sigma( \bar \kappa_o \bar Z_0 + \bar \kappa_* \bar Z_1 ) \bar \kappa_* \bar Z_1 \big] \notag
    +
    \bar M_\rho^{r, \nu, \alpha, \sigma}
    \;,
    \tag{DO} \label{DO}
\end{align}
where we have defined the limits 
\begin{align*}
    \bar \kappa_*
    \;\coloneqq&\; 
    \lim_{p \rightarrow \infty} \kappa_* 
    \;=\;
    \lim_{p \rightarrow \infty} \| \Sigma_* \Sigma_o^{1/2} \beta^* \|
    \;,
    \qquad 
    \bar \kappa_o 
    \;\coloneqq\;  
    \lim_{p \rightarrow \infty} \| (I_p - \Sigma_*) \Sigma_o^{1/2} \beta^* \| 
    \;,
    \\
    \bar \chi^{r, \theta, \sigma, \tau}_1
    \;\coloneqq&\; 
    \mfrac{(r_1 + r_2)^2 \sigma_1 \tau_1}{2k} 
    \,
    \bar \chi^{\sigma, \tau}_{11}
    +
    \mfrac{r_2^2 \sigma_2 \tau_2}{2} \, \bar \chi^{\sigma, \tau}_{12}
    + 
    \mfrac{\theta^2  \bar \kappa_*^2 \sigma_2 \tau_2}{2}
    \, \bar \chi^{\sigma, \tau}_{13}
    \;,
    \\
    \bar \chi^{r, \theta, \sigma, \tau}_2
    \;\coloneqq&\;
    \mfrac{(r_1+r_2)^2\sigma_1^2\tau_1^2}{k} 
    \bar \chi^{\sigma,\tau}_{21}
    +
     r_2^2 \sigma_2^2\tau_2^2 \,
    \bar \chi^{\sigma,\tau}_{22}
    +
     \theta^2  \bar \kappa_*^2 \sigma_2^2 \tau_2^2
    \,
    \bar \chi^{\sigma,\tau}_{23}
    \;,
    \\
    \bar \chi^{r, \theta, \sigma, \tau}_3
    \;\coloneqq&\;
    \mfrac{(r_1+r_2)^2\sigma_1^2\tau_1^2}{k} 
    \bar \chi^{\sigma,\tau}_{31}
    +
    r_2^2 \sigma_2^2\tau_2^2 \,
    \bar \chi^{\sigma,\tau}_{32} 
    +
    \theta^2  \bar \kappa_*^2 \sigma_2^2 \tau_2^2
    \,
    \bar \chi^{\sigma,\tau}_{33} 
    \;,
\end{align*}
with 
\begin{align*}
    \bar \chi^{\sigma,\tau}_{11}
    \;\coloneqq&\;
    \lim
    \mfrac{\Tr\big(
    \big( \frac{\sigma_1 \tau_1\lambda}{m}  \Sigma^\dagger  
        + 
        I_p  \big)^\dagger
        (P_\Sigma - \Sigma_*)
    \big)
    }{m}\;,
    \\
    \bar \chi^{\sigma, \tau}_{12}
    \;\coloneqq&\;
    \lim
    \mfrac{\Tr\big(
        \big( 
            \frac{\sigma_2 \tau_2\lambda}{m}  \Sigma^\dagger  
            + 
            I_p 
        \big)^\dagger
        \Sigma_*
    \big)
    }{m}\;,
    \\
    \bar \chi^{\sigma,\tau}_{13}
    \;\coloneqq&\;
    \lim
    \Tr\Big(
    \Big( 
        \mfrac{\sigma_2 \tau_2\lambda}{m}  \Sigma^\dagger  
        + 
        I_p   
    \Big)^\dagger
    P_*
    \Big)
    \;,
    \\
    \bar \chi^{\sigma,\tau}_{21} 
    \;\coloneqq&\;
    \lim 
    \mfrac{\big\|
            \Sigma_{\rm new}^{1/2} (\Sigma^\dagger)^{1/2}
            \big( \frac{\sigma_1 \tau_1 \lambda}{m} \Sigma^\dagger + I_p \big)^\dagger (P_\Sigma - \Sigma_*) 
        \big\|^2 
    }
    {m}
    \;,
    \\
    \bar \chi^{\sigma,\tau}_{22}
    \;\coloneqq&\;
    \lim 
    \mfrac{\big\|
            \Sigma_{\rm new}^{1/2} (\Sigma^\dagger)^{1/2}
            \Big( \mfrac{\sigma_2 \tau_2 \lambda}{m} \Sigma^\dagger + I_p \Big)^\dagger \Sigma_* 
        \big\|^2 
    }
    {m}
    \;,
    \\
    \bar \chi^{\sigma,\tau}_{23}
    \;\coloneqq&\;
    \lim \,
    \Big\|
        \Sigma_{\rm new}^{1/2} (\Sigma^\dagger)^{1/2}
        \Big( 
            \mfrac{\sigma_2 \tau_2 \lambda}{m} \Sigma^\dagger  
            + 
            I_p   
        \Big)^\dagger
        P_*
    \Big\|^2
    \;,
    \\
    \bar \chi^{\sigma,\tau}_{31} 
    \;\coloneqq&\;
    \lim 
    \mfrac{\big\|
                \big( \frac{\lambda}{2m} \Sigma^\dagger +  \tilde \Sigma_{\sigma, \tau} 
                \big)^{1/2} 
            \Sigma^{1/2} P_{\Sigma_{\rm new}}   
            (\Sigma^\dagger)^{1/2}
            \big( \frac{\sigma_1 \tau_1 \lambda}{m} \Sigma^\dagger + I_p \big)^\dagger (P_\Sigma - \Sigma_*) 
        \big\|^2 
    }
    {m}
    \;,
    \\
    \bar \chi^{\sigma,\tau}_{32} 
    \;\coloneqq&\;
    \lim 
    \mfrac{\big\|
            \big( \frac{\lambda}{2m} \Sigma^\dagger +  \tilde \Sigma_{\sigma, \tau} 
                    \big)^{1/2} 
            \Sigma^{1/2} P_{\Sigma_{\rm new}}        
            (\Sigma^\dagger)^{1/2}
            \Big( \mfrac{\sigma_2 \tau_2 \lambda}{m} \Sigma^\dagger + I_p \Big)^\dagger \Sigma_* 
        \big\|^2 
    }
    {m}
    \;,
    \\
    \bar \chi^{\sigma,\tau}_{33} 
    \;\coloneqq&\;
    \lim \,
    \Big\|
        \Big( \mfrac{\lambda}{2m} \Sigma^\dagger +  \tilde \Sigma_{\sigma, \tau} 
        \Big)^{1/2} 
        \Sigma^{1/2} P_{\Sigma_{\rm new}}
        (\Sigma^\dagger)^{1/2}
        \Big( \mfrac{\sigma_2 \tau_2 \lambda}{m} \Sigma^\dagger + I_p \Big)^\dagger
        P_*
    \Big\|^2 
    \;.
\end{align*}
We have also defined an expected Moreau-envelope-like term 
\begin{align*}
    \bar M_\rho^{r, \nu, \alpha, \sigma}
    \;\coloneqq\;
    &\; \mean\bigg[
       \min_{\tilde u \in \R^{k}}
       \mfrac{1}{k} \bone_k^\intercal \rho(\tilde u)
    + 
    \mfrac{r_1 \nu_1}{2 k} \, 
    \Big\| 
        \big(I_k - \mfrac{1}{k} \bone_{k \times k}\big)
        ( \tilde u  + \sigma_1 \eta )
    \Big\|^2
    \\
    &\qquad
    +
    \mfrac{r_2 \nu_2}{2 k}
    \Big\| 
        \mfrac{1}{k} \bone_{k \times k}
        \Big(
            \tilde u 
            -
            \mfrac{1}{r_2 \nu_2}  \ind_{\geq 0}\{ \bar \kappa_o \bar Z_0 + \bar \kappa_* \bar Z_1 - \varepsilon_1 \} 
            \bone_k 
            -
            \alpha \bar \kappa_* \bar Z_1 
            \bone_k
            +
            \sigma_1 \eta 
            +
            \sigma_2 \bar Z_2 \bone_k 
        \Big)
    \Big\|^2
    \bigg]
    \;,
\end{align*}
where $\bar Z_0,\bar Z_1, \bar Z_2, \eta_1, \ldots, \eta_k$ are i.i.d.~univariate Gaussians and $\eta=(\eta_1, \ldots, \eta_k)$, and $\varepsilon_1$ is an independent $\textrm{Logistic}(0,1)$ variable. The two cases of $S$ we consider are 
\begin{align*}
    S \;=&\; \cS_p 
    &\text{ and }&&
    S 
    \;=&\; 
    \cS_\epsilon^c 
    \;\coloneqq\; 
    \big\{ \beta \in \cS_p \,\big|\, \big| \sqrt{\beta^\intercal \Sigma_{\rm new} \beta} - (\bar \chi^{\bar r, \bar \theta, \bar \sigma, \bar\tau}_2)^{1/2}  \big| > \epsilon \big\}
    \;,
\end{align*}
where $\bar r = (\bar r_1, \bar r_2)$, $\bar \sigma = (\bar \sigma_1, \bar \sigma_2)$, $\bar \theta$ and $\bar \tau =(\bar \tau_1, \bar \tau_2)$ are the optimal solutions to \eqref{DO}. We also set $\epsilon_S = 0$ for $S=\cS_p$ and $\epsilon_S = \epsilon$ for $S=\cS_\epsilon^c$. 

\vspace{.5em}

\begin{lemma}[Equivalence between \eqref{SO} and \eqref{DO}] \label{lem:SO:DO} Assume that the set $S_u \subset \R^{mk}$ is closed under permutation of the $m$ blocks of $k$ coordinates. Also suppose that as $m, p \rightarrow \infty$, 
    % $\sup_{\mu \in S} \| \mu \|_2 \rightarrow \infty$, 
$\sup_{u \in S_u} \frac{\|  u \|_2^2}{mk} \rightarrow \infty$ and $\sup_{u \in S_v} \frac{\| v \|_2^2}{mk} \rightarrow \infty$. Also assume that the limits $\bar \kappa_*$, $\bar \chi^{r, \theta, \sigma, \tau}_1$, $\bar \chi^{r, \theta, \sigma, \tau}_2$ and $\bar \chi^{r, \theta, \sigma, \tau}_3$ exist for every $r_1, r_2, \theta, \sigma_1, \sigma_2, \tau_1, \tau_2 $. Then  for $S = \cS_p$ and $S = \cS_\epsilon^c$,
\begin{align*}
    \big| \, R^{\rm SO}_{S, S_u, S_v}(\by, \bq, \bg, \bh)  -  R^{\rm DO}_{S}  \, \big| 
    \;\xrightarrow{\P}\; 0
    \;.
\end{align*}
\end{lemma}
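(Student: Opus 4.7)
The plan is to show that $L^{\rm SO}_{\tilde\alpha,\tilde\theta}(\by,\bq,\bg,\bh)$ converges in probability, pointwise in $(\tilde\alpha,\tilde\theta)$, to the deterministic integrand of \eqref{DO}, and then upgrade this pointwise convergence to convergence of the min-max value. The deterministic term $-\frac{\sigma_1}{2\tau_1}-\frac{\sigma_2}{2\tau_2}+\frac{r_1}{2\nu_1}+\frac{r_2}{2\nu_2}+\alpha\theta\kappa_*^2-\frac{\alpha^2\kappa_*^2}{2\sigma_2\tau_2}$ converges by the stated existence of $\bar\kappa_*$. The remaining three ingredients are the Moreau-like terms in $\bg$, the $\by$-related terms, and the nested Moreau envelope in $\by$ and $\tilde\bh_{\alpha,\sigma}$; I would handle them in that order.

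For the $\bg$-block (i.e.\ $M_{\bg,\sigma,\tau,\theta}-\tfrac14\|(\tilde\Sigma_{\sigma,\tau}^\dagger)^{1/2}(\tilde\bg+\tfrac{\theta}{\sqrt p}v(\beta^*))\|^2$), I would first rewrite the inner minimization over $\mu\in S$ as an (essentially) quadratic problem in $\Sigma^{1/2}\mu$, augmented when $S=\cS_\epsilon^c$ by a Lagrange-multiplier term that enforces $|(\Sigma^{1/2}\mu)^\intercal\Sigma_{\rm new}(\Sigma^{1/2}\mu)-\bar\chi_2^{\bar r,\bar\theta,\bar\sigma,\bar\tau}|>\epsilon$. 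For $S=\cS_p$, completing the square gives a closed-form minimum; for $S=\cS_\epsilon^c$, a standard KKT argument with a multiplier on the $\Sigma_{\rm new}$-quadratic constraint produces the additional $\epsilon_S^2\bar\chi_3/\bar\chi_2$ term. Substituting the closed form and using concentration of Gaussian quadratic forms (e.g.\ Hanson--Wright applied to $\bg_1,\bg_2$) reduces the random functional to a sum of traces of the form $\frac{1}{m}\Tr((\tfrac{\lambda\sigma_i\tau_i}{m}\Sigma^\dagger+I_p)^\dagger\cdot\ldots)$, which are precisely the $\bar\chi^{\sigma,\tau}_{ij}$ limits whose existence is assumed. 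Collecting terms recovers $-\bar\chi_1^{r,\theta,\sigma,\tau}+\epsilon_S^2\bar\chi_3^{r,\theta,\sigma,\tau}/\bar\chi_2^{r,\theta,\sigma,\tau}$.

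For the $\by$-linear and $\by$-quadratic terms, the coordinates of $\by$ are block-replicated labels $y_i(Z_i)\in\{0,1\}$, so $\|\by\|^2/(mk)=\frac{1}{m}\sum_i y_i(Z_i)$, which by a law of large numbers converges in probability to $\P(\bar\kappa_o\bar Z_0+\bar\kappa_*\bar Z_1-\varepsilon_1\ge 0)=\E[\sigma(\bar\kappa_o\bar Z_0+\bar\kappa_*\bar Z_1)]$, yielding the constant $-1/(4r_2\nu_2)$ after the $1/(2r_2\nu_2)$ prefactor. Since $\bh_1,\bh_2$ are independent of $(\by,\bq)$, the cross terms $\by^\intercal\bh_1$ and $\by^\intercal J_{mk}\bh_2$ are $o(mk)$ in probability. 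The only nontrivial piece is $\kappa_*\alpha\by^\intercal\bq/(mk)$: here $(\bq,\bG\Sigma_o^{1/2}\beta^*)$ is, entrywise within a block, a bivariate Gaussian with explicitly computable covariance coming from \Cref{assumption:CGMT:var}, and using the label representation $y_i=\ind\{\kappa_o G_{i,0}+\kappa_*G_{i,1}-\varepsilon_i\ge 0\}$ together with block-level LLN yields the limit $\alpha\E[\sigma(\bar\kappa_o\bar Z_0+\bar\kappa_*\bar Z_1)\bar\kappa_*\bar Z_1]$.

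The main obstacle, and the step I would spend the most care on, is the nested Moreau envelope $\frac{1}{mk}M_{\by,\tilde\bh_{\alpha,\sigma},r,\nu}$. The key observation is that $S_u$ is invariant under permutation of blocks and that the inner objective is block-separable: writing $\tilde u\in\R^{mk}$ as $m$ consecutive blocks of size $k$, one can decompose the Moreau envelope into $m$ independent $k$-dimensional problems of the form appearing in \eqref{eq:EQs:proximal:define}, driven by $(y_i(Z_i),\bq_{i\cdot},\bh_{1,i\cdot},\bh_{2,i\cdot})$ blocks. The distribution of the $i$th block converges (in total variation, for fixed $k$) to the law of $(\bar Y,\bar\kappa_*\bar Z_1\bone_k,\eta,\bar Z_2\bone_k,\varepsilon_1)$, so by a standard LLN for triangular arrays of independent (conditionally on the multiplier structure) blocks, combined with a uniform integrability / Moreau-envelope Lipschitz bound to pass the limit through the inner minimization, the scaled sum converges in probability to the single-block expectation $\bar M_\rho^{r,\nu,\alpha,\sigma}$. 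Finally, to promote pointwise-in-$(\tilde\alpha,\tilde\theta)$ convergence to convergence of $R^{\rm SO}$, I would invoke the convex-concave structure of $L^{\rm SO}$ in $(\tilde\alpha,\tilde\theta)$ and apply the standard convexity-lemma argument that pointwise convergence of convex-concave functions on a compact domain implies uniform convergence on compacta (e.g.\ \citet{thrampoulidis2018precise}), which then gives convergence of the saddle value and completes the proof for both $S=\cS_p$ and $S=\cS_\epsilon^c$.
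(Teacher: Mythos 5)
Your proposal is correct and follows essentially the same route as the paper: pointwise-in-$(\tilde\alpha,\tilde\theta)$ limits computed term by term (Gaussian quadratic-form concentration for the $\bg$-block yielding $-\bar\chi_1^{r,\theta,\sigma,\tau}+\epsilon_S^2\,\bar\chi_3^{r,\theta,\sigma,\tau}/\bar\chi_2^{r,\theta,\sigma,\tau}$, laws of large numbers for the $\by$-terms, and a block decomposition of the nested Moreau envelope into $m$ i.i.d.\ $k$-dimensional problems via the permutation-closure of $S_u$), followed by an upgrade to convergence of the saddle value using the low-dimensional compact domain of $(\tilde\alpha,\tilde\theta)$. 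The only cosmetic deviations are your KKT treatment of $\cS_\epsilon^c$ (the paper instead parameterizes $\Sigma_{\rm new}^{1/2}\mu$ along the optimal direction and reduces to a scalar problem) and your appeal to the convex-concave convergence lemma for the final step, where the paper relies on smoothing the finite-dimensional min-max.
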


As with \citet{salehi2019impact}, it remains to prove that the first order condition of \eqref{DO} for $S = \cS_p$ is equivalent to the system of $10$ equations \eqref{EQs} in $(\alpha, \sigma_1, \sigma_2, \tau_1, \tau_2, \nu_1, \nu_2, r_1, r_2, \theta)$. This involves computing the derivative of the Moreau-envelope-like term $\bar M^{r,\nu,\alpha,\sigma}_\rho$ using the envelope theorem.

\vspace{.5em}

\begin{lemma} \label{lem:EQs} Assume that the minimizer-maximizers of \eqref{DO} are within the interior of the domain of optimization and that $S = \cS_p$. Then these minimizer-maximizers are solutions to \eqref{EQs}.
\end{lemma}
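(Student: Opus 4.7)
The plan is a direct first-order calculation: since the min-max of \eqref{DO} is assumed to be attained at an interior point of the (otherwise unconstrained) domain $S^\alpha \times S_{\sigma_1} \times S_{\sigma_2} \times (\R_0^+)^2 \times S_{r_1} \times S_{r_2} \times (\R_0^+)^2 \times \R$, the optimizer must satisfy $\nabla_{(\alpha,\sigma,\nu,r,\tau,\theta)} L^{\rm DO} = 0$, giving exactly $10$ equations. For $S=\cS_p$ we have $\epsilon_S=0$, so the $\bar\chi_3/\bar\chi_2$ term in \eqref{DO} drops out. Differentiating the algebraic prefactors $-\tfrac{\sigma_1}{2\tau_1}-\tfrac{\sigma_2}{2\tau_2}+\tfrac{r_1}{2\nu_1}+\tfrac{r_2}{2\nu_2}+\alpha\theta\bar\kappa_*^2-\tfrac{\alpha^2\bar\kappa_*^2}{2\sigma_2\tau_2}-\tfrac{1}{4r_2\nu_2}$ is immediate, and the $\bar\chi_1^{r,\theta,\sigma,\tau}$ contributions are left abbreviated as $\partial_\bullet \bar\chi_1^{r,\theta,\sigma,\tau}$. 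The only nontrivial piece is differentiating $\bar M_\rho^{r,\nu,\alpha,\sigma}$, which we handle by the envelope theorem applied to the strictly convex $\R^k$-inner minimization \eqref{eq:EQs:proximal:define}, whose unique optimizer is by definition $u_{\bar Z,\varepsilon_1,\eta}$.

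Concretely, the inner objective $F(\tilde u;\alpha,\sigma_1,\sigma_2,r_1,\nu_1,r_2,\nu_2)$ of $\bar M_\rho$ depends on the parameters only through the two quadratic penalties. Since $\tilde u \mapsto F(\tilde u;\cdot)$ is smooth and strictly convex and $u_{\bar Z,\varepsilon_1,\eta}$ is an interior minimizer almost surely, the envelope theorem gives $\partial_\bullet \bar M_\rho = \mean[\partial_\bullet F(u_{\bar Z,\varepsilon_1,\eta};\cdot)]$ for each of the seven parameters $(\alpha,\sigma_1,\sigma_2,r_1,\nu_1,r_2,\nu_2)$ (the exchange of expectation and differentiation is justified by dominated convergence, using the local Lipschitz bounds on $u_{\bar Z,\varepsilon_1,\eta}$ coming from strong convexity of $F$). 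For example, writing $\bar Y = \ind_{\geq 0}\{\bar\kappa_o\bar Z_0+\bar\kappa_*\bar Z_1-\varepsilon_1\}$ and using $\frac{1}{k}\mathbf{1}_{k\times k}\bone_k = \bone_k$, the chain rule gives
\begin{align*}
    \partial_\alpha \bar M_\rho
    \;=\; -\mfrac{r_2\nu_2\bar\kappa_*}{k}\mean\!\big[\bar Z_1\bone_k^\intercal u_{\bar Z,\varepsilon_1,\eta}\big] + \bar\kappa_*\mean[\bar Z_1\bar Y] + r_2\nu_2\alpha\bar\kappa_*^2,
\end{align*}
and since $\mean[\sigma(\bar\kappa_o\bar Z_0+\bar\kappa_*\bar Z_1)\bar\kappa_*\bar Z_1]=\mean[\bar Y\bar\kappa_*\bar Z_1]$, the $\partial_\alpha$ equation collapses to precisely the first line of \eqref{EQs}. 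The remaining six derivatives of $\bar M_\rho$ are computed analogously by evaluating $\partial_\bullet F$ at $u_{\bar Z,\varepsilon_1,\eta}$ and taking expectation; the $\sigma_1$-derivative produces the $(I_k-\tfrac{1}{k}\bone_{k\times k})$-projected cross term with $\eta$ (for the $r_1\nu_1$-weighted quadratic) together with the $\tfrac{1}{k}\bone_{k\times k}$-projected term (for the $r_2\nu_2$-weighted quadratic), matching the second line of \eqref{EQs}, and similarly for $\sigma_2$, $\tau_1$, $\tau_2$, $\nu_1$, $\nu_2$, $r_1$, $r_2$, $\theta$.

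The remaining ingredient is the $\bar\chi_1$-dependence, which enters the FOC through the abbreviations $\partial_\bullet \bar\chi_1^{r,\theta,\sigma,\tau}$ already appearing in \eqref{EQs}; no further simplification is needed because the claim is stated in this abbreviated form. Collecting the algebraic derivatives, the $\bar\chi_1$ derivatives, and the envelope-theorem contributions from $\bar M_\rho$ term-by-term and identifying cancellations (most importantly, the cancellation of the $\bar\kappa_*\mean[\bar Z_1\bar Y]$ terms from $\partial_\alpha(-\alpha\mean[\sigma(\cdot)\bar\kappa_*\bar Z_1])$ against $\partial_\alpha \bar M_\rho$, and the cancellation of the $\mean[\bar Y\bone_k^\intercal \cdot ]$ structure across $\partial_{r_2}$ and $\partial_{\nu_2}$) yields exactly the $10$ equations in \eqref{EQs}. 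The main obstacle is the bookkeeping for $\partial_{r_2}$ and $\partial_{\nu_2}$, where both the $\tfrac{1}{4r_2\nu_2}$ prefactor and two distinct pieces of $\bar M_\rho$ contribute simultaneously; we handle this by expanding the squared norm in \eqref{eq:EQs:proximal:define} explicitly in the $P_{mk}$-projected coordinate system before differentiating, after which both equations line up termwise with the last four lines of \eqref{EQs}.
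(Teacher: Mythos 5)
Your proposal is correct and follows essentially the same route as the paper: set the interior first-order conditions of \eqref{DO} to zero (with the $\bar\chi_2,\bar\chi_3$ terms dropped since $\epsilon_S=0$ for $S=\cS_p$), compute the derivatives of $\bar M_\rho^{r,\nu,\alpha,\sigma}$ via the envelope theorem applied to the inner minimization defining $u_{\bar Z,\varepsilon_1,\eta}$, and substitute back using $\mean[\bar Y\,|\,\bar Z]=\sigma(\bar\kappa_o\bar Z_0+\bar\kappa_*\bar Z_1)$ to obtain the cancellations that yield \eqref{EQs}. The only additions relative to the paper are your explicit remarks on strict convexity of the inner problem and dominated convergence, which the paper leaves implicit.
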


\subsection{Verifying conditions for different augmnetations} \label{appendix:DA:special:cases}

\textbf{Isotropic data with no augmentation. } \citet{salehi2019impact} derives a set of equations that governs the behavior of high-dimensional logistic regression with ridge regularization, isotropic data and no data augmentation. Here, we verify that our formula recover their formula exactly as a special case, and that $(r_2, \nu_2, \sigma_2, \tau_2, \alpha, \theta)$ play the role of the parameters in the original unaugmented optimization.

\begin{lemma} \label{lem:EQs:iso} Suppose that $X_{\rm new} \overset{d}{=} Z_1$ with $\Var[Z_1] = \frac{1}{p}I_p$, that $k=1$ and $\phi_1(Z_i) = Z_i$ almost surely for all $i \leq m=n$. Also write $\gamma = \frac{1}{r_2 \nu_2}$, $\rho(\argdot) = \log(1+\exp(\argdot))$ and denote the proximal operator ${\rm Prox}_{t \rho(\argdot)}(v) \coloneqq \argmin_{x \in \R} \frac{1}{2t} (v-x)^2 + \rho(x)$. Then \eqref{EQs} is equivalent to the following system of equations:
\begin{align*}
    \begin{cases}
        \theta \;=\; \mfrac{\alpha \kappa}{\gamma}\;,
        \\
        \tau_2 \;=\; \mfrac{ \kappa^{-1} \gamma}{\sigma_2(1 -  \gamma \lambda)}\;,
        \\
        r_2 \;=\; \mfrac{\sigma_2 \sqrt{ \kappa}}{\gamma}\;,
        \\
        \mfrac{\sigma^2  \kappa}{2}
        \;=\; 
        \mean\big[ \partial \rho( - \bar \kappa_* \bar Z_1 ) 
        \big( 
            \alpha \bar \kappa_* \bar Z_1 + \sigma_2 \bar Z_2 
            - 
            {\rm Prox}_{\gamma \rho(\argdot )}( \alpha \bar \kappa_*  \bar Z_1 + \sigma_2 \bar Z_2  ) \big)^2 
        \big] 
        \;,
        \\
        1 
        - 
        \kappa
        +
        \gamma \lambda \kappa
        \;=\; 
        \mean\Big[ \mfrac{ 2 \rho'(- \bar \kappa_* Z_1)}{1 + \gamma \rho''({\rm Prox}_{\gamma \rho(\argdot)} ( \bar \kappa_* \alpha \bar Z_1 + \sigma_2 \bar Z_2 ) )} \Big] 
        \;,
        \\
        - \mfrac{\alpha  \kappa }{2} 
        \;=\;  
        \mean[ \partial^2 \rho(- \bar \kappa_* \bar Z_1) {\rm Prox}_{\gamma \rho(\argdot)}( \bar \kappa_* \alpha \bar Z_1 + \sigma_2 \bar Z_2 ) ]\;,
    \end{cases}
\end{align*}
 with $r_1 = \sigma_1 = 0$, $\nu_1, \tau_1 \rightarrow \infty$,  $\bar \kappa_* = \lim_{p \rightarrow \infty} \frac{\| \beta^* \|}{\sqrt{p}}$ and $\kappa = \lim p/n$.
\end{lemma}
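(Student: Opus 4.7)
The plan is to check the specialization in three stages: (i) simplify the limits $\bar \kappa_*$, $\bar \kappa_o$ and the $\bar \chi$ quantities under isotropy and no augmentation, (ii) show that four of the ten equations in \eqref{EQs} become trivially satisfied at the claimed boundary values $r_1 = \sigma_1 = 0$ and $\nu_1, \tau_1 \to \infty$, and (iii) reduce the remaining six to the target system via Gaussian integration by parts.

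For stage (i), note that $\Sigma = \Sigma_o = \Sigma_{\rm new} = p^{-1} I_p$ gives $\Sigma^\dagger = p I_p$ and $P_\Sigma = I_p$. Since $\phi_1 = \phi_2 = {\rm id}$ almost surely, $\Cov[\phi_1(Z_1),\phi_2(Z_1)] = \Var[Z_1] = p^{-1}I_p$, which yields $\Sigma_* = I_p$. In particular $P_\Sigma - \Sigma_* = 0$, so $\bar \kappa_o = \|(I_p - \Sigma_*)\Sigma_o^{1/2}\beta^*\| = 0$ and $\bar \chi_{11}^{\sigma,\tau} = \bar \chi_{21}^{\sigma,\tau} = \bar \chi_{31}^{\sigma,\tau} = 0$. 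The surviving traces evaluate explicitly to $\bar \chi_{12}^{\sigma,\tau} = \kappa/(1 + \sigma_2 \tau_2 \lambda \kappa)$, $\bar \chi_{22}^{\sigma,\tau} = \kappa/(1 + \sigma_2 \tau_2 \lambda \kappa)^2$, and analogously for the rank-one entries $\bar \chi_{13}^{\sigma,\tau}$ and $\bar \chi_{23}^{\sigma,\tau}$ corresponding to the projection $P_*$ onto $\beta^*$. Consequently $\bar \chi_1, \bar \chi_2, \bar \chi_3$ have no dependence on $r_1, \sigma_1, \tau_1, \nu_1$.

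For stage (ii), setting $k=1$ makes $I_k - k^{-1}\bone_{k \times k}$ the zero operator, so equations (6) and (8) of \eqref{EQs} reduce to $0=0$ once $r_1 = 0$ is imposed. Equation (4) reads $\sigma_1/(2\tau_1^2) = \partial_{\tau_1} \bar \chi_1$, which holds since $\sigma_1 = 0$ and $\bar \chi_1$ is $\tau_1$-free. Equation (2) reduces, after letting $\tau_1 \to \infty$, to $\mean[\eta_1 u_{\bar Z, \varepsilon_1, \eta}] = 0$; the proximal minimizer in \eqref{eq:EQs:proximal:define} involves $\eta$ only through the factor $\sigma_1 \eta$, so $u_{\bar Z, \varepsilon_1, \eta}$ is independent of $\eta_1$ when $\sigma_1 = 0$ and the expectation vanishes by independence.

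For stage (iii), with $k=1$ and $\sigma_1 = 0$, the proximal operator in \eqref{eq:EQs:proximal:define} collapses to
\begin{equation*}
    u_{\bar Z, \varepsilon_1, \eta}
    \;=\;
    {\rm Prox}_{\gamma \rho(\argdot)}\!\big(
        \gamma \bar Y + \alpha \bar \kappa_* \bar Z_1 - \sigma_2 \bar Z_2
    \big)\;,
    \qquad \gamma = 1/(r_2 \nu_2)\;,
\end{equation*}
and equations (1), (3), (5), (7), (9), (10) of \eqref{EQs} become a system of six scalar relations in $(\alpha, \sigma_2, \tau_2, \nu_2, r_2, \theta)$. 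Substituting the explicit forms of $\bar \chi_{ij}^{\sigma,\tau}$ from stage (i) yields the first three target equations directly (algebraic rearrangement, using $\gamma = 1/(r_2 \nu_2)$). For the remaining three, the cross-terms $\mean[\bar Z_j \, u_{\bar Z, \varepsilon_1, \eta}]$ are handled by Gaussian integration by parts, which produces factors of $(1 + \gamma \rho''({\rm Prox}_{\gamma \rho}(\cdot)))^{-1}$ via the standard identity ${\rm Prox}'_{t\rho} = (1+t\rho''\circ {\rm Prox}_{t\rho})^{-1}$. To convert the residual dependence on the $\{0,1\}$-valued $\bar Y$ into the label-free form $\partial \rho(-\bar \kappa_* \bar Z_1)$ appearing in the target statement, I would use $\P(\bar Y = 1 \mid \bar Z_1) = \sigma(\bar \kappa_* \bar Z_1) = \rho'(\bar \kappa_* \bar Z_1)$ together with the distribution-preserving map $\bar Z_2 \mapsto (2 \bar Y - 1)\bar Z_2$, valid because $\bar Z_2 \indep \bar Y$ and $\bar Z_2$ is symmetric; this absorbs the $\gamma \bar Y$ shift into a sign of $\bar Z_2$ that is subsequently eliminated by Gaussian symmetry.

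I expect the main obstacle to be the line-by-line algebra in stage (iii), particularly matching our proximal argument $\gamma \bar Y + \alpha \bar \kappa_* \bar Z_1 - \sigma_2 \bar Z_2$ to the symmetric form $\alpha \bar \kappa_* \bar Z_1 + \sigma_2 \bar Z_2$ in the target, and verifying the sign conventions so that the $\partial \rho(-\bar \kappa_* \bar Z_1)$ weighting drops out correctly. Once this correspondence is established, the six reduced equations are identical to those of \citet{salehi2019impact} up to notational renaming.
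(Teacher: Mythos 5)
Your overall route is the same as the paper's: specialize the covariance limits so that $P_\Sigma - \Sigma_* = 0$ kills $\bar\kappa_o$ and $\bar\chi_{11},\bar\chi_{21},\bar\chi_{31}$; eliminate $(\sigma_1,r_1,\nu_1,\tau_1)$; collapse the Moreau envelope to the one-dimensional proximal $u_{\bar Z,\varepsilon_1} = {\rm Prox}_{\gamma\rho}(\gamma\bar Y + \alpha\bar\kappa_*\bar Z_1 - \sigma_2\bar Z_2)$; and use Stein's lemma to produce the $(1+\gamma\rho''({\rm Prox}_{\gamma\rho}(\cdot)))^{-1}$ factors. Two points, one minor and one substantive.

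Minor: equation (8) of \eqref{EQs} does not reduce to $0=0$ once $r_1=0$ is imposed. With $k=1$ and $\bar\chi_{11}=0$ it reads $0 = \frac{1}{2\nu_1}$, so it is this equation (together with (6)) that \emph{forces} $\nu_1\to\infty$ (with $r_1\nu_1\to 0$), rather than being trivially satisfied by $r_1=0$. The paper derives the boundary values in the order $\sigma_1=0$ from (4), then $r_1=0$ and $\nu_1\to\infty$ from (6) and (8), then $\tau_1\to\infty$ from (2); for a genuine equivalence you want this derivation direction, not just a consistency check.

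Substantive: the mechanism you propose for removing the label, namely the distribution-preserving map $\bar Z_2 \mapsto (2\bar Y-1)\bar Z_2$, does not absorb the $\gamma\bar Y$ shift. Flipping the sign of $\bar Z_2$ changes $-\sigma_2\bar Z_2$ to $+\sigma_2\bar Z_2$ inside the proximal argument but leaves the additive constant $\gamma$ on the event $\{\bar Y=1\}$ untouched. The shift can only be removed via the reflection identity ${\rm Prox}_{\gamma\rho}(x+\gamma) = -{\rm Prox}_{\gamma\rho}(-x)$, which is special to the logistic loss (it follows from $\rho(x)-\rho(-x)=x$), applied \emph{jointly} with the sign flips $\bar Z_1\mapsto -\bar Z_1$, $\bar Z_2\mapsto -\bar Z_2$ and the conditional distributional identity $1-\ind\{\bar\kappa_*\bar Z_1-\varepsilon_1\ge 0\}\,|\,\bar Z_1 \overset{d}{=} \ind\{-\bar\kappa_*\bar Z_1-\varepsilon_1\ge 0\}\,|\,\bar Z_1$; splitting the expectation over $\{\bar Y=0\}$ and $\{\bar Y=1\}$ and recombining then gives the weight $\partial\rho(-\bar\kappa_*\bar Z_1)$ and the unshifted argument $\alpha\bar\kappa_*\bar Z_1+\sigma_2\bar Z_2$. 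This is exactly the step you flag as the main obstacle, but the sketch you give for resolving it would not go through as written; you also need the cancellation $\mean[\partial\rho(\bar\kappa_*\bar Z_1)]=\tfrac12$ to produce the $-\gamma^2/4$ correction that matches the $\frac{1}{4r_2\nu_2^2}$ and $\frac{1}{4r_2^2\nu_2}$ terms in equations (7) and (9). The rest of the algebra (combining (3) and (10) with the explicit $\bar\chi$'s to get the first three target equations) is as you describe.
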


\begin{remark} \label{remark:EQs:iso} (i) \lemmaref{lem:EQs} and \theoremref{thm:DA:risk} apply even though the values of $r_1$, $\sigma_1$, $\nu_1$ and $\tau_1$ are not in the interior of the domain of optimization, as these variables can be removed much earlier in the proof of \lemmaref{lem:PO:SO} and allow us to handle only a smaller system of equations. Moreover, the only quantity $\bar \chi^{r, \theta, \sigma, \tau}_2$ that enters the test risk is independent of these variables.
(ii) To identify the equations in \lemmaref{lem:EQs:iso} with those in (14) and (16) from Theorem 2 of \cite{salehi2019impact}, we note several notational differences: We have used $\kappa = \lim p/n$, whereas they use $\delta = \lim n/p$; our $\bar \kappa_*$, $\bar Z_1$ and $\bar Z_2$ should be identified with their $\kappa$, $Z_1$ and $Z_2$; our $r_2$, $\sigma_2$ and $\tau_2$ should be identified with their $r$, $\sigma$ and $\tau$; our regularization is defined as $\frac{\lambda}{2n} \| \argdot \|^2$ whereas theirs is defined as $\frac{\lambda}{2p} \| \argdot \|^2$, so to see the equivalence, one needs to make the replacement $\lambda \mapsto \lambda \kappa^{-1}$ above.
\end{remark}

\textbf{Random permutations and sign flipping. }  Recall the setup for random permutations and random sign flipping in \Cref{sec:DA}. We first verify that the equations \eqref{EQs} do apply to these two augmentations in special cases. In view of \Cref{thm:DA:risk}, the key condition to verify is \Cref{assumption:CGMT:var}.

\begin{lemma} \label{lem:CGMT:permute} Suppose the coordinates of each $Z_1^{(t)}$ are i.i.d.~within the group. Then \Cref{assumption:CGMT:var} holds for random permutations.
\end{lemma}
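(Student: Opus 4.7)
The plan is to reduce \Cref{assumption:CGMT:var} to a handful of explicit block computations made possible by the within-group i.i.d.~hypothesis. Write $\sigma_t^2$ for the common variance of the coordinates of $Z_1^{(t)}$, let $T_t \subseteq [p_t]$ denote the top $\lceil r_{\rm perm} p_t\rceil$ coordinates of the $t$-th group (which are the only ones that get permuted) and let $B_t = [p_t]\setminus T_t$. By independence across groups and of $(\phi_1,\phi_2)$ from $Z_1$, every matrix in sight is block-diagonal with respect to the grouping $(B_1,T_1,\ldots,B_N,T_N)$, so it suffices to work one group at a time.

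Within group $t$, first I would show that $\Sigma = \Sigma_o = \sigma_t^2 I_{p_t}$: the $B_t$-block is trivial since $\phi_1$ fixes those coordinates, while on $T_t$ the i.i.d.~assumption yields $\E[(Z_1)_{\pi_1^{-1}(j)}(Z_1)_{\pi_1^{-1}(k)}\mid\pi_1] = \sigma_t^2\ind\{j=k\}$ for $j,k\in T_t$ (since $\pi_1$ is a bijection), and a similar calculation handles $B_t$--$T_t$ cross terms. Consequently $(\Sigma^\dagger)^{1/2} = (\Sigma_o^\dagger)^{1/2}$ is block-diagonal with block $\sigma_t^{-1} I_{p_t}$ (or the pseudo-inverse convention when $\sigma_t=0$).

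Next I would compute the two cross-covariances by conditioning on the permutations. For $j,k \in T_t$,
\begin{align*}
    \E\bigl[(Z_1)_{\pi_1^{-1}(j)}(Z_1)_{\pi_2^{-1}(k)}\bigr] \;=\; \sigma_t^2 \,\P\bigl(\pi_1^{-1}(j)=\pi_2^{-1}(k)\bigr) \;=\; \tfrac{\sigma_t^2}{|T_t|}\;,
\end{align*}
using independence of $\pi_1,\pi_2$ and uniformity. An analogous computation with $\pi_2$ replaced by the identity gives $\E\bigl[(Z_1)_{\pi_1^{-1}(j)}(Z_1)_k\bigr] = \sigma_t^2\,\P(\pi_1^{-1}(j)=k) = \sigma_t^2/|T_t|$. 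All $B_t$--$T_t$ cross terms vanish (since $j\neq \pi_i^{-1}(k)$ deterministically), and the $B_t$ diagonals reduce to $\sigma_t^2 I_{|B_t|}$ in both cases. Hence $\Cov[\phi_1(Z_1),Z_1] = \Cov[\phi_1(Z_1),\phi_2(Z_1)]$, which combined with $\Sigma=\Sigma_o$ gives \Cref{assumption:CGMT:var}(i) immediately.

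For (ii), conjugating by $(\Sigma^\dagger)^{1/2}$ yields the explicit block form
\begin{align*}
    \Sigma_*\big|_{B_t} \;=\; I_{|B_t|}\;,
    \qquad
    \Sigma_*\big|_{T_t} \;=\; \tfrac{1}{|T_t|}\,\bone_{|T_t|}\bone_{|T_t|}^\intercal\;,
\end{align*}
with all cross-blocks zero. Each diagonal block is manifestly an orthogonal projection---the identity on $B_t$ and the rank-one projector onto $\mathrm{span}(\bone_{|T_t|})$ on $T_t$---so $\Sigma_*^2=\Sigma_*$ as required. The main bookkeeping obstacle is simply keeping the four index regimes $(B_t,T_t)\times(B_t,T_t)$ straight and handling $\sigma_t=0$ with pseudo-inverses; beyond that the proof is a direct computation.
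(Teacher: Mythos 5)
Your proposal is correct and follows essentially the same route as the paper: both reduce to a blockwise computation showing $\Sigma_*$ is identity on the unpermuted coordinates and the averaging projector $\tfrac{1}{|T_t|}\bone_{|T_t|}\bone_{|T_t|}^\intercal$ on each permuted block, whence idempotency. The only cosmetic difference is that the paper dispatches part (i) by appealing to the invariance $Z_1 \overset{d}{=} \phi_1(Z_1)$ and computes $\Sigma_*$ via the total law of covariance, whereas you verify both cross-covariances directly by conditioning on the permutations; the underlying calculation is the same.
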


\begin{lemma} \label{lem:CGMT:sign:flip}  Suppose $\Var[Z_1] = \frac{1}{p} I_p$. Then \Cref{assumption:CGMT:var} holds for random sign flipping.
\end{lemma}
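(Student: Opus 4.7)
The plan is to show that under the isotropy assumption $\Var[Z_1] = \frac{1}{p} I_p$, every matrix appearing in \Cref{assumption:CGMT:var} reduces to something diagonal in the standard basis, so that $\Sigma_*$ ends up being the $0/1$-diagonal projection onto the coordinates that are \emph{not} subject to sign flipping. Condition (ii) will then be immediate, and (i) will follow from matching two almost identical moment calculations.

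First I would compute $\Sigma_o$ and $\Sigma$. By hypothesis $\Sigma_o = \frac{1}{p}I_p$. For $\Sigma = \Var[\phi_1(Z_1)]$, write $\phi_1(Z_1)_j = (\phi_1)_{jj} Z_{1,j}$, where $(\phi_1)_{jj}$ is either a Rademacher variable (if $j$ is among the $\lceil r_{\rm flip}\, p\rceil$ flipped coordinates) or deterministically $1$ (otherwise). Because $(\phi_1)_{jj}^2 = 1$ almost surely and $\phi_1 \indep Z_1$, one has $\E[\phi_1(Z_1)_j^2] = \E[Z_{1,j}^2] = 1/p$, while for $j \neq k$, $\E[\phi_1(Z_1)_j \phi_1(Z_1)_k] = \E[(\phi_1)_{jj}(\phi_1)_{kk}]\,\E[Z_{1,j} Z_{1,k}] = 0$ since $\E[Z_1 Z_1^\intercal] = \frac{1}{p} I_p$. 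Hence $\Sigma = \frac{1}{p} I_p$ and $(\Sigma^\dagger)^{1/2} = \sqrt{p}\, I_p$.

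Next I would identify $\Sigma_*$. Let $D \in \R^{p \times p}$ be the diagonal matrix with $D_{jj} = 1$ when $j$ is not a flipped coordinate and $D_{jj} = 0$ otherwise. Using the joint independence of $\phi_1, \phi_2, Z_1$,
\begin{align*}
    \Cov[\phi_1(Z_1), \phi_2(Z_1)]_{jk}
    \;=\;
    \E[(\phi_1)_{jj}]\, \E[(\phi_2)_{kk}]\, \E[Z_{1,j} Z_{1,k}]
    \;=\;
    \delta_{jk}\, \mfrac{1}{p}\, \E[(\phi_1)_{jj}]\, \E[(\phi_2)_{jj}].
\end{align*}
Since $\E[(\phi_i)_{jj}]$ vanishes on flipped coordinates and equals $1$ on the rest, this gives $\Cov[\phi_1(Z_1), \phi_2(Z_1)] = \frac{1}{p} D$, whence $\Sigma_* = \sqrt{p}\, I_p \cdot \frac{1}{p} D \cdot \sqrt{p}\, I_p = D$.

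Finally I would verify the two conditions. Condition (ii) is immediate: $D$ has $0/1$ diagonal entries, so $\Sigma_*^2 = D^2 = D = \Sigma_*$. For condition (i), the analogous moment computation
\begin{align*}
    \Cov[\phi_1(Z_1), Z_1]_{jk}
    \;=\;
    \E[(\phi_1)_{jj}]\,\E[Z_{1,j} Z_{1,k}]
    \;=\;
    \delta_{jk}\, \mfrac{1}{p}\, \E[(\phi_1)_{jj}]
\end{align*}
again yields $\frac{1}{p} D$, and therefore $(\Sigma^\dagger)^{1/2}\, \Cov[\phi_1(Z_1), Z_1]\, (\Sigma_o^\dagger)^{1/2} = \sqrt{p}\, I_p \cdot \frac{1}{p} D \cdot \sqrt{p}\, I_p = D = \Sigma_*$. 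There is no real obstacle here; the only place where the isotropy assumption $\Sigma_o = \frac{1}{p} I_p$ is essential is the step that makes $\Sigma = \Sigma_o$ and causes the off-diagonal contributions of $Z_1$ to vanish, which is precisely what collapses $\Sigma_*$ to a projection.
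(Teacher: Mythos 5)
Your proposal is correct and follows essentially the same route as the paper: both reduce everything to the observation that $\Sigma=\Sigma_o=\frac1p I_p$ and that $\Sigma_*=\mean[\phi_1]\mean[\phi_2]$ is the $0/1$ diagonal projection onto the unflipped coordinates, from which idempotency and the match with $(\Sigma^\dagger)^{1/2}\Cov[\phi_1(Z_1),Z_1](\Sigma_o^\dagger)^{1/2}=\mean[\phi_1]$ are immediate. Your version merely spells out the entrywise moment computations that the paper leaves implicit.
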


\textbf{Random cropping. } Recall the random cropping scheme defined in \Cref{sec:DA}. While random cropping does not satisfy  \Cref{assumption:CGMT:var}, it does satisfy a slightly relaxed notion of  \Cref{assumption:CGMT:var}:

\begin{lemma} \label{lem:CGMT:crop} Suppose $\Var[Z_1] = \frac{1}{p} I_p$. For random cropping, there exist some $a_1, a_2 > 0$ such that
\begin{align*}
    (i) \; \Sigma_* \;=&\; a_1
    ( \Sigma^\dagger )^{1/2}\, \Cov[ \phi_1(Z_1)\,,\, Z_1 ]  ( \Sigma_o^\dagger )^{1/2}
    &\text{ and }&&
    (ii) \; \Sigma_*^2 \;=&\; a_2 \Sigma_*\;.
    \tagaligneq \label{eq:relaxed:assumption:CGMT:var}
\end{align*} 
\end{lemma}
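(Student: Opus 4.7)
The plan is to exploit the explicit form of the cropping transformation: each $\phi_i$ is left-multiplication by an independent random diagonal matrix $D_i\in\{0,1\}^{p\times p}$ in which exactly $\lceil r_{\rm crop} p\rceil$ diagonal entries vanish and the rest are $1$. Because $D_i$ is independent of $Z_1$ and $\Var[Z_1]=\tfrac{1}{p}I_p$, every covariance matrix appearing in \eqref{eq:relaxed:assumption:CGMT:var} will collapse to a scalar multiple of $I_p$, and the two relaxed identities become two scalar identities that I can verify directly.

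First I would compute the single-coordinate marginal $p_1:=\P(D_{1,ii}=1)=1-\lceil r_{\rm crop} p\rceil/p$ from the uniform-subset selection, noting that $\E[D_1]=p_1 I_p$ and $D_1^2=D_1$. Using independence of $D_1$ and $Z_1$ together with $\E[Z_1]=0$, one gets $\Sigma=\E[D_1\Sigma_o D_1]=(p_1/p)\,I_p$, so that $(\Sigma^\dagger)^{1/2}=\sqrt{p/p_1}\,I_p$ (valid as long as $r_{\rm crop}<1$) and $(\Sigma_o^\dagger)^{1/2}=\sqrt{p}\,I_p$.

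Next I would compute the two cross-covariances appearing in \eqref{eq:relaxed:assumption:CGMT:var}. For independent copies $D_1,D_2$ of the cropping mask,
\[
\Cov[\phi_1(Z_1),\phi_2(Z_1)]=\E[D_1]\,\Sigma_o\,\E[D_2]=\tfrac{p_1^2}{p}\,I_p, \qquad \Cov[\phi_1(Z_1),Z_1]=\E[D_1]\,\Sigma_o=\tfrac{p_1}{p}\,I_p.
\]
Sandwiching each by the appropriate pseudo-inverse square roots then yields $\Sigma_*=(\Sigma^\dagger)^{1/2}\Cov[\phi_1(Z_1),\phi_2(Z_1)](\Sigma^\dagger)^{1/2}=p_1 I_p$ and $(\Sigma^\dagger)^{1/2}\Cov[\phi_1(Z_1),Z_1](\Sigma_o^\dagger)^{1/2}=\sqrt{p_1}\,I_p$. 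Consequently part (i) of \eqref{eq:relaxed:assumption:CGMT:var} holds with $a_1=\sqrt{p_1}$, while $\Sigma_*^2=p_1^2 I_p = p_1\cdot\Sigma_*$ gives part (ii) with $a_2=p_1$, both strictly positive whenever $r_{\rm crop}<1$.

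There is no real obstacle beyond bookkeeping: the main observation is that the isotropy of $\Sigma_o$, combined with the independence of the cropping mask $D_i$ from $Z_1$, forces each of $\Sigma_o,\Sigma,\Cov[\phi_1(Z_1),\phi_2(Z_1)]$ and $\Cov[\phi_1(Z_1),Z_1]$ to be a scalar multiple of $I_p$, at which point the matrix identities reduce to trivial scalar arithmetic. The only mild subtlety is that cropping is performed by choosing a uniform subset of $\lceil r_{\rm crop} p\rceil$ coordinates rather than by i.i.d.\ Bernoulli masking, but since only the first marginal $p_1=\P(D_{1,ii}=1)$ enters any of the computations above (off-diagonal entries of $\Sigma_o$ vanish, so joint statistics of the mask are never used), this distinction does not affect the proof.
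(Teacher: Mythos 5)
Your proof is correct and follows essentially the same route as the paper's: independence of the cropping mask from $Z_1$ plus isotropy of $\Sigma_o$ reduces every matrix in \eqref{eq:relaxed:assumption:CGMT:var} to a scalar multiple of $I_p$, with $\Sigma=\tfrac{p_1}{p}I_p$ and $\Sigma_*=p_1 I_p$. In fact your value $a_1=\sqrt{p_1}$ is the correct one — the paper's claim that $(\Sigma^\dagger)^{1/2}\Cov[\phi_1(Z_1),Z_1](\Sigma_o^\dagger)^{1/2}=I_p$ (and hence $a_1=a_2=p_1$) contains a small algebra slip, though this does not affect the lemma, which only asserts existence of positive $a_1,a_2$.
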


The core CGMT statement --- the equivalence of \eqref{PO} and \eqref{AO} --- does hold for random cropping. To see this, notice that \Cref{assumption:CGMT:var} is equivalent to having $a_1=a_2 =1$ in \eqref{eq:relaxed:assumption:CGMT:var}. We observe that to prove the equivalence of \eqref{PO} and \eqref{AO} in \lemmaref{lem:PO:AO}, \Cref{assumption:CGMT:var} is only critical for showing the independence of the differently projected data matrices, which hold even under the rescaling $a_1$ and $a_2$ in \eqref{eq:relaxed:assumption:CGMT:var}; see the proof of \lemmaref{lem:CGMT:proj:cov} below. As such, 

Meanwhile, a tedious extension of \eqref{EQs} also holds for random cropping. Notice that \Cref{assumption:CGMT:var} is used again only in the calculations from \eqref{eq:AO:2} onwards in the proof of \lemmaref{lem:PO:SO}, which relates \eqref{AO} to \eqref{SO}. There, we only use the idempotency of $\Sigma_*$ such that $\Sigma_*$ and $I_p-\Sigma_*$ are projections onto orthogonal subspaces, which simplify many subsequent calculations. If instead \eqref{eq:relaxed:assumption:CGMT:var}(ii) holds, a similar calculation still works by writing $\Sigma_* = \Sigma_1 + \Sigma_2$ and $I_p - \Sigma_* = \Sigma'_2 + \Sigma_3$, such that $\Sigma_1$, $\Sigma_2$ and $\Sigma_3$  have mutually orthogonal positive eigenspaces, and $\Sigma_2$ and $\Sigma'_2$ share the same positive eigenspace. This would lead to a system of equations involving $(\sigma_1, \sigma_2, \sigma_3, \tau_1, \tau_2, \tau_3)$ instead of just $(\sigma_1, \sigma_2, \tau_1, \tau_2)$ in \eqref{EQs}, and we omit the calculations for simplicity.

\section{Proofs for \Cref{appendix:DA:cgmt:results}}   \label{appendix:proof:DA}

\subsection{Proofs for the equivalence of \eqref{PO} and \eqref{AO}} \label{appendix:DA:PO:AO}

The next lemma confirms that the projection $P_*$ decouples the different random quantities.

\begin{lemma} \label{lem:CGMT:proj:cov} Under \Cref{assumption:CGMT:var}, $\bG^\Phi P^\perp_*$ is independent of $(\bG \Sigma^{1/2}_o \beta^*,  \bG^\Phi P_*)$.
\end{lemma}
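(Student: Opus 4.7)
The plan is to exploit joint Gaussianity. Since $(\bG, \bG^\Phi)$ is jointly centered Gaussian, so is the triple $(\bG \Sigma_o^{1/2} \beta^*,\, \bG^\Phi P_*,\, \bG^\Phi P_*^\perp)$, and the desired independence of $\bG^\Phi P_*^\perp$ from $(\bG \Sigma_o^{1/2} \beta^*,\, \bG^\Phi P_*)$ reduces to the vanishing of two cross-covariances: (a) between $\bG^\Phi P_*^\perp$ and $\bG \Sigma_o^{1/2} \beta^*$, and (b) between $\bG^\Phi P_*^\perp$ and $\bG^\Phi P_*$. I will verify each entry-wise by computing the covariance between the $(i,j)$-th row $P_*^\perp G_{ij}^\Phi$ of $\bG^\Phi P_*^\perp$ and the corresponding entries of the other two objects.

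For (a), different blocks $i \neq i'$ are independent by construction, so the only non-trivial case is $i=i'$, where the cross-covariance equals $P_*^\perp \, \Cov[G_{ij}^\Phi, G_i] \, \Sigma_o^{1/2} \beta^* = P_*^\perp \Sigma_* \Sigma_o^{1/2} \beta^*$ by Assumption~\ref{assumption:CGMT:var}(i). This vanishes because $P_*$ is, by definition, the rank-one projection onto the direction $\Sigma_* \Sigma_o^{1/2} \beta^*$, so this very vector lies in $\ker P_*^\perp$.

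For (b), the cross-covariance reduces to $P_*^\perp \, \Cov[G_{ij}^\Phi, G_{i'j'}^\Phi] \, P_*$. The cases $i \neq i'$ and $(i,j) = (i',j')$ are immediate (block independence and $P_*^\perp P_* = 0$, respectively). The remaining case $i = i'$, $j \neq j'$ yields $P_*^\perp \Sigma_* P_*$, and here I would invoke Assumption~\ref{assumption:CGMT:var}(ii): the idempotency $\Sigma_*^2 = \Sigma_*$ forces $v := \Sigma_* \Sigma_o^{1/2} \beta^*$ to satisfy $\Sigma_* v = v$, hence $\Sigma_* P_* = P_*$, and consequently $P_*^\perp \Sigma_* P_* = P_*^\perp P_* = 0$.

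The argument is largely mechanical once the cross-covariance structure is laid out; the only mild technical obstacle is bookkeeping the pseudoinverses when $\Sigma$ or $\Sigma_o$ is singular, so that the identifications $\Cov[G_{ij}^\Phi, G_i] = \Sigma_*$ and $\Cov[G_{ij}^\Phi, G_{ij'}^\Phi] = \Sigma_*$ (which strictly hold only after projecting onto the supports $P_\Sigma$ and $P_{\Sigma_o}$) can be applied as above. This is harmless because every occurrence of $\Sigma_*$ in the key computations is composed with either $\Sigma_o^{1/2} \beta^*$ or $P_*$, both of which already lie in the image of $\Sigma_*$, so the projected and unprojected forms agree on the relevant vectors.
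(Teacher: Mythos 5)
Your proposal is correct and matches the paper's proof essentially step for step: the same Gaussianity reduction to vanishing cross-covariances, the same use of Assumption (i) and the definition of $P_*$ for the covariance with $\bG \Sigma_o^{1/2}\beta^*$, and the same use of idempotency to get $\Sigma_* P_* = P_*$ and hence $P_*^\perp \Sigma_* P_* = 0$. The remark on pseudo-inverses is a careful extra touch not spelled out in the paper, but no substantive difference in approach.
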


\begin{proof}[Proof of \lemmaref{lem:CGMT:proj:cov}] By Gaussianity, to prove independence, it suffices to check that the covariance between the random quantities are zero. We first verify that the covariance between $\bG^\Phi P^\perp_*$ and $\bG \Sigma^{1/2}_o \beta^*$ is zero, for which it suffices to compute 
\begin{align*}
    \Cov[ P^\perp_* G^\Phi_{11} \,,\, G_1^\perp \Sigma^{1/2}_o \beta^* ]
    \;=&\;
     P^\perp_*  \,\Cov[ (\Sigma^\dagger)^{1/2} \phi_{11}(X_1) , (\Sigma_o^\dagger)^{1/2}  X_1 ] \,\Sigma^{1/2}_o \beta^*
     \\
     \;=&\;
     P^\perp_* (\Sigma^\dagger)^{1/2} \, \Cov[  \phi_{11}(X_1),  X_1 ] \,  (\Sigma_o^\dagger)^{1/2}  \Sigma^{1/2}_o \beta^*
     \\
     \;=&\;
    %  \tilde a_* 
    %  \sigma_*  
     P^\perp_* \Sigma_* \Sigma^{1/2}_o \beta^*
     \;=\;
     0\;.
\end{align*}
In the last line, we used \Cref{assumption:CGMT:var}(i), 
% noted that $ \tilde \Sigma_*$ is a scaled multiple of $\Sigma_*$, 
and concluded that the covariance evaluates to zero by the definition of $P_*$. This proves that $\bG^\Phi P^\perp_*$ is independent of $\bG \Sigma^{1/2}_o \beta^*$. 

\vspace{.5em}

To check the independence between $\bG^\Phi P^\perp_*$ and $ \bG^\Phi P_*$, we first note that since $\Sigma = \Var[\phi_{11}(X_1)]$, we have 
\begin{align*}
    \Cov[ P^\perp_* G^\Phi_{11} \,,\, P_* G^\Phi_{11} ]
    \;=&\;
    P^\perp_* (\Sigma^\dagger)^{1/2} \, \Var[ \phi_{11}(X_1)] \, (\Sigma^\dagger)^{1/2} P_* 
    \;=\; P^\perp_* P_*
    \;=\; 0\;.
\end{align*}
We also need to compute 
\begin{align*}
    \Cov[ P^\perp_* G^\Phi_{11} \,,\, P_* G^\Phi_{12} ]
    \;=&\;
    P^\perp_* (\Sigma^\dagger)^{1/2} \, \Cov[ \phi_{11}(X_1) \,,\, \phi_{12}(X_1)] \, (\Sigma^\dagger)^{1/2} P_* 
    \\
    \;=&\;
    % P^\perp_* \, \tilde \Sigma_* \, P_* 
    % \;=\;
    % \sigma_* 
    P^\perp_* \, \Sigma_* \, P_* 
    \;.
\end{align*}
Now note that if $\Sigma_* \Sigma^{1/2}_o \beta^* = 0$, the above evaluates to zero automatically. Otherwise, we have
\begin{align*}
    \Sigma_* \, P_* 
    \;=&\;
    \Sigma_* \, 
    \mfrac{(\Sigma_* \Sigma_o^{1/2} \beta^* )  (\Sigma_* \Sigma_o^{1/2} \beta^* )^\intercal }{\|  \Sigma_*  \Sigma_o^{1/2} \beta^*  \|^2}
    \;=\;
    P_*\;,
\end{align*}
where we have used $\Sigma_*^2 = \Sigma_*$ by \Cref{assumption:CGMT:var}(ii).  This implies 
\begin{align*}
    \Cov[ P^\perp_* G^\Phi_{11} \,,\, P_* G^\Phi_{12} ] \;=\; 
    P^\perp_* P_* \;=\; 0\;,
\end{align*}
which proves that $\bG^\Phi P^\perp_*$  is independent of $\bG^\Phi P_*$. 
\end{proof}

\lemmaref{lem:CGMT:proj:cov} suggests that we can apply \theoremref{thm:CGMT_2} to $\bG^\Phi P_*^\perp \Sigma^{1/2}$ conditionally on $(\bG \Sigma^{1/2}_o \beta^*,  \bG^\Phi P_*)$. To facilitate this, the next lemma computes the covariance structure of $\bG^\Phi P_*^\perp \Sigma^{1/2}$.

\begin{lemma} \label{lem:cov:GPstar} For $i, i' \leq m$, $j, j' \leq k$ and $l, l' \leq p$, 
\begin{align*}
    \Cov[ (\Sigma^{1/2} P_*^\perp G^\Phi_{ij})_l \,,\, (\Sigma^{1/2}  P_*^\perp G^\Phi_{i'j'})_{l'} ] 
    \;=\;
    (I_{mk})_{ij,i'j'} \, (\Sigma_1)_{l,l'}
    +
    (J_{mk})_{ij,i'j'} \, (\Sigma_2)_{l,l'}
    \;.
\end{align*}
Moreover, $\Sigma_*$, $\Sigma_1$ and $\Sigma_2$ are all positive semi-definite.
\end{lemma}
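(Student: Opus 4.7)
The statement is a direct variance-covariance computation combined with properties of projection matrices. The plan is as follows.

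First, I would pin down the covariance of $G^\Phi_{ij}$ itself. Since $\Sigma^{1/2} G^\Phi_{ij}$ is the Gaussian surrogate of $\phi_{ij}(Z_i)$ with matching first two moments, the defining moments of the sequence $(G^\Phi_{ij})$ are $\Cov[\Sigma^{1/2} G^\Phi_{ij}, \Sigma^{1/2} G^\Phi_{i'j'}] = 0$ for $i \neq i'$, $\Var[\Sigma^{1/2} G^\Phi_{ij}] = \Sigma$, and $\Cov[\Sigma^{1/2} G^\Phi_{ij}, \Sigma^{1/2} G^\Phi_{ij'}] = \Sigma^{1/2}\Sigma_* \Sigma^{1/2}$ for $j \neq j'$ (the last by the very definition of $\Sigma_*$). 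On the range of $\Sigma^{1/2}$, which is the only portion that affects $\Sigma^{1/2} P_*^\perp G^\Phi_{ij}$, this translates into
\[
\Cov[G^\Phi_{ij}, G^\Phi_{i'j'}] \;=\; \delta_{ii'}\bigl(\delta_{jj'}\, I_p + (1-\delta_{jj'})\,\Sigma_*\bigr).
\]

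Second, I would plug this into the covariance we care about,
\[
\Cov\bigl[\Sigma^{1/2} P_*^\perp G^\Phi_{ij}, \, \Sigma^{1/2} P_*^\perp G^\Phi_{i'j'}\bigr] \;=\; \Sigma^{1/2} P_*^\perp \,\Cov[G^\Phi_{ij}, G^\Phi_{i'j'}]\, P_*^\perp \Sigma^{1/2},
\]
and then consider the three cases $i\neq i'$, $(i,j)=(i',j')$, and $i=i', j\neq j'$. The first gives zero, matching the right-hand side of the claim since both $(I_{mk})_{ij,i'j'}$ and $(J_{mk})_{ij,i'j'}$ vanish. For the diagonal case, I would use that $P_*^\perp$ is a symmetric idempotent to write $\Sigma^{1/2} P_*^\perp P_*^\perp \Sigma^{1/2} = \Sigma^{1/2} P_*^\perp (I_p - \Sigma_* + \Sigma_*) P_*^\perp \Sigma^{1/2} = \Sigma_1 + \Sigma_2$, which matches the right-hand side since both $(I_{mk})_{ij,ij}$ and $(J_{mk})_{ij,ij}$ equal $1$. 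For the third case, the covariance collapses to $\Sigma^{1/2} P_*^\perp \Sigma_* P_*^\perp \Sigma^{1/2} = \Sigma_2$, matching the right-hand side where $(I_{mk})$ gives $0$ and $(J_{mk})$ gives $1$.

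Third, for the positive semi-definiteness claims, the core observation is that $\Sigma_*$ is itself an orthogonal projection. Indeed, $\Sigma_*$ is symmetric because $\phi_1$ and $\phi_2$ are i.i.d., so $\Cov[\phi_1(Z_1),\phi_2(Z_1)]$ is symmetric, and \Cref{assumption:CGMT:var}(ii) gives $\Sigma_*^2 = \Sigma_*$; a symmetric idempotent is a PSD projection. Consequently $I_p - \Sigma_*$ is also a PSD projection, and
\[
\Sigma_1 \;=\; \bigl(P_*^\perp \Sigma^{1/2}\bigr)^\intercal (I_p - \Sigma_*) \bigl(P_*^\perp \Sigma^{1/2}\bigr), \qquad \Sigma_2 \;=\; \bigl(P_*^\perp \Sigma^{1/2}\bigr)^\intercal \Sigma_* \bigl(P_*^\perp \Sigma^{1/2}\bigr)
\]
are both of the form $A^\intercal B A$ with $B \succeq 0$, hence PSD. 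There is no real obstacle here; the mildest subtlety is ensuring that the residual ambiguity in defining $G^\Phi_{ij}$ on the null space of $\Sigma^{1/2}$ does not affect anything, which is automatic because $\Sigma^{1/2}$ annihilates that ambiguity on both sides of $P_*^\perp$.
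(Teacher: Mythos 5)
Your proposal is correct, and the covariance computation is essentially the paper's argument: condition on the block structure, use that $(\Sigma^\dagger)^{1/2}\Var[\phi_{11}(Z_1)](\Sigma^\dagger)^{1/2}$ acts as the identity on the relevant range, invoke idempotency of $P_*^\perp$, and regroup the $\ind\{j=j'\}$ and $\ind\{j\neq j'\}$ cases into the $I_{mk}$ and $J_{mk}$ coefficients. Where you genuinely diverge is the positive semi-definiteness of $\Sigma_*$ and hence of $I_p-\Sigma_*$: you deduce it from symmetry (via exchangeability of $\phi_1,\phi_2$) together with \Cref{assumption:CGMT:var}(ii), so that $\Sigma_*$ is an orthogonal projection, whereas the paper instead applies the total law of covariance to write $\Sigma_* = (\Sigma^\dagger)^{1/2}\,\Var\,\mean[\phi_{11}(Z_1)\,|\,Z_1]\,(\Sigma^\dagger)^{1/2} \succeq 0$ and a second total-variance decomposition to get $\Sigma_* \preceq I_p$. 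Both are valid here, but the paper's route buys something your shorter one does not: it never uses the idempotency assumption, so the PSD conclusion (and with it the applicability of the dependent CGMT) survives in settings such as random cropping where only the relaxed version $\Sigma_*^2 = a_2\Sigma_*$ of \Cref{assumption:CGMT:var}(ii) holds. Your closing remark about the null space of $\Sigma^{1/2}$ is a fair and correctly resolved point.
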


\begin{proof}[Proof of \lemmaref{lem:cov:GPstar}] For $i, i' \leq m$, $j, j' \leq k$ and $l, l' \leq p$, we have 
\begin{align*}
    &\;\Cov[ ( \Sigma^{1/2}  P_*^\perp G^\Phi_{ij})_l \,,\, (\Sigma^{1/2}  P_*^\perp G^\Phi_{i'j'})_{l'} ] 
    \\
    &\;=\;
    \Cov[ (\Sigma^{1/2} P_*^\perp (\Sigma^\dagger)^{1/2} \phi_{ij}(X_i))_l \,,\,  ( \Sigma^{1/2}  P_*^\perp  (\Sigma^\dagger)^{1/2}  \phi_{i'j'}(X_{i'}))_{l'}  ] 
    \\
    &\;=\;
    \ind\{ i = i'\} 
    \ind\{ j = j'\} 
        \,\big(\, \Sigma^{1/2}  P^\perp_*   (\Sigma^\dagger)^{1/2} \,  \Var[ \phi_{11}(X_1) ] \,  (\Sigma^\dagger)^{1/2}  P^\perp_* \Sigma^{1/2}  \,\big)_{l,l'}
    \\
    &\qquad 
        +
        \ind\{ i = i'\} 
        \ind\{ j \neq j'\} 
        \,\big(\, \Sigma^{1/2}  P^\perp_*  (\Sigma^\dagger)^{1/2}   \, \Cov[ \phi_{11}(X_1) \,,\, \phi_{12}(X_1) ] \,  (\Sigma^\dagger)^{1/2}   P^\perp_* \Sigma^{1/2} \, \big)_{l,l'}
    \\
    &\;\overset{(a)}{=}\;
    (I_{mk})_{ij,i'j'} \, \big( \Sigma^{1/2} P^\perp_*  \Sigma^{1/2} - \Sigma^{1/2} P^\perp_* \Sigma_* P^\perp_* \Sigma^{1/2}   \big)_{l,l'}
    +
    (J_{mk})_{ij,i'j'} \, \big( \Sigma^{1/2} P^\perp_* \Sigma_* P^\perp_* \Sigma^{1/2}  \big)_{l,l'}
    \\
    &\;=\;
    (I_{mk})_{ij,i'j'} \, (\Sigma_1)_{l,l'}
    +
    (J_{mk})_{ij,i'j'} \, (\Sigma_2)_{l,l'}
    \;.
\end{align*}
In $(a)$, we have used that $(\Sigma^\dagger)^{1/2} \,  \Var[ \phi_{11}(X_1) ] \,  (\Sigma^\dagger)^{1/2} = I_p$, $(P^\perp_* )^2 = P^\perp_*$ and the definition of $\Sigma_*$. This gives the desired formula. Now by the total law of covariance (see e.g.~Lemma 41(i) of \citet{huang2022data}),
\begin{align*}
    \Sigma_*
    \;=&\; 
    (\Sigma^\dagger)^{1/2}   \, \Cov[ \phi_{11}(X_1) \,,\, P^\perp_* \phi_{12}(X_1) ] \,  (\Sigma^\dagger)^{1/2} 
    \\
    \;=&\;
    (\Sigma^\dagger)^{1/2}   \, \Var \, \mean[ \phi_{11}(X_1) \,|\, X_1 ] \,  (\Sigma^\dagger)^{1/2} 
\end{align*}
which is positive semi-definite. This implies that $\Sigma_2$ is also positive semi-definite. Moreover, by another total law of variance, we get that 
\begin{align*}
    \Sigma_*
    \;\preceq\;
    (\Sigma^\dagger)^{1/2}   \, \Var[ \phi_{11}(X_1) ] \,  (\Sigma^\dagger)^{1/2}  
    \;=\; I_p\;,
\end{align*}
where $\preceq$ denotes the Loewner partial order on positive semi-definite matrices. This implies that $I_p - \Sigma_*$ is positive semi-definite and so is $\Sigma_1$. 
\end{proof}

We are now ready to prove the equivalence of \eqref{PO} and \eqref{AO}.

\vspace{.5em}

\begin{proof}[Proof of \lemmaref{lem:PO:AO}] We recall that \eqref{PO} can be expressed as 
    \begin{align*}
        \min_{\beta \in S, u \in S_u} \, \max_{v \in S_v}
        \;&\;
        \mfrac{1}{mk} \bone_{nk}^\intercal \, \rho( u )
        -
        \mfrac{1}{mk} \by(\bG \Sigma^{1/2}_o \beta^*)^\intercal u
        +    
        \mfrac{\lambda}{2m} \| \beta \|^2_2 
        +
        \mfrac{1}{mk} v^\intercal u 
        \\
        &\;
        -
        \mfrac{1}{mk} v^\intercal \bG^\Phi P_* \Sigma^{1/2} \beta 
        - 
        \mfrac{1}{mk} v^\intercal \bG^\Phi P^\perp_* \Sigma^{1/2} \beta 
        \;.
    \end{align*}
    By \lemmaref{lem:CGMT:proj:cov}, $\bG^\Phi P^\perp_*$ is independent of $(\bG \Sigma^{1/2}_o \beta^*, \bG^\Phi P_* )$. This allows us to condition on the random variables $(\bG \Sigma^{1/2}_o \beta^*, \bG^\Phi P_* )$, apply the CGMT result to  $\bG^\Phi P^\perp_* \Sigma^{1/2}$, and then marginalize out $(\bG \Sigma^{1/2}_o \beta^*, \bG^\Phi P_* )$. Notice that the loss is convex-concave in $(\beta, v)$, the sets of optimization are compact convex, and the variance-covariance structure of  $\bG^\Phi P^\perp_* \Sigma^{1/2}$ is given by \lemmaref{lem:cov:GPstar}, which satisfies the condition of \theoremref{thm:CGMT_2} with $M=2$. The conclusions of \theoremref{thm:CGMT_2} therefore hold for \eqref{PO} and \eqref{AO}.
\end{proof}

\subsection{Proof of \lemmaref{lem:PO:SO}: Equivalence between \eqref{PO}, \eqref{AO} and \eqref{SO}}

The calculations are mostly similar to that of \citet{salehi2019impact}, so we focus on highlighting the differences in the proof.

\vspace{.5em}
    
\textbf{Analyzing the auxiliary optimization. } We first notice that, other than the regularization term $\| \beta \|^2_2$, $\beta$ appears in the loss only through $\Sigma^{1/2} \beta$, $\Sigma_1^{1/2} \beta$ and $\Sigma_2^{1/2} \beta$, where 
\begin{align*}
    \Sigma_1 
    \;=&\;
    \Sigma^{1/2} P_*^\perp (I_p - \Sigma_*) P_*^\perp \Sigma^{1/2}
    &\text{ and }&&
    \Sigma_2 
    \;=&\;
    \Sigma^{1/2} P_*^\perp \Sigma^{1/2}\;.
\end{align*}
Therefore it suffices to restrict the set of minimization, $\beta \in S$, to the intersection of $S $ and the positive eigenspace of $\Sigma$. Define the projection to the positive eigenspace of $\Sigma$ as $P_{\Sigma} \coloneqq \Sigma^\dagger \Sigma$, which allows us to rewrite the auxiliary optimization as 
\begin{align*}
    \min_{\beta \in S, u \in S_u} \, \max_{v \in S_v} 
    & \,
    \mfrac{1}{mk} \bone_{mk}^\intercal \, \rho( u )
    -
    \mfrac{1}{mk} \by^\intercal u
    +    
    \mfrac{\lambda}{2m} \| P_\Sigma \beta \|^2_2 
    +
    \mfrac{1}{mk} v^\intercal ( u - \bG^\Phi P_* \Sigma^{1/2} \beta )
    \\
    &\;
    -
    \mfrac{1}{mk} 
    v^\intercal \bh_1 \| \beta \|_{\Sigma_1}
    +
    \mfrac{1}{mk} 
    \| v \| \bg_1^\intercal \Sigma_1^{1/2} \beta
    \\
    &\;
    -
    \mfrac{1}{mk^{3/2}} 
    v^\intercal J_{mk} \bh_2 \| \beta \|_{\Sigma_2}
    +
    \mfrac{1}{mk} 
    \| v \|_{J_{mk}} \bg_2^\intercal \Sigma_2^{1/2} \beta
    \;.
    \tagaligneq \label{eq:AO:1}
\end{align*}
For simplicity, we have abbreviated $\by \equiv \by(\bG \Sigma_o^{1/2} \beta^*)$. 

\vspace{.5em}

\citet{salehi2019impact} showed that, under their CGMT result (analogous to our \theoremref{thm:CGMT_2}(i) and \theoremref{thm:CGMT_2}(ii)), the minimum and maximum can be exchanged in the auxiliary optimization in an asymptotic sense since they can be exchanged in the primary optimization. Throughout the analysis of AO, we will highlight explicitly where such flipping is done, and in the case where the min-max theorem is not applicable, we defer a rigorous justification to the end of the proof. 
% We also highlight where the convexity of $S$ is used, and defer to the end of the proof to discuss when this condition can be removed.

\vspace{.5em}

For simplicity of notation, given a matrix $A \in \R^{d' \times d}$ and a subset $S \in \R^d$, we also write $A(S) = \{Av \,|\, v \in S \}$ for short.

\vspace{.5em}

\textbf{Maximizing over $v \in S_v \subset \R^{mk}$.} Consider the projection matrix $P_{mk} \coloneqq \frac{1}{k} J_{mk}$ and write $P^\perp_{mk} = I_{mk} - P_{mk}$. Notice also that $\| \argdot \|_{J_{mk}} = \sqrt{k}\, \| P_{mk} (\argdot) \|$. Then the maximization over $v$ can be re-expressed as
\begin{align*}
    \max_{ P^\perp_{mk} v \in P^\perp_{mk}(S_v)} \max_{P_{mk} v \in P_{mk} (S_v)}\,
    \,
    &\, 
    \mfrac{1}{mk} v^\intercal P_{mk} ( u - \bG^\Phi P_* \Sigma^{1/2} \beta - \bh_1 \| \beta \|_{\Sigma_1} - \mfrac{1}{\sqrt{k}} J_{mk} \bh_2 \| \beta \|_{\Sigma_2} )
    \\
    &\;
    +
    \mfrac{1}{mk} v^\intercal P^\perp_{mk} ( u - \bG^\Phi P_* \Sigma^{1/2} \beta - \bh_1 \| \beta \|_{\Sigma_1} - \mfrac{1}{\sqrt{k}} J_{mk} \bh_2 \| \beta \|_{\Sigma_2} )
    \\
    &\;
    +
    \mfrac{1}{mk} 
    \| P_{mk} v \| \bg_1 \Sigma_1^{1/2} \beta
    +
    \mfrac{1}{mk} 
    \| P^\perp_{mk} v \| \bg_1^\intercal \Sigma_1^{1/2} \beta
    +
    \mfrac{1}{m \sqrt{k}} 
    \| P_{mk} v \| \bg_2^\intercal \Sigma_2^{1/2} \beta
    \;.
\end{align*}
Maximizing the above over $P_{mk} v$ and $P^\perp_{mk} v$ separately, choosing each vector to be what it multiplies and writing $r_1 = \| P^\perp_{mk} v \| / \sqrt{mk}$ and $r_2 = \| P_{mk} v \| / \sqrt{mk}$ (analogous to (44) -- (45) in \citet{salehi2019impact}), the above can be rewritten as 
\begin{align*}
    \max_{(r_1,r_2) \in S_{r_1} \times S_{r_2}}
    \,&\;
    r_1
    \Big( 
        \mfrac{1}{\sqrt{mk}} \bg_1^\intercal \Sigma_1^{1/2} \beta
        +
        \mfrac{1}{\sqrt{mk}}  \big\| P^\perp_{mk} ( u - \bG^\Phi P_* \Sigma^{1/2} \beta - \bh_1 \| \beta \|_{\Sigma_1} - \mfrac{1}{\sqrt{k}} J_{mk} \bh_2 \| \beta \|_{\Sigma_2} ) \big\|
    \Big)
    \\
    &\;
    +
    r_2
    \Big( 
        \mfrac{1}{\sqrt{mk}} \bg_1^\intercal \Sigma_1^{1/2} \beta
        +
        \mfrac{1}{\sqrt{m}} \bg_2^\intercal \Sigma_2^{1/2} \beta
    \\
    &\hspace{3em}
        +
        \mfrac{1}{\sqrt{mk}}  \big\| P_{mk} ( u - \bG^\Phi P_* \Sigma^{1/2} \beta - \bh_1 \| \beta \|_{\Sigma_1} - \mfrac{1}{\sqrt{k}}J_{mk} \bh_2 \| \beta \|_{\Sigma_2} )\big\|
    \Big)
    \;,
\end{align*}
where we have denoted 
\begin{align*}
    S_{r_1}
    \;\coloneqq&\; 
    \Big\{ \mfrac{1}{\sqrt{mk}} \| P^\perp_{mk} v \| \,\Big|\, v \in S_v \Big\}
    &\text{ and }&&
    S_{r_2}
    \;\coloneqq&\; 
    \Big\{ \mfrac{1}{\sqrt{mk}} \| P_{mk} v \| \,\Big|\, v \in S_v \Big\}
    \;.
\end{align*}
Substituting this into \eqref{eq:AO:1} yields  
\begin{align*}
    \min_{\substack{\beta \in S  \\ u \in S_u}} \, \max_{(r_1, r_2) \in S_{r_1} \times S_{r_2}}    
    &\;
    \mfrac{1}{mk} \bone_{mk}^\intercal \, \rho( u )
    -
    \mfrac{1}{mk} \by^\intercal u
    +    
    \mfrac{\lambda}{2m} \| P_\Sigma \beta \|^2_2 
    +
    \mfrac{r_1+r_2}{\sqrt{mk}} \bg_1^\intercal \Sigma_1^{1/2} \beta
    +
    \mfrac{r_2}{\sqrt{n}} \bg_2^\intercal \Sigma_2^{1/2} \beta
    \\
    &\;
        +
        \mfrac{r_1}{\sqrt{mk}}  \big\| P^\perp_{mk} ( u - \bG^\Phi P_* \Sigma^{1/2} \beta - \bh_1 \| \beta \|_{\Sigma_1} -\mfrac{1}{\sqrt{k}} J_{mk} \bh_2 \| \beta \|_{\Sigma_2} ) \big\|
    \\
    &\;
        +
        \mfrac{r_2}{\sqrt{mk}}  \big\| P_{mk} ( u - \bG^\Phi P_* \Sigma^{1/2} \beta - \bh_1 \| \beta \|_{\Sigma_1} -\mfrac{1}{\sqrt{k}} J_{mk} \bh_2 \| \beta \|_{\Sigma_2} )\big\|
    \;.
\end{align*}

\vspace{1em}

\textbf{Minimizing over $\beta \in S $.} As with (47) of \citet{salehi2019impact}, we introduce new variables $\mu, w \in \R^p$ to replace $ \beta $ in the regularization term via the Lagrange multiplier method applied to the constraint $P_\Sigma \mu = P_\Sigma \beta$:
\begin{align*}
    \min_{\substack{\beta \in S \\ u \in S_u \\ \mu \in S}}
    \;\;
    \max_{\substack{w \in \R^p \\ (r_1, r_2) \in S_{r_1} \times S_{r_2}}}
    \cL_1(\beta, u, \mu, w, r_1, r_2) 
    \;,
\end{align*}
where 
\begin{align*}
    \cL_1(\beta, u, \mu, w, r_1, r_2) 
    \;\coloneqq&\;
    \mfrac{1}{mk} \bone_{mk}^\intercal \, \rho( u )
    -
    \mfrac{1}{mk} \by^\intercal u
    +    
    \mfrac{\lambda}{2m} \| P_\Sigma \mu \|^2_2 
    +
    \mfrac{1}{p} w^\intercal P_\Sigma (\mu - \beta)
    \\
    &\;
    +
    \mfrac{r_1+r_2}{\sqrt{mk}} \bg_1^\intercal \Sigma_1^{1/2} \beta
    +
    \mfrac{r_2}{\sqrt{m}} \bg_2^\intercal \Sigma_2^{1/2} \beta
    \\
    &\;
        +
        \mfrac{r_1}{\sqrt{mk}}  \big\| P^\perp_{mk} ( u - \bG^\Phi P_* \Sigma^{1/2} \beta - \bh_1 \| \beta \|_{\Sigma_1} -\mfrac{1}{\sqrt{k}} J_{mk} \bh_2 \| \beta \|_{\Sigma_2} ) \big\|
    \\
    &\;
        +
        \mfrac{r_2}{\sqrt{mk}}  \big\| P_{mk} ( u - \bG^\Phi P_* \Sigma^{1/2} \beta - \bh_1 \| \beta \|_{\Sigma_1} -\mfrac{1}{\sqrt{k}} J_{mk} \bh_2 \| \beta \|_{\Sigma_2} )\big\|
    \;.
    \tagaligneq \label{eq:AO:2}
\end{align*}
To minimize over $\beta \in S$, we first swap the order of $\min_{\beta \in S }$ and $\max_{w \in \R^p, \; (r_1,r_2) \in S_{r_1} \times S_{r_2}}$. Notice that the $\beta$-dependence in the loss comes from $P_{\Sigma} \beta $, $\Sigma^{1/2}_1 \beta$, $\Sigma^{1/2}_2 \beta$ and $\bG^\Phi P_* \Sigma^{1/2} \beta$. Writing $\tilde \beta = \sqrt{p}\, P_* \Sigma^{1/2}
\beta$, $\tilde \beta^\perp = \sqrt{p}\, P_*^\perp \Sigma^{1/2}
\beta$, $v(\beta^*) \coloneqq \sqrt{p} \, \Sigma_* \Sigma_o^{1/2} \beta^*$ and 
$\kappa_* \coloneqq  \| \Sigma_* \Sigma_o^{1/2} \beta^* \|$, we can express
\begin{align*}
    \Sigma^{1/2}_1 \beta
    \;=&\;
    \big( \Sigma^{1/2} P_*^\perp (I_p - \Sigma_*) P_*^\perp \Sigma^{1/2} \big)^{1/2} \beta 
    \;\overset{(a)}{=}\;
    \mfrac{1}{\sqrt{p}}
    (I_p - \Sigma_*) \tilde \beta^\perp
    \;,
    \\
    \Sigma^{1/2}_2 \beta 
    \;=&\;
    \big( \Sigma^{1/2} P_*^\perp \Sigma_*  P_*^\perp \Sigma^{1/2} \big)^{1/2} \beta
    \;\overset{(b)}{=}\;
    \mfrac{1}{\sqrt{p}}
    \Sigma_*
    \tilde \beta^\perp
    \;,
    \\
    \bG^\Phi P_* \Sigma^{1/2} \beta
    \;=&\;
    \underbrace{ \mfrac{1}{\sqrt{p}} \bG^\Phi v(\beta^*) }_{\eqqcolon \kappa_* \bq}
    \, \times \, 
    \underbrace{ \mfrac{v(\beta^*)^\intercal \Sigma^{1/2} \beta}{\sqrt{p}  \, \kappa_*^2} }_{\eqqcolon \alpha(\tilde \beta)}
    \;,
    \\
    P_\Sigma \beta 
    \;=&\;
    \mfrac{1}{\sqrt{p}}
    (\Sigma^\dagger)^{1/2} \tilde \beta + \mfrac{1}{\sqrt{p}} (\Sigma^\dagger)^{1/2} \tilde \beta^\perp
    \;=\; 
    \mfrac{\alpha(\tilde \beta)}{\sqrt{p}} \, (\Sigma^\dagger)^{1/2} v(\beta^*)
    +
    \mfrac{1}{\sqrt{p}}
    (\Sigma^\dagger)^{1/2} \tilde \beta^\perp
    \;.
\end{align*}
In $(a)$ and $(b)$ above, we have used \Cref{assumption:CGMT:var}(ii) to note that $ I_p - \Sigma_*$ and $\Sigma_*$ are both idempotent. This allows us to express all $\beta$-dependent terms in terms of $\alpha(\tilde \beta)$ and $\tilde \beta^\perp$, where $\tilde \beta$ and $\tilde \beta^\perp$ are orthogonal and can be optimized separately. Therefore, the optimization can be rewritten as 
\begin{align*}
    \min_{\substack{u \in S_u \\ \mu \in S \\ \alpha \in S^\alpha }} \, \max_{\substack{w \in \R^p \\ (r_1, r_2) \in S_{r_1} \times S_{r_2}}} \, \min_{\tilde \beta^\perp \in \tilde S^\perp}
    & \,
    \mfrac{1}{mk} \bone_{mk}^\intercal \, \rho( u )
    -
    \mfrac{1}{mk} \by^\intercal u
    +    
    \mfrac{\lambda}{2m} \| P_\Sigma \mu \|^2_2 
    + 
    \mfrac{1}{p} w^\intercal P_\Sigma \mu 
    \\
    &\, \hspace{-3em}
    -
    \mfrac{\alpha}{p \sqrt{p}} 
    \,
    w^\intercal (\Sigma^\dagger)^{1/2} v(\beta^*)
    -
    \mfrac{1}{p \sqrt{p}} w^\intercal (\Sigma^\dagger)^{1/2} \tilde \beta^\perp 
    \\
    & \,\hspace{-3em}
    +
    \mfrac{r_1 + r_2}{\sqrt{mkp}} \bg_1^\intercal (I_p - \Sigma_*)\tilde \beta^\perp
    +
    \mfrac{r_2}{\sqrt{mp}} \bg_2^\intercal \Sigma_* \tilde \beta^\perp 
    \\
    &\;\hspace{-3em}
    +
    \mfrac{r_1}{\sqrt{mk}}  \Big\| P^\perp_{mk} \Big( u - \kappa_* \alpha \bq - \mfrac{1}{\sqrt{p}} \bh_1 \| (I_p - \Sigma_*) \tilde \beta^\perp \| - \mfrac{1}{\sqrt{pk}} J_{mk} \bh_2 \|  \Sigma_* \tilde \beta^\perp \| \Big) \Big\|
    \\
    &\; \hspace{-3em}
        +
        \mfrac{r_2}{\sqrt{mk}}  \Big\| P_{mk} \Big( u - \kappa_* \alpha \bq  - \mfrac{1}{\sqrt{p}} \bh_1 \| (I_p - \Sigma_*) \tilde \beta^\perp \| -  \mfrac{1}{\sqrt{pk}} J_{mk} \bh_2 \|   \Sigma_* \tilde \beta^\perp \| \Big) \Big\|
    \;,
    \tagaligneq 
\end{align*}
where we have defined the sets  
\begin{align*}
    S^\alpha \;\coloneqq&\; 
    \big\{ \mfrac{v(\beta^*)^\intercal \Sigma^{1/2} \beta}{\sqrt{p} \, \kappa^2_*} \,\big| \, \beta \in S \big\} 
    &\text{ and }&&
    \tilde S^\perp 
    \;\coloneqq&\; 
    \{ \sqrt{p} \, P^\perp_* \Sigma^{1/2} \beta \,|\, \beta \in S \}
    \;.
\end{align*}
Note that we have moved the minimization over $\alpha$ to the outmost part of the loss. The steps so far are analogous to (46) -- (47) of \citet{salehi2019impact}. Before proceeding, we notice that since $I_p - \Sigma_*$ and $\Sigma_*$ are symmetric and idempotent by \Cref{assumption:CGMT:var}(ii), they are in fact projection matrices onto two orthogonal subspaces. Therefore to optimize the above over $\tilde \beta^\perp$, it suffices to do so over $(I_p - \Sigma_*) \tilde \beta^\perp$ and $\Sigma_* \tilde \beta^\perp$ individually. Moreover, when optimizing over each of the projected $\tilde \beta^\perp$'s, the optimization takes exactly the same form as (47) of \citet{salehi2019impact}. Similar to them, we introduce
\begin{align*}
    \sigma_1 \;\coloneqq&\; \mfrac{1}{\sqrt{p}} \| (I_p - \Sigma_*) \tilde \beta^\perp \| \;\in\; S_{\sigma_1}
    &\text{ and }&&
    \sigma_2 \;\coloneqq&\; \mfrac{1}{\sqrt{p}}  \| \Sigma_* \tilde \beta^\perp \| \;\in\;  S_{\sigma_2} \;,
\end{align*}
where $ S_{\sigma_1} \coloneqq \{ \| (I_p - \Sigma_*) P_*^\perp \Sigma^{1/2} \beta \| \, |  \, \beta \in S  \}$ and  $ S_{\sigma_2} \coloneqq \{ \|\Sigma_* P_*^\perp \Sigma^{1/2} \beta \| \, |  \, \beta \in S \}$ are both subsets of non-negative real numbers, as well as the auxiliary variables $\nu_1, \nu_2, \tau_1, \tau_2 \geq 0$. We also take note of the fact that 
\begin{align*}
    (I_p - \Sigma_*) \tilde \beta^\perp 
    \;\in&\;
    S_\Sigma^\perp 
    \;\coloneqq\; \{ \sqrt{p}\, (I_p - \Sigma_*) P_*^\perp \Sigma^{1/2} \beta \,|\, \beta \in S  \} \;,
    \\ 
    \Sigma_* \tilde \beta^\perp 
    \;\in&\;
    S_\Sigma 
    \;\coloneqq\;
    \{  \sqrt{p}\,  \Sigma_* P_*^\perp P_\Sigma \Sigma^{1/2} \beta \,|\, \beta \in S  \} \;,
\end{align*}
and denote the projection onto $\textrm{span}(S_\Sigma^\perp )$ as $P_{S_\Sigma^\perp}$ and the projection onto $\textrm{span}(S_\Sigma )$ as $P_{S_\Sigma}$. Then by the same algebra from (47) -- (49) of \citet{salehi2019impact}, we obtain 
\begin{align*}
    \min_{\substack{u \in S_u \\ \mu \in S \\ \alpha \in S^\alpha \\ (\sigma_1, \sigma_2) \in S_{\sigma_1}  \times S_{\sigma_2}  \\ \nu_1, \nu_2 \geq 0}} \, \max_{\substack{w \in \R^p \\ (r_1, r_2) \in S_{r_1} \times S_{r_2} \\ \tau_1, \tau_2 \geq 0 }}
    & \,
    \mfrac{1}{mk} \bone_{mk}^\intercal \, \rho( u )
    -
    \mfrac{1}{mk} \by^\intercal u
    +    
    \mfrac{\lambda}{2m} \| P_\Sigma \mu \|^2_2 
    + 
    \mfrac{1}{p} w^\intercal P_\Sigma \mu 
    \\
    &\;
    -
    \mfrac{\alpha}{p \sqrt{p}}
    \,
    w^\intercal (\Sigma^\dagger)^{1/2} v(\beta^*)
    \\
    &\;
    -
    \mfrac{\sigma_1}{2 \tau_1} 
    -
    \mfrac{\sigma_1 \tau_1}{2}
    \, 
    \Big\| P_{S_\Sigma^\perp}
    \Big( 
    \mfrac{r_1 + r_2}{\sqrt{mk}} \bg_1
    -
    \mfrac{1}{p}
    (I_p - \Sigma_*) (\Sigma^\dagger)^{1/2} w
    \Big) \Big\|^2
    \\
    &\;
    -
    \mfrac{\sigma_2}{2 \tau_2} 
    -
    \mfrac{\sigma_2 \tau_2}{2}
    \, \Big\| P_{S_\Sigma}
    \Big( 
        \mfrac{r_2}{\sqrt{n}} \bg_2 
        -
        \mfrac{1}{p}
        \Sigma_* (\Sigma^\dagger)^{1/2} w
    \Big) \Big\|^2
    \\
    &\;
    +
    \mfrac{r_1}{2 \nu_1} 
    +
    \mfrac{r_1 \nu_1}{2 mk}  
    \Big\| P^\perp_{mk} \Big( u - \kappa_* \alpha \bq - \sigma_1 \bh_1 - \mfrac{\sigma_2}{\sqrt{k}} J_{mk} \bh_2 \Big) \Big\|^2
    \\
    &\;
    +
    \mfrac{r_2}{2 \nu _2}
    +
    \mfrac{r_2 \nu_2}{ 2 mk}  \Big\| P_{mk} \Big( u - \kappa_* \alpha \bq - \sigma_1 \bh_1  - \mfrac{\sigma_2}{\sqrt{k}} J_{mk} \bh_2  \Big) \Big\|^2
    \;.
    \tagaligneq \label{eq:AO:beta:done}
\end{align*}
Note that we have moved the maximization over $w, r_1, r_2$ to be inside the minimization over $\nu_1$, $\nu_2$, $\sigma_1$ and $\sigma_2$. Note also that $P^\perp_{mk} \sigma_2 J_{mk} \bh_2$ evaluates to zero, but we keep this term for the ease of computation later. We also remark that $\nu_1$ can be restricted to be in a compact set $\big\{ \big\| P^\perp_{mk} \big( u - \kappa_* \alpha \bq - \sigma_1 \bh_1 - \frac{\sigma_2}{\sqrt{k}} J_{mk} \bh_2 \big) \big\| \,\big| \, u \in S_u \big\}$ for the purpose of flipping minimization and maximization, and so are $\nu_2, \tau_1, \tau_2$, but we do not do so for notational simplicity.

\vspace{.5em}

\textbf{Maximization over $w \in \R^p$.} We first derive some useful relationships between the different projection matrices introduced so far: By the definition of $\Sigma_*$, we have 
\begin{align*}
    \Sigma_* P_\Sigma \;=\; (\Sigma^\dagger)^{1/2} \Cov[\phi_{11}(X_1), \phi_{11}(X_2) ] (\Sigma^\dagger)^{1/2}  P_\Sigma  
    \;=\; \Sigma_*
    \;.
    \tagaligneq \label{eq:Sigma:star:Sigma}
\end{align*}
Also by the definition of $P_*$ and the idempotency of $\Sigma_*$,
\begin{align*}
    P_* \Sigma_* 
    \;=\;
    \Sigma_* P_* 
    \;=\; 
    \begin{cases}
        \Sigma_* \, \mfrac{(\Sigma_* \Sigma_o^{1/2} \beta^* )  (\Sigma_* \Sigma_o^{1/2} \beta^* )^\intercal }{\|  \Sigma_*  \Sigma_o^{1/2} \beta^*  \|^2} \;=\; P_*
        & \text{ if }    \Sigma_*  \Sigma_o^{1/2} \beta^* \neq 0 \\
        \Sigma_* \times 0 \;=\; P_* & \text{ otherwise }\;.
    \end{cases}
    \tagaligneq \label{eq:Sigma:star:P:star}  
\end{align*}
This implies that 
\begin{align*}
    S_\Sigma^\perp 
    \;=&\;
    \{ \sqrt{p}\, (I_p - \Sigma_*) (I_p - P_*) \Sigma^{1/2} \beta \,|\, \beta \in S  \} 
    \;=\;
    \{ \sqrt{p}\, (I_p - \Sigma_* ) \Sigma^{1/2} \beta \,|\, \beta \in S  \} 
    \;,
    \\
    S_\Sigma 
    \;=&\;
    \{ \Sigma_* (I_p - P_*) \Sigma^{1/2} \beta \,|\, \beta \in S  \} 
    \;=\;
    \{ (\Sigma_* - P_* ) \Sigma^{1/2} \beta \,|\, \beta \in S  \} 
    \;,
\end{align*}
and combining these with the assumption that $\textrm{span}(S) = \R^p$, we can express 
\begin{align*}
    P_{S_\Sigma^\perp} 
    \;=&\; 
    I_p - \Sigma_*
    &\text{ and }&&
    P_{S_\Sigma} 
    \;=&\; 
    \Sigma_* - P_*\;.
    \tagaligneq \label{eq:defn:proj:S:sig}
\end{align*}
This in turn implies that 
\begin{align*}
    P_{S_\Sigma^\perp} (I_p - \Sigma_*) = P_{S_\Sigma^\perp}\;,
    \qquad 
    P_{S_\Sigma} \Sigma_* = P_{S_\Sigma^\perp}\;,
    \qquad
    P_{S_\Sigma^\perp} P_{S_\Sigma} = P_* P_{S_\Sigma^\perp} = P_* P_{S_\Sigma} = 0\;,
    \tagaligneq \label{eq:defn:proj:S:ortho} 
\end{align*}
and that 
\begin{align*}
    P_\Sigma w 
    \;=&\;
    \Sigma^{1/2} (\Sigma^\dagger)^{1/2} w 
    \\
    \;=&\;
    \Sigma^{1/2} P_* (\Sigma^\dagger)^{1/2} w 
    +
    \Sigma^{1/2} P_{S_\Sigma^\perp} (\Sigma^\dagger)^{1/2} w 
    +
    \Sigma^{1/2} P_{S_\Sigma} (\Sigma^\dagger)^{1/2} w 
    \\
    \;=&\;
    \Sigma^{1/2} 
    \mfrac{v(\beta^*) v(\beta^*)^\intercal}{p \kappa_*^2} (\Sigma^\dagger)^{1/2} w
    +
    \Sigma^{1/2} P_{S_\Sigma^\perp} (\Sigma^\dagger)^{1/2} w 
    +
    \Sigma^{1/2} P_{S_\Sigma} (\Sigma^\dagger)^{1/2} w 
    \;.
\end{align*}
Substituting these into \eqref{eq:AO:beta:done}, we obtain 
\begin{align*}
    \min_{\substack{u \in S_u \\ \mu \in S \\ \alpha \in S^\alpha \\ (\sigma_1, \sigma_2) \in S_{\sigma_1}  \times S_{\sigma_2} \\ \nu_1, \nu_2 \geq 0}} \, \max_{\substack{w \in \R^p \\ (r_1, r_2) \in S_{r_1 } \times S_{r_2} \\ \tau_1, \tau_2 \geq 0 }}
    & \,
    \mfrac{1}{mk} \bone_{mk}^\intercal \, \rho( u )
    -
    \mfrac{1}{mk} \by^\intercal u
    +    
    \mfrac{\lambda}{2n} \| P_\Sigma \mu \|^2_2 
    \\
    &\;
    + 
    \Big(
    \mfrac{1}{p^2 \kappa_*^2} 
    \mu^\intercal \Sigma^{1/2}  v(\beta^*)
    -
    \mfrac{\alpha}{p \sqrt{p}}
    \,
    \Big) 
    \,
    v(\beta^*)^\intercal  
    P_*
    (\Sigma^\dagger)^{1/2} w
    \\
    &\;
    +
    \mfrac{1}{p} (\Sigma^{1/2} \mu)^\intercal  P_{S_\Sigma^\perp} (\Sigma^\dagger)^{1/2} w 
    +
    \mfrac{1}{p} (\Sigma^{1/2} \mu)^\intercal   P_{S_\Sigma} (\Sigma^\dagger)^{1/2} w 
    \\
    &\;
    -
    \mfrac{\sigma_1}{2 \tau_1} 
    -
    \mfrac{\sigma_1 \tau_1}{2}
    \, 
    \Big\| P_{S_\Sigma^\perp}
    \Big( 
    \mfrac{r_1 + r_2}{\sqrt{mk}} \bg_1
    -
    \mfrac{1}{p}
    (\Sigma^\dagger)^{1/2} w
    \Big) \Big\|^2
    \\
    &\;
    -
    \mfrac{\sigma_2}{2 \tau_2} 
    -
    \mfrac{\sigma_2 \tau_2}{2}
    \, \Big\| P_{S_\Sigma}
    \Big( 
        \mfrac{r_2}{\sqrt{n}} \bg_2 
        -
        \mfrac{1}{p}
        (\Sigma^\dagger)^{1/2} w
    \Big) \Big\|^2
    \\
    &\;
    +
    \mfrac{r_1}{2 \nu_1} 
    +
    \mfrac{r_1 \nu_1}{2 mk}
    \big\| P^\perp_{mk} ( u - \kappa_* \alpha \bq - \sigma_1 \bh_1 - \mfrac{\sigma_2}{\sqrt{k}} J_{mk} \bh_2 ) \big\|^2
    \\
    &\;
    +
    \mfrac{r_2}{2 \nu _2}
    +
    \mfrac{r_2 \nu_2}{ 2 mk}  \big\| P_{mk} ( u - \kappa_* \alpha \bq - \sigma_1 \bh_1  - \mfrac{\sigma_2}{\sqrt{k}} J_{mk} \bh_2  )\big\|^2
    \;.
    \tagaligneq \label{eq:AO:w:projections}
\end{align*}
To optimize the above over $w$, it again suffices to optimize over three mutually orthogonal vectors $P_* (\Sigma^\dagger)^{1/2} w$, $P_{S_\Sigma^\perp} (\Sigma^\dagger)^{1/2} w$ and $P_{S_\Sigma} (\Sigma^\dagger)^{1/2} w$. The optimization over $P_* (\Sigma^\dagger)^{1/2} w$ is exactly analogous to the optimization over $\bP \bw$ in (49) of \citet{salehi2019impact}, whereas the optimization over the other two vectors are exactly analogous to that over $\bP^\perp \bw$ in (49) of \citet{salehi2019impact}. Therefore by the exact same completion-of-squares argument as in (49) -- (51) in \citet{salehi2019impact} but without taking the asymptotic approximation, the optimization becomes 
\begin{align*}
    \min_{\substack{u \in S_u \\ \mu \in S \\ \alpha \in S^\alpha \\ (\sigma_1, \sigma_2) \in S_{\sigma_1}  \times S_{\sigma_2} \\ \nu_1, \nu_2 \geq 0 \\ \frac{1}{\sqrt{p}} 
    \mu^\intercal 
    \Sigma^{1/2}  v(\beta^*)
    =
    \alpha \kappa_*^2  }}
     \,
     \max_{\substack{ (r_1, r_2) \in S_{r_1 } \times S_{r_2} \\ \tau_1, \tau_2 \geq 0 }}
    & \,
    \mfrac{1}{mk} \bone_{mk}^\intercal \, \rho( u )
    -
    \mfrac{1}{mk} \by^\intercal u
    +    
    \mfrac{\lambda}{2m} \| P_\Sigma \mu \|^2_2 
    -
    \mfrac{\sigma_1}{2 \tau_1} 
    -
    \mfrac{\sigma_2}{2 \tau_2} 
    +
    \mfrac{r_1}{2 \nu_1} 
    +
    \mfrac{r_2}{2 \nu _2}
    \\
    &\; 
    +
    \mfrac{1}{2\sigma_1 \tau_1} 
    \| 
    P_{S_\Sigma^\perp} \Sigma^{1/2} \mu 
    \|^2 
    +
    \mfrac{r_1 + r_2}{\sqrt{mk}} \bg_1^\intercal P_{S_\Sigma^\perp} \Sigma^{1/2} \mu
    \\
    &\;
    + 
    \mfrac{1}{2\sigma_2 \tau_2}
    \| 
    P_{S_\Sigma} \Sigma^{1/2} \mu 
    \|^2
    +
    \mfrac{r_2}{\sqrt{m}}  \bg_2^\intercal P_{S_\Sigma} \Sigma^{1/2} \mu  
    \\
    &\;
    +
    \mfrac{r_1 \nu_1}{2 mk}
    \big\| P^\perp_{mk} ( u -  \kappa_* \alpha \bq - \sigma_1 \bh_1 - \mfrac{\sigma_2}{\sqrt{k}} J_{mk} \bh_2 ) \big\|^2
    \\
    &\;
    +
    \mfrac{r_2 \nu_2}{ 2 mk}  \big\| P_{mk} ( u - \kappa_* \alpha \bq - \sigma_1 \bh_1  - \mfrac{\sigma_2}{\sqrt{k}} J_{mk} \bh_2  )\big\|^2
    \;,
    \tagaligneq \label{eq:AO:w:done}
\end{align*}

\vspace{.5em}

\textbf{Rewriting the minimization over $\mu \in S$. } 
We now flip the order of optimization such that we can perform the minimization over $\mu$ first. This involves computing 
\begin{align*}
    &\;
    \min_{\mu \in S} \,
    \mfrac{\lambda}{2n} \| P_\Sigma \mu \|^2_2  
    +
    \mfrac{1}{2\sigma_1 \tau_1} 
    \| 
    P_{S_\Sigma^\perp} \Sigma^{1/2} \mu 
    \|^2 
    +
    \mfrac{r_1 + r_2}{\sqrt{mk}} \bg_1^\intercal P_{S_\Sigma^\perp} \Sigma^{1/2} \mu
    \\
    &\;\qquad
    +
    \mfrac{1}{2\sigma_2 \tau_2}
    \| 
    P_{S_\Sigma} \Sigma^{1/2} \mu 
    \|^2
    +
    \mfrac{r_2}{\sqrt{n}}  \bg_2^\intercal P_{S_\Sigma} \Sigma^{1/2} \mu 
    \hspace{3em}
    \text{ s.t. }
    \mfrac{1}{\sqrt{p}} 
    \mu^\intercal 
    \Sigma^{1/2}  v(\beta^*)
    =
    \alpha \kappa_*^2  
    \;.
    \tagaligneq \label{eq:AO:minimized:mu}
\end{align*}
Recall from \eqref{eq:defn:proj:S:sig} that $P_{S_\Sigma^\perp} =I_p - \Sigma_*$ and $P_{S_\Sigma}  = \Sigma_* - P_*$. Denote 
\begin{align*}
    \tilde \Sigma_{\sigma, \tau}^c 
    \;\coloneqq&\;
        \mfrac{1}{2 \sigma_1 \tau_1} (P_\Sigma - \Sigma_*) 
        + 
        \mfrac{1}{2 \sigma_2 \tau_2}  (\Sigma_* - P_*)
    \;,
    \\
    \tilde \bg^c 
    \;\coloneqq&\; 
    -
    \mfrac{r_1 + r_2}{\sqrt{mk}} \, (P_\Sigma - \Sigma_*)  \bg_1 
    -
    \mfrac{r_2}{\sqrt{m}} \, (\Sigma_* - P_*) \bg_2 
    \;.
\end{align*}
Then the problem comes 
\begin{align*}
    &\;
    \min_{\mu \in S} \,
    \mfrac{\lambda}{2n} \| P_\Sigma \mu \|^2_2  
    +
    (\Sigma^{1/2} \mu)^\intercal \tilde \Sigma_{\sigma, \tau}^c ( \Sigma^{1/2} \mu)
    - 
    (\tilde \bg^c)^\intercal \Sigma^{1/2} \mu
    \hspace{3em}
    \text{ s.t. }
    \mfrac{1}{\sqrt{p}} 
    \mu^\intercal 
    \Sigma^{1/2}  v(\beta^*)
    =
    \alpha \kappa_*^2  
    \;.
\end{align*}
By \eqref{eq:Sigma:star:P:star} and \eqref{eq:Sigma:star:Sigma}, $P_\Sigma P_* = P_\Sigma \Sigma_* P_* = \Sigma_* P_* = P_*$ and by \eqref{eq:defn:proj:S:sig}, $P_{S_\Sigma^\perp} =I_p - \Sigma_*$ and $P_{S_\Sigma}  = \Sigma_* - P_*$. Then by a similar argument as \eqref{eq:AO:w:projections}, we may express 
\begin{align*}
    P_\Sigma \;=\; P_\Sigma ( P_* + P_{S_\Sigma^\perp} +  P_{S_\Sigma} ) \;=\; P_* + (P_\Sigma - \Sigma_*) + (\Sigma_* - P_*)
    \;,
    \tagaligneq \label{eq:AO:Rp:projections}
\end{align*}
where $P_*$, $P_\Sigma - \Sigma_*$  and $\Sigma_* - P_*$ are projections onto mutually orthogonal subspaces. Meanwhile, recalling the definition of $\tilde \Sigma_{\sigma, \tau}$ and $\tilde \bg$, we can express
\begin{align*}
    \tilde \Sigma_{\sigma, \tau}
    \;=&\; 
    \mfrac{1}{2 \sigma_1 \tau_1} (P_\Sigma  - \Sigma_* ) 
    + 
    \mfrac{1}{2 \sigma_2 \tau_2}  \Sigma_*
    \;=\;
    \tilde \Sigma_{\sigma, \tau}^c 
    +
    \mfrac{1}{2 \sigma_2 \tau_2} P_* 
    \;,
    \\
    \tilde \bg 
    \;=&\; 
    -
    \mfrac{r_1 + r_2}{\sqrt{mk}} \, (P_\Sigma - \Sigma_*)  \bg_1 
    -
    \mfrac{r_2}{\sqrt{m}} \, \Sigma_* \bg_2 
    \;=\; 
    \tilde \bg^c 
    -
    \mfrac{r_2}{\sqrt{m}} P_* \bg_2 
    \;.
\end{align*}
Recalling also that $P_* = v(\beta^*) v(\beta^*)^\intercal / ( p\kappa_*^2 )$, we can write 
\begin{align*}
    &\;
    (\Sigma^{1/2} \mu)^\intercal \tilde \Sigma_{\sigma, \tau}^c ( \Sigma^{1/2} \mu)
    - 
    (\tilde \bg^c)^\intercal \Sigma^{1/2} \mu
    \\
    &\;=\;
    (\Sigma^{1/2} \mu)^\intercal \tilde \Sigma_{\sigma, \tau} (\Sigma^{1/2} \mu)
    -
    \tilde \bg^\intercal \Sigma^{1/2} \mu
    -
    \mfrac{1}{2 \sigma_2 \tau_2} 
    (\Sigma^{1/2} \mu)^\intercal  P_* ( \Sigma^{1/2} \mu)
    +
    \mfrac{r_2}{\sqrt{n}} \big( P_* \bg_2 \big)^\intercal  \Sigma^{1/2} \mu
    \\
    &\;=\;
    (\Sigma^{1/2} \mu)^\intercal \tilde \Sigma_{\sigma, \tau} (\Sigma^{1/2} \mu)
    -
    \tilde \bg^\intercal \Sigma^{1/2} \mu
    -
    \mfrac{\alpha^2 \kappa_*^2}{2 \sigma_2 \tau_2}
    +
    \mfrac{r_2}{\sqrt{m}}  \bg_2^\intercal P_*  \Sigma^{1/2} \mu
    \;.
\end{align*}
Now using a Lagrange multiplier $\theta$ to remove the constraint, the optimization becomes
\begin{align*}
    \min_{ \mu \in S} \, \max_{\theta \in \R} 
    \,
    &\;
    \mfrac{\lambda}{2n} \| P_\Sigma  \mu \|^2_2  
    + 
    (\Sigma^{1/2} \mu)^\intercal \tilde \Sigma_{\sigma, \tau} (\Sigma^{1/2} \mu)
    -
    \Big( \tilde \bg
    + 
    \mfrac{\theta}{\sqrt{p}}    v(\beta^*)
    \Big)^\intercal \Sigma^{1/2} \mu
    \\
    &\;
    -
    \mfrac{\alpha^2 \kappa_*^2}{2 \sigma_2 \tau_2}
    +
    \mfrac{r_2}{\sqrt{m}} \bg_2^\intercal  P_* \Sigma^{1/2} \mu
    +
    \alpha \theta \kappa_*^2 
    \;.
\end{align*}
Since the problem is convex-concave, we can apply the min-max theorem of \citet{rockafellar1997convex} to flip the order of minimum and maximum. Doing this together with a completion of squares, we obtain 
\begin{align*}
    \max_{\theta \in \R} 
    \,
    M_{\bg,\sigma, \tau, \theta}
    -
    \mfrac{1}{4} \big\|  (\tilde \Sigma_{\sigma, \tau}^\dagger)^{1/2}
    \big( \tilde \bg
            + 
            \mfrac{\theta}{\sqrt{p}}    v(\beta^*)
    \big) 
    \big\|^2 
    -
    \mfrac{\alpha^2 \kappa_*^2}{2 \sigma_2 \tau_2}
    +
    \alpha \theta \kappa_*^2 
    \;,
    \tagaligneq \label{eq:AO:minimized:mu:done} 
\end{align*}
where we have denoted the Moreau envelope like term 
\begin{align*}
    M_{\bg,\sigma, \tau, \theta}
    \;\coloneqq\;
    \min_{ \mu \in S} 
    \,
    &\;
    \mfrac{\lambda}{2n} \| P_\Sigma  \mu \|^2_2  
    + 
    \big\| 
        \tilde \Sigma_{\sigma, \tau}^{1/2} (\Sigma^{1/2} \mu)
         -
         \mfrac{1}{2} 
         (\tilde \Sigma_{\sigma, \tau}^\dagger)^{1/2}
        \big( \tilde \bg
                + 
                \mfrac{\theta}{\sqrt{p}}    v(\beta^*)
        \big) 
    \big\|^2 
    \\
    &\;
    +
    \mfrac{r_2}{\sqrt{m}} \bg_2^\intercal P_*  \Sigma^{1/2} \mu
    \;.
\end{align*}

\textbf{Rewriting the minimization over $u \in S_u$.} Meanwhile, the minimization over $u \in S_u$ involves 
\begin{align*}
    \min_{u \in S_u }
    \,
    \mfrac{1}{mk} \bone_{mk}^\intercal \, \rho( u )
    -
    \mfrac{1}{mk} \by^\intercal u
    +
    \mfrac{r_1 \nu_1}{2 mk}  
    \big\| P^\perp_{mk} ( u -  \tilde \bh_{\alpha, \sigma} ) \big\|^2
    +
    \mfrac{r_2 \nu_2}{ 2mk}  \big\| P_{mk} ( u - \tilde \bh_{\alpha, \sigma} )\big\|^2
    \;,
    \tagaligneq \label{eq:AO:minimized:u}
\end{align*}
where we have used the shorthand $\tilde \bh_{\alpha, \sigma} = \kappa_*\alpha \bq - \sigma_1 \bh_1 - \frac{\sigma_2}{\sqrt{k}} J_{mk} \bh_2$. Recall that by definition, $\by = P_{mk} \by$ since $\by$ is a length-$mk$ vector formed by $k$-fold repetitions of $m$ entries. Then we can re-express the loss above as
\begin{align*}
    &
    \;\min_{u \in S_u }
    \,
    \mfrac{1}{mk} \bone_{mk}^\intercal \, \rho( u )
    -
    \mfrac{1}{mk} \by^\intercal P_{mk} u
    +
    \mfrac{r_1 \nu_1}{2 mk}  
    \big\| P^\perp_{mk} ( u -  \tilde \bh_{\alpha, \sigma} ) \big\|^2
    +
    \mfrac{r_2 \nu_2}{ 2 mk}  \big\| P_{mk} ( u - \tilde \bh_{\alpha, \sigma} )\big\|^2
    \\
    \;=&\;
    \min_{u \in S_u }
    \,
    \mfrac{1}{mk} \bone_{mk}^\intercal \, \rho( P_{mk} u + P_{mk}^\perp u )
    +
    \mfrac{r_2 \nu_2}{2 mk}  
    \Big\| P_{mk}  \big( u  -  \mfrac{1}{r_2 \nu_2}  \by - \tilde \bh_{\alpha, \sigma}  \big)\Big\|^2 
    +
    \mfrac{r_1 \nu_1}{ 2 mk}  \big\| P^\perp_{mk} ( u - \tilde \bh_{\alpha, \sigma} )\big\|^2
    \\
    &\qquad \quad
    -
    \mfrac{1}{2 r_2 \nu_2 mk }  
    \big\| P_{mk}  \by \big\|^2
    -
    \mfrac{1}{mk}  \by^\intercal P_{mk} \tilde \bh_{\alpha, \sigma} 
    \;.
\end{align*}
The loss can therefore be minimized separately in $P_{mk} u$ and $P_{mk}^\perp u$. Recall that for a function $f: \cS \rightarrow \R$ and $\cS \subseteq \R^{mk}$, we defined the Moreau envelope
\begin{align*}
    \cM_S(f;v, t) 
    \;\coloneqq\;
    \min_{x \in \cS} f(x) + \mfrac{1}{2 t} \| x - v \|^2_2 
    \;,
\end{align*}
Also recall the definition
\begin{align*}
    M^\perp_{\tilde \bh_{\alpha, \sigma},r,\nu}(\tilde u)
    \;\coloneqq&\;
    \cM_{P_{mk}^\perp(S_u) }
    \big( \bone_{mk}^\intercal \, \rho(\tilde u + \argdot ) \,;\, P^\perp_{mk} \tilde \bh_{\alpha, \sigma} , \mfrac{1}{r_1 \nu_1}  \big)
    \;,
    \\
    M_{\by, \tilde \bh_{\alpha, \sigma}, r, \nu} 
    \;\coloneqq&\; 
    \cM_{P_{mk}(S_u)} 
    \big( 
        M^\perp_{\tilde \bh_{\alpha, \sigma},r,\nu} \,;\, \mfrac{1}{r_2 \nu_2}  \by -  P_{mk} \tilde \bh_{\alpha, \sigma} \,,\, r_2 \nu_2
    \big)
    \;.
\end{align*}
Then \eqref{eq:AO:minimized:u} can be expressed as 
\begin{align*}
    &\; 
    \min_{\tilde u \in  P_{mk}(S_u) }
    \,
    \mfrac{1}{mk}
    M^\perp_{\tilde \bh_{\alpha, \sigma},r,\nu}(\tilde u)
    +
    \mfrac{r_2 \nu_2}{2 mk}  
    \Big\| P_{mk}  \big( \tilde u  -  \mfrac{1}{r_1 \nu_1}  \by - \tilde \bh_{\alpha, \sigma}  \big)\Big\|^2 
    -
    \mfrac{1}{2 r_2 \nu_2 mk }  
    \big\| P_{mk}  \by \big\|^2
    -
    \mfrac{1}{mk}  \by^\intercal P_{mk} \tilde \bh_{\alpha, \sigma} 
    \\
    &\;=\;
    \mfrac{1}{mk}  M_{\by, \tilde \bh_{\alpha, \sigma}, r, \nu}  
    -
    \mfrac{1}{2 r_2 \nu_2 mk }  
    \| \by \|^2
    -
    \mfrac{1}{mk}  \by^\intercal \tilde \bh_{\alpha, \sigma} 
    \;, \tagaligneq \label{eq:AO:minimized:u:done}
\end{align*}
where we have used $P_{mk} \by = \by$ again in the last line. Substituting both \eqref{eq:AO:minimized:mu:done} and \eqref{eq:AO:minimized:u:done} into \eqref{eq:AO:w:done} yields 
\begin{align*}
    \min_{\substack{\alpha \in S^\alpha \\ (\sigma_1, \sigma_2) \in S_{\sigma_1}  \times S_{\sigma_2} \\ \nu_1, \nu_2 \geq 0 }}
     \,
     \max_{\substack{ (r_1, r_2) \in S_{r_1 } \times S_{r_2} \\ \tau_1, \tau_2 \geq 0 \\ \theta \in \R }}
     \,
    & \,
    -
    \mfrac{\sigma_1}{2 \tau_1} 
    -
    \mfrac{\sigma_2}{2 \tau_2} 
    +
    \mfrac{r_1}{2 \nu_1} 
    +
    \mfrac{r_2}{2 \nu _2}
    +
    \alpha \theta \kappa_*^2 
    -
    \mfrac{\alpha^2 \kappa_*^2}{2 \sigma_2 \tau_2}
    \\
    &\, 
    +
    M_{\bg,\sigma, \tau, \theta}
    -
    \mfrac{1}{4} \big\|  (\tilde \Sigma_{\sigma, \tau}^\dagger)^{1/2}
    \big( \tilde \bg
            + 
            \mfrac{\theta}{\sqrt{p}}    v(\beta^*)
    \big) 
    \big\|^2 
    \\
    &\,
    +
    \mfrac{1}{mk}  M_{\by, \tilde \bh_{\alpha, \sigma}, r, \nu}  
    -
    \mfrac{1}{2 r_2 \nu_2 mk }  
    \| \by \|^2
    -
    \mfrac{1}{mk}  \by^\intercal \tilde \bh_{\alpha, \sigma} 
    \;,
\end{align*}
which equals $R^{\rm SO}_{S, S_u, S_v}$. 

\vspace{.5em}

\textbf{Justifying the flipping of the minima and maxima.} To conclude, we need to justify the flipping of min-max in the analysis of the auxiliary optimization above. The same argument has been performed for the logistic loss in the isotropic, unaugmented case in \citet{salehi2019impact} and in more details for general losses in \citet{thrampoulidis2018precise}. For completeness, we repeat the arguments of the proof of Lemma A.3~of \citet{thrampoulidis2018precise} in our context to illustrate how the non-asymptotic inequalities arise in our result for one particular flipping, and refer readers to Appendix A of \citet{thrampoulidis2018precise} for more details in the general setup. 

\vspace{.5em}

We now consider the flipping of $\min_{\beta \in S}$ and $\max_{(r_1,r_2) \in S_{r_1} \times S_{r_2}}$ in \eqref{eq:AO:2}. First define the loss function $\cL_1(\beta, u, \mu, w, r_1, r_2)$ as in  \eqref{eq:AO:2} and denote the risk at \eqref{eq:AO:2} by
\begin{align*}
    \cR_1
    \;\coloneqq&\;
    \min_{\substack{\beta \in S \\ u \in \cS_u \\ \mu \in \cS}}
    \;\;
    \max_{\substack{w \in \R^p \\ (r_1, r_2) \in S_{r_1} \times S_{r_2}}}
    \cL_1(\beta, u, \mu, w, r_1, r_2) 
    \;.
\end{align*}
For convenience, we also abbreviate the dependence on random variables in
\begin{align*}
    R^{\rm PO}_{S, S_u, S_v} 
    \;=&\; 
    R^{\rm PO}_{S, S_u, S_v}(\bG \Sigma^{1/2}_o \beta^*, \bG^\Phi \Sigma^{1/2})\;,
    &\;&&
    L^{\rm PO}_{\beta, u, v} 
    \;=&\; 
    L^{\rm PO}_{\beta, u, v}(\bG \Sigma^{1/2}_o \beta^*, \bG^\Phi \Sigma^{1/2})
    \;,
    \\
    L^{\rm AO}_{\beta,u,v}
    \;=&\; 
    L^{\rm AO}_{\beta,u,v}(\by, \bG^\Phi  P_*, \bg, \bh)
    \;.
\end{align*} 
By the computation of the auxiliary formulation up to \eqref{eq:AO:2}, and by \lemmaref{lem:PO:AO}, we can apply \theoremref{thm:CGMT_2}(i) and (ii) to obtain that
\begin{align*}
    \P\big( 
        R^{\rm PO}_{S, S_u, S_v}
        \;\leq\; c
    \big)
    \;\leq&\; 
    4 \,
    \P\big( 
       \cR_1
        \;\leq\; c
    \big)
    &\text{ and }&&
    \P\big( 
        R^{\rm PO}_{S, S_u, S_v}
        \;\geq\; c
    \big)
    \;\leq&\; 
    4 \,
    \P\big( 
       \cR_1
        \;\geq\; c
    \big)
    \tagaligneq \label{eq:AO:SO:cgmt}
\end{align*}
for all $c \in \R$. Now define 
\begin{align*}
    \cR'_1
    \;\coloneqq\;
    \max_{(r_1, r_2) \in S_{r_1} \times S_{r_2}}
    \;
    \min_{\substack{\beta \in S \\ u \in \cS_u \\ \mu \in \cS}}
    \;\;
    \max_{w \in \R^p}
    \cL_1(\beta, u, \mu, w, r_1, r_2) 
    \;.
\end{align*}
By the min-max inequality (\citet{rockafellar1997convex}, Lemma 36.1), we have 
$\cR'_1 \leq \cR_1$ and therefore 
\begin{align*}
    \P\big( 
        R^{\rm PO}_{S, S_u, S_v}
        \;\leq\; c
    \big)
    \;\leq&\; 
    4 \,
    \P\big( 
       \cR_1
        \;\leq\; c
    \big)
    \;\leq\;
    4 \,
    \P\big( 
       \cR'_1
        \;\leq\; c
    \big)
    \;. 
    \tagaligneq
    \label{eq:AO:SO:minmax:one:side}
\end{align*}
To relate $\{ R^{\rm PO}_{S, S_u, S_v} \geq c\}$ to $\{\cR'_1 \geq c\}$, we apply the min-max theorem  (\citet{rockafellar1997convex}, Corollary 37.3.2) to obtain that 
\begin{align*}
    R^{\rm PO}_{S, S_u, S_v}
    \;=\;
    \min_{\substack{\beta \in S \\ u \in S_u}} \, \max_{v \in S_v} 
    \,
    L^{\rm PO}_{\beta, u, v} 
    \;=\;
    \max_{v \in S_v}  \,  \min_{\substack{\beta \in S \\ u \in S_u}} \,  L^{\rm PO}_{\beta, u, v} 
    \;,
\end{align*}
and applying \theoremref{thm:CGMT_2} gives 
\begin{align*}
    \P( R^{\rm PO}_{S, S_u, S_v} \geq c ) \;\leq\; 4 \, \P\bigg(   \max_{v \in S_v}  \,  \min_{\substack{\beta \in S \\ u \in S_u}}  L^{\rm AO}_{\beta,u,v}  \,\geq\, c \bigg)\;.
    \tagaligneq \label{eq:AO:SO:minmax:intermediate}
\end{align*}
Now recall that 
\begin{align*}
    S_{r_1}
    \;\coloneqq&\; 
    \Big\{ \mfrac{1}{\sqrt{mk}} \| P_{mk} v \| \,\Big|\, v \in S_v \Big\}
    &\text{ and }&&
    S_{r_2}
    \;\coloneqq&\; 
    \Big\{ \mfrac{1}{\sqrt{mk}} \| P^\perp_{mk} v \| \,\Big|\, v \in S_v \Big\}
    \;.
\end{align*}
Defining $\tilde S_v(r_1,r_2) \coloneqq \{ v \in S_v  \,\big|\,  \frac{1}{\sqrt{mk}} \| P_{mk} v \| = r_1 \,,\, \frac{1}{\sqrt{mk}} \| P^\perp_{mk} v \| = r_2 \}$,  we can rewrite 
\begin{align*}
    \max_{v \in S_v}  \,  \min_{\substack{\beta \in S \\ u \in S_u}}  L^{\rm AO}_{\beta,u,v} 
    \;=&\;
    \max_{(r_1, r_2) \in S_{r_1} \times S_{r_2}} \, 
    \max_{\tilde v_1 \in \tilde S_v(r_1,r_2)} \,
    \min_{\substack{\beta \in S \\ u \in S_u}} \,
    \,
    L^{\rm AO}_{\beta,u,\tilde v} 
    \\
    \;\overset{(a)}{\leq}&\;
    \max_{(r_1, r_2) \in S_{r_1} \times S_{r_2}} \, 
    \min_{\substack{\beta \in S \\ u \in S_u}} \,
        \max_{\tilde v \in \tilde S_v(r_1,r_2)}
        \,
        L^{\rm AO}_{\beta,u,\tilde v} 
    \;\overset{(b)}{=}\;
    \cR'_1
    \;,
\end{align*}
where we have applied the min-max inequality  (\citet{rockafellar1997convex}, Lemma 36.1) in $(a)$ followed by the same computation up to \eqref{eq:AO:2} to maximize the loss over $\tilde v$. Combining this with \eqref{eq:AO:SO:minmax:intermediate} gives 
\begin{align*}
    \P( R^{\rm PO}_{S, S_u, S_v} \geq c ) 
    \;\leq\; 
    4 \, \P\bigg(   \max_{v \in S_v}  \,  \min_{\substack{\beta \in S \\ u \in S_u}}  L^{\rm AO}_{\beta,u,v}  \,\geq\, c \bigg)
    \;\leq\; 
    4 \, \P( \cR'_1 \,\geq\, c )
    \;.
\end{align*}
Together with \eqref{eq:AO:SO:minmax:one:side}, this shows that $\cR'_1$ is equivalent to $\cR_1$ in the sense that the CGMT inequalities of \eqref{eq:AO:SO:cgmt} hold also with $\cR_1$ replaced by $\cR'_1$, therefore justifying the flipping of the minimum and the maximum. The remaining flipping of minimum and maximum over compact sets hold for the same reason, and any flipping that involves the Lagrange multiplier $w \in \R^p$ in \eqref{eq:AO:2} can be done in a similar manner by introducing the Lagrange multiplier directly to the \eqref{PO}. This proves the first statement that for all $c \in \R$,
\begin{align*}
    \P( R^{\rm PO}_{S, S_u, S_v} \leq c ) 
    \;\leq&\; 
    4 \, \P( R^{\rm SO}_{S, S_u, S_v} \,\leq\, c )
    &\text{ and }&&
    \P( R^{\rm PO}_{S, S_u, S_v} \geq c ) 
    \;\leq&\; 
    4 \, \P( R^{\rm SO}_{S, S_u, S_v} \,\geq\, c )
    \;.
\end{align*}

\vspace{.5em}

\textbf{Partial statement when $S$ is replaced by $S_{c, \epsilon}$.} When the optimization is over $S_{c, \epsilon}$, which is no longer compact, we cannot apply the min-max theorem for flipping min and max that involve $S_{c,\epsilon}$.  However, notice that this change only affects optimizations over $\beta$, $\alpha$, $\sigma_1$ and $\sigma_2$. For the optimization over $\beta$, we have shown in \eqref{eq:AO:SO:minmax:one:side} that for the desired partial bound, the flipping of min and max does not require the min-max theorem. For the optimizations over $\alpha$, $\sigma_1$ and $\sigma_2$, notice that the new domains of optimization for each of these variables are 
\begin{align*}
    \Big\{ \mfrac{v(\beta^*)^\intercal \Sigma^{1/2} \beta}{\sqrt{p} \, \kappa^2_*} \,\Big| \, \beta \in S \Big\} 
    \;,
    \quad
    \big\{ \| (I_p - \Sigma_*) P_*^\perp \Sigma^{1/2} \beta \| \, \big|  \, \beta \in S  \big\}
    \;,
    \quad
    \big\{ \|\Sigma_* P_*^\perp \Sigma^{1/2} \beta \| \, \big|  \, \beta \in S  \big\}
    \;,
\end{align*}
which are the 1d images of continuous functions on $\R^p$. Since $S_{c,\epsilon}$ is connected, the above sets are connected and therefore convex since they are one-dimensional. Therefore the replacement of $S$ by $S_{c,\epsilon}$ does not affect the application of min-max theorem that concerns $\alpha$, $\sigma_1$ and $\sigma_2$. This proves the partial upper bound analogous to \eqref{eq:AO:SO:minmax:one:side}: For all $c \in \R$,
\begin{align*}
    \P( R^{\rm PO}_{S_{c,\epsilon}, S_u, S_v} \leq c ) 
    \;\leq&\; 
    4 \, \P( R^{\rm SO}_{S_{c,\epsilon}, S_u, S_v}  \,\leq\, c )
    \;.    
\end{align*}
\jmlrQED

\subsection{Proof of \lemmaref{lem:SO:DO}: Equivalence between \eqref{SO} and \eqref{DO}}

It is convenient to restate the optimization \eqref{SO}:
\begin{align*}
    \min_{\substack{\alpha \in S^\alpha \\ (\sigma_1, \sigma_2) \in S_{\sigma_1}  \times S_{\sigma_2} \\ \nu_1, \nu_2 \geq 0 }}
        \,
        \max_{\substack{ (r_1, r_2) \in S_{r_1 } \times S_{r_2} \\ \tau_1, \tau_2 \geq 0 \\ \theta \in \R }}
        \,
    & \,
    -
    \mfrac{\sigma_1}{2 \tau_1} 
    -
    \mfrac{\sigma_2}{2 \tau_2} 
    +
    \mfrac{r_1}{2 \nu_1} 
    +
    \mfrac{r_2}{2 \nu _2}
    +
    \alpha \theta \kappa_*^2 
    -
    \mfrac{\alpha^2 \kappa_*^2}{2 \sigma_2 \tau_2}
    \\
    &\, 
    +
    M_{\bg,\sigma, \tau, \theta}
    -
    \mfrac{1}{4} \big\|  (\tilde \Sigma_{\sigma, \tau}^\dagger)^{1/2}
    \big( \tilde \bg
            + 
            \mfrac{\theta}{\sqrt{p}}    v(\beta^*)
    \big) 
    \big\|^2 
    \\
    &\,
    +
    \mfrac{1}{mk}  M_{\by, \tilde \bh_{\alpha, \sigma}, r, \nu}  
    -
    \mfrac{1}{2 r_2 \nu_2 mk }  
    \| \by \|^2
    -
    \mfrac{1}{mk}  \by^\intercal \tilde \bh_{\alpha, \sigma} 
    \;,
\end{align*}
As with \cite{salehi2019impact}, we exploit the fact that the optimization is over finitely many one-dimensional variables. It therefore suffices to analyze the asymptotics of the loss function directly, as one may first approximate the minimization and maximization over $(\tilde \alpha, \tilde \theta)$ by a smooth function and then take the approximation error to zero as $m, p \rightarrow \infty$. 

\vspace{.5em}

\textbf{Compute terms involving $\bg_1$ and $\bg_2$. } Recall that $\bg_1$ and $\bg_2$ are independent standard Gaussian $\R^p$ vectors, and that 
\begin{align*}
    \tilde \bg 
    \;=&\; 
    -
    \mfrac{r_1 + r_2}{\sqrt{mk}} \, (P_\Sigma  - \Sigma_* ) \bg_1 
    -
    \mfrac{r_2}{\sqrt{m}} \, \Sigma_* \bg_2 
    \;,
    \\
    \tilde \Sigma_{\sigma, \tau}
    \;=&\; 
    \mfrac{1}{2 \sigma_1 \tau_1} (P_\Sigma  - \Sigma_* ) 
    + 
    \mfrac{1}{2 \sigma_2 \tau_2}  \Sigma_*
    \;,
    \\
    M_{\bg,\sigma, \tau, \theta}
    \;=&\;
    \min_{ \mu \in S} 
    \;
    \mfrac{\lambda}{2n} \| P_\Sigma  \mu \|^2_2  
    + 
    \big\| 
        \tilde \Sigma_{\sigma, \tau}^{1/2} (\Sigma^{1/2} \mu)
         -
         \mfrac{1}{2} 
         (\tilde \Sigma_{\sigma, \tau}^\dagger)^{1/2}
        \big( \tilde \bg
                + 
                \mfrac{\theta}{\sqrt{p}}    v(\beta^*)
        \big) 
    \big\|^2 
    \\
    &\;\qquad
    +
    \mfrac{r_2}{\sqrt{m}} \bg_2^\intercal  P_* \Sigma^{1/2} \mu
    \;.
\end{align*}
We focus on handling $M_{\bg,\sigma, \tau, \theta}$. First note that $\frac{r_2}{\sqrt{n}} \bg_2^\intercal P_*  \Sigma^{1/2} \mu$ depends on $\mu$ through the scalar $v(\beta^*) \Sigma^{1/2} \mu$, so by a similar reasoning as above, we can apply the law of large numbers directly to this term and obtain that it converges to zero in probability. Using $o_\P(1)$ to denote terms that converge in probability to zero, we then have
\begin{align*}
    M_{\bg,\sigma, \tau, \theta}
    \;=\;
    &\;
    o_\P(1)
    +
    \min_{ \mu \in S} 
    \mfrac{\lambda}{2m} \| P_\Sigma  \mu \|^2_2  
    + 
    \big\| 
        \tilde \Sigma_{\sigma, \tau}^{1/2} (\Sigma^{1/2} \mu)
         -
         \mfrac{1}{2} 
         (\tilde \Sigma_{\sigma, \tau}^\dagger)^{1/2}
        \big( \tilde \bg
                + 
                \mfrac{\theta}{\sqrt{p}}    v(\beta^*)
        \big) 
    \big\|^2 
    \\
    \;=&\;
    \tilde M_{\bg, \sigma,\tau,\theta}
    + 
    \mfrac{1}{4} \Big\| (\tilde \Sigma_{\sigma, \tau}^\dagger)^{1/2}
    \big( \tilde \bg
            + 
            \mfrac{\theta}{\sqrt{p}}    v(\beta^*)
    \big)  \Big\|^2
    + o_\P(1)
    \;,
    \tagaligneq \label{eq:DO:mu:Moreau:first}
\end{align*}
where 
\begin{align*}
    \tilde M_{\bg, \sigma,\tau,\theta}
    \;\coloneqq\;
    \min_{ \mu \in S} 
    (\Sigma^{1/2} \mu)^\intercal \Big( \mfrac{\lambda}{2n} \Sigma^\dagger +  \tilde \Sigma_{\sigma, \tau} 
    \Big) 
    (\Sigma^{1/2} \mu)
    -
    (\Sigma^{1/2} \mu)^\intercal 
    \big( \tilde \bg
        + 
        \mfrac{\theta}{\sqrt{p}}    v(\beta^*)
    \big)
    \;.
\end{align*}
The second term of \eqref{eq:DO:mu:Moreau:first} cancels with the other $(\bg_1, \bg_2)$-dependent term in the overall loss, so the only remaining $(\bg_1, \bg_2)$-dependent term is $\tilde M_{\bg, \sigma,\tau,\theta}$. By a completion of squares, we obtain 
\begin{align*}
    \tilde M_{\bg, \sigma,\tau,\theta}
    \;=\; 
    \min_{ \mu \in S} 
    &\; \Big\| 
        \Big( \mfrac{\lambda}{2m} \Sigma^\dagger +  \tilde \Sigma_{\sigma, \tau} 
        \Big)^{1/2} 
        (\Sigma^{1/2} \mu)
        -
        \mfrac{1}{2}
        \Big(\Big( \mfrac{\lambda}{2m} \Sigma^\dagger +  \tilde \Sigma_{\sigma, \tau} 
        \Big)^\dagger\Big)^{1/2}
        \big( \tilde \bg
            + 
            \mfrac{\theta}{\sqrt{p}}    v(\beta^*)
        \big)
    \Big\|^2
    \\
    &\;
    - 
    \mfrac{1}{4}
    \big( \tilde \bg
        + 
        \mfrac{\theta}{\sqrt{p}}    v(\beta^*)
    \big)^\intercal 
    \Big( \mfrac{\lambda}{2m} \Sigma^\dagger +  \tilde \Sigma_{\sigma, \tau} \Big)^\dagger
    \big( \tilde \bg
        + 
        \mfrac{\theta}{\sqrt{p}}    v(\beta^*)
    \big)
    \;.
\end{align*}
The second term does not involve $\mu$, so we first seek to take a limit of this term. Recall from \eqref{eq:AO:Rp:projections} that $P_*$, $P_\Sigma - \Sigma_*$ and $\Sigma_* - P_*$ are projections onto mutually orthogonal subspaces and that $P_\Sigma \Sigma_* = \Sigma_*$, $P_\Sigma P_* = P_*$. We can then express 
\begin{align*}
    \Big( \mfrac{\lambda}{2m} \Sigma^\dagger +  \tilde \Sigma_{\sigma, \tau} \Big)^\dagger
    \;=&\;
    \Big( 
        \mfrac{\lambda}{2m} \Sigma^\dagger 
        +
        \mfrac{1}{2 \sigma_1 \tau_1} (P_\Sigma  - \Sigma_* )
        + 
        \mfrac{1}{2 \sigma_2 \tau_2} \Sigma_*
    \Big)^\dagger
    \;,
\end{align*}
which implies that 
\begin{align*}
    \Big( \mfrac{\lambda}{2m} \Sigma^\dagger +  \tilde \Sigma_{\sigma, \tau} \Big)^\dagger
    \big( \tilde \bg
        + 
        \mfrac{\theta}{\sqrt{p}}    v(\beta^*)
    \big)
    \;=&\;
    - \mfrac{2(r_1+r_2)\sigma_1\tau_1}{\sqrt{mk}} 
        \Big( \mfrac{2\sigma_1 \tau_1 \lambda}{2m} \Sigma^\dagger + I_p \Big)^\dagger (P_\Sigma - \Sigma_*) 
    \bg_1
    \\
    &\;
    -
    \mfrac{2 r_2 \sigma_2\tau_2}{\sqrt{m}} 
        \Big( \mfrac{2\sigma_2 \tau_2 \lambda}{2m} \Sigma^\dagger + I_p \Big)^\dagger \Sigma_* 
    \bg_2
    \\
    &\;
    +
    \mfrac{2 \sigma_2 \tau_2 \theta}{\sqrt{p}} 
    \Big( 
        \mfrac{2 \sigma_2 \tau_2 \lambda}{2m}
        \Sigma^\dagger  
        + 
        I_p   
    \Big)^\dagger
    v(\beta^*)
    \\
    &\;
    + O\big( \| m^{-1/2} P_* \bg_1 \| +  \| m^{-1/2} P_* \bg_2 \|  \big)
    \;.
    \tagaligneq \label{eq:DO:decompose:tilde:Sig}
\end{align*}
Also notice that any term linear in $\bg_1$ or $\bg_2$ has expectation zero, which vanishes. Recall also that  $v(\beta^*) = \sqrt{p} \Sigma_* \Sigma_o^{1/2} \beta^*$ and  $\kappa_* = \| \Sigma_* \Sigma_o^{1/2} \beta^* \|$. 
Computing the inverse along each orthogonal subspace explicitly and taking a limit with $m, p \rightarrow \infty$ and $p/n = p/(mk) \rightarrow \kappa$, we obtain 
\begin{align*}
    -
    \mfrac{1}{4}
    \big( \tilde \bg
        + &
        \mfrac{\theta}{\sqrt{p}}    v(\beta^*)
    \big)^\intercal 
    \Big( \mfrac{\lambda}{2m} \Sigma^\dagger +  \tilde \Sigma_{\sigma, \tau} \Big)^\dagger
    \big( \tilde \bg
        + 
        \mfrac{\theta}{\sqrt{p}}    v(\beta^*)
    \big)
    \\
    \;\overset{\P}{\rightarrow}&\;
    - 
    \mfrac{(r_1 + r_2)^2 \sigma_1 \tau_1}{2k} 
    \,
    \bar \chi^{\sigma, \tau}_{11}
    -
    \mfrac{r_2^2 \sigma_2 \tau_2}{2} \, \bar \chi^{\sigma, \tau}_{12}
    - 
    \mfrac{\theta^2  \bar \kappa_*^2 \sigma_2 \tau_2}{2} 
    \, \bar \chi^{\sigma, \tau}_{13}
    \;=\; 
    - \bar \chi^{r, \theta, \sigma, \tau}_1
    \;.
\end{align*}
where we have recalled that $P_* = v(\beta^*) v(\beta^*)^\intercal / (p \kappa_*^2)$, $ \bar \kappa_* = \lim \kappa_*$ and
\begin{align*}
    \bar \chi^{\sigma,\tau}_{11}
    \;\coloneqq&\;
    \lim
    \mfrac{\Tr\big(
    \big( \frac{2 \sigma_1 \tau_1\lambda}{2m}  \Sigma^\dagger  
        + 
        I_p  \big)^\dagger
        (P_\Sigma - \Sigma_*)
    \big)
    }{m}\;,
    \\
    \bar \chi^{\sigma, \tau}_{12}
    \;\coloneqq&\;
    \lim
    \mfrac{\Tr\big(
        \big( 
            \frac{2 \sigma_2 \tau_2\lambda}{2m}  \Sigma^\dagger  
            + 
            I_p 
        \big)^\dagger
        \Sigma_*
    \big)
    }{m}\;,
    \\
    \bar \chi^{\sigma,\tau}_{13}
    \;\coloneqq&\;
    \lim
    \Tr\Big(
    \Big( 
        \mfrac{2 \sigma_2 \tau_2\lambda}{2m}  \Sigma^\dagger  
        + 
        I_p   
    \Big)^\dagger
    P_*
    \Big)
    \;.
\end{align*}
To address the minimization over $\mu \in S$, notice that the only difference between the two choices of $S$ are via the restriction on $\mu^\intercal \Sigma_{\rm new} \mu$. Recall that the two different choices of $S$ differs only in $\beta^\intercal \Sigma_{\rm new} \beta$. Let $P_{\Sigma_{\rm new}}$ be the projection onto the positive eigenspace of $P_{\Sigma_{\rm new}}$ and $P_{\Sigma_{\rm new}}^\perp = I_p - P_{\Sigma_{\rm new}}$. Then we can rewrite the minimization as 
\begin{align*}
    \min_{ \substack{\mu \in P_{\Sigma_{\rm new}}(S) \\ \mu' \in P_{\Sigma_{\rm new}}^\perp(S)  }} 
    &\; \Big\| 
        \Big( \mfrac{\lambda}{2m} \Sigma^\dagger +  \tilde \Sigma_{\sigma, \tau} 
        \Big)^{1/2} 
        \Sigma^{1/2} 
        \Big( 
            \mu + \mu' 
        -
        \mfrac{1}{2}
        (\Sigma^\dagger)^{1/2}
        \Big( \mfrac{\lambda}{2m} \Sigma^\dagger +  \tilde \Sigma_{\sigma, \tau} 
        \Big)^\dagger
        \big( \tilde \bg
            + 
            \mfrac{\theta}{\sqrt{p}}    v(\beta^*)
        \big)
        \Big)
    \Big\|^2
    \;.
\end{align*}
With either choice of $S$, $\mu'$ can be chosen freely within $P^\perp_{\Sigma_{\rm new}}(\R^p)$ so long as $\| \mu' \|_2 = O(\sqrt{p})$. Minimizing over $\mu'$ first and noting that $P_{\Sigma_{\rm new}} = (\Sigma_{\rm new}^\dagger)^{1/2} \Sigma_{\rm new}^{1/2}$, we obtain
\begin{align*}
    \min_{ \mu \in P_{\Sigma_{\rm new}}(S)} 
    &\; 
    \Big\| 
        \Big( \mfrac{\lambda}{2m} \Sigma^\dagger +  \tilde \Sigma_{\sigma, \tau} 
        \Big)^{1/2} 
        \Sigma^{1/2} 
        (\Sigma_{\rm new}^\dagger)^{1/2}
        \Big( 
            \Sigma_{\rm new}^{1/2} \mu 
        -
        \mfrac{1}{2}
        \Sigma_{\rm new}^{1/2} (\Sigma^\dagger)^{1/2}
        \Big( \mfrac{\lambda}{2m} \Sigma^\dagger +  \tilde \Sigma_{\sigma, \tau} 
        \Big)^\dagger
        \big( \tilde \bg
            + 
            \mfrac{\theta}{\sqrt{p}}    v(\beta^*)
        \big)
        \Big)
    \Big\|^2
    \;.
\end{align*}
Setting $\Sigma_{\rm new}^{1/2} \mu$ in the direction of minimization, we have that for some $c(\mu) \in \R$,
\begin{align*}
    \Sigma_{\rm new}^{1/2} \mu 
    \;=&\;
    c(\mu) 
    \bg'\;,
    \qquad
    \bg' \;\coloneqq\; 
    \mfrac{1}{2}
    \Sigma_{\rm new}^{1/2} (\Sigma^\dagger)^{1/2}
        \Big( \mfrac{\lambda}{2m} \Sigma^\dagger +  \tilde \Sigma_{\sigma, \tau} 
        \Big)^\dagger
        \big( \tilde \bg
            + 
            \mfrac{\theta}{\sqrt{p}}    v(\beta^*)
        \big)
    \;,
\end{align*}
which allows us to rewrite 
\begin{align*}
     \mfrac{\Big\| 
        \Big( \mfrac{\lambda}{2m} \Sigma^\dagger +  \tilde \Sigma_{\sigma, \tau} 
        \Big)^{1/2} 
        \Sigma^{1/2} (\Sigma_{\rm new}^\dagger)^{1/2} \bg' 
    \Big\|^2}{\| \bg' \|^2}
    \;\times\; 
    \min_{ \mu \in P_{\Sigma_{\rm new}}(S)} 
    \;
    (c(\mu) \| \bg' \|  - \|\bg'\|)^2
    \;.
\end{align*}
This is now an optimization over a scalar, so we can again take the limit inside the minimization. We proceed to compute the limits of the two norms involving $\bg'$. Recycling the computation in \eqref{eq:DO:decompose:tilde:Sig}, we have  
\begin{align*}
    \| \bg'\|^2
    \;=&\; 
    \mfrac{(r_1+r_2)^2\sigma_1^2\tau_1^2}{mk} 
    \,
    \Big\| 
        \Sigma_{\rm new}^{1/2} (\Sigma^\dagger)^{1/2}
        \Big( \mfrac{2\sigma_1 \tau_1 \lambda}{2m} \Sigma^\dagger + I_p \Big)^\dagger (P_\Sigma - \Sigma_*) 
    \bg_1
    \Big\|^2
    \\
    &\;
    +
    \mfrac{r_2^2 \sigma_2^2\tau_2^2}{n}
    \,
    \Big\| 
        \Sigma_{\rm new}^{1/2} (\Sigma^\dagger)^{1/2}
        \Big( \mfrac{2\sigma_2 \tau_2 \lambda}{2m} \Sigma^\dagger + I_p \Big)^\dagger P_*
    \bg_2
    \Big\|^2
    \\
    &\;
    +
    \mfrac{\theta^2 \sigma_2^2\tau_2^2}{p}
    \Big\|
    \Sigma_{\rm new}^{1/2} (\Sigma^\dagger)^{1/2}
    \Big( 
        \mfrac{2\sigma_2 \tau_2 \lambda}{2m} \Sigma^\dagger
        + 
        I_p   
    \Big)^\dagger
    v(\beta^*)
    \Big\|^2
    +
    o_\P(1)
    \\
    \;\overset{\P}{\rightarrow}&\;
    \mfrac{(r_1+r_2)^2\sigma_1^2\tau_1^2}{k} 
    \bar \chi^{\sigma,\tau}_{21}
    +
    r_2^2 \sigma_2^2\tau_2^2 \,
    \bar \chi^{\sigma,\tau}_{22}
    +
    \theta^2  \bar \kappa_*^2 \sigma_2^2 \tau_2^2
    \,
    \bar \chi^{\sigma,\tau}_{23}
    \;=\;
    \bar \chi^{r, \theta, \sigma, \tau}_2
    \;,
\end{align*}
where we have denoted 
\begin{align*}
    \bar \chi^{\sigma,\tau}_{21} 
    \;\coloneqq&\;
    \lim 
    \mfrac{\big\|
            \Sigma_{\rm new}^{1/2} (\Sigma^\dagger)^{1/2}
            \big( \frac{2\sigma_1 \tau_1 \lambda}{2m} \Sigma^\dagger + I_p \big)^\dagger (P_\Sigma - \Sigma_*) 
        \big\|^2 
    }
    {m}
    \;,
    \\
    \bar \chi^{\sigma,\tau}_{22}
    \;\coloneqq&\;
    \lim 
    \mfrac{\big\|
            \Sigma_{\rm new}^{1/2} (\Sigma^\dagger)^{1/2}
            \Big( \mfrac{2\sigma_2 \tau_2 \lambda}{2m} \Sigma^\dagger + I_p \Big)^\dagger \Sigma_* 
        \big\|^2 
    }
    {m}
    \;,
    \\
    \bar \chi^{\sigma,\tau}_{23}
    \;\coloneqq&\;
    \lim \,
    \Big\|
        \Sigma_{\rm new}^{1/2} (\Sigma^\dagger)^{1/2}
        \Big( 
            \mfrac{2\sigma_2 \tau_2 \lambda}{2m} \Sigma^\dagger  
            + 
            I_p   
        \Big)^\dagger
        P_*
    \Big\|^2 
    \;.
\end{align*}
Similarly, we have 
\begin{align*}
    &\;\Big\| 
        \Big( \mfrac{\lambda}{2m} \Sigma^\dagger +  \tilde \Sigma_{\sigma, \tau} 
        \Big)^{1/2} 
        \Sigma^{1/2} (\Sigma_{\rm new}^\dagger)^{1/2} \bg' 
    \Big\|^2
    \\
    \;=&\;
    \Big\| 
        \Big( \mfrac{\lambda}{2m} \Sigma^\dagger +  \tilde \Sigma_{\sigma, \tau} 
            \Big)^{1/2} 
            \Sigma^{1/2} P_{\Sigma_{\rm new}} (\Sigma^\dagger)^{1/2}
        \Big( \mfrac{\lambda}{2m} \Sigma^\dagger +  \tilde \Sigma_{\sigma, \tau} 
        \Big)^\dagger
        \big( \tilde \bg
            + 
            \mfrac{\theta}{\sqrt{p}}    v(\beta^*)
        \big)
    \Big\|^2 
    \\
    \;\overset{\P}{\rightarrow}&\;
    \mfrac{(r_1+r_2)^2\sigma_1^2\tau_1^2}{k} 
    \bar \chi^{\sigma,\tau}_{31}
    +
     r_2^2 \sigma_2^2\tau_2^2 \,
    \bar \chi^{\sigma,\tau}_{32} 
    +
     \theta^2  \bar \kappa_*^2 \sigma_2^2 \tau_2^2
    \,
    \bar \chi^{\sigma,\tau}_{33} 
    \;=\;
    \bar \chi^{r, \theta, \sigma, \tau}_3
    \;,
\end{align*}
where we used
\begin{align*}
    \bar \chi^{\sigma,\tau}_{31} 
    \;\coloneqq&\;
    \lim 
    \mfrac{\big\|
                \big( \frac{\lambda}{2m} \Sigma^\dagger +  \tilde \Sigma_{\sigma, \tau} 
                \big)^{1/2} 
            \Sigma^{1/2} P_{\Sigma_{\rm new}}   
            (\Sigma^\dagger)^{1/2}
            \big( \frac{2\sigma_1 \tau_1 \lambda}{2m} \Sigma^\dagger + I_p \big)^\dagger (P_\Sigma - \Sigma_*) 
        \big\|^2 
    }
    {m}
    \;,
    \\
    \bar \chi^{\sigma,\tau}_{32} 
    \;\coloneqq&\;
    \lim 
    \mfrac{\big\|
            \big( \frac{\lambda}{2m} \Sigma^\dagger +  \tilde \Sigma_{\sigma, \tau} 
                    \big)^{1/2} 
            \Sigma^{1/2} P_{\Sigma_{\rm new}}        
            (\Sigma^\dagger)^{1/2}
            \Big( \mfrac{2\sigma_2 \tau_2 \lambda}{2m} \Sigma^\dagger + I_p \Big)^\dagger \Sigma_* 
        \big\|^2 
    }
    {m}
    \;,
    \\
    \bar \chi^{\sigma,\tau}_{33} 
    \;\coloneqq&\;
    \lim \,
    \Big\|
        \Big( \mfrac{\lambda}{2m} \Sigma^\dagger +  \tilde \Sigma_{\sigma, \tau} 
        \Big)^{1/2} 
        \Sigma^{1/2} P_{\Sigma_{\rm new}}
        (\Sigma^\dagger)^{1/2}
        \Big( \mfrac{2\sigma_2 \tau_2 \lambda}{2m} \Sigma^\dagger + I_p \Big)^\dagger 
        P_*
    \Big\|^2 
    \;.
\end{align*}
Combining the calculations above, we obtain that 
\begin{align*}
    \tilde M_{\bg, \sigma,\tau,\theta}
    \;\overset{\P}{\rightarrow}&\;
    - \bar \chi^{r, \theta, \sigma, \tau}_1 + \mfrac{\bar \chi^{r, \theta,\sigma, \tau}_3}{\bar \chi^{r, \theta,\sigma, \tau}_2} 
    \, \times \, 
    \min_{\mu \in P_{\Sigma_{\rm new}}(S)} 
    \Big( 
        c(\mu) \sqrt{\bar \chi^{r,\theta,\sigma, \tau}_2 }
        - 
        \sqrt{\bar \chi^{r, \theta,\sigma, \tau}_2}
    \Big)^2
    \;.
\end{align*}
Notice that $\| \Sigma^{1/2}_{\rm new} \mu \| = c(\mu) \| \bg' \| \xrightarrow{\P} c(\mu)  \sqrt{\bar \chi^{r, \theta, \sigma, \tau}_2 } $, and recall that our two choices of $S$ only differs through $\| \Sigma^{1/2}_{\rm new} \mu \| - (\bar \chi^{\bar r, \bar \theta, \bar \sigma, \bar\tau}_2)^{1/2}$, where $\bar r = (\bar r_1, \bar r_2)$, $\bar \sigma = (\bar \sigma_1, \bar \sigma_2)$, $\bar \theta$ and $\bar \tau =(\bar \tau_1, \bar \tau_2)$ are the optimal solutions to \eqref{DO}. This implies that 
\begin{align*}
    \tilde M_{\bg, \sigma,\tau,\theta}
    \;\overset{\P}{\rightarrow}&\;
    - \bar \chi^{r,  \theta, \sigma, \tau}_1 
    + 
    \epsilon_S^2 \,
     \mfrac{\bar \chi^{r,\theta,  \sigma, \tau}_3}{\bar \chi^{r, \theta, \sigma, \tau}_2}
     \;,
\end{align*}
where $\epsilon_S = 0$ for $S=\cS_p$ and $\epsilon_S = \epsilon$ for $S=\cS_\epsilon^c$. Substituting this back into the overall optimization, we can approximate \eqref{SO} in distribution by
\begin{align*}
    \min_{\substack{\alpha \in S^\alpha \\ (\sigma_1, \sigma_2) \in S_{\sigma_1}  \times S_{\sigma_2} \\ \nu_1, \nu_2 \geq 0 }}
        \,
        \max_{\substack{ (r_1, r_2) \in S_{r_1 } \times S_{r_2} \\ \tau_1, \tau_2 \geq 0 \\ \theta \in \R }}
        \,
    & \,
    -
    \mfrac{\sigma_1}{2 \tau_1} 
    -
    \mfrac{\sigma_2}{2 \tau_2} 
    +
    \mfrac{r_1}{2 \nu_1} 
    +
    \mfrac{r_2}{2 \nu _2}
    +
    \alpha \theta  \bar \kappa_*^2
    -
    \mfrac{\alpha^2 \bar \kappa_*^2}{2 \sigma_2 \tau_2}
    - \bar \chi^{r, \theta, \sigma, \tau}_1 
    + 
    \epsilon_S^2 \,
     \mfrac{\bar \chi^{r, \theta, \sigma, \tau}_3}{\bar \chi^{r, \theta, \sigma, \tau}_2}
    \\
    &\,
    +
    \mfrac{1}{mk}  M_{\by, \tilde \bh_{\alpha, \sigma}, r, \nu}  
    -
    \mfrac{1}{2 r_2 \nu_2 mk }  
    \| \by \|^2
    -
    \mfrac{1}{mk}  \by^\intercal \tilde \bh_{\alpha, \sigma} 
    \;.
\end{align*}

\vspace{.5em}

\textbf{Compute terms involving $\by$ and $\tilde \bh_{\alpha, \sigma}$. } Recall that $\tilde \bh_{\alpha, \sigma} = \kappa_*\alpha \bq - \sigma_1 \bh_1 - \frac{\sigma_2}{\sqrt{k}} J_{mk} \bh_2$, where $\bq = \bq(\bG^\Phi) = \frac{1}{\kappa_* \sqrt{p}} \bG^\Phi v(\beta^*)$, and $\bh_1$ and $\bh_2$ are i.i.d.~standard $\R^{mk}$ Gaussians independent of $\bq(\bG^\Phi)$ and $\by = \by(\bG)$. Also recall that $\by = \by P_{mk} = \frac{1}{k} \by J_{mk}$. We can then express the last two terms of the loss above as 
\begin{align*}
    -
    \mfrac{1}{2 r_2 \nu_2 mk }  
    \| \by \|^2
    -
    \mfrac{1}{mk}  \by^\intercal \tilde \bh_{\alpha, \sigma} 
    \;=&\;
    -
    \mfrac{1}{2 r_2 \nu_2 mk }  
    \| \by \|^2
    -
    \mfrac{\kappa_* \alpha }{mk}  \by^\intercal \bq
    +
    \mfrac{\sigma_1}{mk}  \by^\intercal \bh_1 
    +
    \mfrac{\sigma_2}{m\sqrt{k}}  \by^\intercal P_{mk} \bh_2
    \;.
    \tagaligneq \label{eq:SO:lim:y:tilde:h}
\end{align*}
Since $\bh_1$ and $\bh_2$ are zero-mean and $\by$ is coordinate-wise bounded by one, by the weak law of large numbers, 
\begin{align*}
    \mfrac{1}{mk} \, \by^\intercal \bh_1
    \;\xrightarrow{\P}&\; 
    0
    &\text{ and }&&
    \mfrac{1}{m\sqrt{k}} \, \by^\intercal P_{mk} \bh_2
    \;\xrightarrow{\P}&\; 
    0
    \;.
    \tagaligneq \label{eq:SO:lim:y:h}
\end{align*}
To handle the first two terms, recall that 
\begin{align*}
    \by \;=\;  \big( \,  \underbrace{y_1( \Sigma_o^{1/2} G_1), \ldots, y_1( \Sigma_o^{1/2} G_1)}_{\textrm{repeated $k$ times}}, \; \ldots, \; \underbrace{y_n( \Sigma_o^{1/2} G_n), \ldots, y_n(\Sigma_o^{1/2} G_n)}_{\textrm{repeated $k$ times}} \, \big)^\intercal \;,
\end{align*}
where $y_i( \Sigma_o^{1/2} G_i)$'s are i.i.d.~by definition. Therefore by the weak law of large numbers, 
\begin{align*}
    \mfrac{1}{mk}
    \| \by \|^2
    \;=\;
    \mfrac{1}{m} \msum_{i=1}^m y_i( \Sigma_o^{1/2} G_i)
    \;\xrightarrow{\P}&\;
    \mean[y_1( \Sigma_o^{1/2} G_1) ]
    \;=\; 
    \mfrac{1}{2}
    \;.
    \tagaligneq \label{eq:SO:lim:y:square}
\end{align*}
In the last equality, we recall that 
\begin{align*}
    \P\big( y_1( \Sigma_o^{1/2} G_1) = 1\,\big|\, G_1 \big) 
    \;=\; 
    \sigma( G_1^\intercal \Sigma_o^{1/2} \beta^* )
    \;.
\end{align*}
$y_i \in \{0,1\}$ is a logistic variable evaluated at a random input $(\Sigma_o^{1/2} G_1)^\intercal \beta^*$ that is symmetric about zero. On the other hand, recalling that $v(\beta^*) = \sqrt{p} \Sigma_* \Sigma_o^{1/2} \beta^*$,
\begin{align*}
    \mfrac{1}{mk}  \by^\intercal \bq
    \;=&\;
    \mfrac{1}{mk}  \msum_{i \leq m} \msum_{j \leq k} 
    y_i(\Sigma_o^{1/2} G_i) \, 
    \mfrac{1}{\kappa_* \sqrt{p}} \big( G^\Phi_{ij} \big)^\intercal  v(\beta^*)
    \\
    \;=&\;
    \mfrac{1}{\kappa_*} 
    \, 
    \mfrac{1}{mk}
    \msum_{i \leq m} \msum_{j \leq k} 
    y_i(\Sigma_o^{1/2} G_i) \, 
    \big( G^\Phi_{ij} \big)^\intercal \Sigma_* \Sigma_o^{1/2} \beta^*\;.
    \tagaligneq \label{eq:limit:y:q:useful}
\end{align*}
Notice that each $ y_i(\Sigma_o^{1/2} G_i)$ depends on $G_i$ only through $G_i^\intercal \Sigma_o^{1/2} \beta^*$, so $G_i$ and $G^\Phi_{ij}$ appear in each summand only via the Gaussian vector 
\begin{align*}
    \begin{psmallmatrix}
        G_i^\intercal \Sigma_o^{1/2} \beta^* 
        \\
        \big( G^\Phi_{i1} \big)^\intercal \Sigma_* \Sigma_o^{1/2} \beta^*
        \\
        \vdots 
        \\
        \big( G^\Phi_{ik} \big)^\intercal \Sigma_* \Sigma_o^{1/2} \beta^*
    \end{psmallmatrix}
    \;\sim&\; 
    \cN\bigg( 0 \,,\, 
    \begin{psmallmatrix}
        (\beta^*)^\intercal \Sigma_o \beta^*  
        & 
        (\beta^*)^\intercal \Sigma_o^{1/2} \Sigma_* \Sigma_o^{1/2} \beta^*  
        & \ldots & 
        \\
        (\beta^*)^\intercal \Sigma_o^{1/2} \Sigma_*  \Sigma_o^{1/2} \beta^*  
        & (\beta^*)^\intercal  \Sigma_o^{1/2} \Sigma_* \Sigma_o^{1/2} \beta^*
        &  & \\ 
        \vdots & & \ddots & \\
    \end{psmallmatrix}
    \bigg)
    \overset{d}{\rightarrow}
    \begin{psmallmatrix}
        \bar \kappa_o \bar Z_0 + \bar \kappa_* \bar Z_1
        \\
        \bar \kappa_* \bar Z_1
        \\
        \vdots 
        \\
        \bar \kappa_* \bar Z_1
    \end{psmallmatrix}
    \;,
\end{align*}
where we recall that by \Cref{assumption:CGMT:var},
\begin{align*}
    \Sigma_o^{1/2} \, \Cov[ G_i, G^\Phi_{i1}]  \, \Sigma_* \Sigma_o^{1/2} 
    \;=&\; 
    \Sigma_o^{1/2} (\Sigma_*)^2 \Sigma_o^{1/2} 
    \;=\; 
    \Sigma_o^{1/2} \Sigma_* \Sigma_o^{1/2}
    \;,
    \\
    \Sigma_o^{1/2} \, \Sigma_* \Cov[ G^\Phi_{i1}, G^\Phi_{i2}]  \, \Sigma_* \Sigma_o^{1/2} 
    \;=&\; 
    \Sigma_o^{1/2} (\Sigma_*)^2 \Sigma_o^{1/2} 
    \;=\;
    \Sigma_o^{1/2} \Sigma_* \Sigma_o^{1/2}\;,
\end{align*}
 $\bar Z_0$ and $\bar Z_1$ are two i.i.d.~standard normals and 
\begin{align*}
    \bar \kappa_*
    \;\coloneqq\; 
    \lim_{p \rightarrow \infty} \kappa_* 
    \;=\;
    \lim_{p \rightarrow \infty} \| \Sigma_* \Sigma_o^{1/2} \beta^* \|
    \;,
    \qquad 
    \bar \kappa_o 
    \;\coloneqq\;  
    \lim_{p \rightarrow \infty} \| (I_p - \Sigma_*) \Sigma_o^{1/2} \beta^* \| 
    \;.
\end{align*}
Then by the law of large numbers, we have
\begin{align*}
    \mfrac{\kappa_*}{mk} \, \by^\intercal \bq
    \;\xrightarrow{\P}\;
    \, 
    \mean\big[ \sigma( \bar \kappa_o  \bar Z_0 + \bar \kappa_* \bar Z_1 ) \bar \kappa_* \bar Z_1 \big]
    \;.
    \tagaligneq \label{eq:SO:lim:y:q} 
\end{align*}
Combining \eqref{eq:SO:lim:y:tilde:h}, \eqref{eq:SO:lim:y:h}, \eqref{eq:SO:lim:y:square} and \eqref{eq:SO:lim:y:q} gives 
\begin{align*}
    -
    \mfrac{1}{2 r_2 \nu_2 mk }  
    \| \by \|^2
    -
    \mfrac{1}{mk}  \by^\intercal \tilde \bh_{\alpha, \sigma} 
    \;\xrightarrow{\P}\;
    - 
    \mfrac{1}{4 r_2 \nu_2}
    -
    \alpha \,  \mean\big[ \sigma( \bar \kappa_o \bar Z_0 + \bar \kappa_*\bar Z_1 )\bar \kappa_* \bar Z_1 \big]\;,
\end{align*}
so the optimization can be approximated by 
\begin{align*}
    \min_{\substack{\alpha \in S^\alpha \\ (\sigma_1, \sigma_2) \in S_{\sigma_1}  \times S_{\sigma_2} \\ \nu_1, \nu_2 \geq 0 }}
        \,
        \max_{\substack{ (r_1, r_2) \in S_{r_1 } \times S_{r_2} \\ \tau_1, \tau_2 \geq 0 \\ \theta \in \R }}
        \,
    & \,
    -
    \mfrac{\sigma_1}{2 \tau_1} 
    -
    \mfrac{\sigma_2}{2 \tau_2} 
    +
    \mfrac{r_1}{2 \nu_1} 
    +
    \mfrac{r_2}{2 \nu _2}
    +
    \alpha \theta \bar \kappa_*^2
    -
    \mfrac{\alpha^2 \bar \kappa_*^2}{2 \sigma_2 \tau_2}
    - \bar \chi^{r, \theta, \sigma, \tau}_1 
    + 
    \epsilon_S^2 \,
     \mfrac{\bar \chi^{r, \theta,  \sigma, \tau}_3}{\bar \chi^{r, \theta, \sigma, \tau}_2}
     - 
     \mfrac{1}{4 r_2 \nu_2} 
    \\
    &\,
    -
    \alpha \,  \mean\big[ \sigma(\bar \kappa_o \bar Z_0 + \bar \kappa_* \bar Z_1 ) \bar \kappa_* \bar Z_1 \big]
    +
    \mfrac{1}{mk}  M_{\by, \tilde \bh_{\alpha, \sigma}, r, \nu}  
    \;.
    \tagaligneq \label{eq:DO:before:nested:M}
\end{align*}

\vspace{.5em}

\textbf{Computing the nested Moreau envelope.} We are left with
\begin{align*}
    \mfrac{1}{mk}
    M_{\by, \tilde \bh_{\alpha, \sigma}, r, \nu} 
    \;=&\;
    \min_{u_2 \in  P_{mk}(S_u) }
    \,
    \mfrac{1}{mk}
    M^\perp_{\tilde \bh_{\alpha, \sigma},r,\nu}(u_2)
    +
    \mfrac{r_2 \nu_2}{2 mk}  
    \Big\| P_{mk}  \big( u_2  -  \mfrac{1}{r_2 \nu_2}  \by - \tilde \bh_{\alpha, \sigma}  \big)\Big\|^2 
    \\
    \;=&\;
    \min_{u_2 \in  P_{mk}(S_u) } 
    \,
    \min_{u_1 \in  P_{mk}^\perp(S_u) }
    \,
    \mfrac{1}{mk} \bone_{mk}^\intercal \, \rho( u_1 + u_2 )
    +
    \mfrac{r_2 \nu_2}{2 mk}  
    \Big\| P_{mk}  \big( u_2  -  \mfrac{1}{r_2 \nu_2}  \by - \tilde \bh_{\alpha, \sigma}  \big)\Big\|^2 
    \\
    &\;\hspace{9em}
    +
    \mfrac{r_1 \nu_1}{ 2 mk}  \big\| P^\perp_{mk} ( u_1 - \tilde \bh_{\alpha, \sigma} )\big\|^2
    \;.
\end{align*}
Write $u_{1ij}$ as the $(i,j)$-th coordinate of $u_1 \in \R^{mk}$ and similarly write $u_{2ij}$ for that of $u_2$, $q_{ij}$ for $\bq$, $h_{1ij}$ for $\bh_1$ and $h_{2ij}$ for $\bh_2$.  Recalling the definition of $\rho$, $P_{mk} = \frac{1}{k} J_{mk}$ and $P_{mk}^\perp = I_{mk} - P_{mk}$, we can re-express the loss above as 
\begin{align*}
    \mfrac{1}{mk} \msum_{i,j=1}^k L_{ij}(u_1, u_2)\;,
\end{align*}
where 
\begin{align*}
    L_{ij}(u_1, u_2)
    \,&\;\coloneqq\;
        \log( 1 + e^{u_{1ij} + u_{2ij}} )
    \\
    &\; 
        +
        \mfrac{r_2 \nu_2}{2} 
        \Big(
        \mfrac{1}{k}
        \msum_{j'=1}^k 
        \big(
            u_{2ij'} 
            - 
            \mfrac{1}{r_2 \nu_2}  y_i(\Sigma_o^{1/2} G_i) 
            -
            \kappa_* \alpha q_{ij'}
            +
            \sigma_1 h_{1ij'}  
            +
            \mfrac{\sigma_2}{\sqrt{k}} \big( \msum_{j'' \leq k}  h_{2ij''} \big)
        \big)
        \Big)^2
    \\
    &\;
        +
        \mfrac{r_1 \nu_1}{ 2}  
        ( u_{1ij} - \kappa_* \alpha q_{ij}
        +
        \sigma_1 h_{1ij}  
        )^2
        -
        \mfrac{r_1 \nu_1}{ 2}
        \Big( 
        \mfrac{1}{k}
        \msum_{j'=1}^k   
        ( u_{1ij'} - \kappa_* \alpha q_{ij'}
        +
        \sigma_1 h_{1ij'}
        )
        \Big)^2
    \;.
\end{align*}
Consider the $\R^k$-valued vectors $u_{1i} = (u_{1i1}, \ldots, u_{1ik})$ and $u_{2i} = (u_{2i1}, \ldots, u_{2ik})$ for $1 \leq i \leq m$. Notice that the loss $ L_{ij}(u_1, u_2) = \tilde L_{ij}(u_{1i}, u_{2i}) $ only depends on $u_1$ and $u_2$ through $u_{1i}, u_{2i}$. This allows us to rewrite
\begin{align*}
    \mfrac{1}{mk}
    M_{\by, \tilde \bh_{\alpha, \sigma}, r, \nu} 
    \;=\; 
    \mfrac{1}{m} \msum_{i=1}^m
    \min_{u_{2i} \in  (P_{mk}(S_u))_i } 
    \,
    \min_{u_{1i} \in  (P_{mk}^\perp(S_u))_i }
    \,
    \mfrac{1}{k} \msum_{j=1}^k
    \tilde L_{ij}(u_{1i}, u_{2i}) 
    \;,
\end{align*}
where $(P_{mk}(S_u))_i$ and $(P_{mk}^\perp(S_u))_i $ are the corresponding subspaces in which $u_{2i}$ and $u_{1i}$ take values. Since $S_u$ is closed under permutation of its $m$ blocks of $k$ coordinates, the $m$ summands above are i.i.d., which allows us to apply a weak law of large numbers to the above average. Also note that the minima are over $\R^k$-valued vectors, which allows us again to take a limit with $p \rightarrow \infty$ inside the loss function. Using the computation of $y_{ij}q_{ij}$ via $\bar Z_0$ and $\bar Z_1$ in \eqref{eq:limit:y:q:useful}, we obtain that $M_{\by, \tilde \bh_{\alpha, \sigma}, r, \nu} $ can be approximated by
\begin{align*}
    \mean\bigg[
    &
    \min_{\substack{u' \in (P_{mk}(S_u))_1 \\ u'' \in  (P_{mk}^\perp(S_u))_1 }}
     \;
    \mfrac{1}{k} \msum_{j=1}^k \log(1+e^{u'_j+u''_j})
    \\
    &\;\qquad
    +
        \mfrac{r_2 \nu_2}{2}
        \Big(
        \mfrac{1}{k}
        \msum_{j=1}^k 
        \big(
            u'_j
            - 
            \mfrac{1}{r_2 \nu_2}  \ind_{\geq 0}\{ \bar \kappa_o \bar Z_0 + \bar \kappa_* \bar Z_1 - \varepsilon_1 \}
            -
            \alpha \bar \kappa_* \bar Z_1
            +
            \sigma_1 \eta_j  
            +
             \sigma_2 \bar Z_2
        \big)
        \Big)^2
    \\
    &\;\qquad 
        + 
        \mfrac{r_1 \nu_1}{2} \, 
        \Big(
        \mfrac{1}{k} \msum_{j=1}^k
        (u''_j - \alpha \bar \kappa_* \bar Z_1 + \sigma_1 \eta_j )^2
        -
        \Big(
        \mfrac{1}{k}
        \msum_{j=1}^k 
        \big(
            u''_j
            -
            \alpha \bar \kappa_*\bar Z_1
            +
            \sigma_1 \eta_j 
        \big)
        \Big)^2
        \Big)
    \bigg]
    \;,
\end{align*}
where $\eta_1, \ldots, \eta_k$ and $\bar Z_2$ are i.i.d.~standard normals and $\varepsilon_1$ is an independent $\textrm{Logistic}(0,1)$ variable.. Notice that $u' \in (P_{mk}(S_u))_1$ has equal entries, say $u_0$, and $u'' \in (P^\perp_{mk}(S_u))_1$ satisfies $\sum_{j=1}^k u''_j = 0$. Also recall the assumption that $\sup_{u \in S_u} \frac{\| u \|^2_2}{mk} \rightarrow \infty$. Setting $\tilde u = (u''_1 + u_0, \ldots, u''_k + u_0)$, the above can be further approximated by 
\begin{align*}
    \mean\bigg[
    & \min_{\tilde u \in \R^{k}}
    \;
    \mfrac{1}{k} \msum_{j=1}^k \log(1+e^{\tilde u_j})
    \\
    &\;
    +
        \mfrac{r_2 \nu_2}{2}
        \Big(
            \,
            \mfrac{1}{k} \msum_{j \leq k} 
            \Big( 
            \tilde u_j
            - 
            \mfrac{1}{r_2 \nu_2}  \ind_{\geq 0}\{\bar \kappa_o \bar Z_0 +\bar \kappa_* \bar Z_1 - \varepsilon_1 \}
            -
            \alpha \bar \kappa_*\bar Z_1
            +
            \sigma_1 \eta_j 
            +
            \sigma_2  \bar Z_2
            \Big) 
            \,
        \Big)^2
    \\
    &\;
        + 
        \mfrac{r_1 \nu_1}{2} \, 
        \Big(
        \mfrac{1}{k} \msum_{j=1}^k
        ( \tilde u_j - \alpha \bar \kappa_*\bar Z_1 + \sigma_1 \eta_j )^2
        -
        \Big(
        \mfrac{1}{k}
        \msum_{j=1}^k 
        \big(
            \tilde u_j
            -
            \alpha \bar \kappa_*\bar Z_1
            +
            \sigma_1 \eta_j
        \big)
        \Big)^2
        \Big)
    \bigg]
    \\
    \;=\;
    \mean\bigg[
    & \min_{\tilde u \in \R^{k}}
    \;
    \mfrac{1}{k} \bone_k^\intercal \rho(\tilde u)
    + 
    \mfrac{r_1 \nu_1}{2 k} \, 
    \Big\| 
        \big(I_k - \mfrac{1}{k} \bone_{k \times k}\big)
        ( \tilde u - \alpha \bar \kappa_* \bar Z_1 \bone_k + \sigma_1 \eta )
    \Big\|^2
    \\
    &\;
    +
    \mfrac{r_2 \nu_2}{2 k}
    \Big\| 
        \mfrac{1}{k} \bone_{k \times k}
        \Big(
            \tilde u 
            -
            \mfrac{1}{r_2 \nu_2}  \ind_{\geq 0}\{ \bar \kappa_o \bar Z_0 + \bar \kappa_* \bar Z_1 - \varepsilon_1 \} 
            \bone_k 
            -
            \alpha \bar \kappa_* \bar Z_1 
            \bone_k
            +
            \sigma_1 \eta 
            +
            \sigma_2 \bar Z_2 \bone_k 
        \Big)
    \Big\|^2
    \bigg]
    \;,
\end{align*}
which equals $\bar M_\rho^{r, \nu, \alpha, \sigma}$. Substituting this into \eqref{eq:DO:before:nested:M} while also applying the assumption that $\sup_{v \in S_v} \frac{\| v \|^2_2}{mk} \rightarrow \infty$, we obtain
\begin{align*}
    \min_{\substack{\alpha \in S^\alpha \\ (\sigma_1, \sigma_2) \in S_{\sigma_1}  \times S_{\sigma_2} \\ \nu_1, \nu_2 \geq 0 }}
        \,
        \max_{\substack{ (r_1, r_2) \in S_{r_1 } \times S_{r_2} \\ \tau_1, \tau_2 \geq 0 \\ \theta \in \R }}
        \,
    & \,
    -
    \mfrac{\sigma_1}{2 \tau_1} 
    -
    \mfrac{\sigma_2}{2 \tau_2} 
    +
    \mfrac{r_1}{2 \nu_1} 
    +
    \mfrac{r_2}{2 \nu _2}
    +
    \alpha \theta \bar \kappa_*^2
    -
    \mfrac{\alpha^2 \bar \kappa_*^2}{2 \sigma_2 \tau_2}
    \\
    &\, 
    - \bar \chi^{r, \theta, \sigma, \tau}_1 
    + 
    \epsilon_S^2 \,
     \mfrac{\bar \chi^{r, \theta,  \sigma, \tau}_3}{\bar \chi^{r, \theta, \sigma, \tau}_2}
     - 
     \mfrac{1}{4 r_2 \nu_2}
     -
     \alpha \,  \mean\big[ \sigma( \bar \kappa_o \bar Z_0 + \bar \kappa_* \bar Z_1 ) \bar \kappa_* \bar Z_1 \big]
    +
    \bar M_\rho^{r, \nu, \alpha, \sigma}
    \;.
\end{align*}
\jmlrQED

\subsection{Proof of \lemmaref{lem:EQs}: \eqref{DO} to \eqref{EQs}}

Since $S = \cS_p$, we can ignore terms involving $\bar \chi^{r, \theta, \sigma, \tau}_2$ and $\bar \chi^{r, \theta, \sigma, \tau}_3$. Setting the first derivative of \eqref{DO} to zero with respect to each variable, we obtain 
\begin{align}
    \begin{cases}
        0
        \;=\;
        \theta \bar \kappa_*^2 
        - 
        \frac{ \alpha \bar \kappa_*^2}{\sigma_2 \tau_2}
        -
        \mean\big[ \sigma( \bar \kappa_o \bar Z_0 + \bar \kappa_* \bar Z_1 ) \bar \kappa_* \bar Z_1 \big]
        +
        \partial_\alpha  \bar M_\rho^{r, \nu, \alpha, \sigma}
        \;,
        \\
        0
        \;=\;
        -
        \frac{1}{2 \tau_1} 
        -
        \partial_{\sigma_1} \bar \chi_1^{r, \theta, \sigma, \tau} 
        +
        \partial_{\sigma_1} \bar M_\rho^{r, \nu, \alpha, \sigma}
        \;,
        \\
        0
        \;=\; 
        - 
        \frac{1}{2 \tau_2}
        +
        \frac{\alpha^2 \bar \kappa_*^2}{2 \sigma_2^2 \tau_2}
        -
        \partial_{\sigma_2} \bar \chi_1^{r, \theta, \sigma, \tau} 
        +
        \partial_{\sigma_2} \bar M_\rho^{r, \nu, \alpha, \sigma}
        \;,
        \\
        0 
        \;=\;
        \frac{\sigma_1}{2 \tau_1^2}
        -
        \partial_{\tau_1} \bar \chi_1^{r, \theta, \sigma, \tau} 
        \;,
        \\
        0 
        \;=\;
        \frac{\sigma_2}{2 \tau_2^2}
        +
        \frac{\alpha^2 \bar \kappa_*^2}{2 \sigma_2 \tau_2^2}
        -
        \partial_{\tau_2} \bar \chi_1^{r, \theta, \sigma, \tau} 
        \;,
        \\
        0
        \;=\;
        - 
        \frac{r_1}{2 \nu_1^2}
        +
        \partial_{\nu_1} \bar M_\rho^{r, \nu, \alpha, \sigma}
        \;,
        \\
        0
        \;=\;
        -
        \frac{r_2}{2 \nu_2^2}
        +
        \frac{1}{4 r_2 \nu_2^2}
        +
        \partial_{\nu_2} \bar M_\rho^{r, \nu, \alpha, \sigma}
        \;,
        \\
        0
        \;=\;
        \frac{1}{2\nu_1} 
        -
        \partial_{r_1} \bar \chi_1^{r, \theta, \sigma, \tau} 
        +
        \partial_{r_1} \bar M_\rho^{r, \nu, \alpha, \sigma}
        \;,
        \\
        0
        \;=\;
        \frac{1}{2\nu_2} 
        +
        \frac{1}{4 r_2^2 \nu_2}
        -
        \partial_{r_2} \bar \chi_1^{r, \theta, \sigma, \tau} 
        +
        \partial_{r_2} \bar M_\rho^{r, \nu, \alpha, \sigma}
        \;,
        \\
        0 
        \;=\;
        \alpha \bar \kappa_*^2 
        -
        \partial_\theta \bar \chi_1^{r, \theta, \sigma, \tau} 
        \;.
    \end{cases}
\end{align}
The next step is to compute the derivatives of 
\begin{align*}
    \bar M_\rho^{r, \nu, \alpha, \sigma}
    \;\coloneqq\;
    &\; \mean\bigg[
       \min_{\tilde u \in \R^{k}}
       \mfrac{1}{k} \bone_k^\intercal \rho(\tilde u)
    + 
    \mfrac{r_1 \nu_1}{2 k} \, 
    \big\| 
        \big(I_k - \mfrac{1}{k} \bone_{k \times k}\big)
        ( \tilde u  + \sigma_1 \eta )
    \big\|^2
    \\
    &\qquad
    +
    \mfrac{r_2 \nu_2}{2 k}
    \Big\| 
        \mfrac{1}{k} \bone_{k \times k}
        \Big(
            \tilde u 
            -
            \mfrac{1}{r_2 \nu_2}  \ind_{\geq 0}\{ \bar \kappa_o \bar Z_0 + \bar \kappa_* \bar Z_1 - \varepsilon_1 \} 
            \bone_k 
            -
            \alpha \bar \kappa_* \bar Z_1 
            \bone_k
            +
            \sigma_1 \eta 
            +
            \sigma_2 \bar Z_2 \bone_k 
        \Big)
    \Big\|^2
    \bigg]
    \;.
\end{align*}
Recall that we denote $u_{\bar Z, \varepsilon_1, \eta}$ as the minimizer of the minimization inside the expectation. By the envelope theorem and noting that $\mean[\ind_{\geq 0}\{ \bar \kappa_o \bar Z_0 + \bar \kappa_* \bar Z_1 - \varepsilon_1 \}  ] = \mean[ \sigma( \bar \kappa_o \bar Z_0 + \bar \kappa_* \bar Z_1  ) ]$, we have 
\begin{align*}
    \partial_\alpha \bar M_\rho^{r, \nu, \alpha, \sigma}
    \;=&\;
    -
    \mfrac{r_2 \nu_2 \bar \kappa_* }{k}
    \mean \,
    \Big[
        \bar Z_1  
        \Big(
            \bone_k^\intercal
            u_{\bar Z, \varepsilon_1, \eta}
            -
            \mfrac{k}{r_2 \nu_2} \sigma( \bar \kappa_o \bar Z_0 + \bar \kappa_* \bar Z_1 )
        \Big) 
    \Big] 
    +
    r_2 \nu_2 \alpha \bar \kappa_*^2 
    \;,
    \\
    \partial_{\sigma_1} \bar M_\rho^{r, \nu, \alpha, \sigma}
    \;=&\;
    \mfrac{r_1 \nu_1}{ k} \, 
    \mean\Big[
        \eta^\intercal \, \big(I_k - \mfrac{1}{k} \bone_{k \times k}\big)
         \, u_{\bar Z, \varepsilon_1, \eta} 
    \Big] 
    +
    \mfrac{r_1 \nu_1 \sigma_1 (k-1)}{ k} 
    +
    \mfrac{r_2 \nu_2 }{ k}
    \mean\Big[
        \eta^\intercal
        \mfrac{1}{k} \bone_{k \times k}
        u_{\bar Z, \varepsilon_1, \eta} 
    \bigg]
    +
    \mfrac{r_2 \nu_2\sigma_1 }{ k}
    \;,
    \\
    \partial_{\sigma_2} \bar M_\rho^{r, \nu, \alpha, \sigma}
    \;=&\;
    \mfrac{r_2 \nu_2 }{ k}
    \mean\Big[
        \bar Z_2
        \bone_k^\intercal
            u_{\bar Z, \varepsilon_1, \eta} 
    \Big]
    +
    r_2 \nu_2 \sigma_2 
    \;,
    \\
    \partial_{\nu_1} \bar M_\rho^{r, \nu, \alpha, \sigma}
    \;=&\;
    \mfrac{r_1}{2 k} \,  \mean\Big[ \big\| 
    \big(I_k - \mfrac{1}{k} \bone_{k \times k}\big)
    ( u_{\bar Z, \varepsilon_1, \eta} + \sigma_1 \eta )
    \big\|^2 \Big]
    \;,
    \\
    \partial_{\nu_2} \bar M_\rho^{r, \nu, \alpha, \sigma}
    \;=&\;
    \mfrac{r_2}{2 k}
    \mean\bigg[
    \Big\| 
        \mfrac{1}{k} \bone_{k \times k}
        \Big(
            u_{\bar Z, \varepsilon_1, \eta} 
            -
            \mfrac{1}{r_2 \nu_2}  \ind_{\geq 0}\{ \bar \kappa_o \bar Z_0 + \bar \kappa_* \bar Z_1 - \varepsilon_1 \} 
            \bone_k 
            -
            \alpha \bar \kappa_* \bar Z_1 
            \bone_k
            +
            \sigma_1 \eta 
            +
            \sigma_2 \bar Z_2 \bone_k 
        \Big)
    \Big\|^2
    \bigg]
    \\
    &\;
    +
    \mfrac{1}{\nu_2 k}
    \mean\bigg[
    \ind_{\geq 0}\{ \bar \kappa_o \bar Z_0 + \bar \kappa_* \bar Z_1 - \varepsilon_1 \} 
        \Big(
            \bone_k^\intercal
            u_{\bar Z, \varepsilon_1, \eta} 
            -
            \mfrac{k}{r_2 \nu_2}  \sigma( \bar \kappa_o \bar Z_0 + \bar \kappa_* \bar Z_1 )
            -
            k \alpha \bar \kappa_* \bar Z_1 
        \Big)
    \bigg]
    \;,
    \\
    \partial_{r_1} \bar M_\rho^{r, \nu, \alpha, \sigma}
    \;=&\;
    \mfrac{\nu_1}{2 k} \,  \mean\Big[ \big\| 
    \big(I_k - \mfrac{1}{k} \bone_{k \times k}\big)
    ( u_{\bar Z, \varepsilon_1, \eta} + \sigma_1 \eta )
    \big\|^2 \Big]
    \;,
    \\
    \partial_{r_2} \bar M_\rho^{r, \nu, \alpha, \sigma}
    \;=&\;
    \mfrac{\nu_2}{2 k}
    \mean\bigg[
    \Big\| 
        \mfrac{1}{k} \bone_{k \times k}
        \Big(
            u_{\bar Z, \varepsilon_1, \eta}
            -
            \mfrac{1}{r_2 \nu_2}  \ind_{\geq 0}\{ \bar \kappa_o \bar Z_0 + \bar \kappa_* \bar Z_1 - \varepsilon_1 \} 
            \bone_k 
            -
            \alpha \bar \kappa_* \bar Z_1 
            \bone_k
            +
            \sigma_1 \eta 
            +
            \sigma_2 \bar Z_2 \bone_k 
        \Big)
    \Big\|^2
    \bigg]
    \\
    &\;
    +
    \mfrac{1}{r_2 k}
    \mean\bigg[
    \ind_{\geq 0}\{ \bar \kappa_o \bar Z_0 + \bar \kappa_* \bar Z_1 - \varepsilon_1 \} 
        \Big(
            \bone_k^\intercal
            u_{\bar Z, \varepsilon_1, \eta} 
            -
            \mfrac{k}{r_2 \nu_2}  \sigma( \bar \kappa_o \bar Z_0 + \bar \kappa_* \bar Z_1 )
            -
            k \alpha \bar \kappa_* \bar Z_1 
        \Big)
    \bigg]
    \;.
\end{align*}
Writing $\bar Y =  \ind_{\geq 0}\{ \bar \kappa_o \bar Z_0 + \bar \kappa_* \bar Z_1 - \varepsilon_1 \} $ and substituting the bounds above into the system of equations recovers \eqref{EQs}.

\subsection{Proofs for \Cref{appendix:DA:special:cases}}

\subsubsection{Proof of \lemmaref{lem:EQs:iso}: isotropic, no augmentation}

Under the stated setup, the covariance matrices in the formula evaluate to $\Sigma_o = \Sigma = \frac{1}{p} I_p$ and $\Sigma_* = I_p$. In this case, as $m=n, p \rightarrow \infty$ and $p/n \rightarrow \kappa$, we can compute the limit terms defined in \eqref{DO}:
\begin{align*}
    &\;
    \bar \kappa_* \;=\; \lim_{p \rightarrow \infty} \mfrac{\| \beta^* \|}{\sqrt{p}}
    \;,
    \qquad
    \bar \kappa_o 
    \;=\;
    \bar \chi^{\sigma, \tau}_{11}
    \;=\;  
    0\;,
    \qquad 
    \bar \chi^{\sigma, \tau}_{12}
    \;=\; 
    \mfrac{\kappa}{\sigma_2 \tau_2 \lambda \kappa + 1}
    \;,
    \qquad 
    \bar \chi^{\sigma, \tau}_{13}
    \;=\;
    \mfrac{1}{\sigma_2 \tau_2 \lambda \kappa + 1}
    \;,
    \\
    &\;
    \bar \chi^{\sigma, \tau}_{21}
    \;=\;
    0 \;,
    \qquad 
    \bar \chi^{\sigma, \tau}_{22}
    \;=\;
    \mfrac{\kappa}{(\sigma_2 \tau_2 \lambda \kappa + 1)^2}
    \;,
    \qquad 
    \bar \chi^{\sigma, \tau}_{23}
    \;=\;
    \mfrac{1}{(\sigma_2 \tau_2 \lambda \kappa+ 1)^2}
    \;.
\end{align*}
This implies 
\begin{align*}
    \bar \chi_1^{r, \theta, \sigma, \tau}
    \;=\;
    \mfrac{r_2^2  \kappa + \theta^2 \bar \kappa_*^2 }{2 ( \lambda \kappa + \sigma_2^{-1} \tau_2^{-1} )}
    \;,
    \qquad
    \bar \chi_2^{r, \theta, \sigma, \tau}
    \;=\;
    \mfrac{r_2^2 \kappa
    +
    \theta^2 \bar \kappa_*^2 
    }
    {(\lambda \kappa + \sigma_2^{-1} \tau_2^{-1}  )^2}
    \;,
\end{align*}
which are in particular independent of $\sigma_1$, $\tau_1$ and $r_1$. Now recall that \eqref{EQs} read 
\begin{align*}
    \begin{cases}
        0
        \;=\;
        \theta \bar \kappa_*^2 
        - 
        \frac{ \alpha \bar \kappa_*^2}{\sigma_2 \tau_2}
        -
        \frac{r_2 \nu_2 \bar \kappa_* }{k}
        \mean \,
        \Big[
            \bar Z_1  
            \bone_k^\intercal
            u_{\bar Z, \varepsilon_1, \eta}
        \Big] 
        +
        r_2 \nu_2 \alpha \bar \kappa_*^2 
        \;,
        \\
        0
        \;=\;
        -
        \frac{1}{2 \tau_1} 
        -
        \partial_{\sigma_1} \bar \chi_1^{r, \theta, \sigma, \tau} 
        +
        \frac{r_1 \nu_1}{ k} \, 
        \mean\Big[
            \eta^\intercal \, \big(I_k - \frac{1}{k} \bone_{k \times k}\big)
             \, u_{\bar Z, \varepsilon_1, \eta} 
        \Big] 
        +
        \frac{r_1 \nu_1 \sigma_1 (k-1)}{ k} 
        +
        \frac{r_2 \nu_2 }{ k}
        \mean\Big[
            \eta^\intercal
            \frac{1}{k} \bone_{k \times k}
            u_{\bar Z, \varepsilon_1, \eta} 
        \bigg]
        \\
        \;\qquad
        +
        \frac{r_2 \nu_2\sigma_1 }{ k}
        \;,
        \\
        0
        \;=\; 
        - 
        \frac{1}{2 \tau_2}
        +
        \frac{\alpha^2 \bar \kappa_*^2}{2 \sigma_2^2 \tau_2}
        -
        \partial_{\sigma_2} \bar \chi_1^{r, \theta, \sigma, \tau} 
        +
        \frac{r_2 \nu_2 }{ k}
        \mean\Big[
            \bar Z_2
            \bone_k^\intercal
                u_{\bar Z, \varepsilon_1, \eta} 
        \Big]
        +
        r_2 \nu_2 \sigma_2 
        \;,
        \\
        0 
        \;=\;
        \frac{\sigma_1}{2 \tau_1^2}
        -
        \partial_{\tau_1} \bar \chi_1^{r, \theta, \sigma, \tau} 
        \;,
        \\
        0 
        \;=\;
        \frac{\sigma_2}{2 \tau_2^2}
        +
        \frac{\alpha^2 \bar \kappa_*^2}{2 \sigma_2 \tau_2^2}
        -
        \partial_{\tau_2} \bar \chi_1^{r, \theta, \sigma, \tau} 
        \;,
        \\
        0
        \;=\;
        - 
        \frac{r_1}{2 \nu_1^2}
        +
        \frac{r_1}{2 k} \,  \mean\Big[ \big\| 
            \big(I_k - \frac{1}{k} \bone_{k \times k}\big)
            ( u_{\bar Z, \varepsilon_1, \eta} + \sigma_1 \eta )
            \big\|^2 \Big]
        \;,
        \\
        0
        \;=\;
        -
        \frac{r_2}{2 \nu_2^2}
        +
        \frac{1}{4 r_2 \nu_2^2}
        +
        \mfrac{r_2}{2 k}
        \mean\bigg[
        \Big\| 
            \mfrac{1}{k} \bone_{k \times k}
            \Big(
                u_{\bar Z, \varepsilon_1, \eta} 
                -
                \mfrac{1}{r_2 \nu_2} \bar Y
                \bone_k 
                -
                \alpha \bar \kappa_* \bar Z_1 
                \bone_k
                +
                \sigma_1 \eta 
                +
                \sigma_2 \bar Z_2 \bone_k 
            \Big)
        \Big\|^2
        \bigg]
        \\
        \;\qquad
        +
        \mfrac{1}{\nu_2 k}
        \mean\bigg[
            \bar Y
            \Big(
                \bone_k^\intercal
                u_{\bar Z, \varepsilon_1, \eta} 
                -
                \mfrac{k}{r_2 \nu_2} \bar Y
                -
                k \alpha \bar \kappa_* \bar Z_1 
            \Big)
        \bigg]
        \;,
        \\
        0
        \;=\;
        \frac{1}{2\nu_1} 
        -
        \partial_{r_1} \bar \chi_1^{r, \theta, \sigma, \tau} 
        +
        \frac{\nu_1}{2 k} \,  \mean\Big[ \big\| 
        \big(I_k - \frac{1}{k} \bone_{k \times k}\big)
        ( u_{\bar Z, \varepsilon_1, \eta} + \sigma_1 \eta )
        \big\|^2 \Big]
        \;,
        \\
        0
        \;=\;
        \frac{1}{2\nu_2} 
        +
        \frac{1}{4 r_2^2 \nu_2}
        -
        \partial_{r_2} \bar \chi_1^{r, \theta, \sigma, \tau} 
        +
        \mfrac{\nu_2}{2 k}
        \mean\bigg[
        \Big\| 
            \mfrac{1}{k} \bone_{k \times k}
            \Big(
                u_{\bar Z, \varepsilon_1, \eta} 
                -
                \mfrac{1}{r_2 \nu_2} \bar Y
                \bone_k 
                -
                \alpha \bar \kappa_* \bar Z_1 
                \bone_k
                +
                \sigma_1 \eta 
                +
                \sigma_2 \bar Z_2 \bone_k 
            \Big)
        \Big\|^2
        \bigg]
        \\
        \;\qquad
        +
        \mfrac{1}{r_2 k}
        \mean\bigg[
            \bar Y
            \Big(
                \bone_k^\intercal
                u_{\bar Z, \varepsilon_1, \eta} 
                -
                \mfrac{k}{r_2 \nu_2} \bar Y
                -
                k \alpha \bar \kappa_* \bar Z_1 
            \Big)
        \bigg]
        \;,
        \\
        0 
        \;=\;
        \alpha \bar \kappa_*^2 
        -
        \partial_\theta \bar \chi_1^{r, \theta, \sigma, \tau} 
        \;.
    \end{cases}
\end{align*}
By the 4th equation, $\sigma_1=0$.  In this case, the defining optimization of $u_{\bar Z, \varepsilon_1, \eta}$ is symmetric under permutation of $\tilde u \in \R^k$ and in particular $\frac{1}{k} \bone_{k \times k} u_{\bar Z, \varepsilon_1, \eta} = u_{\bar Z, \varepsilon_1, \eta}$. This implies that $u_{\bar Z, \varepsilon_1, \eta} = u_{\bar Z, \varepsilon_1} \bone_k$ where $ u_{\bar Z, \varepsilon_1} $ is the minimizer of the 1-d random optimization problem 
\begin{align*}
    \min_{\tilde u \in \R}
    \,
    \rho(\tilde u)
    +
    \mfrac{r_2 \nu_2}{2}
    \big( 
            \tilde u
            -
            \mfrac{1}{r_2 \nu_2} \bar Y
            -
            \alpha \bar \kappa_* \bar Z_1 
            +
            \sigma_2 \bar Z_2 
    \big)^2
    \;.
    \tagaligneq \label{eq:iso:unaug:1d:Moreau}
\end{align*}
Recall that  ${\rm Prox}_{t \rho(\argdot)}(v) \coloneqq \argmin_{x \in \R} \frac{1}{2t} (v-x)^2 + \rho(x)$. This allows us to express
\begin{align*}
    u_{\bar Z, \varepsilon_1} 
    \;=&\; 
    \textrm{Prox}_{(r_2 \nu_2)^{-1} \rho(\argdot)} 
    \Big(  \mfrac{1}{r_2 \nu_2} \bar Y
    +
    \alpha \bar \kappa_* \bar Z_1 
    -
    \sigma_2 \bar Z_2  \Big)
    \;.
\end{align*}
Meanwhile, substituting $(I_k - \frac{1}{k} \bone_{k \times k}) u_{\bar Z, \varepsilon_1, \eta} = 0$  into the 6th and 8th equations above yields $r_1 = 0$ and $\nu_1 \rightarrow \infty$. We can WLOG take $\nu_1 \rightarrow \infty$ such that $r_1 \nu_1 \rightarrow 0$.  By the 2nd equation we then obtain 
\begin{align*}
    \tau_1
    \;=\;
    \mfrac{1}{2} \Big(
    \frac{r_2 \nu_2 }{ k}
    \mean\big[
        \eta^\intercal
        \bone_k
        u_{\bar Z, \varepsilon} 
    \big]
    \Big)^{-1}
    \;\rightarrow\; \infty
    \;,
\end{align*}
by noting that $\eta$ is zero-mean and independent of $u_{\bar Z, \varepsilon}$. This removes $(\sigma_1, r_1, \nu_1, \tau_1)$ from the equations. Substituting $u_{\bar Z, \varepsilon_1, \eta} = \bar u_{\bar Z, \varepsilon_1} \bone_k$ and the derivatives of $ \bar \chi_1^{r, \theta, \sigma, \tau} $, we obtain
\begin{align}
    \begin{cases}
        0
        \;=\;
        \theta \bar \kappa_*^2 
        - 
        \frac{ \alpha \bar \kappa_*^2}{\sigma_2 \tau_2}
        -
        r_2 \nu_2 \bar \kappa_* 
        \mean \,
        \big[
            \bar Z_1  
            u_{\bar Z, \varepsilon_1}
        \big] 
        +
        r_2 \nu_2 \alpha \bar \kappa_*^2 
        \;,
        \\
        0
        \;=\; 
        - 
        \frac{1}{2 \tau_2}
        +
        \frac{\alpha^2 \bar \kappa_*^2}{2 \sigma_2^2 \tau_2}
        -
        \frac{1}{\sigma_2^2 \tau_2} \,
        \frac{r_2^2  \kappa + \theta^2 \bar \kappa_*^2 }{2 ( \lambda \kappa + \sigma_2^{-1} \tau_2^{-1} )^2}
        +
        r_2 \nu_2 
        \mean\big[
            \bar Z_2
            u_{\bar Z, \varepsilon_1}
        \big]
        +
        r_2 \nu_2 \sigma_2 
        \;,
        \\
        0 
        \;=\;
        \frac{\sigma_2}{2 \tau_2^2}
        +
        \frac{\alpha^2 \bar \kappa_*^2}{2 \sigma_2 \tau_2^2}
        -
        \frac{1}{\sigma_2 \tau_2^2} \,
        \frac{r_2^2  \kappa + \theta^2 \bar \kappa_*^2 }{2 ( \lambda \kappa + \sigma_2^{-1} \tau_2^{-1} )^2}
        \;,
        \\
        0
        \;=\;
        -
        \frac{r_2}{2 \nu_2^2}
        +
        \frac{1}{4 r_2 \nu_2^2}
        +
        \frac{r_2}{2}
        \mean\big[
        \big( 
                u_{\bar Z, \varepsilon_1} 
                -
                \frac{1}{r_2 \nu_2} \bar Y
                -
                \alpha \bar \kappa_* \bar Z_1 
                +
                \sigma_2 \bar Z_2 
        \big)^2
        \big]
        +
        \mfrac{1}{\nu_2}
        \mean\Big[
            \bar Y
            \Big(
                u_{\bar Z, \varepsilon_1} 
                -
                \frac{1}{r_2 \nu_2} \bar Y
                -
                \alpha \bar \kappa_* \bar Z_1 
            \Big)
        \Big]
        \;,
        \\
        0
        \;=\;
        \frac{1}{2\nu_2} 
        +
        \frac{1}{4 r_2^2 \nu_2}
        -
        \frac{r_2 \kappa }{\lambda \kappa + \sigma_2^{-1} \tau_2^{-1}}
        +
        \mfrac{\nu_2}{2}
        \mean\Big[
        \big( 
                u_{\bar Z, \varepsilon_1, \eta} 
                -
                \frac{1}{r_2 \nu_2} \bar Y
                -
                \alpha \bar \kappa_* \bar Z_1 
                +
                \sigma_2 \bar Z_2 
        \big)^2
        \Big]
        \\
        \;\qquad
        +
        \mfrac{1}{r_2}
        \mean\Big[
            \bar Y
            \Big(
                u_{\bar Z, \varepsilon_1, \eta} 
                -
                \frac{1}{r_2 \nu_2} \bar Y
                -
                \alpha \bar \kappa_* \bar Z_1 
            \Big)
        \Big]
        \;,
        \\
        0 
        \;=\;
        \alpha \bar \kappa_*^2 
        -
        \frac{\theta \bar \kappa_*^2 }{ \lambda \kappa + \sigma_2^{-1} \tau_2^{-1} }
        \;.
    \end{cases}
    \label{eq:iso:tmp}
\end{align}
Now let $\gamma = \frac{1}{r_2 \nu_2}$. Notice that the 4th and 5th equations above both involve
\begin{align*}
    (\star)
    \;\coloneqq&\;
    \mfrac{1}{2}
    \mean\big[
    \big( 
            u_{\bar Z, \varepsilon_1} 
            -
            \gamma \bar Y
            -
            \alpha \bar \kappa_* \bar Z_1 
            +
            \sigma_2 \bar Z_2 
    \big)^2
    \big]
    +
    \gamma
    \mean\Big[
        \bar Y
        \Big(
            u_{\bar Z, \varepsilon_1} 
            -
            \gamma \bar Y
            -
            \alpha \bar \kappa_* \bar Z_1 
        \Big)
    \Big]
    \\
    \;\overset{(a)}{=}&\;
    \mean\bigg[
    \mfrac{1 - \bar Y}{2}
    \big( 
            \textrm{Prox}_{\gamma \rho(\argdot)} 
            \big(
            \alpha \bar \kappa_* \bar Z_1 
            -
            \sigma_2 \bar Z_2  
            \big)
            -
            \alpha \bar \kappa_* \bar Z_1 
            +
            \sigma_2 \bar Z_2 
    \big)^2
    \bigg]
    \\
    &\;
    +
    \mean\bigg[
    \mfrac{\bar Y}{2}
    \big( 
            \textrm{Prox}_{\gamma \rho(\argdot)} 
            \big(  \gamma
            +
            \alpha \bar \kappa_* \bar Z_1 
            -
            \sigma_2 \bar Z_2  
            \big)
            -
            \gamma 
            -
            \alpha \bar \kappa_* \bar Z_1 
            +
            \sigma_2 \bar Z_2 
    \big)^2
    \bigg]
    \\
    &\;
    +
    \gamma
    \mean\Big[
        \bar Y
        \Big(
            \textrm{Prox}_{\gamma \rho(\argdot)} 
            \big(  \gamma \bar Y
            +
            \alpha \bar \kappa_* \bar Z_1 
            -
            \sigma_2 \bar Z_2  
            \big)
            -
            \gamma 
            -
            \alpha \bar \kappa_* \bar Z_1 
        \Big)
    \Big]
    \\
    \;\overset{(b)}{=}&\;
    \mean\bigg[
    \mfrac{\partial \rho( - \bar \kappa_* \bar Z_1 )}{2}
    \big( 
            \textrm{Prox}_{\gamma \rho(\argdot)} 
            \big( 
            \alpha \bar \kappa_* \bar Z_1 
            -
            \sigma_2 \bar Z_2  
            \big)
            -
            \alpha \bar \kappa_* \bar Z_1 
            +
            \sigma_2 \bar Z_2 
    \big)^2
    \bigg]
    \\
    &\;
    +
    \mean\bigg[
    \mfrac{\partial \rho(\bar \kappa_* \bar Z_1)}{2}
    \big( 
            \textrm{Prox}_{\gamma \rho(\argdot)} 
            \big(  \gamma
            +
            \alpha \bar \kappa_* \bar Z_1 
            -
            \sigma_2 \bar Z_2  
            \big)
            -
            \gamma 
            -
            \alpha \bar \kappa_* \bar Z_1 
            +
            \sigma_2 \bar Z_2 
    \big)^2
    \bigg]
    \\
    &\;
    -
    \gamma
    \mean\Big[
        \partial \rho(\bar \kappa_* \bar Z_1)
        \Big(
            \alpha \bar \kappa_* \bar Z_1 
            +
            \gamma 
            -
            \textrm{Prox}_{\gamma \rho(\argdot)} 
            \big(  \gamma 
            +
            \alpha \bar \kappa_* \bar Z_1 
            -
            \sigma_2 \bar Z_2  
            \big)
        \Big)
    \Big]
    \\
    \;\overset{(c)}{=}&\;
    \mean\bigg[
    \mfrac{\partial \rho( - \bar \kappa_* \bar Z_1 )}{2}
    \big( 
            \textrm{Prox}_{\gamma \rho(\argdot)} 
            \big(  
            \alpha \bar \kappa_* \bar Z_1 
            -
            \sigma_2 \bar Z_2  
            \big)
            -
            \alpha \bar \kappa_* \bar Z_1 
            +
            \sigma_2 \bar Z_2 
    \big)^2
    \bigg]
    \\
    &\;
    +
    \mean\Big[ \mfrac{\partial \rho(\bar \kappa_* \bar Z_1)}{2} \big(
        \textrm{Prox}_{\gamma \rho(\argdot)} 
            \big(  \gamma 
            +
            \alpha \bar \kappa_* \bar Z_1 
            -
            \sigma_2 \bar Z_2  
            \big)
        -
        \alpha \bar \kappa_* \bar Z_1 
        +
        \sigma_2 \bar Z_2 
    \big)^2 \Big]
    -
    \mfrac{\gamma^2}{2}
    +
    \mfrac{\gamma^2}{4} 
    \\
    \;\overset{(d)}{=}&\;
    \mean\Big[ 
        \, \partial \rho( - \bar \kappa_* \bar Z_1) \, 
        \big(
        \alpha \bar \kappa_* \bar Z_1 
        +
        \sigma_2 \bar Z_2 
        -
        \textrm{Prox}_{\gamma \rho(\argdot)} 
            \big(  
            \alpha \bar \kappa_* \bar Z_1 
            +
            \sigma_2 \bar Z_2  
            \big)
    \big)^2 \Big]
    -
    \mfrac{\gamma^2}{4}
    \;.
\end{align*}
In $(a)$ above, we have recalled that $\bar Y =  \ind_{\geq 0}\{ \bar \kappa_o \bar Z_0 + \bar \kappa_* \bar Z_1 - \varepsilon_1 \} = \ind_{\geq 0}\{\bar \kappa_* \bar Z_1 - \varepsilon_1 \} $ is an indicator function; in $(b)$ we have noted the equality of the conditional distributions $1 - \ind_{\geq 0}\{ \bar \kappa_* \bar Z_1 - \varepsilon_1\} \,|\, \bar Z_1 \overset{d}{=} \ind_{\geq 0}\{ - \bar \kappa_* \bar Z_1 - \varepsilon_1\} \,|\, \bar Z_1 $ by the symmetry of $\varepsilon_1$ followed by $\sigma(\argdot) = \partial \rho(\argdot)$; in $(c)$ we have expanded the square in the second term and noted that $\mean[\partial \rho(\bar \kappa_* \bar Z_1) ] = \mean[(1+e^{-\bar \kappa_* \bar Z_1})^{-1} ] = \frac{1}{2}$ since $\bar Z_1$ is symmetric about zero; in $(d)$, we have used in the second expectation that $\bar Z_1 \overset{d}{=} - \bar Z_1$, $\bar Z_2 \overset{d}{=} - \bar Z_2$ and that 
\begin{align*}
    \textrm{Prox}_{\gamma \rho(\argdot)} 
    \Big( \gamma 
    +
    \alpha \bar \kappa_* \bar Z_1 
    -
    \sigma_2 \bar Z_2  \Big)
    \;=\; 
    -
    \textrm{Prox}_{\gamma \rho(\argdot)} 
    \Big( 
    -
    \alpha \bar \kappa_* \bar Z_1 
    +
    \sigma_2 \bar Z_2  \Big)
    \;,
\end{align*}
where we have used $ \textrm{Prox}_{\gamma \rho(\argdot)}( x + \gamma) = -  \textrm{Prox}_{\gamma \rho(\argdot)}(-x)$ (see e.g.~Lemma 3 of \cite{salehi2019impact}). Substituting this into the last three lines of \eqref{eq:iso:tmp} gives 
\begin{align}
    \begin{cases}
    \gamma^2
    \;=\;
    \mfrac{2}{r_2^2}
    \mean\Big[ 
        \, \partial \rho( - \bar \kappa_* \bar Z_1) \, 
        \big(
        \bar \kappa_* \alpha  \bar Z_1 
        +
        \sigma_2 \bar Z_2 
        -
        \textrm{Prox}_{\gamma \rho(\argdot)} 
            \big(  
            \alpha \bar \kappa_* \bar Z_1 
            +
            \sigma_2 \bar Z_2  
            \big)
    \big)^2 \Big]
    \;,
    \\
    \gamma
    \;=\;
    \frac{\kappa }{\lambda \kappa + \sigma_2^{-1} \tau_2^{-1}}
    \;,
    \\
    \alpha
    \;=\;
    \frac{\theta }{ \lambda \kappa + \sigma_2^{-1} \tau_2^{-1} }
    \;.
    \end{cases}
    \label{eq:iso:EQs:tmp:tmp}
\end{align}
Meanwhile, the third line of \eqref{eq:iso:tmp} implies
\begin{align*}
    \sigma_2^2
    + \alpha^2 \bar \kappa_*^2
    \;=&\;
    \mfrac{ r_2^2  \kappa + \theta^2 \bar \kappa_*^2 }{(\lambda \kappa + \sigma_2^{-1} \tau_2^{-1}   )^2} 
    \;=\; 
    \mfrac{ r_2^2  \kappa }{(\lambda \kappa + \sigma_2^{-1} \tau_2^{-1}   )^2} 
    +
    \alpha^2 \bar \kappa_*^2
    \;.
    \tagaligneq \label{eq:iso:EQs:tmp:sigma:alpha}
\end{align*}
Combining the two calculations, we obtain
\begin{align*}
    \theta \;=\; \mfrac{\alpha \kappa}{\gamma}\;,
    \qquad 
    \tau_2 \;=\; \mfrac{\kappa^{-1} \gamma}{\sigma_2(1 -  \gamma \lambda )}\;,
    \qquad 
    r_2 \;=\; \mfrac{\sigma_2 \sqrt{\kappa}}{\gamma}\;,
    \tagaligneq \label{eq:iso:EQs:partial}
\end{align*}
which gives the first three desired equations. Substituting these back into the first line of \eqref{eq:iso:EQs:tmp:tmp} gives
\begin{align*}
        \mfrac{\sigma^2 \kappa}{2}
        \;=\; 
        \mean\big[ \partial \rho( - \bar \kappa_* \bar Z_1 ) 
        \big( 
            \alpha \bar \kappa_* \bar Z_1 + \sigma_2 \bar Z_2 
            - 
            {\rm Prox}_{\gamma \rho(\argdot )}(  \alpha \bar \kappa_* \bar Z_1 + \sigma_2 \bar Z_2  ) \big)^2  
        \big] 
        \;,
        \tagaligneq \label{eq:iso:EQs:partial:two}
\end{align*}
which is the fourth desired equation. The first and second equations of \eqref{eq:iso:tmp} are handled similarly as appendix C.3 of \cite{salehi2019impact}. We recall that $1 - \ind_{\geq 0}\{ \bar \kappa_* \bar Z_1 - \varepsilon_1\} \,|\, \bar Z_1 \overset{d}{=} \ind_{\geq 0}\{ - \bar \kappa_* \bar Z_1 - \varepsilon_1\} \,|\, \bar Z_1 $ and $ \textrm{Prox}_{\gamma \rho(\argdot)}( x + \gamma) = -  \textrm{Prox}_{\gamma \rho(\argdot)}(-x)$ again to compute
\begin{align*}
    \mean\big[
            \bar Z_1 &\,
            u_{\bar Z, \varepsilon_1}
        \big]
    \;=\;
    \mean\big[ \bar Z_1 \textrm{Prox}_{\gamma \rho(\argdot)} 
    \big( \gamma \bar Y
    +
    \alpha \bar \kappa_* \bar Z_1 
    -
    \sigma_2 \bar Z_2  \big) \big]
    \\
    \;=&\;
    \mean\big[ \bar Z_1 \bar Y \textrm{Prox}_{\gamma \rho(\argdot)} 
    \big( \gamma
    +
    \alpha \bar \kappa_* \bar Z_1 
    -
    \sigma_2 \bar Z_2  \big) \big]
    +
    \mean\big[ \bar Z_1 (1 - \bar Y) \textrm{Prox}_{\gamma \rho(\argdot)} 
    \big( 
    \alpha \bar \kappa_* \bar Z_1 
    -
    \sigma_2 \bar Z_2  \big) \big]
    \\
    \;=&\;
    \mean\big[ \bar Z_1  \, \partial \rho( \bar \kappa_* \bar Z_1 ) \, \textrm{Prox}_{\gamma \rho(\argdot)} 
    \big( \gamma
    +
    \alpha \bar \kappa_* \bar Z_1 
    -
    \sigma_2 \bar Z_2  \big) \big]
    +
    \mean\big[ \bar Z_1 \, \partial \rho( - \bar \kappa_* \bar Z_1 ) \,   \textrm{Prox}_{\gamma \rho(\argdot)} 
    \big( 
    \alpha \bar \kappa_* \bar Z_1 
    -
    \sigma_2 \bar Z_2  \big) \big]
    \\
    \;=&\;
    - \mean\big[ \bar Z_1  \, \partial \rho( \bar \kappa_* \bar Z_1 ) \, \textrm{Prox}_{\gamma \rho(\argdot)} 
    \big( 
    -
    \alpha \bar \kappa_* \bar Z_1 
    +
    \sigma_2 \bar Z_2  \big) \big]
    +
    \mean\big[ \bar Z_1  \, \partial \rho( - \bar \kappa_* \bar Z_1 ) \,  \textrm{Prox}_{\gamma \rho(\argdot)} 
    \big( 
    \alpha \bar \kappa_* \bar Z_1 
    -
    \sigma_2 \bar Z_2  \big) \big]
    \\
    \;=&\;
    2 \, \mean\big[ \bar Z_1  \, \partial \rho( - \bar \kappa_* \bar Z_1 ) \,  \textrm{Prox}_{\gamma \rho(\argdot)} 
    \big( 
    \alpha \bar \kappa_* \bar Z_1 
    -
    \sigma_2 \bar Z_2  \big) \big]
    \\
    \;=&\;
    - 2 \, \mean\big[ \bar \kappa_* \partial^2 \rho\big( - \bar \kappa_* \bar Z_1 ) \,   \textrm{Prox}_{\gamma \rho(\argdot)} 
    \big( 
    \alpha \bar \kappa_* \bar Z_1 
    +
    \sigma_2 \bar Z_2  \big) \big] 
    +
    \bar \kappa_* \alpha \, \mean \bigg[ \mfrac{\partial \rho(- \bar \kappa_* \bar Z_1)}{1+ \gamma \partial^2 \rho\big(\textrm{Prox}_{\gamma \rho(\argdot)} 
    \big( 
    \alpha \bar \kappa_* \bar Z_1 
    +
    \sigma_2 \bar Z_2  \big) \big)} \bigg]
    \;,
    \tagaligneq \label{eq:iso:Z1:tmp}
\end{align*}
where the last line is exactly the same as (87)--(88) of \cite{salehi2019impact} via Stein's lemma and by noting that $\bar Z_2 \overset{d}{=} - \bar Z_2$. Similarly 
\begin{align*}
    \mean\big[
        \bar Z_2
        u_{\bar Z, \varepsilon_1}
    \big]
    =
    2 \, \mean\big[ \bar Z_2  \, \partial \rho( - \bar \kappa_* \bar Z_1 ) \,  \textrm{Prox}_{\gamma \rho(\argdot)} 
    \big( 
    \alpha \bar \kappa_* \bar Z_1 
    -
    \sigma_2 \bar Z_2  \big) \big]
    =
    2  \sigma_2 \, \mean \bigg[ \mfrac{\partial \rho(- \bar \kappa_* \bar Z_1)}{1+ \gamma \partial^2 \rho\big(\textrm{Prox}_{\gamma \rho(\argdot)} 
    \big( 
    \alpha \bar \kappa_* \bar Z_1 
    +
    \sigma_2 \bar Z_2  \big) \big)} \bigg] \;,
    \tagaligneq \label{eq:iso:Z2:tmp}
\end{align*}
where the last line is exactly the same as (83) of \cite{salehi2019impact} via Stein's lemma. Substituting \eqref{eq:iso:Z2:tmp} into the second equation of \eqref{eq:iso:tmp} gives 
\begin{align*}
    0
        \;=\; 
        - 
        \mfrac{1}{2 \tau_2}
        +
        \mfrac{\alpha^2 \bar \kappa_*^2}{2 \sigma_2^2 \tau_2}
        -
        \mfrac{1}{\sigma_2^2 \tau_2} \,
        \mfrac{r_2^2  \kappa + \theta^2 \bar \kappa_*^2 }{2 ( \lambda \kappa + \sigma_2^{-1} \tau_2^{-1} )^2}
        +
        \mfrac{2 \sigma_2}{\gamma}
        \, \mean \bigg[ \mfrac{\partial \rho(- \bar \kappa_* \bar Z_1)}{1+ \gamma \partial^2 \rho\big(\textrm{Prox}_{\gamma \rho(\argdot)} 
        \big( 
        \alpha \bar \kappa_* \bar Z_1 
        +
        \sigma_2 \bar Z_2  \big) \big)} \bigg]
        +
        \mfrac{\sigma_2 }{\gamma}
        \;.
\end{align*}
Upon rearranging and a substitution of $\sigma_2^2
    + \alpha^2 \bar \kappa_*^2
    =
    \frac{ r_2^2  \kappa + \theta^2 \bar \kappa_*^2 }{(\lambda \kappa + \sigma_2^{-1} \tau_2^{-1}   )^2} 
$ from \eqref{eq:iso:EQs:tmp:sigma:alpha} and $\tau_2\sigma_2 = \frac{\kappa^{-1} \gamma}{1 -  \gamma \lambda }$ from \eqref{eq:iso:EQs:partial} , we obtain
\begin{align*}
    1
    - 
    \mfrac{\gamma}{\tau_2 \sigma_2}
    \;=\;
    1 
    - 
    \kappa 
    +
    \gamma \lambda \kappa
    \;=\; 
    \mean \bigg[ \mfrac{\partial \rho(- \bar \kappa_* \bar Z_1)}{1+ \gamma \partial^2 \rho\big(\textrm{Prox}_{\gamma \rho(\argdot)} 
        \big( 
        \alpha \bar \kappa_* \bar Z_1 
        +
        \sigma_2 \bar Z_2  \big) \big)} \bigg]
    \;,
    \tagaligneq \label{eq:iso:EQs:partial:three}
\end{align*}
which gives the fifth desired equation. Substituting this into \eqref{eq:iso:Z1:tmp} implies 
\begin{align*}
    \mean\big[
            \bar Z_1
            u_{\bar Z, \varepsilon_1}
        \big]
    \;=&\;
    - 2 \, \mean\big[ \bar \kappa_* \partial^2 \rho\big( - \bar \kappa_* \bar Z_1 ) \,   \textrm{Prox}_{\gamma \rho(\argdot)} 
    \big( 
    \alpha \bar \kappa_* \bar Z_1 
    +
    \sigma_2 \bar Z_2  \big) \big] 
    +
    \bar \kappa_* \alpha 
    -
    \bar \kappa_* \alpha \,
    \mfrac{\gamma}{\tau_2 \sigma_2}
    \;,
\end{align*}
and substituting this into the first equation of \eqref{eq:iso:tmp} gives
\begin{align*}
    0
    \;=\;
    \theta \bar \kappa_*^2 
    - 
    \mfrac{ \alpha \bar \kappa_*^2}{\sigma_2 \tau_2}
    -
    \mfrac{ \bar \kappa_* }{\gamma}
    \bigg(
    - 2 \, \mean\big[ \bar \kappa_* \partial^2 \rho\big( - \bar \kappa_* \bar Z_1 ) \,   \textrm{Prox}_{\gamma \rho(\argdot)} 
    \big( 
    \alpha \bar \kappa_* \bar Z_1 
    +
    \sigma_2 \bar Z_2  \big) \big] 
    +
    \bar \kappa_* \alpha 
    -
    \bar \kappa_* \alpha \,
    \mfrac{\gamma}{\tau_2 \sigma_2}
    \bigg)
    +
    \mfrac{\alpha \bar \kappa_*^2 }{\gamma}
    \;,
\end{align*}
which simplifies to 
\begin{align*}
    -
    \mfrac{\gamma \theta}{2}
    \;=&\; 
    \mean[ \partial^2 \rho(- \bar \kappa_* \bar Z_1) {\rm Prox}_{\gamma \rho(\argdot)}( \bar \kappa_* \alpha \bar Z_1 + \sigma_2 \bar Z_2 ) ]
    \;.
\end{align*}
Replacing $\gamma \theta$ by $\alpha \kappa$ in view of \eqref{eq:iso:EQs:partial} gives the last desired equation. 

\jmlrQED

\subsubsection{Proof of \lemmaref{lem:CGMT:permute}: Random permutations}
Since $Z_1 \overset{d}{=} \phi_1(Z_1)$, \Cref{assumption:CGMT:var}(i) holds.  Now  note that by the total law of covariance followed by that $\phi_1$ and $\phi_2$ are i.i.d.,
\begin{align*}
    \Cov \,[ \phi_1(Z_1)\,,\, \phi_2(Z_1)]
    \;=&\;
    \Cov \,[ \mean[ \phi_1(Z_1) \,|\, Z_1] \,,\, \mean[ \phi_2(Z_1) \,|\, Z_1] ]
    \,+\,
    \mean \,[ \Cov[ \phi_1(Z_1) \,,\, \phi_2(Z_1) \,|\, Z_1] ]
    \\
    \;=&\;
    \Var\, \mean[\phi_1(Z_1) \,|\, Z_1]\;.
\end{align*}
Denote $\tilde p_t = \lceil r_{\rm perm} p_t \rceil$, the number of fixed entries of the $l$-th group to be permuted. We can WLOG suppose they are chosen as the first $\tilde p_t$ entries of the $t$-th group. Also write 
$Z^{(t)}_{t (\tilde p_t+1): tp_t} = (Z^{(t)}_{t(\tilde p_t+1)}, \ldots, Z^{(t)}_{tp_t})^\intercal$, the vector of un-permuted coordinates within the $t$-th group. Then we may compute 
\begin{align*}
    \Sigma_*
    \;=&\;
    ( \Sigma^\dagger )^{1/2} \, \Cov \,[ \phi_1(Z_1)\,,\, \phi_2(Z_1)] \,  ( \Sigma^\dagger )^{1/2}
    \\
    \;=&\;
    ( \Sigma^\dagger )^{1/2} \, \Var \, \mean[ \, \phi_1(Z_1)\,|\, Z_1 \,] \,  ( \Sigma^\dagger )^{1/2}
    \\
    \;=&\;
    ( \Sigma^\dagger )^{1/2}
    \Var \, \begin{pmatrix}
        \frac{1}{\tilde p_1} \sum_{l \leq p_1} Z^{(1)}_{1l} \times \bone_{\tilde p_1} 
        \\
        Z^{(1)}_{1 (\tilde p_1+1): 1p_1} 
        \\
        \vdots 
        \\
        \frac{1}{\tilde p_N} \sum_{l \leq\tilde p_N} Z^{(N)}_{Nl} \times \bone_{\tilde p_N} 
        \\
        Z^{(N)}_{N (\tilde p_N+1): Np_N} 
    \end{pmatrix}
    \,
    ( \Sigma^\dagger )^{1/2}
    \\
    \;=&\;
    ( \Sigma^\dagger )^{1/2}
    \begin{pmatrix}
        \frac{1}{\tilde p_1} 
        \, \Var[Z^{(1)}_{11}] \times \bone_{\tilde p_1 \times \tilde p_1} & & & & \\
        & \hspace{-2em} \Var[Z^{(1)}_{11}] \times I_{p-\tilde p_1} & & & \\
        &  &  \hspace{-2em} \ddots & & \\
        & & &  \hspace{-2em} \frac{1}{\tilde p_N} \, \Var[Z^{(N)}_{11}] \times \bone_{\tilde p_N \times \tilde p_N}  & \\
        & & & &  \hspace{-2em} \Var[Z^{(N)}_{11}] \times I_{p-\tilde p_N} 
    \end{pmatrix}
    ( \Sigma^\dagger )^{1/2}
    \;,
\end{align*}
whereas
\begin{align*}
    \Sigma 
    \;=\; 
    \Sigma_o
    \;=\;
    \Var[Z_1]
    \;=\;
    \begin{psmallmatrix}
        \Var[ Z^{(1)}_{11} ] \times I_{p_1} & & \\
        & \ddots & \\
        & & \Var[ Z^{(N)}_{11} ] \times I_{p_N} 
    \end{psmallmatrix}
    \;,
\end{align*}
and therefore 
\begin{align*}
    \Sigma_*
    \;=&\; 
    \begin{pmatrix}
        \frac{1}{\tilde p_1} 
        \, \bone_{\tilde p_1 \times \tilde p_1}  \,\ind\{ \Var[Z^{(1)}_{11}] > 0 \}  & & & & \\
        & \hspace{-2em} I_{p-\tilde p_1} \, \ind\{ \Var[Z^{(1)}_{11}] > 0 \}  & & & \\
        &  & \hspace{-2em}\ddots & & \\
        & & & \hspace{-2em}\frac{1}{\tilde p_N} \, \bone_{\tilde p_N \times \tilde p_N} \, \ind\{ \Var[Z^{(1)}_{11}] > 0 \}   & \\
        & & & & \hspace{-2em} I_{p-\tilde p_N} \, \ind\{ \Var[Z^{(1)}_{11}] > 0 \} 
    \end{pmatrix}
    \;,
\end{align*}
which satisfies $\Sigma_*^2 = \Sigma_*$. Thus \Cref{assumption:CGMT:var}(ii) holds. 
\jmlrQED

\subsubsection{Proof of \lemmaref{lem:CGMT:sign:flip}: Random sign flipping}
Since $\Sigma = \Sigma_o = \frac{1}{p} I_p$, we can write
\begin{align*}
    \Sigma_* 
    \;=&\; 
    ( \Sigma^\dagger )^{1/2} \, \Cov[ \phi_1(Z_1)\,,\, \phi_2(Z_1) ] \,  ( \Sigma^\dagger )^{1/2}
    \;=\; \mean[ \phi_1] \, \mean[\phi_2]
\end{align*}
and 
\begin{align*}
    ( \Sigma^\dagger )^{1/2} \, \Cov[ \phi_1(Z_1)\,,\, Z_1 ] \,  ( \Sigma_o^\dagger )^{1/2}
    \;=\; \mean[ \phi_1] 
    \;.
\end{align*}
WLOG we can suppose that the $\lceil r_{\rm flip} p \rceil$ entries are chosen as the first  $\lceil r_{\rm flip} p \rceil$ entries. Then each $\phi_{ij} = \textrm{diag}\{ {\rm Rad}_{ij1}, \ldots, {\rm Rad}_{ij\lceil r_{\rm flip} p \rceil}, 1, \ldots, 1 \}$, where ${\rm Rad}_{ijl}$'s are i.i.d.~Rademacher random variables. Therefore $\mean[\phi_1] = \textrm{diag}\{ 0, \ldots, 0, 1, \ldots, 1 \}$, where there are $\lceil r_{\rm flip} p \rceil$ zeros, and in particular $\mean[\phi_1] = \mean[\phi_1] \mean[\phi_1]  = \mean[\phi_1]   \mean[\phi_2^\intercal]$. This verifies both \Cref{assumption:CGMT:var}(i) and (ii).
\jmlrQED

\subsubsection{Proof of \lemmaref{lem:CGMT:crop}: Random cropping}

In the random cropping setup, $\Sigma_o = \Sigma_{\rm new}= \frac{1}{p} I_p$. Also note that each $\phi_i$ is a random projection matrix and independent of $Z_i$. Then by the total law of covariance,
\begin{align*}
    \Sigma 
    \;=\; 
    \Var[\phi_1(Z_1)]
    \;=&\; 
    \mean \Var[ \phi_1(Z_1) \,|\, \phi_1]
    +
    \Var\mean [ \phi_1(Z_1) \,|\, \phi_1]
    \\
    \;=&\; 
    \mean[ \phi_1 \Var[Z_1] \phi_1 ] + 0 \;=\; \mfrac{1}{p}  \mean[\phi_1] \;=\; \mfrac{1}{p}  \, \mfrac{p - \lceil r_{\rm crop} p \rceil}{p} \, I_p \;.
\end{align*}
This implies 
\begin{align*}
    \Sigma_* 
    \;=&\; 
    (\Sigma^\dagger)^{1/2} \, \Cov[ \phi_1(Z_1), \phi_2(Z_1)] \, (\Sigma^\dagger)^{1/2}
    \\
    \;=&\; 
    \Big(\mfrac{1}{p}  \, \mfrac{p - \lceil r_{\rm crop} p \rceil}{p}\Big)^{-1}
    \, \mean[ \phi_1  ] \, \Var[Z_1]  \, \mean[\phi_2] 
    \;=\; 
    \mfrac{p - \lceil r_{\rm crop} p \rceil}{p} \, I_p
    \;,
\end{align*}
and 
\begin{align*}
    (\Sigma^\dagger)^{1/2} \Cov[ \phi_1(Z_1), Z_1] (\Sigma_o^\dagger)^{1/2}
    \;=&\;
    (\Sigma^\dagger)^{1/2} \mean[\phi_1] \Var[Z_1] (\Sigma_o^\dagger)^{1/2}
    \;=\;
    I_p 
    \;.
\end{align*}
Therefore the desired statements hold with $a_1 =  a_2 =\frac{p - \lceil r_{\rm crop} p \rceil}{p}$.
\jmlrQED

\end{document}